\newtheorem{lemma}{Lemma}[section]
\newtheorem{theorem}{Theorem}[section]
\newtheorem{proposition}{Proposition}[section]
\newtheorem{remark}{Remark}[section]
\newtheorem{corollary}{Corollary}[section]
\numberwithin{equation}{section}
\begin{document}
\bibliography{bibfile}

\begin{thebibliography}{99}\small

\bibitem{Asano-Ukai-1986}Asano K. and Ukai S., On the Vlasov-Poisson limit of the Vlasov-Maxwell equation. {\it Patterns and Waves-Qualitative Analysis of Nonlinear Differential Equations}, Nishida et al. (ed), Studies in Mathematics and Its Applications, vol. 18.

\bibitem{Bostan-2006-JDE}Bostan M., Asymptotic behavior of weak solutions for the relativistic Vlasov-Maxwell equations with large light speed. {\it J. Differential Equations} {\bf 227} (2006), no. 2, 444--498.

\bibitem{Chapman-Cowling-1970} Chapman S. and Cowling T.G., \emph{The Mathematical Theory of Non-uniform Gases. An account of the Kinetic Theory of Viscosity, Thermal Conduction and Diffusion in Gases.} Third edition, prepared in co-operation with D. Burnett. Cambridge University Press, London 1970, xxiv+423 pp.

\bibitem{Degond-MMAS-1986}Degond P., Local existence of solutions of the Vlasov-Maxwell equations and convergence to the Vlasov-Poisson equations for infinite light speed. {\it Math. Methods Appl. Sci.} {\bf 8} (1986), no. 4, 533--558.

\bibitem{Duan-Lei-Yang-Zhao-CMP-2017} Duan R.-J., Lei Y.-J., Yang T., and Zhao H.-J., The Vlasov-Maxwell-Boltzmann system near Maxwellians in the whole space with very soft potentials. {\it Comm. Math. Phys.} {\bf 351} (2017),  no. 1, 95--153.

\bibitem{Duan-Liu-CMP-2013} Duan R.-J. and Liu S.-Q., The Vlasov-Poisson-Boltzmann system without angular cutoff. {\it Comm. Math. Phys.} {\bf 324} (2013), no. 1, 1--45.

\bibitem{Duan_Liu-Yang_Zhao-KRM-2013} Duan R.-J., Liu S.-Q., Yang  T., and Zhao H.-J., Stabilty of the nonrelativistic  Vlasov-Maxwell-Boltzmann system for angular non-cutoff potentials. {\it Kinet. Relat. Models} {\bf 6} (2013), no. 1, 159--204.

\bibitem{Duan-Strain-ARMA-2011} Duan R.-J. and Strain R.M., Optimal time decay of the Vlasov-Poisson-Boltzmann system in $\mathbb{R}^3_x$. \textit{Arch. Ration. Mech. Anal.}\textbf{ 199} (2011), no. 1, 291--328.

\bibitem{Duan-Stain-CPAM-2011} Duan R.-J. and Strain R.M., Optimal large-time behavior of the Vlasov-Maxwell-Boltzmann system in the whole space. \textit{Comm. Pure. Appl. Math.} \textbf{24} (2011), no. 11, 1497--1546.

\bibitem{Duan_Yang_Zhao-JDE-2012}Duan R.-J., Yang T., and Zhao H.-J., The Vlasov-Possion-Boltzmann equation in the whole space: The hard potential case. {\it J. Differential Equations} {\bf 252} (2012), 6356--6386.

\bibitem{Duan_Yang_Zhao-M3AS-2013}Duan R.-J., Yang T., and Zhao H.-J., The Vlasov-Poisson-Boltzmann system for soft potentials. {\it Math. Methods Models Appl. Sci.} {\bf 23} (2013), no. 6, 979--1028.

\bibitem{Fan-Lei-Liu-Zhao-SCM-2017} Fan Y.-Z., Lei Y.-J., Liu S.-Q., and Zhao H.-J., The non-cutoff Vlasov-Maxwell-Boltzmann system with weak angular singularity. {\it Sci. China Math.} {\bf 61} (2018), no. 1, 111--136.

\bibitem{Ghendrih-Nouri-2018} Ghendrih P. and Nouri A., Derivation of a gyrokinetic model. Existence and uniqueness of specific stationary solutions. Preprint 2018.

\bibitem{Grad-1963} Grad H., Asymptotic theory of the Boltzmann equation. II. 1963 Rarefied Gas Dynamics (Proc. 3rd Internat. Sympos., Palais de l'UNESCO, Paris, 1962), Vol. I  pp. 26–59. Academic Press, New York

\bibitem{Guo-CPAM-2002}Guo, Y.: The Vlasov-Poisson-Boltzmann system near Maxwellians. \textit{Comm. Pure Appl. Math.} \textbf{55} (2002) no. 9, 1104--1135.

\bibitem{Guo-ARMA-03} Guo Y.,  Classical solutions to the Boltzmann equation formolecules with an angular cutoff. \textit{Arch. Ration. Mech. Anal.} \textbf{169} (2003), 305--353.

\bibitem{Guo-Invent Math-2003} Guo Y., The Vlasov-Maxwell-Boltzmann system near Maxwellians. \textit{Invent. Math.} \textbf{153} (2003), no. 3, 593--630.

\bibitem{Guo-IUMJ-2004} Guo Y., The Boltzmann equation in the whole space. \textit{Indiana Univ. Math. J.} \textbf{53}(2004), 1081--1094.

\bibitem{Guo-CPDE-2012} Guo Y. and Wang Y.-J, Decay of dissipative equation and negative sobolev spaces. \textit{Comm. Partial Differential Equations} \textbf{37} (2012), 2165--2208.

\bibitem{Jang-ARMA-2009} Jang J., Vlasov-Maxwell-Boltzmann diffusive limit. {\it Arch. Ration. Mech. Anal.} {\bf 194} (2009), no. 2, 531--584.

\bibitem{Krall-Trivelpiece-1973} Krall N.A. and Trivelpiece A.W., {\it Principles of Plasma Physics}. McGraw-Hill, 1973.

\bibitem{Lee-MMAS-2004}Lee H.L., The classical limit of the relativistic Vlasov-Maxwell system in two space dimensions. {\it Math. Methods Appl. Sci.} {\bf 27} (2004), no. 3, 249--287.

\bibitem{Lei-Zhao-JFA-2014} Lei Y.-J. and Zhao H.-J., Negative Sobolev spaces and the two-species Vlasov-Maxwell-Landau system in the whole space. {\it  J. Funct. Anal.} {\bf 267} (2014), no. 10, 3710--3757.

\bibitem{Lei-Zhao-JDE-2016} Lei Y.-J. and Zhao H.-J., The Vlasov-Maxwell-Boltzmann system with a uniform ionic background near Maxwellians. {\it J. Differential Equations} {\bf 260} (2016), no. 3, 2830--2897.

\bibitem{Lei-Zhao-KRM-2020}Lei Y.-J. and Zhao H.-J., The Vlasov-Maxwell-Boltzmann system near strong magnetic field in the perturbative framework. {\it Kinet. Relat. Models} {\bf 13} (2020), no. 3, 599-621.

\bibitem{Li-Yang-Zhong-IUMJ-2016} Li H.-L., Yang, T., and Zhong M.-Y., Spectrum analysis and optimal decay rates of the bipolar Vlasov-Poisson-Boltzmann equations. {\it Indiana Univ. Math. J.} {\bf 65} (2016), no. 2, 665--725.

\bibitem{Li-Yang-Zhong-SIMA-2016} Li H.-L., Yang, T., and Zhong M.-Y., Spectrum structure and behaviors of the Vlasov-Maxwell-Boltzmann systems. {\it SIAM J. Math. Anal.} {\bf 48} (2016), no. 1, 595--669.

\bibitem{Liu-Yang-Yu-Physica D-2004} Liu T.-P., Yang T., and Yu S.-H., Energy method for the Boltzmann equation. {\it Physica D} {\bf 188} (2004), 178--192.

\bibitem{Liu-Yu-CMP-2004} Liu T.-P. and Yu S.-H., Boltzmann equation: Micro-macro decompositions and positivity of shock profiles. {\it Comm. Math. Phys.} {\bf 246} (2004), 133--179.

\bibitem{Schaeffer-CMP-1986}Schaeffer J., The classical limit of the relativistic Vlasov-Maxwell system. {\it Comm. Math. Phys.} {\bf104}(1986), 403-421.

\bibitem{Schaeffer-Wu-M2AS-2017} Schaeffer J. and Wu L., The nonrelativistic limit of relativistic Vlasov-Maxwell system. {\it Math. Methods Appl. Sci.} {\bf 40}  (2017),  no. 10, 3784--3798.

\bibitem{Strain-CMP-2006} Strain R.M., The Vlasov-Maxwell-Boltzmann system in the whole space. {\it Comm. Math. Phys.} {\bf 268} (2006), no. 2, 543--567.

\bibitem{Strain-Guo-ARMA-2008} Strain R.M. and Guo Y., Exponential decay for soft potentials near Maxwellian. \textit{Arch. Ration. Mech. Anal.} \textbf{187}(2008), no. 2, 287--339.

\bibitem{Villani-2002} Villani C., A review of mathematical topics in collisional kinetic theory. North-Holland, Amsterdam, Handbook of mathematical fluid dynamics, Vol. I, 2002, pp. 71--305.

\bibitem{Wang-Xiao-Xiong-Zhao-AMS-2016} Wang L.-S., Xiao Q.-H., Xiong L.-J., and Zhao H.-J., The Vlasov-Poisson-Boltzmann system near Maxwellians for long-range interactions. {\it Acta Math. Sci. Ser. B Engl. Ed.} {\bf 36} (2016), no. 4, 1049--1097.

\bibitem{Wang-JDE-2013}Wang Y.-J., Decay of the two-species Vlasov-Poisson-Boltzmann system. {\it J. Differential Equations.} {\bf 254} (2013), no. 5, 2304--2340.

\bibitem{Xiao-Xiong-Zhao-SCM-2014} Xiao Q.-H., Xiong L.-J., and Zhao H.-J.,The Vlasov-Poisson-Boltzmann system for non-cutoff hard potentials. {\it Sci. China Math.} {\bf 57} (3) (2014), 515--540.

\bibitem{Xiao-Xiong-Zhao-JDE-2013} Xiao Q.-H., Xiong L.-J., and Zhao H.-J., The Vlasov-Posson-Boltzmann system with angular cutoff for soft potential.  {\it J. Differential Equations} {\bf 255} (2013), 6, 1196--1232.

\bibitem{Xiao-Xiong-Zhao-JFA-2016} Xiao Q.-H., Xiong L.-J., and Zhao H.-J., The Vlasov-Posson-Boltzmann system for the whole range of cutoff soft potentials.  {\it J. Funct. Anal.} {\bf 272} (2017), no. 1, 166--226.

\bibitem{Yang-Yu-CMP-2011} Yang T. and Yu H.-J., Optimal convergence rates of classical solutions for Vlasov-Poisson-Boltzmann system. {\it Comm. Math. Phys.} {\bf 301} (2011), 319--355.

\bibitem{Yang-Yu-Zhao-ARMA-2006} Yang T., Yu H.-J., and Zhao H.-J., Cauchy problem for the Vlasov-Poisson-Boltzmann system. {\it Arch. Ration. Mech. Anal.} {\bf 182} (2006), no. 3, 415--470.

\bibitem{Yang-Zhao-CMP-2006} Yang T. and Zhao H.-J., Global existence of classical solutions to the Vlasov-Poisson-Boltzmann system. {\it Comm. Math. Phys.} {\bf 268} (2006), no. 3, 569--605.
\end{thebibliography}

\title[Global-in-time VPB limit of the VMB system]{On the Vlasov-Poisson-Boltzmann Limit of the Vlasov-Maxwell-Boltzmann system}

\author[N. Jiang]{Ning Jiang}
\address[JN]{School of Mathematics and Statistics and Hubei Key Laboratory of Computational Science, Wuhan University, Wuhan 430072, P.R.~China}
\email{njiang@whu.edu.cn}

\author[Y.-J. Lei]{Yuanjie Lei}
\address[YJL]{School of Mathematics and Statistics, Huazhong University of Science and Technology, Wuhan 430074, P.R.~China}
\email{leiyuanjie@hust.edu.cn}

\author[H.-J. Zhao]{Huijiang Zhao}
\address[HJZ]{School of Mathematics and Statistics and Hubei Key Laboratory of Computational Science, Wuhan University, Wuhan 430072, China}
\email{hhjjzhao@whu.edu.cn}


\begin{abstract}
For the whole range of cutoff intermolecular interactions, we give a rigorous mathematical justification of the limit from the Vlasov-Maxwell-Boltzmann system to the Vlasov-Poisson-Boltzmann system as the light speed tends to infinity. Such a limit is shown to hold global-in-time in the perturbative framework. The key point in our analysis is to deduce certain {\it a priori} estimates which are independent of the light speed and the main difficulty is due to the degeneracy of the dissipative effect of the electromagnetic field for large light speed.
\end{abstract}

\maketitle
\thispagestyle{empty}
\tableofcontents

%



\section{Introduction}

\subsection{The problem}

\setcounter{equation}{0}
The motion of dilute ionized plasmas consisting of two-species particles (e.g., electrons and ions) under the influence of binary collisions and the self-consistent electromagnetic field can be modelled by the Vlasov-Maxwell-Boltzmann system (cf. \cite[Chapter 19]{Chapman-Cowling-1970} as well as \cite[Chapter 6.6]{Krall-Trivelpiece-1973})
\begin{eqnarray}
 \partial_tF^c_++ v  \cdot\nabla_xF^c_++\frac{e_+}{m_+}\left(E^c+\frac vc\times B^c\right)\cdot\nabla_{ v  }F^c_+&=&Q\left(F^c_+,F^c_+\right)+Q\left(F^c_+,F^c_-\right),\nonumber\\
 \partial_tF^c_-+ v  \cdot\nabla_xF^c_--\frac{e_-}{m_-}\left(E^c+\frac vc\times B^c\right)\cdot\nabla_{ v  }F^c_-&=&Q\left(F^c_-,F^c_+\right)+Q\left(F^c_-,F^c_-\right).\label{VMB}
\end{eqnarray}
Here the electromagnetic field $\left[E^c, B^c\right]=\left[E^c(t,x), B^c(t,x)\right]$ satisfies the  Maxwell equations
\begin{eqnarray}
 \frac1c\partial_tE^c-\nabla_x\times B^c&=&-\frac{4\pi}{c}{\displaystyle\int_{\mathbb{R}^3}}v\left(e_+F^c_+-e_-F^c_-\right)dv,\nonumber\\
 \frac{1}{c}\partial_tB^c+\nabla_x\times E^c&=&0,\label{Maxwell}\\
 \nabla_x\cdot E^c&=&4\pi{\displaystyle\int_{\mathbb{R}^3}}\left(e_+F^c_+-e_-F^c_-\right)dv,\nonumber\\
 \nabla_x\cdot B^c&=&0,\nonumber
\end{eqnarray}
where $\nabla_x=\left[\partial_{x_1}, \partial_{x_2},\partial_{x_3}\right], \nabla_v=\left[\partial_{v_1}, \partial_{v_2},\partial_{v_3}\right]$.  The unknown functions $F^c_\pm= F^c_\pm(t,x, v) \geq  0$ are the number density functions for the ions ($+$) and electrons ($-$) with position $x = (x_1, x_2, x_3)\in {\mathbb{R}}^3$ and velocity $ v=( v_1,  v_2,  v_3) \in {\mathbb{R}}^3$ at time $t\geq 0$, respectively, $e_\pm$ and $m_\pm$ the magnitudes of their charges and masses, and $c$ the light speed.

Let $F(v)$, $G(v)$ be two number density functions for two types of particles with masses $m_\pm$ and diameters $\sigma_\pm$, then the collision operator $Q(F,G)(v)$ for cutoff intermolecular interactions is defined as (cf. \cite{Chapman-Cowling-1970})
\begin{eqnarray*}
  Q(F,G)(v)
  &=&\frac{(\sigma_++\sigma_-)^2}{4}\int_{\mathbb{R}^3\times \mathbb{S}^2}|u-v|^{\gamma}{\bf b}\left(\frac{\omega\cdot(v-u)}{|u-v|}\right)\left\{F(v')G(u')-F(v)G(u)\right\}
  d\omega du\\
&\equiv&Q_{gain}(F,G)-Q_{loss}(F,G).
\end{eqnarray*}
Here $\omega\in\mathbb{S}^2$ and ${\bf b}$, the angular part of the collision kernel, satisfies Grad's cutoff assumption (cf.~\cite{Grad-1963})
\begin{equation}\label{cutoff-assump}
0\leq{\bf b}(\cos \theta)\leq C|\cos\theta|
\end{equation}
for some positive constant $C>0$. The deviation angle $\pi-2\theta$ satisfies $\cos\theta =\omega\cdot(v-u)/{|v-u|}$.
Moreover, for $m_{1}, m_2\in \{m_+,m_-\}$,
\begin{equation*}
v'=v-\frac{2m_2}{m_1+m_2}[(v-u)\cdot\omega]\omega,\quad u'=u+\frac{2m_1}{m_1+m_2}[(v-u)\cdot\omega]\omega,
\end{equation*}
which denote velocities $(v',u')$ after a collision of particles having velocities $(v, u)$ before the collision and vice versa. Notice that the above identities follow from the conservation of momentum $m_1v+m_2u$ and energy $\frac 12
m_1|v|^2+\frac 12m_2|u|^2$.

The exponent $\gamma\in(-3,1]$ in the kinetic part of the collision kernel is determined by the potential of intermolecular force, which is classified into the soft potential case for $-3<\gamma<0$, the Maxwell molecular case for $\gamma=0$, and the hard potential case for $0<\gamma\leq 1$ which includes the hard sphere model with $\gamma=1$ and ${\bf b}(\cos\theta)=C|\cos\theta|$ for some positive constant $C>0$. For the soft potentials, the case $-2\leq \gamma<0$ is called the moderately soft potentials while $-3<\gamma<-2$ is called the very soft potentials, cf. \cite{Villani-2002}.

For the Vlasov-Maxwell-Boltzmann system \eqref{VMB}, \eqref{Maxwell}, if one let the light speed $c\to+\infty$, one can then formally deduce that the solution $\left[F^c_+(t,x,v), F^c_-(t,x,v), E^c(t,x), B^c(t,x)\right]$ of the Vlasov-Maxwell-Boltzmann system \eqref{VMB}, \eqref{Maxwell} converges to $\left[F^{\infty}_+(t,x,v), F^{\infty}_-(t,x,v), E^{\infty}(t,x), B^{\infty}(t,x)\right]$ with $E^{\infty}(t,x)$ $\equiv \nabla_x \phi^{\infty}(t,x)$ for some scalar function $\phi^{\infty}(t,x)$, and $\left[F^{\infty}_+(t,x,v), F^{\infty}_-(t,x,v), \phi^{\infty}(t,x)\right]$ solves the following Vlasov-Poisson-Boltzmann system
\begin{eqnarray}\label{VPB-Original}
\partial_tF^{\infty}_++ v  \cdot\nabla_xF^{\infty}_++\frac{e_+}{m_+}\nabla_x\phi^{\infty}\cdot\nabla_{ v  }F^{\infty}_+&=&Q\left(F^{\infty}_+,F^{\infty}_+\right)+Q\left(F^{\infty}_+,F^{\infty}_-\right),\nonumber\\
\partial_tF^{\infty}_-+ v  \cdot\nabla_xF^{\infty}_--\frac{e_-}{m_-}\nabla_x\phi^{\infty}\cdot\nabla_{ v  }F^{\infty}_-&=&Q\left(F^{\infty}_-,F^{\infty}_+\right)+Q\left(F^{\infty}_-,F^{\infty}_-\right),\\
\nabla_x\phi^{\infty}&=&4\pi {\displaystyle\int_{\mathbb{R}^3}}\left(e_+F^{\infty}_+-e_-F^{\infty}_-\right)dv.\nonumber
\end{eqnarray}
It is easy to see that \eqref{VPB-Original}$_3$ is equivalent to the fact that $E^{\infty}(t,x)$ solves
\begin{eqnarray}\label{Poisson-Rewritten}
\partial_tE^{\infty}&=&-\int_{\mathbb{R}^3_v}v\left(e_+F^{\infty}_+-e_-F^{\infty}_-\right)dv,\nonumber\\
  \nabla_x\times E^{\infty}&=&0,\\
  \nabla_x\cdot E^{\infty}&=&4\pi\int_{\mathbb{R}^3_v}\left(e_+F^{\infty}_+-e_-F^{\infty}_-\right)dv.\nonumber
\end{eqnarray}

If we define
\begin{eqnarray}
\mu_+(v)&=&\frac{n_0}{e_+}\left(\frac{m_+}{2\pi\kappa_B T_0}\right)^{\frac 32}\exp\left(-\frac{m_+|v|^2}{2\kappa_B T_0}\right),\nonumber\\
\mu_-(v)&=&\frac{n_0}{e_-}\left(\frac{m_-}{2\pi\kappa_B T_0}\right)^{\frac 32}\exp\left(-\frac{m_-|v|^2}{2\kappa_B T_0}\right),\label{strong-magnetic}\\
\mathfrak{E}(x)&=&\left[\mathfrak{E}_1(x), \mathfrak{E}_2(x), \mathfrak{E}_3(x)\right],\nonumber\\
\mathfrak{B}(x)&=&\left[\mathfrak{B}_1(x),\mathfrak{B}_2(x),\mathfrak{B}_3(x)\right],\nonumber
\end{eqnarray}
where $\kappa_B>0$ is the Boltzmann constant, $n_0>0$ and $T_0>0$ are constant
reference number density and temperature, respectively, and the reference bulk
velocities have been chosen to be zero, then it is easy to check that $[\mu_+(v),\mu_-(v),\mathfrak{E}(x), \mathfrak{B}(x)]$ is a solution to the Vlasov-Maxwell-Boltzmann system \eqref{VMB}, \eqref{Maxwell} iff
\begin{equation*}
\mathfrak{E}(x)=0,\quad \nabla_x\cdot \mathfrak{B}(x)=0,\quad \nabla_x\times \mathfrak{B}(x)=0
\end{equation*}
hold for $x\in\mathbb{R}^3$, from which, the fact that $\mathfrak{B}(x)$ is always assumed to be bounded, and Liouville's theorem, one can deduce that
$\mathfrak{B}(x)\equiv\mathfrak{B}=\left[\mathfrak{B}_1, \mathfrak{B}_2, \mathfrak{B}_3\right]$, where $\left[\mathfrak{B}_1, \mathfrak{B}_2, \mathfrak{B}_3\right]$ is any constant vector.

The main purpose of this paper is to give a rigorous mathematical justification of the above formal limit, which is global-in-time, in the perturbative framework, i.e. in the regime when the solution $\left[F^c_+(t,x,v),\right.$ $\left. F^c_-(t,x,v), E^c(t,x), B^c(t,x)\right]$ of the Vlasov-Maxwell-Boltzmann system \eqref{VMB}, \eqref{Maxwell} and the solution $\left[F^{\infty}_+(t,x,v),\right.$ $\left. F^{\infty}_-(t,x,v), \phi^\infty(t,x)\right]$ of the Vlasov-Poisson-Boltzmann system \eqref{VPB-Original} are a small perturbation of the equilibrium states $[\mu_+(v),\mu_-(v), 0, \mathfrak{B}]$ and $[\mu_+(v),\mu_-(v), 0]$, respectively. Here and in the rest of this paper, $\mathfrak{B}=\left[\mathfrak{B}_1, \mathfrak{B}_2, \mathfrak{B}_3\right]$ is any given constant vector.

To this end, we focus on the Cauchy problem of the Vlasov-Maxwell-Boltzmann system \eqref{VMB}, \eqref{Maxwell} with prescribed initial data
\begin{equation}\label{VMB-in}
  F^c_\pm (0,x,v)=F^c_{0,\pm}(v,x), \quad E^c(0,x)=E^c_0(x), \quad B^c(0,x)=B^c_0(x),
\end{equation}
which satisfy the compatibility conditions
\begin{equation}\label{IC-Compatibility}
  \nabla_x\cdot E^c_0=\int_{\mathbb{R}^3}\left(F^c_{0,+}-F^c_{0,-}\right)dv, \quad \nabla_x\cdot B^c_0=0,
\end{equation}
and the Cauchy problem of the Vlasov-Poisson-Boltzmann system \eqref{VPB-Original} with initial data
\begin{equation}\label{VPB-IC-Original}
F^{\infty}_\pm(0,x,v)=F^{\infty}_{0,\pm}(x,v).
\end{equation}

Without loss of generality, we assume in the rest of this paper that all the physical constants such as $m_\pm, e_\pm, \sigma_\pm$, and the generic constant $4\pi$ except the light speed $c$ are chosen to be one. Under such an assumption, we can normalize the above Maxwellians accordingly as
$$
\mu(v)=\mu_+(v)=\mu_-(v)=(2\pi)^{-\frac 32}e^{-\frac{|v|^2}{2}}.
$$
Moreover, if we set $\epsilon=\frac1c$, then we will use $F^\epsilon_\pm(t,x,v), E^\epsilon(t,x), B^\epsilon(t,x), F^\epsilon_{0,\pm}(x,v), E_0^\epsilon(x), B_0^\epsilon(x)$ to denote $F^c_\pm(t,x,v),$ $E^c(t,x), B^c(t,x), F^c_{0,\pm}(x,v), E^c_0(x), B^c_0(x)$, respectively, in the rest of this paper.

\subsection{Notations}
To continue, we first introduce some notations used throughout the paper.
\begin{itemize}
\item $A\lesssim B$ means that there is a generic constant $C> 0$, which is independent of $\epsilon$, such that $A \leq   CB$. $A \sim B$ means $A\lesssim B$ and $B\lesssim A$;
\item The multi-indices $ \alpha= [\alpha_1,\alpha_2, \alpha_3]$ and $\beta = [\beta_1, \beta_2, \beta_3]$ will be used to record spatial and velocity derivatives, respectively. And $\partial^{\alpha}_{\beta}=\partial^{\alpha_1}_{x_1}\partial^{\alpha_2}_{x_2}\partial^{\alpha_3}_{x_3} \partial^{\beta_1}_{ v_1}\partial^{\beta_2}_{ v_2}\partial^{\beta_3}_{ v_3}$. Similarly, the notation $\partial^{\alpha}$ will be used when $\beta=0$ and likewise for $\partial_{\beta}$. The length of $\alpha$ is denoted by $|\alpha|=\alpha_1 +\alpha_2 +\alpha_3$. $\alpha'\leq  \alpha$ means that no component of $\alpha'$ is greater than the corresponding component of $\alpha$, and $\alpha'<\alpha$ means that $\alpha'\leq  \alpha$ and $|\alpha'|<|\alpha|$. And it is convenient to write $\partial_i=\partial_{x_i}$ for $i=1,2,3$ and $\left|\nabla_x^ku\right|=\sum\limits_{|\alpha|=k}\left|\partial^{\alpha}u\right|$;
\item $\langle\cdot,\cdot\rangle$ is used to denote the ${L^2_{ v}}\times L^2_{ v}$ inner product in ${\mathbb{ R}}^3_{ v}$, with the ${L^2}$ norm $|\cdot|_{L^2}$. For notational simplicity, $(\cdot, \cdot)$ denotes the ${L^2}\times L^2$ inner product either in ${\mathbb{ R}}^3_{x}\times{\mathbb{ R}}^3_{ v }$ or in ${\mathbb{ R}}^3_{x}$ with the ${L^2}\times L^2$ norm $\|\cdot\|$. Moreover $|f|_\nu\equiv\left|\sqrt{\nu}f\right|_{L^2}, \|f\|_\nu\equiv\left\|\sqrt{\nu}f\right\|$, where $\nu(v)$ is the collision frequency defined by \eqref{collision-frequency} satisfying $\nu(v)\sim \langle v\rangle^\gamma$, $\langle v\rangle=\sqrt{1+|v|^2}$;
\item For a complex-valued function $g(t,x,v)$, $\mathfrak{R}g$ and $\overline{g}$ denote the real part of $g(t,x,v)$ and the complex conjugate of $g(t,x,v)$, respectively. Moreover, $\langle [v,-v],[f,g]\rangle=\langle v, f-g\rangle,$ $( [v,-v],[f,g])=( v, f-g),$ $(f\ |\ g)=(f, \overline{g}), \langle f\ |\ g\rangle=\langle f, \overline{g}\rangle$;
\item $\|f(t,\cdot,\cdot)\|_{L^p_xL^q_v}=\left({\displaystyle\int_{\mathbb{R}^3_x}}
    \left({\displaystyle\int_{\mathbb{R}^3_v}}|f(t,x,v)|^qdv\right)^{\frac pq}dx\right)^{\frac 1p}$;
\item For $s\in\mathbb{R}$,
\[
\left(\Lambda^sg\right)(t,x,v)=\int_{\mathbb{R}^3}|\xi|^{s}\hat{g}(t,\xi,v)e^{2\pi ix\cdot\xi}d\xi
=\int_{\mathbb{R}^3}|\xi|^{s}\mathcal{F}[g](t,\xi,v)e^{2\pi ix\cdot\xi}d\xi
\]
with $\hat{g}(t,\xi,v)\equiv\mathcal{F}[g](t,\xi,v)$ being the Fourier transform of $g(t,x,v)$ with respect to $x$.  The homogeneous Sobolev space $\dot{H}^s\times \dot{H}^s$ is the Banach space consisting of all $g$ satisfying  $\|g\|_{\dot{H}^s}<+\infty$, where
\[
\|g(t)\|_{\dot{H}^s}\equiv\left\|\left(\Lambda^s g\right)(t,x,v)\right\|_{L^2_{x,v}}=\left\||\xi|^s\hat{g}(t,\xi,v)\right\|_{L^2_{\xi,v}}.
\]
\end{itemize}

\section{Main results and ideas}
In this section, we will state our main results and sketch our main ideas to prove them. To this end, we first give our main results in section 2.1.
\subsection{Main results}
To make the presentation clear, we divided this section into two parts, the first part is concentrated on the hard sphere model.

\subsubsection{Main result for the hard sphere model}

In the perturbative framework, we can further define the perturbation $f^\epsilon_\pm(t,x,v)$ by
\begin{eqnarray}\label{f-perturbation}
F^\epsilon_\pm(t,x,v)&=&\mu(v)+\mu^{1/2}(v)f^\epsilon_\pm(t,x,v)\geq 0, \ \ B^\epsilon(t,x)=\mathfrak{B}+\widetilde{B}^\epsilon(t,x)
\end{eqnarray}
with initial data
\begin{eqnarray}
F^\epsilon_{0,\pm}(x,v)=\mu(v)+ \mu^{1/2}(v)f^\epsilon_{0,\pm}(x,v),\quad\quad B^\epsilon_0(x)=\mathfrak{B}+\widetilde{B}^{\epsilon}_0(x)
\end{eqnarray}
respectively, then we can deduce from \eqref{f-perturbation} that
\begin{eqnarray} \label{f}
&&\partial_tf^\epsilon_\pm+v\cdot\nabla_xf^\epsilon_\pm\mp E^\epsilon \cdot v \mu^{1/2}\mp \frac {1} 2 E^\epsilon\cdot v f^\epsilon_\pm\pm E^\epsilon\cdot\nabla_{ v  }f^\epsilon_\pm\nonumber\\
&&\pm\epsilon \left\{v\times \left(\mathfrak{B}+\widetilde{B}^\epsilon\right)\right\}\cdot\nabla_vf^\epsilon_\pm+{ L}_\pm f^\epsilon\nonumber\\
  &=&{\Gamma}_\pm\left(f^\epsilon,f^\epsilon\right),\\
\epsilon\partial_tE^\epsilon-\nabla_x\times \widetilde{B}^\epsilon&=&-\epsilon{\displaystyle\int_{\mathbb{R}^3}} v\mu^{1/2}(v)\left(f^\epsilon_+-f^\epsilon_-\right)dv,\nonumber\\
\epsilon\partial_t\widetilde{B}^\epsilon+\nabla_x\times E^\epsilon&=&0,\nonumber\\
\nabla_x\cdot E^\epsilon&=&{\displaystyle\int_{\mathbb{R}^3}}\mu^{1/2}(v)\left(f^\epsilon_+-f^\epsilon_-\right)dv,\quad \nabla_x\cdot \widetilde{B}^\epsilon=0\nonumber
\end{eqnarray}
with prescribed initial data
\begin{equation}\label{f-initial}
f^\epsilon_\pm(0,x,v)=f^\epsilon_{0,\pm}(x,v),  \quad E^\epsilon(0,x)=E^\epsilon_0(x), \quad \widetilde{B}^\epsilon(0,x)=\widetilde{B}^\epsilon_0(x),
\end{equation}
which satisfy the compatibility conditions
\begin{equation}\label{compatibility conditions}
  \nabla_x\cdot E^\epsilon_0=\int_{\mathbb{R}^3}\mu^{1/2}(v)\left(f^\epsilon_{0,+}-f^\epsilon_{0,-}\right)dv, \quad \nabla_x\cdot \widetilde{B}^\epsilon_0=0.
\end{equation}

For later use, setting $g=\left[g_+,g_-\right]$, the first equation of $(\ref{f})$ can be also rewritten as
\begin{eqnarray}
&&\partial_tf^\epsilon+ v  \cdot\nabla_xf^\epsilon- E^\epsilon \cdot v \mu^{1/2}q_1\nonumber\\
&&+q_0 E^\epsilon\cdot\nabla_{ v  }f^\epsilon+ q_0\epsilon \left\{v\times \left(\mathfrak{B}+\widetilde{B}^\epsilon\right)\right\}\cdot\nabla_vf^\epsilon+{ L} f^\epsilon\label{f gn}\\
&=&\frac 1 2 q_0 E^\epsilon\cdot v f^\epsilon
 +{\Gamma}\left(f^\epsilon,f^\epsilon\right),\nonumber
\end{eqnarray}
where
\begin{equation}\label{def-q}
q_0={\textrm{diag}}(1,-1),\quad q_1=[1,-1],
 \end{equation}
 and the linearized collision operator $L=[L_+,L_-]$ together with the nonlinear collision operator $\Gamma=[\Gamma_+,\Gamma_-]$ are respectively defined by
\begin{equation}\label{def-L-Gamma}
Lg=[L_{+}g,L_{-}g],\quad\quad\quad\Gamma(g,h)=[\Gamma_+(g,h),\Gamma_-(g,h)]
\end{equation}
with
\begin{eqnarray*}
{L}_\pm g&=& -{\bf \mu}^{-1/2}
\left\{{Q\left(\mu,{\bf \mu}^{1/2}(g_\pm+g_\mp)\right)+ 2Q\left(\mu^{1/2}g_\pm, \mu\right)}\right\},\\
{ \Gamma}_{\pm}(g,h)&=&{\bf \mu}^{-1/2}\left\{Q\left({\bf \mu}^{1/2}g_{\pm},{\bf \mu}^{1/2}h_\pm\right)+Q\left({\bf \mu}^{1/2}g_{\pm},{\bf \mu}^{1/2}h_\mp\right)\right\}.
\end{eqnarray*}

For the linearized  collision operator $ L $, it is well known (cf.~\cite{Guo-Invent Math-2003}) that it is non-negative and the null space $\mathcal{N}$ of ${ L}$ is spanned by
\begin{equation*}
{\mathcal{ N}}={\textrm{span}}\left\{[1,0]\mu^{1/2} , [0,1]\mu^{1/2}, [v_i,v_i]{\mu}^{1/2} (1\leq i\leq3),[|v|^2,|v|^2]{\bf \mu}^{1/2}\right\}.
\end{equation*}

Let ${\bf P}$ be the orthogonal projection from $L^2({\mathbb{R}}^3_ v)\times L^2({\mathbb{R}}^3_ v)$ to $\mathcal{N}$, then for any given function $g(t, x, v )\in L^2({\mathbb{R}}^3_ v)\times L^2({\mathbb{R}}^3_ v)$, one has
\begin{equation}\label{def-pf}
  {\bf P}g ={a^{g}_+(t, x)[1,0]\mu^{1/2}+a^{g}_-(t, x)[0,1]\mu^{1/2}+\sum_{i=1}^{3}b^{g}_i(t, x) [1,1]v_i{\mu}^{1/2}+c^{g}(t, x)[1,1]\left(| v|^2-3\right)}{\bf \mu}^{1/2}
\end{equation}
with
\begin{eqnarray*}
  a^{g}_\pm(t,x)&=&\int_{{\mathbb{R}}^3}{\bf \mu}^{1/2}(v)g_\pm(t,x,v) d v,\\
  b^{g}_i(t,x)&=&\frac12\int_{{\mathbb{R}}^3} v  _i {\bf \mu}^{1/2}(v)\left(g_+(t,x,v)+g_-(t,x,v)\right)d v,\\
  c^{g}(t,x)&=&\frac{1}{12}\int_{{\mathbb{R}}^3}\left(| v|^2-3\right){\bf \mu}^{1/2}(v)\left(g_+(t,x,v)+g_-(t,x,v)\right) d v.
\end{eqnarray*}
The expression \eqref{def-pf} can be rewritten as ${\bf P}g=[{\bf P}g_+,{\bf P}g_-]$ with
\begin{equation*}\label{def-pf-1}
  {\bf P}_\pm g ={a^{g}_\pm(t, x)\mu^{1/2}+\sum_{i=1}^{3}b^{g}_i(t, x) v_i{\mu}^{1/2}+c^{g}(t, x)\left(| v|^2-3\right)}{\bf \mu}^{1/2}.
\end{equation*}
Therefore, we have the following macro-micro decomposition with respect to the given global Maxwellian $\mu(v)$, cf.  \cite{Guo-IUMJ-2004, Liu-Yang-Yu-Physica D-2004}
\begin{equation}\label{macro-micro}
 g(t,x, v)={\bf P}g(t,x, v)+\{{\bf I}-{\bf P}\}g(t, x, v),
\end{equation}
where ${\bf I}$ denotes the identity operator, and ${\bf P}g$ and $\{{\bf I}-{\bf P}\}g$ are called the macroscopic and the microscopic component of $g(t,x,v)$, respectively.
Moreover, it is easy to see that $L$ can be decomposed as
\begin{equation*}\label{decomposition-L}
{L}g=\nu g-Kg
\end{equation*}
with the collision frequency $\nu(v)$ and the nonlocal integral operator $K=[K_+,K_-]$ being defined by
\begin{equation}\label{collision-frequency}
\nu(v)=2Q_{loss}(1,\mu)=2\int_{\mathbb{R}^3\times {\mathbb{S}}^2}|v-u|^\gamma {\bf b}\left(\frac{\omega\cdot(v-u)}{|v-u|}\right)\mu(u)d\omega du\thicksim(1+|v|)^\gamma
\end{equation}
and
\begin{eqnarray}
\left(K_\pm g\right)(v)
&=&{\mu}^{-\frac 12}
\left\{2Q_{gain}\left(\mu^{\frac 12}g_\pm, \mu\right)-Q\left(\mu,{\mu}^{\frac 12}(g_\pm+g_\mp)\right)\right\}\nonumber\\
&=&\int_{\mathbb{R}^3\times \mathbb{S}^2}|u-v|^\gamma {\bf b}\left(\frac{\omega\cdot(v-u)}{|v-u|}\right)\mu^{\frac 12}(u)\label{Operator-K}\\
&&\times\left\{2\mu^{\frac 12}(u')g_\pm(v')-\mu^{\frac 12}(v') (g_\pm+g_\mp)(u')
+\mu^{\frac 12}(v)(g_\pm+g_\mp)(u)\right\}d\omega du,\nonumber
\end{eqnarray}
respectively.

Under Grad's angular cutoff assumption \eqref{cutoff-assump},
by  \cite[Lemma 1]{Guo-Invent Math-2003},  $L$ is locally coercive in the sense that
\begin{equation}\label{coercive-estimates}
-\left\langle f, Lf\right\rangle\geq \sigma_0\left|\{{\bf I}-{\bf P}\}f\right|_\nu^2\equiv \sigma_0\left\|\sqrt{\nu}\{{\bf I}-{\bf P}\}f\right\|^2_{L^2(\mathbb{R}^3_v)},\quad \nu(v)\sim(1+|v|)^{\gamma}
\end{equation}
holds for some positive constant $\sigma_0>0$.

To state our main result for hard sphere model, let
$\left[f^\epsilon(t,x,v), E^\epsilon(t,x), \widetilde{B}^\epsilon(t,x)\right]$ be the unique solution of the Cauchy problem (\ref{f}), (\ref{f-initial}), \eqref{compatibility conditions} and for some $n\in \mathbb{N},$ we need to define the following energy functional ${\mathcal{E}}_{n,f^\epsilon}(t)$ and the corresponding energy dissipation rate functional ${\mathcal{D}}_{n,f^\epsilon}(t)$ by
\begin{equation}\label{Energy-functional-VMB}
{\mathcal{E}}_{n,f^\epsilon}(t)\sim\sum_{|\alpha|+|\beta|\leq n}\left\|\partial^{\alpha}_{\beta}f^\epsilon(t)\right\|^2 +\left\|\left[E^\epsilon(t),\widetilde{B}^\epsilon(t)\right]\right\|_{H^n_x}^2
\end{equation}
and
\begin{eqnarray}\label{Energy-dissipation-rate-functional-VMB}
{\mathcal{D}}_{n,f^\epsilon}(t)&\sim&\sum_{1\leq|\alpha|\leq n}\left\|\partial^{\alpha}\left[a^{f^\epsilon}_{\pm}(t),b^{f^\epsilon}(t), c^{f^\epsilon}(t)\right]\right\|^2+\sum_{|\alpha|+|\beta|\leq n}\left\|
\partial^{\alpha}_{\beta}{\bf\{I-P\}}f^\epsilon(t)\right\|^2_{\nu}
+\left\|a^{f^\epsilon}_+(t)-a^{f^\epsilon}_-(t)\right\|^2\nonumber\\
&&+\epsilon^2\left\|\left\{E^\epsilon(t)+\epsilon b^{f^\epsilon}(t)\times \mathfrak{B}\right\}\right\|^2+\epsilon^2\left\|\left[\nabla_xE^\epsilon(t),
\nabla_x\widetilde{B}^\epsilon(t)\right]\right\|_{H^{n-2}_x}^2,
\end{eqnarray}
respectively.

With the above preparations in hand, we are now ready to state our first result for the hard sphere model.

\begin{theorem}\label{Th1.1}
For the hard sphere model, let $F^\epsilon(0,x,v)=\mu(v)+\sqrt{\mu(v)}f^\epsilon_0(x,v)\geq0$, if we assume further that there exists some positive constant $c_0$ independent of $\epsilon$ such that
\begin{equation}\label{small-condition}
Y_0\equiv\sum_{|\alpha|+|\beta|\leq 3}\left\|\partial_\beta^\alpha f^\epsilon_0\right\| +\left\|E^\epsilon_0\right\|_{H^3_x}+\left\|B^\epsilon_0-\mathfrak{B}\right\|_{H^3_x}\leq c_0,
\end{equation}
then for every $\epsilon\in (0,1]$, the Cauchy problem (\ref{f}), (\ref{f-initial}), \eqref{compatibility conditions} admits a unique global solution $\left[f^\epsilon(t,x,v),\right.$ $\left. E^\epsilon(t,x), \widetilde{B}^\epsilon(t,x)\right]$ satisfying \eqref{f-perturbation}
and the following estimates
\begin{equation}\label{th1.1-1}
\mathcal{E}_{3,f^\epsilon}(t)+\int^t_0\mathcal{D}_{3,f^\epsilon}(\tau)d\tau\lesssim
Y_0^2\lesssim c_0^2
\end{equation}
holds for all $t\in\mathbb{R}^+$.
\end{theorem}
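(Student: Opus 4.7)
The plan is the perturbative scheme: establish local-in-time existence by a standard linearization/iteration argument in the energy space determined by $\mathcal{E}_{3,f^\epsilon}$, then derive \emph{a priori} estimates of the form
\[
\mathcal{E}_{3,f^\epsilon}(t) + \int_0^t \mathcal{D}_{3,f^\epsilon}(\tau)\,d\tau \lesssim Y_0^2 + \sqrt{\sup_{0\leq\tau\leq t}\mathcal{E}_{3,f^\epsilon}(\tau)}\int_0^t \mathcal{D}_{3,f^\epsilon}(\tau)\,d\tau
\]
with constants independent of $\epsilon\in(0,1]$, and close by the standard continuity argument using the smallness of $Y_0$. For the microscopic block, for each $|\alpha|+|\beta|\leq 3$ I apply $\partial^\alpha_\beta$ to \eqref{f gn}, pair with $\partial^\alpha_\beta f^\epsilon$ in $L^2_{x,v}$, and invoke the coercivity \eqref{coercive-estimates} to extract $\|\partial^\alpha_\beta\{{\bf I}-{\bf P}\}f^\epsilon\|_\nu^2$. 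The $[\partial_\beta, v\cdot\nabla_x]$ commutator is controlled inductively by lower-order microscopic dissipation following the Guo--Strain weighted framework; the field terms $E^\epsilon\cdot v\mu^{1/2}q_1$ and $q_0E^\epsilon\cdot\nabla_v f^\epsilon$ are transferred onto $\|E^\epsilon\|_{H^3_x}\subset\mathcal{E}$; and the Lorentz piece $q_0\epsilon\{v\times(\mathfrak{B}+\widetilde B^\epsilon)\}\cdot\nabla_v f^\epsilon$ gives zero contribution at top order for the constant background (since $\nabla_v\cdot(v\times\mathfrak{B})=0$), while the $\widetilde B^\epsilon$-part carries an explicit $\epsilon$ which, combined with Sobolev embedding, is absorbed into $\sqrt{\mathcal{E}}\,\mathcal{D}$.

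\textbf{Macroscopic dissipation.} Following the macro-micro decomposition, I take fluid moments of \eqref{f gn} against $\mu^{1/2}, v_i\mu^{1/2}, (|v|^2-3)\mu^{1/2}$ together with higher-moment Kawashima-type test functions, producing a coupled hyperbolic-parabolic system for $[a_\pm^{f^\epsilon}, b^{f^\epsilon}, c^{f^\epsilon}]$ whose sources involve $\{{\bf I}-{\bf P}\}f^\epsilon$, $E^\epsilon$, and $\epsilon v\times(\mathfrak{B}+\widetilde B^\epsilon)$. A suitable interaction functional built from cross products of neighboring moment derivatives then delivers $\sum_{1\leq|\alpha|\leq n}\|\partial^\alpha[a_\pm,b,c]\|^2$ at the cost of already-controlled microscopic and field contributions. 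The relative-charge dissipation $\|a_+^{f^\epsilon}-a_-^{f^\epsilon}\|^2$ is produced from the Poisson relation $\nabla_x\cdot E^\epsilon = a_+^{f^\epsilon}-a_-^{f^\epsilon}$ combined with the difference momentum equation tested suitably against $E^\epsilon$.

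\textbf{Main obstacle: the degenerate electromagnetic dissipation.} The decisive difficulty, peculiar to the $\epsilon$-uniform setting, is capturing the field dissipations $\epsilon^2\|[\nabla_x E^\epsilon, \nabla_x\widetilde B^\epsilon]\|_{H^{n-2}_x}^2$ and $\epsilon^2\|E^\epsilon + \epsilon b^{f^\epsilon}\times \mathfrak{B}\|^2$ in \eqref{Energy-dissipation-rate-functional-VMB}. Because Maxwell's equations carry an $\epsilon$ in front of $\partial_t$, the classical Duan--Strain construction (testing a moment momentum equation against $E$) only yields an $\epsilon^2$-weighted field dissipation at the derivative level, extracted via $\nabla_x\times \widetilde B^\epsilon$-manipulation of the equation for $b^{f^\epsilon}$. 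At the zero-order level, the drift combination $E^\epsilon + \epsilon b^{f^\epsilon}\times\mathfrak{B}$ is the correct surrogate for $E^\epsilon$ itself: when one takes the $v$-moment of $\epsilon(v\times\mathfrak{B})\cdot\nabla_v f^\epsilon$ in the momentum balance, the term $\epsilon b^{f^\epsilon}\times\mathfrak{B}$ appears adjacent to $E^\epsilon$, so pairing against $E^\epsilon+\epsilon b^{f^\epsilon}\times\mathfrak{B}$ rather than against $E^\epsilon$ alone yields a nondegenerate contribution consistent with the $\epsilon\to 0$ limit. Ensuring that every nonlinear term --- $\Gamma(f^\epsilon,f^\epsilon)$, $\tfrac12 q_0 E^\epsilon\cdot v f^\epsilon$, and $\epsilon(v\times\widetilde B^\epsilon)\cdot\nabla_v f^\epsilon$ --- is absorbed into $\sqrt{\mathcal{E}}\,\mathcal{D}$ \emph{without} any inverse powers of $\epsilon$ appearing is the most delicate bookkeeping of the proof, but is achievable thanks to the smallness of $Y_0$ and the $\epsilon$-factors already built into the Lorentz and Maxwell structure.
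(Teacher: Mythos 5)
There is a genuine gap in your handling of the linear source term $E^\epsilon\cdot v\mu^{1/2}q_1$ in the mixed-derivative microscopic estimates, and this gap is precisely the crux of the $\epsilon$-uniform analysis. For $|\beta|=0$ the pairing $-\left(\partial^\alpha(E^\epsilon\cdot v\mu^{1/2}q_1),\partial^\alpha f^\epsilon\right)$ combines with the Maxwell equations to produce the exact time derivative $\frac{d}{dt}\{\|\partial^\alpha E^\epsilon\|^2+\|\partial^\alpha\widetilde B^\epsilon\|^2\}$, so there is no issue. But for $|\beta|\ge1$ one must first apply $\{{\bf I}-{\bf P}\}$ to \eqref{f gn} and then pair $\partial^\alpha_\beta$ of the resulting microscopic equation with $\partial^\alpha_\beta\{{\bf I}-{\bf P}\}f^\epsilon$. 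Since $\{{\bf I}-{\bf P}\}(E^\epsilon\cdot v\mu^{1/2}q_1)=E^\epsilon\cdot v\mu^{1/2}q_1$, the troublesome pairing $\left(\partial^\alpha_\beta(E^\epsilon\cdot v\mu^{1/2}q_1),\partial^\alpha_\beta\{{\bf I}-{\bf P}\}f^\epsilon\right)$ no longer produces a Maxwell energy identity. Your plan to ``transfer'' this onto $\|E^\epsilon\|_{H^3_x}\subset\mathcal{E}$ via Cauchy--Schwarz/Young produces $C_\eta\|\partial^\alpha E^\epsilon\|^2$ on the right-hand side, which is \emph{not} controlled by the dissipation: $\mathcal{D}_{3,f^\epsilon}$ only contains $\epsilon^2\|\nabla_x[E^\epsilon,\widetilde B^\epsilon]\|_{H^{1}_x}^2$ and $\epsilon^2\|E^\epsilon+\epsilon b^{f^\epsilon}\times\mathfrak{B}\|^2$, so absorbing $C_\eta\|\partial^\alpha E^\epsilon\|^2$ would cost an inverse power of $\epsilon$. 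Leaving it as $C_\eta\mathcal{E}$ gives a Gronwall inequality $\frac{d}{dt}\mathcal{E}\lesssim\mathcal{E}$, hence exponential growth, and the global bound \eqref{th1.1-1} does not close.

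The paper's resolution is a distinct, localized computation that you have not reconstructed. One first observes (Lemma \ref{Lemma-3.5}) that $\{{\bf I}-{\bf P}\}\{E^\epsilon\cdot v\mu^{1/2}q_1-q_0\epsilon(v\times\mathfrak{B})\cdot\nabla_v{\bf P}f^\epsilon\}=\{E^\epsilon+\epsilon b^{f^\epsilon}\times\mathfrak{B}\}\cdot v\mu^{1/2}q_1$, so the natural linear field variable seen by the microscopic block is the drift combination $E^\epsilon+\epsilon b^{f^\epsilon}\times\mathfrak{B}$, not $E^\epsilon$ itself. One then (Lemma \ref{E-estimates}) takes the $\partial_{2\beta}(v_i\mu^{1/2})$-moment of the microscopic equation for the species difference, obtaining a scalar transport equation in which $\{E^\epsilon+\epsilon b^{f^\epsilon}\times\mathfrak{B}\}_i$ appears linearly with a nonzero coefficient $2(-1)^{|\beta|}C_\beta$. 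This lets one rewrite the troublesome pairing as $\frac{1}{4C_\beta}\frac{d}{dt}\big\|\big\langle\partial_{2\beta}(v_i\mu^{1/2}),\partial^\alpha\{{\bf I_+-P_+}\}f^\epsilon-\partial^\alpha\{{\bf I_--P_-}\}f^\epsilon\big\rangle\big\|^2$ plus terms bounded by $\|\nabla^{|\alpha|}\{{\bf I-P}\}f^\epsilon\|^2+\|\nabla^{|\alpha|+1}f^\epsilon\|^2+\mathcal{E}_{3,f^\epsilon}\mathcal{D}_{3,f^\epsilon}$, all of which \emph{are} controlled by the dissipation. You correctly identify the drift variable in your discussion of the macroscopic momentum balance, but attributing its role only to the macro moment equations misses that the decisive application is to kill $\|\partial^\alpha E^\epsilon\|^2$ in the mixed $\partial^\alpha_\beta$ microscopic estimate, where no Maxwell-type energy identity is available.
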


As a direct consequence of the estimate \eqref{th1.1-1} and Sobolev's imbedding theorem, one can easily deduce that the unique global solution $\left[F^\epsilon(t,x,v), E^\epsilon(t,x), B^\epsilon(t,x)\right]$ of the Cauchy problem \eqref{VMB}, \eqref{Maxwell}, \eqref{VMB-in}, \eqref{IC-Compatibility} obtained in Theorem \ref{Th1.1} converges strongly to $\left[F^\infty(t,x,v), E^\infty(t,x), \mathcal{B}\right]$ as $\epsilon\to 0_+$ which satisfies $F^\infty_\pm(t,x,v)=\mu(v)+\mu^\frac 12(v)f^\infty_\pm(t,x,v)\geq 0$, the following estimate
\begin{equation*}
\mathcal{E}^\infty_3(t)+\int^t_0\mathcal{D}^\infty_3(\tau)d\tau\lesssim c_0^2,
\end{equation*}
and $\left[F^\infty(t,x,v), E^\infty(t,x)\right]$ solves the Cauchy problem of the Vlasov-Poisson-Boltzmann equations \eqref{VPB-Original}, \eqref{Poisson-Rewritten}, \eqref{VPB-IC-Original} with
\begin{equation*}
F^\infty_{0,\pm}(x,v)=\lim\limits_{\epsilon\to0_+}F^\epsilon_{0,\pm}(x,v).
\end{equation*}
Here
\begin{eqnarray*}
\mathcal{E}^\infty_n(t)&\sim&\sum_{|\alpha|+|\beta|\leq n}\left\|\partial_\beta^\alpha f^\infty(t)\right\|^2+\left\|E^\infty(t)\right\|^2_{H^n_x},\\
\mathcal{D}^\infty_n(t)&\sim&
\sum_{1\leq|\alpha|\leq n}\left\|\partial^{\alpha}\left[a^{f^\infty}_{\pm}(t),b^{f^\infty}(t), c^{f^\infty}(t)\right]\right\|^2+\sum_{|\alpha|+|\beta|\leq n}\left\|
\partial^{\alpha}_{\beta}{\bf\{I-P\}}f^\infty(t)\right\|^2_{\nu}
+\left\|a^{f^\infty}_+(t)-a^{f^\infty}_-(t)\right\|^2.
\end{eqnarray*}

Our second result in this section is to deduce the error estimate between $\left[F^\epsilon(t,x,v), E^\epsilon(t,x), B^\epsilon(t,x)\right]$ and $\left[F^\infty(t,x,v), E^\infty(t,x), \mathcal{B}\right]$ provided that certain conditions are imposed on the difference between their initial data. In fact, suppose that there exists some sufficiently small $\epsilon-$independent positive constant $\tilde{c}>0$ such that
\begin{equation}\label{small-condition-vpb}
Y_{P,0}\equiv\sum_{|\alpha|+|\beta|\leq 4}\left\|\partial_\beta^\alpha f^\infty_0\right\| +\left\|E^\infty_0\right\|_{H^4_x}\leq \tilde{c}_0,
\end{equation}
then we can show further that $\left[f^\infty(t,x,v), E^\infty(t,x)\right]$ obtained above satisfies
\begin{equation}\label{th1.1-1-b}
\mathcal{E}^\infty_4(t)+\int^t_0\mathcal{D}^\infty_4(\tau)d\tau\lesssim \tilde{c}^2,
\end{equation}
from which and \eqref{th1.1-1}, we can get that

\begin{corollary}\label{c-limit}
In addition to the assumptions listed in Theorem \ref{Th1.1}, we assume further that \eqref{small-condition-vpb} holds and
\begin{eqnarray}\label{th1.1-1-c}
\sum_{|\alpha|+|\beta|\leq 3}\left\|\partial_\beta^\alpha \left(f^\epsilon_0-f^\infty_0\right)\right\| +\left\|E^\epsilon_0-E^\infty_0\right\|_{H^3_x}+\left\|B^\epsilon_0-\mathfrak{B}\right\|_{H^3_x}
\lesssim \epsilon^{\eta}
\end{eqnarray}
is true for any positive constant $\eta>0$, then we can deduce for all $t\in\mathbb{R}^+$ that
\begin{eqnarray}\label{th1.1-1-d}
&&\sum_{|\alpha|+|\beta|\leq 3}\left\|\partial_\beta^\alpha (f^\epsilon(t)-f^\infty(t))\right\|^2+\left\|E^\epsilon(t)-E^\infty(t)\right\|^2_{H^3_x}+\left\|B^\epsilon(t)-\mathfrak{B}\right\|^3_{H^3_x}
\lesssim \epsilon^{\eta}.
\end{eqnarray}
\end{corollary}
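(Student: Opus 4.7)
The plan is to set up a closed system for the differences
\[
h^\epsilon := f^\epsilon - f^\infty,\qquad \mathcal{E}^\epsilon := E^\epsilon - E^\infty,\qquad \widetilde{B}^\epsilon = B^\epsilon - \mathfrak{B},
\]
and to run the same hierarchy of energy estimates that proves Theorem~\ref{Th1.1}, but on the difference level, driving everything by the $O(\epsilon^\eta)$ initial bound \eqref{th1.1-1-c}. Subtracting the Vlasov-Poisson-Boltzmann system (in the form \eqref{VPB-Original}--\eqref{Poisson-Rewritten}) from \eqref{f}, and using the identity \eqref{Poisson-Rewritten}$_1$ to replace $\partial_t E^\infty$ by a velocity moment of $f^\infty$, one obtains an equation for $h^\epsilon$ with the same linear operator $L$ plus two kinds of driving terms: \emph{nonlinear difference terms}, such as $\mathcal{E}^\epsilon\cdot\nabla_v f^\epsilon$, $E^\infty\cdot\nabla_v h^\epsilon$, $\tfrac12 q_0\mathcal{E}^\epsilon\cdot v f^\epsilon$, $\Gamma(h^\epsilon,f^\epsilon)+\Gamma(f^\infty,h^\epsilon)$, etc., which are of the same algebraic type as the nonlinearity in \eqref{f gn}; and \emph{explicit $O(\epsilon)$ source terms}, namely the magnetic force $\epsilon\{v\times(\mathfrak{B}+\widetilde{B}^\epsilon)\}\cdot\nabla_v f^\epsilon$ and, on the Maxwell side, the term $\epsilon\partial_t E^\infty = -\epsilon\int v\mu^{1/2}(f^\infty_+-f^\infty_-)\,dv$ that appears when the Ampère law in \eqref{f} is written in terms of $\mathcal{E}^\epsilon$.

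I would then define, in direct analogy with \eqref{Energy-functional-VMB}--\eqref{Energy-dissipation-rate-functional-VMB} at order $n=3$, functionals $\mathcal{E}_{3,h^\epsilon}(t)$ and $\mathcal{D}_{3,h^\epsilon}(t)$ in which $f^\epsilon$ is replaced by $h^\epsilon$, $E^\epsilon$ by $\mathcal{E}^\epsilon$, and $\widetilde{B}^\epsilon$ appears as it stands (since its VPB counterpart is $0$). Repeating the pure spatial, mixed, and macroscopic estimates on the difference system, together with the structural cancellation that produces $\epsilon^2\|\mathcal{E}^\epsilon+\epsilon\, b^{h^\epsilon}\times\mathfrak{B}\|^2$ in the dissipation, should lead to an inequality of the form
\[
\frac{d}{dt}\mathcal{E}_{3,h^\epsilon}(t)+\kappa\,\mathcal{D}_{3,h^\epsilon}(t)\;\lesssim\;\Bigl\{\sqrt{\mathcal{E}_{3,h^\epsilon}(t)}+\sqrt{\mathcal{E}_{3,f^\epsilon}(t)}+\sqrt{\mathcal{E}^\infty_4(t)}\Bigr\}\mathcal{D}_{3,h^\epsilon}(t)+\epsilon^2\,\mathcal{E}^\infty_4(t).
\]
The last term is precisely the cost of the $\epsilon$-sources identified above, and the reason the hypothesis \eqref{small-condition-vpb} is required at the level $H^4_x$ rather than $H^3_x$: one loses a derivative when using \eqref{Poisson-Rewritten}$_1$ to control $\partial_t E^\infty$ in $H^3_x$. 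Using \eqref{th1.1-1} and \eqref{th1.1-1-b} to absorb the square-root prefactors (since $c_0$ and $\tilde{c}_0$ are small), integrating in time, and combining with $\mathcal{E}_{3,h^\epsilon}(0)\lesssim\epsilon^{2\eta}$ from \eqref{th1.1-1-c}, a standard continuity argument yields $\mathcal{E}_{3,h^\epsilon}(t)\lesssim\epsilon^{2\eta}$ uniformly on $\mathbb{R}^+$, which is exactly \eqref{th1.1-1-d}.

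The main obstacle is the same one that the abstract flags for Theorem~\ref{Th1.1}: the dissipation supplied by the Maxwell block for $[\mathcal{E}^\epsilon,\widetilde{B}^\epsilon]$ carries the prefactor $\epsilon^2$ and therefore degenerates as $\epsilon\to 0$. One cannot treat $\mathcal{E}^\epsilon$ as a small perturbation in the streaming equation; instead, its control must come through the macro-micro cancellation that couples the fluid variables $[a^{h^\epsilon}_\pm,b^{h^\epsilon},c^{h^\epsilon}]$ to $\mathcal{E}^\epsilon$ and produces the combined quantity $\|\mathcal{E}^\epsilon+\epsilon\,b^{h^\epsilon}\times\mathfrak{B}\|^2$ in $\mathcal{D}_{3,h^\epsilon}(t)$. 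The technical heart of the proof is to verify that this delicate cancellation still closes when the source terms listed in step~1 are present, in particular that the $\epsilon\partial_t E^\infty$ contribution enters with the correct weight so that it remains absorbable after Cauchy-Schwarz against the degenerate Maxwell dissipation. Once this is checked, the remaining estimates are parallel to those already developed for Theorem~\ref{Th1.1} and the bootstrap closes under the smallness of $c_0+\tilde{c}_0$.
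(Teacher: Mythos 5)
Your overall strategy — subtract the two systems, define difference unknowns $[h^\epsilon,\mathcal{E}^\epsilon,\widetilde{B}^\epsilon]$, and rerun the Theorem~\ref{Th1.1} energy hierarchy on the difference level, closing via the macro-micro cancellation that produces $\epsilon^2\|\mathcal{E}^\epsilon+\epsilon b\times\mathfrak{B}\|^2$ in the dissipation — is the right route and is what the paper's brief remark after \eqref{th1.1-1-b} has in mind. However, your bookkeeping of the $O(\epsilon)$ sources has a gap that would stop the argument from closing. First, the $\epsilon\partial_t E^\infty$ term you single out is not actually a source: multiplying the VPB identity $\partial_t E^\infty=-\int v\mu^{1/2}(f^\infty_+-f^\infty_-)dv$ by $\epsilon$ and subtracting from the VMB Amp\`ere law gives exactly $\epsilon\partial_t\mathcal{E}^\epsilon-\nabla_x\times\widetilde{B}^\epsilon=-\epsilon\int v\mu^{1/2}(h^\epsilon_+-h^\epsilon_-)\,dv$, so the $\epsilon\partial_t E^\infty$ contribution cancels against the $O(\epsilon)$ piece of the VMB current, and the Maxwell block for the differences has exactly the same clean structure as the original system (the divergence and curl constraints subtract cleanly too). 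The genuine $O(\epsilon)$ source lives only in the kinetic equation, in the piece $\epsilon q_0\{v\times(\mathfrak{B}+\widetilde{B}^\epsilon)\}\cdot\nabla_v f^\infty$ that remains after splitting $f^\epsilon=h^\epsilon+f^\infty$; it is the extra $\nabla_v$ on $f^\infty$ here, not any control of $\partial_t E^\infty$, that accounts for the $H^4$-level hypothesis \eqref{small-condition-vpb}.

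The more serious issue is the right-hand side $\epsilon^2\,\mathcal{E}^\infty_4(t)$ of your proposed closing inequality. The estimate \eqref{th1.1-1-b} only gives $\mathcal{E}^\infty_4(t)\lesssim\tilde c^2$ uniformly; it is the dissipation $\mathcal{D}^\infty_4$ that is time-integrable, not the energy. Integrating your inequality therefore gives $\mathcal{E}_{3,h^\epsilon}(t)\lesssim\epsilon^{2\eta}+\epsilon^2\tilde c^2\,t$, which grows linearly in $t$ and does not yield \eqref{th1.1-1-d} uniformly on $\mathbb{R}^+$. To obtain a closable bound the Lorentz source must be estimated so that its cost is of the form $\epsilon^\theta\,\mathcal{D}^\infty_4(t)$ (time-integrable), which forces two structural observations that are absent from your sketch: one must split the velocity weight as $\langle v\rangle\sim\langle v\rangle^{1/2}\cdot\langle v\rangle^{1/2}$ so that both factors land in $\nu$-weighted norms belonging to $\mathcal{D}^\infty_4$ and $\mathcal{D}_{3,h^\epsilon}$; and the pairing of the source against $\mathbf{P}h^\epsilon$ must exploit the $q_0$-antisymmetry together with the fact that $b^{f^\infty},b^{h^\epsilon}$ are shared between species, which makes the $\mathbf{P}f^\infty$-against-$\mathbf{P}h^\epsilon$ contribution vanish identically and reduces the remaining macro-micro crossings to quantities of the type $\epsilon(\mathfrak{B}\times b^{f^\infty},G^{h^\epsilon})$ involving the microscopic current $G^{h^\epsilon}$. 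Without these ingredients the continuity argument you invoke does not deliver the claimed $\mathcal{E}_{3,h^\epsilon}(t)\lesssim\epsilon^{2\eta}$.
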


\subsubsection{Main result for soft potentials}
Now we turn to deal with the case of soft potentials.  Recall that for the hard sphere model, we can deduce that the solution $\left[f^\epsilon(t,x,v), E^\epsilon(t,x), \widetilde{B}^\epsilon(t,x)\right]$ of the Cauchy problem  (\ref{f}), (\ref{f-initial}), \eqref{compatibility conditions} satisfies the estimates \eqref{th1.1-1} for all $\epsilon\in(0,1]$, from which one can deduce the strong convergence of $\left[f^\epsilon(t,x,v), E^\epsilon(t,x), {B}^\epsilon(t,x)\right]$ to $\left[f^\infty(t,x,v), E^\infty(t,x), \mathfrak{B}\right]$ as $\epsilon\to 0_+$ provided that the initial data $\left[f_0^\epsilon(x,v), E_0^\epsilon(x),\right.$ $\left.   B^\epsilon_0-\mathfrak{B}\right]$ satisfies the assumption \eqref{small-condition}.

The above argument can not be used to treat the case of soft potentials since, unlike the hard sphere model, to guarantee the global solvability of the Cauchy problem (\ref{f}), (\ref{f-initial}), \eqref{compatibility conditions} for non-hard sphere model, a time-velocity-weighted energy method should be introduced and the temporal decay of the solution itself plays an essential role in closing the {\it a priori} estimates, cf. \cite{Duan-Lei-Yang-Zhao-CMP-2017, Duan_Liu-Yang_Zhao-KRM-2013, Fan-Lei-Liu-Zhao-SCM-2017} and the references cited therein, and for the case when $\epsilon$ is chosen sufficiently small, we had to deal with the difficulty caused by the
degeneracy of the dissipative effect of the electromagnetic field $\left[E^\epsilon(t,x), \widetilde{B}^\epsilon(t,x)\right]$.

To overcome such a difficulty, we need to assume that both the solution $\left[F^\epsilon(t,x,v), E^\epsilon(t,x), B^\epsilon(t,x)\right]$ of the Cauchy problem  (\ref{VMB}), (\ref{Maxwell}), \eqref{VMB-in} and the corresponding initial data $\left[F^\epsilon_{0,\pm}(x,v), E^\epsilon_0(x), B_0^\epsilon(x)\right]$ satisfies the following expansions
\begin{eqnarray}\label{F-S-Expansion}
F^\epsilon_\pm(t,x,v)&=&F^{P,\epsilon}_\pm(t,x,v)+\sum_{i=1}^{m-1}\epsilon^i F_{\pm}^{i,\epsilon}(t,x,v)+\epsilon^m F^{m,\epsilon}_{\pm}(t,x,v),\nonumber\\
E^\epsilon(t,x)&=&E^{P,\epsilon}(t,x)+\sum_{i=1}^{m-1}\epsilon^{i}E^{i,\epsilon}(t,x) +\epsilon^{m} E^{m,\epsilon}(t,x),\\
B^\epsilon(t,x)&=&B^{P,\epsilon}(t,x)+\sum_{i=1}^{m-1}\epsilon^iB^{i,\epsilon}(t,x) +\epsilon^m B^{m,\epsilon}(t,x),\ m\geq 1,\nonumber
\end{eqnarray}
and
\begin{eqnarray}\label{F-S-Expansion-IC}
F^\epsilon_{0,\pm}(x,v)&=&F^{P,\epsilon}_{0,\pm}(x,v)+\sum_{i=1}^{m-1}\epsilon^i F_{0,\pm}^{i,\epsilon}(x,v)+\epsilon^m F^{m,\epsilon}_{0,\pm}(x,v),\nonumber\\
E^\epsilon_0(x)&=&E^{P,\epsilon}_0(x)+\sum_{i=1}^{m-1}\epsilon^{i}E^{i,\epsilon}_{0}(x) +\epsilon^{m} E^{m,\epsilon}_{0}(x),\\
B^\epsilon_0(x)&=&B^{P,\epsilon}_0(x)+\sum_{i=1}^{m-1}\epsilon^iB^{i,\epsilon}_{0}(x) +\epsilon^m B^{m,\epsilon}_{0}(x),\ m\geq 1,\nonumber
\end{eqnarray}
then we will show that $B^{P,\epsilon}(t,x), B^{i,\epsilon}(t,x) (i=1,2,\cdots, m-1), B^{P,\epsilon}_0(x)$, and $B^{i,\epsilon}_{0}(x) (i=1,2,\cdot, m-1)$ are nothing but constant vectors (we will assume without loss of generality that all these constant vectors are independent of $\epsilon$), which will be denoted by $B^P, B^i (i=1,2,\cdots, m-1)$ in the rest of this paper.

In fact, by substituting the above ansatz into \eqref{VMB}, \eqref{Maxwell} and by comparing the order of $\epsilon $ in the resulting system, we can get from \eqref{VMB}, \eqref{Maxwell}, \eqref{VMB-in} that:
\begin{itemize}
\item [(i).]
 $\left[F^{P,\epsilon}_\pm(t,x,v), E^{P,\epsilon}(t,x)\right]$ solve the following Vlasov-Poisson-Boltzmann system with given magnetic field $B^{P,\epsilon}(t,x)+\sum_{i=1}^{m-1}\epsilon^{m-1}B^{i,\epsilon}(t,x)$
\begin{eqnarray}\label{VPB-s}
&&\partial_tF^{P,\epsilon}_\pm+ v  \cdot\nabla_xF^{P,\epsilon}_\pm\pm E^{P,\epsilon}\cdot\nabla_vF^{P,\epsilon}_\pm
\pm \epsilon \left\{v\times \left(B^{P,\epsilon}+\sum_{i=1}^{m-1}\epsilon^{m-1}B^{i,\epsilon}\right)\right\} \cdot\nabla_vF^{P,\epsilon}_\pm\nonumber\\
&=&Q\left(F^{P,\epsilon}_\pm,F^{P,\epsilon}_\pm\right) +Q\left(F^{P,\epsilon}_\pm,F^{P,\epsilon}_\mp\right),\\
\partial_tE^{P,\epsilon}&=&-\int_{\mathbb{R}^3_v}v\left(F^{P,\epsilon}_+ -F^{P,\epsilon}_-\right)dv,\nonumber\\
\partial_t B^{P,\epsilon}&=& \nabla_x\times B^{P,\epsilon}=\nabla_x\cdot B^{P,\epsilon}= \nabla_x\times E^{P,\epsilon}=0,\label{vpb-condition-s}\\
\nabla_x\cdot E^{P,\epsilon}&=&\int_{\mathbb{R}^3_v}\left(F^{P,\epsilon}_+-F^{P,\epsilon}_-\right)dv\nonumber
\end{eqnarray}
with prescribed initial condition
\begin{equation}\label{VPB-s-IC}
\left[F^{P,\epsilon}_\pm(0,x,v), E^{P,\epsilon}(0,x)\right]=\left[F^{P,\epsilon}_{0,\pm}(x,v), E^{P,\epsilon}_0(x)\right].
\end{equation}
\eqref{vpb-condition-s}$_3$ together with the Liouville theorem imply that $B^{P,\epsilon}(t,x)$ is nothing but a constant vector $B^{P,\epsilon}$, while from $\eqref{vpb-condition-s}_3$ one can conclude that there exists a potential function $\phi^{R,\epsilon}(t,x)$ for the electric field $E^{P,\epsilon}(t,x)$ such that $E^{P,\epsilon}(t,x)= \nabla_x\phi^{P,\epsilon}(t,x)$. Without loss of generality, we can assume that $\lim\limits_{|x|\to+\infty}\phi^{P,\epsilon}(t,x)=0$ and that the constant vector $B^{P,\epsilon}$ does not depend on $\varepsilon$, thus we can set $B^{P,\epsilon}(t,x)\equiv B^{P,\epsilon}_0(x)\equiv B^P$;
\item [(ii).]
For $i=1,2,\cdots, m-1$, one can deduce that $\left[F_\pm^{i,\epsilon}(t,x,v), E^{i,\epsilon}(t,x),B^{i,\epsilon}(t,x)\right]$ satisfies the following linear Vlasov-Poisson-Boltzmann equations
\begin{eqnarray}\label{F_i-s}
&&\partial_tF_\pm^{i,\epsilon}+ v\cdot\nabla_xF_\pm^{i,\epsilon} \pm E^{i,\epsilon}\cdot\nabla_vF^{P,\epsilon}_\pm\pm E^{P,\epsilon}\cdot\nabla_vF_\pm^{i,\epsilon}\nonumber\\
&&
\pm\sum_{j_1+j_2=i,\atop 0<j_1,j_2<i} E^{j_1,\epsilon}\cdot\nabla_vF_\pm^{j_2,\epsilon} \pm \epsilon \left\{v\times \left(B^{P}
+\sum_{j_1=1}^{m-1}\epsilon^{j_1}B^{j_1,\epsilon}\right)\right\}\cdot\nabla_vF_\pm^{i,\epsilon}\\
&=&Q\left(F^{P,\epsilon}_\pm,F_\pm^{i,\epsilon}+F_\mp^{i,\epsilon}\right) +Q\left(F_\pm^{i,\epsilon},F^{P,\epsilon}_\pm+F^{P,\epsilon}_{\mp}\right)
+\sum_{j_1+j_2=i,\atop 0<j_1,j_2<i} Q\left(F_\pm^{j_1,\epsilon},F_\pm^{j_2,\epsilon}+F_\mp^{j_2,\epsilon}\right),\nonumber
\end{eqnarray}
\begin{eqnarray}
 \partial_tE^{i,\epsilon}&=&-{\displaystyle\int_{\mathbb{R}^3_v}} v\left(F_+^{i,\epsilon}-F_-^{i,\epsilon}\right)dv,\nonumber\\
\nabla_x\cdot E^{i,\epsilon}&=&{\displaystyle\int_{\mathbb{R}^3_v}}\left(F_+^{i,\epsilon} -F_-^{i,\epsilon}\right)dv,\label{F_i-s-Maxwell-con}\\
 \partial_tB^{i,\epsilon}&=&\nabla_x\times B^{i,\epsilon}=\nabla_x\cdot B^{i,\epsilon}=\nabla_x\times E^{i,\epsilon}=0,\nonumber
\end{eqnarray}
and the initial data
\begin{equation}\label{F_i-s-IC}
\left[F_\pm^{i,\epsilon}(0,x,v),E^{i,\epsilon}(0,x)\right]=\left[ F_{0,\pm}^{i,\epsilon}(x,v), E_0^{i,\epsilon}(x)\right],
\end{equation}
which satisfy the compatibility conditions
\begin{equation}\label{F_i-s-IC-Compatibility}
  \nabla_x\cdot E_0^{i,\epsilon}=\int_{\mathbb{R}^3}
  \left(F_{0,+}^{i,\epsilon}-F_{0,-}^{i,\epsilon}\right)dv.
\end{equation}
Here again from \eqref{F_i-s-Maxwell-con}$_3$ together with the Liouville theorem, we can deduce for $i=1,2,\cdots, m-1$ that $B^{i,\epsilon}(t,x)$ are constant vectors $B^{i,\epsilon}$, while from $\eqref{F_i-s-Maxwell-con}_3$ one can further conclude that there exist potential functions $\phi^{i,\epsilon}(t,x)$ for the electric fields $E^{i,\epsilon}(t,x)$ such that $E^{i,\epsilon}(t,x)= \nabla_x\phi^{i,\epsilon}(t,x)$ hold. Without loss of generality, we can assume that $\lim\limits_{|x|\to+\infty}\phi^{i,\epsilon}(t,x)=0$ and that the constant vectors $B^{i,\epsilon}$ does not depend on $\varepsilon$, thus we have $B^{i,\epsilon}(t,x)\equiv B^{i,\epsilon}_0(x)\equiv B^i$ for $i=1,2,\cdots,m-1$;
\item [(iii).] The remainder $\left[F_\pm^{m,\epsilon}(t,x,v), E^{m,\epsilon}(t,x), B^{m,\epsilon}(t,x)\right]$ satisfies the following Vlasov-Maxwell-Boltzmann equations
\begin{eqnarray}\label{F-R-s-VMB-Remain-1}
&&\partial_tF_\pm^{m,\epsilon}+ v\cdot\nabla_xF_\pm^{m,\epsilon} \pm  E^{m,\epsilon}\cdot\nabla_vF^{P,\epsilon}_\pm\pm E^{P,\epsilon}\cdot\nabla_vF_\pm^{m,\epsilon}\nonumber\\
&&\pm\sum_{j_1+j_2\geq m,\atop 0<j_1,j_2<m} \epsilon^{j_1+j_2-m}E^{j_1,\epsilon}\cdot\nabla_vF_\pm^{j_2,\epsilon}\pm\sum_{0<j_1< m} \epsilon^{j_1}E^{j_1,\epsilon}\cdot\nabla_vF_\pm^{m,\epsilon}
\pm\sum_{0<j_1< m} \epsilon^{j_1}E^{m,\epsilon}\cdot\nabla_vF_\pm^{j_1,\epsilon}\nonumber\\
&&\pm\epsilon \left(v\times B^{m,\epsilon}\right)\cdot \nabla_v \left\{F^{P,\epsilon}_\pm+\sum_{i=1}^{m-1}\epsilon^i F_\pm^{i,\epsilon}\right\}\pm \epsilon \left\{v\times \left(B^{P}
+\sum_{i=1}^{m-1}\epsilon^{i}B^{i}\right)\right\}\cdot\nabla_vF_\pm^{m,\epsilon}\nonumber\\
&&\pm\epsilon^{m} E^{m,\epsilon}\cdot\nabla_vF_\pm^{m,\epsilon}
\pm\epsilon^{m+1}v\times B^{m,\epsilon}\cdot\nabla_vF_\pm^{m,\epsilon}\nonumber\\
&=&Q\left(F^{P,\epsilon}_\pm,F_\pm^{m,\epsilon}+F_\mp^{m,\epsilon}\right) +Q\left(F_\pm^{m,\epsilon},F^{P,\epsilon}_\pm+F_\mp^{P,\epsilon}\right)+\epsilon^m Q\left(F_\pm^{m,\epsilon},F_\pm^{m,\epsilon}+F_\mp^{m,\epsilon}\right)\\
&&+\sum_{j_1+j_2\geq m,\atop 0<j_1,j_2\leq m}\epsilon^{j_1+j_2-m}Q\left(F_\pm^{j_1,\epsilon},F_\pm^{j_2,\epsilon} +F_\mp^{j_2,\epsilon}\right),\nonumber
\end{eqnarray}
\begin{eqnarray}
 \epsilon\partial_tE^{m,\epsilon}-\nabla_x\times B^{m,\epsilon}&=&-\epsilon{\displaystyle\int_{\mathbb{R}^3_v}} v\left(F_+^{m,\epsilon}-F_-^{m,\epsilon}\right)dv,\nonumber\\
 \epsilon\partial_tB^{m,\epsilon}+\nabla_x\times E^{m,\epsilon}&=&0,\label{E-B-R-s-Maxwell-con}\\
\nabla_x\cdot E^{m,\epsilon}&=&{\displaystyle\int_{\mathbb{R}^3_v}} \left(F_+^{m,\epsilon}-F_-^{m,\epsilon}\right)dv,\nonumber\\
 \nabla_x\cdot B^{m,\epsilon}&=&0,\nonumber
\end{eqnarray}
and initial data
\begin{eqnarray}\label{VMB-s-IC-Error}
F_\pm^{m,\epsilon}(0,x,v)&=&F_{0,\pm}^{m,\epsilon}(x,v)=\frac 1{\epsilon^m}\left(F^\epsilon_{0,\pm}(x,v)-F^{P,\epsilon}_{0,\pm}(x,v) -\sum_{i=1}^{m-1}\epsilon^iF_{0,\pm}^{i,\epsilon}(x,v)\right),\nonumber\\
E^{m,\epsilon}(0,x)&=&E_0^{m,\epsilon}(x)=\frac 1{\epsilon^{m}}\left(E^\epsilon_0(x)-E^{P,\epsilon}_{0}(x) -\sum_{i=1}^{m-1}\epsilon^{i+1}E_0^{i,\epsilon}(x)\right),\\
B^{m,\epsilon}(0,x)&=&B_0^{m,\epsilon}(x)=\frac 1{\epsilon^m} \left(B^\epsilon_0(x)-B^{P}-\sum_{i=1}^{m-1}\epsilon^iB^{i}\right),\nonumber
\end{eqnarray}
which satisfy the compatibility conditions
\begin{equation}\label{IC-s-Compatibility}
\nabla_x\cdot E_0^{m,\epsilon}=\int_{\mathbb{R}^3}\left(F_{0,+}^{m,\epsilon} -F_{0,-}^{m,\epsilon}\right)dv, \quad \nabla_x\cdot B^{m,\epsilon}_0=0.
\end{equation}
\end{itemize}

In the perturbative framework, if we further define the perturbations $f_\pm^{P,\epsilon}(t,x, v)$, $f_\pm^{i,\epsilon}(t,x,v)$ for $1\leq i\leq m$, $f_{0,\pm}^{P,\epsilon}(x, v)$, and $f_{0,\pm}^{i,\epsilon}(x,v)$ for $1\leq i\leq m$ by
\begin{eqnarray}\label{f-s-perturbation}
F^{P,\epsilon}_\pm(t,x,v)&=&\mu(v)+ \mu^{1/2}(v)f^{P,\epsilon}_{\pm}(t,x,v),\nonumber\\
F_\pm^{i,\epsilon}(t,x,v)&=&\mu^{1/2}(v)f_\pm^{i,\epsilon}(t,x,v),\quad 1\leq i\leq m,\\
F^{P,\epsilon}_{0,\pm}(x,v)&=&\mu(v)+ \mu^{1/2}(v)f^{P,\epsilon}_{0,\pm}(x,v),\nonumber\\
F_{0,\pm}^{i,\epsilon}(x,v)&=&\mu^{1/2}(v)f_{0,\pm}^{i,\epsilon}(x,v),\quad 1\leq i\leq m,\nonumber
\end{eqnarray}
and set
\begin{eqnarray*}
f^{P,\epsilon}(t,x,v)&=&\left[f^{P,\epsilon}_+(t,x,v),f^{P,\epsilon}_-(t,x,v)\right],\\
f^{i,\epsilon}(t,x,v)&=&\left[f^{i,\epsilon}_+(t,x,v), f^{i,\epsilon}_-(t,x,v)\right],\quad 1\leq i\leq m,\\
f^{P,\epsilon}_0(x,v)&=&\left[f^{P,\epsilon}_{0,+}(x,v),f^{P,\epsilon}_{0,-}(x,v)\right],\\
f^{i,\epsilon}_0(x,v)&=&\left[f^{i,\epsilon}_{0,+}(x,v), f^{i,\epsilon}_{0,-}(x,v)\right],\quad 1\leq i\leq m,
\end{eqnarray*}
then, noticing that
$$
B^{P,\epsilon}(t,x)=B^P,\quad B^{i,\epsilon}(t,x)=B^i,\quad i=1,2,\cdots, m-1,
$$
we can get from \eqref{VPB-s}-\eqref{VPB-s-IC}, \eqref{F_i-s}-\eqref{F_i-s-IC-Compatibility}, and
 \eqref{F-R-s-VMB-Remain-1}-\eqref{IC-s-Compatibility}
 that
\begin{itemize}
\item [(i).] $\left[f^{P,\epsilon}(t,x,v), E^{P,\epsilon}(t,x)\right]$ satisfies the following Cauchy problem
\begin{eqnarray}
&&\partial_tf^{P,\epsilon}+ v  \cdot\nabla_xf^{P,\epsilon}- E^{P,\epsilon} \cdot v \mu^{1/2}q_1\nonumber\\
&&+q_0 E^{P,\epsilon}\cdot\nabla_{ v  }f^{P,\epsilon}+ q_0\epsilon \left\{v\times \left(B^P+\sum\limits_{j=1}^{m-1}\epsilon^jB^j\right)\right\}\cdot\nabla_vf^{P,\epsilon}+{ L} f^{P,\epsilon}\nonumber\\
&=&\frac 1 2 q_0 E^{P,\epsilon}\cdot v f^{P,\epsilon}
 +{\Gamma}\left(f^{P,\epsilon},f^{P,\epsilon}\right),\label{f-P-sign}\\
\partial_tE^{P,\epsilon}&=&-\int_{\mathbb{R}^3_v}v\mu^{\frac 12}(v)\left(f^{P,\epsilon}_+ -f^{P,\epsilon}_-\right)dv,\nonumber\\
\nabla_x\cdot E^{P,\epsilon}&=&\int_{\mathbb{R}^3_v}\mu^{\frac 12}(v)\left(f^{P,\epsilon}_+-f^{P,\epsilon}_-\right)dv,\quad \nabla_x\times E^{P,\epsilon} =0,\nonumber
\end{eqnarray}
\begin{equation}\label{VPB-sign-IC}
\left[f^{P,\epsilon}_\pm(0,x,v), E^{P,\epsilon}(0,x)\right]=\left[f^{P,\epsilon}_{0,\pm}(x,v), E^{P,\epsilon}_0(x)\right];
\end{equation}

\item [(ii).] For $i=1,2,\cdots, m-1$, $\left[f^{i,\epsilon}(t,x,v), E^{i,\epsilon}(t,x), B^{i,\epsilon}(t,x)\right]$ with $B^{i,\epsilon}(t,x)=B^i$ solves
\begin{eqnarray} \label{f-i-vector}
&&\partial_tf^{i,\epsilon}+ v\cdot\nabla_xf^{i,\epsilon}-E^{i,\epsilon} \cdot v \mu^{1/2}q_1-\frac12 q_0 E^{i,\epsilon}\cdot v f^{P,\epsilon}+ q_0E^{i,\epsilon}\cdot\nabla_{ v  }f^{P,\epsilon}\nonumber\\
&&- \frac12 q_0E^{P,\epsilon}\cdot v f^{i,\epsilon}+q_0 E^{P,\epsilon}\cdot\nabla_{ v  }f^{i,\epsilon}-\frac12\sum_{j_1+j_2=i,\atop 0<j_1,j_2<i}q_0E^{j_1,\epsilon}\cdot vf^{j_2,\epsilon}\\
&&+\sum_{j_1+j_2=i,\atop 0<j_1,j_2<i} q_0E^{j_1,\epsilon}\cdot\nabla_vf^{j_2,\epsilon} +\epsilon q_0\left\{v\times \left(B^{P}
+\sum_{j=1}^{m-1}\epsilon^{j}B^{j}\right)\right\}\cdot\nabla_vf^{i,\epsilon} +{ L} f^{i,\epsilon}\nonumber\\
  &=&{\Gamma}\left(f^{P,\epsilon},f^{i,\epsilon}\right)
  +{\Gamma}\left(f^{i,\epsilon},f^{P,\epsilon}\right)
  +\sum_{j_1+j_2=i,\atop 0<j_1,j_2<i} \Gamma\left(f^{j_1,\epsilon},f^{j_2,\epsilon}\right),\nonumber
\end{eqnarray}
\begin{eqnarray}
\partial_tE^{i,\epsilon}&=&-{\displaystyle\int_{\mathbb{R}^3}}v\mu^{1/2}(v) \left(f_+^{i,\epsilon}-f_-^{i,\epsilon}\right)dv,\nonumber\\
\nabla_x\times E^{i,\epsilon}&=&0,\label{f-i-sign-e-b}\\
\nabla_x\cdot E^{i,\epsilon}&=&{\displaystyle\int_{\mathbb{R}^3}}\mu^{1/2}(v) \left(f_+^{i,\epsilon}-f_-^{i,\epsilon}\right)dv,\nonumber
\end{eqnarray}
and the initial condition
\begin{equation}\label{f-i-initial}
\left[f^{i,\epsilon}(0,x,v), E^{i,\epsilon}(0,x)\right]=\left[f_0^{i,\epsilon}(x,v),  E_0^{i,\epsilon}(x)\right],
\end{equation}
which satisfy the compatibility conditions
\begin{equation}\label{f-i-e-b-compatibility conditions}
  \nabla_x\cdot E_0^{i,\epsilon}=\int_{\mathbb{R}^3}\mu^{1/2}(v)\left(f_{0,+}^{i,\epsilon}-f_{0,-}^{i,\epsilon}\right)dv;
\end{equation}

\item [(iii).]  $\left[f^{m,\epsilon}(t,x,v), E^{m,\epsilon}(t,x), B^{m,\epsilon}(t,x)\right]$ solves
\begin{eqnarray} \label{f-R-vector}
&&\partial_tf^{m,\epsilon}+ v\cdot\nabla_xf^{m,\epsilon}-
  E^{m,\epsilon} \cdot v \mu^{1/2}q_1- \frac12 q_0  E^{m,\epsilon}\cdot v f^{P,\epsilon}+ q_0E^{m,\epsilon}\cdot\nabla_{ v  }f^{P,\epsilon}\nonumber\\
&&- \frac12 q_0E^{P,\epsilon}\cdot v f^{m,\epsilon}+q_0 E^{P,\epsilon}\cdot\nabla_{ v  }f^{m,\epsilon}
-\frac12q_0\sum_{j_1+j_2\geq m,\atop 0<j_1,j_2<m} \epsilon^{j_1+j_2-m}E^{j_1,\epsilon}\cdot vf^{j_2,\epsilon}\nonumber\\
&&+q_0\sum_{j_1+j_2\geq m,\atop 0<j_1,j_2<m} \epsilon^{j_1+j_2-m}E^{j_1,\epsilon}\cdot\nabla_vf^{j_2,\epsilon} -\frac12q_0\sum_{0<j_1< m} \epsilon^{j_1}E^{j_1,\epsilon}
\cdot vf^{m,\epsilon}\nonumber\\
&&+q_0\sum_{0<j_1< m}\epsilon^{j_1}E^{j_1,\epsilon}
\cdot\nabla_vf^{m,\epsilon}-q_0\frac12\sum_{0<j_1< m} \epsilon^{j_1}E^{m,\epsilon}
\cdot vf^{j_1,\epsilon}+q_0\sum_{0<j_1< m} \epsilon^{j_1}E^{m,\epsilon}\cdot\nabla_vf^{j_1,\epsilon}\\
&&+\epsilon \left(v\times B^{m,\epsilon}\right)\cdot \nabla_v \left\{f^{P,\epsilon}+\sum_{i=1}^{m-1}\epsilon^i f^{i,\epsilon}\right\}+ q_0\epsilon \left\{v\times \left(B^{P}
+\sum_{i=1}^{m-1}\epsilon^{i}B^{i}\right)\right\}\cdot\nabla_vf^{m,\epsilon}\nonumber\\
&&- \frac {\epsilon^{m}} 2 q_0E^{m,\epsilon}\cdot v f^{m,\epsilon}+q_0 \epsilon^{m} E^{m,\epsilon}\cdot\nabla_{ v  }f^{m,\epsilon}+q_0\epsilon^{m+1} v\times B^{m,\epsilon}\cdot\nabla_vf^{m,\epsilon}
+{ L} f^{m,\epsilon} \nonumber\\
&=&{\Gamma}\left(f^{P,\epsilon},f^{m,\epsilon}\right)
  +{\Gamma}\left(f^{m,\epsilon},f^{P,\epsilon}\right)
  +\epsilon^m{\Gamma}\left(f^{m,\epsilon},f^{m,\epsilon}\right) +\sum_{j_1+j_2\geq m,\atop 0<j_1,j_2\leq m}\epsilon^{j_1+j_2-m}\Gamma\left(f^{j_1,\epsilon},f^{j_2,\epsilon}\right),\nonumber
  \end{eqnarray}

\begin{eqnarray}\label{f-r-s-e-b}
\epsilon\partial_tE^{m,\epsilon}-\nabla_x\times B^{m,\epsilon}&=&-{\displaystyle\int_{\mathbb{R}^3}}v\mu^{1/2}(v) \left(f_+^{m,\epsilon}-f_-^{m,\epsilon}\right)dv,\nonumber\\
\epsilon\partial_tB^{m,\epsilon}+\nabla_x\times E^{m,\epsilon}&=&0,\\
\nabla_x\cdot E^{m,\epsilon}&=&{\displaystyle\int_{\mathbb{R}^3}}\mu^{1/2}(v) \left(f_+^{m,\epsilon}-f_-^{m,\epsilon}\right)dv,\quad \nabla_x\cdot B^{m,\epsilon}=0\nonumber
\end{eqnarray}
with prescribed initial data
\begin{equation}\label{f-r-s-initial}
\left[f^{m,\epsilon}(0,x,v), E^{m,\epsilon}(0,x), B^{m,\epsilon}(0,x)\right]=\left[f_0^{m,\epsilon}(x,v), E_0^{m,\epsilon}(x), B_0^{m,\epsilon}(x)\right],
\end{equation}
which satisfy the compatibility conditions
\begin{equation}\label{f-r-s-compatibility conditions}
  \nabla_x\cdot E_0^{m,\epsilon}=\int_{\mathbb{R}^3}\mu^{1/2}(v)\left(f_{0,+}^{m,\epsilon}-f_{0,-}^{m,\epsilon}\right)dv, \quad \nabla_x\cdot B_0^{m,\epsilon}=0.
\end{equation}
\end{itemize}

Now let $\left[f^{m,\epsilon}(t,x,v), E^{m,\epsilon}(t,x), {B}^{m,\epsilon}(t,x)\right]$ be the unique solution of the Cauchy problem \eqref{f-R-vector}-\eqref{f-r-s-compatibility conditions} and for $k,n\in\mathbb{N}$ satisfying $k\leq n$, we can define the following energy functionals $\mathcal{E}_{f^{m,\epsilon},n}(t),$ $\mathcal{E}_{f^{m,\epsilon},n,\ell,-\gamma}(t),$
$\overline{\mathcal{E}}_{f^{m,\epsilon},n,\ell,-\gamma}(t),$ $\mathcal{E}^k_{f^{m,\epsilon},n}(t),$ $\mathcal{E}^k_{f^{m,\epsilon},n,\ell,-\gamma}(t),
$ $\mathcal{E}^{(n,j)}_{f^{m,\epsilon},n,\ell,1}(t)$, $\mathcal{E}^{(n,0)}_{f^{m,\epsilon},n,\ell,1}(t)$, $\mathcal{E}^{(0,0)}_{f^{m,\epsilon},n,\ell,1}(t)$, and the corresponding energy dissipation rate functionals $\mathcal{D}_{f^{m,\epsilon},n}(t)$, $\mathcal{D}_{f^{m,\epsilon},n,\ell,-\gamma}(t)$, $\overline{\mathcal{D}}_{f^{m,\epsilon},n,\ell,-\gamma}(t)$, $\mathcal{D}^k_{f^{m,\epsilon},n}(t)$, $\mathcal{D}^{k}_{f^{m,\epsilon},n,\ell,-\gamma}(t)$, $\mathcal{D}^{(n,j)}_{f^{m,\epsilon},n,\ell,1}(t)$, $\mathcal{D}^{(n,0)}_{f^{m,\epsilon},n,\ell,1}(t)$, $\mathcal{D}^{(0,0)}_{f^{m,\epsilon},n,\ell,1}(t)$ as follows:
\begin{eqnarray}\label{def-E-n-g}
 \mathcal{E}_{f^{m,\epsilon},n}(t)&\equiv& \sum_{|\alpha|\leq n}\left\|\partial^\alpha \left[f^{m,\epsilon}(t),E^{m,\epsilon}(t),B^{m,\epsilon}(t)\right]\right\|^2, \nonumber\\
\mathcal{E}_{f^{m,\epsilon},n,\ell,-\gamma}(t)&\equiv& \sum_{|\alpha|+|\beta|\leq n}\left\|w_{\ell-|\beta|,-\gamma}\partial^\alpha_\beta f^{m,\epsilon}(t)\right\|^2+\mathcal{E}_{f^{m,\epsilon},n}(t),\nonumber\\
\overline{\mathcal{E}}_{f^{m,\epsilon},n,\ell,-\gamma}(t)&\equiv& \mathcal{E}_{f^{m,\epsilon},n,\ell,-\gamma}(t)+\left\|\Lambda^{-\varrho}\left[f^{m,\epsilon}(t), E^{m,\epsilon}(t), B^{m,\epsilon}(t)\right]\right\|^2,\nonumber\\
\mathcal{E}^k_{f^{m,\epsilon},n}(t)&\equiv& \sum_{k\leq|\alpha|\leq n}\left\|\partial^\alpha \left[f^{m,\epsilon}(t), E^{m,\epsilon}(t),B^{m,\epsilon}(t)\right]\right\|^2,\\
\mathcal{E}^k_{f^{m,\epsilon},n,\ell,-\gamma}(t)&\equiv& \sum_{|\alpha|+|\beta|\leq n,\atop |\alpha|\geq k}\left\|w_{\ell-|\beta|,-\gamma}\partial^\alpha_\beta f^{m,\epsilon}(t)\right\|^2+\mathcal{E}^k_{f^{m,\epsilon},n}(t),\nonumber\\
\mathcal{E}^{(n,j)}_{f^{m,\epsilon},n,\ell,1}(t)&\equiv&\sum_{|\alpha|+|\beta|=n} \left\|w_{\ell-|\beta|,1}\partial^\alpha_\beta{\bf\{I-P\}}f^{m,\epsilon}(t)\right\|^2, \quad 1\leq |\beta|=j\leq n, n\geq1\nonumber,\\
\mathcal{E}^{(n,0)}_{f^{m,\epsilon},n,\ell,1}(t)&\equiv&\sum_{|\alpha|=n}\left\|w_{\ell,1} \partial^\alpha f^{m,\epsilon}(t)\right\|^2,\quad n\geq 1,\nonumber\\
\mathcal{E}^{(0,0)}_{f^{m,\epsilon},0,\ell,1}(t)&\equiv& \left\|w_{\ell,1}{\bf\{I-P\}}f^{m,\epsilon}(t)\right\|^2,\nonumber
\end{eqnarray}
and
\begin{eqnarray}\label{def-D-n-g}
\mathcal{D}_{f^{m,\epsilon},n}(t)&\equiv &\sum_{|\alpha|\leq n}\left\|\partial^\alpha\{{\bf I-P}\} f^{m,\epsilon}(t)\right\|_\nu^2 +\left\|\nabla_{x}{\bf P}f^{m,\epsilon}(t)\right\|^2_{H^{n-1}_x}+\epsilon^2\left\|\nabla_x\left[E^{m,\epsilon}(t),B^{m,\epsilon}(t)\right]\right\|_{H^{n-2}_x}^2\nonumber\\
&&+\epsilon^2\left\|E^{m,\epsilon}(t)+\epsilon b^{f^{m,\epsilon}}(t)\times \left\{B^{P}
+\sum_{j=1}^{m-1}\epsilon^{j}B^{j}\right\}\right\|^2
+\left\|a^{f^{m,\epsilon}}_+(t)-a^{f^{m,\epsilon}}_-(t)\right\|^2,\nonumber\\
\mathcal{D}_{f^{m,\epsilon},n,\ell,-\gamma}(t)&\equiv& \sum_{|\alpha|+|\beta|\leq n} \left\|w_{\ell-|\beta|,-\gamma}\partial^\alpha_\beta {\bf \{I-P\}}f^{m,\epsilon}(t)\right\|^2_\nu+\mathcal{D}_{f^{m,\epsilon},n}(t)\nonumber\\
&&+\frac1{(1+t)^{1+\vartheta}}\sum_{|\alpha|+|\beta|\leq n}
  \left\|w_{\ell-|\beta|,-\gamma}\partial^\alpha_\beta{\bf\{I-P\}}f^{m,\epsilon}(t)\langle v\rangle\right\|^2,\nonumber\\
\overline{\mathcal{D}}_{f^{m,\epsilon},n,\ell,-\gamma}(t)&\equiv& \mathcal{D}_{f^{m,\epsilon},n,\ell,-\gamma}(t) +\left\|\Lambda^{-\varrho}{\bf\{I-P\}}f^{m,\epsilon}(t)\right\|_\nu^2\nonumber\\
&&\left\|\Lambda^{-\varrho}\left[a^{f^{m,\epsilon}}_+(t) -a^{f^{m,\epsilon}}_-(t)\right]\right\|^2+\left\|\Lambda^{1-\varrho}\left[{\bf P}f^{m,\epsilon}(t),\epsilon E^{m,\epsilon}(t), \epsilon B^{m,\epsilon}(t)\right]\right\|^2,\\
\mathcal{D}^k_{f^{m,\epsilon},n}(t)&\equiv &\sum_{k\leq|\alpha|\leq n}\left\|\partial^\alpha{\{\bf I-P\} } f^{m,\epsilon}(t)\right\|_\nu^2 +\left\|\nabla^{k+1}_{x}{\bf P}f^{m,\epsilon}(t)\right\|^2_{H^{n-1-k}_x}\nonumber\\ &&+\epsilon^2\left\|\nabla^{k+1}_x \left[E^{m,\epsilon}(t),B^{m,\epsilon}(t)\right]\right\|_{H^{n-2-k}_x}^2,\nonumber\\
\mathcal{D}^{k}_{f^{m,\epsilon},n,\ell,-\gamma}(t)&\equiv& \sum_{|\alpha|+|\beta|\leq n, |\alpha|\geq k}\left\|w_{\ell-|\beta|,-\gamma}\partial^\alpha_\beta {\bf \{I-P\}}f^{m,\epsilon}(t)\right\|^2_\nu+\mathcal{D}^k_{f^{m,\epsilon},n}(t)\nonumber\\
&&+\frac1{(1+t)^{1+\vartheta}}\sum_{|\alpha|+|\beta|\leq n,|\alpha|\geq k}
  \left\|w_{\ell-|\beta|,-\gamma}\partial^\alpha_\beta{\bf\{I-P\}}f^{m,\epsilon}(t)\langle v\rangle\right\|^2,\nonumber\\
 \mathcal{D}^{(n,j)}_{f^{m,\epsilon},n,\ell,1}(t)&\equiv& \sum_{|\alpha|+|\beta|=n} \left\|w_{\ell-|\beta|,1}\partial^\alpha_\beta{\bf\{I-P\}}f^{m,\epsilon}(t)\right\|_{\nu}^2  \nonumber\\
 &&+\frac1{(1+t)^{1+\vartheta}}
  \sum_{|\alpha|+|\beta|=n} \left\|w_{\ell-|\beta|,1}\partial^\alpha_\beta{\bf\{I-P\}}f^{m,\epsilon}(t)\langle v\rangle\right\|^2, 1\leq |\beta|=j\leq n, n\geq 1,\nonumber\\
\mathcal{D}^{(n,0)}_{f^{m,\epsilon},n,\ell,1}(t)&\equiv& \sum_{|\alpha|=n} \left\|w_{\ell-|\beta|,1}\partial^\alpha f^{m,\epsilon}(t)\right\|_{\nu}^2+\frac1{(1+t)^{1+\vartheta}}\sum_{|\alpha|=n}
  \left\|w_{\ell-|\beta|,1}\partial^\alpha f^{m,\epsilon}(t)\langle v\rangle\right\|^2,\  n\geq 1,\nonumber\\
\mathcal{D}^{(0,0)}_{f^{m,\epsilon},0,\ell,1}(t)&\equiv& \left\|w_{\ell-|\beta|,1}{\bf\{I-P\}}f^{m,\epsilon}(t)\right\|_{\nu}^2 +\frac1{(1+t)^{1+\vartheta}}
  \left\|w_{\ell-|\beta|,1}{\bf\{I-P\}}f^{m,\epsilon}(t)\langle v\rangle\right\|^2.\nonumber
\end{eqnarray}

Similarly, let $\left[f^{P,\epsilon}(t,x,v), E^{P,\epsilon}(t,x)\right]$ be the unique solution of the Cauchy problems \eqref{f-P-sign}, \eqref{VPB-sign-IC} and for $k, n\in\mathbb{N}$ satisfying $k\leq n$, we can also define the following energy functionals $\mathcal{E}_{f^{P,\epsilon},n}(t), \mathcal{E}^k_{f^{P,\epsilon},n}(t), \mathcal{E}_{f^{P,\epsilon},n,\ell,-\gamma}(t),$ $ \mathcal{E}^k_{f^{P,\epsilon},n,\ell,-\gamma}(t), \overline{\mathcal{E}}_{f^{P,\epsilon},n,\ell,-\gamma}(t)$, $\mathcal{E}^{(n,j)}_{f^{P,\epsilon},n,\ell,1}(t)$ and  the corresponding energy dissipation rate functionals $\mathcal{D}_{f^{P,\epsilon},n}(t),$ $\mathcal{D}^k_{f^{P,\epsilon},n}(t),$ $\mathcal{D}_{f^{P,\epsilon},n,\ell,-\gamma}(t), \mathcal{D}^k_{f^{P,\epsilon},n,\ell,-\gamma}(t), \overline{\mathcal{D}}_{f^{P,\epsilon},n,\ell,-\gamma}(t)$, $\mathcal{D}^{(n,j)}_{f^{P,\epsilon},n,\ell,1}(t)$:
\begin{eqnarray*}
\mathcal{E}^k_{f^{P,\epsilon},n}(t)&=&\sum\limits_{k\leq|\alpha|\leq n}\left\|\partial^\alpha\left[f^{P,\epsilon}(t), E^{P,\epsilon}(t)\right]\right\|^2,\\
\mathcal{E}_{f^{P,\epsilon},n}(t)&=&\mathcal{E}^0_{f^{P,\epsilon},n}(t),\\
\mathcal{E}^k_{f^{P,\epsilon},n,\ell,-\gamma}(t)&=&\sum\limits_{|\alpha|+|\beta|\leq n,|\alpha|\geq k}\left\|w_{\ell-|\beta|,-\gamma}\partial^\alpha_\beta f^{P,\epsilon}(t)\right\|^2 +\mathcal{E}^k_{f^{P,\epsilon},n}(t),\\
\mathcal{E}_{f^{P,\epsilon},n,\ell,-\gamma}(t)&=& \mathcal{E}^0_{f^{P,\epsilon},n,\ell,-\gamma}(t),\\
\overline{\mathcal{E}}_{f^{P,\epsilon},n,\ell,-\gamma}(t)&=& \mathcal{E}_{f^{P,\epsilon},n,\ell,-\gamma}(t)
+\left\|\Lambda^{-\varrho}\left[f^{P,\epsilon}(t), E^{P,\epsilon}(t)\right]\right\|^2,\nonumber\\
\mathcal{E}^{(n,j)}_{f^{P,\epsilon},n,\ell,1}(t)&\equiv&\sum_{|\alpha|+|\beta|=n} \left\|w_{\ell-|\beta|,1}\partial^\alpha_\beta{\bf\{I-P\}}f^{P,\epsilon}(t)\right\|^2, \quad 1\leq |\beta|=j\leq n, n\geq1,\nonumber
\end{eqnarray*}
and
\begin{eqnarray*}
\mathcal{D}^k_{f^{P,\epsilon},n}(t)&=&\sum\limits_{k\leq|\alpha|\leq n}\left\|\partial^\alpha\{{\bf I}-{\bf P}\}f^{P,\epsilon}(t)\right\|^2_\nu +\left\|\nabla_x{\bf P}f^{P,\epsilon}(t)\right\|^2_{H^{n-1}_x} +\left\|a_+^{f^{P,\epsilon}}(t)-a_-^{f^{P,\epsilon}}(t)\right\|^2,\\
\mathcal{D}_{f^{P,\epsilon},n}(t)&=&\mathcal{D}^0_{f^{P,\epsilon},n}(t),\\
\mathcal{D}^k_{f^{P,\epsilon},n,\ell,-\gamma}(t)&=&\sum\limits_{|\alpha|+|\beta|\leq n,|\alpha|\geq k}\left\|w_{\ell-|\beta|,-\gamma}\partial^\alpha_\beta \{{\bf I}-{\bf P}\}f^{P,\epsilon}(t)\right\|^2_\nu +\mathcal{D}^k_{f^{P,\epsilon},n}(t)\\
&&+\frac1{(1+t)^{1+\vartheta}}\sum_{|\alpha|+|\beta|\leq n,|\alpha|\geq k}
  \left\|w_{\ell-|\beta|,-\gamma}\partial^\alpha_\beta\{{\bf I}-{\bf P} \} f^{P,\epsilon}(t)\langle v\rangle\right\|^2,\\
\mathcal{D}_{f^{P,\epsilon},n,\ell,-\gamma}(t)&=& \mathcal{D}^0_{f^{P,\epsilon},n,\ell,-\gamma}(t),\\
\overline{\mathcal{D}}_{f^{P,\epsilon},n,\ell,-\gamma}(t)&=& \mathcal{D}_{f^{P,\epsilon},n,\ell,-\gamma}(t)
+\left\|\Lambda^{-\varrho}\left[f^{P,\epsilon}(t), E^{P,\epsilon}(t)\right]\right\|^2,\nonumber\\
 \mathcal{D}^{(n,j)}_{f^{P,\epsilon},n,\ell,1}(t)&\equiv& \sum_{|\alpha|+|\beta|=n} \left\|w_{\ell-|\beta|,1}\partial^\alpha_\beta{\bf\{I-P\}}f^{P,\epsilon}(t)\right\|_{\nu}^2  \nonumber\\
 &&+\frac1{(1+t)^{1+\vartheta}}
  \sum_{|\alpha|+|\beta|=n} \left\|w_{\ell-|\beta|,1}\partial^\alpha_\beta{\bf\{I-P\}}f^{P,\epsilon}(t)\langle v\rangle\right\|^2, 1\leq |\beta|=j\leq n, n\geq 1.\nonumber
\end{eqnarray*}
Moreover, for the solution $\left[f^{i,\epsilon}(t,x,v),\right.$ $\left. E^{i,\epsilon}(t,x)\right]$ $(i=1,2,\cdots,m-1)$ of the Cauchy problem
\eqref{f-i-vector}-\eqref{f-i-e-b-compatibility conditions}, the energy functionals $\mathcal{E}_{f^{i,\epsilon},n}(t), \mathcal{E}^k_{f^{i,\epsilon},n}(t), \mathcal{E}_{f^{i,\epsilon},n,\ell,-\gamma}(t),$ $ \mathcal{E}^k_{f^{i,\epsilon},n,\ell,-\gamma}(t), \overline{\mathcal{E}}_{f^{P,\epsilon},n,\ell,-\gamma}(t)$, $\mathcal{E}^{(n,j)}_{f^{i,\epsilon},n,\ell,1}(t)$ and the corresponding energy dissipation rate functionals $\mathcal{D}_{f^{i,\epsilon},n}(t), \mathcal{D}^k_{f^{i,\epsilon},n}(t),$ $\mathcal{D}_{f^{i,\epsilon},n,\ell,-\gamma}(t), \mathcal{D}^k_{f^{i,\epsilon},n,\ell,-\gamma}(t), \overline{\mathcal{D}}_{f^{i,\epsilon},n,\ell,-\gamma}(t)$,  $\mathcal{D}^{(n,j)}_{f^{i,\epsilon},n,\ell,1}(t)$ can be defined in the same way.

With the above preparations in hand, we are now ready to state our main result for cutoff soft potentials.
\begin{theorem}\label{Th1.3}
Assume that
\begin{itemize}
\item $\gamma\in(-3,-1)$, $\varrho\in\left[\frac12,\frac32\right)$ and
 $m>\frac52+\varrho$;
\item  For any given constants vectors $B^P, B^i (i=1,2,\cdots,m-1)$ independent of $\epsilon$, it holds that
\begin{eqnarray}\label{F-Soft-initial-Expansion}
F^\epsilon_0(x,v)&=&\mu+\sqrt{\mu}\left(f^{P,\epsilon}_0(x,v) +\sum_{i=1}^{m-1}\epsilon^{i}f_0^{i,\epsilon}(x,v)
+\epsilon^mf_0^{m,\epsilon}(x,v)\right)\geq 0,\nonumber\\
E^\epsilon_0(x)&=&E^{P,\epsilon}_0(x)+\sum_{i=1}^{m-1}\epsilon^{i}E_0^{i,\epsilon}(x) +\epsilon^{m} E_0^{m,\epsilon}(x),\\
B^\epsilon_0(x)&=&B^{P}+\sum_{i=1}^{m-1}\epsilon^iB^{i}+\epsilon^m B_0^{m,\epsilon}(x);\nonumber
\end{eqnarray}
\item The parameters $l_m^\sharp, l_m^*, l_i^* (i=1,2,\cdots, m-1), l_P^*$ are suitably chosen sufficiently large positive constants whose precise ranges will be specified in the proof of Theorem \ref{Th1.3}.
\end{itemize}
Then if we assume further that there exists a sufficiently small positive constant $c_1$, which is independent of $\epsilon$, such that
\begin{eqnarray}\label{inital-condtions-Total}
\mathbb{Y}_{\textrm{Total},0}&=&\sum_{|\alpha|+|\beta|\leq N^0_m}\left\|\langle v\rangle^{l_m^\sharp-|\beta|}{e^{q\langle v\rangle^2}}\partial^\alpha_\beta f_0^{m,\epsilon}\right\| +\sum_{N_0+1\leq|\alpha|+|\beta|\leq N_m+1}\left\|\langle v\rangle^{l_m^*-|\beta|}{e^{q\langle v\rangle^2}}\partial^\alpha_\beta f_0^{m,\epsilon}\right\|
\nonumber\\
&&+\sum_{|\alpha|+|\beta|\leq N_{i}+1,\atop 1\leq i\leq m-1}\left\|\langle v\rangle^{l_i^*-|\beta|}{e^{q\langle v\rangle^2}}\partial^\alpha_\beta f_0^{i,\epsilon}\right\|
+\sum_{|\alpha|+|\beta|\leq N_P}\left\|\langle v\rangle^{l^*_P-|\beta|}e^{q\langle v\rangle^2}\partial^\alpha_\beta f^{P,\epsilon}_0\right\|\\
&&+\sum_{1\leq i\leq m-1}\left\|\left[f_0^{m,\epsilon}, f_0^{i,\epsilon}, f^{P,\epsilon}_0, E_0^{m,\epsilon}, E_0^{i,\epsilon}, E^{P,\epsilon}_0,B_0^{m,\epsilon}\right]\right\|_{\dot{H}^{-\varrho}}\leq c_1,\nonumber
\end{eqnarray}
we can deduce that
\begin{itemize}
\item For $i=1,2,\cdots, m-1$, the Cauchy problems \eqref{f-P-sign}-\eqref{VPB-sign-IC}, \eqref{f-i-vector}-\eqref{f-i-e-b-compatibility conditions}, and \eqref{f-R-vector}-\eqref{f-r-s-compatibility conditions} admit unique global solutions $\left[f^{P,\epsilon}(t,x,v), E^{P,\epsilon}(t,x)\right]$, $\left[f^{i,\epsilon}(t,x,v), E^{i,\epsilon}(t,x)\right]$ and $\left[f^{m,\epsilon}(t,x,v), E^{m,\epsilon}(t,x), \right.$ $\left.B^{m,\epsilon}(t,x)\right]$, respectively;
\item The Cauchy problem \eqref{VMB}, \eqref{Maxwell}, \eqref{VMB-in}, \eqref{IC-Compatibility} admits a unique global solution $\left[F^\epsilon(t,x,v), E^\epsilon(t,x), B^\epsilon(t,x)\right]$ which satisfies
\begin{eqnarray*}
F^\epsilon_\pm(t,x,v)&=&\mu(v)+\sqrt{\mu(v)}\left(f^{P,\epsilon}_\pm(t,x,v) +\sum_{i=1}^{m-1}\epsilon^i f_{\pm}^{i,\epsilon}(t,x,v)+\epsilon^m f^{m,\epsilon}_{\pm}(t,x,v)\right)\geq 0,\nonumber\\
E^\epsilon(t,x)&=&E^{P,\epsilon}(t,x)+\sum_{i=1}^{m-1}\epsilon^{i}E^{i,\epsilon}(t,x) +\epsilon^{m} E^{m,\epsilon}(t,x),\\
B^\epsilon(t,x)&=&B^P+\sum_{i=1}^{m-1}\epsilon^iB^i+\epsilon^m B^{m,\epsilon}(t,x). \nonumber
\end{eqnarray*}
\end{itemize}
Furthermore, we have
\begin{equation}\label{Th2-2}
\sum_{|\alpha|+|\beta|\leq N_m}\left\|\partial^\alpha_\beta \left\{\frac{F^\epsilon-F^{P,\epsilon}}{\mu^{1/2}}\right\}\right\|^2 +\left\|E^\epsilon-E^{P,\epsilon}\right\|_{H^{N_m+1}_x}^2
+\left\|B^\epsilon -B^P\right\|^2_{H^{N_m+1}_x}\lesssim\epsilon^{2}\mathbb{Y}_{\textrm{Total},0}^2.
\end{equation}
\end{theorem}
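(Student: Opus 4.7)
The plan is to establish the three existence claims by a tiered bootstrap argument based on the ansatz \eqref{F-S-Expansion}, and then read off \eqref{Th2-2} directly from the expansion. The work splits into three tiers, treated in this order so that each is driven by source terms already controlled at previous tiers: (i) the leading Vlasov-Poisson-Boltzmann profile $[f^{P,\epsilon}, E^{P,\epsilon}]$ governed by \eqref{f-P-sign}-\eqref{VPB-sign-IC}; (ii) the correctors $[f^{i,\epsilon}, E^{i,\epsilon}]$, $1\le i\le m-1$, satisfying the linear system \eqref{f-i-vector}-\eqref{f-i-e-b-compatibility conditions}; and (iii) the remainder $[f^{m,\epsilon}, E^{m,\epsilon}, B^{m,\epsilon}]$ governed by the full perturbed VMB system \eqref{f-R-vector}-\eqref{f-r-s-compatibility conditions}.

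For tier (i), I would run a time-velocity weighted energy estimate for \eqref{f-P-sign} in the weights $w_{\ell-|\beta|,-\gamma}$ supplemented by the exponential factor $e^{q\langle v\rangle^2}$ carried over from the initial datum in \eqref{inital-condtions-Total}, which is needed to compensate for the soft-potential loss $\nu(v)\sim\langle v\rangle^{\gamma}$ with $\gamma<0$. The constant-magnetic-field contribution $\epsilon\{v\times(B^P+\sum\epsilon^jB^j)\}\cdot\nabla_v f^{P,\epsilon}$ is harmless because $v\times\mathrm{const}$ is $v$-divergence-free, so after integration by parts in $v$ only commutators against velocity derivatives of the weight survive, and they come with an extra $\epsilon$ that is absorbed by smallness. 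Combining the local coercivity \eqref{coercive-estimates}, the macroscopic dissipation obtained from the local balance laws as in the standard VPB framework, and the negative Sobolev norm $\|\Lambda^{-\varrho}\,\cdot\,\|$ to generate the algebraic time decay that beats the $(1+t)^{-(1+\vartheta)}$ factor in $\overline{\mathcal{D}}_{f^{P,\epsilon},N_P,l_P^*,-\gamma}$, one obtains a uniform-in-$\epsilon$ bound $\overline{\mathcal{E}}_{f^{P,\epsilon},N_P,l_P^*,-\gamma}(t)+\int_0^t\overline{\mathcal{D}}_{f^{P,\epsilon},N_P,l_P^*,-\gamma}(\tau)\,d\tau\lesssim c_1^2$. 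Tier (ii) is an induction on $i$: equation \eqref{f-i-vector} is linear in $f^{i,\epsilon}$ with sources built from $f^{P,\epsilon}$ and $f^{j,\epsilon}$ for $j<i$. The weight hierarchy $l_P^*>l_1^*>\cdots>l_{m-1}^*>l_m^*$ is chosen precisely so that each step of the induction absorbs the weight loss generated by $\Gamma(f^{j_1,\epsilon},f^{j_2,\epsilon})$ with $j_1+j_2=i$ and by the cross terms $E^{j_1,\epsilon}\cdot\nabla_v f^{j_2,\epsilon}$; the same weighted scheme then closes and propagates the bound at level $i$.

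The main difficulty is tier (iii): closing an energy estimate for $f^{m,\epsilon}$ uniformly in $\epsilon$ despite the degeneracy of the electromagnetic dissipation. In $\mathcal{D}_{f^{m,\epsilon},n}$ the gradients of $E^{m,\epsilon}$ and $B^{m,\epsilon}$ appear only with a prefactor $\epsilon^2$, so they degenerate in the limit; compensation must come from the three structural gains visible in the very definition of $\mathcal{D}_{f^{m,\epsilon},n}$. First, the Poisson-type constraint $\nabla_x\cdot E^{m,\epsilon}=\int\mu^{1/2}(f_+^{m,\epsilon}-f_-^{m,\epsilon})dv$ yields dissipation for $a_+^{f^{m,\epsilon}}-a_-^{f^{m,\epsilon}}$ with no $\epsilon$ loss. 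Second, the Maxwell equation \eqref{f-r-s-e-b}$_1$ combined with the local momentum balance extracts the term $\epsilon^2\|E^{m,\epsilon}+\epsilon b^{f^{m,\epsilon}}\times(B^P+\sum\epsilon^{j}B^{j})\|^2$, in the spirit of \cite{Guo-Invent Math-2003}. Third, the negative homogeneous Sobolev norm $\|\Lambda^{-\varrho}[f^{m,\epsilon},E^{m,\epsilon},B^{m,\epsilon}]\|$, with $\varrho\in[1/2,3/2)$, provides the algebraic decay needed to beat the $(1+t)^{-(1+\vartheta)}$ coefficient in the weighted dissipation. The source terms of the form $E^{m,\epsilon}\cdot\nabla_v f^{P,\epsilon}$ and $\epsilon(v\times B^{m,\epsilon})\cdot\nabla_v f^{i,\epsilon}$ are absorbed using the decay already proved for $f^{P,\epsilon}$ and $f^{i,\epsilon}$ in tiers (i)-(ii), while the genuinely nonlinear contributions $\epsilon^m\Gamma(f^{m,\epsilon},f^{m,\epsilon})$ and $\epsilon^m E^{m,\epsilon}\cdot\nabla_v f^{m,\epsilon}$ are handled through their $\epsilon^m$ smallness together with the a priori bound. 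A standard continuity argument then closes the uniform-in-$\epsilon$ estimate on $\overline{\mathcal{E}}_{f^{m,\epsilon},N_m,l_m^*,-\gamma}(t)$ and its variants with the higher weight indices $l_m^\sharp$ and the various $(n,j)$ superscripts.

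Once the three tiers are in place with bounds uniform in $\epsilon$, the error estimate \eqref{Th2-2} follows immediately from \eqref{F-S-Expansion}: writing
$F^\epsilon_\pm-F^{P,\epsilon}_\pm=\mu^{1/2}\bigl(\sum_{i=1}^{m-1}\epsilon^i f^{i,\epsilon}_\pm+\epsilon^m f^{m,\epsilon}_\pm\bigr)$, the lowest power of $\epsilon$ on the right is $\epsilon^1$ from $f^{1,\epsilon}$, so the triangle inequality combined with the uniform bounds on each $\|f^{i,\epsilon}\|$ and $\|f^{m,\epsilon}\|$ yields the $\epsilon^2\mathbb{Y}_{\mathrm{Total},0}^2$ upper bound after squaring, and the analogous expansions for $E^\epsilon-E^{P,\epsilon}$ and $B^\epsilon-B^P$ (whose lowest powers are likewise $\epsilon^1$) produce the remaining Sobolev-norm bounds in \eqref{Th2-2}.
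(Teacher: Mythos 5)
Your tiered bootstrap matches the paper's overall architecture (Proposition \ref{Th1.4} for the $P$-tier, Proposition \ref{lemma-fi-end} for the correctors, and Lemmas \ref{lemma-f-m}--\ref{lemma9} for the remainder), and the reading of \eqref{Th2-2} from the expansion \eqref{F-S-Expansion} is essentially the final step in the paper. However, the proposal has three gaps that would prevent tier (iii) from actually closing, and they are the whole point of the proof.

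First, a single family of weights $w_{\ell-|\beta|,\kappa}(t,v)$ does not close the remainder estimate for the stated range $-3<\gamma<-1$. The paper makes this explicit: to absorb the transport contribution \eqref{Transport-term} via the coercive dissipation \eqref{dissipative-L} one must take $\kappa=-\gamma$, but controlling the magnetic-field nonlinearity \eqref{Nonlinear-term-magnetic-field} through the extra dissipation \eqref{dissipative-weight} with that same $\kappa$ requires $\gamma\geq -1$. The paper therefore runs two parallel weighted energy schemes (with $\kappa=-\gamma$ and $\kappa=1$), linked through the time factors $(1+t)^{-\sigma_{n,k}}$ whose increments $\sigma_{n,k}-\sigma_{n,k-1}=\frac{2(1+\gamma)}{\gamma-2}(1+\vartheta)$ encode the interpolation between the two families; cf.\ Lemmas \ref{lemma-high-E-i}, \ref{lemma-high-E-R}, \ref{lemma-f-i-high}. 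Your proposal never introduces the second scheme, so the soft-potential loss cannot be recovered for $\gamma<-1$.

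Second, the temporal decay of the remainder electromagnetic field is itself $\epsilon$-degenerate: the interpolation against $\Lambda^{-\varrho}$ only gives $\|\nabla^k E^{m,\epsilon}(t)\|^2\lesssim \epsilon^{-2(k+\varrho)}(1+t)^{-(k+\varrho)}$ as in \eqref{intro-decay-1}. Your argument treats the decay rate as if it were $\epsilon$-independent and then invokes ``$\epsilon^m$ smallness'' of $\epsilon^m\Gamma(f^{m,\epsilon},f^{m,\epsilon})$ and $\epsilon^m E^{m,\epsilon}\cdot\nabla_v f^{m,\epsilon}$. In fact the smallness is in competition with a \emph{negative} power of $\epsilon$ coming from the $L^\infty_x$ decay of $E^{m,\epsilon}$, and the balance is precisely the computation \eqref{intro-4}: one needs $\epsilon^{m-\varrho-\frac32}\lesssim 1$, which is exactly what fixes the hypothesis $m>\frac52+\varrho$. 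Without this bookkeeping, the constraint on $m$ in the theorem statement never appears, which is a sign that the argument is not actually closing.

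Third, you list the term $\epsilon^2\|E^{m,\epsilon}+\epsilon b^{f^{m,\epsilon}}\times(B^P+\cdots)\|^2$ as one of the ``structural gains'' of $\mathcal{D}_{f^{m,\epsilon},n}$, but the $\epsilon^2$ prefactor is the problem, not a gain: when estimating the weighted mixed-derivative products $(\partial^\alpha_\beta\{E^{m,\epsilon}\cdot v\mu^{1/2}q_1\}, w^2_{\ell-|\beta|,\kappa}\partial^\alpha_\beta\{{\bf I-P}\}f^{m,\epsilon})$, one cannot afford to pay the $\epsilon^{-2}$ needed to use that dissipative term. The paper's workaround is the identity \eqref{intro-6}--\eqref{intro-7}, projected as in Lemma \ref{lemma-nonhard-1}, which rewrites this inner product as a perfect time derivative plus $\epsilon$-uniformly controlled remainders. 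This is the central technical device of the whole paper (the soft-potential analogue of Lemma \ref{E-estimates} for hard spheres) and is entirely absent from your outline. Without it, one is stuck estimating $\|\partial^\alpha E^{m,\epsilon}\|$ with no dissipative control available at leading order in $\epsilon$.
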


\begin{remark} Some remarks concerning Theorem \ref{Th1.3} are listed below:
\begin{itemize}
\item [(i).]Although only the case of $-3<\gamma<-1$ is studied in Theorem \ref{Th1.3}, the case of $-1\leq\gamma<1$ is much more simpler and similar result holds also. In fact in such a case, one can close the desired energy type estimates by employing only one set of weighted energy method with the weight $w_{\ell-|\beta|,|\gamma|}(t,v)=\langle v\rangle ^{|\gamma|(\ell-|\beta|)} e^{\frac{q\langle v\rangle^2}{(1+t)^\vartheta}}$;
\item [(ii).] Under the assumptions imposed in Theorem \ref{Th1.3}, since we do not know whether $F^{P,\epsilon}_0(x,v)=\mu(v)+\sqrt{\mu(v)}f_0^{P,\epsilon}(x,v)$ is nonnegative or not, thus we can not deduce that $F^{P,\epsilon}(t,x,v)=\mu(v)+\sqrt{\mu(v)}f^{P,\epsilon}(t,x,v)$ is nonnegative, but we can really prove that, as the parameter $\epsilon\to 0_+$, $\left[F^{P,\epsilon}(t,x,v), E^{P,\epsilon}(t,x)\right]$ will converge strongly to the unique global-in-time solution $\left[F^\infty(t,x,v), E^\infty(t,x)\right]$ of the Cauchy problem of the Vlasov-Poisson-Boltzmann system \eqref{VPB-Original}-\eqref{Poisson-Rewritten} with prescribed initial data \eqref{VPB-IC-Original}. Here $F^\infty_0(x,v)=\lim\limits_{\epsilon\to 0_+}F^\epsilon_0(x,v)$. Since $F^\epsilon_0(x,v)\geq 0$, we can thus get that $F^\infty_0(x,v)\geq 0$ and from which one can further prove that $F^\infty(t,x,v)\geq 0$;
\item [(iii).] The arguments used in this paper can also be used to study the global-in-time Vlasov-Poisson-Boltzmann limit of the Vlasov-Maxwell-Boltzmann system \eqref{VMB}-\eqref{Maxwell} for the whole range of non-cutoff intermolecular interactions;
\item[(iv).] The parameters $l_m^\sharp, l_m^*, l_i^* (i=1,2,\cdots,m-1), l_P^*$ in Theorem \ref{Th1.3} satisfy the following relations:
\begin{itemize}
\item  $\vartheta$ satisfies \[
    0<\vartheta\leq \min\left\{\frac{\gamma-2}{4\gamma-2}\left(\frac52+\varrho\right)-1,\frac\varrho2-\frac14\right\};
    \]
\item Let $\epsilon_0>0$ be a sufficiently small fixed constant and the constants $N_P, N_i, N_m$, and $\sigma_{n,k}$ are chosen to satisfy
\begin{itemize}
\item $N_m^0\geq 5$ and $N_m+1=2N_m^0-1+[\varrho]$;
\item  $N_{i}\geq N_{i+1}+2$ for $1\leq i\leq m-1$, $N_P\geq N_1+2$;
 \item $\sigma_{n,0}=0$ for $n\leq N_m$ and $\sigma_{N_m+1,0}=\frac{1+\epsilon_0}2$, and
$
\sigma_{n,k}-\sigma_{n,k-1}=\frac{2(1+\gamma)}{\gamma-2}(1+\vartheta),  1\leq k\leq n,1\leq n\leq N_m+1;
$
\end{itemize}
\item There exist properly large constants $\widehat{l}_i, \widehat{l}_m$ and the parameters $l_P,l_i,l_m$ satisifies
\begin{itemize}
\item $\widehat{l}_m>\frac{N_P+\varrho}2$,
  $\widehat{l}_{m-1}\geq \widehat{l}_m, \widehat{l}_{j-1}\geq \widehat{l}_j$ for $2\leq j\leq m-1$, $\widehat{l}^{P}\geq \widehat{l}_1$;
    \item $\widetilde{\ell}_m\geq\frac\gamma2+\frac{(2-4\gamma)\sigma_{N_m,N_m}}{1+\varrho}$, $l_m\geq N_m+\frac12-\frac1\gamma$;
  \item $l_m^*\geq \widetilde{\ell}_m-\gamma l_m+1+\frac\gamma2$,
    $\ell_m\geq -\frac{l^*_m+1}\gamma+\frac12$, $l_m^0\geq \ell_m+\widehat{l}_m$, $l_m^\sharp\geq \widetilde{\ell}_m-\gamma l^0_m+1+\frac\gamma2$;
    \item $\widetilde{\ell}_{m-1}\geq\frac\gamma2-\frac{2\gamma\sigma_{N_{m-1},N_{m-1}}}{1+\varrho}$, $\ell_{m-1}\geq N_m^0+3-\frac{l_m^\sharp+1-N_m^0}\gamma$, $l_{m-1}\geq\ell_{m-1}+\widehat{\ell}_{m-1}$ and $l_{m-1}^*\geq \widetilde{\ell}_{m-1}-\gamma l_{m-1}$;
  \item $\widetilde{\ell}_{j}\geq\frac\gamma2-\frac{2\gamma\sigma_{N_{j},N_{j}}}{1+\varrho}$, $\ell_{j}\geq-\frac{l_{j+1}^*}\gamma+\frac12-\frac 2{\gamma}$, $l_{j}\geq \ell_j+\widehat{l}_j$  and $l_j^*\geq \widetilde{\ell}_{j}-\gamma l_{j}$ for $1\leq j\leq m-2$;
       \item $\widetilde{\ell}^{P}\geq\frac\gamma2-\frac{2\gamma\sigma_{N^{P},N^{P}}}{1+\varrho}$, $\ell_{P}\geq-\frac{l_{1}^*}\gamma+\frac12-\frac 2{\gamma}$, $l_P\geq \ell_P+\widehat{l}_P$  and $l_P^*\geq \widetilde{\ell}_{P}-\gamma l_{P}$.
        \end{itemize}
\end{itemize}
\end{itemize}
\end{remark}

\subsection{Main ideas}
Now we outline our main ideas to deduce our main results, Theorem \ref{Th1.1} and Theorem \ref{Th1.3}. Our method is based on the nonlinear energy method developed recently in \cite{Guo-IUMJ-2004, Liu-Yang-Yu-Physica D-2004, Liu-Yu-CMP-2004} to deal with the global solvability of the Boltzmann equation in the perturbative framework. Such a method is based on the so-called macro-microscopic decompositions of the solution of the Boltzmann equation and the equation itself and it has been proved to be effective to be used to yield the global solvability of some complex kinetic equations, cf. \cite{Duan-Lei-Yang-Zhao-CMP-2017, Duan_Liu-Yang_Zhao-KRM-2013, Duan-Stain-CPAM-2011, Fan-Lei-Liu-Zhao-SCM-2017, Guo-Invent Math-2003, Jang-ARMA-2009, Lei-Zhao-JDE-2016, Li-Yang-Zhong-SIMA-2016, Strain-CMP-2006, Strain-Guo-ARMA-2008} for the  Vlasov-Maxwell-Boltzmann system \eqref{VMB}, \eqref{Maxwell} and
 \cite{Duan-Liu-CMP-2013, Duan-Strain-ARMA-2011, Duan_Yang_Zhao-JDE-2012, Duan_Yang_Zhao-M3AS-2013,  Guo-CPAM-2002, Li-Yang-Zhong-IUMJ-2016, Wang-Xiao-Xiong-Zhao-AMS-2016, Wang-JDE-2013,  Xiao-Xiong-Zhao-SCM-2014, Xiao-Xiong-Zhao-JDE-2013, Xiao-Xiong-Zhao-JFA-2016, Yang-Yu-CMP-2011, Yang-Yu-Zhao-ARMA-2006, Yang-Zhao-CMP-2006} for the Vlasov-Poisson-Boltzmann system \eqref{VPB-Original}, \eqref{Poisson-Rewritten}, etc.

Since the main purpose of this paper is to give a rigorous mathematical justification of the global-in-time limit from the Vlasov-Maxwell-Boltzmann system \eqref{VMB}, \eqref{Maxwell} to the Vlasov-Poisson-Boltzmann system \eqref{VPB-Original},
we need to deduce certain {\it a priori} estimates on the solution $\left[f^\epsilon_+(t,x,v), f^\epsilon_-(t,x,v), E^\epsilon(t,x), \widetilde{B}^\epsilon(t,x)\right]$ of the Cauchy problem of \eqref{f}, \eqref{f-initial}, \eqref{compatibility conditions} which are independent of the parameter $\epsilon=\frac 1c$.

For Theorem \ref{Th1.1}, the main difficulty is due to degeneracy of the dissipative effect of the electromagnetic field $\left[E^\epsilon(t,x),\widetilde{B}^\epsilon(t,x)\right]$ for small $\epsilon=\frac 1c$ and such a fact is reflected in the expression of the energy dissipation rate functional ${\mathcal{D}}_{n,f^\epsilon}(t)$ defined by \eqref{Energy-dissipation-rate-functional-VMB}, i.e. the term
\[
\epsilon^2\left\|\left\{E^\epsilon(t)+\epsilon b^{f^\epsilon}(t)\times \mathfrak{B}\right\}\right\|^2+\epsilon^2\left\|\left[\nabla_xE^\epsilon(t),
\nabla_x\widetilde{B}^\epsilon(t)\right]\right\|_{H^{m-2}_x}^2
\]
in the right hand side of \eqref{Energy-dissipation-rate-functional-VMB}.

Such a degeneracy effect does lead to some difficulties on the energy types estimate related to the term $E^\epsilon\cdot v\mu^{\frac12} q_1$ in \eqref{f gn} induced by $E^\epsilon\cdot\nabla_v F^\epsilon$. In fact, if one performs the energy estimates directly to \eqref{f gn}, such a term does not cause any inconvenience since
$$
-\left(\partial^\alpha\left(E^\epsilon\cdot v\mu^{\frac12}q_1\right), \partial^\alpha f^\epsilon\right)=\frac{d}{dt}\left\{\left\|\partial^\alpha E^\epsilon\right\|^2+\left\|\partial^\alpha \widetilde{B}^\epsilon\right\|^2\right\},$$
while for the corresponding estimates on the terms in which mix-derivative $\partial^\alpha_\beta$ on $\{{\bf I-P}\}f^\epsilon$ is involved, one has to perform the microscopic projection $\{{\bf I-P}\}$ to \eqref{f gn} and noticing that $E^\epsilon\cdot v\mu^{\frac12} q_1$ remains unchanged under such a microscopic projection, thus one has to treat the linear term
\[
\left(\partial^\alpha_\beta\left(E^\epsilon\cdot v\mu^{\frac12}q_1\right), \partial^\alpha _{\beta} \{{\bf I-P}\}f^\epsilon\right)
\]
the argument used in \cite{Duan-Lei-Yang-Zhao-CMP-2017, Duan_Liu-Yang_Zhao-KRM-2013, Duan-Stain-CPAM-2011, Guo-Invent Math-2003, Strain-CMP-2006} can not be applied any more to yield the desired $\epsilon-$independent estimates. To overcome such a difficulty, we note that
\begin{equation*}
\{{\bf I-P }\} \left\{E^\epsilon \cdot v \mu^{1/2}q_1-q_0\epsilon(v\times\mathfrak{B})\cdot\nabla_v {\bf P}f^\epsilon\right\}=\left\{E^\epsilon+\epsilon b^{f^\epsilon}\times \mathfrak{B}\right\} \cdot v \mu^{1/2}q_1,
\end{equation*}
one has from Lemma \ref{E-estimates} that
\begin{eqnarray}\label{intro-1}
  &&\left(\partial_{\beta}^{\alpha}\left\{q_1\left\{E^\epsilon+\epsilon b^{f^\epsilon}\times \mathfrak{B}\right\}\cdot v \mu^{1/2}\right\}, \partial^\alpha_\beta{\{\bf I-P\}}f^\epsilon\right)\nonumber\\
  &=&(-1)^{|\beta|}\sum\limits_{i=1}^3\left(\partial^{\alpha}\left\{E^\epsilon+\epsilon b^{f^\epsilon}\times \mathfrak{B}\right\}_i,\left\langle \partial_{2\beta}\left\{v_i \mu^{1/2}\right\}, \partial^\alpha{\{\bf I_+-P_+\}}f^\epsilon-\partial^\alpha{\{\bf I_--P_-\}}f^\epsilon\right\rangle\right)\\
   &\lesssim&\frac1{4C_\beta}\sum_{i=1}^3\frac{d}{dt}\left\|\left\langle \partial_{2\beta}\left\{v_i \mu^{1/2}\right\}, \partial^\alpha{\{\bf I_+-P_+\}}f^\epsilon(t)-\partial^\alpha{\{\bf I_--P_-\}}f^\epsilon(t)\right\rangle\right\|^2\nonumber\\
  &&+\left\|\nabla^{|\alpha|}\{{\bf I-P}\}f^\epsilon(t)\right\|^2+\left\|\nabla^{|\alpha|+1}f^\epsilon(t)\right\|^2
  +\mathcal{E}_{3, f^\epsilon}(t)\mathcal{D}_{3,f^\epsilon}(t).\nonumber
\end{eqnarray}
and then such a term can be controlled suitably. Here $C_\beta$ are some positive constants and $\mathcal{E}_{3, f^\epsilon}(t), \mathcal{D}_{3,f^\epsilon}(t)$ are defined by \eqref{Energy-functional-VMB} and \eqref{Energy-dissipation-rate-functional-VMB}, respectively.

As for the proof of Theorem \ref{Th1.3}, since our main purpose is to cover the whole range of intermolecular interactions, the dissipative effect induced by the coercivity estimate \eqref{coercive-estimates} of the linearized Boltzmann collision operator $L$ for non-hard sphere model is weaker than the hard sphere model and such a dissipative effect is not sufficient to control the corresponding nonlinear terms induced by the Lorentz forces, which can lead to the velocity growth at the rate of $|v|$. To overcome such a difficulty, a general strategy developed in \cite{Duan-Lei-Yang-Zhao-CMP-2017, Duan_Liu-Yang_Zhao-KRM-2013} is to introduce some weights of the form $w_{\ell-|\beta|,\kappa}(t,v)=\langle v\rangle ^{\kappa(\ell-|\beta|)} e^{\frac{q\langle v\rangle^2}{(1+t)^\vartheta}}$ to compensate the degeneracy coercivity property of the linearized Boltzmann collision operator $Lf$.

As a consequence of the introduction of the weight $w_{\ell-|\beta|,\kappa}(t,v)$, we can now deduce the following two types of dissipative mechanisms for the non-fluid component $\{{\bf I}-{\bf P}\}f^{m,\epsilon}$:
\begin{itemize}
\item The first one is the dissipative term
\begin{equation}\label{dissipative-L}
D^L_{\ell-|\beta|,\kappa}(f^{m,\epsilon})\equiv \left\|w_{\ell-|\beta|,\kappa}\partial^\alpha_\beta\{{\bf I}-{\bf P}\}f^{m,\epsilon}\right\|^2_\nu
\end{equation}
which is due to the coercive estimate of $L$;

\item The second type is the extra dissipative term
\begin{equation}\label{dissipative-weight}
D^W_{\ell-|\beta|,\kappa}(f^{m,\epsilon})\equiv (1+t)^{-1-\vartheta}\left\|\langle v\rangle w_{\ell-|\beta|,\kappa}\partial^\alpha_\beta\{{\bf I}-{\bf P}\}f^{m,\epsilon}\right\|^2
\end{equation}
induced by the exponent factor of the weight $w_{\ell-|\beta|,\kappa}(t,v)$.
\end{itemize}

With the above two types of dissipative effects on the non-fluid component $\{{\bf I}-{\bf P}\}f^{m,\epsilon}$, the nonlinear terms related to the Lorentz force can be estimates suitably as in \cite{Duan-Lei-Yang-Zhao-CMP-2017, Duan_Liu-Yang_Zhao-KRM-2013}
if one can show that the electromagnetic fields $\left[E^{m,\epsilon}(t,x), B^{m,\epsilon}(t,x)\right]$ enjoy certain temporal decay estimates. We note, however, that, since the influence of the light speed, if we employ the interpolation technique introduced in \cite{Guo-CPDE-2012} to yield the desired temporal decay estimates, we can only deduce that the temporal decay estimates of the electromagnetic fields $\left[E^{m,\epsilon}(t,x), B^{m,\epsilon}(t,x)\right]$ do depend on light speed. In fact one can only show that the electric field $E^{m,\epsilon}(t,x)$ enjoy the following temporal decay rates
\begin{equation}\label{intro-decay-1}
  \left\|\nabla^k E^{m,\epsilon}(t)\right\|^2\lesssim \epsilon ^{-2(k+\varrho)}(1+t)^{-(k+\varrho)}
\end{equation}
and such a decay estimate can not guarantee that the corresponding term related to  $E^{m,\epsilon}\cdot v f^{m,\epsilon}$ can be bounded suitably which is due to the appearance of the factor $\epsilon ^{-2(k+\varrho)}$ in \eqref{intro-decay-1}.

Our idea to go on is to use the ansatz \eqref{F-S-Expansion} to expand $\left[F^{\epsilon}(t,x,v), E^{\epsilon}(t,x), B^{\epsilon}(t,x)\right]$ to certain order. Our observation is that if $m$, the order of expansion, is taken suitably large, such an  expansion \eqref{F-S-Expansion} will leads to the appearance of the factor $\epsilon^m$ in front of $E^{m,\epsilon}\cdot v f^{m,\epsilon}$ and such a term now can be estimated as follows:
\begin{eqnarray}\label{intro-4}
&&\left(\epsilon^{m}E^{m,\epsilon}\cdot v\{{\bf I-P}\}f^{m,\epsilon}, w^2_{\ell,\kappa}\{{\bf I-P}\}f^{m,\epsilon}\right)\nonumber\\
&\lesssim&\epsilon^{m}\left\|E^{m,\epsilon}(t)\right\|_{L^\infty_x} \left\|w_{\ell,\kappa}\{{\bf I-P}\}f^{m,\epsilon}(t)\langle v\rangle\right\|^2\nonumber\\
&\lesssim&\epsilon^{m}\epsilon^{-\varrho-\frac32}(1+t)^{-\frac34-\frac\varrho2} \left\|w_{\ell,1}\{{\bf I-P}\}f^{m,\epsilon}(t)\langle v\rangle\right\|^2\\
&\lesssim&\epsilon^{m-\varrho-\frac32}(1+t)^{-1-\vartheta} \left\|w_{\ell,\kappa}\{{\bf I-P}\}f^{m,\epsilon}(t)\langle v\rangle\right\|^2\nonumber\\
&\lesssim&(1+t)^{-1-\vartheta} \left\|w_{\ell,\kappa}\{{\bf I-P}\}f^{m,\epsilon}(t)\langle v\rangle\right\|^2\nonumber
\end{eqnarray}
if $m\geq \rho+\frac 32$. Consequently, the above term can be absorbed by the extra dissipative term $D^W_{\ell-|\beta|,\kappa}(f^{m,\epsilon})$ defined by \eqref{dissipative-weight}, which is induced by the exponential part of the s $w_{\ell-|\beta|,\kappa}(t,v)$.

Although the above analysis has shown the power of the above-mentioned weighted energy method based on the weight functions $w_{\ell-|\beta|,\kappa}(t,v)$ in studying the global solvability of the Cauchy problem
\eqref{f-R-vector}-\eqref{f-r-s-compatibility conditions}, we note, however, that, as pointed out in \cite{Duan-Lei-Yang-Zhao-CMP-2017, Duan_Liu-Yang_Zhao-KRM-2013}, we can not hope to close the desired energy estimates by using a single weight $w_{\ell-|\beta|,\kappa}(t,v)$ with some specifically chosen constant $\kappa$. More precisely, we can not hope to control the terms related both to the nonlinear term induced by the magnetic field $B^{m,\epsilon}(t,x)$
\begin{equation}\label{Nonlinear-term-magnetic-field}
\left(\left(v\times \partial^{\alpha-e_j} B^{m,\epsilon}\right)\cdot\nabla_v\partial^{\alpha+e_j}\{{\bf I-P}\}f^{m,\epsilon}, w_{\ell-|\beta|,\kappa}\partial^\alpha_\beta\{{\bf I-P}\}f^{m,\epsilon}\right)
\end{equation}
and to the linear transport term $v\cdot\nabla_xf^{m,\epsilon}(t,x,v)$
\begin{equation}\label{Transport-term}
\left(\partial^{\alpha}_\beta \left\{v\cdot\nabla_x\{{\bf I-P}\}f^{m,\epsilon}\right\},w_{\ell-|\beta|,\kappa}\partial^\alpha_\beta\{{\bf I-P}\}f^{m,\epsilon}\right)
\end{equation}
simultaneously by using the dissipative effects \eqref{dissipative-L} and \eqref{dissipative-weight} for all $-3<\gamma\leq 1$ and for just one suitably chosen
constant $\kappa$.

In fact, for $-3<\gamma<0$, if we want to control \eqref{Transport-term} suitably first by \eqref{dissipative-L}, one has to choose $\kappa=-\gamma$, but if we try to use \eqref{dissipative-weight} to bound \eqref{Nonlinear-term-magnetic-field} with $\kappa=-\gamma$, we had to assume that $\gamma\geq -1$.

Our main idea, as in \cite{Duan-Lei-Yang-Zhao-CMP-2017, Duan_Liu-Yang_Zhao-KRM-2013}, is to introduce the following two sets of weighted energy estimates:
\begin{itemize}
\item [(i).] The weighted energy method with respect to the weight $w_{\ell-|\beta|,-\gamma}(t,v)$ is used to deduce the necessary temporal decay estimates on $\left[f^{m,\epsilon}(t,x,v), E^{m,\epsilon}(t,x), B^{m,\epsilon}(t,x)\right]$ together with the uniform-in-time weighted energy type estimates;
\item[(ii).] The weighted energy method with respect to the weight $w_{\ell-|\beta|,1}(t,v)$ is used to yield the time increase rates on the weighted energy type estimates on $\left[f^{m,\epsilon}(t,x,v), E^{m,\epsilon}(t,x), B^{m,\epsilon}(t,x)\right]$.
\end{itemize}
By employing these two sets of weighted energy methods and by suitably interpolations between these two sets of weighted norms, we can indeed obtain the desired energy type estimates and show that the solution $\left[f^{m,\epsilon}(t,x,v), E^{m,\epsilon}(t,x), B^{m,\epsilon}(t,x)\right]$ satisfies the desired temporal decay estimates like \eqref{intro-decay-1}.

At last, similar to that of hard sphere model, we had to deal with the estimates related to the term $E^{m,\epsilon}\cdot v\mu^{\frac12} q_1$, which is due to the degeneracy of the dissipative effect of the electromagnetic field $\left[E^{m,\epsilon}(t,x),B^{m,\epsilon}(t,x)\right]$ for small $\epsilon=\frac 1c$.
Compared with the hard sphere model, the difference now is that we had to treat the following weighted estimate:
$$
\left(\partial_{\beta}^{\alpha}\left\{q_1E^{m,\epsilon}\cdot v \mu^{1/2}\right\}, w^2_{\ell-|\beta|,\kappa}\partial^\alpha_\beta{\{\bf I-P\}}f^{m,\epsilon}\right).
$$

To control such a term, we note that
 \begin{equation}\label{intro-6}
\{{\bf I-P }\} \left\{E^{m,\epsilon} \cdot v \mu^{1/2}q_1-q_0\epsilon(v\times\mathfrak{B})\cdot\nabla_v {\bf P}f^{m,\epsilon}\right\}=\left\{E^{m,\epsilon}+\epsilon b^{f^{m,\epsilon}}\times \mathfrak{B}\right\} \cdot v \mu^{1/2}q_1
\end{equation}
and
\begin{equation}\label{intro-7}
E^{m,\epsilon} \cdot v \mu^{1/2}q_1-q_0\epsilon(v\times\mathfrak{B})\cdot\nabla_v {\bf P}f^{m,\epsilon}=\left\{E^{m,\epsilon}+\epsilon b^{f^{m,\epsilon}}\times \mathfrak{B}\right\} \cdot v \mu^{1/2}q_1,
\end{equation}
we can therefore get from Lemma \ref{lemma-nonhard-1} that
\begin{eqnarray}\label{intro-8}
&&\left(\partial_{\beta}^\alpha\left\{q_1\left\{E^{m,\epsilon}+\epsilon b^{f^{m,\epsilon}}\times \left\{B^P
+\sum_{k=1}^{m-1}\epsilon^kB^k\right\}\right\}\cdot v \mu^{1/2}\right\}, w^2_{\ell-|\beta|,\kappa}\partial^\alpha_\beta{\{\bf I-P\}}f^{m,\epsilon}\right)\nonumber\\
&\lesssim&\frac1{4C_\beta}\sum_{i=1}^3\frac{d}{dt}\left\|\left\langle \partial_{\beta}\left\{w^2_{\ell-|\beta|,\kappa}\partial_\beta\left[v_i \mu^{1/2}\right]\right\}, \partial^\alpha{\{\bf I_+-P_+\}}f^{m,\epsilon}(t)- \partial^\alpha{\{\bf I_--P_-\}}f^{m,\epsilon}(t)\right\rangle\right\|^2\\
&&+\left\|\nabla^{|\alpha|}\{{\bf I-P}\}f^{m,\epsilon}(t)\right\|_\nu^2 +\left\|\nabla^{|\alpha|+1}f^{m,\epsilon}(t)\right\|^2_\nu
  +\mathcal{E}_{f^{m,\epsilon},N}(t)\mathcal{D}_{f^{P,\epsilon},N}(t)\nonumber\\
&&+\sum_{j_1+j_2\geq m,\atop 0<j_1,j_2\leq m}\mathcal{E}_{f^{j_1,\epsilon},N}(t) \mathcal{D}_{f^{j_2,\epsilon},N}(t)\nonumber
\end{eqnarray}
and
\begin{eqnarray}\label{intro-9}
&&\left(\partial^{\alpha}\left\{q_1\left\{E^{m,\epsilon}+\epsilon b^{f^{m,\epsilon}}\times \left\{B^P
+\sum_{k=1}^{m-1}\epsilon^kB^k\right\}\right\}\cdot v \mu^{1/2}\right\}, w^2_{\ell-|\beta|,\kappa}\partial^\alpha f^{m,\epsilon}\right)\nonumber\\
   &\lesssim&\frac1{4{C_0}}\frac{d}{dt}\sum_{i=1}^3\left\|\left\langle \left\{w^2_{\ell-|\beta|,\kappa}\left[v_i \mu^{1/2}\right]\right\}, \partial^\alpha  f_+^{m,\epsilon}(t)-\partial^\alpha f_-^{m,\epsilon}(t)\right\rangle\right\|^2\\
   &&+\left\|\nabla^{|\alpha|}f^{m,\epsilon}(t)\right\|_\nu^2 +\left\|\nabla^{|\alpha|+1}f^{m,\epsilon}(t)\right\|^2_\nu
  +\mathcal{E}_{f^{m,\epsilon},N}(t)\mathcal{D}_{f^{P,\epsilon},N}(t)\nonumber\\
&&+\sum_{j_1+j_2\geq m,\atop 0<j_1,j_2\leq m}\mathcal{E}_{f^{j_1,\epsilon},N}(t)\mathcal{D}_{f^{j_2,\epsilon},N}(t),\nonumber
  \end{eqnarray}
which do not include $\left\|\partial^\alpha E^{m,\epsilon}(t)\right\|$ any more,
such that the terms in the right hand side of the above inequalities can be absorbed by the corresponding dissipation functionals. Here $\mathcal{E}_{f^{j,\epsilon},N}(t) (j=1,2,\cdots,m), \mathcal{E}_{f^{P,\epsilon},N}(t)$, and $\mathcal{D}_{f^{j,\epsilon},N}(t) (j=1,2,\cdots, m), \mathcal{D}_{f^{P,\epsilon},N}(t)$ are defined by \eqref{def-E-n-g} and \eqref{def-D-n-g}, respectively.

The rest of this paper is organized as follows.  Section 2 and Section 3 are devoted to the proofs of Theorem \ref{Th1.1} and Theorem \ref{Th1.3} respectively. Some preliminaries and some tedious but standard calculations will be given in Appendix A and Appendix B for brevity.



\section{The proof of Theorem \ref{Th1.1}}
In this section, we prove Theorem \ref{Th1.1}. For this purpose, suppose that $\left[f^\epsilon(t,x,v), E^\epsilon(t,x), \widetilde{B}^\epsilon(t,x)\right]$ is a unique solution of the Cauchy problem (\ref{f}), (\ref{f-initial}), \eqref{compatibility conditions} defined on the strip $\Pi_T=[0,T]\times \mathbb{R}^3\times\mathbb{R}^3$ for some given positive constant $T>0$, we now turn to deduce some energy type {\it a priori} estimates on $\left[f^\epsilon(t,x,v), E^\epsilon(t,x), \widetilde{B}^\epsilon(t,x)\right]$ in terms of the corresponding initial data $\left[f^\epsilon_0(x,v), E^\epsilon_0(x), \widetilde{B}^\epsilon_0(x)\right]$.

To make the presentation easy to follow, we divide this section into several subsections and the first one is on the macro-structure for $\left[f^\epsilon(t,x,v), E^\epsilon(t,x), \widetilde{B}^\epsilon(t,x)\right]$.

\subsection{Macro-structure for $\left[f^\epsilon(t,x,v), E^\epsilon(t,x), \widetilde{B}^\epsilon(t,x)\right]$}

We first list below some identities concerning the macroscopic part of $f^\epsilon(t,x,v)$, which can be obtained by recalling the macro-micro decomposition $(\ref{macro-micro})$ introduced in \cite{Guo-IUMJ-2004}, multiplying (\ref{f}) by some $\mu^{\frac12}-$type terms and by integrating the resulting identities with respect to $v$ over $\mathbb{R}^3$.
\begin{lemma}\label{lemma-macro-1}
We can get that
\begin{eqnarray}\label{Macro-equation}
&&\partial_t\left(\frac{a^{f^\epsilon}_++a^{f^\epsilon}_-}2\right)+\nabla_x\cdot b^{f^\epsilon}=0,\nonumber\\
&&\partial_tb^{f^\epsilon}_i+\partial_i\left(\frac{a^{f^\epsilon}_+ +a^{f^\epsilon}_-}2+2c^{f^\epsilon}\right)
+\frac12\sum\limits_{j=1}^3\partial_j\mathbb{A}_{ij}(\{{\bf I-P}\}{f^\epsilon}\cdot [1,1])=\frac{a^{f^\epsilon}_+-a^{f^\epsilon}_-}{2}E^{\epsilon}_i
+\epsilon\left[G^\epsilon\times \mathfrak{B}\right]_i+\epsilon\left[G^\epsilon\times \widetilde{B}^\epsilon\right]_i,\nonumber\\
&&\partial_tc^{f^\epsilon}+\frac13\nabla_x\cdot b^{f^\epsilon}+\frac56\sum\limits_{i=1}^3\partial_i \mathbb{B}_i\left(\{{\bf I-P}\}{f^\epsilon}\cdot [1,1]\right)=\frac16 G^\epsilon\cdot E^{\epsilon}\\
&&\partial_t\left(a^{f^\epsilon}_+-a^{f^\epsilon}_-\right)+\nabla_x\cdot G^\epsilon=0,\nonumber\\
&&\partial_tG^\epsilon+\nabla_x\left(a^{f^\epsilon}_+-a^{f^\epsilon}_-\right) -2E^{\epsilon}+\nabla_x\cdot \mathbb{A}\left(\{{\bf I-P}\}{f^\epsilon}\cdot q_1\right)\nonumber\\
&&\qquad= E^{\epsilon}\left(a^{f^\epsilon}_++a^{f^\epsilon}_-\right)+2\epsilon b^{f^\epsilon}\times \mathfrak{B}+2\epsilon b^{f^\epsilon}\times \widetilde{B}^\epsilon
+\left\langle [v,-v]\mu^{1/2},L{f^\epsilon}
+{\Gamma}_\pm\left(f^\epsilon,f^\epsilon\right)\right\rangle\nonumber
\end{eqnarray}
and
\begin{eqnarray}\label{Micro-equation1}
&&\frac12\partial_t\mathbb{A}_{ij}\left(\{{\bf I-P}\}{f^\epsilon}\cdot [1,1]\right)+\partial_jb^{f^\epsilon}_i+\partial_ib^{f^\epsilon}_j
-\frac23\delta_{ij}\nabla_x\cdot b^{f^\epsilon}
-\frac53\delta_{ij}\nabla_x\cdot \mathbb{B}\left(\{{\bf I-P}\}{f^\epsilon}\cdot [1,1]\right)\nonumber\\
&=&\frac12\mathbb{A}_{ij}\left(r^\epsilon_++r^\epsilon_-+g^\epsilon_+ +g^\epsilon_-\right)-\frac13\delta_{ij}G^\epsilon\cdot E^{\epsilon},\\
&&\frac12\partial_t \mathbb{B}_{j}\left(\{{\bf I-P}\}{f^\epsilon}\cdot [1,1]\right)+\partial_jc^{f^\epsilon}=\frac12\mathbb{B}_{j}\left(r^\epsilon_++r^\epsilon_- +g^\epsilon_++g^\epsilon_-\right),\nonumber
\end{eqnarray}
\end{lemma}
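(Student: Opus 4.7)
The plan is to derive the two blocks of macroscopic identities by taking $L^2_v$ inner products of the perturbation equation \eqref{f} against a suitably chosen basis of $\mathcal{N}$ and of the orthogonal complement inside the first few Burnett modes, and then forming $\pm$ sums and differences. Concretely, for the first block I would test \eqref{f} (written componentwise for $f_+^\epsilon$ and $f_-^\epsilon$) against $\mu^{1/2}$, $v_i\mu^{1/2}$ and $(|v|^2-3)\mu^{1/2}$, integrate in $v$, and exploit the orthogonality $\langle\mu^{1/2},L_\pm f^\epsilon\rangle=0$, $\langle v_i\mu^{1/2},L_+f^\epsilon+L_-f^\epsilon\rangle=0$, $\langle(|v|^2-3)\mu^{1/2},L_+f^\epsilon+L_-f^\epsilon\rangle=0$ together with the corresponding vanishing identities for $\Gamma_\pm$. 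The macroscopic coefficients emerge directly from \eqref{def-pf}; in particular the current-type moment $G^\epsilon=\langle v\mu^{1/2},f_+^\epsilon-f_-^\epsilon\rangle$ appears automatically from testing the $\pm$ difference with $v\mu^{1/2}$.

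For the two equations involving the moments $a^{f^\epsilon}_++a^{f^\epsilon}_-$, $b^{f^\epsilon}$, $c^{f^\epsilon}$ (which are shared by $f_+^\epsilon$ and $f_-^\epsilon$ by definition of $\mathbf{P}$), I would sum the $\pm$ equations so that the electric-force contribution $\mp E^\epsilon\cdot v\mu^{1/2}$ collapses against the difference moment $a^{f^\epsilon}_+-a^{f^\epsilon}_-$ on the right-hand side, while the magnetic Lorentz terms $\pm\epsilon(v\times(\mathfrak{B}+\widetilde B^\epsilon))\cdot\nabla_v f_\pm^\epsilon$ are handled by integration by parts in $v$: the $v$-derivative falls on the test function, producing terms like $\epsilon(G^\epsilon\times\mathfrak{B})_i$ and $\epsilon(G^\epsilon\times\widetilde B^\epsilon)_i$ in the $b_i^{f^\epsilon}$ equation and a term $\tfrac16 G^\epsilon\cdot E^\epsilon$ in the $c^{f^\epsilon}$ equation (coming from the half-electric-force nonlinear correction $\tfrac12 q_0 E^\epsilon\cdot v f^\epsilon$ in \eqref{f gn}). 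The higher-order moments of the stress tensor produced by testing $v_iv_j$ and $v_j|v|^2$ type functions against the transport term $v\cdot\nabla_x f^\epsilon$ are exactly $\nabla_x\cdot\mathbb{A}(\{\mathbf{I}-\mathbf{P}\}f^\epsilon\cdot[1,1])$ and $\nabla_x\cdot\mathbb{B}$, using that the projection $\mathbf{P}f^\epsilon$ integrated against these odd/even combinations contracts exactly to the lower-order fluid moments.

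For the $\pm$ difference block, subtracting the two species equations gives the continuity equation for $a_+^{f^\epsilon}-a_-^{f^\epsilon}$ once the difference is tested against $\mu^{1/2}$; testing against $v\mu^{1/2}$ yields the equation for $G^\epsilon$, where the linear term $-2E^\epsilon$ arises from $\langle v\mu^{1/2},2 E^\epsilon\cdot v\mu^{1/2}\rangle=2E^\epsilon$, the magnetic term contributes $2\epsilon b^{f^\epsilon}\times(\mathfrak{B}+\widetilde B^\epsilon)$ (this time $L_\pm$ and $\Gamma_\pm$ do not drop out on the difference and therefore survive as the moment $\langle[v,-v]\mu^{1/2},Lf^\epsilon+\Gamma(f^\epsilon,f^\epsilon)\rangle$), and the quadratic nonlinearity $\tfrac12 q_0 E^\epsilon\cdot v f^\epsilon$ gives the $E^\epsilon(a_+^{f^\epsilon}+a_-^{f^\epsilon})$ term. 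The identities \eqref{Micro-equation1} for the evolution of the stress and heat-flux projections are obtained analogously by testing against $(v_iv_j-\tfrac13\delta_{ij}|v|^2)\mu^{1/2}$ and $\tfrac12(|v|^2-5)v_j\mu^{1/2}$, after which the transport term produces the derivatives of $b^{f^\epsilon}$ and $c^{f^\epsilon}$ explicitly, and all remaining terms from $Lf^\epsilon$, $\Gamma(f^\epsilon,f^\epsilon)$, and the forcing terms get collected into the abbreviations $r^\epsilon_\pm$ and $g^\epsilon_\pm$.

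The main obstacle is bookkeeping rather than any deep estimate: one must carefully track which linear Lorentz and collisional contributions survive on the sum versus the difference, and ensure the integrations by parts in $v$ used to move $\nabla_v$ off the Lorentz terms produce exactly the cross products $G^\epsilon\times(\mathfrak{B}+\widetilde B^\epsilon)$ and $b^{f^\epsilon}\times(\mathfrak{B}+\widetilde B^\epsilon)$ with the correct coefficients. Once these cancellations are set up correctly, the system \eqref{Macro-equation}–\eqref{Micro-equation1} follows by direct computation, and no further tool beyond \eqref{def-pf}, the orthogonality of $L$ and $\Gamma$ against $\mathcal{N}$, and elementary vector identities is required.
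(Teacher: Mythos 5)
Your proposal matches the paper's approach exactly: the paper gives no detailed argument, saying only that the identities follow by taking $v$-moments of \eqref{f} against $\mu^{1/2}$-type test functions and invoking the macro–micro decomposition of Guo, which is precisely what you carry out by testing against $\mu^{1/2}$, $v_i\mu^{1/2}$, $(|v|^2-3)\mu^{1/2}$ and the Burnett-type functions, summing and differencing the $\pm$ equations, and integrating by parts in $v$ for the Lorentz terms. One small caveat worth recording: the terms $\tfrac16 G^\epsilon\cdot E^\epsilon$ (in the $c^{f^\epsilon}$ equation) and $E^\epsilon(a^{f^\epsilon}_++a^{f^\epsilon}_-)$ (in the $G^\epsilon$ equation) do not arise from the $\tfrac12 q_0 E^\epsilon\cdot v f^\epsilon$ correction alone but from its combination with the integration-by-parts contribution of $q_0 E^\epsilon\cdot\nabla_v f^\epsilon$ (the $-\tfrac12 v_iv_j\mu^{1/2}$ pieces cancel, leaving only the $\delta_{ij}\mu^{1/2}$ and $2v_j\mu^{1/2}$ pieces), so you should not expect either term in isolation to yield those moments.
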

where
\begin{eqnarray}\label{def-r-g}
 r^\epsilon_\pm&=&- v\cdot\nabla_x\{{\bf I_\pm-P_\pm}\}f^\epsilon-{L}_\pm f^\epsilon,\nonumber\\
g^\epsilon_\pm&=&{\Gamma}_\pm\left(f^\epsilon,f^\epsilon\right)
\pm \frac1 2 E^\epsilon\cdot v f^\epsilon_{\pm}\mp E^\epsilon\cdot\nabla_{ v  }f^\epsilon_{\pm}\mp\epsilon \left(v\times \mathfrak{B}\right)\cdot\nabla_vf^\epsilon_{\pm}\mp\epsilon \left(v\times \widetilde{B}^\epsilon\right)\cdot\nabla_vf^\epsilon_{\pm},\nonumber\\
\mathbb{A}_{mj}(g)&=&\int_{{\mathbb{R}}^3}\left( v_m v_j-1\right)\mu^{1/2}gd v,\\
\mathbb{B}_j(g)&=&\frac{1}{10}\int_{{\mathbb{R}}^3}\left(| v|^2-5\right) v_j\mu^{1/2}gd v,\nonumber\\
G^\epsilon&=&\left\langle v\mu^{1/2},\{{\bf I-P}\}f^\epsilon \cdot q_1 \right\rangle.\nonumber
\end{eqnarray}

The following lemma is concerned with the dissipative effect of the macroscopic quantities such as ${\bf P}f^\epsilon(t,x,v)$ and $a^{f^\epsilon}_+(t,x)-a^{f^\epsilon}_-(t,x)$, which can be proved by repeating the argument used in the proof of {\cite[Lemma 3.2]{Lei-Zhao-JFA-2014}}, we thus omit the details for brevity.
\begin{lemma}\label{lemma-f-epsilon-1}
For $N\geq 2$,
there exists $G_{f^\epsilon}(t)$ satisfying
  \[
  G_{f^\epsilon}(t)\lesssim \sum_{|\alpha|\leq N}\left\|\partial^\alpha f^\epsilon(t)\right\|^2
  \]
such that
  \begin{eqnarray}\label{pf-e}
    &&\frac{d}{dt}G_{f^\epsilon}(t)+\sum_{1\leq|\alpha|\leq N}\left\|\partial^\alpha{\bf P}f^\epsilon(t)\right\|^2 +\left\|a^{f^\epsilon}_+(t)-a^{f^\epsilon}_-(t)\right\|^2\\ \nonumber
    &\lesssim& \sum_{|\alpha|\leq N}\left\|\partial^\alpha\{{\bf I-P}\}f^\epsilon(t)\right\|^2  +\mathcal{E}_{N,f^\epsilon}(t)\mathcal{D}_{N,f^\epsilon}(t)
  \end{eqnarray}
holds for all $0\leq t\leq T$.
\end{lemma}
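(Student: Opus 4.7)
The plan is to construct a Kawashima-type interaction functional $G_{f^\epsilon}(t)$ whose time derivative produces, on the positive side, the full macroscopic dissipation
$\sum_{1\le|\alpha|\le N}\|\partial^\alpha{\bf P}f^\epsilon\|^2+\|a^{f^\epsilon}_+-a^{f^\epsilon}_-\|^2$,
while its pointwise size is bounded by $\sum_{|\alpha|\le N}\|\partial^\alpha f^\epsilon\|^2$. The relevant dynamics are encoded in the two systems of macroscopic/auxiliary equations in Lemma~\ref{lemma-macro-1}.

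\emph{Step 1: fluid dissipation of $(a^{f^\epsilon}_++a^{f^\epsilon}_-)/2$, $b^{f^\epsilon}$, $c^{f^\epsilon}$.} For each $0\le|\alpha|\le N-1$, I include in $G_{f^\epsilon}(t)$ the standard interaction terms
\begin{equation*}
\lambda_1\bigl(\partial^\alpha b^{f^\epsilon},\nabla_x\partial^\alpha\bigl(\tfrac{a^{f^\epsilon}_++a^{f^\epsilon}_-}{2}+2c^{f^\epsilon}\bigr)\bigr)
+\lambda_2\sum_{i,j}\bigl(\partial_j\partial^\alpha b^{f^\epsilon}_i,\partial^\alpha\mathbb{A}_{ij}(\{{\bf I-P}\}f^\epsilon\!\cdot\![1,1])\bigr)
+\lambda_3\bigl(\nabla_x\partial^\alpha c^{f^\epsilon},\partial^\alpha\mathbb{B}(\{{\bf I-P}\}f^\epsilon\!\cdot\![1,1])\bigr),
\end{equation*}
with $0<\lambda_3\ll\lambda_2\ll\lambda_1\ll1$. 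Differentiating in time and using \eqref{Macro-equation} and \eqref{Micro-equation1} yields, by now-standard manipulations, a positive definite quadratic form controlling $\|\nabla_x\partial^\alpha[(a^{f^\epsilon}_++a^{f^\epsilon}_-)/2,b^{f^\epsilon},c^{f^\epsilon}]\|^2$; summing in $\alpha$ gives the desired $\sum_{1\le|\alpha|\le N}\|\partial^\alpha{\bf P}f^\epsilon\|^2$ up to absorbable errors.

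\emph{Step 2: dissipation of $a^{f^\epsilon}_+-a^{f^\epsilon}_-$.} The $G^\epsilon$-equation in \eqref{Macro-equation} can be reorganized as
\begin{equation*}
\partial_tG^\epsilon+\nabla_x(a^{f^\epsilon}_+-a^{f^\epsilon}_-)-2\bigl(E^\epsilon+\epsilon b^{f^\epsilon}\times\mathfrak{B}\bigr)+\nabla_x\cdot\mathbb{A}(\{{\bf I-P}\}f^\epsilon\!\cdot\!q_1)=E^\epsilon(a^{f^\epsilon}_++a^{f^\epsilon}_-)+2\epsilon b^{f^\epsilon}\times\widetilde{B}^\epsilon+\langle[v,-v]\mu^{1/2},Lf^\epsilon+\Gamma(f^\epsilon,f^\epsilon)\rangle.
\end{equation*}
I add to $G_{f^\epsilon}(t)$ the single cross term $-\lambda_4(G^\epsilon,\nabla_x(a^{f^\epsilon}_+-a^{f^\epsilon}_-))$. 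Using the preserved Poisson constraint $\nabla_x\cdot E^\epsilon=a^{f^\epsilon}_+-a^{f^\epsilon}_-$ together with the conservation identity $\partial_t(a^{f^\epsilon}_+-a^{f^\epsilon}_-)=-\nabla_x\cdot G^\epsilon$, time differentiation of this cross term produces $\lambda_4(\|\nabla_x(a^{f^\epsilon}_+-a^{f^\epsilon}_-)\|^2+2\|a^{f^\epsilon}_+-a^{f^\epsilon}_-\|^2)$ on the positive side, modulo the inoffensive remainder $2\epsilon\lambda_4(\nabla_x\cdot(b^{f^\epsilon}\times\mathfrak{B}),a^{f^\epsilon}_+-a^{f^\epsilon}_-)$, which for $\epsilon\le1$ is absorbed by a fraction of $\lambda_4\|a^{f^\epsilon}_+-a^{f^\epsilon}_-\|^2$ together with a fraction of the $\lambda_1\|\nabla_xb^{f^\epsilon}\|^2$ dissipation produced in Step~1 (this is where $\lambda_4\ll\lambda_1$ is used).

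\emph{Step 3: collecting errors and closing.} Differentiating $G_{f^\epsilon}(t)$ also yields three families of error terms: (i) microscopic remainders coming from $r^\epsilon_\pm=-v\cdot\nabla_x\{{\bf I-P}\}f^\epsilon-Lf^\epsilon$ and from $\nabla_x\cdot\mathbb{A}(\cdot),\nabla\cdot G^\epsilon,\nabla_x\cdot\mathbb{B}(\cdot)$, each controlled after Cauchy--Schwarz by $\sum_{|\alpha|\le N}\|\partial^\alpha\{{\bf I-P}\}f^\epsilon\|^2$; (ii) genuinely quadratic/cubic nonlinearities (the terms $E^\epsilon(a^{f^\epsilon}_++a^{f^\epsilon}_-),\,\epsilon G^\epsilon\times\widetilde{B}^\epsilon,\,E^\epsilon\cdot vf^\epsilon,\,\epsilon(v\times\widetilde{B}^\epsilon)\cdot\nabla_vf^\epsilon,\,\Gamma(f^\epsilon,f^\epsilon)$ appearing in the right-hand sides of \eqref{Macro-equation}--\eqref{Micro-equation1} via $g^\epsilon_\pm$), each bounded by $\mathcal{E}_{N,f^\epsilon}(t)\mathcal{D}_{N,f^\epsilon}(t)$ by standard Sobolev embedding (since $N\geq2$ places us at or above the $H^2$-algebra threshold); (iii) the potentially troublesome linear Lorentz-type forcing $2\epsilon b^{f^\epsilon}\times\mathfrak{B}$, already absorbed at the algebraic level by the regrouping performed in Step~2. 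Each summand of $G_{f^\epsilon}(t)$ is a Cauchy--Schwarz pairing of derivatives of $f^\epsilon$ of total order at most $N$, so $G_{f^\epsilon}(t)\lesssim\sum_{|\alpha|\le N}\|\partial^\alpha f^\epsilon(t)\|^2$, as required.

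\emph{Main obstacle.} The delicate point is Step~2: because the electromagnetic dissipation in $\mathcal{D}_{N,f^\epsilon}(t)$ is weighted by $\epsilon^2$, the naive energy test of the $G^\epsilon$-equation against $E^\epsilon$ fails to be $\epsilon$-uniform. The remedy--testing against $-\nabla_x(a^{f^\epsilon}_+-a^{f^\epsilon}_-)$ and invoking the Poisson constraint, after the renormalized grouping $E^\epsilon+\epsilon b^{f^\epsilon}\times\mathfrak{B}$--circumvents this while leaving only the harmless $\epsilon$-error handled above.
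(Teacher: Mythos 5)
Your proposal matches the argument the paper delegates to \cite[Lemma 3.2]{Lei-Zhao-JFA-2014}: a Kawashima-type interaction functional built from \eqref{Macro-equation}--\eqref{Micro-equation1}, with the decisive $\epsilon$-uniform step of pairing the $G^\epsilon$-equation with $\nabla_x(a^{f^\epsilon}_+-a^{f^\epsilon}_-)$ and converting $E^\epsilon$ into $a^{f^\epsilon}_+-a^{f^\epsilon}_-$ through the Poisson constraint after the $E^\epsilon+\epsilon\,b^{f^\epsilon}\times\mathfrak{B}$ regrouping, so that the macroscopic dissipation is obtained without touching the $\epsilon^2$-weighted electromagnetic dissipation. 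Two small repairs are needed in a full writeup, neither of which changes the structure or validity of the approach: (i) the Step-2 cross term must, like Step~1, be taken at every order $0\le|\alpha|\le N-1$ so that $\|\partial^\alpha(a^{f^\epsilon}_+-a^{f^\epsilon}_-)\|^2$ is controlled for all $1\le|\alpha|\le N$ (your stated output only covers $|\alpha|\le1$); and (ii) the $\lambda$-hierarchy should be reversed to $\lambda_1\ll\lambda_2$ (with $\lambda_4$ also small relative to $\lambda_2$), since differentiating the $\lambda_1$-coupling and using $\partial_t\bigl(\tfrac{a^{f^\epsilon}_++a^{f^\epsilon}_-}{2}+2c^{f^\epsilon}\bigr)=-\tfrac53\nabla_x\cdot b^{f^\epsilon}+\cdots$ produces the positive remainder $\tfrac53\lambda_1\|\nabla_x\cdot\partial^\alpha b^{f^\epsilon}\|^2$ inside $\tfrac{d}{dt}G_{f^\epsilon}(t)$, which must be dominated by the $b$-dissipation $\lambda_2\|\nabla_x\partial^\alpha b^{f^\epsilon}\|^2$ coming from the $\mathbb{A}$-coupling, and this fails under $\lambda_2\ll\lambda_1$.
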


Our next result focuses on the dissipative effect of the electromagnetic field $\left[E^{\epsilon}(t,x), \widetilde{B}^\epsilon(t,x)\right]$.
\begin{lemma}\label{lemma-E-B-R-1}
For $N\geq2$,
there exists $G_{E^\epsilon,\widetilde{B}^\epsilon}(t)$ satisfying
\[
G_{E^\epsilon,\widetilde{B}^\epsilon}(t)\lesssim\sum_{|\alpha|\leq N}\left\|\partial^\alpha\left[f^\epsilon,E^\epsilon,\widetilde{B}^\epsilon\right](t)\right\|^2
\]
such that the following estimate
\begin{eqnarray}\label{lemma-E-B}
  &&\frac{d}{dt}G_{E^\epsilon,\widetilde{B}^\epsilon}(t)+\epsilon^2\left\|E^\epsilon(t)+\epsilon b^{f^\epsilon}(t)\times \mathfrak{B}\right\|^2+\epsilon^2\sum_{1\leq|\alpha|\leq N-1}\left\|\partial^\alpha \left[E^\epsilon(t),\widetilde{B}^\epsilon(t)\right]\right\|^2\\
&\lesssim&\sum_{|\alpha|\leq N}\left\|\partial^\alpha\{{\bf I-P}\}f^\epsilon(t)\right\|^2 +\mathcal{E}_{N, f^\epsilon}(t)\mathcal{D}_{N,f^\epsilon}(t)\nonumber
\end{eqnarray}
holds for all $0\leq t\leq T$.
\end{lemma}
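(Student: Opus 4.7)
The goal is to construct a functional $G_{E^\epsilon,\widetilde{B}^\epsilon}(t)$ whose time derivative, combined with the two dissipative quantities $\epsilon^2\|E^\epsilon + \epsilon b^{f^\epsilon}\times\mathfrak{B}\|^2$ and $\epsilon^2\|\nabla_x[E^\epsilon,\widetilde{B}^\epsilon]\|^2_{H^{N-2}_x}$, is controlled by the microscopic quantity $\sum_{|\alpha|\leq N}\|\partial^\alpha\{{\bf I-P}\}f^\epsilon\|^2$ plus the nonlinear remainder $\mathcal{E}_{N,f^\epsilon}(t)\mathcal{D}_{N,f^\epsilon}(t)$. The plan is to write $G_{E^\epsilon,\widetilde{B}^\epsilon}(t)$ as a sum of two auxiliary functionals: one generating the screened-electric-field dissipation and, by derivative extension, the $\epsilon^2\|\partial^\alpha E^\epsilon\|^2$ part of the gradient dissipation, and a Kawashima-type cross functional generating the $\epsilon^2\|\partial^\alpha \widetilde{B}^\epsilon\|^2$ part.

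For the first ingredient, the fifth macroscopic equation in Lemma \ref{lemma-macro-1} can be rearranged as
\begin{equation*}
2(E^\epsilon + \epsilon b^{f^\epsilon}\times\mathfrak{B}) = \partial_t G^\epsilon + \nabla_x(a^{f^\epsilon}_+ - a^{f^\epsilon}_-) + \nabla_x\cdot\mathbb{A}(\{{\bf I-P}\}f^\epsilon\cdot q_1) - E^\epsilon(a^{f^\epsilon}_+ + a^{f^\epsilon}_-) - 2\epsilon b^{f^\epsilon}\times\widetilde{B}^\epsilon - \langle [v,-v]\mu^{1/2}, Lf^\epsilon + \Gamma(f^\epsilon,f^\epsilon)\rangle.
\end{equation*}
Applying $\partial^\alpha$ for $0\leq|\alpha|\leq N-1$ and testing against $\epsilon^2\partial^\alpha(E^\epsilon + \epsilon b^{f^\epsilon}\times\mathfrak{B})$ extracts the dissipative term $\epsilon^2\|\partial^\alpha(E^\epsilon + \epsilon b^{f^\epsilon}\times\mathfrak{B})\|^2$, while the $\partial_t\partial^\alpha G^\epsilon$ contribution is integrated by parts in time, producing the boundary piece $\epsilon^2(\partial^\alpha G^\epsilon, \partial^\alpha(E^\epsilon + \epsilon b^{f^\epsilon}\times\mathfrak{B}))$ that enters $G_{E^\epsilon,\widetilde{B}^\epsilon}$. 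The commutators replace $\partial_t E^\epsilon$ by $\epsilon^{-1}(\nabla_x\times\widetilde{B}^\epsilon - \epsilon G^\epsilon)$ via the first Maxwell equation and $\partial_t b^{f^\epsilon}$ via the second macro equation; the surviving cross term $\epsilon(\partial^\alpha G^\epsilon, \nabla_x\times\partial^\alpha\widetilde{B}^\epsilon)$ is split by Young's inequality into a small multiple of $\epsilon^2\|\nabla_x\partial^\alpha\widetilde{B}^\epsilon\|^2$ (to be absorbed by the second ingredient) and $\|\partial^\alpha G^\epsilon\|^2\lesssim\|\partial^\alpha\{{\bf I-P}\}f^\epsilon\|^2$. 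The elementary inequality $\epsilon^2\|\partial^\alpha E^\epsilon\|^2 \leq 2\epsilon^2\|\partial^\alpha(E^\epsilon + \epsilon b^{f^\epsilon}\times\mathfrak{B})\|^2 + 2\epsilon^4|\mathfrak{B}|^2\|\partial^\alpha b^{f^\epsilon}\|^2$, combined with the observation that $\|\partial^\alpha b^{f^\epsilon}\|^2$ for $|\alpha|\geq 1$ already sits in the dissipation rate of Lemma \ref{lemma-f-epsilon-1}, then delivers $\epsilon^2\|\partial^\alpha E^\epsilon\|^2$ control for all $|\alpha|\leq N-1$.

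For the magnetic-field part, I would add the Kawashima-type functional $\mathcal{I}^\alpha(t) \equiv -\epsilon^3(\partial^\alpha E^\epsilon, \nabla_x\times\partial^\alpha\widetilde{B}^\epsilon)$ for $0\leq|\alpha|\leq N-2$ to $G_{E^\epsilon,\widetilde{B}^\epsilon}$. Differentiating in time, substituting $\epsilon\partial_t E^\epsilon = \nabla_x\times\widetilde{B}^\epsilon - \epsilon G^\epsilon$ and $\epsilon\partial_t \widetilde{B}^\epsilon = -\nabla_x\times E^\epsilon$, and performing one integration by parts on the double curl yields
\begin{equation*}
\frac{d}{dt}\mathcal{I}^\alpha + \epsilon^2\|\nabla_x\times\partial^\alpha\widetilde{B}^\epsilon\|^2 = \epsilon^2\|\nabla_x\times\partial^\alpha E^\epsilon\|^2 + \epsilon^3(\partial^\alpha G^\epsilon, \nabla_x\times\partial^\alpha\widetilde{B}^\epsilon).
\end{equation*}
Since $\nabla_x\cdot\widetilde{B}^\epsilon=0$, the identity $\|\nabla_x V\|^2 = \|\nabla_x\times V\|^2$ for divergence-free $V$ converts $\sum_{|\alpha|=k-1}\|\nabla_x\times\partial^\alpha\widetilde{B}^\epsilon\|^2$ into $\sum_{|\beta|=k}\|\partial^\beta\widetilde{B}^\epsilon\|^2$, so summing $\mathcal{I}^\alpha$ over $0\leq|\alpha|\leq N-2$ recovers the full magnetic dissipation $\epsilon^2\|\nabla_x\widetilde{B}^\epsilon\|^2_{H^{N-2}_x}$. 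The error $\epsilon^2\|\nabla_x\times\partial^\alpha E^\epsilon\|^2 \leq \epsilon^2\|\nabla_x\partial^\alpha E^\epsilon\|^2$ with $|\alpha|+1\leq N-1$ is absorbed into the electric-field dissipation produced by the first ingredient, provided $\mathcal{I}^\alpha$ is added with a sufficiently small coefficient; the remaining pairing $\epsilon^3(\partial^\alpha G^\epsilon,\nabla_x\times\partial^\alpha\widetilde{B}^\epsilon)$ is absorbed by Young's inequality into $\|\partial^\alpha\{{\bf I-P}\}f^\epsilon\|^2$.

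The main obstacle is the degeneracy of the Maxwell system as $\epsilon\to 0_+$: the pure Maxwell evolution is oscillatory (skew-symmetric in Fourier), so no spatial dissipation on $[E^\epsilon,\widetilde{B}^\epsilon]$ can be extracted from these equations in isolation. All dissipation must be routed through the coupling to the kinetic equation via $G^\epsilon$ and through the macroscopic quantity $E^\epsilon + \epsilon b^{f^\epsilon}\times\mathfrak{B}$; the $\epsilon^2$ prefactor precisely quantifies this weak coupling. The delicate point of the proof is the bookkeeping needed to ensure that every cross term produced by the time-integration-by-parts steps either matches the microscopic target $\|\partial^\alpha\{{\bf I-P}\}f^\epsilon\|^2$ or falls into the quadratic bulk $\mathcal{E}_{N,f^\epsilon}\mathcal{D}_{N,f^\epsilon}$, and that the successive small-coefficient absorption between the two ingredients can be closed uniformly in $\epsilon\in(0,1]$.
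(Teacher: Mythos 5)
Your proposal is correct and follows essentially the same strategy as the paper's proof: both extract the electric-field dissipation by testing $\partial^\alpha$ of the fifth macroscopic equation (the one governing $\partial_t G^\epsilon$) against the scaled electric field, integrating by parts in time to produce the cross term $\epsilon^2(\partial^\alpha G^\epsilon,\partial^\alpha E^\epsilon)$, and both recover the magnetic dissipation through the Kawashima-type functional $\epsilon^3(\partial^\alpha E^\epsilon,\nabla_x\times\partial^\alpha\widetilde{B}^\epsilon)$, closed by a small-coefficient linear combination. The only difference — testing against $\partial^\alpha(E^\epsilon+\epsilon b^{f^\epsilon}\times\mathfrak{B})$ at every order and then peeling off $\epsilon^4\|\partial^\alpha b^{f^\epsilon}\|^2$ via Young, rather than testing against $\epsilon\partial^\alpha E^\epsilon$ directly for $|\alpha|\geq 1$ as the paper does — is cosmetic and produces the same terms.
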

\begin{proof}
Since
\begin{eqnarray*}
2E^{\epsilon}+2\epsilon b^{f^\epsilon}\times \mathfrak{B}&=&\partial_tG^\epsilon+\nabla_x\left(a^{f^\epsilon}_+ -a^{f^\epsilon}_-\right)+\nabla_x\cdot \mathbb{A}\left(\{{\bf I-P}\}f^\epsilon\cdot q_1\right) -E^\epsilon\left(a^{f^\epsilon}_++a^{f^\epsilon}_-\right)\nonumber\\
&&-2\epsilon b^{f^\epsilon}\times \widetilde{B}^\epsilon-\left\langle [v,-v]\mu^{1/2}, Lf^\epsilon+\Gamma\left(f^\epsilon,f^\epsilon\right)\right\rangle,
\end{eqnarray*}
multiplying the above identity by $E^\epsilon+\epsilon b^{f^\epsilon}\times \mathfrak{B}$ and by integrating the result with respect to $x$ over $\mathbb{R}_x^3$, we can get from Lemma \ref{lemma-nonlinear} that
\begin{eqnarray*}
&&2\epsilon^2\left\|E^\epsilon+\epsilon b^{f^\epsilon}\times \mathfrak{B}\right\|^2=2\epsilon^2\left(\left\{E^\epsilon+\epsilon b^{f^\epsilon}\times \mathfrak{B}\right\},\left\{E^\epsilon+\epsilon b^{f^\epsilon}\times \mathfrak{B}\right\}\right)\\
&\lesssim&\epsilon^2\left(\partial_t G^\epsilon(t),\left\{E^\epsilon+\epsilon b^{f^\epsilon}\times \mathfrak{B}\right\}\right)+\epsilon^2\left(\left\{\nabla_x\left(a^{f^\epsilon}_+-a^{f^\epsilon}_-\right)+\nabla_x\cdot \mathbb{A}\left(\{{\bf I-P}\}f^\epsilon(t)\cdot q_1\right)\right\},\left\{E^\epsilon+\epsilon b^{f^\epsilon}\times \mathfrak{B}\right\}\right)\\
&&-\epsilon^2\left(E^\epsilon\left(a^{f^\epsilon}_++a^{f^\epsilon}_-\right)
+2\epsilon b\times \widetilde{B}^\epsilon+\left\langle [v,-v]\mu^{1/2},L f^\epsilon+\Gamma\left(f^\epsilon,f^\epsilon\right)\right\rangle,\left\{E^\epsilon+\epsilon b^{f^\epsilon}\times \mathfrak{B}\right\}\right)\\
&\lesssim&\epsilon^2\frac{d}{dt}\left(G^\epsilon,\left\{E^\epsilon+\epsilon b^{f^\epsilon}\times \mathfrak{B}\right\}\right)
-\epsilon^2\left(G^\epsilon, \partial_t\left\{E^\epsilon+\epsilon b^{f^\epsilon}\times \mathfrak{B}\right\}\right)+\eta\epsilon^2\left\|E^\epsilon+\epsilon b^{f^\epsilon}\times \mathfrak{B}\right\|^2\\
&&+\left\|\left(a^{f^\epsilon}_+-a^{f^\epsilon}_-\right)\right\|_{\dot{H}^1_x}^2+\left\|\{{\bf I-P}\}f^\epsilon\right\|^2_{H^1_xL^2_\nu} +\mathcal{E}_{N,f^\epsilon}(t)\mathcal{D}_{N,f^\epsilon}(t)\\
&\lesssim&\frac{d}{dt}\left(G^\epsilon(t),\left\{E^\epsilon+\epsilon b^{f^\epsilon}\times \mathfrak{B}\right\}\right)+\eta\epsilon^2\left\|\nabla\times \widetilde{B}^\epsilon\right\|^2+\eta\epsilon^2\left\|E^\epsilon+\epsilon b^{f^\epsilon}\times \mathfrak{B}\right\|^2\\
&&+\left\|\left[a^{f^\epsilon}_+\pm a^{f^\epsilon}_-, c^{f^\epsilon}\right]\right\|_{\dot{H}^1_x}^2+\left\|\{{\bf I-P}\}f^\epsilon\right\|^2_{H^1_xL^2_\nu} +\mathcal{E}_{N,f^\epsilon}(t)\mathcal{D}_{N,f^\epsilon}(t).
\end{eqnarray*}

For $1\leq|\alpha|\leq N-1$, one also has from Lemma \ref{lemma-nonlinear} that
\begin{eqnarray*}
&&\epsilon^2\left\|\partial^\alpha E^\epsilon(t)\right\|^2=\left(\epsilon\partial^\alpha E^\epsilon(t),\epsilon\partial^\alpha E^\epsilon(t)\right)\\
&\lesssim&\left(\epsilon\partial_t\partial^\alpha {G^\epsilon}(t),\epsilon\partial^\alpha E^\epsilon(t)\right)+\left(\epsilon\partial^\alpha\left\{\nabla_x\left(a^{f^\epsilon}_+-a^{f^\epsilon}_-\right)(t)\right\}+\epsilon\partial^\alpha\left\{\nabla_x\cdot \mathbb{A}\left(\{{\bf I-P}\}f^\epsilon(t)\cdot q_1\right)\right\},\epsilon\partial^\alpha E^\epsilon(t)\right)\\
&&-\left(\epsilon\partial^\alpha\left\{E^\epsilon(t)\left(a^{f^\epsilon}_++a^{f^\epsilon}_-\right)(t)
+\left\langle [v,-v]\mu^{1/2},L f^\epsilon(t)+\Gamma\left(f^\epsilon(t),f^\epsilon(t)\right)\right\rangle\right\},\epsilon\partial^\alpha E^\epsilon(t)\right)\\
&&-\left(\epsilon\partial^\alpha\left\{
2\epsilon b^{f^\epsilon}(t)\times \widetilde{B}^\epsilon(t)+2\epsilon b^{f^\epsilon}(t)\times \mathfrak{B}\right\},\epsilon\partial^\alpha E^\epsilon(t)\right)\\
&\lesssim&\frac{d}{dt}\left(\epsilon\partial^\alpha {G^\epsilon}(t),
\epsilon\partial^\alpha E^\epsilon(t)\right)-\left(\epsilon\partial^\alpha {G^\epsilon}(t),\epsilon\partial^\alpha \partial_tE^\epsilon(t)\right)+\eta\epsilon^2\left\|\partial^{\alpha}E^\epsilon(t)\right\|^2
\\
&&+\left\|\partial^{\alpha}\left(a^{f^\epsilon}_+-a^{f^\epsilon}_-\right)(t)\right\|_{\dot{H}^1_x}^2+\left\|\partial^\alpha\{{\bf I-P}\}f^\epsilon(t) \right\|^2_{H^1_x} +\left\|\partial^\alpha b^{f^\epsilon}\right\|^2 +\mathcal{E}_{N,f^\epsilon}(t)\mathcal{D}_{N,f^\epsilon}(t)\\
&\lesssim&\frac{d}{dt}\left(\epsilon\partial^\alpha {G^\epsilon}(t),\epsilon\partial^\alpha E^\epsilon(t)\right)-\left(\epsilon\partial^\alpha {G^\epsilon}(t), \partial^\alpha \left\{\nabla\times \widetilde{B}^\epsilon(t)-\epsilon G^\epsilon(t)\right\}\right)+\eta\epsilon^2\left\|\partial^{\alpha}E^\epsilon(t)\right\|^2
\\
&&+\left\|\partial^{\alpha}\left(a^{f^\epsilon}_+-a^{f^\epsilon}_-\right)(t)\right\|_{\dot{H}^1_x}^2+\left\|\partial^\alpha\{{\bf I-P}\}f^\epsilon(t)\right\|^2_{H^1_x}+\left\|\partial^\alpha b^{f^\epsilon}\right\|^2 +\mathcal{E}_{N,f^\epsilon}(t)\mathcal{D}_{N,f^\epsilon}(t)\\
&\lesssim&\frac{d}{dt}\left(\epsilon\partial^\alpha {G^\epsilon}(t),\epsilon\partial^\alpha E^\epsilon(t)\right)+\eta\epsilon^2\left\|\partial^{\alpha}E^\epsilon(t)\right\|^2 +\left\|\partial^{\alpha}\left(a^{f^\epsilon}_+-a^{f^\epsilon}_-\right)(t)\right\|_{\dot{H}^1_x}^2+\left\|\partial^\alpha\{{\bf I-P}\}f^\epsilon(t)\right\|^2_{H^1_xL^2_v}\\
&&+\min\left\{\left\|\partial^\alpha {G^\epsilon}(t)\right\|^2+\epsilon^2\left\|\partial^\alpha\nabla\times \widetilde{B}^\epsilon(t)\right\|^2,\sum_{i=1}^3\left\|\partial^{\alpha+e_i} G^\epsilon(t)\right\|^2+\epsilon^2\left\|\partial^{\alpha-e_i}\nabla\times \widetilde{B}^\epsilon(t)\right\|^2\right\}\\
&&+\left\|\partial^\alpha b^{f^\epsilon}\right\|^2+\mathcal{E}_{N,f^\epsilon}(t)\mathcal{D}_{N,f^\epsilon}(t).
\end{eqnarray*}
Consequently
\begin{eqnarray}\label{E-R}
&&-\frac{d}{dt}\sum_{|\alpha|\leq N}\left(\epsilon\partial^\alpha {G^\epsilon}(t),\epsilon\partial^\alpha E^\epsilon(t)\right) +\epsilon^2\left\|E^\epsilon+\epsilon b^{f^\epsilon}\times \mathfrak{B}\right\|^2+\epsilon^2\sum_{1\leq|\alpha|\leq N-1}\left\|\partial^\alpha E^\epsilon(t)\right\|^2\nonumber\\
&\lesssim&\eta\epsilon^2\sum_{1\leq|\alpha|\leq N-1}\left\|\partial^\alpha \widetilde{B}^\epsilon(t)\right\|^2
+\sum_{1\leq|\alpha|\leq N}\left\|\partial^\alpha{\bf P}f^\epsilon(t)\right\|^2 +\left\|a^{f^\epsilon}_+(t)-a^{f^\epsilon}_-(t)\right\|^2\\
&&+\left\|\partial^\alpha\{{\bf I-P}\}f^\epsilon(t)\right\|^2_{H^N_x}+\mathcal{E}_{N,f^\epsilon}(t)\mathcal{D}_{N,f^\epsilon}(t).\nonumber
\end{eqnarray}

For $\widetilde{B}^\epsilon$, since
\begin{eqnarray*}
&&\epsilon^2\sum_{i=1}^3\left\|\partial^{\alpha+e_i} \widetilde{B}^\epsilon(t)\right\|^2\\
&=&\epsilon^2\left\|\partial^\alpha \nabla\times \widetilde{B}^\epsilon(t)\right\|^2\\
&=&\left(\epsilon\partial^\alpha \nabla\times \widetilde{B}^\epsilon(t),\epsilon\partial^\alpha \nabla\times \widetilde{B}^\epsilon(t)\right)\\
&=&\left(\epsilon\partial^\alpha\partial_t E^\epsilon(t)+\epsilon \partial^\alpha {G^\epsilon}(t),\epsilon^2\partial^\alpha \nabla\times \widetilde{B}^\epsilon(t)\right)\\
&=&\frac{d}{dt}\left(\epsilon\partial^\alpha E^\epsilon(t),\epsilon^2\partial^\alpha \nabla\times \widetilde{B}^\epsilon(t)\right)-\left(\epsilon\partial^\alpha E^\epsilon(t),\epsilon^2\partial^\alpha \nabla\times \partial_t\widetilde{B}^\epsilon(t)\right)
+\left(\epsilon \partial^\alpha {G^\epsilon}(t),\epsilon^2\partial^\alpha \nabla\times \widetilde{B}^\epsilon(t)\right)\\
&=&\frac{d}{dt}\left(\epsilon\partial^\alpha E^\epsilon(t),\epsilon^2\partial^\alpha \nabla\times \widetilde{B}^\epsilon(t)\right)+\left(\epsilon\partial^\alpha \nabla\times E^\epsilon(t),\epsilon\partial^\alpha \nabla\times E^\epsilon(t)\right)
+\left(\epsilon \partial^\alpha {G^\epsilon}(t),\epsilon^2\partial^\alpha \nabla\times \widetilde{B}^\epsilon(t)\right)\\
&\lesssim&\frac{d}{dt}\left(\epsilon\partial^\alpha E^\epsilon(t),\epsilon^2\partial^\alpha \nabla\times \widetilde{B}^\epsilon(t)\right)+\epsilon^2\left\|\partial^\alpha \nabla\times E^\epsilon(t)\right\|^2
+\eta\epsilon^2\left\|\partial^\alpha \nabla\times \widetilde{B}^\epsilon(t)\right\|^2+\epsilon^2\left\| \partial^\alpha {G^\epsilon}(t)\right\|^2,
\end{eqnarray*}
we can deduce from Lemma \ref{lemma-nonlinear} that
\begin{eqnarray}\label{B-R}
  &&-\frac{d}{dt}\sum_{|\alpha|\leq N-1} \left(\epsilon\partial^\alpha E^\epsilon(t),\epsilon^2\partial^\alpha \nabla\times \widetilde{B}^\epsilon(t)\right)+\epsilon^2\sum_{1\leq |\alpha|\leq N-1}\left\|\partial^\alpha \widetilde{B}^\epsilon(t)\right\|^2\\
  &\lesssim&\epsilon^2\sum_{1\leq |\alpha|\leq N-1}\left\|\partial^\alpha E^\epsilon(t)\right\|^2
  +\epsilon^2\left\|\{{\bf I-P}\}f^\epsilon(t)\right\|_{H^N_xL^2_v}^2.\nonumber
\end{eqnarray}

For sufficiently small $\kappa>0$, $(\ref{E-R})$+$\kappa\times$(\ref{B-R}) gives
\begin{eqnarray}\label{E-B-{m}}
&&\frac{d}{dt}G_{E^\epsilon,\widetilde{B}^\epsilon}(t)+\epsilon^2\left\|E^\epsilon+\epsilon b^{f^\epsilon}\times \mathfrak{B}\right\|^2+\epsilon^2\sum_{1\leq|\alpha|\leq N-1}\left\|\partial^\alpha \left[E^\epsilon(t),\widetilde{B}^\epsilon(t)\right]\right\|^2\nonumber\\
&\lesssim&\sum_{1\leq|\alpha|\leq N}\left\|\partial^\alpha{\bf P}f^\epsilon(t)\right\|^2 +\left\|a^{f^\epsilon}_+(t)-a^{f^\epsilon}_-(t)\right\|^2+\left\|\partial^\alpha\{{\bf I-P}\}f^\epsilon(t)\right\|^2_{H^N_xL^2_v}+\mathcal{E}_{N,f^\epsilon}(t) \mathcal{D}_{N,f^\epsilon}(t).\nonumber
\end{eqnarray}
Here we have set
\[
G_{E^\epsilon,\widetilde{B}^\epsilon}(t)=-\sum_{|\alpha|\leq N}\left(\epsilon\partial^\alpha {G^\epsilon}(t),\epsilon\partial^\alpha E^\epsilon(t)\right)-\kappa\sum_{|\alpha|\leq N-1} \left(\epsilon\partial^\alpha E^\epsilon(t),\epsilon^2\partial^\alpha \nabla\times \widetilde{B}^\epsilon(t)\right).
\]
 This completes the proof of Lemma \ref{lemma-E-B-R-1}.
\end{proof}
Combing Lemma \ref{lemma-f-epsilon-1} with Lemma \ref{lemma-E-B-R-1}, we can get by taking $N=3$ that
\begin{lemma}
There exists $G_{f^\epsilon,E^\epsilon,\widetilde{B}^\epsilon}(t)$ satisfying
\[
G_{f^\epsilon,E^\epsilon,\widetilde{B}^\epsilon}(t)\lesssim \sum_{|\alpha|\leq3}\left\|\partial^\alpha\left[f^\epsilon,E^\epsilon, \widetilde{B}^\epsilon\right](t)\right\|^2
\]
such that
  \begin{eqnarray}\label{f-R-Macro-0}
    &&\frac{d}{dt}G_{f^\epsilon,E^\epsilon,\widetilde{B}^\epsilon}(t)+\sum_{1\leq|\alpha|\leq 3}
    \left\|\partial^\alpha {\bf P}f^\epsilon(t)\right\|^2 +\left\|a^{f^\epsilon}_+(t)-a^{f^\epsilon}_-(t)\right\|^2\nonumber\\
    &&+\epsilon^2\left\|E^\epsilon(t)+\epsilon b^{f^\epsilon}(t)\times \mathfrak{B}\right\|^2
    +\epsilon^2\sum_{1\leq|\alpha|\leq 2}\left\|\partial^\alpha \left[E^\epsilon(t), \widetilde{B}^\epsilon(t)\right]\right\|^2\\
    &\lesssim&\sum_{|\alpha|\leq3}\left\|\partial^\alpha\{{\bf I-P}\}f^\epsilon(t)\right\|^2 +\mathcal{E}_{3,f^\epsilon}(t)\mathcal{D}_{3,f^\epsilon}(t)\nonumber
  \end{eqnarray}
holds for all $0\leq t\leq T$.
\end{lemma}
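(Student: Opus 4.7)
The proof is a direct linear combination of the two immediately preceding lemmas specialized to $N=3$, so the plan is to set
\[
G_{f^\epsilon,E^\epsilon,\widetilde{B}^\epsilon}(t) := G_{f^\epsilon}(t) + \kappa\, G_{E^\epsilon,\widetilde{B}^\epsilon}(t)
\]
for a sufficiently small positive constant $\kappa$ to be fixed at the end of the argument. The size bound $G_{f^\epsilon,E^\epsilon,\widetilde{B}^\epsilon}(t)\lesssim \sum_{|\alpha|\leq 3}\|\partial^\alpha[f^\epsilon,E^\epsilon,\widetilde{B}^\epsilon](t)\|^2$ is then immediate from the corresponding bounds on $G_{f^\epsilon}$ (controlled by $\sum_{|\alpha|\leq 3}\|\partial^\alpha f^\epsilon\|^2$, cf.~Lemma \ref{lemma-f-epsilon-1}) and on $G_{E^\epsilon,\widetilde{B}^\epsilon}$ (controlled by $\sum_{|\alpha|\leq 3}\|\partial^\alpha[f^\epsilon,E^\epsilon,\widetilde{B}^\epsilon]\|^2$, cf.~Lemma \ref{lemma-E-B-R-1}).

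For the dissipative estimate, I would add \eqref{pf-e} with $N=3$ to $\kappa$ times \eqref{lemma-E-B} with $N=3$. The left-hand side is exactly the one claimed in \eqref{f-R-Macro-0}: the $\frac{d}{dt}$ terms sum to $\frac{d}{dt}G_{f^\epsilon,E^\epsilon,\widetilde{B}^\epsilon}(t)$; \eqref{pf-e} supplies the macroscopic dissipation $\sum_{1\leq|\alpha|\leq 3}\|\partial^\alpha{\bf P}f^\epsilon\|^2+\|a^{f^\epsilon}_+-a^{f^\epsilon}_-\|^2$; and the $\kappa$-multiple of \eqref{lemma-E-B} supplies the degenerate electromagnetic dissipation $\epsilon^2\|E^\epsilon+\epsilon b^{f^\epsilon}\times\mathfrak{B}\|^2+\epsilon^2\sum_{1\leq|\alpha|\leq 2}\|\partial^\alpha[E^\epsilon,\widetilde{B}^\epsilon]\|^2$ (the factor $\kappa$ is harmless under $\lesssim$). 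On the right-hand side one gets $(1+\kappa)\sum_{|\alpha|\leq 3}\|\partial^\alpha\{{\bf I-P}\}f^\epsilon\|^2$ together with a $(1+\kappa)$-multiple of the cubic term $\mathcal{E}_{3,f^\epsilon}(t)\mathcal{D}_{3,f^\epsilon}(t)$, both of the form demanded by \eqref{f-R-Macro-0}.

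The one point that actually needs attention is that if one unfolds the chain of intermediate inequalities in the proof of Lemma \ref{lemma-E-B-R-1}, the bound on $\frac{d}{dt}G_{E^\epsilon,\widetilde{B}^\epsilon}$ genuinely produces contributions of the form $\sum_{1\leq|\alpha|\leq 3}\|\partial^\alpha{\bf P}f^\epsilon\|^2$ and $\|a^{f^\epsilon}_+-a^{f^\epsilon}_-\|^2$ on its right-hand side with coefficients of order one. This is precisely the reason I introduce the weight $\kappa$: these unwanted macroscopic contributions enter the combined inequality multiplied by $\kappa$, and by choosing $\kappa$ small enough they are strictly absorbed by the $O(1)$ macroscopic dissipation generated on the left-hand side by \eqref{pf-e}. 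This absorption step is the only nontrivial book-keeping in the argument; once it is performed, the remaining terms reorganize into exactly the right-hand side of \eqref{f-R-Macro-0}, and the proof is complete.
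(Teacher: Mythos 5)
Your proposal is correct and matches what the paper does implicitly: the lemma is stated as a direct combination of Lemma \ref{lemma-f-epsilon-1} and Lemma \ref{lemma-E-B-R-1} with $N=3$, using the linear combination $G_{f^\epsilon}+\kappa\,G_{E^\epsilon,\widetilde{B}^\epsilon}$ with $\kappa$ small. Your observation that the small weight $\kappa$ is genuinely needed — because the estimate underlying Lemma \ref{lemma-E-B-R-1} produces $O(1)$ macroscopic contributions $\sum_{1\leq|\alpha|\leq 3}\|\partial^\alpha{\bf P}f^\epsilon\|^2+\|a^{f^\epsilon}_+-a^{f^\epsilon}_-\|^2$ on its right-hand side that must be absorbed by the macroscopic dissipation coming from Lemma \ref{lemma-f-epsilon-1} — is exactly the bookkeeping step the paper leaves tacit.
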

\subsection{A key estimate related to the electric field $E^\epsilon(t,x)$}

The estimate \eqref{f-R-Macro-0} tells us that the dissipative effect of the electromagnetic field $\left[E^\epsilon(t,x), \widetilde{B}^\epsilon(t,x)\right]$ is degenerate for small $\epsilon=\frac 1c$. As pointed out in the introduction, such a degeneracy will lead to a difficulty in dealing with the term
\[
\left(\partial^\alpha_\beta\left(E^\epsilon\cdot v\mu^{\frac12} q_1\right), \partial^\alpha _{\beta} \{{\bf I-P}\}f^\epsilon\right)
\]
and the main purpose of this subsection is to provide another way to bound such a term.
To this end, we need the following result.
\begin{lemma}\label{Lemma-3.5}
It holds that
\begin{equation}\label{Micro-identity-key}
\{{\bf I-P }\} \left\{E^\epsilon \cdot v \mu^{1/2}q_1-q_0\epsilon(v\times\mathfrak{B})\cdot\nabla_v {\bf P}f^\epsilon\right\}=\left\{E^\epsilon+\epsilon b^{f^\epsilon}\times \mathfrak{B}\right\} \cdot v \mu^{1/2}q_1.
\end{equation}
\end{lemma}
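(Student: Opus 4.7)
The plan is a direct computation based on two structural facts: (i) any vector-valued function of the form $[\psi(v),-\psi(v)]$ with $\psi$ odd in $v$ lies entirely in the orthogonal complement of $\mathcal{N}$, so that $\{{\bf I-P}\}$ acts as the identity on it; and (ii) the cross product $v\times\mathfrak{B}$ is orthogonal to $v$, which annihilates every contribution to ${\bf P}f^\epsilon$ except the $b^{f^\epsilon}$-piece.

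First I would dispose of the $E^\epsilon$-term. Inspecting the generators of $\mathcal{N}$ listed just before \eqref{def-pf}, one sees that the vector $[v_i\mu^{1/2},-v_i\mu^{1/2}]=v_i\mu^{1/2}q_1$ is $L^2_v\times L^2_v$-orthogonal to every basis element of $\mathcal{N}$ (the parity in $v$ kills the inner products with $[1,0]\mu^{1/2}$, $[0,1]\mu^{1/2}$ and $[|v|^2,|v|^2]\mu^{1/2}$, while the $\pm$-antisymmetry kills the inner products with $[v_j,v_j]\mu^{1/2}$). Hence ${\bf P}(E^\epsilon\cdot v\mu^{1/2}q_1)=0$, so that
\[
\{{\bf I-P}\}\bigl(E^\epsilon\cdot v\mu^{1/2}q_1\bigr)=E^\epsilon\cdot v\mu^{1/2}q_1.
\]

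Next I would compute $(v\times\mathfrak{B})\cdot\nabla_v{\bf P}_\pm f^\epsilon$ using the expression
\[
{\bf P}_\pm f^\epsilon=a^{f^\epsilon}_\pm\mu^{1/2}+\sum_{i=1}^3 b^{f^\epsilon}_i v_i\mu^{1/2}+c^{f^\epsilon}(|v|^2-3)\mu^{1/2}.
\]
Since $\nabla_v\mu^{1/2}=-\tfrac{v}{2}\mu^{1/2}$ and $(v\times\mathfrak{B})\cdot v=0$, every term that produces a factor of $v$ on differentiation drops out. The only survivor comes from $\nabla_v(v_i\mu^{1/2})=e_i\mu^{1/2}-\tfrac{v_i v}{2}\mu^{1/2}$, giving
\[
(v\times\mathfrak{B})\cdot\nabla_v{\bf P}_\pm f^\epsilon=b^{f^\epsilon}\cdot(v\times\mathfrak{B})\,\mu^{1/2}.
\]
Applying $q_0=\mathrm{diag}(1,-1)$ pairs this identical scalar with opposite signs, yielding
\[
q_0\epsilon(v\times\mathfrak{B})\cdot\nabla_v{\bf P}f^\epsilon=\epsilon\,b^{f^\epsilon}\cdot(v\times\mathfrak{B})\,\mu^{1/2}q_1=-\epsilon\,v\cdot(b^{f^\epsilon}\times\mathfrak{B})\,\mu^{1/2}q_1,
\]
where in the last step I used the scalar triple product identity $b\cdot(v\times\mathfrak{B})=-v\cdot(b\times\mathfrak{B})$. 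This expression again has the form $[\psi,-\psi]$ with $\psi$ odd in $v$, so by the same orthogonality argument as in the first step,
\[
\{{\bf I-P}\}\bigl(q_0\epsilon(v\times\mathfrak{B})\cdot\nabla_v{\bf P}f^\epsilon\bigr)=-\epsilon\,v\cdot(b^{f^\epsilon}\times\mathfrak{B})\,\mu^{1/2}q_1.
\]

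Subtracting the two identities produces exactly the right-hand side of \eqref{Micro-identity-key}. There is no real obstacle; the only thing to be careful about is the sign in the triple product identity when converting $b^{f^\epsilon}\cdot(v\times\mathfrak{B})$ into $v\cdot(b^{f^\epsilon}\times\mathfrak{B})$, since flipping this sign would replace $+\epsilon b^{f^\epsilon}\times\mathfrak{B}$ by $-\epsilon b^{f^\epsilon}\times\mathfrak{B}$ in the conclusion.
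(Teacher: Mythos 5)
Your proof is correct and follows essentially the same route as the paper's: both rely on observing that ${\bf P}$ annihilates any $q_1$-valued odd-in-$v$ function, on the fact that $(v\times\mathfrak{B})\cdot\nabla_v$ kills everything in ${\bf P}f^\epsilon$ except the $b^{f^\epsilon}\cdot v\,\mu^{1/2}$ piece, and on the cyclic property of the scalar triple product. The only cosmetic difference is that the paper passes through the intermediate form $(\mathfrak{B}\times v)\cdot b^{f^\epsilon}$ before applying cyclicity, whereas you apply the identity $b^{f^\epsilon}\cdot(v\times\mathfrak{B})=-v\cdot(b^{f^\epsilon}\times\mathfrak{B})$ directly; the sign bookkeeping checks out in both cases.
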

\begin{proof}
From the definition of ${\bf P}$, one has
\begin{eqnarray}\label{micro-key-1}
\{{\bf I-P }\} \left\{E^\epsilon \cdot v \mu^{1/2}q_1\right\}
&=& E^\epsilon \cdot v \mu^{1/2}q_1-{\bf P }\left\{ E^\epsilon \cdot v \mu^{1/2}q_1\right\}\nonumber\\
&=& E^\epsilon \cdot v \mu^{1/2}q_1.
\end{eqnarray}

Similarly, one also has
\begin{eqnarray}\label{micro-key-2}
&&\{{\bf I-P }\} \left\{-q_0(v\times\mathfrak{B})\cdot\nabla_v {\bf P}f^\epsilon\right\}\nonumber\\
&=&\{{\bf I-P }\}\left\{-(v\times\mathfrak{B})\cdot\nabla_v \left\{b^{f^\epsilon}\cdot v \mu^{1/2}\right\}q_1\right\}\nonumber\\
&=&\{{\bf I-P }\}\left\{-(v\times\mathfrak{B})\cdot b^{f^\epsilon} \mu^{1/2}q_1\right\}\nonumber\\
&=&\{{\bf I-P }\}\left\{(\mathfrak{B}\times v)\cdot b^{f^\epsilon} \mu^{1/2}q_1\right\}\\
&=&\{{\bf I-P }\}\left\{\left(b^{f^\epsilon}\times \mathfrak{B}\right)\cdot v \mu^{1/2}q_1\right\}\nonumber\\
&=&\left(b^{f^\epsilon}\times \mathfrak{B}\right)\cdot v \mu^{1/2}q_1-{\bf P}\left\{\left(b^{f^\epsilon}\times \mathfrak{B}\right)\cdot v \mu^{1/2}q_1\right\}\nonumber\\
&=&\left(b^{f^\epsilon}\times \mathfrak{B}\right)\cdot v \mu^{1/2}q_1.\nonumber
\end{eqnarray}
Therefore, one can get \eqref{Micro-identity-key} from \eqref{micro-key-1} and \eqref{micro-key-2}.
\end{proof}

Having obtained Lemma \ref{Lemma-3.5}, we can get that
\begin{lemma}\label{E-estimates}
For $|\alpha|+|\beta|\leq 3$ with $|\beta|\geq 1$, it holds that
\begin{eqnarray}\label{E-R-estimates}
  &&\left(\partial_{\beta}^{\alpha}\left\{q_1\left\{E^\epsilon+\epsilon b^{f^\epsilon}\times \mathfrak{B}\right\}\cdot v \mu^{1/2}\right\}, \partial^\alpha_\beta{\{\bf I-P\}}f^\epsilon\right)\nonumber\\
  &=&(-1)^{|\beta|}\sum_{i=1}^3\left(\partial^{\alpha}\left\{E^\epsilon+\epsilon b^{f^\epsilon}\times \mathfrak{B}\right\}_i,\left\langle \partial_{2\beta}\left\{v_i \mu^{1/2}\right\}, \partial^\alpha{\{\bf I_+-P_+\}}f^\epsilon-\partial^\alpha{\{\bf I_--P_-\}}f^\epsilon\right\rangle\right)\\
   &\lesssim&\frac1{4C_\beta}\frac{d}{dt}\sum_{i=1}^3\left\|\left\langle \partial_{2\beta}\left\{v_i \mu^{1/2}\right\}, \partial^\alpha{\{\bf I_+-P_+\}}f^\epsilon(t)-\partial^\alpha{\{\bf I_--P_-\}}f^\epsilon(t)\right\rangle\right\|^2\nonumber\\
  &&+\left\|\nabla^{|\alpha|}\{{\bf I-P}\}f^\epsilon(t)\right\|^2+\left\|\nabla^{|\alpha|+1}f^\epsilon(t)\right\|^2
  +\mathcal{E}_{3,f^\epsilon}(t)\mathcal{D}_{3,f^\epsilon}(t).\nonumber
\end{eqnarray}
Here
$C_\beta=\int_{\mathbb{R}^3_v}
\left\{\partial_\beta\left(v_i\mu^{\frac12}\right)\right\}^2dv
$ is some positive constant.
\end{lemma}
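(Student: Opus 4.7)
The plan is to split the proof into (i) establishing the stated algebraic identity via integration by parts in $v$ and (ii) extracting the $\frac{d}{dt}\sum_i\|h_i\|^2$ structure by rewriting $\partial^\alpha \mathcal{E}^\epsilon$ (with $\mathcal{E}^\epsilon := E^\epsilon + \epsilon b^{f^\epsilon}\times\mathfrak{B}$) as a time derivative of the moment $h_i$ modulo controllable remainders. Throughout, let $h_i(t,x) := \langle \partial_{2\beta}(v_i\mu^{1/2}), \partial^\alpha\{{\bf I}_+-{\bf P}_+\}f^\epsilon - \partial^\alpha\{{\bf I}_--{\bf P}_-\}f^\epsilon\rangle$ denote the moment that appears on the right-hand side of the claimed identity.

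For step (i), since $\mathcal{E}^\epsilon$ depends only on $(t,x)$ while $v\mu^{1/2}$ depends only on $v$, the Leibniz rule yields $\partial^\alpha_\beta\{q_1\mathcal{E}^\epsilon\cdot v\mu^{1/2}\} = \sum_i q_1\,\partial^\alpha\mathcal{E}^\epsilon_i\,\partial_\beta(v_i\mu^{1/2})$. Pairing against $\partial^\alpha_\beta\{{\bf I}-{\bf P}\}f^\epsilon$ in $L^2_{x,v}$, integrating by parts $|\beta|$ times in $v$ to shift $\partial_\beta$ onto the Maxwellian factor, and decomposing $q_1=[1,-1]$ yields the equality part of the claim with its $(-1)^{|\beta|}$ prefactor.

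For step (ii), the core maneuver is to derive an evolution equation for $h_i$. Applying the microscopic projection $\{{\bf I}-{\bf P}\}$ to the vector formulation $(\ref{f gn})$ and invoking Lemma \ref{Lemma-3.5}, the awkward electric forcing collapses to $\mathcal{E}^\epsilon \cdot v\mu^{1/2}q_1$, and the resulting equation takes the schematic form
\[
\partial_t\{{\bf I}-{\bf P}\}f^\epsilon + \{{\bf I}-{\bf P}\}(v\cdot\nabla_x f^\epsilon) + L\{{\bf I}-{\bf P}\}f^\epsilon - \mathcal{E}^\epsilon\cdot v\mu^{1/2} q_1 = \mathcal{N}^\epsilon,
\]
where $\mathcal{N}^\epsilon$ gathers the nonlinear Lorentz interactions $E^\epsilon\cdot\nabla_v f^\epsilon$, $\epsilon(v\times\widetilde{B}^\epsilon)\cdot\nabla_v f^\epsilon$, the magnetic rotation of $\{{\bf I}-{\bf P}\}f^\epsilon$, the quadratic piece $\tfrac{1}{2}q_0E^\epsilon\cdot v f^\epsilon$, and $\{{\bf I}-{\bf P}\}\Gamma(f^\epsilon,f^\epsilon)$. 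Apply $\partial^\alpha$ and take the $q_1$-weighted moment against $\partial_{2\beta}(v_i\mu^{1/2})$. The product structure of $\mu^{1/2}$ together with odd-in-$v_j$ parity produces the diagonal relation $\langle\partial_{2\beta}(v_i\mu^{1/2}), v_j\mu^{1/2}\rangle = \delta_{ij}(-1)^{|\beta|}C_\beta$, so the source term contributes exactly $-2(-1)^{|\beta|}C_\beta\,\partial^\alpha\mathcal{E}^\epsilon_i$, giving
\[
\partial^\alpha \mathcal{E}^\epsilon_i = \frac{(-1)^{|\beta|}}{2C_\beta}\bigl(\partial_t h_i + R_i^T + R_i^L + R_i^{\mathrm{NL}}\bigr),
\]
where $R_i^T$, $R_i^L$, $R_i^{\mathrm{NL}}$ are the $\partial_{2\beta}(v_i\mu^{1/2})$-moments of the transport, linearized collision, and nonlinear contributions, respectively.

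Substituting this identity back into the equality from step (i) produces exactly $\frac{1}{4C_\beta}\frac{d}{dt}\sum_i\|h_i\|^2$ as the leading term, matching the claim. For the remainder, Cauchy--Schwarz combined with $\|h_i\|\lesssim\|\partial^\alpha\{{\bf I}-{\bf P}\}f^\epsilon\|$ and the fast Gaussian decay of $\partial_{2\beta}(v_i\mu^{1/2})$ bounds $(R_i^T,h_i)$ by $\|\nabla^{|\alpha|+1}f^\epsilon\|^2$ (plus absorbable $\|h_i\|^2$ pieces), $(R_i^L,h_i)$ by $\|\nabla^{|\alpha|}\{{\bf I}-{\bf P}\}f^\epsilon\|^2$ via the $L^2_v$-boundedness of $L$ on the microscopic component, and $(R_i^{\mathrm{NL}},h_i)$ by $\mathcal{E}_{3,f^\epsilon}(t)\mathcal{D}_{3,f^\epsilon}(t)$ through standard Sobolev product estimates in the range $|\alpha|+|\beta|\leq 3$ under the smallness assumption $Y_0\leq c_0$. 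The hard part will be the estimate on $R_i^{\mathrm{NL}}$: one must verify that none of the contributions from the Lorentz forces reintroduces an uncontrolled $\|\partial^\alpha E^\epsilon\|^2$ on the right-hand side, and this is precisely where Lemma \ref{Lemma-3.5} is indispensable, since it guarantees that every electric-field source in the microscopic equation appears only inside the combination $\mathcal{E}^\epsilon$, whose degenerate dissipation $\epsilon^2\|\mathcal{E}^\epsilon\|^2$ is already furnished by $(\ref{f-R-Macro-0})$.
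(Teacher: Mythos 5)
Your proposal is correct and follows essentially the same route as the paper: both establish the algebraic identity by Leibniz plus $|\beta|$-fold integration by parts in $v$, then derive the evolution equation for $h_i$ by applying $\{{\bf I}-{\bf P}\}$ to \eqref{f gn}, taking the $\partial_{2\beta}(v_i\mu^{1/2})$-moment, differencing the $\pm$ components, and applying $\partial^\alpha$ (the paper's $R_1^\epsilon$ is exactly your $R_i^T+R_i^L+R_i^{\mathrm{NL}}$), and finally solve for $\partial^\alpha\{E^\epsilon+\epsilon b^{f^\epsilon}\times\mathfrak{B}\}_i$, substitute back, and bound the remainders by Cauchy--Schwarz. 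The one cosmetic slip is that the closing sentence about the degenerate dissipation $\epsilon^2\|\mathcal{E}^\epsilon\|^2$ from \eqref{f-R-Macro-0} is not actually used here — the whole point of the $\partial_t h_i$ reformulation is to avoid needing any dissipative control of $\mathcal{E}^\epsilon$ at all — but this does not affect the argument's validity.
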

\begin{proof}
 Applying ${\bf I-P}$ to \eqref{f gn}, one has
 \begin{eqnarray}\label{f-micro}
&&\partial_t\{{\bf I-P}\}f^\epsilon+ v  \cdot\nabla_x\{{\bf I-P}\}f^\epsilon- \left\{E^\epsilon+\epsilon b^{f^\epsilon}\times \mathfrak{B}\right\} \cdot v \mu^{1/2}q_1+{ L} f^\epsilon+ v  \cdot\nabla_x{\bf P}f^\epsilon\nonumber\\
&=&-\epsilon \{{\bf I-P}\}q_0 (v\times \mathfrak{B})\cdot\nabla_v\{{\bf I-P}\}f^\epsilon+ {\bf P}\{v\cdot\nabla_x f^\epsilon\}+\{{\bf I-P}\}{\tilde{G}^\epsilon}
\end{eqnarray}
with
\begin{equation}\label{def-G}
\tilde{G}^\epsilon\equiv\frac1 2 q_0 E^\epsilon\cdot v f^\epsilon-q_0 E^\epsilon\cdot\nabla_{ v  }f^\epsilon-\epsilon q_0 \left(v\times \widetilde{B}^\epsilon\right)\cdot\nabla_vf^\epsilon +{\Gamma}\left(f^\epsilon,f^\epsilon\right).
\end{equation}

Multiplying \eqref{f-micro} by $\partial_{2\beta}\{v_i \mu^{1/2}\}$ and
integrating the resulting identity with respect to $v$ over $\mathbb{R}^3$, one has
  \begin{eqnarray}\label{key1-hard-sphere}
  &&\partial_t\left\langle \partial_{2\beta}\left(v_i\mu^{1/2}\right),\{{\bf I_\pm-P_\pm}\}f^\epsilon\right\rangle+\nabla_x\cdot\left\langle v\partial_{2\beta}\left(v_i\mu^{1/2}\right), \{{\bf I_\pm-P_\pm}\}f^\epsilon \right\rangle\nonumber\\
  &&\mp\underbrace{\left\langle \partial_{2\beta}\left(v_i\mu^{1/2}\right),v_i\mu^{1/2}\right\rangle}_{J_1} \left\{E^\epsilon+\epsilon b^{f^\epsilon}\times \mathfrak{B}\right\}_i\\
&=&\left\langle \partial_{2\beta}\left(v_i\mu^{1/2}\right), {\bf P}_{\pm}\left\{v\cdot\nabla_x f^\epsilon\right\}-{ L} _{\pm}f^\epsilon- v  \cdot\nabla_x{\bf P}_{\pm}f^\epsilon
-\epsilon \{{\bf I_\pm-P_\pm}\}q_0 (v\times \mathfrak{B})\cdot\nabla_v\{{\bf I-P}\}f^\epsilon\right\rangle\nonumber\\ \nonumber
&&+\left\langle \partial_{2\beta}\left(v_i\mu^{1/2}\right), \{{\bf I_{\pm}-P_{\pm}}\}{\tilde{G}^\epsilon}
\right\rangle.
\end{eqnarray}

Noticing that
\[
J_1=(-1)^{|\beta|}\int_{\mathbb{R}^3_v}
\left\{\partial_\beta(v_i\mu^{\frac12})\right\}^2dv\equiv (-1)^{|\beta|}C_{\beta},
\]
we can get from \eqref{Micro-identity-key} that
\begin{eqnarray}\label{key2-hard-sphere}
&&\partial_t\left\langle \partial_{2\beta}\left(v_i\mu^{1/2}\right),\{{\bf I_+-P_+}\}f^\epsilon-\{{\bf I_--P_-}\}f^\epsilon\right\rangle-2(-1)^{|\beta|}C_\beta \left\{E^\epsilon+\epsilon b^{f^\epsilon}\times \mathfrak{B}\right\}_i\nonumber\\
&=&-\nabla_x\cdot\left\langle v\partial_{2\beta}\left(v_i\mu^{1/2}\right),\{{\bf I_+-P_+}\}f^\epsilon-\{{\bf I_--P_-}\}f^\epsilon \right\rangle\\ \nonumber
&&+\left\langle \partial_{2\beta}\left(v_i\mu^{1/2}\right), {\bf
P}_{+}\{v\cdot\nabla_x f^\epsilon\}-{\bf
P}_{-}\{v\cdot\nabla_x f^\epsilon\}-{ L} _{+}f^\epsilon+-{ L} _{-}f^\epsilon- v  \cdot\nabla_x{\bf P}_{+}f^\epsilon+ v  \cdot\nabla_x{\bf P}_{-}f^\epsilon\right\rangle\\ \nonumber
&&+\left\langle \partial_{2\beta}\left(v_i\mu^{1/2}\right),-\epsilon \{{\bf I_+-P_+}\}q_0 (v\times \mathfrak{B})\cdot\nabla_v\{{\bf I-P}\}f^\epsilon+\epsilon \{{\bf I_--P_-}\}q_0 (v\times \mathfrak{B})\cdot\nabla_v\{{\bf I-P}\}f^\epsilon\right\rangle\\ \nonumber
&&+\left\langle \partial_{2\beta}\left(v_i\mu^{1/2}\right),\{{\bf I_{+}-P_{+}}\}{\tilde{G}^\epsilon}-\{{\bf I_{-}-P_{-}}\}{\tilde{G}^\epsilon}
\right\rangle.
\end{eqnarray}
Applying $\partial^\alpha$ to \eqref{key2-hard-sphere}, one has
\begin{eqnarray}\label{key3-hard-sphere}
&&\partial_t\left\langle \partial_{2\beta}\left(v_i\mu^{1/2}\right),\partial^\alpha\{{\bf I_+-P_+}\}f^\epsilon-\partial^\alpha\{{\bf I_--P_-}\}f^\epsilon\right\rangle\nonumber\\
&&-2(-1)^{|\beta|}C_\beta\partial^\alpha \left\{E^\epsilon+\epsilon b^{f^\epsilon}\times \mathfrak{B}\right\}_i\nonumber\\
&=&-\nabla_x\cdot\left\langle v\partial_{2\beta}\left(v_i\mu^{1/2}\right),\partial^\alpha\{{\bf I_+-P_+}\}f^\epsilon-\partial^\alpha\{{\bf I_--P_-}\}f^\epsilon \right\rangle\\ \nonumber
&&+\left\langle \partial_{2\beta}\left(v_i\mu^{1/2}\right), \partial^\alpha{\bf
P}_{+}\{v\cdot\nabla_x f^\epsilon\}-\partial^\alpha{\bf
P}_{-}\{v\cdot\nabla_x f^\epsilon\}\right.\\ \nonumber
&&\left.\ \ \ \ \ \ \ \ \ \ \ \ \ -\partial^\alpha{ L} _{+}f^\epsilon+-\partial^\alpha{ L} _{-}f^\epsilon- v  \cdot\nabla_x\partial^\alpha{\bf P}_{+}f^\epsilon+ v  \cdot\nabla_x\partial^\alpha{\bf P}_{-}f^\epsilon\right\rangle\\ \nonumber
&&+\left\langle \partial_{2\beta}\left(v_i\mu^{1/2}\right),-\epsilon \{{\bf I_+-P_+}\}q_0 (v\times \mathfrak{B})\cdot\nabla_v\partial^\alpha\{{\bf I-P}\}f^\epsilon+\epsilon \{{\bf I_--P_-}\}q_0 (v\times \mathfrak{B})\cdot\nabla_v\partial^\alpha\{{\bf I-P}\}f^\epsilon\right\rangle\\ \nonumber
&&+\left\langle \partial_{2\beta}\left(v_i\mu^{1/2}\right),\partial^\alpha\{{\bf I_{+}-P_{+}}\}{\tilde{G}^\epsilon}-\partial^\alpha\{{\bf I_{-}-P_{-}}\}{\tilde{G}^\epsilon}\right\rangle\nonumber\\
&:=&R^\epsilon_1,\nonumber
\end{eqnarray}
where we use $R^\epsilon_{1}$ to denote all terms in the right hand side of \eqref{key3-hard-sphere}.

Therefore, one can finally get from Lemma \ref{lemma-nonlinear} that
\begin{eqnarray*}
  &&(-1)^{|\beta|}\left(\partial^{\alpha}\left\{E^\epsilon+\epsilon b^{f^\epsilon}\times \mathfrak{B}\right\}_i,\left\langle \partial_{2\beta}\left\{v_i \mu^{1/2}\right\}, \partial^\alpha{\{\bf I_+-P_+\}}f^\epsilon-\partial^\alpha{\{\bf I_--P_-\}}f^\epsilon\right\rangle\right)\\
  &=&\frac1{2C_\beta}\left(\partial_t\left\langle \partial_{2\beta}\left(v_i\mu^{1/2}\right),\partial^\alpha\{{\bf I_+-P_+}\}f^\epsilon-\partial^\alpha\{{\bf I_--P_-}\}f^\epsilon\right\rangle-R^\epsilon_1,\right.\\
  &&\quad\quad \left.\left\langle \partial_{2\beta}\left\{v_i \mu^{1/2}\right\}, \partial^\alpha{\{\bf I_+-P_+\}}f^\epsilon-\partial^\alpha{\{\bf I_--P_-\}}f^\epsilon\right\rangle\right)\\ \nonumber
  &=&\frac1{4C_\beta}\sum_{i=1}^3\frac{d}{dt}\left\|\left\langle \partial_{2\beta}\left\{v_i \mu^{1/2}\right\}, \partial^\alpha{\{\bf I_+-P_+\}}f^\epsilon-\partial^\alpha{\{\bf I_--P_-\}}f^\epsilon\right\rangle\right\|^2\\ \nonumber
  &&+\frac1{2C_\beta}\left(R_1^\epsilon,\left\langle \partial_{2\beta}\left\{v_i \mu^{1/2}\right\}, \partial^\alpha{\{\bf I_+-P_+\}}f^\epsilon-\partial^\alpha{\{\bf I_--P_-\}}f^\epsilon\right\rangle\right)\\ \nonumber
  &\lesssim&\frac1{4C_\beta}\sum_{i=1}^3\frac{d}{dt}\left\|\left\langle \partial_{2\beta}\left\{v_i \mu^{1/2}\right\}, \partial^\alpha{\{\bf I_+-P_+\}}f^\epsilon-\partial^\alpha{\{\bf I_--P_-\}}f^\epsilon\right\rangle\right\|^2\\ \nonumber
  &&+\left\|\nabla^{|\alpha|}\{{\bf I-P}\}f^\epsilon(t)\right\|^2+\left\|\nabla^{|\alpha|+1}f^\epsilon(t)\right\|^2
  +\mathcal{E}_{3,f^\epsilon}(t)\mathcal{D}_{3,f^\epsilon}(t).
\end{eqnarray*}
Thus we have completed the proof of this lemma.
\end{proof}
\subsection{Lyapunov-type inequalities for $\mathcal{E}_{3,f^\epsilon}(t)$}
With the above results in hand, we now turn to deduce the key {\it a priori} estimates on the solution $\left[f^\epsilon(t,x,v), E^\epsilon(t,x), \widetilde{B}^\epsilon(t,x)\right]$ of the Cauchy problem \eqref{f}, \eqref{f-initial}, \eqref{compatibility conditions}.
\begin{lemma}
It holds for all $0\leq t\leq T$ that
   \begin{eqnarray}\label{final-1-0}
     &&\frac{d}{dt}\mathcal{E}_{3,f^\epsilon}(t)+\mathcal{D}_{3,f^\epsilon}(t)
     \lesssim\mathcal{E}_{3,f^\epsilon}(t)\mathcal{D}_{3,f^\epsilon}(t).
   \end{eqnarray}
\end{lemma}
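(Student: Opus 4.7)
The plan is to establish \eqref{final-1-0} by combining four ingredients: (i) pure spatial derivative energy estimates on the full perturbation equation \eqref{f gn}; (ii) mixed $\partial^\alpha_\beta$ microscopic estimates for $|\beta|\geq 1$; (iii) the macroscopic/electromagnetic dissipation bound \eqref{f-R-Macro-0}; (iv) the crucial identity \eqref{Micro-identity-key} together with Lemma \ref{E-estimates}. A careful linear combination with small multipliers produces the full dissipation rate $\mathcal{D}_{3,f^\epsilon}(t)$ as defined in \eqref{Energy-dissipation-rate-functional-VMB}.

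\emph{Step 1 (pure spatial derivatives).} For each $|\alpha|\leq 3$, apply $\partial^\alpha$ to \eqref{f gn} and take the $L^2_{x,v}$ inner product with $\partial^\alpha f^\epsilon$. The coercivity \eqref{coercive-estimates} yields $\sigma_0\|\partial^\alpha\{\mathbf{I}-\mathbf{P}\}f^\epsilon\|^2_\nu$. The key cancellation is the identity
\[
-\bigl(\partial^\alpha(E^\epsilon\cdot v\mu^{1/2}q_1),\partial^\alpha f^\epsilon\bigr) = \tfrac{1}{2}\tfrac{d}{dt}\bigl(\|\partial^\alpha E^\epsilon\|^2+\|\partial^\alpha\widetilde{B}^\epsilon\|^2\bigr),
\]
obtained by contracting against Maxwell's equations in \eqref{f}. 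The magnetic Lorentz term $q_0\epsilon(v\times\mathfrak{B})\cdot\nabla_v f^\epsilon$ vanishes after integration by parts in $v$ (since $(v\times\mathfrak{B})\cdot v\equiv 0$), and the $\widetilde{B}^\epsilon$ counterpart together with the semilinear terms $\tfrac12 q_0E^\epsilon\cdot v f^\epsilon$ and $\Gamma(f^\epsilon,f^\epsilon)$ are estimated via Lemma \ref{lemma-nonlinear} and Sobolev embedding by $\mathcal{E}_{3,f^\epsilon}(t)\mathcal{D}_{3,f^\epsilon}(t)$.

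\emph{Step 2 (mixed derivatives, the delicate piece).} For $1\leq|\beta|$ and $|\alpha|+|\beta|\leq 3$, apply $\partial^\alpha_\beta$ to the projected microscopic equation \eqref{f-micro} and pair with $\partial^\alpha_\beta\{\mathbf{I}-\mathbf{P}\}f^\epsilon$. The coercivity produces the weighted microscopic dissipation. The obstructive term is $\bigl(\partial^\alpha_\beta(E^\epsilon\cdot v\mu^{1/2}q_1),\partial^\alpha_\beta\{\mathbf{I}-\mathbf{P}\}f^\epsilon\bigr)$, which, if bounded naively, would require $\|\partial^\alpha E^\epsilon\|$ — a quantity only controlled with an $\epsilon^{-1}$ loss in $\mathcal{D}_{3,f^\epsilon}(t)$. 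Here I would invoke \eqref{Micro-identity-key}: up to the Lorentz contribution $q_0\epsilon(v\times\mathfrak{B})\cdot\nabla_v\mathbf{P}f^\epsilon$ which is transferred to the right-hand side, the problematic term is rewritten in terms of the combination $E^\epsilon+\epsilon b^{f^\epsilon}\times\mathfrak{B}$. Then Lemma \ref{E-estimates} produces, modulo an exact time derivative of a quantity dominated by $\mathcal{E}_{3,f^\epsilon}(t)$, a bound consisting only of $\|\nabla^{|\alpha|}\{\mathbf{I}-\mathbf{P}\}f^\epsilon\|^2$, $\|\nabla^{|\alpha|+1}f^\epsilon\|^2$, and the nonlinear remainder $\mathcal{E}_{3,f^\epsilon}(t)\mathcal{D}_{3,f^\epsilon}(t)$ — all of which are controlled by $\mathcal{D}_{3,f^\epsilon}(t)$. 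The remaining terms (the gain operator $\Gamma$, the electric acceleration $\tfrac12 q_0E^\epsilon\cdot v f^\epsilon$, and the $\widetilde B^\epsilon$ Lorentz force) are routine and close via Lemma \ref{lemma-nonlinear}.

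\emph{Step 3 (closing the estimate).} Form a linear combination: Step 1 plus a small constant $\lambda_1$ times the sum of Step 2 estimates, plus a small constant $\lambda_2$ times \eqref{f-R-Macro-0}, and then absorb all the exact time-derivative terms produced in Step 2 and Lemma \ref{E-estimates} into a redefined functional equivalent to $\mathcal{E}_{3,f^\epsilon}(t)$ (this equivalence holds because each absorbed quantity is bounded by $\sum_{|\alpha|+|\beta|\leq 3}\|\partial^\alpha_\beta f^\epsilon\|^2+\|[E^\epsilon,\widetilde B^\epsilon]\|_{H^3_x}^2$). The microscopic dissipation, macroscopic dissipation, the charge-neutrality term $\|a^{f^\epsilon}_+-a^{f^\epsilon}_-\|^2$, and the degenerate electromagnetic pieces $\epsilon^2\|E^\epsilon+\epsilon b^{f^\epsilon}\times\mathfrak{B}\|^2+\epsilon^2\|[\nabla_xE^\epsilon,\nabla_x\widetilde B^\epsilon]\|^2_{H^1_x}$ then assemble to give exactly $\mathcal{D}_{3,f^\epsilon}(t)$ on the left, while all cubic and higher nonlinearities are uniformly bounded by $\mathcal{E}_{3,f^\epsilon}(t)\mathcal{D}_{3,f^\epsilon}(t)$. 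The main obstacle throughout is precisely the mixed-derivative microscopic estimate in Step 2, and the identity \eqref{Micro-identity-key} combined with Lemma \ref{E-estimates} is the device that keeps the whole argument independent of the light speed $\epsilon^{-1}$.
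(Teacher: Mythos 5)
Your proposal is correct and follows essentially the same route as the paper's proof: pure $\partial^\alpha$ estimates on \eqref{f gn} with the $\frac{d}{dt}\|[\partial^\alpha E^\epsilon,\partial^\alpha\widetilde B^\epsilon]\|^2$ cancellation through Maxwell, mixed $\partial^\alpha_\beta$ estimates on the projected equation \eqref{f-micro} where the identity \eqref{Micro-identity-key} plus Lemma \ref{E-estimates} replaces the untreatable $\partial^\alpha E^\epsilon$ contribution by a time-derivative term and lower-order dissipation, and a final small-multiplier linear combination with \eqref{f-R-Macro-0} (equivalently, \eqref{1-sum}, \eqref{2-sum}, \eqref{alpha-beta-sum-0}) to assemble $\mathcal{D}_{3,f^\epsilon}(t)$. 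The only cosmetic difference is the ordering of the small multipliers $\lambda_1,\lambda_2$, but the absorption mechanism is the same.
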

\begin{proof} For the brevity of the presentation, the proof of this lemma is divided into the following two steps:\\

\noindent{\bf Step 1:}\quad Firstly, multiplying \eqref{f gn} by $f^\epsilon$ and integrating the resulting identity over $\mathbb{R}^3_v\times\mathbb{R}^3_x$, one has
\begin{eqnarray}\label{0-order-micro}
   && \frac{d}{dt}\left\|\left[f^\epsilon,E^\epsilon,\widetilde{B}^\epsilon\right](t)\right\|^2+\left\|\{{\bf I-P}\}f^\epsilon(t)\right\|_\nu^2\nonumber\\
   &\lesssim&
  \underbrace{\frac12 \left(q_0 E^\epsilon(t)\cdot v f^\epsilon(t), f^\epsilon(t)\right)}_{I_1}
  +\underbrace{\left( {\Gamma}\left(f^\epsilon(t), f^\epsilon(t)\right), f^\epsilon(t)\right)}_{I_2}.
    \end{eqnarray}

For $I_1$, one can easily get from Sobolev's inequality that
\begin{eqnarray*}
I_1
&\lesssim&\mathcal{E}_{2,f^\epsilon}(t)\mathcal{D}_{2,f^\epsilon}(t)+\eta\mathcal{D}_{2,f^\epsilon}(t),
\end{eqnarray*}
while by employing the estimate \eqref{nonlinear-1} on the nonlinear collision operator $\Gamma_\pm(g_1,g_2)$ obtained in Lemma \ref{lemma-nonlinear}, one can deduce that
\begin{eqnarray*}
  I_2\lesssim \mathcal{E}_{2,f^\epsilon}(t)\mathcal{D}_{2,f^\epsilon}(t)+\eta\mathcal{D}_{2,f^\epsilon}(t).
\end{eqnarray*}

Plugging the above estimates on $I_1\sim I_2$ into \eqref{0-order-micro}, one arrives at
\begin{eqnarray}\label{1-sum}
&& \frac{d}{dt}\left\|\left[f^\epsilon,E^\epsilon,\widetilde{B}^\epsilon\right](t)\right\|^2+\left\|\{{\bf I-P}\}f^\epsilon(t)\right\|_\nu^2\\  \nonumber
&\lesssim&\mathcal{E}_{2,f^\epsilon}(t)\mathcal{D}_{2,f^\epsilon}(t)+\eta\mathcal{D}_{2,f^\epsilon}(t).
\end{eqnarray}

Next applying $\partial^\alpha$ with $1\leq|\alpha|\leq 3$ to \eqref{f gn}, multiplying the resulting identity by $\partial^\alpha f^\epsilon$, one has by integrating the final result with respect to $v$ and $x$ over $\mathbb{R}^3_v\times\mathbb{R}^3_x$ that
 \begin{eqnarray}\label{x-derivatives}
   && \frac{d}{dt}\left\|\partial^\alpha\left[f^\epsilon,E^\epsilon,\widetilde{B}^\epsilon\right](t)\right\|^2
   +\left\|\partial^\alpha\{{\bf I-P}\}f^\epsilon(t)\right\|_\nu^2\nonumber\\
   &\lesssim&
  -\underbrace{\left(\partial^\alpha\left\{ q_0 E^\epsilon(t)\cdot\nabla_{v}f^\epsilon(t)\right\},\partial^\alpha f^\epsilon(t)\right)}_{I_{3}}+\underbrace{\frac12 \left(\partial\left\{q_0 E^\epsilon(t)\cdot v f^\epsilon(t)\right\}, \partial^\alpha f^\epsilon(t)\right)}_{I_4}\nonumber\\
  &&-\underbrace{\left(\partial^\alpha\left\{\epsilon q_0 v\times \widetilde{B}^\epsilon(t)\cdot\nabla_vf^\epsilon(t)\right\},\partial^\alpha f^\epsilon(t)\right)}_{I_5}
+\underbrace{\left(\partial ^\alpha {\Gamma}\left(f^\epsilon(t),f^\epsilon(t)\right),\partial^\alpha f^\epsilon(t)\right)}_{I_6}.\nonumber
    \end{eqnarray}

 For $I_3$, one has from Sobolev's inequality that
 \begin{eqnarray*}
   I_3&\lesssim & \left\|\partial^{e_i}E^\epsilon(t)\right\|_{L^\infty_x}\left\|\partial^{\alpha-e_i}_{e_i}f^\epsilon(t)\right\| \left\|\partial^\alpha f^\epsilon(t)\right\| \\ &&+\left\|\partial^{2e_i}E^\epsilon(t)\right\|_{L^3_x} \left\|\partial^{\alpha-2e_i}_{e_i}f^\epsilon(t)\right\|_{L^6_xL^2_v}\left\|\partial^\alpha f^\epsilon(t)\right\| +\left\|\partial^\alpha E^\epsilon(t)\right\|\left\|\nabla_vf^\epsilon(t)\right\|_{L^\infty_xL^2_v}\left\|\partial^\alpha f^\epsilon(t)\right\|\\
   &\lesssim&\left\|\partial^{e_i}E^\epsilon(t)\right\|^2_{L^\infty_x}\left\|\partial^{\alpha-e_i}_{e_i}f^\epsilon(t)\right\|^2+
   \left\|\partial^{e_i} E^\epsilon(t)\right\|_{L^3_x}^2\left\|\partial^{\alpha-2e_i}_{e_i}f^\epsilon(t)\right\|^2_{L^6_xL^2_v}\\
   &&+\left\|\partial^\alpha E^\epsilon(t)\right\|^2\left\|\nabla_vf^\epsilon(t)\right\|^2_{L^\infty_xL^2_v}
   +\eta\left\|\partial^\alpha f^\epsilon(t)\right\|^2\\
   &\lesssim&\mathcal{E}_{3,f^\epsilon}(t)\mathcal{D}_{3,f^\epsilon}(t)+\eta\mathcal{D}_{3,f^\epsilon}(t).
 \end{eqnarray*}

 Similarly, one can also get from Lemma \ref{lemma-nonlinear} and Sobolev's inequality that
 \begin{equation*}
   I_{4}+I_5+I_6\lesssim\mathcal{E}_{3,f^\epsilon}(t)\mathcal{D}_{3,f^\epsilon}(t)+\eta\mathcal{D}_{3,f^\epsilon}(t).
 \end{equation*}

Substituting the above estimates on $I_3\sim I_{6}$ into \eqref{x-derivatives} and by exploiting the estimate \eqref{1-sum}, one has
 \begin{equation}\label{2-sum}
   \frac{d}{dt}\left\|\partial^\alpha\left[f^\epsilon,E^\epsilon,\widetilde{B}^\epsilon\right](t)\right\|^2+\left\|\partial^\alpha{\bf \{I-P\}}f^\epsilon(t)\right\|^2\lesssim\mathcal{E}_{3,f^\epsilon}(t)\mathcal{D}_{3,f^\epsilon}(t)+\eta\mathcal{D}_{3,f^\epsilon}(t)
 \end{equation}
holds for $1\leq |\alpha|\leq 3$.\\

\noindent{\bf Step 2:}\quad
Applying $\partial^\alpha_\beta$ to \eqref{f-micro}, multiplying the result by $\partial^\alpha_\beta{\{\bf I-P\}}f^\epsilon$ with $|\beta|\geq 1$ and integrating the final resulting identity with respect to $v$ and $x$ over $\mathbb{R}^3_v\times\mathbb{R}^3_x$, one has
 \begin{eqnarray}\label{alpha-beta-e}
   &&\frac{d}{dt}\left\|\partial^\alpha_\beta\{{\bf I-P}\}f^\epsilon(t)\right\|^2
   +\left\|\partial^\alpha_\beta\{{\bf I-P}\}f^\epsilon(t)\right\|_\nu^2\nonumber\\
   &\lesssim&\eta\sum_{|\beta'|<|\beta|}\left\|\partial^\alpha_{\beta'}\{{\bf I-P}\}f^\epsilon(t)\right\|_\nu^2+C_\eta\left\|\partial^{\alpha+e_i}_{\beta-e_i}\{{\bf I-P}\}f^\epsilon(t)\right\|^2
   +\left\|\nabla_x f^\epsilon(t)\right\|_\nu^2\\
   &&+\underbrace{\left(\partial^{\alpha}_\beta\left\{\left\{E^\epsilon+\epsilon b^{f^\epsilon}\times \mathfrak{B}\right\} \cdot v \mu^{1/2}q_1\right\}, \partial^\alpha_\beta{\{\bf I-P\}}f^\epsilon(t)\right)}_{I_{7}}+\underbrace{\left(\partial^{\alpha}_\beta{\tilde{G}^\epsilon(t)}, \partial^\alpha_\beta{\{\bf I-P\}}f^\epsilon(t)\right)}_{I_{8}}.\nonumber
 \end{eqnarray}

 By employing a similar argument used to estimate $I_5$ and $I_{6}$, one has
 \[I_{7}\lesssim
 \mathcal{E}_{3,f^\epsilon}(t)\mathcal{D}_{3,f^\epsilon}(t)+\eta\mathcal{D}_{3,f^\epsilon}(t).\]

To control $I_{8}$, one has by \eqref{E-R-estimates} and Lemma \ref{lemma-nonlinear} that
\begin{eqnarray*}
  I_{8}&=&(-1)^{|\beta|}
  \left(\partial^{\alpha}_{2\beta}\left\{\left\{E^\epsilon+\epsilon b^{f^\epsilon}\times \mathfrak{B}\right\}_i \cdot v_i \mu^{1/2}\right\}, \partial^\alpha_\beta{\{\bf I-P\}}f^\epsilon_+(t)-\partial^\alpha_\beta{\{\bf I-P\}}f^\epsilon_-(t)\right)\\
  &=&(-1)^{|\beta|}\left(\partial^{\alpha}\left\{E^\epsilon+\epsilon b^{f^\epsilon}\times \mathfrak{B}\right\}_i \cdot \partial_{2\beta}\left\{v_i \mu^{1/2}\right\}, \partial^\alpha{\{\bf I-P\}}f^\epsilon_+(t)-\partial^\alpha{\{\bf I-P\}}f^\epsilon_-(t)\right)\\
    &=&(-1)^{|\beta|}\left(\partial^{\alpha}\left\{E^\epsilon+\epsilon b^{f^\epsilon}\times \mathfrak{B}\right\}_i,\left\langle \partial_{2\beta}\left\{v_i \mu^{1/2}\right\}, \partial^\alpha{\{\bf I-P\}}f^\epsilon_+(t)-\partial^\alpha{\{\bf I-P\}}f^\epsilon_-(t)\right\rangle\right)\\
    &\lesssim&\frac1{4C_\beta}\frac{d}{dt}\sum_{i=1}^3\left\|\left\langle \partial_{2\beta}\left\{v_i \mu^{1/2}\right\}, \partial^\alpha{\{\bf I_+-P_+\}}f^\epsilon(t)-\partial^\alpha{\{\bf I_--P_-\}}f^\epsilon(t)\right\rangle\right\|^2\\
  &&+\left\|\nabla^{|\alpha|}\{{\bf I-P}\}f^\epsilon(t)\right\|^2+\left\|\nabla^{|\alpha|+1}f^\epsilon(t)\right\|^2
 +\mathcal{E}_{3,f^\epsilon}(t)\mathcal{D}_{3,f^\epsilon}(t).
\end{eqnarray*}

Plugging the estimates on $I_{7}$ and $I_{8}$ into \eqref{alpha-beta-e} gives
 \begin{eqnarray}\label{alpha-beta-e-1}
   &&\frac{d}{dt}\left\|\partial^\alpha_\beta\{{\bf I-P}\}f^\epsilon(t)\right\|^2
   +\left\|\partial^\alpha_\beta\{{\bf I-P}\}f^\epsilon(t)\right\|_\nu^2\nonumber\\
   &\lesssim&\eta\sum_{|\beta'|<|\beta|}\left\|\partial^\alpha_{\beta'}\{{\bf I-P}\}f^\epsilon(t)\right\|_\nu^2+C_\eta\left\|\partial^{\alpha+e_i}_{\beta-e_i}\{{\bf I-P}\}f^\epsilon(t)\right\|^2
   +\left\|\nabla_x f^\epsilon(t)\right\|_\nu^2\\
   &&+\mathcal{E}_{3,f^\epsilon}(t)\mathcal{D}_{3,f^\epsilon}(t)+\eta\mathcal{D}_{3,f^\epsilon}(t) +\left\|\nabla^{|\alpha|}\{{\bf I-P}\}f^\epsilon(t)\right\|^2
   +\left\|\nabla^{|\alpha|+1}f^\epsilon(t)\right\|^2\nonumber\\
   &&+\frac1{4C_\beta}\frac{d}{dt}\sum_{i=1}^3\left\|\left\langle \partial_{2\beta}\left\{v_i \mu^{1/2}\right\}, \partial^\alpha{\{\bf I_+-P_+\}}f^\epsilon(t)-\partial^\alpha{\{\bf I_--P_-\}}f^\epsilon(t)\right\rangle\right\|^2.\nonumber
   \end{eqnarray}

Taking the summation over
$\{|\beta|=m,|\alpha|+|\beta|\leq 3\}$ for each given $1\leq m\leq3$, and then taking the proper linear
combination of those estimates with properly chosen constants,  one has
 \begin{eqnarray}\label{alpha-beta-sum-0}
   &&\frac{d}{dt}\sum_{|\alpha|+|\beta|\leq 3,|\beta|\geq1}\left\|\partial^\alpha_\beta\{{\bf I-P}\}f^\epsilon(t)\right\|^2+\sum_{|\alpha|+|\beta|\leq 3,|\beta|\geq1}\left\|\partial^\alpha_\beta\{{\bf I-P}\}f^\epsilon(t)\right\|_\nu^2\nonumber\\
   &\lesssim&\sum_{\alpha'\leq 3}\left\|\partial^{\alpha'}\{{\bf I-P}\}f^\epsilon(t)\right\|_\nu^2
   +\left\|\nabla_x {\bf P}f^\epsilon(t)\right\|_{H^2_xL^2_v}^2
  +\mathcal{E}_{3,f^\epsilon}(t)\mathcal{D}_{3,f^\epsilon}(t)+\eta\mathcal{D}_{3,f^\epsilon}(t)\\
   &&+\frac1{4C_\beta}\frac{d}{dt}\sum_{i=1}^3\left\|\left\langle \partial_{2\beta}\left\{v_i \mu^{1/2}\right\}, \partial^\alpha{\{\bf I_+-P_+\}}f^\epsilon(t)-\partial^\alpha{\{\bf I_--P_-\}}f^\epsilon(t)\right\rangle\right\|^2.\nonumber
   \end{eqnarray}

 A proper linear combination of \eqref{f-R-Macro-0}, \eqref{1-sum}, \eqref{2-sum} and \eqref{alpha-beta-sum-0} yields \eqref{final-1-0}. This completes the proof of Lemma 3.7.
\end{proof}

\subsection{The proof of Theorem \ref{Th1.1}}

Now we are ready to deduce the desired {\it a priori} estimates on the solution $\left[f^\epsilon(t,x,v), E^\epsilon(t,x),\right.$ $\left. \widetilde{B}^\epsilon(t,x)\right]$ of the Cauchy problem \eqref{f}, \eqref{f-initial}, \eqref{compatibility conditions} defined on the strip $\Pi_T=[0,T]\times\mathbb{R}^3\times\mathbb{R}^3$ and complete the proof of Theorem \ref{Th1.1}.

Suppose that the Cauchy problem (\ref{f}), (\ref{f-initial}), \eqref{compatibility conditions} admits a unique solution $\left[f^\epsilon(t,x,v),  E^\epsilon(t,x),\right.$ $\left. \widetilde{B}^\epsilon(t,x)\right]$ on the strip $\Pi_T=[0,T]\times\mathbb{R}^3_x\times\mathbb{R}^3_v$ and satisfy the following {\it a priori assumption}
\begin{equation}\label{the-a-priori-estimates}
X(t)=\sum_{|\alpha|+|\beta|\leq 3}\left\|\partial^\alpha_\beta f^\epsilon(t)\right\|^2+\left\|\left[E^\epsilon(t),\widetilde{B}^\epsilon(t)\right]\right\|^2_{H^3_x}\leq M, \quad 0\leq t\leq T
\end{equation}
for some sufficiently small positive constant $M>0$.

Under the {\it a priori} estimates \eqref{the-a-priori-estimates}, from \eqref{final-1-0}, integrating from $0$ to $t$, one has for all $0\leq t\leq T$ that
\begin{equation}\label{A-priori-estimate}
\sup_{0\leq s\leq t}\left\{\mathcal{E}_{3,f^\epsilon}(s)\right\}\lesssim \mathcal{E}_{3,f^\epsilon}(0),
\end{equation}
from which one can deduce \eqref{th1.1-1} immediately.

Having obtained the estimate \eqref{A-priori-estimate}, one can then get the global solvability result for the Cauchy problem  (\ref{f}), (\ref{f-initial}), \eqref{compatibility conditions}  by the continuation argument and as a by-product of such an argument, one can show that the estimate \eqref{A-priori-estimate} holds for all $t\in\mathbb{R}^+$. This completes the proof of Theorem \ref{Th1.1}.

\section{The proof of Theorem \ref{Th1.3}}
This section is devoted to proving Theorem \ref{Th1.3}. To this end, suppose that $\left[f^{m,\epsilon}(t,x,v), E^{m,\epsilon}(t,x), {B}^{m,\epsilon}(t,x)\right]$ is a unique solution of the Cauchy problem \eqref{f-R-vector}-\eqref{f-r-s-compatibility conditions} and for $i=1,2,\cdots, m-1$, let $\left[f^{P,\epsilon}(t,x,v), E^{P,\epsilon}(t,x)\right]$, $\left[f^{i,\epsilon}(t,x,v), E^{i,\epsilon}(t,x)\right]$, and $\left[f^{m,\epsilon}(t,x,v), E^{m,\epsilon}(t,x), B^{m,\epsilon}(t,x)\right]$ be solutions of the Cauchy problems \eqref{f-P-sign}-\eqref{VPB-sign-IC}, \eqref{f-i-vector}-\eqref{f-i-e-b-compatibility conditions}, and \eqref{f-R-vector}-\eqref{f-r-s-compatibility conditions}, respectively, defined on the strip $\Pi_T=[0,T]\times\mathbb{R}^3\times\mathbb{R}^3$ for some positive constant $T>0$, we now turn to deduce certain energy type {\it a priori} estimates on $\left[f^{m,\epsilon}(t,x,v), E^{m,\epsilon}(t,x), \right.$ $\left.B^{m,\epsilon}(t,x)\right]$ in terms of its initial data in the coming subsections. The first result is on the macro-structure of $\left[f^{m,\epsilon}(t,x,v), E^{m,\epsilon}(t,x), B^{m,\epsilon}(t,x)\right]$.

\subsection{Macro-structure for $\left[f^{m,\epsilon}(t,x,v), E^{m,\epsilon}(t,x), B^{m,\epsilon}(t,x)\right]$}
In this section, the first result is concerned with the dissipative effect of the macroscopic quantities such as ${\bf P}f^{m,\epsilon}(t,x,v)$, $a^{f^{m,\epsilon}}_+(t,x)-a^{f^{m,\epsilon}}_-(t,x)$, etc. To do so, we first need the following identities concerning the macroscopic part of $f^{m,\epsilon}(t,x,v)$.
\begin{lemma}
It holds that
\begin{eqnarray}\label{Macro-f-R-equation1}
&&\partial_t\left(\frac{a^{f^{m,\epsilon}}_++a^{f^{m,\epsilon}}_-}2\right)+\nabla_x\cdot b^{f^{m,\epsilon}}=0,\nonumber\\
&&\partial_tb^{f^{m,\epsilon}}_j+\partial_j\left(\frac{a^{f^{m,\epsilon}}_+ +a^{f^{m,\epsilon}}_-}2+2c^{f^{m,\epsilon}}\right)
+\sum\limits_{k=1}^3\frac{\partial_k\mathbb{A}_{jk}(\{{\bf I-P}\}{f^{m,\epsilon}}\cdot [1,1])}2\nonumber\\
&=&\frac{a^{f^{P,\epsilon}}_+-a^{f^{P,\epsilon}}_-}{2}E^{m,\epsilon}_{j}+\frac{a^{f^{m,\epsilon}}_+ -a^{f^{m,\epsilon}}_-}{2}E^{P,\epsilon}_j
+\sum_{j_1+j_2\geq m,\atop 0<j_1,j_2<m}\frac{a^{f^{j_2,\epsilon}}_+-a^{f^{j_2,\epsilon}}_-}{2}E^{j_1,\epsilon}_j \nonumber\\
&&+\epsilon\left[G^{i,\epsilon}\times \left\{B^{P}
+\sum_{j_1=1}^{m-1}\epsilon^{j_1}B^{j_1}\right\}\right]_j,\quad j=1,2,3,\\
&&\partial_tc^{f^{m,\epsilon}}+\frac13\nabla_x\cdot b^{f^{m,\epsilon}}+\frac56\sum\limits_{j=1}^3\partial_j \mathbb{B}_j\left(\{{\bf I-P}\}{f^{i,\epsilon}}\cdot [1,1]\right)\nonumber\\
&=&\frac16 G^{P,\epsilon}\cdot E^{m,\epsilon}+\frac16 G^{m,\epsilon}\cdot E^{P,\epsilon}+\frac16\sum_{j_1+j_2\geq m,\atop 0<j_1,j_2<m} G^{j_2,\epsilon}\cdot E^{j_1,\epsilon},\nonumber\\
&&\partial_t\left(a^{f^{m,\epsilon}}_+-a^{f^{m,\epsilon}}_-\right)+\nabla_x\cdot G^{m,\epsilon}=0,\nonumber\\
&&\partial_tG^{m,\epsilon}+\nabla_x\left(a^{f^{m,\epsilon}}_+-a^{f^{m,\epsilon}}_-\right) -2E^{m,\epsilon}+\nabla_x\cdot \mathbb{A}\left(\{{\bf I-P}\}{f^{m,\epsilon}}\cdot q_1\right)\nonumber\\
&=&E^{P,\epsilon}\left(a^{f^{m,\epsilon}}_++a^{f^{m,\epsilon}}_-\right)+ E^{m,\epsilon}\left(a^{f^{P,\epsilon}}_++a^{f^{P,\epsilon}}_-\right) \nonumber\\
&&+\sum_{j_1+j_2\geq m,\atop 0<j_1,j_2<m}\epsilon^{j_1+j_2-m} E^{j_1,\epsilon}\left(a^{f^{j_2,\epsilon}}_++a^{f^{j_2,\epsilon}}_-\right)
+2\epsilon b^{f^{m,\epsilon}}\times \left\{B^{P}
+\sum_{j_1=1}^{m-1}\epsilon^{j_1}B^{j_1}\right\}\nonumber\\
&&+\left\langle [v,-v]\mu^{1/2},-{ L} f^{m,\epsilon}+{\Gamma}\left(f^{P,\epsilon},f^{m,\epsilon}\right)
  +{\Gamma}\left(f^{m,\epsilon},f^{P,\epsilon}\right)
  +\sum_{j_1+j_2\geq m,\atop 0<j_1,j_2<m} \epsilon^{j_1+j_2-m}\Gamma\left(f^{j_1,\epsilon},f^{j_2,\epsilon}\right) \right\rangle,\nonumber
\end{eqnarray}
and
\begin{eqnarray}\label{Micro-f-i-equation-1}
&&\frac12\partial_t\mathbb{A}_{jk}\left(\{{\bf I-P}\}{f^{m,\epsilon}}\cdot [1,1]\right)+\partial_kb^{f^{m,\epsilon}}_j+\partial_jb^{f^{m,\epsilon}}_k -\frac23\delta_{jk}\nabla_x\cdot b^{f^{m,\epsilon}}
-\frac53\delta_{jk}\nabla_x\cdot \mathbb{B}\left(\{{\bf I-P}\}{f^{m,\epsilon}}\cdot [1,1]\right)\nonumber\\
&=&\frac12\mathbb{A}_{jk}\left(r^{m,\epsilon}_{+}+r^{m,\epsilon}_{-}+g^{m,\epsilon}_{+} +g^{m,\epsilon}_{-}\right),\\
&&\frac12\partial_t \mathbb{B}_{k}\left(\{{\bf I-P}\}{f^{m,\epsilon}}\cdot [1,1]\right)+\partial_kc^{f^{m,\epsilon}}=\frac12\mathbb{B}_{k}\left(r^{m,\epsilon}_{+} +r^{m,\epsilon}_{-}+g^{m,\epsilon}_{+}+g^{m,\epsilon}_{-}\right),\nonumber
\end{eqnarray}
where
\begin{eqnarray}\label{def-f-R-r-g}
r^{m,\epsilon}_{\pm}&=&- v\cdot\nabla_x\{{\bf I_\pm-P_\pm}\}f^{m,\epsilon} -{ L}_\pm f^{m,\epsilon},\nonumber\\
g^{m,\epsilon}_{\pm}&=&\pm\frac12  E^{m,\epsilon}\cdot v f^{P,\epsilon}_{\pm}
\mp E^{m,\epsilon}\cdot\nabla_{ v  }f^{P,\epsilon}_{\pm}
\pm \frac12 E^{P,\epsilon}\cdot v f^{m,\epsilon}_{\pm}
\mp E^{P,\epsilon}\cdot\nabla_{ v  }f^{m,\epsilon}_{\pm}\nonumber\\
&&\pm\frac12\sum_{j_1+j_2\geq m,\atop 0<j_1,j_2<m} \epsilon^{j_1+j_2-m}E^{j_1,\epsilon}\cdot vf^{j_2,\epsilon}_{\pm}\mp\sum_{j_1+j_2\geq m,\atop 0<j_1,j_2<m} \epsilon^{j_1+j_2-m}E^{j_1,\epsilon}\cdot\nabla_vf^{j_2,\epsilon}_{\pm}
\pm\frac12\sum_{0<j_1< m} \epsilon^{j_1}E^{j_1,\epsilon}
\cdot vf^{m,\epsilon}_{\pm}\nonumber\\
&&\mp\sum_{0<j_1< m}\epsilon^{j_1}E^{j_1,\epsilon}
\cdot\nabla_vf^{m,\epsilon}_{\pm}\pm\frac12\sum_{0<j_1< m} \epsilon^{j_1}E^{m,\epsilon}
\cdot vf^{j_1,\epsilon}_{\pm}\mp\sum_{0<j_1< m} \epsilon^{j_1}E^{m,\epsilon}\cdot\nabla_vf^{j_1,\epsilon}_{\pm}\nonumber\\
&&\mp\epsilon \left(v\times B^{m,\epsilon}\right)\cdot \nabla_v \left\{f^{P,\epsilon}_{\pm}+\sum_{i=1}^{m-1}\epsilon^i f^{i,\epsilon}_{\pm}\right\}\mp \epsilon \left\{v\times \left(B^{P}
+\sum_{i=1}^{m-1}\epsilon^{i}B^{i}\right)\right\}\cdot\nabla_vf^{m,\epsilon}_{\pm}\\
&&\pm \frac {\epsilon^{m}} 2 E^{m,\epsilon}\cdot v f^{m,\epsilon}_{\pm}\mp \epsilon^{m} E^{m,\epsilon}\cdot\nabla_{ v  }f^{m,\epsilon}_{\pm}\mp\epsilon^{m+1} \left(v\times B^{m,\epsilon}\right)\cdot\nabla_vf^{m,\epsilon}_{\pm}\nonumber\\
  &&+{\Gamma}_\pm\left(f^{P,\epsilon},f^{m,\epsilon}\right)
  +{\Gamma}_\pm\left(f^{m,\epsilon},f^{P,\epsilon}\right)
  +\epsilon^m{\Gamma}_\pm\left(f^{m,\epsilon},f^{m,\epsilon}\right)+\sum_{j_1+j_2\geq m,\atop 0<j_1,j_2<m}\epsilon^{j_1+j_2-m}\Gamma_\pm\left(f^{j_1,\epsilon},f^{j_2,\epsilon}\right),
  \nonumber\\
  G^{j,\epsilon}&=&\left\langle v\mu^{1/2},\{{\bf I-P}\}f^{j,\epsilon} \cdot q_1 \right\rangle,\quad j=1,2,\cdots,m.\nonumber
\end{eqnarray}
\end{lemma}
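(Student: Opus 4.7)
The plan is to derive both \eqref{Macro-f-R-equation1} and \eqref{Micro-f-i-equation-1} by taking moments of \eqref{f-R-vector} against the basis of the null space $\mathcal{N}$ and, for the non-fluid identities, against the distinguished elements $(v_jv_k-1)\mu^{1/2}$ and $(|v|^2-5)v_k\mu^{1/2}$ that generate $\mathbb{A}_{jk}$ and $\mathbb{B}_k$. I will use three repeated structural facts: first, $\langle Lg,\varphi\rangle_v = 0$ and $\langle\Gamma(g,h),\varphi\rangle_v = 0$ for every $\varphi\in\mathcal{N}$; second, the identity $(v\times A)\cdot v = 0$ together with $\nabla_v\cdot(v\times A) = 0$, which after integration by parts in $v$ annihilates every Lorentz term of the form $q_0\epsilon(v\times B)\cdot\nabla_v f$ when tested against $\mu^{1/2}$; and third, the orthogonality $\int v_i\mu^{1/2}\mathbf{P}_\pm f\, dv = b_i^{f}$, $\int v_i\mu^{1/2}\{\mathbf{I}-\mathbf{P}\}f\, dv = 0$ built into the definition \eqref{def-pf}.

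For the five equations in \eqref{Macro-f-R-equation1}, I successively test \eqref{f-R-vector} against $[1,1]\mu^{1/2}/2$, $[1,1]v_j\mu^{1/2}/2$, $[1,1](|v|^2-3)\mu^{1/2}/12$, $[1,-1]\mu^{1/2}$, and $[1,-1]v_j\mu^{1/2}$. The mass and mean-number laws (lines one and four) follow immediately since $L$, $\Gamma$, the $E\cdot v$ and $E\cdot\nabla_v$ bilinears, and the Lorentz terms all produce zero after integration; only the transport term survives and collapses to $\nabla_x\cdot b^{f^{m,\epsilon}}$ or $\nabla_x\cdot G^{m,\epsilon}$ via \eqref{macro-micro}. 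For the momentum-type equations, the transport part splits by macro-micro decomposition into the pressure gradient $\partial_j((a_++a_-)/2+2c)$ (respectively $\partial_j(a_+-a_-)$ for the $q_1$-moment) plus the stress contribution $\sum_k\partial_k\mathbb{A}_{jk}/2$; the scalar Poisson constraint in \eqref{f-r-s-e-b} converts $\nabla_x\cdot E^{m,\epsilon}$ into the $-2E^{m,\epsilon}$ term; integration by parts on $q_0E\cdot\nabla_v f$ and evaluation of the Gaussian moments of $v f$ produce exactly the products $(a^{f^{P,\epsilon}}_+ - a^{f^{P,\epsilon}}_-)E^{m,\epsilon}_j/2$ and the analogous cross and self contributions; finally the Lorentz forcing by $B^P + \sum\epsilon^j B^j$ yields $\epsilon[G^{m,\epsilon}\times(\cdots)]_j$ and $2\epsilon b^{f^{m,\epsilon}}\times(\cdots)$ because only the odd-in-$v$ component of $f^{m,\epsilon}$ survives the integral. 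The energy equation is analogous upon using $\int(|v|^2-3)^2\mu\,dv = 6$ and $\int(|v|^2-3)v_iv_j\mu\,dv = 2\delta_{ij}$.

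The identity \eqref{Micro-f-i-equation-1} is obtained by testing against the non-null elements directly. Since $\mathbb{A}_{jk}$ and $\mathbb{B}_k$ project onto $\mathcal{N}^\perp$, the linearized collision term $Lf^{m,\epsilon}$ is retained and, together with $-v\cdot\nabla_x\{\mathbf{I}-\mathbf{P}\}f^{m,\epsilon}$, is absorbed into $r^{m,\epsilon}_\pm$ as defined in \eqref{def-f-R-r-g}; the macroscopic pieces of the transport term are computed from the explicit Hermite structure of $v_jv_k\mu^{1/2}$ and $(|v|^2-5)v_k\mu^{1/2}$ and produce the $\partial_kb_j + \partial_jb_k - (2/3)\delta_{jk}\nabla_x\cdot b$ and $\partial_kc$ terms on the left, with the $\mathbb{B}$-trace contributing the $-(5/3)\delta_{jk}\nabla_x\cdot\mathbb{B}$ correction; all remaining electric, magnetic, and nonlinear-collision contributions are simply repackaged into $g^{m,\epsilon}_\pm$. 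I expect the main obstacle to be bookkeeping rather than analysis: the right-hand side of \eqref{f-R-vector} contains the double sums $\sum_{j_1+j_2\geq m,\,0<j_1,j_2<m}\epsilon^{j_1+j_2-m}E^{j_1,\epsilon}\cdot\nabla_v f^{j_2,\epsilon}$ and the companion $\Gamma$-sums, and matching them with the coefficients $(a^{f^{j_2,\epsilon}}_+\pm a^{f^{j_2,\epsilon}}_-)E^{j_1,\epsilon}$ and $G^{j_2,\epsilon}\cdot E^{j_1,\epsilon}$ on the right-hand side of \eqref{Macro-f-R-equation1} requires careful use of the $j_1\leftrightarrow j_2$ symmetry of the sum and integration by parts in $v$; no new analytic tool beyond those already invoked is needed.
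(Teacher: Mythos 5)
Your proposal matches the (implicit) approach the paper takes for this lemma: project the $f^{m,\epsilon}$-equation against the fluid test functions $[1,1]\mu^{1/2}/2$, $[1,1]v_j\mu^{1/2}/2$, $[1,1](|v|^2-3)\mu^{1/2}/12$, $[1,-1]\mu^{1/2}$, $[1,-1]v_j\mu^{1/2}$ and against the elements generating $\mathbb{A}_{jk}$ and $\mathbb{B}_k$, use the null-space vanishing of $L$ and $\Gamma$ where applicable, the identities $\nabla_v\cdot(v\times A)=0$ and $(v\times A)\cdot v=0$ for the Lorentz terms, and integration by parts in $v$. Two remarks on your exposition.

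First, you misattribute the $-2E^{m,\epsilon}$ term in the $G^{m,\epsilon}$-equation to the Poisson constraint of \eqref{f-r-s-e-b}. That constraint is never used in deriving \eqref{Macro-f-R-equation1} or \eqref{Micro-f-i-equation-1}; the term comes directly from testing the linear force term $-E^{m,\epsilon}\cdot v\,\mu^{1/2}q_1$ in \eqref{f-R-vector} against $[1,-1]v_j\mu^{1/2}$ and using $\int v_iv_j\mu\,dv=\delta_{ij}$. This is only a slip in bookkeeping and would correct itself if you wrote out the moment computation, since that linear term is unavoidable.

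Second, be explicit that $[1,-1]v_j\mu^{1/2}\notin\mathcal{N}$, whereas $[1,1]v_j\mu^{1/2}$ and $[1,1](|v|^2-3)\mu^{1/2}$ are in $\mathcal{N}$. Your blanket statement that testing against $\mathcal{N}$ annihilates $L$ and $\Gamma$ is correct, but it does not apply to the $G^{m,\epsilon}$-moment; that is precisely why the lemma retains the bracket $\langle[v,-v]\mu^{1/2},-Lf^{m,\epsilon}+\Gamma(\cdots)\rangle$ on the right-hand side of the last equation of \eqref{Macro-f-R-equation1} while no such contribution appears in the $b^{f^{m,\epsilon}}$- or $c^{f^{m,\epsilon}}$-equations. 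You implicitly get this right in your list of target equations, but the distinction should be stated so the reader sees why $L$ and $\Gamma$ survive only in the fifth line.
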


Based on the above lemma, one has by repeating the argument used in the proof of {\cite[Lemma 3.2]{Lei-Zhao-JFA-2014}} that
\begin{lemma}
There exists $G_{f^{m,\epsilon},E^{m,\epsilon},B^{m,\epsilon}}(t)$ satisfying
\[
G_{f^{m,\epsilon},E^{m,\epsilon},B^{m,\epsilon}}(t)\lesssim \sum_{|\alpha|\leq N}\left\|\partial^\alpha f^{m,\epsilon}(t)\right\|^2+\sum_{|\alpha|\leq N-1}\left\|\partial^\alpha\left[ E^{m,\epsilon}(t), B^{m,\epsilon}(t)\right]\right\|^2
\]
such that
  \begin{eqnarray}\label{mac-dis-f-R}
    &&\frac{d}{dt}G_{f^{m,\epsilon},E^{m,\epsilon},B^{m,\epsilon}}(t)
  +\sum_{1\leq|\alpha|\leq N}
    \left\|\partial^\alpha {\bf P}f^{m,\epsilon}(t)\right\|^2\nonumber\\ &&+\epsilon^2\left\|E^{m,\epsilon}(t)+\epsilon b^{f^{m,\epsilon}}(t)\times \left\{B^{P}
+\sum_{j=1}^{m-1}\epsilon^{j}B^{j}\right\}\right\|^2
+\epsilon^2\sum_{1\leq|\alpha|\leq N-1}\left\|\partial^\alpha \left [E^{m,\epsilon}(t), B^{m,\epsilon}(t)\right]\right\|^2\\
    &\lesssim&\sum_{|\alpha|\leq N}\left\|\partial^\alpha\{{\bf I-P}\}f^{m,\epsilon}(t)\right\|_\nu^2+\mathcal{E}_{f^{P,\epsilon},N}(t) \mathcal{D}_{f^{m,\epsilon},N}(t)+\sum_{j_1+j_2\geq m,\atop 0<j_1,j_2\leq m}\mathcal{E}_{f^{j_2,\epsilon},N}(t)\mathcal{D}_{f^{j_1,\epsilon},N}(t)\nonumber
  \end{eqnarray}
holds for all $0\leq t\leq T$.
\end{lemma}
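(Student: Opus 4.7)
The target estimate is a two-species Vlasov--Maxwell--Boltzmann analogue of estimate \eqref{f-R-Macro-0} in Section~2, but now adapted to the remainder system \eqref{f-R-vector}--\eqref{f-r-s-s-compatibility conditions}, so the plan is to mimic the combination of Lemma~3.2 and Lemma~3.3, treating the new source terms coming from the expansion \eqref{F-S-Expansion} as perturbations. The strategy is to build $G_{f^{m,\epsilon},E^{m,\epsilon},B^{m,\epsilon}}(t)$ as a sum of three interaction functionals: (i) one controlling $\nabla_x{\bf P}f^{m,\epsilon}$ via the fluid-type equations \eqref{Macro-f-R-equation1}--\eqref{Micro-f-i-equation-1}; (ii) one controlling $E^{m,\epsilon}+\epsilon b^{f^{m,\epsilon}}\times(B^P+\sum_{j=1}^{m-1}\epsilon^j B^j)$ via the $G^{m,\epsilon}$ equation; (iii) one controlling $\nabla_x B^{m,\epsilon}$ via the Maxwell system \eqref{f-r-s-e-b}. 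Adding with small weights should then produce the stated dissipation.

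For part (i), the idea is to mimic the proof of Lemma~3.2 cited from \cite{Lei-Zhao-JFA-2014}. Taking $\partial^\alpha$ of the equations \eqref{Macro-f-R-equation1} for $a^{f^{m,\epsilon}}_\pm, b^{f^{m,\epsilon}}, c^{f^{m,\epsilon}}$ together with the micro equations \eqref{Micro-f-i-equation-1} for $\mathbb{A}_{jk}(\{{\bf I-P}\}f^{m,\epsilon}\cdot[1,1])$ and $\mathbb{B}_k(\{{\bf I-P}\}f^{m,\epsilon}\cdot[1,1])$, I construct an interaction functional of the standard form $\sum_{|\alpha|\le N-1}\bigl(\partial^\alpha \mathbb{A}_{jk},\partial^{\alpha}\partial_k b^{f^{m,\epsilon}}_j\bigr)+\cdots$, whose time derivative produces $\|\nabla_x{\bf P}f^{m,\epsilon}\|_{H^{N-1}_x}^2$ up to $\|\partial^\alpha\{{\bf I-P}\}f^{m,\epsilon}\|_\nu^2$ and quadratic error terms. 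The new source terms (e.g.\ $a^{f^{P,\epsilon}}_\pm E^{m,\epsilon}_j$, $a^{f^{m,\epsilon}}_\pm E^{P,\epsilon}_j$, $G^{P,\epsilon}\cdot E^{m,\epsilon}$, the $\Gamma(f^{P,\epsilon},f^{m,\epsilon})$-type contributions, and the $\sum_{j_1+j_2\ge m}$ coupling terms) are all product terms bounded via Sobolev embedding and Lemma~\ref{lemma-nonlinear}; they split cleanly into $\mathcal{E}_{f^{P,\epsilon},N}\mathcal{D}_{f^{m,\epsilon},N}$, $\mathcal{E}_{f^{m,\epsilon},N}\mathcal{D}_{f^{P,\epsilon},N}$ and $\mathcal{E}_{f^{j_2,\epsilon},N}\mathcal{D}_{f^{j_1,\epsilon},N}$ cross terms (note that the symmetric combination will be symmetrized so only the stated form appears on the right-hand side).

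For part (ii), I read from the last line of \eqref{Macro-f-R-equation1} the identity
\[
2E^{m,\epsilon}+2\epsilon b^{f^{m,\epsilon}}\times \Bigl(B^P+\sum_{j=1}^{m-1}\epsilon^j B^j\Bigr)
=\partial_t G^{m,\epsilon}+\nabla_x(a^{f^{m,\epsilon}}_+-a^{f^{m,\epsilon}}_-)+\nabla_x\!\cdot\!\mathbb{A}(\{{\bf I-P}\}f^{m,\epsilon}\!\cdot\! q_1)-\text{(lower order)},
\]
then pair it with $\epsilon^2(E^{m,\epsilon}+\epsilon b^{f^{m,\epsilon}}\times(\cdots))$, integrate by parts in $t$ on the $\partial_tG^{m,\epsilon}$ term, and handle $\partial_t(E^{m,\epsilon}+\epsilon b\times(\cdots))$ by substituting Ampère's law $\epsilon\partial_t E^{m,\epsilon}=\nabla_x\times B^{m,\epsilon}-\epsilon\langle v\mu^{1/2},\{{\bf I-P}\}f^{m,\epsilon}\cdot q_1\rangle$ and $\partial_t b^{f^{m,\epsilon}}$ from the second line of \eqref{Macro-f-R-equation1}. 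This reproduces the Lemma~3.3 computation with the constant field $\mathfrak{B}$ replaced by $B^P+\sum\epsilon^j B^j$. For the higher-order derivatives $1\le|\alpha|\le N-1$, applying $\partial^\alpha$ to the same identity and pairing with $\epsilon^2\partial^\alpha E^{m,\epsilon}$ yields $\epsilon^2\|\partial^\alpha E^{m,\epsilon}\|^2$ up to $\epsilon^2\|\partial^\alpha\nabla_x\times B^{m,\epsilon}\|^2$ and lower order, exactly as in \eqref{E-R}. Finally, for $B^{m,\epsilon}$ I pair $\epsilon^2\partial^\alpha\nabla_x\times B^{m,\epsilon}$ with itself, use Ampère again to produce $\epsilon\partial^\alpha \partial_tE^{m,\epsilon}$ and integrate by parts, recovering $\epsilon^2\|\nabla^{|\alpha|+1}B^{m,\epsilon}\|^2$ up to $\epsilon^2\|\nabla^{|\alpha|+1}E^{m,\epsilon}\|^2$ as in \eqref{B-R}. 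Taking $\kappa>0$ small and adding gives part (iii).

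The main obstacle I expect is twofold. First, carefully bookkeeping the $\epsilon$-powers so that all cross terms from the expansion coupling (in particular the terms $\sum_{j_1+j_2\geq m}\epsilon^{j_1+j_2-m}E^{j_1,\epsilon}(a^{f^{j_2,\epsilon}}_++a^{f^{j_2,\epsilon}}_-)$ and the magnetic coupling $\epsilon b^{f^{m,\epsilon}}\times B^j$ for $j\ge 1$) do not spoil the $\epsilon^2$-weight in the electromagnetic dissipation on the left-hand side; the key algebraic point is that when pairing with $\epsilon^2(E^{m,\epsilon}+\epsilon b^{f^{m,\epsilon}}\times(\cdots))$, any extra $\epsilon$'s land harmlessly as positive powers. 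Second, since the constant magnetic fields $B^P,B^1,\ldots,B^{m-1}$ appear \emph{inside} the dissipated quantity $E^{m,\epsilon}+\epsilon b^{f^{m,\epsilon}}\times(B^P+\sum_{j=1}^{m-1}\epsilon^j B^j)$, the analogue of Lemma~\ref{Lemma-3.5} must be invoked with the composite constant $B^P+\sum\epsilon^j B^j$ in place of $\mathfrak{B}$; since that lemma only uses that the field is an $x$-independent constant vector (which is guaranteed by the analysis of \eqref{vpb-condition-s} and \eqref{F_i-s-Maxwell-con} preceding the theorem), the argument goes through verbatim. All remaining nonlinear terms are estimated by \eqref{nonlinear-1} and Sobolev embeddings in the standard way and absorbed into $\mathcal{E}_{f^{P,\epsilon},N}\mathcal{D}_{f^{m,\epsilon},N}+\sum_{j_1+j_2\ge m,0<j_1,j_2\le m}\mathcal{E}_{f^{j_2,\epsilon},N}\mathcal{D}_{f^{j_1,\epsilon},N}$, producing the claimed bound.
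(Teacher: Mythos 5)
Your plan matches the paper's approach: the paper proves Lemma 4.2 by combining the macro-structure identities \eqref{Macro-f-R-equation1}--\eqref{Micro-f-i-equation-1} with the interaction-functional method, exactly as done for the hard-sphere version in Lemmas 3.2--3.4, citing \cite[Lemma 3.2]{Lei-Zhao-JFA-2014} for the fluid-part dissipation. Your three-piece decomposition of $G_{f^{m,\epsilon},E^{m,\epsilon},B^{m,\epsilon}}(t)$ and the Amp\`ere-law manipulations for the electromagnetic dissipation reproduce the Lemma 3.3 computation with $\mathfrak{B}$ replaced by $B^P+\sum_{j}\epsilon^j B^j$, and your observation that extra positive $\epsilon$-powers from the expansion coupling only help to preserve the $\epsilon^2$ weights is correct. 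One small inaccuracy: Lemma \ref{Lemma-3.5} (the $\{{\bf I-P}\}$ projection identity) is not invoked in this macro-dissipation lemma — it enters only for the weighted mixed-derivative estimates later (Lemmas \ref{E-estimates} and \ref{lemma-nonhard-1}). What is actually used here is just the $G^{m,\epsilon}$ moment equation from \eqref{Macro-f-R-equation1} and the fact that $B^P+\sum_j\epsilon^j B^j$ is an $x$-independent constant vector; your argument still goes through, since the relevant fact is the same.
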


\subsection{Some estimates related to the electromagnetic field and the transport term}
The main purpose of this section is to derive certain estimates related to the electromagnetic field $\left[E^{m,\epsilon}(t,x), B^{m,\epsilon}(t,x)\right]$ and the linear transport term $v\cdot\nabla_xf^{m,\epsilon}(t,x,v)$. As pointed out before, compared with the problem on global solvability near Maxwellians for fixed light speed studied in \cite{Duan-Lei-Yang-Zhao-CMP-2017}, the difficulty we encountered here is due to the degeneracy of the dissipative effect of the electromagnetic field  $\left[E^{m,\epsilon}(t,x), B^{m,\epsilon}(t,x)\right]$ for large light speed, cf. the small parameter $\epsilon$ in front of terms related to the electromagnetic field  $\left[E^{m,\epsilon}(t,x), B^{m,\epsilon}(t,x)\right]$ in the definitions of the
 energy dissipation rate functionals $\mathcal{D}_{f^{m,\epsilon},n}(t)$, $\mathcal{D}_{f^{m,\epsilon},n,\ell,-\gamma}(t)$, and $\overline{\mathcal{D}}_{f^{m,\epsilon},n,\ell,-\gamma}(t)$ given by \eqref{def-D-n-g}.
 To deduce an $\epsilon-$independent estimates on $f^{m,\epsilon}(t,x,v)$, unlike \cite{Duan-Lei-Yang-Zhao-CMP-2017}, we had to use a different way to deal with all those terms related to the electromagnetic field  $\left[E^{m,\epsilon}(t,x), B^{m,\epsilon}(t,x)\right]$. Moreover, since weighted energy estimates will be used, we also need to control the terms related to the linear transport term $v\cdot\nabla_x f^{m,\epsilon}(t,,x,v)$ suitably.

\subsubsection{Some linear estimates related to the electromegnetic field and the transport term}
This section focuses on dealing with some estimates on some linear terms related to the electromagnetic field  $\left[E^{m,\epsilon}(t,x), B^{m,\epsilon}(t,x)\right]$ and the linear transport term $v\cdot\nabla_x f^{m,\epsilon}(t,,x,v)$.

Our first result is on some linear terms related to the electromagnetic field  $\left[E^{m,\epsilon}(t,x), B^{m,\epsilon}(t,x)\right]$.
\begin{lemma}\label{lemma-nonhard-1}
We have the following estimates:
\begin{itemize}
\item [(i).] For $|\alpha|+|\beta|\leq N$ with $|\beta|\geq1$ or $|\alpha|=|\beta|=0$,
it holds that
\begin{eqnarray}\label{E-R-nonhard-1}
&&\left(\partial_{\beta}^{\alpha}\left\{q_1\left\{E^{m,\epsilon}+\epsilon b^{f^{m,\epsilon}}\times \left\{B^{P}
+\sum_{j_1=1}^{m-1}\epsilon^{j_1}B^{j_1}\right\}\right\}\cdot v \mu^{1/2}\right\}, w^2_{\ell-|\beta|,\kappa}\partial^\alpha_\beta{\{\bf I-P\}}f^{m,\epsilon}\right)\nonumber\\
&=&(-1)^{|\beta|}\sum\limits_{i=1}^3\left(\left\{\partial^{\alpha}E^{m,\epsilon}+\epsilon b^{f^{m,\epsilon}}\times \left\{B^{P}
+\sum_{j_1=1}^{m-1}\epsilon^{j_1}B^{j_1}\right\}\right\}_i,\right.\nonumber\\
&&\left.\left\langle \partial_{\beta}\left\{w^2_{\ell-|\beta|,\kappa}\partial_\beta\left[v_i \mu^{1/2}\right]\right\}, \partial^\alpha{\{\bf I_+-P_+\}}f^{m,\epsilon}-\partial^\alpha{\{\bf I_--P_-\}}f^{m,\epsilon}\right\rangle\right)\\
&\lesssim&\frac 1{4C_\beta}\sum_{i=1}^3\frac{d}{dt}\left\|\left\langle \partial_{\beta}\left\{w^2_{\ell-|\beta|,\kappa}\partial_\beta\left[v_i \mu^{1/2}\right]\right\}, \partial^\alpha{\{\bf I_+-P_+\}}f^{m,\epsilon}(t)-\partial^\alpha{\{\bf I_--P_-\}}f^{m,\epsilon}(t)\right\rangle\right\|^2\nonumber\\
&&+\left\|\nabla^{|\alpha|}\{{\bf I-P}\}f^{m,\epsilon}(t)\right\|_\nu^2 +\left\|\nabla^{|\alpha|+1}f^{m,\epsilon}(t)\right\|^2_\nu
+\mathcal{E}_{f^{m,\epsilon},N}(t)\mathcal{D}_{f^{P,\epsilon},N}(t)+\sum_{j_1+j_2\geq m,\atop 0<j_1,j_2\leq m}\mathcal{E}_{f^{j_1,\epsilon},N}(t)\mathcal{D}_{f^{j_2,\epsilon},N}(t).\nonumber
\end{eqnarray}
Here
$$
C_\beta(t)=\int_{\mathbb{R}^3_v}
\left\{w_{\ell-|\beta|,\kappa}\partial_\beta\left(v_i\mu^{\frac12}\right)\right\}^2dv
$$
depends only on the time variable $t$ but satisfies $C_\beta\sim 1$ for $t\geq 0$;

\item[(ii).] For $|\alpha|\leq N-1$, it also holds that
\begin{eqnarray}\label{E-R-nonhard-2}
&&\left(\partial^{\alpha}\left\{q_1\left\{E^{m,\epsilon}+\epsilon b^{f^{m,\epsilon}}\times \left\{B^{P}
+\sum_{j_1=1}^{m-1}\epsilon^{j_1}B^{j_1}\right\}\right\}\cdot v \mu^{1/2}\right\}, w^2_{\ell-|\beta|,\kappa}\partial^\alpha f^{m,\epsilon}\right)\nonumber\\
  &=&\sum\limits_{i=1}^3\left(\partial^{\alpha}\left\{E^{m,\epsilon}+\epsilon b^{f^{m,\epsilon}}\times \left\{B^{P}
+\sum_{j_1=1}^{m-1}\epsilon^{j_1}B^{j_1}\right\}\right\}_i,\left\langle \left\{w^2_{\ell-|\beta|,\kappa}\left[v_i \mu^{1/2}\right]\right\}, \partial^\alpha f_{m,+}-\partial^\alpha f_{m,-}\right\rangle\right)\\ \nonumber
&\lesssim&\frac1{4{C_0}}\frac{d}{dt}\sum_{i=1}^3\left\|\left\langle \left\{w^2_{\ell-|\beta|,\kappa}\left[v_i \mu^{1/2}\right]\right\}, \partial^\alpha  f_{m,+}(t)-\partial^\alpha f_{m,-}(t)\right\rangle\right\|^2\\ \nonumber
&&+\left\|\nabla^{|\alpha|}f^{m,\epsilon}(t)\right\|_\nu^2 +\left\|\nabla^{|\alpha|+1}f^{m,\epsilon}(t)\right\|^2_\nu
+\mathcal{E}_{f^{m,\epsilon},N}(t)\mathcal{D}_{f^{P,\epsilon},N}(t) +\sum_{j_1+j_2\geq m,\atop 0<j_1,j_2\leq m}\mathcal{E}_{f^{j_1,\epsilon},N}(t)\mathcal{D}_{f^{j_2,\epsilon},N}(t).
\end{eqnarray}
Here again
$C_0(t)=\int_{\mathbb{R}^3_v}
\left\{w_{\ell-|\beta|,\kappa}\left(v_i\mu^{\frac12}\right)\right\}^2dv
$ depends only on $t$ and satisfies $C_0(t)\sim 1$.
\end{itemize}
 \end{lemma}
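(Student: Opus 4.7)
\textbf{Proof proposal for Lemma \ref{lemma-nonhard-1}.} My plan is to mirror the strategy that yielded Lemma \ref{E-estimates} in the hard sphere case, but upgraded to (a) the weighted setting with weight $w^2_{\ell-|\beta|,\kappa}$, and (b) the more complicated remainder equation \eqref{f-R-vector} with its background magnetic field $B^P+\sum_{j_1=1}^{m-1}\epsilon^{j_1}B^{j_1}$ and its numerous cross-interaction nonlinearities. The very first step is to establish the analogue of Lemma \ref{Lemma-3.5}, namely the identity
\[
\{{\bf I-P }\} \left\{E^{m,\epsilon} \cdot v \mu^{1/2}q_1-q_0\epsilon\Big(v\times \Big(B^{P}+\sum_{j_1=1}^{m-1}\epsilon^{j_1}B^{j_1}\Big)\Big)\cdot\nabla_v {\bf P}f^{m,\epsilon}\right\}=\Big\{E^{m,\epsilon}+\epsilon b^{f^{m,\epsilon}}\times \Big(B^{P}+\sum_{j_1=1}^{m-1}\epsilon^{j_1}B^{j_1}\Big)\Big\} \cdot v \mu^{1/2}q_1,
\]
which is proved exactly as in Lemma \ref{Lemma-3.5} since the constant vectors $B^P$, $B^i$ pass through $\{{\bf I-P}\}$ in the same fashion as $\mathfrak{B}$ did.

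Next, I apply $\{{\bf I-P}\}$ to equation \eqref{f-R-vector} to obtain a microscopic equation whose linear electromagnetic piece is exactly the right-hand side of the identity above, modulo a source term $\widetilde G^{m,\epsilon}$ collecting: the transport term $v\cdot\nabla_x {\bf P}f^{m,\epsilon}$, the commutator with $\{{\bf I-P}\}$, the linearized collision term $Lf^{m,\epsilon}$, the cross nonlinearities $E^{P,\epsilon}\cdot\nabla_v f^{m,\epsilon}$, $E^{m,\epsilon}\cdot\nabla_v f^{P,\epsilon}$, the sums $\sum \epsilon^{j_1+j_2-m}E^{j_1,\epsilon}\cdot\nabla_v f^{j_2,\epsilon}$, the magnetic terms involving $B^{m,\epsilon}$, the higher-order $\epsilon^m$ self-interaction, and the nonlinear collision contributions $\Gamma(f^{P,\epsilon},f^{m,\epsilon})$, $\Gamma(f^{m,\epsilon},f^{P,\epsilon})$, $\epsilon^m\Gamma(f^{m,\epsilon},f^{m,\epsilon})$ and $\sum \epsilon^{j_1+j_2-m}\Gamma(f^{j_1,\epsilon},f^{j_2,\epsilon})$. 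Testing this microscopic equation against $\partial_\beta\big[w^2_{\ell-|\beta|,\kappa}\partial_\beta(v_i\mu^{1/2})\big]$, taking the difference of the $+$ and $-$ components, and then applying $\partial^\alpha$ in $x$ produces, after integration by parts in $v$, an identity in which $2(-1)^{|\beta|}C_\beta\,\partial^\alpha\{E^{m,\epsilon}+\epsilon b^{f^{m,\epsilon}}\times(B^P+\sum \epsilon^{j_1}B^{j_1})\}_i$ appears explicitly as a linear term, where $C_\beta(t)\sim 1$ is the (strictly positive) moment $\int_{\mathbb R^3}|w_{\ell-|\beta|,\kappa}\partial_\beta(v_i\mu^{1/2})|^2\,dv$.

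Solving for $\partial^\alpha\{E^{m,\epsilon}+\epsilon b^{f^{m,\epsilon}}\times(\cdots)\}_i$, pairing the resulting expression with the tested moment of $\partial^\alpha\{{\bf I-P}\}f^{m,\epsilon}$ on the left-hand side of \eqref{E-R-nonhard-1}, and then using $\partial_t$ to extract a perfect time derivative of
\[
\sum_{i=1}^3\left\|\left\langle \partial_{\beta}\big\{w^2_{\ell-|\beta|,\kappa}\partial_\beta[v_i \mu^{1/2}]\big\}, \partial^\alpha\{{\bf I_+-P_+}\}f^{m,\epsilon}(t)-\partial^\alpha\{{\bf I_--P_-}\}f^{m,\epsilon}(t)\right\rangle\right\|^2,
\]
delivers the principal term on the right of \eqref{E-R-nonhard-1}. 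The residual pairings are absorbed using: for the transport and collision parts, $\|\nabla_x^{|\alpha|}\{{\bf I-P}\}f^{m,\epsilon}\|_\nu^2$ and $\|\nabla_x^{|\alpha|+1}f^{m,\epsilon}\|_\nu^2$ via $L^2$ estimates and the coercivity \eqref{coercive-estimates}; for the trilinear electromagnetic and $\Gamma$ contributions, the bilinear estimates of Lemma \ref{lemma-nonlinear} together with Sobolev embedding, which convert them into $\mathcal E_{f^{P,\epsilon},N}\mathcal D_{f^{m,\epsilon},N}$ and the cross-term $\sum_{j_1+j_2\ge m,\,0<j_1,j_2\le m}\mathcal E_{f^{j_1,\epsilon},N}\mathcal D_{f^{j_2,\epsilon},N}$ precisely as displayed. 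Part (ii) is parallel but simpler: one tests \eqref{f-R-vector} (not its microscopic projection) against $w^2_{\ell-|\beta|,\kappa}\partial^\alpha f^{m,\epsilon}$, uses the same elimination of $E^{m,\epsilon}+\epsilon b^{f^{m,\epsilon}}\times(\cdots)$ via the zeroth-order moment identity, and extracts the time derivative of the $v_i\mu^{1/2}$-moment squared, with $C_0(t)\sim 1$ playing the role of $C_\beta$.

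The main obstacle I anticipate is bookkeeping: the equation \eqref{f-R-vector} contains many interaction groups with distinct $\epsilon$-powers, and one must verify that every residual pairing can be absorbed either by the dissipation norms appearing in $\mathcal D_{f^{m,\epsilon},N}$ or by the cross-bilinear structure $\mathcal E_{f^{j_1,\epsilon},N}\mathcal D_{f^{j_2,\epsilon},N}$ without producing negative powers of $\epsilon$. The crucial observation that makes this possible is that the problematic electric-field term $E^{m,\epsilon}\cdot v\mu^{1/2}q_1$ is now re-expressed as a time derivative of a moment of $\{{\bf I-P}\}f^{m,\epsilon}$, thereby removing the degenerate $\|\partial^\alpha E^{m,\epsilon}\|$ altogether from the right-hand side, as is essential for closing the $\epsilon$-independent estimates in the subsequent sections.
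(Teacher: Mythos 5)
Your proposal follows the same strategy as the paper's proof: establish the $\{{\bf I-P}\}$ identity replacing $E^{m,\epsilon}\cdot v\mu^{1/2}q_1$ by the combination $\{E^{m,\epsilon}+\epsilon b^{f^{m,\epsilon}}\times(B^P+\sum_{j_1}\epsilon^{j_1}B^{j_1})\}\cdot v\mu^{1/2}q_1$ (this is precisely the mechanism built into the paper's microscopic equation \eqref{f-R-vect-I-P}), then test that equation against $\partial_\beta\{w^2_{\ell-|\beta|,\kappa}\partial_\beta[v_i\mu^{1/2}]\}$, take the $\pm$ difference, apply $\partial^\alpha$, and isolate $2(-1)^{|\beta|}C_\beta\partial^\alpha\{E^{m,\epsilon}+\epsilon b^{f^{m,\epsilon}}\times(\cdots)\}_i$ in order to extract a perfect time derivative and absorb the remainder via the collision and cross-interaction estimates. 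The only cosmetic deviation is that you invoke the coercivity estimate \eqref{coercive-estimates} when bounding the transport/collision residuals, whereas the paper's proof cites Lemmas \ref{lemma-nonlinear}, \ref{Lemma-5.3}, and \ref{Lemma-5.4} directly; the substance is the same and the estimate closes in the same way.
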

\begin{proof} To prove Lemma \ref{lemma-nonhard-1}, we can get first the following equation for ${\bf \{I-P\}}f^{m,\epsilon}$ by applying ${\bf \{I-P\}}$ to \eqref{f-i-vector}:
 \begin{eqnarray} \label{f-R-vect-I-P}
&&\partial_t{\bf \{I-P\}}f^{m,\epsilon}+ v\cdot\nabla_x{\bf \{I-P\}}f^{m,\epsilon}- \left\{E^{m,\epsilon}+\epsilon b^{f^{m,\epsilon}}\times \left\{B^{P}
+\sum_{j_1=1}^{m-1}\epsilon^{j_1}B^{j_1}\right\}\right\} \cdot v \mu^{1/2}q_1\nonumber\\
&&+v\cdot\nabla_x{\bf P}f-{\bf P}(v\cdot\nabla_x f)+{ L} f^{m,\epsilon}+\epsilon {\bf \{I-P\}}q_0\left\{v\times \left(B^{P}
+\sum_{j_1=1}^{m-1}\epsilon^{j_1}B^{j_1}\right)\right\}\cdot\nabla_v{\bf \{I-P\}}f^{m,\epsilon}\nonumber\\
&=& \frac12{\bf \{I-P\}}\left(q_0 E^{m,\epsilon}\cdot v f^{P,\epsilon}\right)-{\bf \{I-P\}}\left(q_0  E^{m,\epsilon}\cdot\nabla_vf^{P,\epsilon}\right)\nonumber\\
&&+\frac12{\bf \{I-P\}}\left(q_0\sum_{0<j_1< m} \epsilon^{j_1}E^{m,\epsilon}
\cdot vf^{j_1,\epsilon}\right)-{\bf \{I-P\}}\left(
q_0\sum_{0<j_1< m} \epsilon^{j_1}E^{m,\epsilon}\cdot\nabla_vf^{j_1,\epsilon}\right)\nonumber\\
&&-{\bf \{I-P\}}\left(
q_0\epsilon \left(v\times B^{m,\epsilon}\right)\cdot \nabla_v\left\{f^{P,\epsilon}+\sum_{i=1}^{m-1}\epsilon^if^{i,\epsilon}\right\}\right)\nonumber\\
&&+
\frac12{\bf \{I-P\}} \left(q_0\sum_{j_1+j_2\geq m,\atop 0<j_1,j_2<m} \epsilon^{j_1+j_2-m}E^{j_1,\epsilon}\cdot vf^{j_2,\epsilon}\right) -{\bf \{I-P\}}\left(q_0\sum_{j_1+j_2\geq m,\atop 0<j_1,j_2<m} \epsilon^{j_1+j_2-m}E^{j_1,\epsilon}\cdot \nabla_vf^{j_2,\epsilon}\right)\nonumber\\
&&+\frac12{\bf \{I-P\}}\left(q_0 E^{P,\epsilon}\cdot v f^{m,\epsilon}\right)+{\bf \{I-P\}}\left(q_0 E^{P,\epsilon}\cdot\nabla_{ v  }f^{m,\epsilon}\right)\\
&&+\frac12{\bf \{I-P\}}\left(q_0\sum_{0<j_1< m} \epsilon^{j_1}E^{j_1,\epsilon}
\cdot vf^{m,\epsilon}\right)+{\bf \{I-P\}}\left(q_0\sum_{0<j_1< m} \epsilon^{j_1}E^{j_1,\epsilon}
\cdot\nabla_vf^{m,\epsilon}\right)\nonumber\\
&&+ {\bf \{I-P\}}\left(\frac {\epsilon^{m}} 2q_0 E^{m,\epsilon}\cdot v f^{m,\epsilon}\right)+ {\bf \{I-P\}}\left(q_0\epsilon^{m}\left\{ E^{m,\epsilon}+ \epsilon v\times B^{m,\epsilon}\right\}\cdot\nabla_vf^{m,\epsilon}\right)\nonumber\\
&&+{\Gamma}\left(f^{P,\epsilon},f^{m,\epsilon}\right)
  +{\Gamma}\left(f^{m,\epsilon},f^{P,\epsilon}\right)
  +\sum_{j_1+j_2\geq m,\atop 0<j_1,j_2<m}\epsilon^{j_1+j_2-m}\Gamma\left(f^{j_1,\epsilon},f^{j_2,\epsilon}\right)\nonumber\\
  &&
  +\sum_{1\leq j_1\leq m-1}\epsilon^{j_1}
  \left\{\Gamma\left(f^{j_1,\epsilon},f^{m,\epsilon}\right)
  +\Gamma\left(f^{m,\epsilon},f^{j_1,\epsilon}\right)\right\}
  +\epsilon^m{\Gamma}\left(f^{m,\epsilon},f^{m,\epsilon}\right)\nonumber\\
  &:=&\left[I^{m,\epsilon,+}_{mic,mix}(t),I^{m,\epsilon,-}_{mic,mix}(t)\right],\nonumber
  \end{eqnarray}
where we $[I^{m,\epsilon,+}_{mic,mix}(t),I^{m,\epsilon,-}_{mic,mix}(t)]$ to denote the all terms in the right-hand side of the above vector equality for brevity.

Multiplying \eqref{f-R-vect-I-P} by $\partial_{\beta}\left\{w^2_{\ell-|\beta|,\kappa}\partial_\beta\left[v_i\mu^{1/2}\right]\right\}$ and integrating the resulting identity with respect to $v$ over $\mathbb{R}^3$, one has
  \begin{eqnarray}\label{key1}
  &&\partial_t\left\langle \partial_{\beta}\left\{w^2_{\ell-|\beta|,\kappa}\partial_\beta\left[v_i\mu^{1/2}\right]\right\},\{{\bf I_\pm-P_\pm}\}f^{m,\epsilon}\right\rangle+\nabla_x\cdot\left\langle v\partial_{\beta}\left\{w^2_{\ell-|\beta|,\kappa}\partial_\beta\left[v_i\mu^{1/2}\right]\right\}, \{{\bf I_\pm-P_\pm}\}f^{m,\epsilon} \right\rangle\nonumber\\
  &&\mp\underbrace{\left\langle \partial_{\beta}\left\{w^2_{\ell-|\beta|,\kappa}\partial_\beta\left[v_i \mu^{1/2}\right]\right\},v_i\mu^{1/2}\right\rangle}_{{J}_1} \left\{E^{m,\epsilon}+\epsilon b^{f^{m,\epsilon}}\times \left\{B^{P}
+\sum_{j_1=1}^{m-1}\epsilon^{j_1}B^{j_1}\right\}\right\}_i\\
&=&\left\langle \partial_t\partial_{\beta}\left\{w^2_{\ell-|\beta|,\kappa}\partial_\beta\left[v_i\mu^{1/2}\right]\right\},\{{\bf I_\pm-P_\pm}\}f^{m,\epsilon}\right\rangle\nonumber\\
&&+\left\langle \partial_{\beta}\left\{w^2_{\ell-|\beta|,\kappa}\partial_\beta\left[v_i \mu^{1/2}\right]\right\}, {\bf P}_{\pm}\left\{v\cdot\nabla_x f^{m,\epsilon}\right\}
-{ L} _{\pm}f^{m,\epsilon}- v  \cdot\nabla_x{\bf P}_{\pm}f^{m,\epsilon}
\right\rangle \nonumber\\
&&+\left\langle \partial_{\beta}\left\{w^2_{\ell-|\beta|,\kappa}\partial_\beta\left[v_i \mu^{1/2}\right]\right\},
-\epsilon {\bf \{I_{\pm}-P_{\pm}\}}q_0\left\{v\times \left(B^{P}
+\sum_{j_1=1}^{m-1}\epsilon^{j_1}B^{j_1}\right)\right\}\cdot\nabla_v{\bf \{I-P\}}f^{m,\epsilon}\right\rangle \nonumber\\
&&+\left\langle \partial_{\beta}\left\{w^2_{\ell-|\beta|,\kappa}\partial_\beta\left[v_i \mu^{1/2}\right]\right\}, I^{m,\epsilon,\pm}_{mic,mix}(t)
\right\rangle \nonumber.
\end{eqnarray}

Noticing that
\[
{J}_1=(-1)^{|\beta|}\int_{\mathbb{R}^3_v}
\left\{w_{\ell-|\beta|,\kappa}\partial_\beta(v_i\mu^{\frac12})\right\}^2dv\equiv (-1)^{|\beta|}{C}_{\beta},
\]
we can get from \eqref{key1} that
\begin{eqnarray}\label{key2}
 &&\partial_t\left\langle \partial_{\beta}\left\{w^2_{\ell-|\beta|,\kappa}\partial_\beta\left[v_i \mu^{1/2}\right]\right\},\{{\bf I_+-P_+}\}f^{m,\epsilon}-\{{\bf I_--P_-}\}f^{m,\epsilon}\right\rangle\nonumber\\
 &&-2(-1)^{|\beta|}C_\beta \left\{E^{m,\epsilon}+\epsilon b^{f^{m,\epsilon}}\times \left\{B^{P}
+\sum_{j_1=1}^{m-1}\epsilon^{j_1}B^{j_1}\right\}\right\}_i\\ \nonumber
&=&\left\langle \partial_t\partial_{\beta}\left\{w^2_{\ell-|\beta|,\kappa}\partial_\beta\left[v_i\mu^{1/2}\right]\right\},\{{\bf I_+-P_+}\}f^{m,\epsilon}-\{{\bf I_--P_-}\}f^{m,\epsilon}\right\rangle\\ \nonumber
&&-\nabla_x\cdot\left\langle v\partial_{\beta}\left\{w^2_{\ell-|\beta|,\kappa}\partial_\beta\left[v_i \mu^{1/2}\right]\right\},\{{\bf I_+-P_+}\}f^{m,\epsilon}-\{{\bf I_--P_-}\}f^{m,\epsilon} \right\rangle\\ \nonumber
&&+\left\langle \partial_{\beta}\left\{w^2_{\ell-|\beta|,\kappa}\partial_\beta\left[v_i \mu^{1/2}\right]\right\}, \{{\bf
P}_{+}-{\bf P}_-\}\{v\cdot\nabla_x f^{m,\epsilon}\}
-\{{ L} _{+}-L_-\}f^{m,\epsilon}- v  \cdot\nabla_x{\bf P}_{+}f^{m,\epsilon}+ v  \cdot\nabla_x{\bf P}_{-}f^{m,\epsilon}\right\rangle\\ \nonumber
&&+\left\langle \partial_{\beta}\left\{w^2_{\ell-|\beta|,\kappa}\partial_\beta\left[v_i \mu^{1/2}\right]\right\},-\epsilon {\bf \{I_+-P_+\}}q_0\left\{v\times \left(B^{P}
+\sum_{j_1=1}^{m-1}\epsilon^{j_1}B^{j_1}\right)\right\}\cdot\nabla_v{\bf \{I-P\}}f^{m,\epsilon}\right\rangle\\ \nonumber
&&+\left\langle \partial_{\beta}\left\{w^2_{\ell-|\beta|,\kappa}\partial_\beta\left[v_i \mu^{1/2}\right]\right\},\epsilon {\bf \{I_--P_-\}}q_0\left\{v\times \left(B^{P}
+\sum_{j_1=1}^{m-1}\epsilon^{j_1}B^{j_1}\right)\right\}\cdot\nabla_v{\bf \{I-P\}}f^{m,\epsilon}\right\rangle\\ \nonumber
&&+\left\langle \partial_{\beta}\left\{w^2_{\ell-|\beta|,\kappa}\partial_\beta\left[v_i \mu^{1/2}\right]\right\},I^{m,\epsilon,+}_{mic,mix}(t)-I^{m,\epsilon,-}_{mic,mix}(t)
\right\rangle.
\end{eqnarray}
Applying $\partial^\alpha$ to \eqref{key2}, one has
\begin{eqnarray}\label{key3}
&&\partial_t\left\langle \partial_{\beta}\left\{w^2_{\ell-|\beta|,\kappa}\partial_\beta\left[v_i \mu^{1/2}\right]\right\},\partial^\alpha\{{\bf I_+-P_+}\}f^{m,\epsilon}-\partial^\alpha\{{\bf I_--P_-}\}f^{m,\epsilon}\right\rangle\nonumber\\
&&-2(-1)^{|\beta|}C_\beta\partial^\alpha \left\{E^{m,\epsilon}+\epsilon b^{f^{m,\epsilon}}\times \left\{B^{P}
+\sum_{j_1=1}^{m-1}\epsilon^{j_1}B^{j_1}\right\}\right\}_i\\ \nonumber
&=&\left\langle \partial_t\partial_{\beta}\left\{w^2_{\ell-|\beta|,\kappa}\partial_\beta\left[v_i\mu^{1/2}\right]\right\},\partial^\alpha\{{\bf I_+-P_+}\}f^{m,\epsilon}-\partial^\alpha\{{\bf I_--P_-}\}f^{m,\epsilon}\right\rangle\\ \nonumber
&&-\nabla_x\cdot\left\langle v\partial_{\beta}\left\{w^2_{\ell-|\beta|,\kappa}\partial_\beta\left[v_i \mu^{1/2}\right]\right\},\partial^\alpha\{{\bf I_+-P_+}\}f^{m,\epsilon}-\partial^\alpha\{{\bf I_--P_-}\}f^{m,\epsilon} \right\rangle\\ \nonumber
&&+\left\langle \partial_{\beta}\left\{w^2_{\ell-|\beta|,\kappa}\partial_\beta\left[v_i \mu^{1/2}\right]\right\}, \partial^\alpha{\bf
P}_{+}\{v\cdot\nabla_x f^{m,\epsilon}\}-\partial^\alpha{\bf
P}_{-}\{v\cdot\nabla_x f^{m,\epsilon}\}\right.\\[2mm] \nonumber
&&\left.\ \ \ \ \ \ \ \ \ \ \ \ \ \ \ \ \ -\partial^\alpha{ L} _{+}f^{m,\epsilon}+\partial^\alpha{ L} _{-}f^{m,\epsilon}- v  \cdot\nabla_x\partial^\alpha{\bf P}_{+}f^{m,\epsilon}+ v  \cdot\nabla_x\partial^\alpha{\bf P}_{-}f^{m,\epsilon}\right\rangle\\ \nonumber
&&+\left\langle \partial_{\beta}\left\{w^2_{\ell-|\beta|,\kappa}\partial_\beta\left[v_i \mu^{1/2}\right]\right\},-\epsilon {\bf \{I_+-P_+\}}q_0\left\{v\times \left(B^{P}
+\sum_{j_1=1}^{m-1}\epsilon^{j_1}B^{j_1}\right)\right\}\cdot\nabla_v\partial^\alpha{\bf \{I-P\}}f^{m,\epsilon}\right\rangle\\ \nonumber
&&+\left\langle \partial_{\beta}\left\{w^2_{\ell-|\beta|,\kappa}\partial_\beta\left[v_i \mu^{1/2}\right]\right\},\epsilon {\bf \{I_--P_-\}}q_0\left\{v\times \left(B^{P}
+\sum_{j_1=1}^{m-1}\epsilon^{j_1}B^{j_1}\right)\right\}\cdot\nabla_v\partial^\alpha{\bf \{I-P\}}f^{m,\epsilon}\right\rangle\\ \nonumber
&&+\left\langle \partial_{\beta}\left\{w^2_{\ell-|\beta|,\kappa}\partial_\beta\left[v_i \mu^{1/2}\right]\right\},\partial^\alpha I^{m,\epsilon,+}_{mic,mix}(t)-\partial^\alpha I^{m,\epsilon,-}_{mic,mix}(t)
\right\rangle\\
&:=&\mathcal{{RHS}}_{1},\nonumber
\end{eqnarray}
where we use $\mathcal{{RHS}}_{1}$ to denote all terms in the right hand side of \eqref{key3}.

Therefore, we can get from Lemma \ref{lemma-nonlinear}, Lemma \ref{Lemma-5.3}, and Lemma \ref{Lemma-5.4} that
\begin{eqnarray*}
  &&(-1)^{|\beta|}\left(\partial^{\alpha}\left\{E^{m,\epsilon}+\epsilon b^{f^{m,\epsilon}}\times \left\{B^{P}
+\sum_{j_1=1}^{m-1}\epsilon^{j_1}B^{j_1}\right\}\right\}_i,\right.\\
&&\left.\quad\quad\quad\quad\quad\left\langle \partial_{\beta}\left\{w^2_{\ell-|\beta|,\kappa}\partial_\beta\left[v_i \mu^{1/2}\right]\right\}, \partial^\alpha{\{\bf I_+-P_+\}}f^{m,\epsilon}-\partial^\alpha{\{\bf I_--P_-\}}f^{m,\epsilon}\right\rangle\right)\\
&=&\frac1{2C_\beta}\left(\partial_t\left\langle \partial_{\beta}\left\{w^2_{\ell-|\beta|,\kappa}\partial_\beta\left[v_i \mu^{1/2}\right]\right\},\partial^\alpha\{{\bf I_+-P_+}\}f^{m,\epsilon}-\partial^\alpha\{{\bf I_--P_-}\}f^{m,\epsilon}\right\rangle-\mathcal{RHS}_1,\right.\\
  &&\qquad\qquad\quad \left.\left\langle \partial_{\beta}\left\{w^2_{\ell-|\beta|,\kappa}\partial_\beta\left[v_i \mu^{1/2}\right]\right\}, \partial^\alpha{\{\bf I_+-P_+\}}f^{m,\epsilon}-\partial^\alpha{\{\bf I_--P_-\}}f^{m,\epsilon}\right\rangle\right)\\ \nonumber
  &=&\frac1{4C_\beta}\sum_{i=1}^3\frac{d}{dt}\left\|\left\langle \partial_{\beta}\left\{w^2_{\ell-|\beta|,\kappa}\partial_\beta\left[v_i \mu^{1/2}\right]\right\}, \partial^\alpha{\{\bf I_+-P_+\}}f^{m,\epsilon}-\partial^\alpha{\{\bf I_--P_-\}}f^{m,\epsilon}\right\rangle\right\|^2\\ \nonumber
  &&+\frac1{2C_\beta}\left(\mathcal{RHS}_1,\left\langle \partial_{\beta}\left\{w^2_{\ell-|\beta|,\kappa}\partial_\beta\left[v_i \mu^{1/2}\right]\right\}, \partial^\alpha{\{\bf I_+-P_+\}}f^{m,\epsilon}-\partial^\alpha{\{\bf I_--P_-\}}f^{m,\epsilon}\right\rangle\right)\\ \nonumber
  &\lesssim&\frac1{4C_\beta}\sum_{i=1}^3\frac{d}{dt}\left\|\left\langle \partial_{\beta}\left\{w^2_{\ell-|\beta|,\kappa}\partial_\beta\left[v_i \mu^{1/2}\right]\right\}, \partial^\alpha{\{\bf I_+-P_+\}}f^{m,\epsilon}-\partial^\alpha{\{\bf I_--P_-\}}f^{m,\epsilon}\right\rangle\right\|^2\\ \nonumber
  &&+\left\|\nabla^{|\alpha|}\{{\bf I-P}\}f^{m,\epsilon}\right\|_\nu^2+\left\|\nabla^{|\alpha|+1}f^{m,\epsilon}\right\|^2_\nu
 +\mathcal{E}_{f^{m,\epsilon},N}(t)\mathcal{D}_{f^{P,\epsilon},N}(t)+\sum_{j_1+j_2\geq m,\atop 0<j_1,j_2\leq m}\mathcal{E}_{f^{j_1,\epsilon},N}(t)\mathcal{D}_{f^{j_2,\epsilon},N}(t),\nonumber
 \end{eqnarray*}
which is \eqref{E-R-nonhard-1},
and \eqref{E-R-nonhard-2} can be obtained in a similar way. Thus we have completed the proof of this lemma.
\end{proof}
For the corresponding term related to the linear transport term $v\cdot\nabla_x f^{m,\epsilon}(t,x,v)$, we have the following result.
\begin{lemma}
  Assume $|\alpha|+|\beta|\leq N_m+1$ with $|\beta|\geq 1$,and
  \begin{equation}\label{sigma-relation}
    \sigma_{n,k}-\sigma_{n,k-1}=\frac{2(1+\gamma)}{\gamma-2}(1+\vartheta),\ \ 1\leq k\leq n.
  \end{equation}
 one has
  \begin{eqnarray}\label{linear-fi-high-R}
    &&(1+t)^{-\sigma_{n,|\beta|}}\left(\partial^\alpha_\beta(v\cdot \nabla_x{\bf\{I-P\}}f^{m,\epsilon}), w^2_{\ell_m^*-|\beta|,1}\partial^\alpha_\beta{\bf\{I-P\}}f^{m,\epsilon}\right)\nonumber\\
    &\lesssim&(1+t)^{-\sigma_{n,|\beta|-1}}\mathcal{D}^{(n,|\beta|-1)}_{f^{m,\epsilon},n,\ell_m^*,1}(t)+
    \eta(1+t)^{-\sigma_{n,|\beta|}}\left\|w_{\ell_m^*-|\beta|,1} \partial^\alpha_\beta{\bf\{I-P\}}f^{m,\epsilon}(t) \langle v\rangle^{\frac\gamma2}\right\|^2.
  \end{eqnarray}
\end{lemma}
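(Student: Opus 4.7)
The starting point is the Leibniz-type identity
\begin{equation*}
\partial^\alpha_\beta\bigl(v\cdot\nabla_x g\bigr)=v\cdot\nabla_x \partial^\alpha_\beta g+\sum_{j=1}^3 \beta_j\,\partial^{\alpha+e_j}_{\beta-e_j}g
\end{equation*}
applied with $g=\{{\bf I-P}\}f^{m,\epsilon}$, which follows from $\partial_{v_i}(v_j h)=\delta_{ij}h+v_j\partial_{v_i}h$. Since the weight $w_{\ell_m^*-|\beta|,1}(t,v)$ does not depend on $x$, integration by parts in $x$ immediately forces
\begin{equation*}
\bigl(v\cdot\nabla_x \partial^\alpha_\beta\{{\bf I-P}\}f^{m,\epsilon},\, w^2_{\ell_m^*-|\beta|,1}\partial^\alpha_\beta\{{\bf I-P}\}f^{m,\epsilon}\bigr)=0,
\end{equation*}
so the whole inner product reduces to the commutator sum, in which one $v$-derivative has been traded for one $x$-derivative.

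I would then apply Cauchy--Schwarz with the weight splitting $\langle v\rangle^{\gamma/2}\cdot\langle v\rangle^{-\gamma/2}$ and Young's inequality to bound the commutator sum by
\begin{equation*}
\eta\,\bigl\|w_{\ell_m^*-|\beta|,1}\langle v\rangle^{\gamma/2}\partial^\alpha_\beta\{{\bf I-P}\}f^{m,\epsilon}\bigr\|^2+C_\eta\sum_{j=1}^3\bigl\|w_{\ell_m^*-|\beta|,1}\langle v\rangle^{-\gamma/2}\partial^{\alpha+e_j}_{\beta-e_j}\{{\bf I-P}\}f^{m,\epsilon}\bigr\|^2.
\end{equation*}
After multiplication by the outer temporal factor $(1+t)^{-\sigma_{n,|\beta|}}$, the first term is already the $\eta$-contribution on the right of \eqref{linear-fi-high-R}. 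For the second, I would use the identity $w_{\ell_m^*-|\beta|,1}=\langle v\rangle^{-1}w_{\ell_m^*-(|\beta|-1),1}$ to match the weight order appearing in $\mathcal{D}^{(n,|\beta|-1)}_{f^{m,\epsilon},n,\ell_m^*,1}(t)$; relative to $w_{\ell_m^*-(|\beta|-1),1}$, the $v$-weight inside the $L^2$-norm square becomes $\langle v\rangle^{-2-\gamma}$.

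The decisive step is the pointwise interpolation
\begin{equation*}
\langle v\rangle^{-2-\gamma}=\bigl(\langle v\rangle^{\gamma}\bigr)^{1-\lambda}\bigl(\langle v\rangle^{2}\bigr)^{\lambda},\qquad \lambda=\frac{2(1+\gamma)}{\gamma-2}\in\bigl(0,\tfrac{4}{5}\bigr)\ \text{for}\ \gamma\in(-3,-1),
\end{equation*}
which is an exact equality obtained by matching exponents. Inserting $(1+t)^{\mp(1+\vartheta)}$ into the second slot and applying Young's inequality $a^{1-\lambda}b^\lambda\leq a+b$ yields
\begin{equation*}
\langle v\rangle^{-2-\gamma}\leq (1+t)^{(1+\vartheta)\lambda}\bigl[\langle v\rangle^{\gamma}+(1+t)^{-(1+\vartheta)}\langle v\rangle^2\bigr].
\end{equation*}
The calibration \eqref{sigma-relation}, namely $\sigma_{n,|\beta|}-\sigma_{n,|\beta|-1}=(1+\vartheta)\lambda$, is exactly what is needed so that $(1+t)^{-\sigma_{n,|\beta|}+(1+\vartheta)\lambda}=(1+t)^{-\sigma_{n,|\beta|-1}}$, at which point the bracket on the right recombines the $\nu$-dissipation $\|w\cdot\|_\nu^2\sim\|w\langle v\rangle^{\gamma/2}\cdot\|^2$ and the extra exponential-weight dissipation $(1+t)^{-(1+\vartheta)}\|w\langle v\rangle\cdot\|^2$ that jointly define $\mathcal{D}^{(n,|\beta|-1)}_{f^{m,\epsilon},n,\ell_m^*,1}(t)$. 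The substantive content of the argument—and the only place where anything could fail—is this final calibration: the definition of $\sigma_{n,k}$ in the paper has been reverse-engineered precisely so that the loss in temporal decay suffered when passing from $(1+t)^{-\sigma_{n,|\beta|}}$ to $(1+t)^{-\sigma_{n,|\beta|-1}}$ is exactly compensated by the gain afforded by the interpolation. Once this is observed, the remainder of the proof is routine bookkeeping.
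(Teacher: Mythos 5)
Your proof is correct and follows essentially the same route as the paper's. The paper's one-line ``Cauchy's inequality'' step implicitly performs the Leibniz split $\partial^\alpha_\beta(v\cdot\nabla_x g)=v\cdot\nabla_x\partial^\alpha_\beta g+\sum_j\beta_j\partial^{\alpha+e_j}_{\beta-e_j}g$ together with the vanishing of the $x$-antisymmetric main term, which you make explicit; and the paper's norm-level interpolation with exponents $\frac{4(1+\gamma)}{\gamma-2}$ and $\frac{-2\gamma-8}{\gamma-2}$ is exactly your pointwise Young inequality with $\lambda=\frac{2(1+\gamma)}{\gamma-2}$ integrated against the squared weighted derivative, so the two arguments coincide once the calibration $\sigma_{n,|\beta|}-\sigma_{n,|\beta|-1}=(1+\vartheta)\lambda$ is invoked.
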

\begin{proof}
By Cauchy's inequality, one has
    \begin{eqnarray*}
    &&\left(\partial^\alpha_\beta(v\cdot \nabla_x{\bf\{I-P\}}f^{m,\epsilon}), w^2_{\ell_m^*-|\beta|,1}\partial^\alpha_\beta{\bf\{I-P\}}f^{m,\epsilon}\right)\nonumber\\
    &\lesssim&\left\|w_{\ell_m^*-|\beta-e_j|,1}\partial^{\alpha+e_j}_{\beta-e_j}{\bf\{I-P\}} f^{m,\epsilon}\langle v\rangle^{-\frac\gamma2-1}\right\|^2 +\eta\left\|w_{\ell_m^*-|\beta|,1}\partial^\alpha_\beta{\bf\{I-P\}}f^{m,\epsilon}(t)\langle v\rangle^{\frac\gamma2}\right\|^2.
  \end{eqnarray*}

  To control $\left\|w_{\ell_m^*-|\beta-e_j|,1}\partial^{\alpha+e_j}_{\beta-e_j}{\bf\{I-P\}}f^{m,\epsilon}\langle v\rangle^{-\frac\gamma2-1}\right\|^2$, one applies different time-rates
  factors to deduce
 \begin{eqnarray}\label{linear-fi-compute-R}
&&\sum_{|\alpha|+|\beta|=n}(1+t)^{-\sigma_{n,|\beta|}}
\left\|w_{\ell_m^*-|\beta-e_j|,1}\partial^{\alpha+e_j}_{\beta-e_j} \{{\bf I-P}\} f^{m,\epsilon}\langle v\rangle^{-\frac\gamma2-1}\right\|
^2\\ \nonumber
&\lesssim&\sum_{|\alpha|+|\beta|=n}(1+t)^{-\sigma_{n,|\beta|-1}-\frac{2(1+\gamma)}{\gamma-2}(1+\vartheta)}
\left\|w_{\ell_m^*-|\beta-e_j|,1}\partial^{\alpha+e_j}_{\beta-e_j} \{{\bf I-P}\} f^{m,\epsilon}\langle v\rangle\right\|^{\frac{4(1+\gamma)}{\gamma-2}}\\ \nonumber
&&\times \left\|w_{\ell_m^*-|\beta-e_j|,1}\partial^{\alpha+e_j}_{\beta-e_j} \{{\bf I-P}\} f^{m,\epsilon}\langle v\rangle^\frac\gamma 2\right\|^{\frac{-2\gamma-8}{\gamma-2}}\\ \nonumber
&\lesssim&\sum_{|\alpha|+|\beta|=n}\left\{(1+t)^{-\sigma_{n,|\beta|-1}-1-\vartheta}
\left\|w_{\ell_m^*-|\beta-e_j|,1} \partial^{\alpha+e_j}_{\beta-e_j}\{{\bf I-P}\} f^{m,\epsilon}\langle v\rangle\right\|^{2}\right.\\
&&\left.+(1+t)^{-\sigma_{n,|\beta|-1}}\left\|w_{\ell_m^*-|\beta-e_j|,1} \partial^{\alpha+e_j}_{\beta-e_j}\{{\bf I-P}\} f^{m,\epsilon}\right\|^2_\nu\right\},\nonumber
\end{eqnarray}
where we used \eqref{sigma-relation},
then one has \eqref{linear-fi-high-R} by \eqref{linear-fi-compute-R}.
\end{proof}
\subsubsection{Some estimates on the interactions between $\left[f^{m,\epsilon}(t,x,v), E^{m,\epsilon}(t,x), B^{m,\epsilon}(t,x)\right]$, $\left[f^{P,\epsilon}(t,x,v), E^{P,\epsilon}(t,x)\right]$, and  $\left[f^{i,\epsilon}(t,x,v), E^{i,\epsilon}(t,x)\right]$ $(i=1,2,3,\cdots,m-1)$}

In this subsection, we will focus on some estimates on the interactions between $\left[f^{m,\epsilon}(t,x,v), E^{m,\epsilon}(t,x), B^{m,\epsilon}(t,x)\right]$, $\left[f^{P,\epsilon}(t,x,v), E^{P,\epsilon}(t,x)\right]$, and  $\left[f^{i,\epsilon}(t,x,v), E^{i,\epsilon}(t,x)\right]$ $(i=1,\cdots,m-1)$, in which the electromagnetic $\left[E^{m,\epsilon}(t,x), B^{m,\epsilon}(t,x)\right]$ are involved, with respect to the weight $w_{l_m-|\beta|,\kappa}(t,v)$ for both $\kappa=-\gamma$ and $\kappa=1$.

First for estimates on the interactions between $E^{m,\epsilon}(t,x)$ and $\left[f^{P,\epsilon}(t,x,v), f^{m,\epsilon}(t,x,v)\right]$ with respect to the weight $w_{l_m-|\beta|,\kappa}(t,v)$ for both $\kappa=-\gamma$ and $\kappa=1$, we have the following result.
\begin{lemma} Take $n\geq 2$ and $w_{l_m-|\beta|,\kappa}(t,v)
=\langle v\rangle^{\kappa(l_m-|\beta|)}e^{\frac{q\langle v\rangle^2}{(1+t)^\vartheta}}$, one has the following estimates:
\begin{itemize}
\item [i)]For $1\leq |\alpha|\leq n$, one has
\begin{eqnarray}\label{lemma-f-R-1}
  &&\left|\left(\partial^\alpha\left\{\frac12 q_0E^{m,\epsilon}\cdot v f^{P,\epsilon}\right\}, \partial^\alpha f^{m,\epsilon}\right)\right|\\
  &\lesssim&\left\|E^{m,\epsilon}(t)\right\|_{H^n_x}^2\left\| \nabla_xf^{P,\epsilon}(t)\right\|_{H^{n-1}_xL^2_\nu}^2+\left\| E^{m,\epsilon}(t)\right\|_{H^{n}_x}^2\left\| {\bf \{I-P\}}f^{P,\epsilon}(t)\langle v\rangle^{1-\frac\gamma2}\right\|^2_{H^{n}_x}+\eta\left\|\partial^\alpha f^{m,\epsilon}(t)\langle v\rangle^{\frac\gamma2}\right\|^2\nonumber\\
  &\lesssim&\mathcal{E}_{f^{m,\epsilon},n}(t)\mathcal{D}_{f^{P,\epsilon},n,n+1-\frac1\gamma,-\gamma}(t)
  +\eta\mathcal{D}_{f^{m,\epsilon},n}(t).\nonumber
\end{eqnarray}
\item [ii)]For $1\leq |\alpha|\leq n$, one has
\begin{eqnarray}\label{lemma-f-R-2}
  &&\left|\left(\partial^\alpha\left\{\frac12 q_0E^{m,\epsilon}\cdot v f^{P,\epsilon}\right\},w^2_{l_m,\kappa}\partial^\alpha f^{m,\epsilon}\right)\right|\\
 &\lesssim&\mathcal{E}_{f^{m,\epsilon},n}(t)\mathcal{D}_{f^{P,\epsilon},n,-\frac{\kappa l_m}{\gamma}+n+1+\frac{\kappa n-1}\gamma,-\gamma}(t)+\eta\left\|w_{l_m,\kappa}\partial^\alpha f^{m,\epsilon}(t)\langle v\rangle^{\frac\gamma2}\right\|^2\nonumber.
\end{eqnarray}
\item [iii)] For $|\alpha|+|\beta|\leq n$ with $|\beta|\geq 1$ or $|\alpha|=|\beta|=0$, one has
  \begin{eqnarray}\label{lemma-f-R-3}
    &&\left|\left(\partial^\alpha_\beta\left\{{\bf \{I-P\}}\left(\frac\epsilon2 q_0E^{m,\epsilon}\cdot v f^{P,\epsilon}\right)\right\},w^2_{l_m-|\beta|,\kappa}\partial^\alpha_\beta{\bf \{I-P\}}f^{m,\epsilon}\right)\right|\\
    &\lesssim&\mathcal{E}_{f^{m,\epsilon},n}(t)\mathcal{D}_{f^{P,\epsilon},n,-\frac{\kappa l_m}{\gamma}+n+1+\frac{\kappa n-1}\gamma,-\gamma}(t)+\eta\left\|w_{l_m-|\beta|,\kappa}\partial^\alpha_\beta{\bf \{I-P\}}f^{m,\epsilon}(t)\langle v\rangle^{\frac\gamma2}\right\|^2.\nonumber
  \end{eqnarray}
  \end{itemize}
\end{lemma}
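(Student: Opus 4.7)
The plan is to prove the three estimates by a unified strategy: apply Leibniz's rule to distribute the derivatives $\partial^\alpha$ (and $\partial^\alpha_\beta$) across the product $E^{m,\epsilon}\cdot v\,f^{P,\epsilon}$, use the Sobolev embedding $H^2(\mathbb{R}^3_x)\hookrightarrow L^\infty_x$ to put one factor of the pairing in $L^\infty_x$ and the other in $L^2_x$, and absorb the extra velocity factor $v$ together with the weight $w^2_{l_m-|\beta|,\kappa}$ into the $f^{P,\epsilon}$-side norm via the macro-micro decomposition $f^{P,\epsilon}={\bf P}f^{P,\epsilon}+\{{\bf I}-{\bf P}\}f^{P,\epsilon}$. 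The macroscopic piece carries a Gaussian $\mu^{1/2}(v)$ and is dominated by $\|\nabla_x {\bf P} f^{P,\epsilon}\|_{H^{n-1}_x}$, which sits in $\mathcal{D}_{f^{P,\epsilon},n}(t)$; the microscopic piece is bounded by the weighted dissipation with index chosen precisely so that $\langle v\rangle^{\kappa(l_m-|\beta|)+1-\gamma/2}\cdot e^{q\langle v\rangle^2/(1+t)^\vartheta}$ fits inside $w_{\ell,-\gamma}^2$, the exponential factors on both sides cancelling under Cauchy--Schwarz.

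For \eqref{lemma-f-R-1}, expanding by Leibniz and splitting according to $|\alpha_1|\leq\lfloor n/2\rfloor$ or $|\alpha_1|>\lfloor n/2\rfloor$, the hypothesis $n\geq 2$ guarantees that one of the two factors $\partial^{\alpha_1} E^{m,\epsilon}$ or $\partial^{\alpha_2} f^{P,\epsilon}$ admits an $L^\infty_x$ bound by Sobolev. After the macro-micro decomposition, the macroscopic part of $f^{P,\epsilon}$ is handled by $\|\nabla_x f^{P,\epsilon}\|_{H^{n-1}_xL^2_\nu}$ (since $|\alpha_2|\geq 1$ is forced when $|\alpha_1|\leq n-1$, and the remaining subcase is symmetric), while the microscopic part is handled by $\|\{{\bf I}-{\bf P}\}f^{P,\epsilon}\langle v\rangle^{1-\gamma/2}\|_{H^n_x}\lesssim\mathcal{D}_{f^{P,\epsilon},n,n+1-1/\gamma,-\gamma}(t)^{1/2}$ after transferring a $\langle v\rangle^{\gamma/2}$ factor onto $\partial^\alpha f^{m,\epsilon}$ via Cauchy's inequality; the residual $\eta\|\partial^\alpha f^{m,\epsilon}\langle v\rangle^{\gamma/2}\|^2$ is absorbed into $\mathcal{D}_{f^{m,\epsilon},n}(t)$.

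For \eqref{lemma-f-R-2} and \eqref{lemma-f-R-3}, the same Leibniz split is performed but with the weight $w^2_{l_m-|\beta|,\kappa}$ inserted. The key algebraic observation is
\[
w^2_{l_m-|\beta|,\kappa}(t,v)\cdot v=\bigl(w_{l_m-|\beta|,\kappa}\langle v\rangle^{\gamma/2}\bigr)\cdot\bigl(w_{l_m-|\beta|,\kappa}\langle v\rangle^{1-\gamma/2}\bigr),
\]
so one factor pairs with the $f^{m,\epsilon}$-side via the dissipative norm $\|w_{l_m-|\beta|,\kappa}\partial^\alpha_\beta f^{m,\epsilon}\langle v\rangle^{\gamma/2}\|^2$, and the other is absorbed into the $f^{P,\epsilon}$-side. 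Since the $f^{P,\epsilon}$-dissipation weight is $w_{\ell,-\gamma}=\langle v\rangle^{-\gamma\ell}e^{q\langle v\rangle^2/(1+t)^\vartheta}$, matching the polynomial order $\kappa(l_m-|\beta|)+1-\gamma/2$ (together with the crude bound $|\beta|\leq n$ and the Sobolev-embedding loss) forces $\ell\geq-\frac{\kappa l_m}{\gamma}+n+1+\frac{\kappa n-1}{\gamma}$, which is precisely the index appearing in the statement. For \eqref{lemma-f-R-3}, the microscopic projection only contributes the Gaussian-moment correction ${\bf P}(q_0E^{m,\epsilon}\cdot vf^{P,\epsilon})$, trivially bounded; the action of $\partial_\beta$ on the product either falls on $v$ (reducing $|\beta|$ by one) or on the weight (lower-order and handled by standard weight-derivative inequalities).

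The main obstacle is the delicate weight bookkeeping: one must verify that the specific index $-\frac{\kappa l_m}{\gamma}+n+1+\frac{\kappa n-1}{\gamma}$ is uniformly sufficient across all Leibniz subcases and all distributions of $\partial_\beta$, and that every lower-order error created by velocity differentiation of $w^2_{l_m-|\beta|,\kappa}$ can be absorbed into the extra time-decay dissipation induced by the exponential factor of the weight, as already isolated in \eqref{dissipative-weight}. The assumption $n\geq 2$ ensures that $H^2_x\hookrightarrow L^\infty_x$ applies to at least one factor of the product in every Leibniz subcase, so no extra regularity hypothesis beyond the one already imposed in the statement is needed.
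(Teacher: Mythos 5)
Your strategy --- Leibniz expansion, macro–micro decomposition of $f^{P,\epsilon}$, $L^2$–$L^\infty$–$L^2$ / $L^3$–$L^6$–$L^2$ Sobolev pairings, and transferring $\langle v\rangle^{\gamma/2}$ onto the $f^{m,\epsilon}$-side while sending $\langle v\rangle^{1-\gamma/2}$ and the polynomial part of $w^2_{l_m-|\beta|,\kappa}$ into the $f^{P,\epsilon}$-dissipation --- is exactly the paper's (the paper only writes out (iii), splitting into $I_{\alpha,\beta,1}$, $I_{\alpha,\beta,2}$, $I_{\alpha,\beta,3}$ by the triangle inequality according to whether $\{{\bf I}-{\bf P}\}f^{P,\epsilon}$, ${\bf P}f^{P,\epsilon}$, or the outer ${\bf P}$-projection appears, and then treating each by Leibniz and Sobolev). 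So the route is the same.

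One slip needs to be fixed: the claim ``$|\alpha_2|\geq 1$ is forced when $|\alpha_1|\leq n-1$'' is false. What is true is that $|\alpha_2|\geq 1$ is forced when $|\alpha_1|<|\alpha|$; the problematic subcase is $|\alpha_1|=|\alpha|$, $\alpha_2=0$, where $f^{P,\epsilon}$ carries no $x$-derivative. Calling the remaining subcase ``symmetric'' hides the asymmetry of the two factors: $E^{m,\epsilon}$ must land in the \emph{energy} $\mathcal{E}_{f^{m,\epsilon},n}$, while $f^{P,\epsilon}$ must land in the \emph{dissipation} $\mathcal{D}_{f^{P,\epsilon},n,\cdot,-\gamma}$, so you cannot simply swap their roles. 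For the macroscopic contribution $\partial^{\alpha}E^{m,\epsilon}\cdot v\,{\bf P}f^{P,\epsilon}$ (and likewise the $\alpha=\beta=0$ case permitted in (iii)), the embedding $H^2_x\hookrightarrow L^\infty_x$ is not enough: it would produce the zeroth-order norm $\|{\bf P}f^{P,\epsilon}\|_{L^2_{x,v}}$, which is \emph{not} part of the dissipation. You need the Gagliardo--Nirenberg refinement $\|g\|_{L^\infty_x}\lesssim\|\nabla_x g\|_{L^2_x}^{1/2}\|\nabla^2_x g\|_{L^2_x}^{1/2}$ (equivalently $\|g\|_{L^6_x}\lesssim\|\nabla_x g\|_{L^2_x}$), so that only $\|\nabla_x{\bf P}f^{P,\epsilon}\|_{H^1_xL^2_v}$ appears, which \emph{is} in $\mathcal{D}_{f^{P,\epsilon},n}$. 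With that replacement your argument goes through as intended and matches the bound $\left\|\nabla_xf^{P,\epsilon}\right\|_{H^{n-1}_xL^2_\nu}$ appearing in the lemma.
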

\begin{proof}
  We only prove \eqref{lemma-f-R-3} for simplicity. For this purpose, we can deduce from the triangle inequality that
   \begin{eqnarray}\label{J-alpha-beta}
    &&\left|\left(\partial^\alpha_\beta\left\{{\bf \{I-P\}}\left(\frac12 q_0E^{m,\epsilon}\cdot v f^{P,\epsilon}\right)\right\},w^2_{l_m-|\beta|,\kappa}\partial^\alpha_\beta{\bf \{I-P\}}f^{m,\epsilon}\right)\right|\nonumber\\
    &\lesssim&\underbrace{\left|\left(\partial^\alpha_\beta\left\{\left(\frac12 q_0E^{m,\epsilon}\cdot v {\bf \{I-P\}} f^{P,\epsilon}\right)\right\},w^2_{l_m-|\beta|,\kappa}\partial^\alpha_\beta{\bf \{I-P\}}f^{m,\epsilon}\right)\right|}_{I_{\alpha,\beta,1}}\\
    &&+\underbrace{\left|\left(\partial^\alpha_\beta\left\{\frac12 q_0E^{m,\epsilon}\cdot v {\bf P} f^{P,\epsilon}\right\},w^2_{l_m-|\beta|,\kappa}\partial^\alpha_\beta{\bf \{I-P\}}f^{m,\epsilon}\right)\right|}_{I_{\alpha,\beta,2}}\nonumber\\
    &&+\underbrace{\left|\left(\partial^\alpha_\beta\left\{{\bf P}\left(\frac12 q_0E^{m,\epsilon}\cdot v f^{P,\epsilon}\right)\right\},w^2_{l_m-|\beta|,\kappa}\partial^\alpha_\beta{\bf \{I-P\}}f^{m,\epsilon}\right)\right|}_{I_{\alpha,\beta,3}}.\nonumber
  \end{eqnarray}
$I_{\alpha,\beta,1}$ can be bounded by
  \begin{eqnarray*}
    I_{\alpha,\beta,1}&\lesssim&\left|\left(\frac12 q_0E^{m,\epsilon}\cdot v \partial^\alpha_\beta{\bf \{I-P\}}f^{P,\epsilon},w^2_{l_m-|\beta|,\kappa}\partial^\alpha_\beta{\bf \{I-P\}}f^{m,\epsilon}\right)\right|\\
    &&+\left|\left(\frac12 q_0E^{m,\epsilon}\cdot \partial_{e_j}v \partial^\alpha_{\beta-e_j}{\bf \{I-P\}}f^{P,\epsilon},w^2_{l_m-|\beta|,\kappa}\partial^\alpha_\beta{\bf \{I-P\}}f^{m,\epsilon}\right)\right|\\
    &&+\sum_{0<\alpha_1\leq \alpha}\left|\left(\frac12 q_0\partial^{\alpha_1}E^{m,\epsilon}\cdot v \partial^{\alpha-\alpha_1}_\beta{\bf \{I-P\}}f^{P,\epsilon},w^2_{l_m-|\beta|,\kappa}\partial^\alpha_\beta{\bf \{I-P\}}f^{m,\epsilon}\right)\right|.
      \end{eqnarray*}
  By $L^2-L^\infty-L^2$, $L^2-L^\infty-L^2$, $L^3-L^6-L^2$ or $L^6-L^3-L^2$ type Sobolev  inequalities, one can deduce
  \begin{eqnarray}
  I_{\alpha,\beta,1}
  &\lesssim&\sum_{|\alpha|+|\beta|\leq n,\atop |\beta|\geq 1}\left\| \nabla_xE^{m,\epsilon}\right\|^2_{H^{n-2}_x}\left\|w_{l_m-|\beta|,\kappa}\partial^\alpha_\beta {\bf \{I-P\}}f^{P,\epsilon}\langle v\rangle^{1-\frac\gamma2}\right\|^2\nonumber\\
  &&+\eta\left\|w_{l_m-|\beta|,\kappa}\partial^\alpha_\beta{\bf \{I-P\}}f^{m,\epsilon}\langle v\rangle^{\frac\gamma2}\right\|^2\nonumber\\
  &\lesssim&\mathcal{E}_{f^{m,\epsilon},n}(t)\mathcal{D}_{f^{P,\epsilon},n,-\frac{\kappa l_m}{\gamma}+n+1+\frac{\kappa n-1}\gamma,-\gamma}(t)+\eta\left\|w_{l_m-|\beta|,\kappa}\partial^\alpha_\beta{\bf \{I-P\}}f^{m,\epsilon}\langle v\rangle^{\frac\gamma2}\right\|^2\nonumber.
  \end{eqnarray}
  For $I_{\alpha,\beta,2}$ and $I_{\alpha,\beta,3}$, one has
  \begin{eqnarray*}
    I_{\alpha,\beta,2}+I_{\alpha,\beta,3}&\lesssim&
    \left\|\nabla_xE^{m,\epsilon}\right\|^2_{H^{n-2}_x}\|\nabla_x f^{P,\epsilon}\|^2_{H^{n-1}_xL^2_\nu} +\eta\left\|\partial^\alpha_\beta{\bf \{I-P\}}f^{m,\epsilon}\langle v\rangle^{\frac\gamma2}\right\|^2\nonumber\\
    &\lesssim&\mathcal{E}_{f^{m,\epsilon},n}(t)\mathcal{D}_{f^{P,\epsilon},n}(t)+\eta\left\|\partial^\alpha_\beta{\bf \{I-P\}}f^{m,\epsilon}(t)\langle v\rangle^{\frac\gamma2}\right\|^2.
  \end{eqnarray*}
  Thus \eqref{lemma-f-R-3} follows by collecting the estimates on $I_{\alpha,\beta,1}$, $I_{\alpha,\beta,2}$ and $I_{\alpha,\beta,3}$ into \eqref{J-alpha-beta}.
\end{proof}

Similarly, one can also deduce the following estimates on the interactions between $\left[E^{m,\epsilon}(t,x), B^{m,\epsilon}(t,x)\right]$ with $\left[f^{P,\epsilon}(t,x,v), f^{1,\epsilon}(t,x,v), \cdots, f^{m,\epsilon}(t,x,v)\right]$ with respect to the weight $w_{l_m-|\beta|,\kappa}(t,v)$ for both $\kappa=-\gamma$ and $\kappa=1$:
\begin{lemma} Take $n\geq 2$ and $w_{l_m-|\beta|,\kappa}
=\langle v\rangle^{\kappa(l_i-|\beta|)}e^{\frac{q\langle v\rangle^2}{(1+t)^\vartheta}}$, one has the following estimates:
\begin{itemize}
\item [i)] For $1\leq |\alpha|\leq n$ and $0<j_1<m$, one has
\begin{eqnarray}\label{0-order-E-R-B-R}
  &&\left|\left(\partial^\alpha \left\{\frac12q_0\epsilon^{j_1}E^{m,\epsilon}\cdot f^{j_1,\epsilon}-q_0\epsilon^{j_1}E^{m,\epsilon}\cdot \nabla_v f^{{j_1},\epsilon}-q_0E^{m,\epsilon}\cdot\nabla_vf^{P,\epsilon}\right\}, \partial^\alpha f^{m,\epsilon}\right)\right|\\
   &\lesssim&\mathcal{E}_{f^{m,\epsilon},n}(t)\left\{\mathcal{D}_{f^{j_1,\epsilon},n+1,n+2-\frac1\gamma,-\gamma}(t)+\mathcal{D}_{f^{P,\epsilon},n+1,n+2-\frac1\gamma,-\gamma}(t)\right\}
  +\eta\mathcal{D}_{f^{m,\epsilon},n}(t)\nonumber
\end{eqnarray}
and one also has for $j_1+j_2\geq m$ with $0<j_1,j_2<m$ that
\begin{eqnarray}\label{0-order-E-R-12}
 &&\left(\partial^\alpha\left\{\frac12 q_0 \epsilon^{j_1+j_2-m} E^{j_1,\epsilon}\cdot vf^{j_2,\epsilon}-q_0\epsilon ^{j_1+j_2-m}E^{j_1,\epsilon}\nabla_vf^{j_2,\epsilon}\right.\right.\nonumber\\
 &&\left.\left.\quad\quad\quad\quad\quad\quad\quad-q_0\epsilon \left(v\times B^{m,\epsilon}\right)\cdot\nabla_v\left\{f^{P,\epsilon}+\sum_{i=1}^{m-1}\epsilon^if^{i,\epsilon}\right\}\right\}, \partial^\alpha f^{m,\epsilon}\right)\\
   &\lesssim&
  \mathcal{E}_{f^{m,\epsilon},n}(t)\left\{\mathcal{D}_{f^{P,\epsilon},n+1,n+2-\frac1\gamma,-\gamma}(t)+\sum_{i=1}^{m-1}\mathcal{D}_{f^{i,\epsilon},n+1,n+2-\frac1\gamma,-\gamma}(t)\right\}\nonumber\\
  &&+\mathcal{E}_{f^{j_1,\epsilon},n}(t) \mathcal{D}_{f^{j_2,\epsilon},n+1,n+2-\frac1\gamma,-\gamma}(t) +\eta\mathcal{D}_{f^{m,\epsilon},n}(t).\nonumber
\end{eqnarray}
\item[ii)]
For $1\leq |\alpha|\leq n$ with $0<j_1<m$, one has
\begin{eqnarray}\label{i-order-E-R-B}
  &&\left(\partial^\alpha\left\{\frac12q_0\epsilon^{j_1}E^{m,\epsilon}\cdot f^{j_1,\epsilon}-q_0\epsilon^{j_1}E^{m,\epsilon}\cdot \nabla_v f^{{j_1},\epsilon}-q_0E^{m,\epsilon}\cdot\nabla_vf^{P,\epsilon}\right\},w^2_{l_m,\kappa}\partial^\alpha f^{m,\epsilon}\right)\nonumber\\
    &\lesssim&\mathcal{E}_{f^{m,\epsilon},n}(t)\left\{\mathcal{D}_{f^{P,\epsilon},n+1,-\frac{\kappa l_m}{\gamma}+n+2+\frac{\kappa n-1}\gamma,-\gamma}(t)+\mathcal{D}_{f^{j_1,\epsilon},n+1,-\frac{\kappa l_m}{\gamma}+n+2+\frac{\kappa n-1}\gamma,-\gamma}(t)\right\}\\
  &&
+\eta\left\|w_{l_m,\kappa}\partial^\alpha f^{m,\epsilon}(t)\right\|_\nu^2\nonumber
\end{eqnarray}
and one also has for $j_1+j_2\geq m$ with $0<j_1,j_2<m$ that
\begin{eqnarray}\label{i-order-E-R-12}
 &&\left(\partial^\alpha\left\{\frac12 q_0 \epsilon^{j_1+j_2-m} E^{j_1,\epsilon}\cdot vf^{j_2,\epsilon}-q_0\epsilon ^{j_1+j_2-m}E^{j_1,\epsilon}\nabla_vf^{j_2,\epsilon}\right.\right.\nonumber\\
 &&\left.\left.\quad\quad\quad\quad\quad\quad\quad-q_0\epsilon \left(v\times B^{m,\epsilon}\right)\cdot\nabla_v\left(f^{P,\epsilon}+\sum_{i=1}^{m-1}\epsilon^if^{i,\epsilon}\right)\right\}, \partial^\alpha f^{m,\epsilon}\right)\\
   &\lesssim&
  \mathcal{E}_{f^{m,\epsilon},n}(t)\left\{\mathcal{D}_{f^{P,\epsilon},n+1,-\frac{\kappa l_m}{\gamma}+n+2+\frac{\kappa n-1}\gamma,-\gamma}(t)+\sum_{i=1}^{m-1}\mathcal{D}_{f^{i,\epsilon},n+1,-\frac{\kappa l_m}{\gamma}+n+2+\frac{\kappa n-1}\gamma,-\gamma}(t)\right\}\nonumber\\
&&+\mathcal{E}_{f^{j_1,\epsilon},n}(t)\mathcal{D}_{f^{j_2,\epsilon},n+1,-\frac{\kappa l_m}{\gamma}+n+2+\frac{\kappa n-1}\gamma,-\gamma}(t)+\eta\left\|w_{l_m,\kappa}\partial^\alpha f^{m,\epsilon}(t)\right\|_\nu^2.\nonumber
\end{eqnarray}
\item [iii)] For $|\alpha|+|\beta|\leq n,\ |\beta|\geq 1$ or $\alpha=\beta=0$, one has
  \begin{eqnarray}\label{micro-weight-E-R}
   &&\left(\partial^\alpha_\beta\left\{{\bf\{I-P\}}\left\{\frac12q_0\epsilon^{j_1}E^{m,\epsilon}\cdot f^{j_1,\epsilon}-q_0\epsilon^{j_1}E^{m,\epsilon}\cdot \nabla_v f^{{j_1},\epsilon}-q_0E^{m,\epsilon}\cdot \nabla_vf^{P,\epsilon}\right\}\right\},\right.\nonumber\\
 &&\left.\quad\quad\quad\quad\quad\quad\quad\quad\quad\quad\quad\quad\quad\quad\quad\quad\quad \quad\quad\quad\quad\quad\quad\quad\quad\quad\quad w^2_{l_m-|\beta|,\kappa}\partial^\alpha_\beta {\bf\{I-P\}}f^{m,\epsilon}\right)\\
    &\lesssim&\mathcal{E}_{f^{m,\epsilon},n}(t) \left\{\mathcal{D}_{f^{j_1,\epsilon},n+1,n+2-\frac1\gamma,-\gamma}(t) +\mathcal{D}_{f^{P,\epsilon},n+1,n+2-\frac1\gamma,-\gamma}(t)\right\}\nonumber\\
    &&
    +\eta\left\|w_{l_m-|\beta|,\kappa}\partial^\alpha_\beta{\bf \{I-P\}}f^{m,\epsilon}(t)\right\|_\nu^2\nonumber
    \end{eqnarray}
    and one also has for $j_1+j_2\geq m$ with $0<j_1,j_2<m$ that
\begin{eqnarray}\label{micro-weight-E-R-12}
   &&\left|\left(\partial^\alpha_\beta\left\{{\bf\{I-P\}}\left[\frac12 q_0 \epsilon^{j_1+j_2-m} E^{j_1,\epsilon}\cdot vf^{j_2,\epsilon}-q_0\epsilon ^{j_1+j_2-m}E^{j_1,\epsilon}\nabla_vf^{j_2,\epsilon}\right.\right.\right.\right.\nonumber\\
    &&\left.\left.\left.\left.\quad\quad\quad\quad\quad\quad-q_0\epsilon \left(v\times B^{m,\epsilon}\right)\cdot\nabla_v\left(f^{P,\epsilon}+\sum_{i=1}^{m-1}\epsilon^if^{i,\epsilon}\right)\right]\right\},w^2_{l_m-|\beta|,\kappa}\partial^\alpha_\beta{\bf \{I-P\}}f^{m,\epsilon}\right)\right|\\
   &\lesssim&
  \mathcal{E}_{f^{m,\epsilon},n}(t)\left\{\mathcal{D}_{f^{P,\epsilon},n+1,-\frac{\kappa l_m}{\gamma}+n+2+\frac{\kappa n-1}\gamma,-\gamma}(t)+\sum_{i=1}^{m-1}\mathcal{D}_{f^{i,\epsilon},n+1,-\frac{\kappa l_m}{\gamma}+n+2+\frac{\kappa n-1}\gamma,-\gamma}(t)\right\}\nonumber\\
&&+\mathcal{E}_{f^{j_1,\epsilon},n}(t)\mathcal{D}_{f^{j_2,\epsilon},n+1,-\frac{\kappa l_m}{\gamma}+n+2+\frac{\kappa n-1}\gamma,-\gamma}(t)+\eta\left\|w_{l_m-|\beta|,\kappa}\partial^\alpha_\beta{\bf \{I-P\}}f^{m,\epsilon}(t)\right\|_\nu^2.\nonumber
\end{eqnarray}
  \end{itemize}
\end{lemma}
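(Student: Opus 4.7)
The plan is to extend the splitting and Sobolev-interpolation argument used for the previous lemma (estimates \eqref{lemma-f-R-1}--\eqref{lemma-f-R-3}) to the present collection of inner products. For each term on the left-hand side of \eqref{0-order-E-R-B-R}--\eqref{micro-weight-E-R-12}, I would first apply the Leibniz rule to distribute $\partial^{\alpha}$ (or $\partial^{\alpha}_{\beta}$) across the product, and then split the remaining $f^{\sharp,\epsilon}$ factor via the macro-micro decomposition $f^{\sharp,\epsilon}=\mathbf{P}f^{\sharp,\epsilon}+\{\mathbf{I}-\mathbf{P}\}f^{\sharp,\epsilon}$. The $\mathbf{P}$ pieces involve polynomial growth in $v$ multiplied by $\mu^{1/2}$, so they can be absorbed using $L^{2}$--$L^{\infty}$--$L^{2}$ (or $L^{3}$--$L^{6}$--$L^{2}$, $L^{6}$--$L^{3}$--$L^{2}$) H\"{o}lder/Sobolev inequalities and accounted for by the macroscopic part $\|\nabla_{x}\mathbf{P}f^{\sharp,\epsilon}\|_{H^{n-1}_{x}}$ of $\mathcal{D}_{f^{\sharp,\epsilon},n}(t)$, while the $\{\mathbf{I}-\mathbf{P}\}$ pieces enter the weighted dissipation functionals on the right-hand side.

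The two new features relative to the previous lemma are (a) the velocity derivatives $\nabla_{v}$ acting on the $f^{\sharp,\epsilon}$ factors, which raise the overall derivative order by one and account for the shift from $n$ to $n+1$ in the dissipation indices on the right-hand side, and (b) the magnetic contribution $\epsilon(v\times B^{m,\epsilon})\cdot\nabla_{v}\{f^{P,\epsilon}+\sum_{i=1}^{m-1}\epsilon^{i}f^{i,\epsilon}\}$, in which the extra factor $v$ costs one power of $\langle v\rangle$ relative to the weight. To handle (b), I would keep the prefactor $\epsilon$ attached to $B^{m,\epsilon}$ and place $\epsilon\partial^{\alpha_{1}}B^{m,\epsilon}$ into the appropriate $L^{\infty}_{x}$, $L^{3}_{x}$ or $L^{6}_{x}$ slot via Sobolev embedding, so that its $L^{2}_{x}$-norm can be absorbed into $\epsilon^{2}\|\nabla_{x}B^{m,\epsilon}\|^{2}_{H^{n-2}_{x}}\subset\mathcal{D}_{f^{m,\epsilon},n}(t)$ or into $\mathcal{E}_{f^{m,\epsilon},n}(t)$, while the additional $\langle v\rangle$ together with the loss of one weight power from $\nabla_{v}$ is compensated by the dissipation weight exponent $-\kappa l_{m}/\gamma+n+2+(\kappa n-1)/\gamma$ chosen precisely so that $\langle v\rangle\cdot w_{l_{m}-|\beta|,\kappa}\lesssim w_{-\kappa l_{m}/\gamma+n+2+(\kappa n-1)/\gamma,-\gamma}$ up to $\nu(v)^{1/2}$ factors. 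The cross-terms $\epsilon^{j_{1}+j_{2}-m}E^{j_{1},\epsilon}\cdot\nabla_{v}f^{j_{2},\epsilon}$ with $j_{1}+j_{2}\geq m$ are handled identically, the only difference being that one of the two $f$-factors is assigned to $\mathcal{E}_{f^{j_{1},\epsilon},n}(t)$ and the other to the weighted dissipation functional for $f^{j_{2},\epsilon}$; this yields the sums displayed in \eqref{0-order-E-R-12}, \eqref{i-order-E-R-12}, and \eqref{micro-weight-E-R-12}.

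For the microscopic estimates \eqref{micro-weight-E-R} and \eqref{micro-weight-E-R-12}, I would apply the additional split $\{\mathbf{I}-\mathbf{P}\}(\cdot\, g)=\{\mathbf{I}-\mathbf{P}\}(\cdot\,\mathbf{P}g)+\{\mathbf{I}-\mathbf{P}\}(\cdot\,\{\mathbf{I}-\mathbf{P}\}g)$ exactly as in the proof of \eqref{lemma-f-R-3}, and then follow the routine above: the $\mathbf{P}g$ piece reduces to control of macroscopic coefficients, whereas the $\{\mathbf{I}-\mathbf{P}\}g$ piece is paired with the appropriate weighted $L^{2}_{\nu}$ norm. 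A small Cauchy--Schwarz residue on the final factor $\partial^{\alpha}_{\beta}\{\mathbf{I}-\mathbf{P}\}f^{m,\epsilon}$ (or $\partial^{\alpha}f^{m,\epsilon}$) produces the $\eta$-term on the right-hand side of each estimate.

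The main obstacle is not conceptual but the careful bookkeeping of weight exponents and derivative counts: I must verify, case by case, that the shift $\kappa\mapsto-\gamma$ in the dissipation functionals, combined with the weight parameter $-\kappa l_{m}/\gamma+n+2+(\kappa n-1)/\gamma$, simultaneously absorbs (i) the $\langle v\rangle$-growth coming from $v\times B^{m,\epsilon}$, (ii) the unit loss of $\langle v\rangle$ from $\nabla_{v}$, and (iii) the $\nu(v)^{1/2}$ factor needed to sit inside $\mathcal{D}_{f^{\sharp,\epsilon},n+1,\cdot,-\gamma}(t)$. Once these algebraic relations on the weights are checked, the rest of the argument is a direct adaptation of the Sobolev product estimates already carried out for the previous lemma.
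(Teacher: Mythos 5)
Your proposal is correct and follows the same route the paper intends: the paper explicitly introduces this lemma with "Similarly, one can also deduce...", i.e.\ by the same splitting (Leibniz rule on $\partial^\alpha$ or $\partial^\alpha_\beta$, triangle inequality through the macro--micro decomposition of the $f^{\sharp,\epsilon}$-factor, $L^2$--$L^\infty$--$L^2$ and $L^3$--$L^6$--$L^2$ type H\"older/Sobolev estimates, and a Cauchy--Schwarz to peel off the $\eta$-term) that was used to prove \eqref{lemma-f-R-3}. You correctly identify the two new bookkeeping features --- the $n\mapsto n+1$ shift in the dissipation index caused by $\nabla_v$, and the $\langle v\rangle$-loss from $v\times B^{m,\epsilon}$ being compensated by the weight parameter $-\kappa l_m/\gamma + n + 2 + (\kappa n-1)/\gamma$ (which reduces to $n+2-1/\gamma$ when $\kappa=0$, matching part (i)) --- which is exactly the verification the paper leaves to the reader.
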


\subsubsection{Some nonlinear estimates related to the electromagnetic field with respect to the weight $w_{\ell-|\beta|, -\gamma}(t,v)$} In this section, we try to deduce certain weighted nonlinear estimates related to the electromagnetic field  $\left[E^{m,\epsilon}(t,x), B^{m,\epsilon}(t,x)\right]$ with respect to the weight $w_{\ell-|\beta|, -\gamma}(t,v)$. For result in this direction, we have
 \begin{lemma}
 Recall the definitions in \eqref{def-D-n-g},
take
\[n\geq 3, \ \widetilde{\ell}_m\geq 1-\frac{3\gamma} 2,\ {\theta}=\frac{2-4\gamma}{2\widetilde{\ell}_m-\gamma},\]
we have the following estimates:
\begin{itemize}
  \item [i)]For $1\leq|\alpha|\leq n$, one has
\begin{eqnarray}\label{1-typical-low-non-R}
   &&\left|\left(\partial^\alpha\left\{
   \epsilon^{m}q_0\left(E^{m,\epsilon}+\epsilon v\times B^{m,\epsilon}\right)\cdot\nabla_v f^{m,\epsilon}\right\},\partial^\alpha f^{m,\epsilon}\right)\right|\nonumber\\
   &\lesssim&\mathcal{E}_{f^{m,\epsilon},n-1,n-\frac12-\frac1\gamma,-\gamma}(t)
 \mathcal{D}_{f^{m,\epsilon},n}(t)+\mathcal{E}_{f^{m,\epsilon},n}(t) \mathcal{D}_{f^{m,\epsilon},3,4-\frac1\gamma,-\gamma}(t)\\
  &&+\left\|\epsilon^m\nabla_x\left[E^{m,\epsilon}(t),B^{m,\epsilon}(t)\right]\right\|^2_{L^\infty_x} \left\|\nabla_v\nabla^{n-1}_x{\bf\{I-P\}}f^{m,\epsilon}(t)\langle v\rangle^{1-\frac\gamma2}\right\|^2+\eta\mathcal{D}_{f^{m,\epsilon},n}(t)\nonumber
\end{eqnarray}
and
\begin{eqnarray}\label{1-typical-low-non-1-R}
   &&\left|\left(\partial^\alpha\left\{\frac12q_0\epsilon^mE^{m,\epsilon}\cdot vf^{m,\epsilon}\right\},\partial^\alpha f^{m,\epsilon}\right)\right|\nonumber\\
   &\lesssim&\mathcal{E}_{f^{m,\epsilon},n-1,n-\frac12-\frac1\gamma,-\gamma}(t)
 \mathcal{D}_{f^{m,\epsilon},n}(t)+\mathcal{E}_{f^{m,\epsilon},n}(t) \mathcal{D}_{f^{m,\epsilon},3,4-\frac1\gamma,-\gamma}(t)\\
  &&+\left\|\epsilon^m E^{m,\epsilon}(t)\right\|^2_{L^\infty_x} \left\|\nabla^{n}_x{\bf\{I-P\}}f^{m,\epsilon}(t)\langle v\rangle^{1-\frac\gamma2}\right\|^2+\eta\mathcal{D}_{f^{m,\epsilon},n}(t);\nonumber
\end{eqnarray}
\item [ii)]For $1\leq|\alpha|\leq n$, one has
\begin{eqnarray}\label{1-w-typical-low-non-R}
   &&\left|\left(\partial^\alpha\left\{
   \epsilon^{m}q_0\left(E^{m,\epsilon}+\epsilon v\times B^{m,\epsilon}\right)\cdot\nabla_v f^{m,\epsilon}\right\},w^2_{l_m,-\gamma}\partial^\alpha f^{m,\epsilon}\right)\right|\nonumber\\
&\lesssim&\mathcal{E}_{f^{m,\epsilon},n}(t)\mathcal{D}_{f^{m,\epsilon},n}(t) +\epsilon^{m}\left\| E^{m,\epsilon}(t)\right\|_{L^\infty_x} \left\|w_{l_m,-\gamma}\partial^\alpha f^{m,\epsilon}(t)\langle v\rangle^{\frac12}\right\|^2\\
&&+\left\{\epsilon^{2m}\left\|\nabla_x
\left[E^{m,\epsilon}(t),\epsilon B^{m,\epsilon}(t)\right]\right\|^2_{H^{N_m^0-1}_x}
\right\}^{\frac{1}{\theta}}\sum_{1\leq j\leq n}\mathcal{D}_{f^{m,\epsilon},j,\widetilde{\ell}_m-\gamma l_m+1+\frac\gamma2,1}^{(j,1)}(t)\nonumber\\
  &&+\chi_{|\alpha|\geq N_m^0+1}\mathcal{E}_{f^{m,\epsilon},N_m}(t) \mathcal{E}^1_{f^{m,\epsilon},N^0_m,l_m+\frac32-\frac1\gamma,1}(t) +\eta\mathcal{D}_{f^{m,\epsilon},n,l_m,-\gamma}(t)\nonumber
\end{eqnarray}
and
\begin{eqnarray}\label{1-w-typical-low-non-1-R}
   &&\left|\left(\partial^\alpha\left\{\frac12q_0\epsilon^mE^{m,\epsilon}\cdot vf^{m,\epsilon}\right\},w^2_{l_m,-\gamma}\partial^\alpha f^{m,\epsilon}\right)\right|\nonumber\\
 &\lesssim&\mathcal{E}_{f^{m,\epsilon},n}(t)\mathcal{D}_{f^{m,\epsilon},n}(t) +\epsilon^{m}\left\| E^{m,\epsilon}(t)\right\|_{L^\infty_x} \left\|w_{l_m,-\gamma}\partial^\alpha f^{m,\epsilon}(t)\langle v\rangle^{\frac12}\right\|^2\\
  &&+\left\{\epsilon^{2m}\left\|\nabla_x
E^{m,\epsilon}(t)\right\|^2_{H^{N_m^0-1}_x}
\right\}^{\frac{1}{\theta}}\sum_{0\leq j\leq n}\mathcal{D}_{f^{m,\epsilon},j,\widetilde{\ell}_m-\gamma l_m+1+\frac\gamma2,1}^{(j,0)}(t)\nonumber\\
  &&+\chi_{|\alpha|\geq N_m^0+1}\mathcal{E}_{f^{m,\epsilon},N_m}(t) \mathcal{E}^1_{f^{m,\epsilon},N^0_m,l_m+\frac32-\frac1\gamma,1}(t) +\eta\mathcal{D}_{f^{m,\epsilon},n,l_m,-\gamma}(t);\nonumber
\end{eqnarray}
\item [iii)] For $|\alpha|+|\beta|= n$ with $|\beta|\geq 1$ or $|\alpha|=|\beta|=0$, one has
\begin{eqnarray}\label{mic-non-I-P-1}
 &&\left|\left(\partial^\alpha_\beta\left\{{\bf\{I-P\}}\left(
   \epsilon^{m}q_0\left(E^{m,\epsilon}+\epsilon v\times B^{m,\epsilon}\right)\cdot\nabla_v f^{m,\epsilon}\right)\right\},w^2_{l_m-|\beta|,-\gamma}\partial^\alpha_\beta{\bf\{I-P\}} f^{m,\epsilon}\right)\right|\nonumber\\
 &\lesssim&\mathcal{E}_{f^{m,\epsilon},n}(t)\mathcal{D}_{f^{m,\epsilon},n}(t) +\epsilon ^m\left\|E^{m,\epsilon}(t)\right\|_{L^\infty_x} \left\|w_{l_m-|\beta|,-\gamma}\partial^\alpha_\beta{\bf\{I-P\}} f^{m,\epsilon}(t)\langle v\rangle^{\frac12}\right\|^2\\
 &&+\chi_{|\beta|\geq 1}\left\{\epsilon^{2m}\left\|\nabla_x
\left[E^{m,\epsilon}(t),\epsilon B^{m,\epsilon}(t)\right]\right\|^2_{H^{N_m^0-1}_x}
\right\}^{\frac{1}{\theta}}\sum_{|\beta|\leq k\leq \min\{|\beta|+1,|\alpha|+|\beta|\},\atop k\leq j\leq |\alpha|}\mathcal{D}^{(j,k)}_{f^{m,\epsilon},j,\widetilde{\ell}_m-\gamma l_m+1+\frac\gamma2,1}(t)\nonumber\\
  &&+\chi_{|\alpha|+|\beta|\geq N_m^0+1}\mathcal{E}_{f^{m,\epsilon},N_m}(t) \mathcal{E}^1_{f^{m,\epsilon},N^0_m,l_m+\frac32-\frac1\gamma,1}(t) +\eta\mathcal{D}_{f^{m,\epsilon},n,l_m,-\gamma}(t)\nonumber
\end{eqnarray}
and
\begin{eqnarray}\label{mic-non-I-P-2}
  &&\left|\left(\partial^\alpha_\beta\left\{{\bf\{I-P\}}\left\{\frac12q_0\epsilon^mE^{m,\epsilon}\cdot vf^{m,\epsilon}\right\}\right\},w^2_{l_m-|\beta|,-\gamma}\partial^\alpha_\beta{\bf\{I-P\}} f^{m,\epsilon}\right)\right|\nonumber\\
  &\lesssim&\mathcal{E}_{f^{m,\epsilon},n}(t)\mathcal{D}_{f^{m,\epsilon},n}(t)+\epsilon ^m\left\| E^{m,\epsilon}(t)\right\|_{L^\infty_x} \left\|w_{l_m-|\beta|,-\gamma}\partial^\alpha_\beta{\bf\{I-P\}} f^{m,\epsilon}(t)\langle v\rangle^{\frac12}\right\|^2\\
  &&+\chi_{|\beta|\geq 1}\left\{\epsilon^{2m}\left\|\nabla_x
E^{m,\epsilon}(t)\right\|^2_{H^{N_m^0-1}_x}
\right\}^{\frac{1}{\theta}}\sum_{|\beta|\leq k\leq \min\{|\beta|+1,|\alpha|+|\beta|\},\atop k\leq j\leq |\alpha|}\mathcal{D}^{(j,k)}_{f^{m,\epsilon},j,\widetilde{\ell}_m-\gamma l_m+1+\frac\gamma2,1}(t)\nonumber\\
  &&+\chi_{|\alpha|+|\beta|\geq N_m^0+1}\mathcal{E}_{f^{m,\epsilon},N_m}(t) \mathcal{E}^1_{f^{m,\epsilon},N^0_m,l_m+\frac32-\frac1\gamma,1}(t) +\eta\mathcal{D}_{f^{m,\epsilon},n,l_m,-\gamma}(t).\nonumber
\end{eqnarray}
\end{itemize}
 \end{lemma}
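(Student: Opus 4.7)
The plan is to treat the six estimates in a uniform way by expanding each term via the Leibniz rule, then splitting the resulting sum into a ``low--high'' piece (few derivatives on $[E^{m,\epsilon},B^{m,\epsilon}]$, many on $f^{m,\epsilon}$) and a ``high--low'' piece (many derivatives on the fields, few on $f^{m,\epsilon}$). For the high--low piece, the factor $\epsilon^m$ combined with the $H^{N_m^0-1}_x$--smallness of $\nabla_x[E^{m,\epsilon},\epsilon B^{m,\epsilon}]$ provided by $\mathcal{E}_{f^{m,\epsilon},n}(t)$ allows one to absorb the velocity integral using only the standard $L^\infty_x$--$L^2_{x,v}$--$L^2_{x,v}$ (or $L^3_x$--$L^6_xL^2_v$--$L^2_{x,v}$) splittings, producing the product $\mathcal{E}_{f^{m,\epsilon},n}(t)\mathcal{D}_{f^{m,\epsilon},n}(t)$ modulo terms proportional to $\|E^{m,\epsilon}\|_{L^\infty_x}$ acting on a $\langle v\rangle^{1/2}$--weighted norm of $f^{m,\epsilon}$, which is exactly the second term on the right of each estimate.

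First I would handle (i), i.e.\ the pure $\partial^\alpha$ cases $1\le |\alpha|\le n$. The $v\times B^{m,\epsilon}\cdot\nabla_v f^{m,\epsilon}$ term loses one velocity derivative, so I would bound it by $L^\infty_x$ on $\epsilon^{m+1}\nabla_x^{|\alpha_1|}[E^{m,\epsilon},B^{m,\epsilon}]$ when $|\alpha_1|\le n-1$ and $L^2_x$ on $\nabla_v\partial^{\alpha-\alpha_1}\{{\bf I-P}\}f^{m,\epsilon}$ with weight $\langle v\rangle^{1-\gamma/2}$ (using that $|v|\le \langle v\rangle$ can be absorbed in the $\nu$--norm of the dissipation since $-\gamma>0$). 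The case $\alpha_1=\alpha$ gives the explicit $L^\infty_x$ term displayed in the conclusion. The term $\frac{\epsilon^m}{2}E^{m,\epsilon}\cdot v f^{m,\epsilon}$ is handled identically, writing $v=v-\text{bulk}$ absorbed by the $\langle v\rangle$ factor in the dissipation.

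For the weighted estimates (ii) and (iii), the key new ingredient is the interpolation between the two weighted dissipation families $\mathcal{D}_{\cdot,\cdot,l_m,-\gamma}$ and $\mathcal{D}^{(j,k)}_{\cdot,\cdot,\tilde\ell_m-\gamma l_m+1+\gamma/2,1}$. Because $w_{l_m,-\gamma}=\langle v\rangle^{-\gamma l_m}e^{q\langle v\rangle^2/(1+t)^\vartheta}$ while $w_{l_m,1}=\langle v\rangle^{l_m}e^{q\langle v\rangle^2/(1+t)^\vartheta}$, the velocity weight $\langle v\rangle^{-\gamma l_m}\langle v\rangle^{1/2}$ arising from an $L^\infty_x$ estimate on $E^{m,\epsilon}$ does not match the dissipation rate of $f^{m,\epsilon}$ directly; instead I would interpolate
\[
\|w_{l_m,-\gamma}\partial^\alpha f^{m,\epsilon}\langle v\rangle^{1/2}\|^2
\lesssim \|\partial^\alpha f^{m,\epsilon}\langle v\rangle^{\gamma/2}\|^{2(1-1/\theta)}
\cdot \|w_{l_m,1}\partial^\alpha f^{m,\epsilon}\langle v\rangle\|^{2/\theta},
\]
with the exponent $\theta=(2-4\gamma)/(2\tilde\ell_m-\gamma)$ chosen precisely so the velocity weights balance. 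Using Sobolev embedding $\|E^{m,\epsilon}\|_{L^\infty_x}\lesssim\|\nabla_x E^{m,\epsilon}\|_{H^{N_m^0-1}_x}$ and raising both sides to the $1/\theta$ power produces the displayed $\{\epsilon^{2m}\|\nabla_x[E^{m,\epsilon},\epsilon B^{m,\epsilon}]\|^2_{H^{N_m^0-1}_x}\}^{1/\theta}$ factor multiplying the ``super-weighted'' dissipation $\mathcal{D}^{(j,k)}_{\cdot,\cdot,\tilde\ell_m-\gamma l_m+1+\gamma/2,1}(t)$.

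The cases (iii) with mixed $\partial^\alpha_\beta$ differ only in that the $v\times B^{m,\epsilon}\cdot\nabla_v$ term can transfer a $v$--derivative onto either factor, producing either $\partial^{\alpha+e_j}_{\beta-e_j}$ or $\partial^\alpha_{\beta+e_j}$--type terms; this is why the range of $k$ in the indicator $\chi_{|\beta|\ge 1}$ runs between $|\beta|$ and $\min\{|\beta|+1,|\alpha|+|\beta|\}$. The very highest-order term $|\alpha|+|\beta|=n\ge N_m^0+1$ cannot be bounded by this interpolation alone because the ``low'' side no longer has enough Sobolev room; here I would invoke the $\chi_{|\alpha|+|\beta|\ge N_m^0+1}$ term, controlling it by $\mathcal{E}_{f^{m,\epsilon},N_m}(t)\cdot\mathcal{E}^1_{f^{m,\epsilon},N_m^0,l_m+3/2-1/\gamma,1}(t)$ through an $L^2_xL^\infty_v$ embedding in the spatial top derivative and the weighted $H^{N_m^0}$ bound on the remainder. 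The main obstacle in the whole lemma is choosing the interpolation exponent $\theta$ and the auxiliary weight parameter $\tilde\ell_m$ so that the resulting velocity weight on the right-hand side is exactly $\langle v\rangle^{\tilde\ell_m-\gamma l_m+1+\gamma/2}$, which is forced by the compatibility $\tilde\ell_m\ge 1-3\gamma/2$, together with verifying that the Young inequality application in the $(1-1/\theta,1/\theta)$ split does not disturb the $\epsilon^m$ smallness needed to close the bootstrap.
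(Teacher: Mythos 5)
The overall strategy you describe — a Leibniz split into low--high and high--low pieces, use of the $\epsilon^m$ smallness together with Sobolev control of $\nabla_x[E^{m,\epsilon},\epsilon B^{m,\epsilon}]$, a velocity interpolation between the $\langle v\rangle^{\gamma/2}$--weighted dissipation and a $w_{\cdot,1}$--type strong weight, closing with a $\chi_{|\alpha|+|\beta|\ge N_m^0+1}$ overflow term — is indeed the paper's approach, and at the structural level your proposal is on track.

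However, the central interpolation step is misstated in three ways that together constitute a genuine gap. First, your displayed inequality uses exponents $1-1/\theta$ and $1/\theta$; since $\theta=\frac{2-4\gamma}{2\widetilde{\ell}_m-\gamma}\in(0,1]$ under the hypothesis $\widetilde{\ell}_m\ge 1-\frac{3\gamma}{2}$, the exponent $1-1/\theta$ is nonpositive, so as written this is not a valid interpolation. The correct interpolation has exponents $\theta$ and $1-\theta$ (the $1/\theta$ power only appears afterwards via Young's inequality applied to the product $\bigl(\epsilon^{2m}\|E^{m,\epsilon}\|_{L^\infty_x}^2\bigr)A^{2\theta}B^{2(1-\theta)}$). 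Second, and more importantly, you apply the interpolation to the term $\|w_{l_m,-\gamma}\partial^\alpha f^{m,\epsilon}\langle v\rangle^{1/2}\|^2$ — but this is precisely the piece where \emph{all} $\alpha$--derivatives fall on $f^{m,\epsilon}$ (the $\alpha_1=0$ term after integration by parts on $\nabla_v$), and the paper keeps it as is, multiplied by $\epsilon^m\|E^{m,\epsilon}\|_{L^\infty_x}$, to be absorbed later by the time--weighted dissipative term $D^W$. The interpolation is instead applied to the pieces where $\alpha_1>0$ derivatives land on the fields, which, after accounting for the $|v|$ factor from $v\times B^{m,\epsilon}$, the loss of one order of $w$--weight from the $\nabla_v$--derivative, and the $\langle v\rangle^{\gamma/2}$ dissipation weight, carry the velocity weight $\langle v\rangle^{1-\frac{3\gamma}{2}}$; the constraint $\widetilde{\ell}_m\ge 1-\frac{3\gamma}{2}$ is exactly what makes that weight sit between $\langle v\rangle^{\gamma/2}$ and $\langle v\rangle^{\widetilde{\ell}_m}$. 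Third, your interpolated ``high'' side has weight $\langle v\rangle^{l_m+1}$ from $w_{l_m,1}\langle v\rangle$, which does not match the target dissipation weight $\widetilde{\ell}_m-\gamma l_m+1+\frac{\gamma}{2}$ appearing in $\mathcal{D}^{(j,k)}_{f^{m,\epsilon},j,\widetilde{\ell}_m-\gamma l_m+1+\frac{\gamma}{2},1}$; one must instead interpolate $w_{l_m-1,-\gamma}\langle v\rangle^{1-\frac{3\gamma}{2}}$ against $w_{l_m-1,-\gamma}\langle v\rangle^{\widetilde{\ell}_m}$ and $w_{l_m-1,-\gamma}\langle v\rangle^{\gamma/2}$, which reproduces the stated weight. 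Repairing these three points — correct interpolation exponents, correct term to interpolate, correct target weight — would bring the proposal in line with the paper's proof.
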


\begin{proof}
We only prove \eqref{1-w-typical-low-non-R} since the rest can be done similarly. To this end, since
 \begin{eqnarray}
&&\left(\partial^\alpha\left\{
  \epsilon^{m}q_0\left(E^{m,\epsilon}+\epsilon v\times B^{m,\epsilon}\right)\cdot\nabla_v f^{m,\epsilon}\right\},w^2_{l_m,-\gamma}\partial^\alpha f^{m,\epsilon}\right)\nonumber\\
&=&\underbrace{\left(
   q_0\epsilon^{m}\left(E^{m,\epsilon}+\epsilon v\times B^{m,\epsilon}\right)\cdot\nabla_v\partial^\alpha f^{m,\epsilon},w^2_{l_m,-\gamma}\partial^\alpha f^{m,\epsilon}\right)}_{{J}_{\alpha,1}}\nonumber\\
   &&+\underbrace{\sum_{0<\alpha_1\leq\alpha}\left( q_0\epsilon^{m}\partial^{\alpha_1}\left(E^{m,\epsilon}+\epsilon v\times B^{m,\epsilon}\right)\cdot\nabla_v\partial^{\alpha-\alpha_1 }{\bf P}f^{m,\epsilon},w^2_{l_m,-\gamma}\partial^\alpha f^{m,\epsilon}\right)}_{{J}_{\alpha,2}}\\
&&+\underbrace{\sum_{0<\alpha_1\leq\alpha}\left( q_0\epsilon^{m}\partial^{\alpha_1}\left(E^{m,\epsilon}+\epsilon v\times B^{m,\epsilon}\right)\cdot\nabla_v\partial^{\alpha-\alpha_1 }{\bf\{I-P\} }f^{m,\epsilon},w^2_{l_m,-\gamma}\partial^\alpha f^{m,\epsilon}\right)}_{{J}_{\alpha,3}},\nonumber
\end{eqnarray}
we can get by using integration by parts that
\begin{eqnarray}
{J}_{\alpha,1}&\lesssim&\epsilon^{m}\left\{\left\|E^{m,\epsilon}\right\|_{L^\infty_x}+\|\epsilon B^{m,\epsilon}\|_{L^\infty_x}\right\}\left\|w_{l_m,-\gamma}\partial^\alpha f^{m,\epsilon}\langle v\rangle \right\|^2.\nonumber
\end{eqnarray}

For ${J}_{\alpha,2}$, one has by Sobolev's inequalities that
\[
{J}_{\alpha,2}\lesssim\epsilon^{m}\left\{\left\|E^{m,\epsilon}\right\|_{H^{n}_x}+\|\epsilon B^{m,\epsilon}\|_{H^{n}_x}\right\}\|\nabla_x f^{m,\epsilon}\|^2_{H^{n-2}_xL^2_\nu}+\eta\left\|\partial^\alpha f^{m,\epsilon}\langle v\rangle^{\frac\gamma2}\right\|^2
\]
and ${J}_{\alpha,3}$ can be estimated as follows
 \begin{eqnarray*}
{J}_{\alpha,3}
&\lesssim&\underbrace{\epsilon^{2m}\sum_{1\leq|\alpha_1|\leq N^0_{R}-2}\left\{\left\|\partial^{\alpha_1}
E^{m,\epsilon}\right\|^2_{L^\infty_x}
+\left\|\epsilon\partial^{\alpha_1}B^{m,\epsilon}\right\|^2_{L^\infty_x}
\right\}
\left\|w_{l_m-1,-\gamma}\partial^{\alpha-\alpha_1}_{e_j}{\bf \{I-P\}}f^{m,\epsilon}\langle v\rangle^{1-\frac{3\gamma}2}\right\|^2}_{{J}^{(1)}_{\alpha,3}}\\{}
&&+\underbrace{\epsilon^{2m}\sum_{|\alpha_1|= N^0_{R}-1}\left\{\left\|\partial^{\alpha_1}E^{m,\epsilon}\right\|
^2_{L^6_x}
+\left\|\epsilon\partial^{\alpha_1}B^{m,\epsilon}\right\|^2_{L^6_x}
\right\}
\left\|w_{l_m-1,-\gamma}
\partial^{\alpha-\alpha_1}_{e_j}{\bf \{I-P\}}f^{m,\epsilon}\langle v\rangle^{1-\frac{3\gamma}2}\right\|^2_{L^2_vL^3_x}}_{{J}^{(2)}_{\alpha,3}}\\{}
&&+\underbrace{\epsilon^{2m}\sum_{|\alpha_1|=N^0_{R}}\left\{\left\|\partial^{\alpha_1}E^{m,\epsilon}\right\|^2
+\left\|\epsilon\partial^{\alpha_1}B^{m,\epsilon}\right\|^2
\right\}
\left\|w_{l_m-1,-\gamma}\partial^{\alpha-\alpha_1}_{e_j}{\bf \{I-P\}}f^{m,\epsilon}\langle v\rangle^{1-\frac{3\gamma}2}\right\|^2_{L^\infty_xL^2_v}}_{{J}^{(3)}_{\alpha,3}}\\{}
&&+\chi_{|\alpha|\geq N^0_{R}+1}\underbrace{\epsilon^{2m}\sum_{N^0_{R}+1\leq|\alpha_1|\leq N_m-2}\left\{\left\|\partial^{\alpha_1}\left[E^{m,\epsilon},\epsilon B^{m,\epsilon}\right]\right\|^2_{L^\infty_x}\right\}
\left\|w_{l_m-1,-\gamma}\partial^{\alpha-\alpha_1}_{e_j}{\bf \{I-P\}}f^{m,\epsilon}\langle v\rangle^{1-\frac{3\gamma}2}\right\|^2}_{{J}^{(4)}_{\alpha,3}}\\{}
&&+\chi_{|\alpha|\geq N_{R}-1}\underbrace{\epsilon^{2m}\sum_{|\alpha_1|= N_m-1 }\left\{\left\|\partial^{\alpha_1}E^{m,\epsilon}\right\|_{L^6_x}^2
+\left\|\epsilon\partial^{\alpha_1}B^{m,\epsilon}\right\|_{L^6_x}^2
\right\}
\left\|w_{l_m-1,-\gamma}\partial^{\alpha-\alpha_1}_{e_j}{\bf \{I-P\}}f^{m,\epsilon}\langle v\rangle^{1-\frac{3\gamma}2}\right\|^2_{L^3_xL^2_v}}_{{J}^{(5)}_{\alpha,3}}\\{}
&&+\chi_{|\alpha|= N_{R}}\underbrace{\epsilon^{2m}\sum_{|\alpha_1|= N_m }\left\{\left\|\partial^{\alpha_1}E^{m,\epsilon}\right\|^2
+\left\|\epsilon\partial^{\alpha_1}B^{m,\epsilon}\right\|^2
\right\}
\left\|w_{l_m-1,-\gamma}\partial_{e_j}{\bf \{I-P\}}f^{m,\epsilon}\langle v\rangle^{1-\frac{3\gamma}2}\right\|^2_{L^\infty_xL^2_v}}_{{J}^{(6)}_{\alpha,3}}\\{}
&&+\eta\left\|w_{l_m,-\gamma}\partial^\alpha f^{m,\epsilon}\right\|_\nu^2.
\end{eqnarray*}

To estimate ${J}^{(i)}_{\alpha,3}$ $(i=1,2,3,4,5,6)$, one has for ${J}^{(1)}_{\alpha,3}$ that
\begin{eqnarray*}
{J}^{(1)}_{\alpha,3}&\lesssim&\sum_{1\leq|\alpha_1|\leq N^0_{R}-2}\epsilon^{2m}\left\{\left\|\partial^{\alpha_1}
E^{m,\epsilon}\right\|^2_{L^\infty_x}
+\left\|\epsilon\partial^{\alpha_1}B^{m,\epsilon}\right\|^2_{L^\infty_x}
\right\}\left\|w_{l_m-1,-\gamma}\partial^{\alpha-\alpha_1}_{e_j}{\bf \{I-P\}}f^{m,\epsilon}\langle v\rangle^{\widetilde{\ell}_m}\right\|^{2{\theta}}\\
&&\times
\left\|w_{l_m-1,-\gamma}\partial^{\alpha-\alpha_1}_{e_j}{\bf \{I-P\}}f^{m,\epsilon}\langle v\rangle^{\frac{\gamma}2}\right\|^{2(1-{\theta})}
\\
&\lesssim&\sum_{1\leq|\alpha_1|\leq N^0_{R}-2}\epsilon^{2m}\left\{\left\|\partial^{\alpha_1}
E^{m,\epsilon}\right\|^2_{L^\infty_x}
+\left\|\epsilon\partial^{\alpha_1}B^{m,\epsilon}\right\|^2_{L^\infty_x}
\right\}\left\|w_{l_m-1,-\gamma}\partial^{\alpha-\alpha_1}_{e_j}{\bf \{I-P\}}f^{m,\epsilon}\langle v\rangle^{\widetilde{\ell}_m}\right\|^{2{\theta}}\\
&&\times
\left\|w_{l_m-1,-\gamma}\partial^{\alpha-\alpha_1}_{e_j}{\bf \{I-P\}}f^{m,\epsilon}\langle v\rangle^{\frac{\gamma}2}\right\|^{2(1-{\theta})}\nonumber\\
&\lesssim&\sum_{1\leq|\alpha_1|\leq N^0_{R}-2}\left\{\epsilon^{2m}\left\|\partial^{\alpha_1}
E^{m,\epsilon}\right\|^2_{L^\infty_x}
+\epsilon^{2m}\left\|\epsilon\partial^{\alpha_1}B^{m,\epsilon}\right\|^2_{L^\infty_x}
\right\}^{\frac{1}{\theta}}\left\|w_{l_m-1,-\gamma}\partial^{\alpha-\alpha_1}_{e_j}{\bf \{I-P\}}f^{m,\epsilon}\langle v\rangle^{\widetilde{\ell}_m}\right\|^{2}\\
&&+\eta
\left\|w_{l_m-1,-\gamma}\partial^{\alpha-\alpha_1}_{e_j}{\bf \{I-P\}}f^{m,\epsilon}\langle v\rangle^{\frac{\gamma}2}\right\|^{2}\nonumber\\
&\lesssim&\sum_{1\leq|\alpha_1|\leq N^0_{R}-2}\left\{\epsilon^{2m}\left\|\partial^{\alpha_1}
E^{m,\epsilon}\right\|^2_{L^\infty_x}
+\epsilon^{2m}\left\|\epsilon\partial^{\alpha_1}B^{m,\epsilon}\right\|^2_{L^\infty_x}
\right\}^{\frac{1}{\theta}}\nonumber\\
&&\times\sum_{1\leq j\leq n}\mathcal{D}_{f^{m,\epsilon},j,\widetilde{\ell}_m-\gamma l_m+1+\frac\gamma2,1}^{(j,1)}(t)+\eta\mathcal{D}_{f^{m,\epsilon},n,l_m,-\gamma}(t),
\end{eqnarray*}
where ${\theta}$ satisfies that
$1-\frac32\gamma=\frac\gamma2(1-{\theta})+\widetilde{\ell}_m{\theta}$ which yields that ${\theta}=\frac{2-4\gamma}{2\widetilde{\ell}_m-\gamma}$.
${J}^{(2)}_{\alpha,3}$ and ${J}^{(3)}_{\alpha,3}$ can be controlled in a similar way, while for the last three terms, one can deduce that
\begin{eqnarray*}
  &&{J}^{(4)}_{\alpha,3}+{J}^{(5)}_{\alpha,3}
  +{J}^{(6)}_{\alpha,3}\lesssim\mathcal{E}_{f^{m,\epsilon},N_m}(t) \mathcal{E}^1_{f^{m,\epsilon},N^0_m,l_m+\frac32-\frac1\gamma,1}(t).
\end{eqnarray*}
Thus we complete the proof of this lemma.
\end{proof}

For the estimates on the interactions of $f^{m,\epsilon}(t,x,v)$ with $\left[E^{P,\epsilon}(t,x), E^{1,\epsilon}(t,x),\cdots, E^{m-1,\epsilon}(t,x)\right]$ with respect to the weight $w_{\ell^*_m-|\beta|,-\gamma}(t,v)$, we can get similarly that
\begin{lemma}
Assume
\[\widetilde{\ell}_m\geq 1-\frac{3\gamma} 2,\ {\theta}=\frac{2-4\gamma}{2\widetilde{\ell}_m-\gamma},\]
we have the following estimates:
\begin{itemize}
\item [i)] For $0\leq|\alpha|\leq n$, one has
      \begin{eqnarray}\label{0-w-typical-R}
       &&\left|\left(\partial^\alpha\left\{\frac12q_0 E^{P,\epsilon}\cdot v f^{m,\epsilon}-q_0 E^{P,\epsilon}\cdot\nabla_{ v  }f^{m,\epsilon}\right\},\partial^\alpha f^{m,\epsilon}\right)\right|\nonumber\\
   &\lesssim&\mathcal{E}_{f^{P,\epsilon},n}(t)\mathcal{D}_{f^{m,\epsilon},n}(t)
   +\mathcal{E}_{f^{m,\epsilon},n-1,n-\frac12-\frac1\gamma,-\gamma}(t)
  \mathcal{D}_{f^{P,\epsilon},n}(t)\nonumber\\
  &&+\left\|\nabla_xE^{P,\epsilon}(t)\right\|^2_{L^\infty_x} \left\|\nabla_v\nabla^{n-1}_x{\bf\{I-P\}}f^{m,\epsilon}(t)\langle v\rangle^{1-\frac\gamma2}\right\|^2\\
   &&+\left\|E^{P,\epsilon}(t)\right\|^2_{L^\infty_x} \left\|\nabla^{n}_x{\bf\{I-P\}}f^{m,\epsilon}(t)\langle v\rangle^{1-\frac\gamma2}\right\|^2+\eta\mathcal{D}_{f^{m,\epsilon},n}(t)\nonumber
\end{eqnarray}
  and  for $0<j_1<m$, one can get that
  \begin{eqnarray}\label{1-w-typical-R}
       &&\left|\left(\partial^\alpha\left\{\frac12q_0 E^{j_1,\epsilon}\cdot v f^{m,\epsilon}-q_0 E^{j_1,\epsilon}\cdot\nabla_{ v  }f^{m,\epsilon}\right\},\partial^\alpha f^{m,\epsilon}\right)\right|\nonumber\\
   &\lesssim&\mathcal{E}_{f^{j_1,\epsilon},n}(t)\mathcal{D}_{f^{m,\epsilon},n}(t)
   +\mathcal{E}_{f^{m,\epsilon},n-1,n-\frac12-\frac1\gamma,-\gamma}(t)
  \mathcal{D}_{f^{j_1,\epsilon},n}(t)\nonumber\\
  &&+\|\nabla_xE^{j_1,\epsilon}(t)\|^2_{L^\infty_x} \left\|\nabla_v\nabla^{n-1}_x{\bf\{I-P\}}f^{m,\epsilon}(t)\langle v\rangle^{1-\frac\gamma2}\right\|^2\\
   &&+\|E^{j_1,\epsilon}(t)\|^2_{L^\infty_x}\|\nabla^{n}_x{\bf\{I-P\}}f^{m,\epsilon}(t)\langle v\rangle^{1-\frac\gamma2}\|^2+\eta\mathcal{D}_{f^{m,\epsilon},n}(t).\nonumber
\end{eqnarray}
\item [ii)]For $1\leq|\alpha|\leq n$, one has
\begin{eqnarray}\label{1-w-typical-low-R}
   &&\left|\left(\partial^\alpha\left\{\frac12q_0 E^{P,\epsilon}\cdot v f^{m,\epsilon}-q_0 E^{P,\epsilon}\cdot\nabla_{ v  }f^{m,\epsilon}\right\},w^2_{l_m,-\gamma}\partial^\alpha f^{m,\epsilon}\right)\right|\\
&\lesssim&\mathcal{E}_{f^{P,\epsilon},n}(t)\mathcal{D}_{f^{m,\epsilon},n}(t)+\left\| E^{P,\epsilon}(t)\right\|_{L^\infty_x}\left\|w_{l_m,-\gamma}\partial^\alpha f^{m,\epsilon}(t)\langle v\rangle^{\frac12}\right\|^2\nonumber\\
  &&+\left\|\nabla_xE^{P,\epsilon}(t)\right\|_{H^{N_m}_x}^{\frac2{{\theta}}}\sum_{0\leq k\leq 1,\atop k\leq j\leq |\alpha|}\mathcal{D}^{(j,k)}_{f^{m,\epsilon},j,-\gamma l_m+\widetilde{l}_m,1}(t)+\eta\mathcal{D}_{f^{m,\epsilon},n,l_m,-\gamma}(t)\nonumber
  \end{eqnarray}
and we also have
\begin{eqnarray}\label{1-w-typical-low-1-R}
   &&\left|\left(\partial^\alpha\left\{\frac12q_0 E^{j_1,\epsilon}\cdot v f^{m,\epsilon}-q_0 E^{j_1,\epsilon}\cdot\nabla_{ v  }f^{m,\epsilon}\right\},w^2_{l_m,-\gamma}\partial^\alpha f^{m,\epsilon}\right)\right|\\
&\lesssim&\mathcal{E}_{f^{j_1,\epsilon},n}(t)\mathcal{D}_{f^{m,\epsilon},n}(t)+\left\| E^{j_1,\epsilon}(t)\right\|_{L^\infty_x}\left\|w_{l_m,-\gamma}\partial^\alpha f^{m,\epsilon}(t)\langle v\rangle^{\frac12}\right\|^2\nonumber\\
  &&+\left\|\nabla_xE^{j_1,\epsilon}(t)\right\|_{H^{N_m}_x}^{\frac2{{\theta}}}\sum_{0\leq k\leq 1,\atop k\leq j\leq |\alpha|}\mathcal{D}^{(j,k)}_{f^{m,\epsilon},j,-\gamma l_m+\widetilde{l}_m,1}(t)+\eta\mathcal{D}_{f^{m,\epsilon},n,l_m,-\gamma}(t).\nonumber
\end{eqnarray}
\item [iii)] For $|\alpha|+|\beta|\leq n$ with $|\beta|\geq 1$, one has
\begin{eqnarray}\label{2-w-typical-low-R}
 &&\left|\left(\partial^\alpha_\beta\left\{{\bf\{I-P\}}\left\{\frac12q_0 E^{P,\epsilon}\cdot v f^{m,\epsilon}-q_0 E^{P,\epsilon}\cdot\nabla_{ v  }f^{m,\epsilon}\right\}\right\},w^2_{l_m-|\beta|,-\gamma}\partial^\alpha_\beta{\bf\{I-P\}} f^{m,\epsilon}\right)\right|\nonumber\\
   &\lesssim&\mathcal{E}_{f^{P,\epsilon},n}(t)\mathcal{D}_{f^{m,\epsilon},n}(t)+\left\|  E^{P,\epsilon}(t)\right\|_{L^\infty_x}\left\|w_{l_m-|\beta|,-\gamma} \partial^\alpha_\beta{\bf\{I-P\}} f^{m,\epsilon}(t)\langle v\rangle^{\frac12}\right\|^2\\
  &&+\left\|\nabla_xE^{P,\epsilon}(t)\right\|_{H^{N_m}_x}^{\frac2{{\theta}}}\sum_{|\beta|-1\leq k\leq \min\{|\beta|+1,|\alpha|+|\beta|\},\atop k\leq j\leq |\alpha|+|\beta|}\mathcal{D}^{(j,k)}_{f^{m,\epsilon},j,-\gamma l_m+\widetilde{l}_m,1}(t)
+\eta\mathcal{D}_{f^{m,\epsilon},N_m,l_m,-\gamma}(t)\nonumber
\end{eqnarray}
and we also have
\begin{eqnarray}\label{2-w-typical-low-1-R}
  &&\left|\left(\partial^\alpha_\beta\left\{{\bf\{I-P\}}\left\{ \frac12q_0 E^{j_1,\epsilon}\cdot v f^{m,\epsilon}-q_0 E^{j_1,\epsilon}\cdot\nabla_{ v  }f^{m,\epsilon}\right\}\right\},w^2_{l_m-|\beta|,-\gamma}\partial^\alpha_\beta{\bf\{I-P\}} f^{m,\epsilon}\right)\right|\nonumber\\
   &\lesssim&\mathcal{E}_{f^{j_1,\epsilon},n}(t)\mathcal{D}_{f^{m,\epsilon},n}(t) +\left\|  E^{j_1,\epsilon}(t)\right\|_{L^\infty_x} \left\|w_{l_m-|\beta|,-\gamma}\partial^\alpha_\beta{\bf\{I-P\}} f^{m,\epsilon}(t)\langle v\rangle^{\frac12}\right\|^2\\
   &&+\left\|\nabla_xE^{j_1,\epsilon}(t)\right\|_{H^{N_m}_x}^{\frac2{{\theta}}} \sum_{|\beta|-1\leq k\leq \min\{|\beta|+1,|\alpha|+|\beta|\},\atop k\leq j\leq |\alpha|+|\beta|}\mathcal{D}^{(j,k)}_{f^{m,\epsilon},j,-\gamma l_m+\widetilde{l}_m,1}(t)
+\eta\mathcal{D}_{f^{m,\epsilon},N_m,l_m,-\gamma}(t).\nonumber
\end{eqnarray}
\end{itemize}
 \end{lemma}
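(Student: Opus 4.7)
The plan is to mirror the strategy used in the immediately preceding lemma (for the $\epsilon^{m}$-weighted terms), but now with the ``background'' electric field $E^{P,\epsilon}$ or $E^{j_1,\epsilon}$ in place of $\epsilon^{m}E^{m,\epsilon}$, so that there is no small factor $\epsilon^{m}$ available and we must instead exploit the fact that the background electric field already lives in the energy functional $\mathcal{E}_{f^{P,\epsilon},n}(t)$ (resp.\ $\mathcal{E}_{f^{j_1,\epsilon},n}(t)$) with no $\epsilon$-loss. First, I would apply $\partial^\alpha_\beta$ by Leibniz and split each bilinear expression into the three canonical pieces used earlier: (a) the piece where all derivatives fall on $f^{m,\epsilon}$, (b) intermediate pieces where derivatives are distributed between $E^{P,\epsilon}$ (or $E^{j_1,\epsilon}$) and $f^{m,\epsilon}$, and (c) the piece where all the spatial derivatives fall on the field. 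For (a), integration by parts in $v$ for the $E\cdot\nabla_v f^{m,\epsilon}$ term produces the ``commutator term'' $\|E^{P,\epsilon}\|_{L^\infty_x}\|w_{\ell,-\gamma}\partial^\alpha_\beta \{\mathbf{I}-\mathbf{P}\}f^{m,\epsilon}\langle v\rangle^{1/2}\|^2$ that already appears on the right-hand side; the $E\cdot v f^{m,\epsilon}$ term is handled the same way.

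For the intermediate pieces (b) I would use the usual $L^2$--$L^\infty$--$L^2$, $L^6$--$L^3$--$L^2$ (or $L^3$--$L^6$--$L^2$) Sobolev trichotomy depending on the order of derivatives, together with the fact that $\|\partial^{\alpha_1}E^{P,\epsilon}\|_{L^\infty_x}\lesssim \|\nabla_x E^{P,\epsilon}\|_{H^{N_m}_x}$ whenever $|\alpha_1|\le N_m-1$, absorbing half of the derivatives into $\mathcal{E}_{f^{P,\epsilon},n}(t)$ and the remaining $f^{m,\epsilon}$ factor into either $\mathcal{D}_{f^{m,\epsilon},n}(t)$ (without weight, using $\eta$ times the weighted dissipation as a buffer) or into $\mathcal{E}_{f^{m,\epsilon},n-1,n-\tfrac12-1/\gamma,-\gamma}(t)$ with its companion dissipation $\mathcal{D}_{f^{P,\epsilon},n}(t)$. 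This produces the first two summands on the right-hand side of \eqref{0-w-typical-R} (resp.\ \eqref{1-w-typical-R}); the ``pure velocity-loss'' terms $\|\nabla_x E^{P,\epsilon}\|_{L^\infty_x}^2\|\nabla_v\nabla_x^{n-1}\{\mathbf{I}-\mathbf{P}\}f^{m,\epsilon}\langle v\rangle^{1-\gamma/2}\|^2$ and $\|E^{P,\epsilon}\|_{L^\infty_x}^2\|\nabla_x^n\{\mathbf{I}-\mathbf{P}\}f^{m,\epsilon}\langle v\rangle^{1-\gamma/2}\|^2$ come out of the extreme cases of the Leibniz split.

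The main obstacle, and the step I would treat most carefully, is (c): to close the weighted $w_{\ell_m,-\gamma}$ estimates \eqref{1-w-typical-low-R}--\eqref{2-w-typical-low-1-R} one cannot afford to put the top-order derivative $\partial^{\alpha_1}E^{P,\epsilon}$ into $L^\infty_x$ via Sobolev with room to spare, because that would cost two full spatial derivatives. The fix, exactly as in the estimate of $J^{(1)}_{\alpha,3}$ in the preceding lemma, is to interpolate the velocity weight between $\langle v\rangle^{\gamma/2}$ (giving the coercive dissipation) and the higher velocity weight $\langle v\rangle^{\widetilde{\ell}_m}$ (absorbed into the $w_{\cdot,1}$-weighted dissipation $\mathcal{D}^{(j,k)}_{f^{m,\epsilon},j,-\gamma l_m+\widetilde{\ell}_m,1}$). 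Choosing the interpolation exponent $\theta=\tfrac{2-4\gamma}{2\widetilde{\ell}_m-\gamma}$, which is exactly the $\theta$ already fixed in the statement, converts the $\|\nabla_x E^{P,\epsilon}\|_{H^{N_m}_x}^2$ prefactor (raised to the power $1/\theta$) into an $\mathcal{E}_{f^{P,\epsilon},n}$-type quantity and the residual $v$-growth into the second-weight dissipation; the leftover $\eta$-piece is absorbed into $\mathcal{D}_{f^{m,\epsilon},n,l_m,-\gamma}(t)$.

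Finally, for the microscopic estimate \eqref{2-w-typical-low-R} (resp.\ \eqref{2-w-typical-low-1-R}), I would first project onto $\{\mathbf{I}-\mathbf{P}\}$, observing as in \eqref{Micro-identity-key} that the hydrodynamic part of $E^{P,\epsilon}\cdot v f^{m,\epsilon}$ and of $E^{P,\epsilon}\cdot\nabla_v f^{m,\epsilon}$ is cancelled except for terms that couple back into $\mathbf{P}f^{m,\epsilon}$, which are handled by \eqref{pf-e}; then repeat the Leibniz--interpolation scheme of the previous paragraph, noting that the index range $|\beta|-1\le k\le \min\{|\beta|+1,|\alpha|+|\beta|\}$ on the right-hand side is dictated by the one derivative of $\partial_\beta$ that may fall on $v$ inside $E\cdot\nabla_v f^{m,\epsilon}$ (shifting the $\beta$-count by one) and by the one derivative that may fall on the weight. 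The estimates for $E^{j_1,\epsilon}$ are structurally identical because $0<j_1<m$ gives $\epsilon^{j_1}\le 1$, so no new small or large factor intervenes. Collecting the three cases yields \eqref{0-w-typical-R}--\eqref{2-w-typical-low-1-R}.
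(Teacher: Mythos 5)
The proposal correctly identifies the engine that drives the estimate—Leibniz split into three canonical pieces, integration by parts in $v$ for the $E\cdot\nabla_v f$ term to produce the $\|E\|_{L^\infty_x}\|w_{\cdot,-\gamma}\partial^\alpha f^{m,\epsilon}\langle v\rangle^{1/2}\|^2$ term, Sobolev $L^2$--$L^\infty$--$L^2$/$L^3$--$L^6$--$L^2$ trichotomy for the intermediate derivative distributions, and the velocity-weight interpolation between $\langle v\rangle^{\gamma/2}$ and $\langle v\rangle^{\widetilde\ell_m}$ with exponent $\theta=\tfrac{2-4\gamma}{2\widetilde\ell_m-\gamma}$ converting the $\|\nabla_xE^{P,\epsilon}\|_{H^{N_m}_x}$ prefactor into a power $2/\theta$. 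This is exactly the argument the paper intends (the paper omits the proof, stating only that it is obtained ``similarly'' to the preceding lemma, where the $\theta$-interpolation on $\tilde J^{(1)}_{\alpha,3}$ is displayed in full).

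One piece of the last paragraph is off, though. For the microscopic estimates \eqref{2-w-typical-low-R}--\eqref{2-w-typical-low-1-R} you invoke \eqref{Micro-identity-key} and \eqref{pf-e} and claim the ``hydrodynamic part is cancelled.'' The identity \eqref{Micro-identity-key} is specific to the \emph{linear} source $E\cdot v\mu^{1/2}q_1$ and says nothing about the quadratic terms $E\cdot v f^{m,\epsilon}$ and $E\cdot\nabla_vf^{m,\epsilon}$, whose projection $\{\mathbf{I}-\mathbf{P}\}\{E\cdot v f^{m,\epsilon}\}$ has no cancellation; and \eqref{pf-e} is the hard-sphere macroscopic-dissipation estimate from Section~3, not relevant here. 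The paper's actual recipe (visible in the proof of \eqref{lemma-f-R-3}) is a plain triangle-inequality split of $\{\mathbf{I}-\mathbf{P}\}(E\cdot v f^{m,\epsilon})$ into $E\cdot v\{\mathbf{I}-\mathbf{P}\}f^{m,\epsilon}$, $E\cdot v\mathbf{P}f^{m,\epsilon}$, and $\mathbf{P}(E\cdot v f^{m,\epsilon})$, each estimated by the same Sobolev/interpolation machinery. Also, the remark that ``$0<j_1<m$ gives $\epsilon^{j_1}\le1$'' is irrelevant since \eqref{1-w-typical-R} and \eqref{1-w-typical-low-1-R} carry no such prefactor in the first place. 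With those two mis-citations corrected, the proposal matches the paper's intended route.
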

\subsubsection{Some nonlinear estimates related to the electromagnetic field with respect to the weight $w_{\ell-|\beta|,1}(t,v)$} As we mentioned above, two sets of energy estimates with respect to weights $w_{\ell-|\beta|,-\gamma}(t,v)$ and $w_{\ell-|\beta|,1}(t,v)$ respectively are introduced to close the desired energy estimates, the main purpose of this section is to deduce certain nonlinear estimates related to the electromagnetic field $\left[E^{m,\epsilon}(t,x), B^{m,\epsilon}(tx)\right]$ with respect to the weight $w_{\ell-|\beta|,1}(t,v)$.
 \begin{lemma}\label{lemma-high-E-i}
For $2\leq n\leq N_m+1$, we have the following estimates:
\begin{itemize}
\item [i)]For $1\leq|\alpha|\leq n$, one has
\begin{eqnarray}\label{1-w-typical-high-non-R}
  &&\left|\left(\partial^\alpha\left\{
   q_0\epsilon^{m}(E^{m,\epsilon}+\epsilon v\times B^{m,\epsilon})\cdot\nabla_v f^{m,\epsilon}\right\},w^2_{\ell^*_m,1}\partial^\alpha f^{m,\epsilon}\right)\right|\nonumber\\
   &\lesssim&\mathcal{E}_{f^{P,\epsilon},n}(t)\mathcal{D}_{f^{m,\epsilon},n}(t) +\left\|\epsilon^m E^{m,\epsilon}(t)\right\|_{L^\infty_x}\left\|w_{\ell^*_m,1}\partial^\alpha f^{m,\epsilon}(t)\langle v\rangle^{\frac12}\right\|^2\nonumber\\
 &&+(1+t)^{2+2\vartheta}\sum_{1\leq|\alpha_1|\leq N^0_{R}-2}\left\|\partial^{\alpha_1}\left[\epsilon^m
 E^{m,\epsilon}(t),\epsilon^{m+1} B^{m,\epsilon}(t)\right]\right\|^2_{L^\infty_x}\sum_{1\leq j\leq |\alpha|}\mathcal{D}_{f^{m,\epsilon},j,\ell_m^*,1}^{(j,1)}(t)\\
   &&+(1+t)^{2+2\vartheta}\left\|\nabla^{N^0_{R}-1}\left[\epsilon^m E^{m,\epsilon}(t), \epsilon^{m+1}B^{m,\epsilon}(t)\right]\right\|_{H^1_x}^2\sum_{1\leq j\leq |\alpha|}\mathcal{D}_{f^{m,\epsilon},j,\ell_m^*,1}^{(j,1)}(t)\nonumber\\
  &&+\chi_{|\alpha|\geq N_m^0+1}\mathcal{E}_{f^{m,\epsilon},N_m}(t)\mathcal{E}^1_{f^{m,\epsilon},N^0_m,\ell^*_m+1-\frac\gamma2,1}(t)+\eta\left\|\partial^\alpha f^{m,\epsilon}(t)\right\|_\nu^2+\eta(1+t)^{-1-\vartheta}
\left\|w_{\ell^*_m,1}\partial^\alpha f^{m,\epsilon}(t)\langle v\rangle^{\frac{1}2}\right\|^2\nonumber
\end{eqnarray}
and
\begin{eqnarray}\label{1-w-typical-high-non-1-R}
  &&\left|\left(\partial^\alpha\left\{q_0 \epsilon^{m} E^{m,\epsilon}\cdot v f^{m,\epsilon}\right\},w^2_{\ell^*_m,1}\partial^\alpha f^{m,\epsilon}\right)\right|\nonumber\\
   &\lesssim&\mathcal{E}_{f^{P,\epsilon},n}(t)\mathcal{D}_{f^{m,\epsilon},n}(t)+\left\|\epsilon^m E^{m,\epsilon}(t)\right\|_{L^\infty_x}\left\|w_{\ell^*_m,1}\partial^\alpha f^{m,\epsilon}(t)\langle v\rangle^{\frac12}\right\|^2\nonumber\\
 &&+(1+t)^{2+2\vartheta}\sum_{1\leq|\alpha_1|\leq N^0_{R}-2}\left\|\partial^{\alpha_1}\left[\epsilon^m
 E^{m,\epsilon}(t),\epsilon^{m+1} B^{m,\epsilon}(t)\right]\right\|^2_{L^\infty_x}\sum_{0\leq j\leq |\alpha|}\mathcal{D}_{f^{m,\epsilon},j,\ell_m^*,1}^{(j,1)}(t)\\
   &&+(1+t)^{2+2\vartheta}\left\|\nabla^{N^0_{R}-1}\left[\epsilon^m E^{m,\epsilon}(t),\epsilon^{m+1}B^{m,\epsilon}(t)\right]\right\|_{H^1_x}^2\sum_{0\leq j\leq |\alpha|}\mathcal{D}_{f^{m,\epsilon},0,\ell_m^*,1}^{(j,1)}(t)\nonumber\\
  &&+\chi_{|\alpha|\geq N_m^0+1}\mathcal{E}_{f^{m,\epsilon},N_m}(t)\mathcal{E}^1_{f^{m,\epsilon},N^0_m,\ell^*_m+1-\frac\gamma2,1}(t)+\eta\left\|\partial^\alpha f^{m,\epsilon}(t)\right\|_\nu^2+\eta(1+t)^{-1-\vartheta}
\left\|w_{\ell^*_m,1}\partial^\alpha f^{m,\epsilon}(t)\langle v\rangle^{\frac{1}2}\right\|^2;\nonumber
\end{eqnarray}
\item [ii)] For $|\alpha|+|\beta|\leq n$, one has
\begin{eqnarray}\label{2-w-typical-high-non-R}
&&\left|\left(\partial^\alpha_\beta{\bf\{I-P\}}\left\{
   q_0\epsilon^{m}(E^{m,\epsilon}+\epsilon v\times B^{m,\epsilon})\cdot\nabla_v f^{m,\epsilon}\right\},w^2_{\ell^*_m-|\beta|,1}\partial^\alpha_\beta{\bf\{I-P\}} f^{m,\epsilon}\right)\right|\nonumber\\
 &\lesssim&\mathcal{E}_{f^{P,\epsilon},n}(t)\mathcal{D}_{f^{m,\epsilon},n}(t)+\left\|\epsilon^m E^{m,\epsilon}(t)\right\|_{L^\infty_x}\left\|w_{\ell^*_m-|\beta|,1}\partial^\alpha_\beta{\bf \{I-P\}} f^{m,\epsilon}(t)\langle v\rangle^{\frac{1}2}\right\|^2\\
 &&+(1+t)^{2+2\vartheta}\sum_{1\leq|\alpha_1|\leq N^0_{R}-2}\left\|\partial^{\alpha_1}\left[\epsilon^m
 E^{m,\epsilon}(t),\epsilon^{m+1} B^{m,\epsilon}(t)\right]\right\|^2_{L^\infty_x}\sum_{|\beta|\leq k\leq\min\{|\beta|+1,|\alpha|+|\beta|\} \atop k\leq j\leq |\alpha|+|\beta|}\mathcal{D}_{f^{m,\epsilon},j,\ell_m^*,1}^{(j,k)}(t)\nonumber\\
   &&+(1+t)^{2+2\vartheta}\left\|\epsilon\nabla^{N^0_{R}-1}\left[\epsilon^m
 E^{m,\epsilon}(t),\epsilon^{m+1} B^{m,\epsilon}(t)\right]\right\|_{H^1_x}^2\sum_{|\beta|\leq k\leq\min\{|\beta|+1,|\alpha|+|\beta|\} \atop k\leq j\leq |\alpha|+|\beta|}\mathcal{D}_{f^{m,\epsilon},j,\ell_m^*,1}^{(j,k)}(t)\nonumber\\
  &&+\chi_{|\alpha|+|\beta|\geq N_m^0+1}\mathcal{E}_{f^{m,\epsilon},N_m}(t) \mathcal{E}^1_{f^{m,\epsilon},N^0_m,\ell^*_m+1-\frac\gamma2,1}(t)\nonumber\\
&&+\eta\left\|\partial^\alpha {\bf \{I-P\}} f^{m,\epsilon}(t)\right\|_\nu^2+\eta(1+t)^{-1-\vartheta}
\left\|w_{\ell^*_m-|\beta|,1}\partial^\alpha_\beta{\bf \{I-P\}} f^{m,\epsilon}(t)\langle v\rangle^{\frac{1}2}\right\|^2\nonumber
\end{eqnarray}
and
\begin{eqnarray}\label{2-w-typical-high-non-1-R}
&&\left|\left(\partial^\alpha_\beta\left\{q_0 \epsilon^{m} E^{m,\epsilon}\cdot v f^{m,\epsilon}\right\},w^2_{\ell^*_m-|\beta|,1}\partial^\alpha_\beta{\bf\{I-P\}} f^{m,\epsilon}\right)\right|\nonumber\\
 &\lesssim&\mathcal{E}_{f^{P,\epsilon},n}(t)\mathcal{D}_{f^{m,\epsilon},n}(t)+\left\|\epsilon^m E^{m,\epsilon}(t)\right\|_{L^\infty_x}\left\|w_{\ell^*_m-|\beta|,1}\partial^\alpha_\beta{\bf \{I-P\}} f^{m,\epsilon}(t)\langle v\rangle^{\frac{1}2}\right\|^2\nonumber\\
 &&+(1+t)^{2+2\vartheta}\sum_{1\leq|\alpha_1|\leq N^0_{R}-2}\left\|\partial^{\alpha_1}\left[\epsilon^m
 E^{m,\epsilon}(t),\epsilon^{m+1} B^{m,\epsilon}(t)\right]\right\|^2_{L^\infty_x}\sum_{|\beta|-1\leq k\leq|\beta| \atop k\leq j\leq |\alpha|+|\beta|}\mathcal{D}_{f^{m,\epsilon},j,\ell_m^*,1}^{(j,k)}(t)\\
   &&+(1+t)^{2+2\vartheta}\left\|\epsilon\nabla^{N^0_{R}-1}\left[\epsilon^m
 E^{m,\epsilon}(t),\epsilon^{m+1} B^{m,\epsilon}(t)\right]\right\|_{H^1_x}^2 \sum_{|\beta|-1\leq k\leq|\beta| \atop k\leq j\leq |\alpha|+|\beta|}\mathcal{D}_{f^{m,\epsilon},j,\ell_m^*,1}^{(j,k)}(t)\nonumber\\
  &&+\chi_{|\alpha|+|\beta|\geq N_m^0+1}\mathcal{E}_{f^{m,\epsilon},N_m}(t) \mathcal{E}^1_{f^{m,\epsilon},N^0_m,\ell^*_m+1-\frac\gamma2,1}(t)\nonumber\\
&&+\eta\left\|\partial^\alpha {\bf \{I-P\}} f^{m,\epsilon}(t)\right\|_\nu^2+\eta(1+t)^{-1-\vartheta}
\left\|w_{\ell^*_m-|\beta|,1}\partial^\alpha_\beta{\bf \{I-P\}} f^{m,\epsilon}(t)\langle v\rangle^{\frac{1}2}\right\|^2.\nonumber
\end{eqnarray}
\end{itemize}
 \end{lemma}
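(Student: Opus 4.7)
The plan is to prove all four estimates \eqref{1-w-typical-high-non-R}--\eqref{2-w-typical-high-non-1-R} from a single template, since they differ only in which nonlinear term is being tested and whether the microscopic projection $\{{\bf I-P}\}$ is inserted. I would first apply Leibniz to $\partial^\alpha$ (respectively $\partial^\alpha_\beta$), splitting the terms into three groups: (a) the \emph{top piece}, where every spatial derivative falls on $f^{m,\epsilon}$ and the electromagnetic factor carries no derivative; (b) \emph{low-order mixed pieces}, where $\partial^{\alpha_1}$ with $1\leq|\alpha_1|\leq N^0_m-1$ lands on $E^{m,\epsilon}$ or $B^{m,\epsilon}$; and (c) \emph{high-order pieces}, where $|\alpha_1|\geq N^0_m$ so that $L^\infty$ must be placed on $f^{m,\epsilon}$ instead of the field. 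The weighted target norm is $w_{\ell^*_m-|\beta|,1}(t,v)=\langle v\rangle^{\ell^*_m-|\beta|}e^{q\langle v\rangle^2/(1+t)^\vartheta}$, whose $v$-gradient produces one algebraic contribution $\sim w_{\ell^*_m-|\beta|,1}\langle v\rangle^{-1}$ and one exponential contribution $\sim \frac{w_{\ell^*_m-|\beta|,1}\,v}{(1+t)^\vartheta}$; this dichotomy is what generates the two kinds of right-hand side terms, namely those with $\|\epsilon^m E^{m,\epsilon}\|_{L^\infty_x}$ coefficients and those with the absorbable $\eta(1+t)^{-1-\vartheta}\|\,\cdot\langle v\rangle^{1/2}\|^2$ piece.

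For the top piece, I would integrate by parts in $v$: for the $\epsilon^m E^{m,\epsilon}\cdot\nabla_v f$ term the field is $v$-independent, so the only surviving contribution is $\int |\partial^\alpha f|^2\,\epsilon^m E^{m,\epsilon}\cdot\nabla_v w^2_{\ell^*_m,1}\,dv$; the algebraic half of $\nabla_v w^2_{\ell^*_m,1}$ supplies $\|\epsilon^m E^{m,\epsilon}\|_{L^\infty_x}\|w_{\ell^*_m,1}\partial^\alpha f \langle v\rangle^{1/2}\|^2$, while the exponential half, paired with Cauchy--Schwarz, produces the $\eta(1+t)^{-1-\vartheta}\|w_{\ell^*_m,1}\partial^\alpha f\langle v\rangle^{1/2}\|^2$ piece. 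The $\epsilon\cdot v\times B^{m,\epsilon}$ contribution is handled analogously, except that the $v$ factor raises the net power of $\langle v\rangle$ by one and the extra $\epsilon$ combines with $\epsilon^m$ into $\epsilon^{m+1} B^{m,\epsilon}$, which is why both $\epsilon^m E^{m,\epsilon}$ and $\epsilon^{m+1}B^{m,\epsilon}$ appear symmetrically in the final bounds. The same integration-by-parts device applies to \eqref{1-w-typical-high-non-1-R} and \eqref{2-w-typical-high-non-1-R}, where $E^{m,\epsilon}\cdot v f^{m,\epsilon}$ is treated directly without integration by parts but again produces a $\langle v\rangle^{1/2}$ exponent through Cauchy--Schwarz against the weight.

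For the mixed pieces (b), I would use $L^2_x$--$L^\infty_x$--$L^2_x$, $L^3_x$--$L^6_x$--$L^2_x$, and $L^6_x$--$L^3_x$--$L^2_x$ Sobolev embeddings on $[\partial^{\alpha_1}E^{m,\epsilon},\epsilon\partial^{\alpha_1}B^{m,\epsilon}]$ according to the size of $|\alpha_1|$, routed so that the lowest $|\alpha_1|$ goes into $L^\infty_x$ and any $|\alpha_1|\in\{N^0_m-1,N^0_m\}$ is upgraded through $\|\partial^{\alpha_1}E\|_{H^1_x}\lesssim \|\nabla^{N^0_m-1}E\|_{H^1_x}$. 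The residual $f$-factor is then pulled into the dissipation slot $\mathcal{D}^{(j,k)}_{f^{m,\epsilon},j,\ell^*_m,1}$; because that dissipation already contains the time-weighted $(1+t)^{-1-\vartheta}\|\cdot\langle v\rangle\|^2$ summand, the clean interpolation $\|w_{\ell^*_m-|\beta|,1}\partial^\alpha_\beta f\,\langle v\rangle^s\|^2\lesssim \|w\partial^\alpha_\beta f\|_\nu^2+(1+t)^{-1-\vartheta}\|w\partial^\alpha_\beta f\langle v\rangle\|^2$ for $s\leq 1$ becomes available only after multiplying by the compensating factor $(1+t)^{2+2\vartheta}$ -- this is precisely the source of the $(1+t)^{2+2\vartheta}$ prefactor in all four stated inequalities. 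For the commutator pieces arising from $\partial^\alpha_\beta\{{\bf I-P}\}$ in (ii), the $|\beta|$-index bookkeeping forces the summation range $|\beta|\leq k\leq\min\{|\beta|+1,|\alpha|+|\beta|\}$, and the macroscopic portion ${\bf P}(E^{m,\epsilon}\cdot\nabla_v f^{m,\epsilon})$ is handled via Lemma \ref{lemma-nonlinear}-type bounds on the hydrodynamic moments.

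The main obstacle I foresee is the group (c): when $|\alpha|+|\beta|\geq N^0_m+1$ and more than $N^0_m$ derivatives fall on $[E^{m,\epsilon},B^{m,\epsilon}]$, one can no longer place the electromagnetic field in $L^\infty_x$, and the standard Sobolev chain breaks down. The remedy is to instead place $[\partial^{\alpha_1}E^{m,\epsilon},\epsilon\partial^{\alpha_1}B^{m,\epsilon}]$ in $L^2_x$ and put the remaining $f^{m,\epsilon}$ factor in $L^\infty_tL^\infty_xL^2_v$ through the Sobolev embedding applied in velocity-weighted form; this yields exactly the indicator term $\chi_{|\alpha|\geq N_m^0+1}\mathcal{E}_{f^{m,\epsilon},N_m}\mathcal{E}^1_{f^{m,\epsilon},N^0_m,\ell^*_m+1-\gamma/2,1}$, and is why $\ell^*_m$ must be large enough to absorb the $\langle v\rangle^{1-\gamma/2}$ shift. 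A subsidiary but delicate point is that the $\epsilon^m$ gain in front of $E^{m,\epsilon}$ and $B^{m,\epsilon}$ must propagate through every piece of the decomposition, because without it the top-piece coefficient $\|E^{m,\epsilon}\|_{L^\infty_x}$ would not be small enough to later be absorbed by temporal decay of $[E^{m,\epsilon},B^{m,\epsilon}]$ in the closing argument of Theorem \ref{Th1.3}.
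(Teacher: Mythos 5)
Your decomposition into top piece, mixed pieces, and high-order pieces, your use of integration by parts in $v$ for the top piece, the $L^2_x$--$L^\infty_x$--$L^2_x$ (and $L^3$--$L^6$--$L^2$, $L^6$--$L^3$--$L^2$) Sobolev chains for the mixed pieces with Young's inequality supplying the $(1+t)^{2+2\vartheta}$ compensating factor, and your treatment of the high-order pieces via $L^2_x$ on the field are all in line with the paper's argument (which proves only \eqref{1-w-typical-high-non-R} explicitly via the split $\tilde{H}_{\alpha,1}+\tilde{H}_{\alpha,2}+\tilde{H}_{\alpha,3}$ and then the four-way sub-split $\tilde{H}^{(1)}_{\alpha,3},\dots,\tilde{H}^{(4)}_{\alpha,3}$ of the last piece).

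There is, however, one step in your top-piece handling of the magnetic term that, if carried out as you wrote it, does not close. You state that the $\epsilon\, v\times B^{m,\epsilon}\cdot\nabla_v f$ contribution is ``handled analogously, except that the $v$ factor raises the net power of $\langle v\rangle$ by one,'' and you conclude that $\epsilon^m E^{m,\epsilon}$ and $\epsilon^{m+1}B^{m,\epsilon}$ ``appear symmetrically in the final bounds.'' Neither is true. After integration by parts in $v$, the $\tilde{H}_{\alpha,1}$-type contribution of the magnetic Lorentz force is
\[
-\frac12\int\left(v\times \epsilon^{m+1}B^{m,\epsilon}\right)\cdot\nabla_v\left(w^2_{\ell^*_m,1}\right)\,|\partial^\alpha f^{m,\epsilon}|^2\,dv\,dx,
\]
and since $w_{\ell^*_m,1}$ depends on $v$ only through $\langle v\rangle$, the gradient $\nabla_v w^2_{\ell^*_m,1}$ is parallel to $v$, so $(v\times B^{m,\epsilon})\cdot\nabla_v w^2_{\ell^*_m,1}\equiv 0$ and the entire top piece vanishes identically. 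That cancellation is essential: the ``analogous'' bound you propose would produce the term $\left\|\epsilon^{m+1}B^{m,\epsilon}\right\|_{L^\infty_x}\left\|w_{\ell^*_m,1}\partial^\alpha f^{m,\epsilon}\langle v\rangle^{3/2}\right\|^2$, which has one power of $\langle v\rangle$ more than the dissipation can absorb and which does not appear in \eqref{1-w-typical-high-non-R}; correspondingly, the stated inequality carries only $\|\epsilon^m E^{m,\epsilon}\|_{L^\infty_x}$ in the $L^\infty$ coefficient, with $\epsilon^{m+1}B^{m,\epsilon}$ confined to the mixed pieces where at least one $x$-derivative falls on it. A second, smaller bookkeeping point: the $\eta(1+t)^{-1-\vartheta}\|w_{\ell^*_m,1}\partial^\alpha f^{m,\epsilon}\langle v\rangle^{1/2}\|^2$ term in the conclusion is generated in the paper by the Young/Cauchy splitting inside the mixed and high-order pieces ($\tilde{H}^{(3)}_{\alpha,3}$, $\tilde{H}^{(4)}_{\alpha,3}$), not from the exponential half of $\nabla_v w^2_{\ell^*_m,1}$ in the top piece as you indicate -- the top piece is folded entirely into the single $\|\epsilon^m E^{m,\epsilon}\|_{L^\infty_x}\|\cdot\langle v\rangle^{1/2}\|^2$ bound because $(1+t)^{-\vartheta}\leq 1$.
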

\begin{proof} To simplify the presentation, we only prove \eqref{1-w-typical-high-non-R} since the other estimates can be obtained similarly. For this purpose, since
 \begin{eqnarray}
&&\left(\partial^\alpha \left\{q_0 \epsilon^m E^{m,\epsilon}\cdot\nabla_{ v  }f^{m,\epsilon}\right\},w^2_{\ell^*_m,1}\partial^\alpha f^{m,\epsilon}\right)\nonumber\\
&=&\underbrace{\left( q_0 \epsilon^m E^{m,\epsilon}\cdot\nabla_{ v  }\partial^\alpha f^{m,\epsilon},w^2_{\ell^*_m,1}\partial^\alpha f^{m,\epsilon}\right)}_{\tilde{H}_{\alpha,1}}
+\underbrace{\sum_{0<\alpha_1\leq\alpha}\left( q_0 \epsilon^m \partial^{\alpha_1} E^{m,\epsilon}\cdot\nabla_{ v  }\partial^{\alpha-\alpha_1}{\bf P}f^{m,\epsilon},w^2_{\ell^*_m,1}\partial^\alpha f^{m,\epsilon}\right)}_{\tilde{H}_{\alpha,2}}\\
&&+\underbrace{\sum_{0<\alpha_1\leq\alpha}\left( q_0 \epsilon^m \partial^{\alpha_1} E^{m,\epsilon}\cdot\nabla_{ v  }\partial^{\alpha-\alpha_1}{\bf \{I-P\}}f^{m,\epsilon},w^2_{\ell^*_m,1}\partial^\alpha f^{m,\epsilon}\right)}_{\tilde{H}_{\alpha,3}},\nonumber
\end{eqnarray}
we can get by using integrations by parts and Holder's inequality that
\[
\tilde{H}_{\alpha,1}\lesssim\left\|\epsilon^m E^{m,\epsilon}\right\|_{L^\infty_x}\left\|w_{\ell^*_m,1}\partial^\alpha f^{m,\epsilon}\langle v\rangle^{\frac12}\right\|^2
\]
and
\[\tilde{H}_{\alpha,2}\lesssim
\left\| \epsilon^m E^{m,\epsilon}\right\|^2_{H^{N_m+1}_x}\|\nabla_x f^{m,\epsilon}\|^2_{H^{N_m}_x}+\eta\left\|\partial^\alpha f^{m,\epsilon}\langle v\rangle^{\frac\gamma2}\right\|^2.\]

For $\tilde{H}_{\alpha,3}$, one has
 \begin{eqnarray*}
\tilde{H}_{\alpha,3}&=&\sum_{0<\alpha_1\leq\alpha}\left( q_0 \epsilon^m \partial^{\alpha_1} E^{m,\epsilon}\cdot\nabla_{ v  }\partial^{\alpha-\alpha_1}{\bf \{I-P\}}f^{m,\epsilon},w^2_{\ell^*_m,1}\partial^\alpha f^{m,\epsilon}\right)\\
&\lesssim&\underbrace{\sum_{1\leq|\alpha_1|\leq N^0_{R}-2}\left\|\epsilon^m\partial^{\alpha_1}E^{m,\epsilon}\right\|_{L^\infty_x}
\left\|w_{\ell^*_m-1,1}\partial^{\alpha-\alpha_1}_{e_j}{\bf \{I-P\}}f^{m,\epsilon}\langle v\rangle^{\frac{1}2}\right\|
\left\|w_{\ell^*_m,1}\partial^\alpha f^{m,\epsilon}\langle v\rangle^{\frac{1}2}\right\|}_{\tilde{H}^{(1)}_{\alpha,3}}\\{}
&&+\underbrace{\sum_{|\alpha_1|= N^0_m-1}\left\|\epsilon^m\partial^{\alpha_1}E^{m,\epsilon}\right\|_{L^6_x}
\left\|w_{\ell^*_m-1,1}\partial^{\alpha-\alpha_1}_{e_j}{\bf \{I-P\}}f^{m,\epsilon}\langle v\rangle^{\frac{1}2}\right\|_{L^2_vL^3_x}
\left\|w_{\ell^*_m,1}\partial^\alpha f^{m,\epsilon}\langle v\rangle^{\frac{1}2}\right\|}_{\tilde{H}^{(2)}_{\alpha,3}}\\{}
&&+\underbrace{\sum_{|\alpha_1|= N^0_m}\left\|\epsilon^m\partial^{\alpha_1}E^{m,\epsilon}\right\|
\left\|w_{\ell^*_m-1,1}\partial_{e_j}{\bf \{I-P\}}f^{m,\epsilon}\langle v\rangle^{\frac{1}2}\right\|_{L^2_vL^\infty_x}
\left\|w_{\ell^*_m,1}\partial^\alpha f^{m,\epsilon}\langle v\rangle^{\frac{1}2}\right\|}_{\tilde{H}^{(3)}_{\alpha,3}}\\{}
&&+\chi_{|\alpha|\geq N_m^0+1}\nonumber\\
&&\times\underbrace{\sum_{|\alpha_1|\geq N^0_m+1}\int_{\mathbb{R}^3_x\times \mathbb{R}^3_v}\left|\epsilon^m\partial^{\alpha_1}E^{m,\epsilon}\right|
\left|w_{\ell^*_m-1,1}
\partial^{\alpha-\alpha_1}_{e_j}{\bf \{I-P\}}f^{m,\epsilon}
\langle v\rangle^{1-\frac\gamma2}\right|
\left|w_{\ell^*_m,1}\partial^\alpha f^{m,\epsilon}\langle v\rangle^{\frac{\gamma}2}\right|dvdx}_{\tilde{H}^{(4)}_{\alpha,3}}.
\end{eqnarray*}
To control the terms $\tilde{H}^{(j)}_{\alpha,3}$ for $j=1,2,3,4$, we first deal with
$\tilde{H}^{(3)}_{\alpha,3}$ as follows
\begin{eqnarray}
  \tilde{H}^{(3)}_{\alpha,3}  &\lesssim&(1+t)^{1+\vartheta}\sum_{1\leq|\alpha_1|\leq N^0_{R}-2}\left\|\epsilon^m\partial^{\alpha_1}E^{m,\epsilon}\right\|^2_{L^\infty_x}
\left\|w_{\ell^*_m-1,1}\partial^{\alpha-\alpha_1}_{e_j}{\bf \{I-P\}}f^{m,\epsilon}\langle v\rangle^{\frac{1}2}\right\|^2\nonumber\\
&&+\eta(1+t)^{-1-\vartheta}
\left\|w_{\ell^*_m,1}\partial^\alpha f^{m,\epsilon}\langle v\rangle^{\frac{1}2}\right\|^2\nonumber\\
&\lesssim&(1+t)^{2+2\vartheta}\sum_{1\leq|\alpha_1|\leq N^0_{R}-2}\left\|\epsilon^m\partial^{\alpha_1}E^{m,\epsilon}\right\|^2_{L^\infty_x}\sum_{1\leq j\leq |\alpha|}\mathcal{D}_{f^{m,\epsilon},j,\ell_m^*,1}^{(j,1)}(t)\nonumber\\
&&+\eta(1+t)^{-1-\vartheta}
\left\|w_{\ell^*_m,1}\partial^\alpha f^{m,\epsilon}\langle v\rangle^{\frac{1}2}\right\|^2\nonumber
\end{eqnarray}
and $\tilde{H}^{(1)}_{\alpha,3}$ and $\tilde{H}^{(2)}_{\alpha,3}$ can be estimated similarly.

As for $\tilde{H}^{(4)}_{\alpha,3}$,
one has by Sobolev's inequalities that
\[
\tilde{H}^{(4)}_{\alpha,3}\lesssim \mathcal{E}_{f^{m,\epsilon},N_m}(t)\mathcal{E}^1_{f^{m,\epsilon},N^0_m,\ell^*_m+1-\frac\gamma2,1}(t)
+\eta(1+t)^{-1-\vartheta}
\left\|w_{\ell^*_m,1}\partial^\alpha f^{m,\epsilon}\langle v\rangle^{\frac{1}2}\right\|^2\nonumber.\]

Collecting the above estimates, one can deduce \eqref{1-w-typical-high-non-R} immediately.
\end{proof}
Similarly, for the estimates on the interactions of $f^{m,\epsilon}(t,x,v)$ with $\left[E^{P,\epsilon}(t,x), E^{1,\epsilon}(t,x),\cdots, E^{m-1,\epsilon}(t,x)\right]$ with respect to the weight $w_{\ell^*_m-|\beta|,1}(t,v)$, we also have the following lemma, whose proofs will be omitted for brevity.
\begin{lemma}\label{lemma-high-E-R}
For $2\leq n\leq N_m+1$, we have the following estimates:
\begin{itemize}
\item [i)]For $1\leq |\alpha|\leq n$, one has
\begin{eqnarray}\label{1-w-typical-high-R}
  &&\left|\left(\partial^\alpha\left\{q_0 E^{P,\epsilon}\cdot\nabla_{ v  }f^{m,\epsilon}- \frac12q_0 E^{P,\epsilon}\cdot v f^{m,\epsilon}\right\},w^2_{\ell^*_m,1}\partial^\alpha f^{m,\epsilon}\right)\right|\nonumber\\
   &\lesssim&\mathcal{E}_{f^{P,\epsilon},n}(t)\mathcal{D}_{f^{m,\epsilon},n}(t) +\left\| E^{P,\epsilon}(t)\right\|_{L^\infty_x}\left\|w_{\ell^*_m,1}\partial^\alpha f^{m,\epsilon}(t)\langle v\rangle^{\frac12}\right\|^2\nonumber\\
  &&+(1+t)^{2+2\vartheta}\left\{\left\|\nabla_xE^{P,\epsilon}(t)\right\|^2_{L^\infty_x}
  +\left\|\nabla^3_xE^{P,\epsilon}(t)\right\|^2_{H^{N_m-2}_x}\right\}\sum_{0\leq k\leq 1,\atop k\leq j\leq |\alpha|}\mathcal{D}^{(j,k)}_{f^{m,\epsilon},j,\ell^*_m,1}(t)\\
  &&+\eta\left\|\partial^\alpha f^{m,\epsilon}(t)\right\|_\nu^2 +\eta(1+t)^{-1-\vartheta}
\left\|w_{\ell^*_m,1}\partial^\alpha f^{m,\epsilon}(t)\langle v\rangle^{\frac{1}2}\right\|^2\nonumber
\end{eqnarray}
and
\begin{eqnarray}\label{1-w-typical-high-R-1}
&&\left|\left(\partial^\alpha\left\{q_0 E^{j_1,\epsilon}\cdot\nabla_{ v  }f^{m,\epsilon}- \frac12q_0 E^{j_1,\epsilon}\cdot v f^{m,\epsilon}\right\},w^2_{\ell^*_m,1}\partial^\alpha f^{m,\epsilon}\right)\right|\nonumber\\
&\lesssim&\mathcal{E}_{f^{j_1,\epsilon},n}(t)\mathcal{D}_{f^{m,\epsilon},n}(t) +\left\| E^{j_1,\epsilon}(t)\right\|_{L^\infty_x}\left\|w_{\ell^*_m,1}\partial^\alpha f^{m,\epsilon}(t)\langle v\rangle^{\frac12}\right\|^2\nonumber\\
  &&+(1+t)^{2+2\vartheta}\left\{\left\|\nabla_xE^{j_1,\epsilon}(t)\right\|^2_{L^\infty_x}
  +\left\|\nabla^3_xE^{j_1,\epsilon}(t)\right\|^2_{H^{N_m-2}_x}\right\}\sum_{0\leq j\leq |\alpha|}\mathcal{D}^{(j,0)}_{f^{m,\epsilon},j,\ell^*_m,1}(t)\\
  &&+\eta\left\|\partial^\alpha f^{m,\epsilon}(t)\right\|_\nu^2 +\eta(1+t)^{-1-\vartheta}
\left\|w_{\ell^*_m,1}\partial^\alpha f^{m,\epsilon}(t)\langle v\rangle^{\frac{1}2}\right\|^2;\nonumber
\end{eqnarray}
\item [ii)] For $|\alpha|+|\beta|\leq n$ with $|\beta|\geq 1$ or $|\alpha|=|\beta|=0$, one has
\begin{eqnarray}\label{2-w-typical-high}
  &&\left|\left(\partial^\alpha_\beta\left\{{\bf\{I-P\}}\left(q_0 E^{P,\epsilon}\cdot\nabla_{ v  }f^{m,\epsilon}- \frac12q_0 E^{P,\epsilon}\cdot v f^{m,\epsilon}\right)\right\},w^2_{\ell^*_m-|\beta|,1}\partial^\alpha_\beta{\bf\{I-P\}} f^{m,\epsilon}\right)\right|\nonumber\\
 &\lesssim&\mathcal{E}_{f^{P,\epsilon},n}(t)\mathcal{D}_{f^{m,\epsilon},n}(t)+\left\| E^{P,\epsilon}(t)\right\|_{L^\infty_x}\left\|w_{\ell^*_m-|\beta|,1}\partial^\alpha_\beta{\bf\{I-P\}} f^{m,\epsilon}(t)\langle v\rangle^{\frac12}\right\|^2\\
 &&+(1+t)^{2+2\vartheta}\left\{\left\|\nabla_xE^{P,\epsilon}(t)\right\|^2_{L^\infty_x}
  +\left\|\nabla^3_xE^{P,\epsilon}(t)\right\|^2_{H^{N_m-2}_x}\right\}\sum_{|\beta|-1\leq k\leq \min\{|\beta|+1,|\alpha|+|\beta|\}\atop k\leq j\leq |\alpha|+|\beta|}\mathcal{D}^{(j,k)}_{f^{m,\epsilon},j,\ell^*_m,1}(t)\nonumber\\
&&+\eta\left\|\partial^\alpha {\bf \{I-P\}} f^{i,\epsilon}(t)\right\|_\nu^2+\eta(1+t)^{-1-\vartheta}
\left\|w_{\ell^*_m-|\beta|,1}\partial^\alpha_\beta{\bf \{I-P\}}f^{m,\epsilon}(t)\langle v\rangle^{\frac{1}2}\right\|^2\nonumber
\end{eqnarray}
and
\begin{eqnarray}\label{2-w-typical-high-1}
  &&\left|\left(\partial^\alpha_\beta\left\{{\bf\{I-P\}}\left(q_0 E^{j_1,\epsilon}\cdot\nabla_{ v  }f^{m,\epsilon}- \frac12q_0 E^{j_1,\epsilon}\cdot v f^{m,\epsilon}\right)\right\},w^2_{\ell^*_m-|\beta|,1}\partial^\alpha_\beta{\bf\{I-P\}} f^{m,\epsilon}\right)\right|\nonumber\\
 &\lesssim&\mathcal{E}_{f^{j_1,\epsilon},n}(t)\mathcal{D}_{f^{m,\epsilon},n}(t)+\left\| E^{j_1,\epsilon}(t)\right\|_{L^\infty_x}\left\|w_{\ell^*_m-|\beta|,1}\partial^\alpha_\beta{\bf\{I-P\}} f^{m,\epsilon}(t)\langle v\rangle^{\frac12}\right\|^2\\
 &&+(1+t)^{2+2\vartheta}\left\{\left\|\nabla_xE^{j_1,\epsilon}(t)\right\|^2_{L^\infty_x}
  +\left\|\nabla^3_xE^{j_1,\epsilon}(t)\right\|^2_{H^{N_m-2}_x}\right\}\sum_{|\beta|-1\leq k\leq \min\{|\beta|+1,|\alpha|+|\beta|\}\atop k\leq j\leq |\alpha|+|\beta|}\mathcal{D}^{(j,k)}_{f^{m,\epsilon},j,\ell^*_m,1}(t)\nonumber\\
&&+\eta\left\|\partial^\alpha {\bf \{I-P\}} f^{i,\epsilon}(t)\right\|_\nu^2+\eta(1+t)^{-1-\vartheta}
\left\|w_{\ell^*_m-|\beta|,1}\partial^\alpha_\beta{\bf \{I-P\}}f^{m,\epsilon}(t)\langle v\rangle^{\frac{1}2}\right\|^2.\nonumber
\end{eqnarray}
\end{itemize}
 \end{lemma}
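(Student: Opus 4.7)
\medskip

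\noindent\textbf{Proof proposal for Lemma \ref{lemma-high-E-R}.}
The plan is to follow the same template used in the proof of Lemma \ref{lemma-high-E-i}, adapted to the electromagnetic fields $E^{P,\epsilon}(t,x)$ and $E^{j_1,\epsilon}(t,x)$ coming from the Vlasov--Poisson--Boltzmann hierarchy rather than from the remainder equation. The key structural difference is that these fields carry no $\epsilon^m$ or $\epsilon^{m+1}$ prefactor, and no magnetic counterpart appears, which actually simplifies several estimates: one no longer needs the $\epsilon$-weighted $L^\infty_x$ bound on the magnetic field, and the very high-order Sobolev exceptional term $\chi_{|\alpha|\geq N_m^0+1}\mathcal{E}_{f^{m,\epsilon},N_m}\mathcal{E}^1_{f^{m,\epsilon},N^0_m,\ell^*_m+1-\gamma/2,1}$ is absent from the final bound. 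For each term on the left-hand side, I would apply the Leibniz rule in $x$ to distribute $\partial^\alpha$ (or $\partial^\alpha_\beta$) between the electric field and $f^{m,\epsilon}$, then split the resulting contributions into three groups: (i) the baseline term where all spatial derivatives fall on $f^{m,\epsilon}$; (ii) intermediate-order terms where $1\leq|\alpha_1|\leq N_m^0-2$ spatial derivatives hit the electric field; and (iii) high-order terms with $|\alpha_1|\in\{N_m^0-1,N_m^0\}$ or larger falling on the electric field.

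For (i), the two offending terms $E^{P,\epsilon}\cdot\nabla_v f^{m,\epsilon}$ and $\frac12 E^{P,\epsilon}\cdot v\,f^{m,\epsilon}$ are handled in parallel: after an integration by parts in $v$ for the first piece, the net contribution is bounded by $\|E^{P,\epsilon}\|_{L^\infty_x}\|w_{\ell^*_m-|\beta|,1}\partial^\alpha_\beta\{\mathbf I-\mathbf P\}f^{m,\epsilon}\langle v\rangle^{1/2}\|^2$, which is exactly the first "dissipative-looking" term appearing in the statement; no further work is needed since this term is simply kept as is, to be absorbed later at the smallness-assumption level. For (ii), I would use $L^\infty_x\times L^2_xL^2_v$ Hölder splitting together with the elementary weighted Cauchy--Schwarz inequality
\[
ab \;\leq\; \tfrac{1}{4\eta}(1+t)^{2+2\vartheta}\,a^2\;+\;\eta\,(1+t)^{-1-\vartheta}\,b^2,
\]
with $a=\|\partial^{\alpha_1}E^{P,\epsilon}\|_{L^\infty_x}\|w_{\ell^*_m-|\beta|,1}\partial^{\alpha-\alpha_1}_{\beta+e_j}\{\mathbf I-\mathbf P\}f^{m,\epsilon}\langle v\rangle^{1/2}\|$ and $b=\|w_{\ell^*_m-|\beta|,1}\partial^\alpha_\beta\{\mathbf I-\mathbf P\}f^{m,\epsilon}\langle v\rangle^{1/2}\|$. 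The $b$-piece is absorbed by the dissipative-type quantity $D^W_{\ell-|\beta|,1}(f^{m,\epsilon})$ at rate $\eta(1+t)^{-1-\vartheta}$, while the $a$-piece produces exactly the factor $(1+t)^{2+2\vartheta}\|\nabla_x E^{P,\epsilon}\|^2_{L^\infty_x}$ multiplied by the weighted dissipation $\mathcal{D}^{(j,k)}_{f^{m,\epsilon},j,\ell^*_m,1}$, after noting that differentiating ${\bf\{I-P\}}f^{m,\epsilon}$ in $v$ raises $|\beta|$ by one. The case $k=|\beta|+1$ arises when the $v$-derivative coming from $\nabla_v$ lands on $\{\mathbf I-\mathbf P\}f^{m,\epsilon}$, while $k=|\beta|-1$ arises from the commutator with $v\cdot$ for the $\frac12 E\cdot v\,f$ piece, explaining the index range $|\beta|-1\leq k\leq \min\{|\beta|+1,|\alpha|+|\beta|\}$.

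For (iii), the high-order Sobolev splits: $|\alpha_1|=N_m^0-1$ is handled by $L^6_x\times L^3_xL^2_v$ and controlled by $\|\nabla^3_x E^{P,\epsilon}\|_{H^{N_m-2}_x}$ together with the standard $H^1\hookrightarrow L^6$ and $H^1\hookrightarrow L^3$ inequalities; $|\alpha_1|=N_m^0$ uses $L^2_x\times L^\infty_xL^2_v$ with $H^2_x\hookrightarrow L^\infty_x$. The $\chi_{|\alpha|\geq N_m^0+1}$ correction is not required here because the electric field $E^{P,\epsilon}$ (respectively $E^{j_1,\epsilon}$) belongs to a lower-index solution space with higher regularity $N_P\geq N_1+2\geq\cdots\geq N_m+2$, so top-order derivatives of the field are always under control by $\|\nabla^3_xE^{P,\epsilon}\|_{H^{N_m-2}_x}\lesssim \mathcal{E}_{f^{P,\epsilon},n}^{1/2}$. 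Terms whose $x$-derivatives fall on ${\bf P}f^{m,\epsilon}$ contribute the cleaner bound $\mathcal{E}_{f^{P,\epsilon},n}(t)\,\mathcal{D}_{f^{m,\epsilon},n}(t)$ through the embedding $\|\nabla_x{\bf P}f^{m,\epsilon}\|_{H^{n-1}_x}^2\lesssim \mathcal{D}_{f^{m,\epsilon},n}$.

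The main technical obstacle I anticipate is the bookkeeping in case (ii) to obtain the precise index range $|\beta|-1\leq k\leq \min\{|\beta|+1,|\alpha|+|\beta|\}$: one must track simultaneously whether the stray $v$-factor from $\frac12 E\cdot v\,f^{m,\epsilon}$ is reabsorbed into the weight $\langle v\rangle^{1/2}$ on the right-hand side, and whether the $\nabla_v$ from the transport-like term raises or preserves the order $|\beta|$ after commuting with the macro-micro projection. A secondary subtlety is ensuring that no factor of $\epsilon$ is lost: unlike Lemma \ref{lemma-high-E-i}, the absence of $\epsilon^m$ means the $(1+t)^{2+2\vartheta}$ coefficient multiplies an $\epsilon$-independent norm of the electric field, so closure of the global energy argument relies on the decay of $\|\nabla_x E^{P,\epsilon}\|_{L^\infty_x}$ and $\|\nabla^3_x E^{P,\epsilon}\|_{H^{N_m-2}_x}$ being faster than $(1+t)^{-1-\vartheta}$, a property that has to be imported from the a priori temporal decay estimates established in the Vlasov--Poisson--Boltzmann block of the proof of Theorem \ref{Th1.3}. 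Once these two pieces are in place, the estimates \eqref{1-w-typical-high-R}--\eqref{2-w-typical-high-1} follow by the straightforward parallel to \eqref{1-w-typical-high-non-R}--\eqref{2-w-typical-high-non-1-R}.
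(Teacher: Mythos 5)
Your proposal is correct and follows essentially the same route as the paper, which in fact omits the proof entirely with the remark that it is obtained by adapting Lemma \ref{lemma-high-E-i}; your Leibniz-rule decomposition, macro--micro split, three-tier case analysis on where derivatives land on the field, $L^\infty$--$L^6$--$L^2$ Sobolev splittings, and the observation that the $\chi_{|\alpha|\geq N_m^0+1}$ correction disappears because $E^{P,\epsilon}$ and $E^{j_1,\epsilon}$ carry the higher regularity $N_P\geq N_m+3$ are exactly the intended argument. One small arithmetic slip worth fixing: the weighted Cauchy--Schwarz should be $ab\leq\frac{1}{4\eta}(1+t)^{1+\vartheta}a^2+\eta(1+t)^{-1-\vartheta}b^2$, with the second power of $(1+t)^{1+\vartheta}$ supplied afterward by the bound $\|w_{\ell^*_m-|\beta'|,1}\partial^{\alpha'}_{\beta'}\{\mathbf I-\mathbf P\}f^{m,\epsilon}\langle v\rangle^{1/2}\|^2\leq(1+t)^{1+\vartheta}\mathcal{D}^{(j,k)}_{f^{m,\epsilon},j,\ell^*_m,1}(t)$ coming from the definition of the dissipation functional; as written, $(1+t)^{2+2\vartheta}$ in the first Young term would, after that same absorption step, produce $(1+t)^{3+3\vartheta}$ rather than the stated $(1+t)^{2+2\vartheta}$. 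Since you quote the correct final coefficient, this is a presentational lapse rather than a gap.
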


\subsection{Lyapunov-type inequalities for $\mathcal{E}_{f^{m,\epsilon},N_m,l_m,-\gamma}(t)$, $\mathcal{E}_{f^{m,\epsilon},N^0_m,l^0_m,-\gamma}(t)$, and $\mathcal{E}_{f^{m,\epsilon},N_m+1}(t)$}
Now we are going to derive certain  Lyapunov-type inequalities for $\mathcal{E}_{f^{m,\epsilon},N_m,l_m,-\gamma}(t)$, $\mathcal{E}_{f^{m,\epsilon},N^0_m,l^0_m,-\gamma}(t)$, and $\mathcal{E}_{f^{m,\epsilon},N_m+1}(t)$. Our first result is to deduce the Lyapunov-type inequalities for $\mathcal{E}_{f^{m,\epsilon},N_m,l_m,-\gamma}(t)$.
\subsubsection{Lyapunov-type inequalities for $\mathcal{E}_{f^{m,\epsilon},N_m,l_m,-\gamma}(t)$} For result on this problem, we can get that
\begin{lemma}\label{lemma-f-m}
Take
\[\widetilde{\ell}_m\geq 1-\frac{3\gamma} 2,\ {\theta}=\frac{2-4\gamma}{2\widetilde{\ell}_m-\gamma},\]
and assume that
\begin{eqnarray}\label{Assume-f-R-N-R-low}
 \max&&\left\{\sup_{0\leq\tau\leq t}(1+\tau)^{1+\vartheta}\left\|\left[E^{P,\epsilon},E^{j_1,\epsilon},\epsilon^m E^{m,\epsilon}\right](\tau)\right\|_{L^\infty_x},  \sup_{0\leq\tau\leq t}\mathcal{E}_{f^{P,\epsilon},N_m+1,l_m+2-\frac1\gamma,-\gamma}(\tau), \right.\nonumber\\
 &&\ \ \ \ \ \ \ \ \ \ \ \ \ \ \ \ \ \ \ \ \ \ \ \ \ \ \left.\sup_{0\leq\tau\leq t}\mathcal{E}_{f^{j_1,\epsilon},N_m+1,l_m+2-\frac1\gamma,-\gamma}(\tau),\sup_{0\leq\tau\leq t}\mathcal{E}_{f^{m,\epsilon},N_m,l_m,-\gamma}(\tau)\right\}
\end{eqnarray}
is sufficiently small,
then one has
\begin{eqnarray}\label{lemma-f-R-N-R}
&&\frac{d}{dt}\mathcal{E}_{f^{m,\epsilon},N_m,l_m,-\gamma}(t)+\mathcal{D}_{f^{m,\epsilon},N_m,l_m,-\gamma}(t)\nonumber\\
 &\lesssim&\left\{\sum_{1\leq j_1\leq m-1}\left\|\nabla_x\left[E^{P,\epsilon}(t), E^{j_1,\epsilon}(t)\right]\right\|_{H^{N_m-1}_x}^{2}+\epsilon^{2m}\left\|\nabla_x
\left[E^{m,\epsilon}(t), \epsilon B^{m,\epsilon}(t)\right]\right\|^2_{H^{N_m^0-1}_x}
\right\}^{\frac{1}{\theta}}\nonumber\\
&&\times\sum_{0\leq k\leq N_m,\atop k\leq j\leq N_m}\mathcal{D}^{(j,k)}_{f^{m,\epsilon},j,\widetilde{\ell}_m-\gamma l_m+1+\frac\gamma2,1}(t)\\
 &&+\sum_{j_1+j_2\geq m,\atop 0<j_1,j_2<m}\mathcal{E}_{f^{j_1,\epsilon},N_m,l_m,-\gamma}(t)\mathcal{D}_{f^{j_2,\epsilon},N_m+1, l_m+2-\frac{1}\gamma,-\gamma}(t)\nonumber\\
   &&+
  \mathcal{E}_{f^{m,\epsilon},N_m,l_m,-\gamma}(t)\left\{\mathcal{D}_{f^{P,\epsilon},N_m+1,l_m+2-\frac{1}\gamma,-\gamma}(t)+\sum_{i=1}^{m-1}\mathcal{D}_{f^{i,\epsilon},N_m+1,l_m+2-\frac{1}\gamma,-\gamma}(t)\right\}\nonumber\\
  &&+\mathcal{E}_{f^{m,\epsilon},N_m}(t) \mathcal{E}^1_{f^{m,\epsilon},N^0_m,l_m+\frac32-\frac1\gamma,1}(t) +\left\|\nabla^{N_m+1}f^{m,\epsilon}(t)\right\|_\nu^2.\nonumber
  \end{eqnarray}
\end{lemma}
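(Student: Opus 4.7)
The plan is to derive \eqref{lemma-f-R-N-R} by combining a pure-spatial weighted energy estimate on \eqref{f-R-vector}, a mixed-derivative weighted energy estimate on the projected equation \eqref{f-R-vect-I-P}, and the macroscopic dissipation estimate \eqref{mac-dis-f-R}, each tested against $w_{l_m-|\beta|,-\gamma}^2\partial^\alpha_\beta(\cdot)$. For $|\beta|=0$ and $1\leq|\alpha|\leq N_m$ I would apply $\partial^\alpha$ to \eqref{f-R-vector}, pair with $w_{l_m,-\gamma}^2\partial^\alpha f^{m,\epsilon}$, and use the coercivity \eqref{coercive-estimates} together with the weight-generated extra dissipation $(1+t)^{-1-\vartheta}\|w_{l_m,-\gamma}\partial^\alpha\{{\bf I-P}\}f^{m,\epsilon}\langle v\rangle\|^2$ coming from $\partial_t e^{q\langle v\rangle^2/(1+t)^\vartheta}$. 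For $|\beta|\geq1$ I would test \eqref{f-R-vect-I-P} against $w_{l_m-|\beta|,-\gamma}^2\partial^\alpha_\beta\{{\bf I-P}\}f^{m,\epsilon}$ and induct downward on $|\beta|$, absorbing the velocity-commutator $v\cdot\nabla_x{\bf\{I-P\}}f^{m,\epsilon}$ into the dissipation of order $|\beta|-1$ by Cauchy's inequality at the cost of one more spatial derivative, which explains the $\|\nabla^{N_m+1}f^{m,\epsilon}\|_\nu^2$ remainder on the right-hand side.

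The first genuine difficulty is the linear term $-\{E^{m,\epsilon}+\epsilon b^{f^{m,\epsilon}}\times(B^P+\sum\epsilon^{j_1}B^{j_1})\}\cdot v\mu^{1/2}q_1$ in \eqref{f-R-vect-I-P}: a direct pairing would require $\|\partial^\alpha E^{m,\epsilon}\|^2$, but the electromagnetic dissipation in $\mathcal{D}_{f^{m,\epsilon},N_m}$ carries the degenerate prefactor $\epsilon^2$ and hence cannot absorb it with constants independent of $\epsilon$. This is handled by invoking Lemma \ref{lemma-nonhard-1}: one converts the pairing into a time derivative of the scalar moment $\langle\partial_\beta(w_{l_m-|\beta|,-\gamma}^2\partial_\beta(v_i\mu^{1/2})),\partial^\alpha(\{{\bf I_+-P_+}\}-\{{\bf I_--P_-}\})f^{m,\epsilon}\rangle$ (which is of order $\mathcal{E}_{f^{m,\epsilon},N_m,l_m,-\gamma}$ and absorbable into the energy) plus remainder terms already controlled by $\|\nabla^{|\alpha|}\{{\bf I-P}\}f^{m,\epsilon}\|_\nu^2$, $\|\nabla^{|\alpha|+1}f^{m,\epsilon}\|_\nu^2$, and the listed product energy-dissipation terms. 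The analogous identity for $|\beta|=0$ uses \eqref{E-R-nonhard-2}.

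For the nonlinear interactions I would dispatch them by category: the self-interaction $\epsilon^m q_0(E^{m,\epsilon}+\epsilon v\times B^{m,\epsilon})\cdot\nabla_v f^{m,\epsilon}$ and $\tfrac{\epsilon^m}{2}q_0 E^{m,\epsilon}\cdot v f^{m,\epsilon}$ via \eqref{1-w-typical-low-non-R}--\eqref{mic-non-I-P-2}, whose interpolation exponent $\theta=(2-4\gamma)/(2\widetilde{\ell}_m-\gamma)$ produces the power $\{\epsilon^{2m}\|\nabla_x[E^{m,\epsilon},\epsilon B^{m,\epsilon}]\|^2_{H^{N_m^0-1}_x}\}^{1/\theta}$ appearing on the right of \eqref{lemma-f-R-N-R}; the cross interactions with $[f^{P,\epsilon},f^{j,\epsilon}]$ via \eqref{lemma-f-R-1}--\eqref{lemma-f-R-3}, \eqref{0-order-E-R-B-R}--\eqref{micro-weight-E-R-12} and \eqref{0-w-typical-R}--\eqref{2-w-typical-low-1-R}; and the $\Gamma$-type bilinear terms by the standard nonlinear estimates of Lemma \ref{lemma-nonlinear} (in Appendix A). The smallness hypothesis \eqref{Assume-f-R-N-R-low} is precisely what is needed to convert $\epsilon^m\|E^{m,\epsilon}\|_{L^\infty_x}\|\cdots\langle v\rangle\|^2$ into $(1+t)^{-1-\vartheta}\|\cdots\langle v\rangle\|^2$, as sketched in the display surrounding \eqref{intro-4}, so that it is absorbable by the weight-dissipation in $\mathcal{D}_{f^{m,\epsilon},N_m,l_m,-\gamma}$.

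The main obstacle, and where the bookkeeping is most delicate, is the weight-parameter accounting. Each application of an interpolation or a Cauchy inequality loses either a weight exponent (from $l_m-|\beta|$ down to $l_m-|\beta|-\text{const}$) or a spatial derivative (from $N_m$ up to $N_m+1$), and the exponent conditions $\widetilde{\ell}_m\geq 1-3\gamma/2$, $l_m+2-1/\gamma$ in the dissipation subscripts of $\mathcal{D}_{f^{P,\epsilon},\cdot,\cdot,-\gamma}$ and $\mathcal{D}_{f^{j,\epsilon},\cdot,\cdot,-\gamma}$ are exactly what is required so that everything closes into a functional one level above the energy being estimated. I would therefore take the linear combination with small constants $\kappa_{|\beta|}\ll\kappa_{|\beta|-1}$ so that the induction on $|\beta|$ closes, then add a small multiple of \eqref{mac-dis-f-R} to restore $\|\nabla_x{\bf P}f^{m,\epsilon}\|^2_{H^{N_m-1}_x}$, $\|a^{f^{m,\epsilon}}_+-a^{f^{m,\epsilon}}_-\|^2$, and the $\epsilon^2$-weighted electromagnetic dissipation, yielding \eqref{lemma-f-R-N-R}.
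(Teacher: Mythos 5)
Your overall architecture — weighted pure-spatial estimate on \eqref{f-R-vector}, weighted mixed-derivative estimate on \eqref{f-R-vect-I-P} with downward induction on $|\beta|$, macroscopic dissipation from \eqref{mac-dis-f-R}, with the degenerate electromagnetic coupling handled via Lemma \ref{lemma-nonhard-1} — correctly identifies most of the paper's machinery, and your category-by-category dispatch of the nonlinear terms matches the lemmas the paper invokes. However, there is a genuine missing ingredient: you state that \emph{every} pairing is taken against $w_{l_m-|\beta|,-\gamma}^2\partial^\alpha_\beta(\cdot)$, including the pure-spatial one. That omits the \emph{unweighted} pure-spatial estimate (Step 1 of the paper's proof, yielding \eqref{f-R-spatial-x}), which is the only place the electromagnetic energy $\frac{d}{dt}\sum_{|\alpha|\leq N_m}\|\partial^\alpha[E^{m,\epsilon},B^{m,\epsilon}]\|^2$ appears as a clean time derivative. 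The mechanism is the standard Maxwell-system cancellation: pairing $-\partial^\alpha(E^{m,\epsilon}\cdot v\mu^{1/2}q_1)$ against $\partial^\alpha f^{m,\epsilon}$ produces $-(\partial^\alpha E^{m,\epsilon},\partial^\alpha G^{m,\epsilon})$, and \eqref{f-r-s-e-b} converts this to $-\tfrac{\epsilon}{2}\frac{d}{dt}\|\partial^\alpha[E^{m,\epsilon},B^{m,\epsilon}]\|^2$. Once you insert $w^2\neq 1$, the moment $\langle w^2 v_i\mu^{1/2},\partial^\alpha(f_+^{m,\epsilon}-f_-^{m,\epsilon})\rangle$ is no longer $G^{m,\epsilon}_i$, so the Maxwell structure cannot be exploited; \eqref{E-R-nonhard-2} then salvages the situation by turning the pairing into a time derivative of a \emph{different} moment (which is bounded by the energy but is not itself part of $\mathcal{E}_{f^{m,\epsilon},N_m,l_m,-\gamma}$), plus controllable errors. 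That trick disposes of the pairing but never recovers $\|\partial^\alpha[E^{m,\epsilon},B^{m,\epsilon}]\|^2$ on the left-hand side.

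Since $\mathcal{E}_{f^{m,\epsilon},N_m,l_m,-\gamma}(t)$ is defined in \eqref{def-E-n-g} to include $\mathcal{E}_{f^{m,\epsilon},N_m}(t)=\sum_{|\alpha|\leq N_m}\|\partial^\alpha[f^{m,\epsilon},E^{m,\epsilon},B^{m,\epsilon}]\|^2$, your proposed linear combination cannot reproduce the full time derivative $\frac{d}{dt}\mathcal{E}_{f^{m,\epsilon},N_m,l_m,-\gamma}(t)$ claimed in \eqref{lemma-f-R-N-R}. The fix is mechanical but essential: add the unweighted pure-spatial estimate (pair $\partial^\alpha$ of \eqref{f-R-vector} with $\partial^\alpha f^{m,\epsilon}$ for all $|\alpha|\leq N_m$, i.e.\ the paper's \eqref{f-R-spatial-x}) before taking the final combination. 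Relatedly, you should also treat the $\alpha=\beta=0$ weighted \emph{microscopic} estimate (the paper's \eqref{00-weight-I-P-1-R}) as a separate piece, since your split into ``$|\beta|=0$, $|\alpha|\geq1$'' and ``$|\beta|\geq1$'' leaves $|\alpha|=|\beta|=0$ unaddressed; that case is what supplies $\|w_{l_m,-\gamma}\{{\bf I-P}\}f^{m,\epsilon}\|_\nu^2$ and the associated $(1+t)^{-1-\vartheta}$-weighted dissipation at zero derivative order. With these two additions your approach coincides with the paper's.
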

\begin{proof}
  \noindent{\bf Step 1:}\quad Applying $\partial^\alpha$ with $|\alpha|\leq N_m$ to \eqref{f-R-vector}, multiplying the resulting identity by $\partial^\alpha f^{m,\epsilon}$ with $|\alpha|\leq N_m$, and integrating the final result with respect to $v$ and $x$ over $\mathbb{R}^3_v\times\mathbb{R}^3_x$, one has from \eqref{lemma-f-R-1}, \eqref{0-order-E-R-B-R}, \eqref{0-order-E-R-12}, \eqref{0-order-gamma-R}, \eqref{0-w-typical-R}, \eqref{1-w-typical-R},\eqref{1-typical-low-non-R}, \eqref{1-typical-low-non-1-R}, and \eqref{1-typical-low-non-gamma-R} that
  \begin{eqnarray}\label{f-R-spatial-x}
&&\frac{d}{dt}\left\|\left[\partial^\alpha f^{m,\epsilon},\partial^\alpha E^{m,\epsilon},\partial^\alpha B^{m,\epsilon}\right](t)\right\|^2+\left\|\partial^\alpha\{{\bf I-P}\}f^{m,\epsilon}(t)\right\|_\nu^2\nonumber\\
  &\lesssim&
  \mathcal{E}_{f^{m,\epsilon},N_m-1,N_m-\frac12-\frac1\gamma,-\gamma}(t)
  \left\{\mathcal{D}_{f^{P,\epsilon},N_m}(t)+\sum_{1\leq j_1\leq m-1}\mathcal{D}_{f^{j_1,\epsilon},N_m}(t)\right\}\nonumber\\
   &&+
  \mathcal{E}_{f^{m,\epsilon},N_m}(t)\nonumber\\
&&\times\left\{\mathcal{D}_{f^{P,\epsilon},N_m+1,N_m+2-\frac1\gamma,-\gamma}(t)
  +\sum_{i=1}^{m-1}\mathcal{D}_{f^{i,\epsilon},N_m+1,N_m+2-\frac1\gamma,-\gamma}(t)
  +\mathcal{D}_{f^{m,\epsilon},3,4-\frac1\gamma,-\gamma}(t)\right\}\nonumber\\
  &&+\left\{\mathcal{E}_{f^{P,\epsilon},N_m,N_m,-\gamma}(t)+\sum_{1\leq j_1\leq m-1}\mathcal{E}_{f^{j_1,\epsilon},N_m-1,N_m-1,-\gamma}(t)
  +\mathcal{E}_{f^{m,\epsilon},N_m-1,N_m-\frac12-\frac1\gamma,-\gamma}(t)\right\}
\nonumber\\
&&\times \mathcal{D}_{f^{m,\epsilon},N_m}(t)\nonumber\\
 &&+\sum_{1\leq j_1\leq m-1}\left\|\nabla_x\left[\epsilon E^{j_1,\epsilon},E^{P,\epsilon},\epsilon^m E^{m,\epsilon},\epsilon^{m+1} B^{m,\epsilon}\right]\right\|^2_{L^\infty_x} \left\|\nabla_v\nabla^{N_m-1}_x{\bf\{I-P\}}f^{m,\epsilon}\langle v\rangle^{1-\frac\gamma2}\right\|^2\\
  &&+\sum_{1\leq j_1\leq m-1}\left\|\left[\epsilon ^m E^{m,\epsilon}, E^{j_1,\epsilon},E^{P,\epsilon}\right]\right\|^2_{L^\infty_x} \left\|\nabla^{N_m}_x{\bf\{I-P\}}f^{m,\epsilon}\langle v\rangle^{1-\frac\gamma2}\right\|^2\nonumber\\
  &&+\sum_{j_1+j_2\geq m,\atop 0<j_1,j_2<m}\left\{\mathcal{E}_{f^{j_1,\epsilon},N_m,N_m,-\gamma}(t)
  \mathcal{D}_{f^{j_2,\epsilon},N_m+1,N_m+2-\frac1\gamma,-\gamma}(t)\right\}
  +\eta\mathcal{D}_{f^{m,\epsilon},N_m}(t).\nonumber
    \end{eqnarray}

 \noindent{\bf Step 2:}
For simplicity in presentation, we use the notations ${I}_{mix,mic}(t)$ and $I_{pure,mic}(t)$ to denote
\begin{eqnarray}\label{def-J-mic-mix-R}
  {I}_{mix,mic}(t)\equiv
&& \frac12{\bf \{I-P\}}q_0 E^{m,\epsilon}\cdot v f^{P,\epsilon}-{\bf \{I-P\}}q_0  E^{m,\epsilon}\cdot\nabla_vf^{P,\epsilon}\nonumber\\
&&+\frac12{\bf \{I-P\}}q_0\sum_{0<j_1< m} \epsilon^{j_1}E^{m,\epsilon}
\cdot vf^{j_1,\epsilon}-{\bf \{I-P\}}
q_0\sum_{0<j_1< m} \epsilon^{j_1}E^{m,\epsilon}\cdot\nabla_vf^{j_1,\epsilon}\\
&&-{\bf \{I-P\}}
q_0\epsilon \left(v\times B^{m,\epsilon}\right)\cdot \nabla_v\left\{f^{P,\epsilon}+\sum_{i=1}^{m-1}\epsilon^if^{i,\epsilon}\right\}\nonumber\\
&&+
\frac12{\bf \{I-P\}}q_0\sum_{j_1+j_2\geq m,\atop 0<j_1,j_2<m} \epsilon^{j_1+j_2-m}E^{j_1,\epsilon}\cdot vf^{j_2,\epsilon}\nonumber\\
&&-{\bf \{I-P\}}q_0\sum_{j_1+j_2\geq m,\atop 0<j_1,j_2<m} \epsilon^{j_1+j_2-m}E^{j_1,\epsilon}\cdot \nabla_vf^{j_2,\epsilon}\nonumber\\
&&+{\Gamma}\left(f^{P,\epsilon},f^{m,\epsilon}\right)
  +{\Gamma}\left(f^{m,\epsilon},f^{P,\epsilon}\right)+\sum_{j_1+j_2\geq m,\atop 0<j_1,j_2<m}\epsilon^{j_1+j_2-m}\Gamma\left(f^{j_1,\epsilon},f^{j_2,\epsilon}\right)
  \nonumber\\
&&+\sum_{1\leq j_1\leq m-1}\epsilon^{j_1}\left\{\Gamma\left(f^{j_1,\epsilon},f^{m,\epsilon}\right)+\Gamma\left(f^{m,\epsilon},f^{j_1,\epsilon}\right)\right\}\nonumber
\end{eqnarray}
and
\begin{eqnarray}\label{def-J-mic-pure-R}
  I_{pure,mic}(t)&\equiv&
  \frac12{\bf \{I-P\}}q_0 E^{P,\epsilon}\cdot v f^{m,\epsilon}-{\bf \{I-P\}}q_0 E^{P,\epsilon}\cdot\nabla_{ v  }f^{m,\epsilon}\nonumber\\
&&+\frac12{\bf \{I-P\}}q_0\sum_{0<j_1< m} \epsilon^{j_1}E^{j_1,\epsilon}
\cdot vf^{m,\epsilon}-{\bf \{I-P\}}q_0\sum_{0<j_1< m} \epsilon^{j_1}E^{j_1,\epsilon}
\cdot\nabla_vf^{m,\epsilon}\\
&&+ \frac 1 2{\bf \{I-P\}}q_0\epsilon^{m}p E^{m,\epsilon}\cdot v f^{m,\epsilon}- {\bf \{I-P\}}q_0\epsilon^{m}\left\{ E^{m,\epsilon}+ \epsilon v\times B^{m,\epsilon}\right\}\cdot\nabla_vf^{m,\epsilon}\nonumber\\
&&+\epsilon^m\Gamma(f^{m,\epsilon},f^{m,\epsilon}),\nonumber
\end{eqnarray}
then \eqref{f-R-vect-I-P} can be rewritten as
\begin{eqnarray}\label{mic-I-P-equation}
  &&\partial_t{\bf \{I-P\}}f^{m,\epsilon}+ {\bf \{I-P\}}v\cdot\nabla_xf^{m,\epsilon}- \left\{E^{m,\epsilon}+\epsilon b^{f^{m,\epsilon}}\times \left\{B^{P}
+\sum_{j_1=1}^{m-1}\epsilon^{j_1}B^{j_1}\right\}\right\} \cdot v \mu^{1/2}q_1\nonumber\\
  &&+\epsilon q_0\left\{\left(v\times \left(B^{P}
+\sum_{j_1=1}^{m-1}\epsilon^{j_1}B^{j_1}\right)\right)\right\}\cdot\nabla_v{\bf \{I-P\}}f^{m,\epsilon}+Lf^{m,\epsilon}\\
&\equiv&
  I_{mix,mic}(t)+I_{pure,mic}(t).\nonumber
\end{eqnarray}

Multiplying \eqref{mic-I-P-equation} by $w^2_{l_m,-\gamma}{\bf \{I-P\}}f^{m,\epsilon}$ and integrating the resulting identity over $\mathbb{R}^3_v\times\mathbb{R}^3_x$, one has
\begin{eqnarray}\label{00-weight-I-P-R}
&&  \frac{d}{dt}\|w_{l_m,-\gamma}{\bf \{I-P\}}f^{m,\epsilon}\|^2+\|w_{l_m,-\gamma}{\bf \{I-P\}}f^{m,\epsilon}\|^2_{\nu}\nonumber\\
&&+\frac{q\vartheta}{(1+t)^{1+\vartheta}}\|w_{l_m,-\gamma}{\bf \{I-P\}}f^{m,\epsilon}\langle v\rangle\|^2\nonumber\\
&\lesssim&\|{\bf\{I-P\}}f^{m,\epsilon}\|_\nu^2+\underbrace{\left|\left(-{\bf \{I-P\}}v\cdot\nabla_xf^{m,\epsilon},w^2_{l_m,-\gamma}{\bf \{I-P\}}f^{m,\epsilon}\right)\right|
}_{\mathcal{S}_1}\nonumber\\
&&+\underbrace{\left|\left(\left\{E^{m,\epsilon}+\epsilon b^{f^{m,\epsilon}}\times \left\{B^{P}
+\sum_{j_1=1}^{m-1}\epsilon^{j_1}B^{j_1}\right\}\right\} \cdot v \mu^{1/2}q_1,w^2_{l_m,-\gamma}{\bf \{I-P\}}f^{m,\epsilon}\right)\right|
}_{\mathcal{S}_2}\nonumber\\
&&
+\underbrace{\left|\left(I_{mix,mic}(t),w^2_{l_m,-\gamma}{\bf \{I-P\}}f^{m,\epsilon}\right)\right|}_{\mathcal{S}_3}\nonumber\\
&&+\underbrace{\left|\left(I_{pure,mic}(t),w^2_{l_m,-\gamma}{\bf \{I-P\}}f^{m,\epsilon}\right)\right|}_{\mathcal{S}_4}\nonumber\\
&&+\underbrace{\left|\left(\epsilon q_0\left\{v\times \left\{B^{P}
+\sum_{j_1=1}^{m-1}\epsilon^{j_1}B^{j_1}\right\}\right\}\cdot\nabla_v{\bf \{I-P\}}f^{m,\epsilon},w^2_{l_m,-\gamma}{\bf \{I-P\}}f^{m,\epsilon}\right)\right|}_{\mathcal{S}_5}.
\end{eqnarray}
For $\mathcal{S}_1$, one has
\begin{eqnarray*}
  \mathcal{S}_1&\lesssim&\left|\left(v\cdot\nabla_x{\bf \{I-P\}}f^{m,\epsilon},w^2_{l_m,-\gamma}{\bf \{I-P\}}f^{m,\epsilon}\right)\right|+\left|\left(v\cdot\nabla_x{\bf P}f^{m,\epsilon},w^2_{l_m,-\gamma}{\bf \{I-P\}}f^{m,\epsilon}\right)\right|\\
  &&+\left|\left({\bf P}\left\{v\cdot\nabla_xf^{m,\epsilon}\right\},w^2_{l_m,-\gamma}{\bf \{I-P\}}f^{m,\epsilon}\right)\right|\\
  &\lesssim&\eta\|\nabla_x f^{m,\epsilon}\|_\nu^2+\|{\bf\{I-P\}}f^{m,\epsilon}\|_\nu^2.
\end{eqnarray*}
\eqref{E-R-nonhard-1} yields
\begin{eqnarray}
  \mathcal{S}_2 &\lesssim&\frac1{4C_\beta}\sum_{i=1}^3\frac{d}{dt}\left\|\left\langle \left\{w^2_{l_m,-\gamma}\left[v_i \mu^{1/2}\right]\right\}, {\{\bf I_+-P_+\}}f^{m,\epsilon}-{\{\bf I_--P_-\}}f^{m,\epsilon}\right\rangle\right\|^2\\ \nonumber
  &&+\left\|f^{m,\epsilon}\right\|_\nu^2+\left\|\nabla_xf^{m,\epsilon}\right\|^2_\nu
  +\mathcal{E}_{f^{m,\epsilon},2}(t)\left\{\mathcal{D}_{f^{P,\epsilon},2}(t)+
  \sum_{1\leq j_1\leq m-1}\mathcal{D}_{f^{j_1,\epsilon},2}(t)+\mathcal{D}_{f^{m,\epsilon},2}(t)\right\}\\ \nonumber
  &&+\sum_{j_1+j_2\geq m,\atop 0<j_1,j_2<m}\mathcal{E}_{f^{j_1,\epsilon},2}(t)\mathcal{D}_{f^{j_2,\epsilon},2}(t)
  +\sum_{1\leq j_1\leq m-1}\mathcal{E}_{f^{j_1,\epsilon},2}(t)\mathcal{D}_{f^{m,\epsilon},2}(t).
\end{eqnarray}
Using \eqref{lemma-f-R-3}, \eqref{micro-weight-E-R}, \eqref{micro-weight-E-R-12}, and \eqref{micro-weight-gamma-low-R} with $\alpha=\beta=0$,
one has
\begin{eqnarray}
  \mathcal{S}_3
     &\lesssim&\sum_{j_1+j_2\geq m,\atop 0<j_1,j_2<m}\mathcal{E}_{f^{j_1,\epsilon},2,l_m,-\gamma}(t)\mathcal{D}_{f^{j_2,\epsilon},3,l_m+2-\frac{1}\gamma,-\gamma}(t)\nonumber\\
   &&+
  \mathcal{E}_{f^{m,\epsilon},2,l_m,-\gamma}(t)\left\{\mathcal{D}_{f^{P,\epsilon},3,l_m+2-\frac{1}\gamma,-\gamma}(t)+\sum_{i=1}^{m-1}\mathcal{D}_{f^{i,\epsilon},3,l_m+2-\frac{1}\gamma,-\gamma}(t)\right\}\nonumber\\
 &&+\left\{\mathcal{E}_{f^{P,\epsilon},2,l_m,-\gamma}(t)+\sum_{1\leq j_1\leq m-1}\mathcal{E}_{f^{j_1,\epsilon},2,l_m,-\gamma}(t)\right\}\mathcal{D}_{f^{m,\epsilon},2,l_m,-\gamma}(t)\\
  && +\eta\left\|w_{l_m,-\gamma}{\bf \{I-P\}} f^{m,\epsilon}\right\|_\nu^2.\nonumber
  \end{eqnarray}
  As for $\mathcal{S}_4$, one has from \eqref{2-w-typical-low-R}, \eqref{2-w-typical-low-1-R}, \eqref{mic-non-I-P-1}, \eqref{mic-non-I-P-2} and \eqref{2-w-typical-low-non-gamma-R} with $|\alpha|=|\beta|=0$
  \begin{eqnarray}\label{estimate-H-4}
    \mathcal{S}_4
   &\lesssim&\sum_{1\leq j_1\leq m-1}\left\|  [E^{P,\epsilon},E^{j_1,\epsilon},\epsilon^m E^{m,\epsilon}]\right\|_{L^\infty_x}\left\|w_{l_m,-\gamma}{\bf\{I-P\}} f^{m,\epsilon}\langle v\rangle^{\frac12}\right\|^2\\
&&+\left\{\mathcal{E}_{f^{P,\epsilon},2}(t)+\sum_{1\leq j_1\leq m-1}\mathcal{E}_{f^{j_1,\epsilon},2}(t)\right\}\mathcal{D}_{f^{m,\epsilon},2}(t)
\nonumber\\
&&+\mathcal{E}_{f^{m,\epsilon},2,l_m,-\gamma}(t)\mathcal{D}_{f^{m,\epsilon},2,l_m,-\gamma}(t)
  +\eta\mathcal{D}_{f^{m,\epsilon},2,l_m,-\gamma}(t).\nonumber
    \end{eqnarray}
Finally, it is straightforward to compute that
\[ \mathcal{S}_5\lesssim\eta\|f^{m,\epsilon}\|_\nu+\|\{{\bf I-P}\}f^{m,\epsilon}\|_\nu^2.\]

Collecting the estimates on $\mathcal{S}_1-\mathcal{S}_5$ into \eqref{00-weight-I-P-R} implies that
\begin{eqnarray}\label{00-weight-I-P-1-R}
&&  \frac{d}{dt}\|w_{l_m,-\gamma}{\bf \{I-P\}}f^{m,\epsilon}\|^2+\|w_{l_m,-\gamma}{\bf \{I-P\}}f^{m,\epsilon}\|^2_{\nu}\nonumber\\
&&+\frac{q\vartheta}{(1+t)^{1+\vartheta}}\|w_{l_m,-\gamma}{\bf \{I-P\}}f^{m,\epsilon}\langle v\rangle\|^2\nonumber\\
&\lesssim&\frac1{4C_\beta}\sum_{i=1}^3\frac{d}{dt}\left\|\left\langle \left\{w^2_{l_m,-\gamma}\left[v_i \mu^{1/2}\right]\right\}, {\{\bf I_+-P_+\}}f^{m,\epsilon}-{\{\bf I_--P_-\}}f^{m,\epsilon}\right\rangle\right\|^2\\ \nonumber
   &&+\sum_{j_1+j_2\geq m,\atop 0<j_1,j_2<m}\mathcal{E}_{f^{j_1,\epsilon},2,l_m,-\gamma}(t)\mathcal{D}_{f^{j_2,\epsilon},3,l_m+2-\frac{1}\gamma,-\gamma}(t)\nonumber\\
   &&+
  \mathcal{E}_{f^{m,\epsilon},2,l_m,-\gamma}(t)\left\{\mathcal{D}_{f^{P,\epsilon},3,l_m+2-\frac{1}\gamma,-\gamma}(t)+\sum_{i=1}^{m-1}\mathcal{D}_{f^{i,\epsilon},3,l_m+2-\frac{1}\gamma,-\gamma}(t)\right\}\nonumber\\
 &&+\left\{\mathcal{E}_{f^{P,\epsilon},2,l_m,-\gamma}(t)+\sum_{1\leq j_1\leq m-1}\mathcal{E}_{f^{j_1,\epsilon},2,l_m,-\gamma}(t)\right\}\mathcal{D}_{f^{m,\epsilon},2,l_m,-\gamma}(t)\nonumber\\
 &&+\mathcal{E}_{f^{m,\epsilon},2,l_m,-\gamma}(t)\mathcal{D}_{f^{m,\epsilon},2,l_m,-\gamma}(t)+\eta\|f^{m,\epsilon}\|_\nu^2+\mathcal{D}_{f^{m,\epsilon},2}(t)+\eta\mathcal{D}_{f^{m,\epsilon},2,l_m,-\gamma}(t)\nonumber\\
&&+ \sum_{1\leq j_1\leq m-1}\left\|  [E^{P,\epsilon},E^{j_1,\epsilon},\epsilon^m E^{m,\epsilon}]\right\|_{L^\infty_x}\left\|w_{l_m,-\gamma}{\bf\{I-P\}} f^{m,\epsilon}\langle v\rangle^{\frac12}\right\|^2.\nonumber
\end{eqnarray}

Secondly, applying $\partial^\alpha$ with $1\leq|\alpha|\leq N_m$ to \eqref{f-R-vector},
multiplying the resulting identity by $w^2_{l_m,-\gamma}\partial^\alpha f^{m,\epsilon}$,  and integrating the final result with respect to $v$ and $x$ over $\mathbb{R}^3_v\times\mathbb{R}^3_x$, one has

\begin{eqnarray}\label{0-weight-alpha-R}
&&  \frac{d}{dt}\|w_{l_m,-\gamma}\partial^\alpha f^{m,\epsilon}\|^2+\|w_{l_m,-\gamma}\partial^\alpha f^{m,\epsilon}\|^2_{\nu}+\frac{q\vartheta}{(1+t)^{1+\vartheta}}\|w_{l_m,-\gamma}\partial^\alpha f^{m,\epsilon}\langle v\rangle\|^2\nonumber\\
&\lesssim&\|\partial^\alpha f^{m,\epsilon}\|_\nu^2+\underbrace{\left|\left(\partial^\alpha\left\{E^{m,\epsilon} \cdot v \mu^{1/2}q_1\right\},w^2_{l_m,-\gamma}\partial^\alpha f^{m,\epsilon}\right)\right|
}_{\mathcal{S}_6}\nonumber\\
&&+\underbrace{\left|\left(\partial^\alpha I_{mix}(t),w^2_{l_m,-\gamma}\partial^\alpha f^{m,\epsilon}\right)\right|}_{\mathcal{S}_7}+\underbrace{\left|\left(\partial^\alpha I_{pure}(t),w^2_{l_m,-\gamma}\partial^\alpha f^{m,\epsilon}\right)\right|}_{\mathcal{S}_8}\nonumber\\
&&+\underbrace{\left|\left(\partial^\alpha\epsilon q_0\left\{v\times \left\{B^{P}
+\sum_{j_1=1}^{m-1}\epsilon^{j_1}B^{j_1}\right\}\right\}\cdot\nabla_vf^{m,\epsilon},w^2_{l_m,-\gamma}\partial^\alpha f^{m,\epsilon}\right)\right|}_{\mathcal{S}_9},
\end{eqnarray}
where
\begin{eqnarray*}\label{def-J-mix-R}
  {I}_{mix}(t)\equiv
 && \frac12q_0 E^{m,\epsilon}\cdot v f^{P,\epsilon}-q_0  E^{m,\epsilon}\cdot\nabla_vf^{P,\epsilon}\nonumber\\
&&+\frac12q_0\sum_{0<j_1< m} \epsilon^{j_1}E^{m,\epsilon}
\cdot vf^{j_1,\epsilon}-
q_0\sum_{0<j_1< m} \epsilon^{j_1}E^{m,\epsilon}\cdot\nabla_vf^{j_1,\epsilon}\\
&&-
q_0\epsilon \left(v\times B^{m,\epsilon}\right)\cdot \nabla_v\left\{f^{P,\epsilon}+\sum_{i=1}^{m-1}\epsilon^if^{i,\epsilon}\right\}+
\frac12q_0\sum_{j_1+j_2\geq m,\atop 0<j_1,j_2<m} \epsilon^{j_1+j_2-m}E^{j_1,\epsilon}\cdot vf^{j_2,\epsilon}\nonumber\\
&&-q_0\sum_{j_1+j_2\geq m,\atop 0<j_1,j_2<m} \epsilon^{j_1+j_2-m}E^{j_1,\epsilon}\cdot \nabla_vf^{j_2,\epsilon}+{\Gamma}\left(f^{P,\epsilon},f^{m,\epsilon}\right)
  +{\Gamma}\left(f^{m,\epsilon},f^{P,\epsilon}\right)\nonumber\\
&&+\sum_{j_1+j_2\geq m,\atop 0<j_1,j_2<m}\epsilon^{j_1+j_2-m}\Gamma\left(f^{j_1,\epsilon},f^{j_2,\epsilon}\right)
  +\sum_{1\leq j_1\leq m-1}\epsilon^{j_1}\left\{\Gamma\left(f^{j_1,\epsilon},f^{m,\epsilon}\right)+\Gamma\left(f^{m,\epsilon},f^{j_1,\epsilon}\right)\right\}\nonumber
\end{eqnarray*}
and
\begin{eqnarray}\label{def-J-pure-R}
  I_{pure}(t)&\equiv&
\frac12q_0 E^{P,\epsilon}\cdot v f^{m,\epsilon}-q_0 E^{P,\epsilon}\cdot\nabla_{ v  }f^{m,\epsilon}\nonumber\\
&&+\frac12q_0\sum_{0<j_1< m} \epsilon^{j_1}E^{j_1,\epsilon}
\cdot vf^{m,\epsilon}-q_0\sum_{0<j_1< m} \epsilon^{j_1}E^{j_1,\epsilon}
\cdot\nabla_vf^{m,\epsilon}\nonumber\\
&&+ \frac {\epsilon^{m}} 2q_0 E^{m,\epsilon}\cdot v f^{m,\epsilon}- q_0\epsilon^{m}\left\{ E^{m,\epsilon}+ \epsilon v\times B^{m,\epsilon}\right\}\cdot\nabla_vf^{m,\epsilon}+\epsilon^m\Gamma(f^{m,\epsilon},f^{m,\epsilon}).
\end{eqnarray}
Thus we can written \eqref{f-R-vector} as
\begin{eqnarray}\label{mic-R-equation}
  &&\partial_tf^{m,\epsilon}+ v\cdot\nabla_xf^{m,\epsilon}-  E^{m,\epsilon} \cdot v \mu^{1/2}q_1\nonumber\\
  &&\quad\quad\quad\quad\quad\quad+\epsilon q_0\left\{v\times \left\{B^{P}
+\sum_{j_1=1}^{m-1}\epsilon^{j_1}B^{j_1}\right\}\right\}\cdot\nabla_vf^{m,\epsilon}+Lf^{m,\epsilon}=
  I_{mix}(t)+I_{pure}(t).
\end{eqnarray}
Using \eqref{E-R-nonhard-2} yields
\begin{eqnarray}
\mathcal{S}_6&\lesssim&\frac1{4\tilde{C}}\frac{d}{dt}\sum_{i=1}^3\left\|\left\langle \left\{w^2_{l_m,-\gamma}\left[v_i \mu^{1/2}\right]\right\}, \partial^\alpha  f_{m,+}-\partial^\alpha f_{m,-}\right\rangle\right\|^2+\left\|\nabla^{|\alpha|}f^{m,\epsilon}\right\|_\nu^2\\ \nonumber
   &&+\left\|\nabla^{|\alpha|+1}f^{m,\epsilon}\right\|^2_\nu
  +\mathcal{E}_{f^{m,\epsilon},N}(t)\left\{\mathcal{D}_{f^{P,\epsilon},N}(t)+
  \sum_{1\leq j_1\leq m-1}\mathcal{D}_{f^{j_1,\epsilon},N}(t)+\mathcal{D}_{f^{m,\epsilon},N}(t)\right\}\\ \nonumber
  &&+\sum_{j_1+j_2\geq m,\atop 0<j_1,j_2<m}\mathcal{E}_{f^{j_1,\epsilon},N}(t)\mathcal{D}_{f^{j_2,\epsilon},N}(t)
  +\sum_{1\leq j_1\leq m-1}\mathcal{E}_{f^{j_1,\epsilon},N}(t)\mathcal{D}_{f^{m,\epsilon},N}(t).
\end{eqnarray}
For $ \mathcal{S}_7$, we can get by applying \eqref{lemma-f-R-2}, \eqref{i-order-E-R-B}, \eqref{i-order-E-R-12}, and \eqref{i-weight-gamma-low-R} that
\begin{eqnarray}
  \mathcal{S}_7
 &\lesssim&
  \mathcal{E}_{f^{m,\epsilon},N_m,l_m,-\gamma}(t)\left\{\mathcal{D}_{f^{P,\epsilon},N_m+1, l_m+2-\frac{1}\gamma,-\gamma}(t)+\sum_{i=1}^{m-1}\mathcal{D}_{f^{i,\epsilon},N_m+1,l_m+2-\frac{1}\gamma,-\gamma}(t)\right\}\nonumber\\
  &&+\left\{\mathcal{E}_{f^{P,\epsilon},N_m,l_m,-\gamma}(t)+\sum_{1\leq j_1\leq m-1}\mathcal{E}_{f^{j_1,\epsilon},N_m,l_m,-\gamma}(t)\right\}\mathcal{D}_{f^{m,\epsilon},N_m,l_m,-\gamma}(t)\nonumber\\
    &&+\sum_{j_1+j_2\geq m,\atop 0<j_1,j_2<m}\mathcal{E}_{f^{j_1,\epsilon},N_m,l_m,-\gamma}(t)\mathcal{D}_{f^{j_2,\epsilon},N_m+1,l_m+2-\frac{1}\gamma,-\gamma}(t)
   +\eta\left\|w_{l_m,-\gamma}\partial^\alpha f^{m,\epsilon}\right\|_\nu^2.\nonumber
  \end{eqnarray}
For $\mathcal{S}_8$, one has from \eqref{1-w-typical-low-R}, \eqref{1-w-typical-low-1-R} ,\eqref{1-w-typical-low-non-R}, \eqref{1-w-typical-low-non-1-R}, and \eqref{1-w-typical-low-non-gamma-R} that
\begin{eqnarray}
\mathcal{S}_8
&\lesssim&\sum_{1\leq j_1\leq m-1}\left\| [E^{P,\epsilon},E^{j_1,\epsilon},\epsilon^m E^{m,\epsilon}]\right\|_{L^\infty_x}\left\|w_{l_m,-\gamma}\partial^\alpha f^{m,\epsilon}\langle v\rangle^{\frac12}\right\|^2\nonumber\\
 &&+\left\{\sum_{1\leq j_1\leq m-1}\left\|\nabla_x[E^{P,\epsilon},E^{j_1,\epsilon}]\right\|_{H^{N_m-1}_x}^{2}+\epsilon^{2m}\left\|\nabla_x
[E^{m,\epsilon},\epsilon B^{m,\epsilon}]\right\|^2_{H^{N_m^0-1}_x}
\right\}^{\frac{1}{\theta}}\nonumber\\
&&\times\sum_{0\leq k\leq 1,\atop k\leq j\leq |\alpha|}\mathcal{D}^{(j,k)}_{f^{m,\epsilon},j,\widetilde{\ell}_m-\gamma l_m+1+\frac\gamma2,1}(t)\nonumber\\
&&+\left\{\mathcal{E}_{f^{P,\epsilon},N_m}(t)+\sum_{1\leq j_1\leq m-1}\mathcal{E}_{f^{j_1,\epsilon},N_m}(t)\right\}
\mathcal{D}_{f^{m,\epsilon},N_m}(t)\nonumber\\
&&+\chi_{|\alpha|\geq N_m^0+1}\mathcal{E}_{f^{m,\epsilon},N_m}(t)\mathcal{E}^1_{f^{m,\epsilon},N^0_m,l_m +\frac32-\frac1\gamma,1}(t)\\
&&+\mathcal{E}_{f^{m,\epsilon},N_m,l_m,-\gamma}(t)\mathcal{D}_{f^{m,\epsilon},N_m,l_m,-\gamma}(t)
  +\eta\mathcal{D}_{f^{m,\epsilon},N_m,l_m,-\gamma}(t).\nonumber
  \end{eqnarray}
Finally, one has by integrations by parts that
\begin{eqnarray}
  \mathcal{S}_9=\left|\left(\epsilon q_0\left\{v\times \left\{B^{P}
+\sum_{j_1=1}^{m-1}\epsilon^{j_1}B^{j_1}\right\}\right\}\cdot\nabla_v\partial^\alpha f^{m,\epsilon},w^2_{l_m,-\gamma}\partial^\alpha f^{m,\epsilon}\right)\right|=0.
\end{eqnarray}
Plugging the estimates on $\mathcal{S}_6-\mathcal{S}_9$ into \eqref{0-weight-alpha-R} yields that
\begin{eqnarray}\label{0-weight-alpha-1}
&&  \frac{d}{dt}\|w_{l_m,-\gamma}\partial^\alpha f^{m,\epsilon}\|^2+\|w_{l_m,-\gamma}\partial^\alpha f^{m,\epsilon}\|^2_{\nu}+\frac{q\vartheta}{(1+t)^{1+\vartheta}}\|w_{l_m,-\gamma}\partial^\alpha f^{m,\epsilon}\langle v\rangle\|^2\nonumber\\
&\lesssim&\frac1{4\tilde{C}}\frac{d}{dt}\sum_{i=1}^3\left\|\left\langle \left\{w^2_{l_m,-\gamma}\left[v_i \mu^{1/2}\right]\right\}, \partial^\alpha  f_{m,+}-\partial^\alpha f_{m,-}\right\rangle\right\|^2\nonumber\\
 &&+\sum_{1\leq j_1\leq m-1}\left\| \left[E^{P,\epsilon},E^{j_1,\epsilon},\epsilon^m E^{m,\epsilon}\right]\right\|_{L^\infty_x}\left\|w_{l_m,-\gamma}\partial^\alpha f^{m,\epsilon}\langle v\rangle^{\frac12}\right\|^2\nonumber\\
 &&+\left\{\sum_{1\leq j_1\leq m-1}\left\|\nabla_x[E^{P,\epsilon},E^{j_1,\epsilon}]\right\|_{H^{N_m-1}_x}^{2}+\epsilon^{2m}\left\|\nabla_x
[E^{m,\epsilon},\epsilon B^{m,\epsilon}]\right\|^2_{H^{N_m^0-1}_x}
\right\}^{\frac{1}{\theta}}\nonumber\\
&&\times\sum_{0\leq k\leq 1,\atop k\leq j\leq |\alpha|}\mathcal{D}^{(j,k)}_{f^{m,\epsilon},j,\widetilde{\ell}_m-\gamma l_m+1+\frac\gamma2,1}(t)\nonumber\\
&&+
  \mathcal{E}_{f^{m,\epsilon},N_m,l_m,-\gamma}(t)\left\{\mathcal{D}_{f^{P,\epsilon},N_m+1, l_m+2-\frac{1}\gamma,-\gamma}(t)+\sum_{i=1}^{m-1}\mathcal{D}_{f^{i,\epsilon},N_m+1,l_m+2-\frac{1}\gamma,-\gamma}(t)\right\}\nonumber\\
  &&+\left\{\mathcal{E}_{f^{P,\epsilon},N_m,l_m,-\gamma}(t)+\sum_{1\leq j_1\leq m-1}\mathcal{E}_{f^{j_1,\epsilon},N_m,l_m,-\gamma}(t)\right\}\mathcal{D}_{f^{m,\epsilon},N_m,l_m,-\gamma}(t)\nonumber\\
    &&+\sum_{j_1+j_2\geq m,\atop 0<j_1,j_2<m}\mathcal{E}_{f^{j_1,\epsilon},N_m,l_m,-\gamma}(t)\mathcal{D}_{f^{j_2,\epsilon},N_m+1,l_m+2-\frac{1}\gamma,-\gamma}(t)
   \nonumber\\
  &&+\chi_{|\alpha|\geq N_m^0+1}\mathcal{E}_{f^{m,\epsilon},N_m}(t)\mathcal{E}^1_{f^{m,\epsilon},N^0_m,l_m +\frac32-\frac1\gamma,1}(t)+\mathcal{E}_{f^{m,\epsilon},N_m,l_m,-\gamma}(t)\mathcal{D}_{f^{m,\epsilon},N_m,l_m,-\gamma}(t)\nonumber\\
  &&
  +\left\|\nabla^{N_m+1}f^{m,\epsilon}\right\|_\nu^2 +\mathcal{D}_{f^{m,\epsilon},N_m}(t)+\eta\mathcal{D}_{f^{m,\epsilon},N_m,l_m,-\gamma}(t)\nonumber.
  \end{eqnarray}
\noindent{\bf Step 3:} Applying $\partial^\alpha_\beta$ with $|\alpha|+|\beta|\leq N_m,\ |\beta|\geq1$ to \eqref{mic-I-P-equation},
multiplying the resulting identity by $w^2_{l_m-|\beta|,-\gamma}$ $\partial^\alpha_\beta {\bf\{I-P\}}f^{m,\epsilon}$, and integrating the final result with respect to $v$ and $x$ over $\mathbb{R}^3_v\times\mathbb{R}^3_x$, one has
\begin{eqnarray}\label{R-weight-I-P}
&&  \frac{d}{dt}\|w_{l_m-|\beta|,-\gamma}\partial^\alpha_\beta{\bf \{I-P\}}f^{m,\epsilon}\|^2+\|w_{l_m-|\beta|,-\gamma}\partial^\alpha_\beta{\bf \{I-P\}}f^{m,\epsilon}\|^2_{\nu}\nonumber\\
&&+\frac{q\vartheta}{(1+t)^{1+\vartheta}}\|w_{l_m-|\beta|,-\gamma}\partial^\alpha_\beta{\bf \{I-P\}}f^{m,\epsilon}\langle v\rangle\|^2\nonumber\\
&\lesssim&\eta\|w_{l_m,-\gamma}\partial^\alpha{\bf\{I-P\}}f^{m,\epsilon}\|_\nu^2+\|\partial^\alpha{\bf\{I-P\}}f^{m,\epsilon}\|_\nu^2\nonumber\\
&&+\underbrace{\left|\left(\partial^\alpha_\beta\{-{\bf \{I-P\}}v\cdot\nabla_xf^{m,\epsilon}\},w^2_{l_m-|\beta|,-\gamma}\partial^\alpha_\beta{\bf \{I-P\}}f^{m,\epsilon}\right)\right|
}_{\mathcal{S}_{10}}\nonumber\\
&&+\underbrace{\left|\left(\partial^\alpha_\beta\left\{ \left\{E^{m,\epsilon}+\epsilon b^{f^{m,\epsilon}}\times \left\{B^{P}
+\sum_{j_1=1}^{m-1}\epsilon^{j_1}B^{j_1}\right\}\right\} \cdot v \mu^{1/2}q_1\right\},w^2_{l_m-|\beta|,-\gamma}\partial^\alpha_\beta{\bf \{I-P\}}f^{m,\epsilon}\right)\right|}_{\mathcal{S}_{11}}\nonumber\\
&&
+\underbrace{\left|\left(\partial^\alpha_\beta I_{mix,mic}(t),w^2_{l_m-|\beta|,-\gamma}\partial^\alpha_\beta{\bf \{I-P\}}f^{m,\epsilon}\right)\right|}_{\mathcal{S}_{12}}\nonumber\\
&&
+\underbrace{\left|\left(\partial^\alpha_\beta I_{pure,mic}(t),w^2_{l_m-|\beta|,-\gamma}\partial^\alpha_\beta{\bf \{I-P\}}f^{m,\epsilon}\right)\right|}_{\mathcal{S}_{13}}\nonumber\\
&&
+\underbrace{\left|\left(\partial^\alpha_\beta \left\{\epsilon q_0\left\{v\times \left\{B^{P}
+\sum_{j_1=1}^{m-1}\epsilon^{j_1}B^{j_1}\right\}\right\}\cdot\nabla_v\{{\bf I-P}\}f^{m,\epsilon}\right\},w^2_{l_m-|\beta|,-\gamma}\partial^\alpha_\beta{\bf \{I-P\}}f^{m,\epsilon}\right)\right|}_{\mathcal{S}_{14}}.
\end{eqnarray}
Here $I_{mix,mic}(t)$ and $I_{pure,mic}(t)$ are defined by \eqref{def-J-mic-mix-R} and \eqref{def-J-mic-pure-R}.

It is straightforward to compute that
\begin{eqnarray}
\mathcal{S}_{10}&\lesssim&\eta\left\|\nabla^{|\alpha|+1}_x f^{m,\epsilon}\right\|_\nu^2+\left\|\partial^{\alpha}{\bf\{I-P\}}f^{m,\epsilon}\right\|_\nu^2\nonumber\\
&&+\left\|w_{l_m,-\gamma}(\alpha+e_j,\beta-e_j)\partial^{\alpha+e_j}_{\beta-e_j}{\bf \{I-P\}}f^{m,\epsilon}\right\|^2_\nu+\eta
\left\|w_{l_m-|\beta|,-\gamma}\partial^{\alpha}_{\beta}{\bf \{I-P\}}f^{m,\epsilon}\right\|^2_\nu.
\end{eqnarray}
Using \eqref{E-R-nonhard-1} yields
\begin{eqnarray*}
  \mathcal{S}_{11} &\lesssim&\frac1{4C_\beta}\sum_{i=1}^3\frac{d}{dt}\left\|\left\langle \partial_{\beta}\left\{w^2_{l_m-|\beta|,-\gamma}\partial_\beta\left[v_i \mu^{1/2}\right]\right\}, \partial^\alpha{\{\bf I_+-P_+\}}f^{m,\epsilon}-\partial^\alpha{\{\bf I_--P_-\}}f^{m,\epsilon}\right\rangle\right\|^2\\ \nonumber
  &&+\left\|\nabla^{|\alpha|}{\bf\{I-P\}}f^{m,\epsilon}\right\|_\nu^2
  +\left\|\nabla^{|\alpha|+1}f^{m,\epsilon}\right\|^2_\nu\nonumber\\
  &&
  +\mathcal{E}_{f^{m,\epsilon},N_m}(t)\left\{\mathcal{D}_{f^{P,\epsilon},N_m}(t)+
  \sum_{1\leq j_1\leq m-1}\mathcal{D}_{f^{j_1,\epsilon},N_m}(t)+\mathcal{D}_{f^{m,\epsilon},N_m}(t)\right\}\\ \nonumber
  &&+\sum_{j_1+j_2\geq m,\atop 0<j_1,j_2<m}\mathcal{E}_{f^{j_1,\epsilon},N_m}(t)\mathcal{D}_{f^{j_2,\epsilon},N_m}(t)
  +\sum_{1\leq j_1\leq m-1}\mathcal{E}_{f^{j_1,\epsilon},N_m}(t)\mathcal{D}_{f^{m,\epsilon},N_m}(t).
\end{eqnarray*}
By employing  \eqref{lemma-f-R-3}, \eqref{micro-weight-E-R}, \eqref{micro-weight-E-R-12}, \eqref{2-w-typical-low-R}, \eqref{2-w-typical-low-1-R} and \eqref{micro-weight-gamma-low-R}, one has
\begin{eqnarray}
   \mathcal{S}_{12}
     &\lesssim&\sum_{j_1+j_2\geq m,\atop 0<j_1,j_2<m}\mathcal{E}_{f^{j_1,\epsilon},N_m,l_m,-\gamma}(t)\mathcal{D}_{f^{j_2,\epsilon},N_m+1, l_m+2-\frac{1}\gamma,-\gamma}(t)\nonumber\\
   &&+
  \mathcal{E}_{f^{m,\epsilon},N_m,l_m,-\gamma}(t)\left\{\mathcal{D}_{f^{P,\epsilon},N_m+1,l_m+2-\frac{1}\gamma,-\gamma}(t)+\sum_{i=1}^{m-1}\mathcal{D}_{f^{i,\epsilon},N_m+1,l_m+2-\frac{1}\gamma,-\gamma}(t)\right\}\nonumber\\
 &&+\left\{\mathcal{E}_{f^{P,\epsilon},N_m,l_m,-\gamma}(t)+\sum_{1\leq j_1\leq m-1}\mathcal{E}_{f^{j_1,\epsilon},N_m,l_m,-\gamma}(t)\right\}\mathcal{D}_{f^{m,\epsilon},N_m,l_m,-\gamma}(t)\\
  && +\eta\left\|w_{l_m-|\beta|,-\gamma}\partial^\alpha_\beta{\bf \{I-P\}} f^{m,\epsilon}\langle v\rangle^{\frac\gamma2}\right\|^2.\nonumber
\end{eqnarray}
As for $\mathcal{S}_{13}$, one has from \eqref{2-w-typical-low-R}, \eqref{2-w-typical-low-1-R}, \eqref{mic-non-I-P-1}, \eqref{mic-non-I-P-2}, and \eqref{2-w-typical-low-non-gamma-R} that
\begin{eqnarray}
  \mathcal{S}_{13}
    &\lesssim&\sum_{1\leq j_1\leq m-1}\left\|  [E^{P,\epsilon},E^{j_1,\epsilon},\epsilon^m E^{m,\epsilon}]\right\|_{L^\infty_x} \left\|w_{l_m-|\beta|,-\gamma}\partial^\alpha_\beta{\bf\{I-P\}} f^{m,\epsilon}\langle v\rangle^{\frac12}\right\|^2\nonumber\\
  &&+\chi_{|\beta|\geq 1}\sum_{|\beta|-1\leq k\leq \min\{|\beta|+1,|\alpha|+|\beta|\},\atop k\leq j\leq |\alpha|+|\beta|}\\ \nonumber
   &&\times\left\{\sum_{1\leq j_1\leq m-1}\left\|\nabla_x[E^{P,\epsilon},E^{j_1,\epsilon}]\right\|_{H^{N_m-1}_x}^{2}+\epsilon^{2m}\left\|\nabla_x
[E^{m,\epsilon},\epsilon B^{m,\epsilon}]\right\|^2_{H^{N_m^0-1}_x}
\right\}^{\frac{1}{\theta}}\nonumber\\
&&\times\mathcal{D}^{(j,k)}_{f^{m,\epsilon},j,\widetilde{\ell}_m-\gamma l_m+1+\frac\gamma2,1}(t)+\left\{\mathcal{E}_{f^{P,\epsilon},N_m}(t)+\sum_{1\leq j_1\leq m-1}\mathcal{E}_{f^{j_1,\epsilon},N_m}(t)\right\}\mathcal{D}_{f^{m,\epsilon},N_m}(t)\nonumber\\
&&+\chi_{|\alpha|+|\beta|\geq N_m^0+1}\mathcal{E}_{f^{m,\epsilon},N_m}(t) \mathcal{E}^1_{f^{m,\epsilon},N^0_m,l_m+\frac32-\frac1\gamma,1}(t)
\nonumber\\
&&+\mathcal{E}_{f^{m,\epsilon},N_m,l_m,-\gamma}(t)\mathcal{D}_{f^{m,\epsilon},N_m,l_m,-\gamma}(t)
  +\eta\mathcal{D}_{f^{m,\epsilon},N_m,l_m,-\gamma}(t)\nonumber.
  \end{eqnarray}
Finally, it is straightforward to compute that
  \begin{eqnarray}
    \mathcal{S}_{14}=0.
  \end{eqnarray}

Collecting the estimates on $\mathcal{S}_{10}-\mathcal{S}_{14}$ into \eqref{R-weight-I-P} yields that
\begin{eqnarray}\label{R-weight-I-P-1}
  &&  \frac{d}{dt}\|w_{l_m-|\beta|,-\gamma}\partial^\alpha_\beta{\bf \{I-P\}}f^{m,\epsilon}\|^2+\|w_{l_m-|\beta|,-\gamma}\partial^\alpha_\beta{\bf \{I-P\}}f^{m,\epsilon}\|^2_{\nu}\nonumber\\
&&+\frac{q\vartheta}{(1+t)^{1+\vartheta}}\|w_{l_m-|\beta|,-\gamma}\partial^\alpha_\beta{\bf \{I-P\}}f^{m,\epsilon}\langle v\rangle\|^2\nonumber\\
  &\lesssim&\frac1{4C_\beta}\sum_{i=1}^3\frac{d}{dt}\left\|\left\langle \partial_{\beta}\left\{w^2_{l_m-|\beta|,-\gamma}\partial_\beta\left[v_i \mu^{1/2}\right]\right\}, \partial^\alpha{\{\bf I_+-P_+\}}f^{m,\epsilon}-\partial^\alpha{\{\bf I_--P_-\}}f^{m,\epsilon}\right\rangle\right\|^2\nonumber\\
    &&+\sum_{1\leq j_1\leq m-1}\left\|  [E^{P,\epsilon},E^{j_1,\epsilon},\epsilon^m E^{m,\epsilon}]\right\|_{L^\infty_x}\left\|w_{l_m-|\beta|,-\gamma}\partial^\alpha_\beta{\bf\{I-P\}} f^{m,\epsilon}\langle v\rangle^{\frac12}\right\|^2\\
  &&+\chi_{|\beta|\geq 1}\left\{\sum_{1\leq j_1\leq m-1}\left\|\nabla_x[E^{P,\epsilon},E^{j_1,\epsilon}]\right\|_{H^{N_m-1}_x}^{2}+\epsilon^{2m}\left\|\nabla_x
[E^{m,\epsilon},\epsilon B^{m,\epsilon}]\right\|^2_{H^{N_m^0-1}_x}
\right\}^{\frac{1}{\theta}}\nonumber\\
&&\times\sum_{|\beta|-1\leq k\leq \min\{|\beta|+1,|\alpha|+|\beta|\},\atop k\leq j\leq |\alpha|+|\beta|}\mathcal{D}^{(j,k)}_{f^{m,\epsilon},j,\widetilde{\ell}_m-\gamma l_m+1+\frac\gamma2,1}(t)\nonumber\\
   &&+\sum_{j_1+j_2\geq m,\atop 0<j_1,j_2<m}\mathcal{E}_{f^{j_1,\epsilon},N_m,l_m,-\gamma}(t)\mathcal{D}_{f^{j_2,\epsilon},N_m+1, l_m+2-\frac{1}\gamma,-\gamma}(t)\nonumber\\
   &&+
  \mathcal{E}_{f^{m,\epsilon},N_m,l_m,-\gamma}(t)\left\{\mathcal{D}_{f^{P,\epsilon},N_m+1,l_m+2-\frac{1}\gamma,-\gamma}(t)+\sum_{i=1}^{m-1}\mathcal{D}_{f^{i,\epsilon},N_m+1,l_m+2-\frac{1}\gamma,-\gamma}(t)\right\}\nonumber\\
 &&+\left\{\mathcal{E}_{f^{P,\epsilon},N_m,l_m,-\gamma}(t)+\sum_{1\leq j_1\leq m-1}\mathcal{E}_{f^{j_1,\epsilon},N_m,l_m,-\gamma}(t)\right\}\mathcal{D}_{f^{m,\epsilon},N_m,l_m,-\gamma}(t) \nonumber\\
   &&+\mathcal{E}_{f^{m,\epsilon},N_m,l_m,-\gamma}(t)\mathcal{D}_{f^{m,\epsilon},N_m,l_m,-\gamma}(t)+\chi_{|\alpha|+|\beta|\geq N_m^0+1}\mathcal{E}_{f^{m,\epsilon},N_m}(t) \mathcal{E}^1_{f^{m,\epsilon},N^0_m,l_m+\frac32-\frac1\gamma,1}(t)
  \nonumber\\
  &&+\eta\|f^{m,\epsilon}\|_\nu^2+\mathcal{D}_{f^{m,\epsilon},N_m}(t) +\eta\mathcal{D}_{f^{m,\epsilon},N_m,l_m,-\gamma}(t)\nonumber.
  \end{eqnarray}

A proper linear combination of \eqref{mac-dis-f-R}, \eqref{f-R-spatial-x}, \eqref{00-weight-I-P-1-R}, \eqref{0-weight-alpha-1}, and \eqref{R-weight-I-P-1} implies that
\begin{eqnarray}
  &&  \frac{d}{dt}\sum_{|\alpha|+|\beta|\leq N_m}\|w_{l_m-|\beta|,-\gamma}\partial^\alpha_\beta f^{m,\epsilon}\|^2+\|\nabla_x{\bf P}f^{m,\epsilon}\|^2_{H^{N_m-1}_x}+\sum_{|\alpha|\leq N_m}\|\partial^\alpha{\bf\{I-P\}}f\|^2_{\nu}\nonumber\\
  &&+\sum_{|\alpha|+|\beta|\leq N_m}\left\{\|w_{l_m-|\beta|,-\gamma}\partial^\alpha_\beta{\bf \{I-P\}}f^{m,\epsilon}\|^2_{\nu}+\frac{q\vartheta}{(1+t)^{1+\vartheta}}\|w_{l_m-|\beta|,-\gamma}\partial^\alpha_\beta{\bf \{I-P\}}f^{m,\epsilon}\langle v\rangle\|^2\right\}\nonumber\\
   &\lesssim&\left\{\sum_{1\leq j_1\leq m-1}\left\|\nabla_x[E^{P,\epsilon},E^{j_1,\epsilon}]\right\|_{H^{N_m-1}_x}^{2}+\epsilon^{2m}\left\|\nabla_x
[E^{m,\epsilon},\epsilon B^{m,\epsilon}]\right\|^2_{H^{N_m^0-1}_x}
\right\}^{\frac{1}{\theta}}\nonumber\\
  &&\times\sum_{0\leq k\leq N_m,\atop k\leq j\leq N_m}\mathcal{D}^{(j,k)}_{f^{m,\epsilon},j,\widetilde{\ell}_m-\gamma l_m+1+\frac\gamma2,1}(t)\nonumber\\
 &&+\sum_{j_1+j_2\geq m,\atop 0<j_1,j_2<m}\mathcal{E}_{f^{j_1,\epsilon},N_m,l_m,-\gamma}(t)\mathcal{D}_{f^{j_2,\epsilon},N_m+1, l_m+2-\frac{1}\gamma,-\gamma}(t)\nonumber\\
   &&+
  \mathcal{E}_{f^{m,\epsilon},N_m,l_m,-\gamma}(t)\left\{\mathcal{D}_{f^{P,\epsilon},N_m+1,l_m+2-\frac{1}\gamma,-\gamma}(t)+\sum_{i=1}^{m-1}\mathcal{D}_{f^{i,\epsilon},N_m+1,l_m+2-\frac{1}\gamma,-\gamma}(t)\right\}\nonumber\\
  &&+\mathcal{E}_{f^{m,\epsilon},N_m}(t)\mathcal{E}^1_{f^{m,\epsilon},N^0_m,l_m+\frac32-\frac1\gamma}(t)
  +\|\nabla^{N_m+1}f^{m,\epsilon}\|_\nu^2,
  \end{eqnarray}
where we used the smallness of \eqref{Assume-f-R-N-R-low}.
Thus one gets \eqref{lemma-f-R-N-R}.
\end{proof}
\begin{remark}
Since $\left\|\nabla^{N_m+1}f^{m,\epsilon}(t)\right\|_\nu^2$  is absent in the dissipation rate functional $\mathcal{D}_{f^{m,\epsilon},N_m,l_m,-\gamma}(t)$, thus we attempt to use $\mathcal{D}_{f^{m,\epsilon},N_m+1}(t)$ to control $\left\|\nabla^{N_m+1}f^{m,\epsilon}(t)\right\|_\nu^2$  in the right hand side of \eqref{lemma-f-R-N-R}.
\end{remark}
\subsubsection{Lyapunov-type inequalities for $\mathcal{E}_{f^{m,\epsilon},N_m+1}(t)$}
For the estimate of $\mathcal{E}_{f^{m,\epsilon},N_m+1}(t)$, we can get that
\begin{lemma}\label{lemma-f-R-high-spatial}
Assume that
\begin{eqnarray}\label{assumption-2}
\max\left\{\sup_{0\leq\tau\leq t}\mathcal{E}_{f^{P,\epsilon},N_m+1,N_m+1,-\gamma}(\tau),\sup_{0\leq\tau\leq t}\mathcal{E}_{f^{j_1,\epsilon},N_m,N_m,-\gamma}(\tau),\sup_{0\leq\tau\leq t}\mathcal{E}_{f^{m,\epsilon},N_m,N_m+\frac12-\frac1\gamma,-\gamma}(\tau)\right\}
\end{eqnarray}
 is sufficiently small, where $1\leq j_1\leq m-1$, then one has
  \begin{eqnarray}\label{lemma-spatial-f-R-N+1-R}
    &&\frac{d}{dt}\mathcal{E}_{f^{m,\epsilon},N_m+1}(t)+\mathcal{D}_{f^{m,\epsilon},N_m+1}(t)\nonumber\\
  &\lesssim&
  \sum_{1\leq j_1\leq m-1}\left\|\nabla_x\left[E^{P,\epsilon}(t), E^{j_1,\epsilon}(t), \epsilon^m E^{m,\epsilon}(t),\epsilon^{m+1} B^{m,\epsilon}(t)\right]\right\|^2_{L^\infty_x} \left\|\nabla_v\nabla^{N_m}_x{\bf\{I-P\}}f^{m,\epsilon}(t)\langle v\rangle^{1-\frac\gamma2}\right\|^2\\
  &&+\sum_{1\leq j_1\leq m-1}\left\|\left[\epsilon ^m E^{m,\epsilon}(t), E^{j_1,\epsilon}(t), E^{P,\epsilon}(t)\right]\right\|^2_{L^\infty_x} \left\|\nabla^{N_m+1}_x{\bf\{I-P\}}f^{m,\epsilon}(t)\langle v\rangle^{1-\frac\gamma2}\right\|^2\nonumber\\
  &&+\sum_{j_1+j_2\geq m,\atop 0<j_1,j_2<m}\left\{\mathcal{E}_{f^{j_1,\epsilon},N_m+1,N_m+1,-\gamma}(t)
  \mathcal{D}_{f^{j_2,\epsilon},N_m+2,N_m+3-\frac1\gamma,-\gamma}(t)\right\}
  \nonumber\\
   &&+\mathcal{E}_{f^{m,\epsilon},N_m,N_m+\frac12-\frac1\gamma,-\gamma}(t)
  \left\{\mathcal{D}_{f^{P,\epsilon},N_m+1}(t)+\sum_{1\leq j_1\leq m-1}\mathcal{D}_{f^{j_1,\epsilon},N_m+1}(t)\right\}\nonumber\\
   &&+
  \mathcal{E}_{f^{m,\epsilon},N_m+1}(t)\left\{\mathcal{D}_{f^{P,\epsilon},N_m+2,N_m+3-\frac1\gamma,-\gamma}(t)
  +\sum_{i=1}^{m-1}\mathcal{D}_{f^{i,\epsilon},N_m+2,N_m+3-\frac1\gamma,-\gamma}(t)
  +\mathcal{D}_{f^{m,\epsilon},3,4-\frac1\gamma,-\gamma}(t)\right\}\nonumber.
    \end{eqnarray}
\end{lemma}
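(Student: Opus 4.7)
The plan is to derive this pure-spatial estimate by adapting the scheme of Lemma \ref{lemma-f-m}, but restricting to pure spatial derivatives at order up to $N_m+1$ (i.e.\ no velocity derivatives and no $v$-weights at the top level). I first invoke the macroscopic estimate \eqref{mac-dis-f-R} with $N=N_m+1$, producing a functional $G_{f^{m,\epsilon},E^{m,\epsilon},B^{m,\epsilon}}(t)\lesssim \mathcal{E}_{f^{m,\epsilon},N_m+1}(t)$ whose time derivative controls $\|\nabla_x\mathbf{P}f^{m,\epsilon}\|^2_{H^{N_m}_x}$, $\|a^{f^{m,\epsilon}}_+-a^{f^{m,\epsilon}}_-\|^2$, and the degenerate electromagnetic dissipation, modulo $\sum_{|\alpha|\le N_m+1}\|\partial^\alpha\{\mathbf{I-P}\}f^{m,\epsilon}\|^2_\nu$ and the standard triple products $\mathcal{E}_{f^{P,\epsilon}}\mathcal{D}_{f^{m,\epsilon}}$, $\mathcal{E}_{f^{j_1}}\mathcal{D}_{f^{j_2}}$.

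Next I apply $\partial^\alpha$ with $|\alpha|\leq N_m+1$ to \eqref{mic-R-equation}, take the $L^2_{x,v}$ inner product against $\partial^\alpha f^{m,\epsilon}$, and use the coercivity \eqref{coercive-estimates} to extract $\|\partial^\alpha\{\mathbf{I-P}\}f^{m,\epsilon}\|^2_\nu$. The Lorentz piece $\epsilon q_0\{v\times(B^P+\sum\epsilon^{j_1}B^{j_1})\}\cdot\nabla_v f^{m,\epsilon}$ integrates to zero, and the transport term contributes nothing after integration by parts. The linear coupling $\partial^\alpha\{E^{m,\epsilon}\cdot v\mu^{1/2}q_1\}$ paired with $\partial^\alpha f^{m,\epsilon}$ matches, via the Maxwell system \eqref{f-r-s-e-b}, with $\tfrac{d}{dt}\|\partial^\alpha[E^{m,\epsilon},B^{m,\epsilon}]\|^2$; this is the crucial point where no $\epsilon$-dependence is introduced into the top-order estimate, since at level $|\alpha|\leq N_m+1$ with pure spatial derivatives one does not need the microscopic-projection trick \eqref{E-R-nonhard-1}.

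For the RHS I split into $I_{mix}(t)$ and $I_{pure}(t)$ and invoke the already-established interaction lemmas at level $n=N_m+1$. Specifically, $E^{m,\epsilon}\cdot v f^{P,\epsilon}$ is handled by \eqref{lemma-f-R-1}; the $E^{m,\epsilon}\cdot\nabla_v f^{P/j,\epsilon}$, $E^{j_1,\epsilon}\cdot v\,f^{j_2,\epsilon}$, $E^{j_1,\epsilon}\cdot\nabla_v f^{j_2,\epsilon}$, and $\epsilon(v\times B^{m,\epsilon})\cdot\nabla_v\{f^{P,\epsilon}+\sum\epsilon^i f^{i,\epsilon}\}$ pieces by \eqref{0-order-E-R-B-R}--\eqref{0-order-E-R-12}; the cross-collision terms $\Gamma(f^{P,\epsilon},f^{m,\epsilon})$, $\Gamma(f^{m,\epsilon},f^{P,\epsilon})$, $\Gamma(f^{j_1,\epsilon},f^{j_2,\epsilon})$ by the nonlinear collision estimate \eqref{0-order-gamma-R}; the self-coupling $E^{P,\epsilon}\cdot v f^{m,\epsilon}$, $E^{P,\epsilon}\cdot\nabla_v f^{m,\epsilon}$ and the $E^{j_1,\epsilon}$ analogues by \eqref{0-w-typical-R} and \eqref{1-w-typical-R}, which produce the explicit $L^\infty_x$ norms of $\nabla_x E^{P,\epsilon}$, $\nabla_x E^{j_1,\epsilon}$, $E^{P,\epsilon}$, $E^{j_1,\epsilon}$ appearing in \eqref{lemma-spatial-f-R-N+1-R}; and finally the pure-remainder pieces $\epsilon^m E^{m,\epsilon}\cdot v f^{m,\epsilon}$, $\epsilon^m(E^{m,\epsilon}+\epsilon v\times B^{m,\epsilon})\cdot\nabla_v f^{m,\epsilon}$, $\epsilon^m\Gamma(f^{m,\epsilon},f^{m,\epsilon})$ by \eqref{1-typical-low-non-R}, \eqref{1-typical-low-non-1-R}, and the corresponding $\Gamma$-estimate, producing the $L^\infty_x$ products of $\epsilon^m E^{m,\epsilon}$ and $\epsilon^{m+1}B^{m,\epsilon}$ on the right.

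The main obstacle is the top-order case $|\alpha|=N_m+1$: when all $N_m+1$ derivatives fall on $f^{m,\epsilon}$, there is no velocity derivative left on the companion factor and the pairing $(\partial^\alpha\{E^{m,\epsilon}\cdot v f^{P,\epsilon}\},\partial^\alpha f^{m,\epsilon})$ would require a $v$-weighted control on $\partial^\alpha f^{m,\epsilon}$, which is unavailable in $\mathcal{E}_{f^{m,\epsilon},N_m+1}(t)$. The resolution is to borrow one derivative of regularity from the lower-order weighted energy $\mathcal{E}_{f^{m,\epsilon},N_m,N_m+\frac12-\frac1\gamma,-\gamma}(t)$ via a Sobolev interpolation of the $L^6_x$--$L^3_x$--$L^2_x$ type, which explains precisely that factor on the right of \eqref{lemma-spatial-f-R-N+1-R} and why the smallness \eqref{assumption-2} is imposed at weight $N_m+\tfrac12-\tfrac1\gamma$. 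After absorbing all $\eta$-small contributions into the left-hand side using the smallness in \eqref{assumption-2}, a suitable linear combination of the macroscopic estimate and the microscopic estimates above gives \eqref{lemma-spatial-f-R-N+1-R}.
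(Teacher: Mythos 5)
Your proposal is correct and follows essentially the same route as the paper, which simply reruns the spatial-derivative energy estimate \eqref{f-R-spatial-x} with $N_m$ replaced by $N_m+1$ (invoking the same interaction lemmas \eqref{lemma-f-R-1}, \eqref{0-order-E-R-B-R}, \eqref{0-order-E-R-12}, \eqref{0-order-gamma-R}, \eqref{0-w-typical-R}, \eqref{1-w-typical-R}, \eqref{1-typical-low-non-R}, \eqref{1-typical-low-non-1-R}, \eqref{1-typical-low-non-gamma-R} at level $n=N_m+1$), absorbs the small factors via \eqref{assumption-2}, and combines with \eqref{mac-dis-f-R} at $N=N_m+1$. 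One small misattribution in your explanation of the top-order obstacle: the pairing $(\partial^\alpha\{E^{m,\epsilon}\cdot v f^{P,\epsilon}\},\partial^\alpha f^{m,\epsilon})$ is not the problem, since there all $|\alpha|$ derivatives and the $\langle v\rangle$-weight fall on $E^{m,\epsilon}$ or $f^{P,\epsilon}$; the factor $\mathcal{E}_{f^{m,\epsilon},N_m,N_m+\frac12-\frac1\gamma,-\gamma}(t)$ actually arises from the self-coupling terms $E^{P,\epsilon}\cdot v f^{m,\epsilon}$ and $E^{j_1,\epsilon}\cdot v f^{m,\epsilon}$ via \eqref{0-w-typical-R} and \eqref{1-w-typical-R} at $n=N_m+1$, where the unweighted top-order $\partial^\alpha f^{m,\epsilon}$ cannot swallow the extra $|v|$ and the velocity weight is instead shifted onto a lower-order ($n-1=N_m$) piece of $f^{m,\epsilon}$, hence the appearance of that particular weighted energy and the form of the smallness assumption \eqref{assumption-2}.
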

\begin{proof} In fact, if we replace $N_m$ in the estimate \eqref{f-R-spatial-x} by $N_m+1$, one can deduce that
 \begin{eqnarray}\label{spatial-x-f-R-N+1}
   &&\frac{d}{dt}\sum_{|\alpha|\leq N_m+1}\left\|\left[\partial^\alpha f^{m,\epsilon}, \partial^\alpha E^{m,\epsilon},\partial^\alpha B^{m,\epsilon}\right](t)\right\|^2+\sum_{|\alpha|\leq N_m+1}\left\|\partial^\alpha\{{\bf I-P}\}f^{m,\epsilon}(t)\right\|_\nu^2\nonumber\\
   &\lesssim&
  \sum_{j_1+j_2\geq m,\atop 0<j_1,j_2<m}\left\{\mathcal{E}_{f^{j_1,\epsilon},N_m+1,N_m+1,-\gamma}(t)
  \mathcal{D}_{f^{j_2,\epsilon},N_m+2,N_m+3-\frac1\gamma,-\gamma}(t)\right\}
  \nonumber\\
   &&+\mathcal{E}_{f^{m,\epsilon},N_m,N_m+\frac12-\frac1\gamma,-\gamma}(t)
  \left\{\mathcal{D}_{f^{P,\epsilon},N_m+1}(t)+\sum_{1\leq j_1\leq m-1}\mathcal{D}_{f^{j_1,\epsilon},N_m+1}(t)\right\}\nonumber\\
   &&+
  \mathcal{E}_{f^{m,\epsilon},N_m+1}(t)\nonumber\\
  &&\times\left\{\mathcal{D}_{f^{P,\epsilon},N_m+2,N_m+3-\frac1\gamma,-\gamma}(t)
  +\sum_{i=1}^{m-1}\mathcal{D}_{f^{i,\epsilon},N_m+2,N_m+3-\frac1\gamma,-\gamma}(t)
  +\mathcal{D}_{f^{m,\epsilon},3,4-\frac1\gamma,-\gamma}(t)\right\}\nonumber\\
  &&+\left\{\mathcal{E}_{f^{P,\epsilon},N_m+1,N_m+1,-\gamma}(t)+\sum_{1\leq j_1\leq m-1}\mathcal{E}_{f^{j_1,\epsilon},N_m,N_m,-\gamma}(t)
  +\mathcal{E}_{f^{m,\epsilon},N_m,N_m+\frac12-\frac1\gamma,-\gamma}(t)\right\}\nonumber\\
  &&\times
 \mathcal{D}_{f^{m,\epsilon},N_m+1}(t)\nonumber\\
 &&+ \sum_{1\leq j_1\leq m-1}\|\nabla_x[ E^{j_1,\epsilon},E^{P,\epsilon},\epsilon^m E^{m,\epsilon},\epsilon^{m+1} B^{m,\epsilon}]\|^2_{L^\infty_x}\|\nabla_v\nabla^{N_m}_x{\bf\{I-P\}}f^{m,\epsilon}\langle v\rangle^{1-\frac\gamma2}\|^2\\
  &&+\sum_{1\leq j_1\leq m-1}\| [\epsilon ^m E^{m,\epsilon}, E^{j_1,\epsilon},E^{P,\epsilon}]\|^2_{L^\infty_x}\|\nabla^{N_m+1}_x{\bf\{I-P\}}f^{m,\epsilon}\langle v\rangle^{1-\frac\gamma2}\|^2+\eta\mathcal{D}_{f^{m,\epsilon},N_m+1}(t),\nonumber
\end{eqnarray}
where we use the smallness of \eqref{assumption-2}.

We can deduce \eqref{lemma-spatial-f-R-N+1-R} by a proper linear combination of \eqref{spatial-x-f-R-N+1} and \eqref{mac-dis-f-R} with $N=N_m+1$, thus the proof is complete.
  \end{proof}
 \subsection{Lyapunov-type inequalities for $\overline{\mathcal{E}}_{f^{m,\epsilon},N^0_m,\ell,-\gamma}(t) $} To yield the desired Lyapunov-type inequalities for the energy functional $\overline{\mathcal{E}}_{f^{m,\epsilon},N^0_m,\ell,-\gamma}(t)$, we first need first to perform some energy estimates with negative Sobolev space.

  \subsubsection{Energy estimates with negative Sobolev space}
By applying similar argument used to prove Lemma 3.2 in \cite[page 3727]{Lei-Zhao-JFA-2014} and Lemma 3.3 in \cite[page 3731]{Lei-Zhao-JFA-2014}, one has
\begin{lemma}\label{lemma3.3-f-r}
Let $\varrho\in \left[\frac12, \frac32\right)$, there exists an interactive functional $G^{-\varrho}_{f^{m,\epsilon},E^{m,\epsilon},B^{m,\epsilon}}(t)$ satisfying
\begin{equation}\label{G_{E,B}}
G^{-\varrho}_{f^{m,\epsilon},E^{m,\epsilon},B^{m,\epsilon}}(t)\lesssim \left\|\Lambda^{1-\varrho}\left[f^{m,\epsilon}(t), E^{m,\epsilon}(t), B^{m,\epsilon}(t)\right]\right\|^2
+\left\|\Lambda^{-\varrho}\left[f^{m,\epsilon}(t), E^{m,\epsilon}(t), B^{m,\epsilon}(t)\right]\right\|^2
\end{equation}
such that
\begin{eqnarray}\label{EB-s-f-R}
&&\frac{d}{dt}G^{-\varrho}_{f^{m,\epsilon},E^{m,\epsilon},B^{m,\epsilon}}(t)
+\left\|\Lambda^{1-\varrho}\left[{\bf P}f^{m,\epsilon}(t), \epsilon E^{m,\epsilon}(t),\epsilon B^{m,\epsilon}(t)\right]\right\|^2 +\left\|\Lambda^{-\varrho}\left(a^{f^{m,\epsilon}}_+(t) -a^{f^{m,\epsilon}}_-(t)\right)\right\|^2\nonumber\\
&\lesssim&\mathcal{\overline{E}}_{f^{P,\epsilon},2}(t)\mathcal{\overline{D}}_{f^{m,\epsilon},2}(t)+\mathcal{\overline{E}}_{f^{m,\epsilon},2}(t)\mathcal{\overline{D}}_{f^{m,\epsilon},2}(t)
+\sum_{1\leq j_1\leq m-1}\mathcal{\overline{E}}_{f^{j_1,\epsilon},2}(t)\mathcal{\overline{D}}_{f^{m,\epsilon},2}(t)\nonumber\\
&&+\sum_{j_1+j_2\geq m,\atop 0<j_1,j_2<m}\mathcal{\overline{E}}_{f^{j_1,\epsilon},2}(t) \mathcal{\overline{D}}_{f^{j_2,\epsilon},2}(t) +\left\|\Lambda^{-\varrho}\{{\bf I-P}\}f^{m,\epsilon}(t)\right\|_{H^1_xL^2_\nu}^2
\end{eqnarray}
holds for any $0\leq t\leq T$.
\end{lemma}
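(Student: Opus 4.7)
The plan is to adapt the interactive-functional negative Sobolev scheme of \cite{Lei-Zhao-JFA-2014} to the $\epsilon$-scaled remainder system \eqref{f-R-vector}--\eqref{f-r-s-e-b}, keeping careful track of the factors of $\epsilon$ in the Maxwell equations. The crucial observation is that the Ampère and Faraday laws for the remainder carry $\epsilon\partial_t$ rather than $\partial_t$, so the interactive functionals can only reproduce dissipation of $\|\Lambda^{1-\varrho}[\epsilon E^{m,\epsilon},\epsilon B^{m,\epsilon}]\|^2$ (not $\|\Lambda^{1-\varrho}[E^{m,\epsilon},B^{m,\epsilon}]\|^2$), which is exactly what \eqref{EB-s-f-R} states. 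First I would apply $\Lambda^{-\varrho}$ to \eqref{f-R-vector} and to \eqref{f-r-s-e-b}, test against $\Lambda^{-\varrho}f^{m,\epsilon}$, $\Lambda^{-\varrho}\epsilon E^{m,\epsilon}$ and $\Lambda^{-\varrho}\epsilon B^{m,\epsilon}$ respectively, and add them; the linear cross terms from $E^{m,\epsilon}\cdot v\mu^{1/2}q_1$ and the Ampère/Faraday equations cancel in the usual way and the coercivity \eqref{coercive-estimates} yields
\[
\tfrac12\tfrac{d}{dt}\bigl\|\Lambda^{-\varrho}[f^{m,\epsilon},\epsilon E^{m,\epsilon},\epsilon B^{m,\epsilon}]\bigr\|^2+\sigma_0\bigl\|\Lambda^{-\varrho}\{{\bf I{-}P}\}f^{m,\epsilon}\bigr\|_\nu^2\lesssim(\text{nonlinear}),
\]
which furnishes the basic microscopic dissipation in $\dot H^{-\varrho}$.

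The second and central step is to extract the dissipation of $\|\Lambda^{1-\varrho}[\epsilon E^{m,\epsilon},\epsilon B^{m,\epsilon}]\|^2$ and $\|\Lambda^{-\varrho}(a_+^{f^{m,\epsilon}}-a_-^{f^{m,\epsilon}})\|^2$. I would introduce
\[
\mathcal{I}_E(t):=-\bigl(\Lambda^{-\varrho}\epsilon G^{m,\epsilon},\Lambda^{-\varrho}\epsilon E^{m,\epsilon}\bigr),\qquad
\mathcal{I}_B(t):=-\bigl(\Lambda^{-\varrho}\epsilon E^{m,\epsilon},\Lambda^{-\varrho}\epsilon^2\nabla_x\times B^{m,\epsilon}\bigr).
\]
Differentiating $\mathcal{I}_E$ and substituting the $\partial_t G^{m,\epsilon}$-identity from \eqref{Macro-f-R-equation1}---which contains $-2E^{m,\epsilon}$, $\nabla_x(a_+^{f^{m,\epsilon}}-a_-^{f^{m,\epsilon}})$, $\nabla_x\cdot\mathbb{A}(\{{\bf I{-}P}\}f^{m,\epsilon}\cdot q_1)$ and nonlinear sources---together with the Ampère law $\epsilon\partial_tE^{m,\epsilon}=\nabla_x\times B^{m,\epsilon}-\epsilon G^{m,\epsilon}$ yields, after Cauchy--Schwarz with small parameters,
\[
\tfrac{d}{dt}\mathcal{I}_E+2\|\Lambda^{1-\varrho}\epsilon E^{m,\epsilon}\|^2+c\|\Lambda^{-\varrho}(a_+^{f^{m,\epsilon}}-a_-^{f^{m,\epsilon}})\|^2\lesssim\|\Lambda^{1-\varrho}\epsilon B^{m,\epsilon}\|^2+\|\Lambda^{-\varrho}\{{\bf I{-}P}\}f^{m,\epsilon}\|_{H^1_xL^2_\nu}^2+(\text{NL}).
\]
A parallel computation for $\mathcal{I}_B$, using the Faraday law $\epsilon\partial_tB^{m,\epsilon}+\nabla_x\times E^{m,\epsilon}=0$, generates $\|\Lambda^{1-\varrho}\epsilon B^{m,\epsilon}\|^2$ controlled by $\|\Lambda^{1-\varrho}\epsilon E^{m,\epsilon}\|^2$ plus microscopic remainders. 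Dissipation of $\|\Lambda^{1-\varrho}{\bf P}f^{m,\epsilon}\|^2$ is produced analogously by interactive functionals built from the $\partial_t(a_\pm,b,c)$ equations in \eqref{Macro-f-R-equation1}. I would then set
\[
G^{-\varrho}_{f^{m,\epsilon},E^{m,\epsilon},B^{m,\epsilon}}(t):=\tfrac12\|\Lambda^{-\varrho}[f^{m,\epsilon},\epsilon E^{m,\epsilon},\epsilon B^{m,\epsilon}]\|^2+\kappa_1\mathcal{I}_E+\kappa_2\mathcal{I}_B+(\text{macro interaction terms})
\]
with $0<\kappa_2\ll\kappa_1\ll 1$ chosen so that all error terms are absorbed; Young's inequality then verifies the upper bound \eqref{G_{E,B}}.

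The nonlinear contributions---cross terms $\Gamma(f^{P,\epsilon},f^{m,\epsilon})$, $\Gamma(f^{j_1,\epsilon},f^{j_2,\epsilon})$, the forcing terms $E\cdot\nabla_v f$, $E\cdot v f$ and the Lorentz force $\epsilon(v\times B^{m,\epsilon})\cdot\nabla_v f$---are estimated by the Hardy--Littlewood--Sobolev inequality in the form $\|\Lambda^{-\varrho}(hg)\|_{L^2_x}\lesssim\|h\|_{L^{3/\varrho}_x}\|g\|_{L^2_x}$ valid for $\varrho\in[\tfrac12,\tfrac32)$, combined with the Sobolev embedding $\|h\|_{L^{3/\varrho}_x}\lesssim\|h\|_{H^{3/2-\varrho}_x}\lesssim\|h\|_{H^2_x}$; this absorbs every nonlinear term into the products $\overline{\mathcal{E}}\cdot\overline{\mathcal{D}}$ on the right-hand side of \eqref{EB-s-f-R}. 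The main obstacle I expect is the velocity-growth contributions of the form $\Lambda^{-\varrho}(E^{m,\epsilon}\cdot v\,f^{P,\epsilon})$ and the Lorentz-force term with the $v$-factor, for which the naive HLS bound fails: I plan to split each such factor into $\bf P$ and $\{{\bf I{-}P}\}$ components, using the Gaussian weight $\mu^{1/2}$ inside $\bf P$ to kill the $|v|$-growth for the macroscopic piece, and routing the microscopic piece into the $\|\Lambda^{-\varrho}\{{\bf I{-}P}\}f^{m,\epsilon}\|_{H^1_xL^2_\nu}^2$-term explicitly carried on the right-hand side of \eqref{EB-s-f-R}, which was placed there precisely to accommodate this obstruction.
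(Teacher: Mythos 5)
The paper itself offers no self-contained proof here; it simply points to Lemmas 3.2 and 3.3 of \cite{Lei-Zhao-JFA-2014}, so only your sketch can be assessed on its own terms. Your overall framework --- negative-Sobolev interactive functionals, careful accounting for the $\epsilon$-powers inherited from the scaled Amp\`{e}re/Faraday laws, and Hardy--Littlewood--Sobolev/Sobolev estimates on the nonlinearities --- is the right one and matches the spirit of the positive-Sobolev argument in Lemma \ref{lemma-E-B-R-1}.

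There is, however, a concrete computational gap in the $\mathcal{I}_E$ step. With $\mathcal{I}_E=-\bigl(\Lambda^{-\varrho}\epsilon G^{m,\epsilon},\Lambda^{-\varrho}\epsilon E^{m,\epsilon}\bigr)$ and the identity $\partial_tG^{m,\epsilon}=2E^{m,\epsilon}-\nabla_x\bigl(a^{f^{m,\epsilon}}_+-a^{f^{m,\epsilon}}_-\bigr)-\nabla_x\cdot\mathbb{A}\bigl(\{{\bf I{-}P}\}f^{m,\epsilon}\cdot q_1\bigr)+\cdots$, the pairing $-\bigl(\Lambda^{-\varrho}\epsilon\partial_tG^{m,\epsilon},\Lambda^{-\varrho}\epsilon E^{m,\epsilon}\bigr)$ yields the zeroth-order dissipation $2\|\Lambda^{-\varrho}\epsilon E^{m,\epsilon}\|^2$, not the $\|\Lambda^{1-\varrho}\epsilon E^{m,\epsilon}\|^2$ that the lemma needs and that must absorb the $\epsilon^2\|\Lambda^{-\varrho}\nabla_x\times E^{m,\epsilon}\|^2$ error term produced by $\mathcal{I}_B$; for that you must instead take the one-derivative-higher functional $-\bigl(\Lambda^{1-\varrho}\epsilon G^{m,\epsilon},\Lambda^{1-\varrho}\epsilon E^{m,\epsilon}\bigr)$. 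More importantly, the cross term from $-\nabla_x(a_+-a_-)$ in $\mathcal{I}_E$ comes out as $-\epsilon^2\|\Lambda^{-\varrho}(a^{f^{m,\epsilon}}_+-a^{f^{m,\epsilon}}_-)\|^2$, so $\mathcal{I}_E$ cannot deliver the $\epsilon$-independent coefficient on $\|\Lambda^{-\varrho}(a_+-a_-)\|^2$ appearing in \eqref{EB-s-f-R}. That piece has to come from a \emph{separate} macroscopic interactive functional: pairing the $\partial_tG^{m,\epsilon}$ identity against $\Lambda^{-2\varrho}\nabla_x(a^{f^{m,\epsilon}}_+-a^{f^{m,\epsilon}}_-)$ turns $-2E^{m,\epsilon}$ into $+2\|\Lambda^{-\varrho}(a_+-a_-)\|^2$ via the $\epsilon$-free Poisson constraint $\nabla_x\cdot E^{m,\epsilon}=a^{f^{m,\epsilon}}_+-a^{f^{m,\epsilon}}_-$. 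You do acknowledge additional ``macro interaction terms'' for $\|\Lambda^{1-\varrho}{\bf P}f^{m,\epsilon}\|^2$, but the attribution of the $a_+-a_-$ dissipation to $\mathcal{I}_E$ is where the sketch would break down; with that reassignment and the derivative bump on $\mathcal{I}_E$, the remainder of the plan is sound.
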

Moreover, by applying the argument used to prove Lemma 3.1 in \cite[page 3724]{Lei-Zhao-JFA-2014}, one has
\begin{lemma}\label{Lemma4.1-f-R} Under the assumptions stated above, we have that
\begin{eqnarray}\label{Lemma4.1-1-f-R}
&&\frac{d}{dt}\left\|\Lambda^{-\varrho}(f^{m,\epsilon},E^{m,\epsilon},B^{m,\epsilon})\right\|^2+\left\|\Lambda^{-\varrho}\{{\bf I-P}\}f^{m,\epsilon}\right\|_{\nu}^2\nonumber\\
&\lesssim&\frac d{dt}\left\{\frac12\mathfrak{R}\left(\mathcal{F}\left[q_0G_m\cdot vf^{P,\epsilon}\right]\mid |\xi|^{-2{\varrho}}{\mathcal{F}[{\bf P}f^{m,\epsilon}]}\right)-\mathfrak{R}\left(\mathcal{F}[q_0G_m\cdot\nabla_{ v  }f^{P,\epsilon}]\mid |\xi|^{-2{\varrho}}{\mathcal{F}({\bf P}f^{m,\epsilon})}\right)\right\}\nonumber\\
 &&+\mathcal{{E}}_{f^{P,\epsilon},2,3-\frac1\gamma,-\gamma}(t)\mathcal{\overline{D}}_{f^{m,\epsilon},2,3-\frac1\gamma,-\gamma}(t)+\mathcal{\overline{E}}_{f^{m,\epsilon},2,3-\frac1\gamma,-\gamma}(t)\mathcal{\overline{D}}_{f^{P,\epsilon},2,3-\frac1\gamma,-\gamma}(t)
\\
&&+\sum_{1\leq j_1\leq m-1}\left\{\mathcal{\overline{E}}_{f^{j_1,\epsilon},2,3-\frac1\gamma,-\gamma}(t)\mathcal{\overline{D}}_{f^{m,\epsilon},2,3-\frac1\gamma,-\gamma}(t)+
\mathcal{\overline{E}}_{f^{m,\epsilon},2,3-\frac1\gamma,-\gamma}(t)\mathcal{\overline{D}}_{f^{j_1,\epsilon},2,3-\frac1\gamma,-\gamma}(t)\right\}\nonumber\\
&&+\sum_{j_1+j_2\geq m,\atop 0<j_1,j_2<m}\mathcal{\overline{E}}_{f^{j_1,\epsilon},2,3-\frac1\gamma,-\gamma}(t)\mathcal{\overline{D}}_{f^{j_2,\epsilon},2,3-\frac1\gamma,-\gamma}(t)
+\mathcal{\overline{E}}_{f^{m,\epsilon},2,3-\frac1\gamma,-\gamma}(t) \mathcal{\overline{D}}_{f^{m,\epsilon},2,3-\frac1\gamma,-\gamma}(t).\nonumber
\end{eqnarray}
\end{lemma}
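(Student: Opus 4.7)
The plan is to derive a modified energy identity in the negative Sobolev space $\dot H^{-\varrho}$ by testing the remainder system \eqref{f-R-vector}--\eqref{f-r-s-e-b} against $\Lambda^{-2\varrho}$-weighted copies of $f^{m,\epsilon}$, $E^{m,\epsilon}$, $B^{m,\epsilon}$, and then to absorb the genuinely ``bad'' cross terms into the left-hand side by writing them as total time derivatives. Concretely, I would take the Fourier transform of \eqref{f-R-vector}, pair it with $|\xi|^{-2\varrho}\overline{\mathcal{F}[f^{m,\epsilon}]}$, take real parts, and integrate over $\xi$ and $v$; simultaneously, take the Fourier transform of the Maxwell-type equations \eqref{f-r-s-e-b}, pair with $|\xi|^{-2\varrho}\overline{\mathcal{F}[E^{m,\epsilon}]}$ and $|\xi|^{-2\varrho}\overline{\mathcal{F}[B^{m,\epsilon}]}$, and add everything up. The linear term $E^{m,\epsilon}\cdot v\mu^{1/2}q_1$ will cancel with the current coming from $\epsilon\partial_t E^{m,\epsilon}-\nabla_x\times B^{m,\epsilon}$, and the coupling between $\nabla_x\times E^{m,\epsilon}$ and $\epsilon\partial_t B^{m,\epsilon}$ will also cancel, producing the clean time derivative $\tfrac{d}{dt}\|\Lambda^{-\varrho}[f^{m,\epsilon},E^{m,\epsilon},B^{m,\epsilon}]\|^2$ on the left, together with the coercive term $\|\Lambda^{-\varrho}\{{\bf I-P}\}f^{m,\epsilon}\|_\nu^2$ coming from \eqref{coercive-estimates}.

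The second step is to control the nonlinear right-hand side terms, which fall into three groups. For the pure quadratic collision terms $\Gamma(f^{P,\epsilon},f^{m,\epsilon})$, $\Gamma(f^{m,\epsilon},f^{P,\epsilon})$, $\epsilon^{j_1+j_2-m}\Gamma(f^{j_1,\epsilon},f^{j_2,\epsilon})$, and $\epsilon^m\Gamma(f^{m,\epsilon},f^{m,\epsilon})$, I would pair them with $\Lambda^{-2\varrho}\{{\bf I-P}\}f^{m,\epsilon}$ and use the Hardy--Littlewood--Sobolev-type estimate $\|\Lambda^{-\varrho}(fg)\|_{L^2_x}\lesssim \|f\|_{L^2_x}\|g\|_{L^{3/\varrho}_x}$ (valid for $\varrho\in[1/2,3/2)$) together with the standard bounds on $\Gamma$ from Lemma \ref{lemma-nonlinear}; these yield the products $\overline{\mathcal E}_{\star,2,3-1/\gamma,-\gamma}\overline{\mathcal D}_{\star,2,3-1/\gamma,-\gamma}$ claimed on the right. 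For the transport-type terms $q_0E^{P,\epsilon}\cdot \nabla_v f^{m,\epsilon}$, $q_0\epsilon^{j_1}E^{j_1,\epsilon}\cdot\nabla_v f^{m,\epsilon}$, $q_0\epsilon^m E^{m,\epsilon}\cdot\nabla_v f^{m,\epsilon}$, and the magnetic terms $\epsilon(v\times B^\bullet)\cdot\nabla_v f^\bullet$ together with the $\tfrac12 q_0 E\cdot v f$ terms, I would integrate by parts in $v$ and again use the HLS-type product estimate: the factor $\Lambda^{-\varrho}$ on one side can be converted into an $L^{3/\varrho}_x$ bound on the other, which after Sobolev embedding controls one factor in terms of $\overline{\mathcal E}$ and the other in terms of $\overline{\mathcal D}$.

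The main obstacle, and the reason for the unusual form of the conclusion, is the pair of cross terms $q_0E^{m,\epsilon}\cdot v\,f^{P,\epsilon}$ and $q_0E^{m,\epsilon}\cdot\nabla_v f^{P,\epsilon}$, in which the macroscopic electric field $E^{m,\epsilon}$ of the remainder couples linearly to the unknown $f^{P,\epsilon}$ whose dissipation is controlled only through the separate VPB estimates. A naive estimate produces $\|\Lambda^{-\varrho}E^{m,\epsilon}\|\cdot(\cdot)$, but $\|\Lambda^{-\varrho}E^{m,\epsilon}\|$ is not in the dissipation $\overline{\mathcal D}$. The standard trick, following the template of \cite{Lei-Zhao-JFA-2014}, is to use the momentum evolution equation in \eqref{Macro-f-R-equation1} to write $E^{m,\epsilon}$ as $\tfrac12\partial_t G^{m,\epsilon}+\cdots$ and then shift the time derivative onto the pairing with ${\bf P}f^{m,\epsilon}$; equivalently, one pairs directly in Fourier variables and integrates by parts in $t$, which produces exactly the term $\tfrac{d}{dt}\bigl\{\tfrac12\mathfrak R(\mathcal F[q_0G_m\cdot vf^{P,\epsilon}]\mid |\xi|^{-2\varrho}\mathcal F[{\bf P}f^{m,\epsilon}])-\mathfrak R(\mathcal F[q_0G_m\cdot\nabla_v f^{P,\epsilon}]\mid |\xi|^{-2\varrho}\mathcal F[{\bf P}f^{m,\epsilon}])\bigr\}$ recorded on the right-hand side. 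The leftover contributions, obtained when $\partial_t$ lands on $f^{P,\epsilon}$ or on ${\bf P}f^{m,\epsilon}$, are substituted via the evolution equations \eqref{f-P-sign} and \eqref{f-R-vector} respectively, and each resulting term can then be bounded by products of $\overline{\mathcal E}$ and $\overline{\mathcal D}$ of the form displayed, using the same HLS-product and Sobolev-embedding argument as in the second step. Summing the estimates and applying Cauchy's inequality with a small parameter to absorb the $\|\Lambda^{-\varrho}\{{\bf I-P}\}f^{m,\epsilon}\|_\nu^2$ contributions back into the dissipative term on the left yields \eqref{Lemma4.1-1-f-R}.
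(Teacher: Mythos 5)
Your proposal is correct and follows the same route as the paper: the paper proves this lemma simply by citing the argument of Lemma 3.1 in Lei--Zhao [Lei-Zhao-JFA-2014, p.~3724], and your sketch reconstructs exactly that template — Fourier pairing in $\dot H^{-\varrho}$ with the Maxwell cancellation, HLS-type product estimates for the nonlinearities, and an integration by parts in time (via the macroscopic momentum relation from \eqref{Macro-f-R-equation1}) to handle the cross terms $E^{m,\epsilon}\cdot v f^{P,\epsilon}$ and $E^{m,\epsilon}\cdot\nabla_v f^{P,\epsilon}$ whose naive treatment would require $\|\Lambda^{-\varrho}E^{m,\epsilon}\|$, which is not part of the dissipation. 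You correctly identify these cross terms as the reason for the unusual $\tfrac{d}{dt}\{\cdots\}$ term on the right-hand side and correctly note that the leftover contributions, after the time derivative lands on $f^{P,\epsilon}$ or ${\bf P}f^{m,\epsilon}$, are substituted back through the evolution equations and closed by $\overline{\mathcal E}\cdot\overline{\mathcal D}$ products.
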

\subsubsection{Lyapunov-type inequalities for $\overline{\mathcal{E}}_{f^{m,\epsilon},N^0_m,\ell,-\gamma}(t) $}

By employing the argument used to prove Lemma \ref{lemma-f-m} and by applying \eqref{EB-s-f-R} and \eqref{Lemma4.1-1-f-R}, we can get that

\begin{lemma}\label{lemma-f-m-low}
Assume that
\begin{eqnarray}\label{Assume-f-R-N-R-0-low}
 \max&&\left\{\sup_{0\leq\tau\leq t}(1+\tau)^{1+\vartheta}\left\|\left[\epsilon E^{m,\epsilon},E^{j_1,\epsilon},E^{P,\epsilon}\right](\tau)\right\|_{L^\infty_x},  \mathcal{\overline{E}}_{f^{P,\epsilon},N^0_m+1,l^0_m+\frac32-\frac1\gamma,-\gamma}(\tau), \right.\nonumber\\
 &&\ \ \ \ \ \ \ \ \ \ \ \ \ \ \ \ \ \ \ \ \ \ \ \ \ \ \ \ \
 \left.\sup_{0\leq\tau\leq t}\sum_{0<j<m}\mathcal{\overline{E}}_{f_{j},N^0_m+1,l^0_m+\frac32-\frac1\gamma,-\gamma}(\tau),
 \sup_{0\leq\tau\leq t}\mathcal{\overline{E}}_{f^{m,\epsilon},N^0_m,l^0_m,-\gamma}(\tau)\right\}
\end{eqnarray}
is sufficiently small, one has
\begin{eqnarray}\label{lemma-f-m-N-R-0-low-1}
&&\frac{d}{dt}\mathcal{\overline{E}}_{f^{m,\epsilon},N^0_m,l^0_m,-\gamma}(t)
+\mathcal{\overline{D}}_{f^{m,\epsilon},N^0_m,l^0_m,-\gamma}(t)\nonumber\\
 &\lesssim&\left\{\sum_{1\leq j_1\leq m-1}\left\|\nabla_x\left[E^{P,\epsilon}(t), E^{j_1,\epsilon}(t)\right]\right\|_{H^{N_m-1}_x}^{2}+\epsilon^{2m}\left\|\nabla_x
\left[E^{m,\epsilon}(t), \epsilon B^{m,\epsilon}(t)\right]\right\|^2_{H^{N_m^0-1}_x}
\right\}^{\frac{1}{\theta}}\nonumber\\
  &&\times\sum_{0\leq k\leq N^0_m,\atop k\leq j\leq N^0_m}\mathcal{D}^{(j,k)}_{f^{m,\epsilon},j,\widetilde{\ell}_m-\gamma l^0_m+1+\frac\gamma2,1}(t)\\
   &&+
  \overline{\mathcal{E}}_{f^{m,\epsilon},N^0_m,l^0_m,-\gamma}(t)\left\{\overline{\mathcal{D}}_{f^{P,\epsilon},N^0_m+1,l^0_m+2-\frac{1}\gamma,-\gamma}(t)+\sum_{i=1}^{m-1}\overline{\mathcal{D}}_{f^{i,\epsilon},N^0_m+1,l^0_m+2-\frac{1}\gamma,-\gamma}(t)\right\}\nonumber\\
  &&+\sum_{j_1+j_2\geq m,\atop 0<j_1,j_2<m}\overline{\mathcal{E}}_{f^{j_1,\epsilon},N^0_m,l^0_m,-\gamma}(t)\overline{\mathcal{D}}_{f^{j_2,\epsilon},N^0_m+1, l^0_m+2-\frac{1}\gamma,-\gamma}(t) +\left\|\nabla^{N^0_m+1}f^{m,\epsilon}(t)\right\|_\nu^2.\nonumber
 \end{eqnarray}
\end{lemma}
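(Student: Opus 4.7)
The plan is to obtain the Lyapunov-type inequality for $\overline{\mathcal{E}}_{f^{m,\epsilon},N^0_m,l^0_m,-\gamma}(t)$ by combining two ingredients: the weighted energy estimate at the level $N^0_m$ (which is a direct analogue of Lemma \ref{lemma-f-m} but with $N_m,l_m$ replaced by $N^0_m,l^0_m$), and the negative Sobolev space estimates already recorded in Lemma \ref{lemma3.3-f-r} and Lemma \ref{Lemma4.1-f-R}. The structural identity used throughout is the rewriting \eqref{mic-R-equation} of \eqref{f-R-vector} together with the microscopic projection \eqref{mic-I-P-equation}; these same equations drove the proof of Lemma \ref{lemma-f-m}, so most of the algebraic work can be recycled.

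First, I would repeat the three-step scheme of Lemma \ref{lemma-f-m} verbatim with $(N_m,l_m)$ replaced by $(N^0_m,l^0_m)$: Step 1 performs the zeroth and $\partial^\alpha$ pure spatial estimates on \eqref{f-R-vector}, relying on \eqref{lemma-f-R-1}, \eqref{0-order-E-R-B-R}, \eqref{0-order-E-R-12}, \eqref{0-w-typical-R}, \eqref{1-w-typical-R}, \eqref{1-typical-low-non-R} and \eqref{1-typical-low-non-1-R}; Step 2 produces the weighted estimate in the weight $w_{l^0_m,-\gamma}$, invoking \eqref{E-R-nonhard-1}, \eqref{E-R-nonhard-2}, \eqref{lemma-f-R-2}, \eqref{i-order-E-R-B}, \eqref{i-order-E-R-12}, \eqref{1-w-typical-low-R}, \eqref{1-w-typical-low-1-R}, \eqref{1-w-typical-low-non-R} and \eqref{1-w-typical-low-non-1-R} to convert each problematic electric-field interaction into either a time-derivative boundary term or a factor bounded by the dissipation rate functional; Step 3 handles mixed $\partial^\alpha_\beta$ derivatives on the microscopic projection \eqref{mic-I-P-equation} via \eqref{2-w-typical-low-R}, \eqref{2-w-typical-low-1-R}, \eqref{mic-non-I-P-1} and \eqref{mic-non-I-P-2}. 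The macroscopic dissipation is supplied by \eqref{mac-dis-f-R} applied with $N=N^0_m$. A suitable linear combination with small constants, absorbing the terms of the form $\eta\,\mathcal{D}_{f^{m,\epsilon},N^0_m,l^0_m,-\gamma}(t)$ on the left and using the smallness of \eqref{Assume-f-R-N-R-0-low}, then yields the non-negative-index analogue
\begin{equation*}
\frac{d}{dt}\mathcal{E}_{f^{m,\epsilon},N^0_m,l^0_m,-\gamma}(t)+\mathcal{D}_{f^{m,\epsilon},N^0_m,l^0_m,-\gamma}(t)\lesssim (\text{right-hand side of \eqref{lemma-f-m-N-R-0-low-1}}).
\end{equation*}

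Next, to upgrade the energy functional from $\mathcal{E}$ to $\overline{\mathcal{E}}$ (i.e.\ to include the $\|\Lambda^{-\varrho}\cdot\|^2$ norms) and the dissipation from $\mathcal{D}$ to $\overline{\mathcal{D}}$, I would add to the above bound a small multiple of the inequality produced by Lemma \ref{lemma3.3-f-r}, which provides control of $\|\Lambda^{1-\varrho}[{\bf P}f^{m,\epsilon},\epsilon E^{m,\epsilon},\epsilon B^{m,\epsilon}]\|^2+\|\Lambda^{-\varrho}(a^{f^{m,\epsilon}}_+-a^{f^{m,\epsilon}}_-)\|^2$ up to a $G^{-\varrho}_{f^{m,\epsilon},E^{m,\epsilon},B^{m,\epsilon}}$ correction that is majorized by the $\overline{\mathcal{E}}$ norm (see \eqref{G_{E,B}}). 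Then Lemma \ref{Lemma4.1-f-R} furnishes the missing piece $\|\Lambda^{-\varrho}\{{\bf I-P}\}f^{m,\epsilon}\|_\nu^2$; the time-derivative term on the right of \eqref{Lemma4.1-1-f-R} is absorbed as in \cite{Lei-Zhao-JFA-2014} by modifying the interactive functional, so only the genuine nonlinear and cross-species contributions remain. Combining \eqref{EB-s-f-R} and \eqref{Lemma4.1-1-f-R} with the Sobolev-level estimate and closing the small factors using \eqref{Assume-f-R-N-R-0-low}, one obtains exactly the right-hand side displayed in \eqref{lemma-f-m-N-R-0-low-1}, including the residual term $\|\nabla^{N^0_m+1}f^{m,\epsilon}\|_\nu^2$ which, as in Lemma \ref{lemma-f-m}, arises from bounding $\|\nabla^{|\alpha|+1}f^{m,\epsilon}\|_\nu^2$ in \eqref{E-R-nonhard-1} when $|\alpha|=N^0_m$ and must be deferred for control by $\mathcal{D}_{f^{m,\epsilon},N^0_m+1}$ at a later stage.

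The main obstacle I expect is twofold. First, the weighted estimates in Step 2 generate the factor $\{\sum_{j_1}\|\nabla_x E^{j_1,\epsilon}\|^2_{H^{N_m-1}_x}+\epsilon^{2m}\|\nabla_x[E^{m,\epsilon},\epsilon B^{m,\epsilon}]\|^2_{H^{N^0_m-1}_x}\}^{1/\theta}$ multiplying the weight-$1$ dissipation $\mathcal{D}^{(j,k)}_{f^{m,\epsilon},j,\widetilde\ell_m-\gamma l^0_m+1+\gamma/2,1}$, coming from the interpolation $\theta=(2-4\gamma)/(2\widetilde\ell_m-\gamma)$; one must carefully verify that the exponent $-\gamma l^0_m+1+\gamma/2$ together with the relations on $l^0_m$ and $\widetilde{\ell}_m$ listed in the statement of Theorem \ref{Th1.3} makes the resulting velocity weight compatible with the dissipation functional actually available at the level $N^0_m$. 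Second, the combination with the negative Sobolev estimate \eqref{Lemma4.1-1-f-R} requires that the right-hand side of that inequality (which is quadratic in $\overline{\mathcal{E}}\cdot\overline{\mathcal{D}}$ at level $2$) is indeed majorized by the smallness hypothesis \eqref{Assume-f-R-N-R-0-low}; this is where the embedding $\overline{\mathcal{E}}_{\cdot,2,3-1/\gamma,-\gamma}\lesssim \overline{\mathcal{E}}_{\cdot,N^0_m+1,l^0_m+3/2-1/\gamma,-\gamma}$ must be checked (it holds because $N^0_m\geq 5$), and where the smallness is used to hide these terms on the left. Once these points are verified, taking the appropriate linear combination produces \eqref{lemma-f-m-N-R-0-low-1}.
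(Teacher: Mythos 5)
Your proposal is correct and follows essentially the same route as the paper, which states that Lemma \ref{lemma-f-m-low} is obtained by repeating the argument of Lemma \ref{lemma-f-m} with $(N_m,l_m)$ replaced by $(N^0_m,l^0_m)$ and then adjoining the negative Sobolev space estimates \eqref{EB-s-f-R} and \eqref{Lemma4.1-1-f-R}. Your elaboration of the two potential obstacles (the $\theta$-interpolation exponent and the compatibility of the level-$2$ quadratic terms in \eqref{Lemma4.1-1-f-R} with the smallness hypothesis) is accurate and fills in details the paper leaves implicit.
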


\subsection{Temporal decay rates}
To control
\begin{equation}\label{gamma-weight-1}
\begin{split}
&\left\{\sum_{1\leq j_1\leq m-1}\left\|\nabla_x\left[E^{P,\epsilon}(t), E^{j_1,\epsilon}(t)\right]\right\|_{H^{N_m-1}_x}^{2}+\epsilon^{2m}\left\|\nabla_x
\left[E^{m,\epsilon}(t),\epsilon B^{m,\epsilon}(t)\right]\right\|^2_{H^{N_m^0-1}_x}
\right\}^{\frac{1}{\theta}}\\
&\times\sum_{0\leq k\leq N^0_m,\atop k\leq j\leq N^0_m}\mathcal{D}^{(j,k)}_{f^{m,\epsilon},j,\widetilde{\ell}_m-\gamma l^0_m+1+\frac\gamma2,1}(t),
\end{split}
\end{equation}
\begin{equation}\label{gamma-weight-2}
\begin{split}
&\left\{\sum_{1\leq j_1\leq m-1}\left\|\nabla_x\left[E^{P,\epsilon}(t), E^{j_1,\epsilon}(t)\right]\right\|_{H^{N_m-1}_x}^{2}+\epsilon^{2m}\left\|\nabla_x
\left[E^{m,\epsilon}(t),\epsilon B^{m,\epsilon}(t)\right]\right\|^2_{H^{N_m^0-1}_x}
\right\}^{\frac{1}{\theta}}\\
&\times\sum_{0\leq k\leq N_m,\atop k\leq j\leq N_m}\mathcal{D}^{(j,k)}_{f^{m,\epsilon},j,\widetilde{\ell}_m-\gamma l_m+1+\frac\gamma2,1}(t),
\end{split}
\end{equation}
\begin{equation}\label{no-weight-1}
  \sum_{1\leq j_1\leq m-1}\left\|\nabla_x\left[ E^{P,\epsilon}(t), E^{j_1,\epsilon}(t),\epsilon^m E^{m,\epsilon}(t),\epsilon^{m+1} B^{m,\epsilon}(t)\right]\right\|^2_{L^\infty_x} \left\|\nabla_v\nabla^{N_m}_x{\bf\{I-P\}}f^{m,\epsilon}(t)\langle v\rangle^{1-\frac\gamma2}\right\|^2,
\end{equation}
and
\begin{equation}\label{no-weight-2}
\sum_{1\leq j_1\leq m-1}\left\|\left[\epsilon ^m E^{m,\epsilon}(t,x), E^{j_1,\epsilon}(t,x), E^{P,\epsilon}(t,x)\right]\right\|^2_{L^\infty_x} \left\|\nabla^{N_m+1}_x{\bf\{I-P\}}f^{m,\epsilon}(t)\langle v\rangle^{1-\frac\gamma2}\right\|^2
\end{equation}
in the right hand side of \eqref{lemma-f-R-N-R}, \eqref{lemma-f-m-N-R-0-low-1} and \eqref{lemma-spatial-f-R-N+1-R}, we need the time decay rates property of
 \[\sum_{1\leq j_1\leq m-1}\left\|\left[E^{P,\epsilon}(t), E^{j_1,\epsilon}(t)\right]\right\|_{H^{N_m}_x}^{2},\ \ \left\|\left[E^{m,\epsilon}(t), B^{m,\epsilon}(t)\right]\right\|^2_{H^{N_m^0}_x}.\]
For result in this direction, we have
  \begin{lemma}\label{Lemma1-R}
Assume
\begin{itemize}
  \item $N_m^0\geq 5$ and $N_m+1=2N_m^0-1+[\varrho]$;
  \item $N_{m-1}\geq N_m+2$, $N_j\geq N_{j+1}+1$ for $0<j<m-1$, $N_P\geq N_1+1$;
  \item there exist $\widehat{l}_m>\frac{N_P+\varrho}2$, and
  $\widehat{l}_{m-1}\geq \widehat{l}_m, \widehat{l}_{j-1}\geq \widehat{l}_j$ for $2\leq j\leq m-1$, $\widehat{l}^{P}\geq \widehat{l}_1$
\end{itemize}
 such that if
\begin{eqnarray}\label{def-Xi-R}
&&\Xi_m(t)\nonumber\\
&\equiv&\max\left\{
\sup_{0\leq\tau\leq t}\mathcal{\overline{E}}_{f^{m,\epsilon},N^0_m,N^0_m+\widehat{l}_m,-\gamma}(\tau),\sup_{0\leq\tau\leq t}\mathcal{E}_{f^{m,\epsilon},N_m,N_m,-\gamma}(\tau),\sup_{0\leq\tau\leq t}\mathcal{E}_{f^{m,\epsilon},N_m+1}(\tau),\right.\nonumber\\
&&\left.\ \ \ \ \ \ \ \ \ \sup_{0\leq\tau\leq t}\mathcal{\overline{E}}_{f^{P,\epsilon},N_P,N_P+\widehat{l}_P,-\gamma}(\tau),\sum_{1\leq j\leq m-1}\sup_{0\leq\tau\leq t}\mathcal{\overline{E}}_{f^{j,\epsilon},N_j,N_j+\widehat{l}_{j},-\gamma}(\tau)
\right\}
\end{eqnarray}
is sufficiently small,
then one has for $k=0,1,2,\cdots, N^0_m-2$ that
\begin{eqnarray}\label{lemma-decay-f-R}
\mathcal{E}^{k}_{f^{m,\epsilon},N^0_m}(t)+\mathcal{E}^{k}_{f^{P,\epsilon},N_P}(t)+\sum_{1\leq j\leq m-1}\mathcal{E}^{k}_{f^{j,\epsilon},N_j}(t)
\lesssim\epsilon^{-2(k+\varrho)}(1+t)^{-(k+\varrho)}\Xi_{R}(t)
\end{eqnarray}
holds for $0\leq t\leq T$.
\end{lemma}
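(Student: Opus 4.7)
The plan is to adapt Guo's negative-Sobolev interpolation scheme (see \cite{Guo-CPDE-2012}) to the light-speed-degenerate setting and close the decay estimates by a downward induction on the derivative order $k$. The whole argument is driven by two inputs already at our disposal: the Lyapunov inequalities (Lemmas \ref{lemma-f-m}, \ref{lemma-f-R-high-spatial}, \ref{lemma-f-m-low}) coupled with the uniform-in-time control of the negative Sobolev norm $\left\|\Lambda^{-\varrho}\left[f^{m,\epsilon},E^{m,\epsilon},B^{m,\epsilon}\right]\right\|^2$ supplied by Lemma \ref{lemma3.3-f-r} and Lemma \ref{Lemma4.1-f-R}, and the analogous estimates for $f^{P,\epsilon}$ and each $f^{j,\epsilon}$.

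The first step treats the VPB-type components $f^{P,\epsilon}$ and $f^{j,\epsilon}$ $(1\leq j\leq m-1)$. Since the electric field in \eqref{f-P-sign}, \eqref{f-i-vector} is purely electrostatic and no light-speed degeneracy is present, the dissipation rates $\mathcal{D}^k_{f^{P,\epsilon},N_P}(t)$ and $\mathcal{D}^k_{f^{j,\epsilon},N_j}(t)$ do not contain the factor $\epsilon^2$ in front of their $\nabla_x E$ contributions. A direct application of the interpolation
\begin{equation*}
\left\|\nabla_x^k g\right\|^2\lesssim \left\|\Lambda^{-\varrho}g\right\|^{2\theta_k}\left\|\nabla_x^{k+1}g\right\|^{2(1-\theta_k)},\qquad \theta_k=\frac{1}{k+1+\varrho},
\end{equation*}
plugged into the corresponding Lyapunov-type inequalities and solved as a Bihari ODE, yields the decay
$\mathcal{E}^{k}_{f^{P,\epsilon},N_P}(t)+\sum_{j=1}^{m-1}\mathcal{E}^{k}_{f^{j,\epsilon},N_j}(t)\lesssim (1+t)^{-(k+\varrho)}\Xi_m(t)$
for $k=0,\dots,N^0_m-2$. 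This handles the first two summands on the left of \eqref{lemma-decay-f-R}, with the prefactor $\epsilon^{-2(k+\varrho)}$ trivially absorbed.

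The heart of the proof is the treatment of the remainder $\mathcal{E}^{k}_{f^{m,\epsilon},N^0_m}(t)$. Starting from the dissipation-energy pair coming out of Lemma \ref{lemma-f-m-low} (after restricting to the $k$-th-order component)
\begin{equation*}
\frac{d}{dt}\mathcal{\overline{E}}^{k}_{f^{m,\epsilon},N^0_m,l^0_m,-\gamma}(t) +\mathcal{\overline{D}}^{k}_{f^{m,\epsilon},N^0_m,l^0_m,-\gamma}(t)\lesssim \text{(small cubic)},
\end{equation*}
the crucial fact is that the electromagnetic piece of the dissipation appears only in the degenerate form $\epsilon^2\|\nabla_x^{k+1}[E^{m,\epsilon},B^{m,\epsilon}]\|^2_{H^{N^0_m-2-k}_x}$ (cf. the definition of $\mathcal{D}^k_{f^{m,\epsilon},n}$ in \eqref{def-D-n-g}). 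Interpolating
\begin{equation*}
\mathcal{E}^{k}_{f^{m,\epsilon},N^0_m}(t)\lesssim \left\|\Lambda^{-\varrho}\left[f^{m,\epsilon},E^{m,\epsilon},B^{m,\epsilon}\right](t)\right\|^{2\theta_k}\bigl(\mathcal{E}^{k+1}_{f^{m,\epsilon},N^0_m}(t)\bigr)^{1-\theta_k},
\end{equation*}
and noting that the negative-Sobolev norm on the right is bounded by $\Xi_m(t)$ thanks to Lemmas \ref{lemma3.3-f-r}--\ref{Lemma4.1-f-R}, transforms the dissipation bound into the differential inequality of Bihari type
\begin{equation*}
\frac{d}{dt}\mathcal{E}^{k}_{f^{m,\epsilon},N^0_m}(t)+\epsilon^{2}\,\Xi_m(t)^{-1/(k+\varrho)}\bigl(\mathcal{E}^{k}_{f^{m,\epsilon},N^0_m}(t)\bigr)^{1+1/(k+\varrho)}\lesssim \mathcal{R}_k(t),
\end{equation*}
whose solution furnishes the desired $\epsilon^{-2(k+\varrho)}(1+t)^{-(k+\varrho)}\Xi_m(t)$ bound. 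Lemma \ref{lemma-f-R-high-spatial} ensures that the top-order quantity $\mathcal{E}_{f^{m,\epsilon},N_m+1}(t)$, needed to activate the interpolation at $k=N^0_m-2$, remains bounded by $\Xi_m(t)$.

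The final step is a downward induction on $k$ from $k=N^0_m-2$ to $k=0$, at each stage using the already-proved decay at level $k+1$ (and the decay of $f^{P,\epsilon}$, $f^{j,\epsilon}$ from Step one) to control the source $\mathcal{R}_k(t)$. The source $\mathcal{R}_k(t)$ consists of the cubic remainders gathered in the right-hand sides of \eqref{lemma-f-R-N-R}, \eqref{lemma-spatial-f-R-N+1-R}, \eqref{lemma-f-m-N-R-0-low-1}, chiefly terms of the form $\|\nabla_x[E^{P,\epsilon},E^{j,\epsilon},\epsilon^m E^{m,\epsilon},\epsilon^{m+1}B^{m,\epsilon}]\|_{L^\infty_x}^2\cdot(\text{weighted energy})$ and the time-weighted interpolation terms raised to the $1/\theta$. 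The main obstacle is tracking the $\epsilon$-book-keeping: the worst term is $\epsilon^{2m}\|\nabla_x[E^{m,\epsilon},\epsilon B^{m,\epsilon}]\|^2_{H^{N^0_m-1}_x}$, which by the induction hypothesis is bounded by $\epsilon^{2m}\cdot\epsilon^{-2(N^0_m-1+\varrho)}(1+t)^{-(N^0_m-1+\varrho)}\Xi_m(t)$, and this factor remains innocuous precisely when $m>\frac{5}{2}+\varrho$ (together with $N_m^0\geq 5$), which is exactly the size assumption imposed in Theorem \ref{Th1.3}. Similarly, the temporal weights $(1+t)^{2+2\vartheta}$ appearing in the weighted-energy estimates combine with the $(1+t)^{-(k+\varrho+1)}$ decay coming from the inductive step, and the restriction $0<\vartheta<\frac{\varrho}{2}-\frac{1}{4}$ is precisely what makes the resulting time integrals convergent. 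Once this $\epsilon$- and $t$-book-keeping is arranged, the induction closes and \eqref{lemma-decay-f-R} follows.
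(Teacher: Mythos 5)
Your proposal identifies the correct key idea — Guo's negative-Sobolev interpolation combined with $\epsilon$-book-keeping so that the degenerate EM dissipation $\epsilon^2\|\nabla_x^{k+1}[E^{m,\epsilon},B^{m,\epsilon}]\|^2$ is what feeds the interpolation and hence produces the $\epsilon^{-2(k+\varrho)}$ prefactor. The paper also uses exactly this kind of interpolation, with the $\epsilon$-weights inserted by hand, e.g.
$\|\nabla^k[E^{m,\epsilon},B^{m,\epsilon}]\|\leq\{\epsilon\|\nabla^{k+1}[E^{m,\epsilon},B^{m,\epsilon}]\|\}^{\frac{k+\varrho}{k+\varrho+1}}\{\epsilon^{-(k+\varrho)}\|\Lambda^{-\varrho}[E^{m,\epsilon},B^{m,\epsilon}]\|\}^{\frac{1}{k+\varrho+1}}$,
supplemented by the velocity-weighted interpolation for the microscopic part using $\|\cdot\|_\nu$ against $\|w_{\frac{k+\varrho}{2},-\gamma}\cdot\|$. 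That part of your plan is right.

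However, your proposal introduces a downward induction on $k$ and a source term $\mathcal{R}_k(t)$ that is not present in the paper's actual proof, and this comes from conflating two different Lyapunov inequalities. The paper establishes, for each $k$ independently and under only the smallness of $\Xi_m(t)$, the \emph{source-free} differential inequality
\begin{equation*}
\frac{d}{dt}\left\{\mathcal{E}^{k}_{f^{m,\epsilon},N^0_m}(t)+\mathcal{E}^{k}_{f^{P,\epsilon},N_P}(t)+\sum_{1\leq j\leq m-1}\mathcal{E}^{k}_{f^{j,\epsilon},N_j}(t)\right\}+\mathcal{D}^{k}_{f^{m,\epsilon},N^0_m}(t)+\mathcal{D}^{k}_{f^{P,\epsilon},N_P}(t)+\sum_{1\leq j\leq m-1}\mathcal{D}^{k}_{f^{j,\epsilon},N_j}(t)\leq 0,
\end{equation*}
by absorbing all nonlinear contributions into the dissipation via the smallness of $\Xi_m$ (the Lei--Zhao argument). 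The cubic remainders you list, such as the term $\epsilon^{2m}\|\nabla_x[E^{m,\epsilon},\epsilon B^{m,\epsilon}]\|^2_{H^{N^0_m-1}_x}$ raised to the $1/\theta$, together with the restriction $m>\frac{5}{2}+\varrho$, belong to the \emph{weighted} Lyapunov inequalities \eqref{lemma-f-R-N-R}, \eqref{lemma-f-m-N-R-0-low-1}, and \eqref{lemma-spatial-f-R-N+1-R}, which are used later in Lemma \ref{lemma9} where this decay lemma is invoked as a black box. They play no role inside the proof of the decay estimate itself, and importing them here makes the argument circular: the decay of $\|\nabla_x[E^{m,\epsilon},B^{m,\epsilon}]\|$ that your "induction hypothesis" feeds into $\mathcal{R}_k$ is precisely what Lemma \ref{Lemma1-R} is supposed to produce. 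Once the source-free inequality is in hand, a single Bihari-type ODE (with the $\epsilon^2$ coefficient you correctly identify) gives \eqref{lemma-decay-f-R} directly, with no induction over $k$. Your separate treatment of the VPB components in "Step one" is fine — those systems are indeed decoupled and non-degenerate — but the paper bundles all three into one combined energy and solves one ODE, which is cleaner because the top-order interpolation for the EM field must reach into $\mathcal{E}_{f^{m,\epsilon},N_m+1}$, and the index constraint $N_m+1=2N_m^0-1+[\varrho]$ is exactly what makes that reach possible.
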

\begin{proof} First notice that under the smallness assumption imposed on $\Xi_m(t)$, by applying similar strategy as Lemma 4.1 in \cite[page 3745]{Lei-Zhao-JFA-2014},  one can deduce that
\begin{eqnarray}\label{Lemma1-1-R-1}
\frac{d}{dt}\left\{\mathcal{E}^{k}_{f^{P,\epsilon},N_P}(t)+\sum_{1\leq j\leq m-1}\mathcal{E}^{k}_{f^{j,\epsilon},N_j}(t)\right\}+\mathcal{D}^{k}_{f^{P,\epsilon},N_P}(t)+\sum_{1\leq j\leq m-1}\mathcal{D}^{k}_{f^{j,\epsilon},N_j}(t)\leq 0,\quad 0\leq t\leq T.
\end{eqnarray}

Combing with the above inequality, one has by a similar argument that
\begin{eqnarray}\label{Lemma1-1-R-1}
&&\frac{d}{dt}\left\{\mathcal{E}^{k}_{f^{m,\epsilon},N^0_m}(t)+\mathcal{E}^{k}_{f^{P,\epsilon},N_P}(t)+\sum_{1\leq j\leq m-1}\mathcal{E}^{k}_{f^{j,\epsilon},N_j}(t)\right\}\nonumber\\
&&+\mathcal{D}^{k}_{f^{m,\epsilon},N^0_m}(t)+\mathcal{D}^{k}_{f^{P,\epsilon},N_P}(t)+\sum_{1\leq j\leq m-1}\mathcal{D}^{k}_{f^{j,\epsilon},N_j}(t)\leq 0,\quad 0\leq t\leq T.
\end{eqnarray}

Now we turn to compare the difference between the energy functionals $$\mathcal{E}^{k}_{f^{m,\epsilon},N^0_m}(t)+\mathcal{E}^{k}_{f^{P,\epsilon},N_P}(t)+\sum_{1\leq j\leq m-1}\mathcal{E}^{k}_{f^{j,\epsilon},N_j}(t)$$ and the dissipation functionals $$\mathcal{D}^{k}_{f^{m,\epsilon},N^0_m}(t)+\mathcal{D}^{k}_{f^{P,\epsilon},N_P}(t)+\sum_{1\leq j\leq m-1}\mathcal{D}^{k}_{f^{j,\epsilon},N_j}(t).$$
To this end, since
\[
\left\|\nabla^k\left[
E^{m,\epsilon},B^{m,\epsilon}\right]\right\|\leq \left\{\left\|\nabla^{k+1}\epsilon\left[E^{m,\epsilon}, B^{m,\epsilon}\right]\right\|\right\}^{\frac{k+\varrho}{k+\varrho+1}}
\left\{\epsilon^{-k-\varrho}\left\|\Lambda^{-\varrho}\left[E^{m,\epsilon},B^{m,\epsilon}\right] \right\|\right\}^{\frac{1}{k+\varrho+1}},
\]

\[
\left\|\nabla^{N^R_0}[E^{m,\epsilon},B^{m,\epsilon}]\right\|\lesssim\left\{\epsilon\left\|\nabla^{N^R_0-1}[E^{m,\epsilon},B^{m,\epsilon}]\right\|
\right\}^\frac{k+\varrho}{k+\varrho+1}
\left\{\epsilon^{-k-\varrho}\left\|\nabla^{N^R_0+k+\varrho}[E^{m,\epsilon},B^{m,\epsilon}]\right\|\right\}^\frac{1}{k+\varrho+1},
\]
and for $0<j<m$
\begin{eqnarray*}
 &&\left\|\nabla^k\left[{\bf P}f^{m,\epsilon},{\bf P}f^{P,\epsilon},{\bf P}f^{j,\epsilon},E^{P,\epsilon},E^{j,\epsilon}\right]\right\|\nonumber\\
 &\leq& \left\{\epsilon\left\|\nabla^{k+1}\left[{\bf P}f^{m,\epsilon},{\bf P}f^{P,\epsilon},{\bf P}f^{j,\epsilon},E^{P,\epsilon},E^{j,\epsilon}\right]\right\|\right\}^{\frac{k+\varrho}{k+\varrho+1}} \left\{\epsilon^{-k-\varrho}
\left\|\Lambda^{-\varrho}\left[{\bf P}f^{m,\epsilon},{\bf P}f^{P,\epsilon},{\bf P}f^{j,\epsilon},E^{P,\epsilon},E^{j,\epsilon}\right]\right\|\right\}^{\frac{1}{k+\varrho+1}}
\end{eqnarray*}
holds for the macroscopic component, while for the microscopic component, we have by employing the H$\ddot{o}$lder inequality that
\begin{eqnarray}
&&\left\|\partial^\alpha {\bf\{I-P\}}\left[f^{P,\epsilon},f^{j,\epsilon}, f^{m,\epsilon}\right]\right\|\nonumber\\
&\leq & \left\|\partial^\alpha {\bf\{I-P\}}\left[f^{P,\epsilon},f^{j,\epsilon}, f^{m,\epsilon}\right]\langle v\rangle^{\frac{\gamma}2}\right\|^{\frac{k+\varrho}{k+\varrho+1}}
\left\|\partial^\alpha {\bf\{I-P\}}\left[f^{P,\epsilon},f^{j,\epsilon}, f^{m,\epsilon}\right]\langle v\rangle^{-\frac{\gamma(k+\varrho)}{2}}\right\|^{\frac{1}{k+\varrho+1}}\\
&\leq&\left\|\partial^\alpha {\bf\{I-P\}}\left[f^{P,\epsilon},f^{j,\epsilon}, f^{m,\epsilon}\right]\right\|_\nu^{\frac{k+\varrho}{k+\varrho+1}}
\left\|w_{\frac{k+\varrho}{2},-\gamma}\partial^\alpha {\bf\{I-P\}}\left[f^{P,\epsilon},f^{j,\epsilon}, f^{m,\epsilon}\right]\right\|^{\frac{1}{k+\varrho+1}}.\nonumber
\end{eqnarray}
Therefore, we arrive at by collecting the interpolation estimates that
\begin{equation*}
\begin{aligned}
&\mathcal{E}^{k}_{f^{m,\epsilon},N^0_m}(t)+\mathcal{E}^{k}_{f^{P,\epsilon},N_P}(t)+\sum_{1\leq j\leq m-1}\mathcal{E}^{k}_{f^{j,\epsilon},N_j}(t)\\
\leq &\left\{\mathcal{D}^{k}_{f^{m,\epsilon},N^0_m}(t)+\mathcal{D}^{k}_{f^{P,\epsilon},N_P}(t)+\sum_{1\leq j\leq m-1}\mathcal{D}^{k}_{f^{j,\epsilon},N_j}(t)\right\}^\frac{k+\varrho}{k+\varrho+1}\times\left\{\Xi_{R}(t)\right\}^\frac{1}{k+\varrho+1},
\end{aligned}
\end{equation*}
which combing with \eqref{Lemma1-1-R-1} yields that
\begin{equation*}
\begin{split}\
&\frac{d}{dt}\left\{\mathcal{E}^{k}_{f^{m,\epsilon},N^0_m}(t)+\mathcal{E}^{k}_{f^{P,\epsilon},N_P}(t)+\sum_{1\leq j\leq m-1}\mathcal{E}^{k}_{f^{j,\epsilon},N_j}(t)\right\}\\
&+\left\{\mathcal{E}^{k}_{f^{m,\epsilon},N^0_m}(t)+\mathcal{E}^{k}_{f^{P,\epsilon},N_P}(t)+\sum_{1\leq j\leq m-1}\mathcal{E}^{k}_{f^{j,\epsilon},N_j}(t)\right\}^{1+\frac{1}{k+\varrho}}
\times\left\{\Xi_{R}(t)\right\}^{-\frac{1}{k+\varrho}}
\leq 0.
\end{split}
\end{equation*}

Solving the above inequality directly gives \eqref{lemma-decay-f-R},
this completes the proof of Lemma \ref{Lemma1-R}.
\end{proof}

Based on the above lemma and by applying Lemma 3.6 of \cite{Duan-Lei-Yang-Zhao-CMP-2017} with straightforward modification, we can further obtain the following temporal decay rates of $\mathcal{E}^k_{f^{m,\epsilon},N_m^0,\ell_m,-\gamma}(t)$.
\begin{lemma}\label{lemma2-R}
Under the assumption of the above lemma, let $\ell_m\geq N_P$ and
  $\ell_{m-1}\geq \ell_m, \ell_{j-1}\geq \ell_j$ for $2\leq j\leq m-1$, $\ell^{P}\geq \ell_1$, furthermore, if
\begin{eqnarray*}
&&\Xi_{R,weight}(t)\nonumber\\
&\equiv&\max\left\{
\sup_{0\leq\tau\leq t}\mathcal{\overline{E}}_{f^{m,\epsilon},N^0_m,\ell_m+\widehat{l}_m,-\gamma}(\tau),\sup_{0\leq\tau\leq t}\mathcal{E}_{f^{m,\epsilon},N_m,\ell_m,-\gamma}(\tau),\sup_{0\leq\tau\leq t}\mathcal{E}_{f^{m,\epsilon},N_m+1}(\tau),\right.\nonumber\\
&&\left.\sup_{0\leq\tau\leq t}\mathcal{\overline{E}}_{f^{P,\epsilon},N_P,\ell_P+\widehat{l}_P,-\gamma}(\tau),\sum_{1\leq j\leq m-1}\sup_{0\leq\tau\leq t}\mathcal{\overline{E}}_{f^{j,\epsilon},N_j,\ell_j+\widehat{l}_j,-\gamma}(\tau)
\right\}
\end{eqnarray*}
is sufficiently small, then
one has for $k\leq N_m^0-3$,
\begin{equation}\label{lemma3-1-R}
 \begin{split}
\mathcal{E}^k_{f^{m,\epsilon},N^0_m,{\ell_{R}},-\gamma}(t)+\sum_{0<j<m}\mathcal{E}^k_{f^{j,\epsilon},N_j,{\ell_{j}},-\gamma}(t)
+\mathcal{E}^k_{f^{P,\epsilon},N_P,\ell^{P},-\gamma}(t)
\lesssim& \epsilon^{-2(k+\varrho)}(1+t)^{-(k+\varrho)}\Xi_{R,weight}(t).
\end{split}
\end{equation}
\end{lemma}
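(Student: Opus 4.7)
The plan is to run the same weighted interpolation scheme used to prove Lemma \ref{Lemma1-R}, but now applied to the family of weighted energy functionals, and then to close the decay estimate by feeding in the unweighted temporal decay already obtained in Lemma \ref{Lemma1-R}. First, by repeating the derivation of the estimates \eqref{lemma-f-R-N-R} and \eqref{lemma-f-m-N-R-0-low-1} but taking only derivatives $\partial^\alpha$ with $|\alpha|\geq k$ (and combining with the corresponding estimates for the $f^{P,\epsilon}$ and $f^{j,\epsilon}$ pieces, which were established in the preceding sections of the paper), one obtains a Lyapunov-type inequality of the schematic form
\begin{equation*}
\frac{d}{dt}\mathcal{F}^k_\ell(t)+\mathcal{G}^k_\ell(t)\lesssim \text{(small)}\cdot \mathcal{G}^k_\ell(t)+\mathcal{R}^k_\ell(t),
\end{equation*}
where
\begin{equation*}
\mathcal{F}^k_\ell(t)=\mathcal{E}^k_{f^{m,\epsilon},N^0_m,\ell_m,-\gamma}(t)+\sum_{0<j<m}\mathcal{E}^k_{f^{j,\epsilon},N_j,\ell_j,-\gamma}(t)+\mathcal{E}^k_{f^{P,\epsilon},N_P,\ell^P,-\gamma}(t),
\end{equation*}
$\mathcal{G}^k_\ell(t)$ is the corresponding weighted dissipation rate functional, and the remainder $\mathcal{R}^k_\ell(t)$ collects the nonlinear electromagnetic contributions. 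Under the smallness of $\Xi_{R,weight}(t)$ these nonlinear terms can be absorbed by $\mathcal{G}^k_\ell(t)$ together with extra pieces whose size is controlled by the unweighted decay from Lemma \ref{Lemma1-R}.

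The heart of the argument is an interpolation between the weighted dissipation $\mathcal{G}^k_\ell(t)$, the higher-weight energy $\mathcal{F}^k_{\ell+\widehat{l}}(t)\le\Xi_{R,weight}(t)$, and the unweighted low-order norms already bounded by Lemma \ref{Lemma1-R}. Following the philosophy of \cite{Duan-Lei-Yang-Zhao-CMP-2017,Duan_Liu-Yang_Zhao-KRM-2013}, for the macroscopic components one uses the negative-Sobolev/$L^2$ interpolation (as in the proof of Lemma \ref{Lemma1-R}) at order $k+\varrho$; for the microscopic components one uses the H\"older interpolation
\begin{equation*}
\left\|w_{\ell-|\beta|,-\gamma}\partial^\alpha_\beta\{{\bf I-P}\}f\right\|\lesssim \left\|w_{\ell-|\beta|,-\gamma}\partial^\alpha_\beta\{{\bf I-P}\}f\right\|_\nu^{\theta_k}\left\|w_{\ell-|\beta|+\widehat{l},-\gamma}\partial^\alpha_\beta\{{\bf I-P}\}f\right\|^{1-\theta_k}
\end{equation*}
with $\theta_k=(k+\varrho)/(k+\varrho+1)$, exactly as in Lemma \ref{Lemma1-R}; this uses $\widehat l_{\cdot}$ large enough (the conditions $\widehat l_m>\tfrac{N_P+\varrho}{2}$, etc.\ enter precisely here) so that the higher-weight norm is controlled by $\Xi_{R,weight}(t)$. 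Combining these, one arrives at
\begin{equation*}
\mathcal{F}^k_\ell(t)\lesssim \bigl\{\mathcal{G}^k_\ell(t)\bigr\}^{\theta_k}\bigl\{\Xi_{R,weight}(t)\bigr\}^{1-\theta_k}+\epsilon^{-2(k+\varrho)}(1+t)^{-(k+\varrho)}\Xi_R(t),
\end{equation*}
the last term arising because the spatial $\nabla_x{\bf P}f$ and the electromagnetic $\epsilon\nabla_x[E^{m,\epsilon},B^{m,\epsilon}]$ pieces of $\mathcal{F}^k_\ell$ carry no velocity weight and so are directly controlled by the unweighted decay \eqref{lemma-decay-f-R}.

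Inserting this back into the Lyapunov inequality gives the ODE
\begin{equation*}
\frac{d}{dt}\mathcal{F}^k_\ell(t)+c\,\bigl\{\Xi_{R,weight}(t)\bigr\}^{-1/(k+\varrho)}\bigl\{\mathcal{F}^k_\ell(t)-C\epsilon^{-2(k+\varrho)}(1+t)^{-(k+\varrho)}\Xi_R(t)\bigr\}^{1+1/(k+\varrho)}\le 0,
\end{equation*}
which upon integration yields \eqref{lemma3-1-R} for any $k\le N_m^0-3$; the restriction $k\le N_m^0-3$ is used precisely to ensure that the spatial derivatives appearing in the interpolations lie within the range covered by the a priori smallness of $\Xi_{R,weight}(t)$. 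The main obstacle I expect is a bookkeeping one: one must verify that in all the nonlinear estimates from Sections 3.3.3--3.3.4 (in particular \eqref{gamma-weight-1}--\eqref{no-weight-2}), the weight loss incurred upon applying H\"older is consistent with the chain of inequalities $\ell_m\le \ell_m+\widehat l_m$, $\ell_{j-1}\ge\ell_j$, $\ell^P\ge\ell_1$ listed in the hypothesis, so that every ``bad'' term can be traded for either $\mathcal{G}^k_\ell(t)$, the higher-weighted energy $\Xi_{R,weight}(t)$, or the already-decaying $\Xi_R(t)$-controlled unweighted quantity; once these bookkeeping inequalities are checked, the Gronwall step is routine.
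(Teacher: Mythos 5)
Your proposal is essentially the same approach the paper intends: the paper's own justification is a one-line citation to Lemma 3.6 of \cite{Duan-Lei-Yang-Zhao-CMP-2017}, which is exactly the Lyapunov-inequality plus weighted-interpolation plus Gronwall scheme you describe, and the key interpolation for the microscopic piece with exponent $\theta_k=(k+\varrho)/(k+\varrho+1)$ and higher weight $\widehat{l}\ge(k+\varrho)/2$ is precisely what that lemma uses. One small cleanup worth making: the differential inequality you wrote, with $\bigl\{\mathcal{F}^k_\ell(t)-C\epsilon^{-2(k+\varrho)}(1+t)^{-(k+\varrho)}\Xi_R(t)\bigr\}^{1+1/(k+\varrho)}$ inside, is awkward since the argument could be negative; the cleaner route (and the one matching the stated conclusion, which involves only $\Xi_{R,\textrm{weight}}$) is to note $\Xi_R(t)\lesssim\Xi_{R,\textrm{weight}}(t)$, fold the EM and macroscopic contributions into the same interpolation as in Lemma \ref{Lemma1-R} so that $\mathcal{F}^k_\ell\le\{\mathcal{G}^k_\ell\}^{\theta_k}\{\epsilon^{-2(k+\varrho)}\Xi_{R,\textrm{weight}}\}^{1-\theta_k}$ directly, and then integrate $\frac{d}{dt}\mathcal{F}^k_\ell+\{\mathcal{F}^k_\ell\}^{1+1/(k+\varrho)}\{\epsilon^{-2(k+\varrho)}\Xi_{R,\textrm{weight}}\}^{-1/(k+\varrho)}\le 0$.
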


\subsection{Lyapunov-type inequalities for some weighted estimates with the weight $w_{l_m^*-|\beta|,1}(t,v)$}
Now we are ready to get some Lyapunov-type inequalities for some weighted estimates with the weight $w_{l_m^*-|\beta|,1}(t,v)$. It is worth to pointing out that, unlike the estimates obtained in Lemma
\ref{lemma-f-m-low} which lead to the increase of the weights in the corresponding terms in the right hand side of the estimate \eqref{lemma-f-m-N-R-0-low-1}, the estimates we obtained in the coming two lemmas will not lead to the growth of the weights.
\begin{lemma}\label{lemma-f-i-high}
Assume that
\begin{equation}\label{Assume-f-R-high}
\sup_{0\leq \tau\leq t}\left\{(1+\tau)^{1+\vartheta}\sum_{1\leq j_1\leq m-1}\left\| \left[E^{P,\epsilon}, E^{j_1,\epsilon},\epsilon^m E^{m,\epsilon}\right](\tau)\right\|_{L^\infty_x}\right\}
\end{equation}
is sufficiently small,
then one has
\begin{eqnarray}\label{lemma-f-R-high}
 &&\frac{d}{dt}\left\{\sum_{N_m^0+1\leq n\leq N_m+1,\atop 0\leq|\beta|\leq n }(1+t)^{-\sigma_{n,|\beta|}}\mathcal{E}^{(n,|\beta|)}_{f^{m,\epsilon},n,l_m^*,1}(t)\right\}\nonumber\\
&&  +\sum_{N_m^0+1\leq n\leq N_m+1,\atop 0\leq|\beta|\leq n }\left\{(1+t)^{-\sigma_{n,|\beta|}}\mathcal{E}^{(n,|\beta|)}_{f^{m,\epsilon},n,l_m^*,1}(t)
+(1+t)^{-\sigma_{n,|\beta|}}\mathcal{D}^{(n,|\beta|)}_{f^{m,\epsilon},n,l_m^*,1}(t)\right\}\\
&\lesssim&\sum_{N_m^0+1\leq n\leq N_m+1,\atop |\alpha|+|\beta|=n}\frac1{4C_\beta}\sum_{i=1}^3\frac{d}{dt}\left\|\left\langle \partial_{\beta}\left\{w^2_{l^*_m-|\beta|,1}\partial_\beta\left[v_i \mu^{1/2}\right]\right\}, \partial^\alpha{\{\bf I_+-P_+\}}f^{m,\epsilon}(t)-\partial^\alpha{\{\bf I_--P_-\}}f^{m,\epsilon}(t)\right\rangle\right\|^2 \nonumber\\
&&+\sum_{N_m^0+1\leq n\leq N_m+1,\atop 0\leq|\beta|\leq n }(1+t)^{-\sigma_{N_m+1,|\beta|}+2+2\vartheta}\nonumber\\
&&\times\left\{\left\|\nabla_x\left[E^{P,\epsilon},E^{j_1,\epsilon},\epsilon^m
 E^{m,\epsilon},\epsilon^{m+1} B^{m,\epsilon}\right](t)\right\|^2_{L^\infty_x}
  +\left\|\nabla^3_x\left[E^{P,\epsilon},E^{j_1,\epsilon},\epsilon^m
 E^{m,\epsilon},\epsilon^{m+1} B^{m,\epsilon}\right](t)\right\|^2_{H^{N_m-3}_x}\right\}\nonumber\\
&&\times\sum_{0\leq k\leq \min\{|\beta|+1,n\},\atop k\leq j\leq n}\mathcal{D}^{(j,k)}_{f^{m,\epsilon},j,l^*_m,1}(t)\nonumber\\
&&+\left\{\mathcal{E}_{f^{P,\epsilon},N_m+1,-\frac{l_m^*}\gamma,-\gamma}(t)
+\sum_{1\leq j_1\leq m-1}\mathcal{E}_{f^{j_1,\epsilon},N_m+1,-\frac{l_m^*}\gamma,-\gamma}(t)\right\}\mathcal{D}_{f^{m,\epsilon},N_m+1}(t)\nonumber\\
 &&+
  \mathcal{E}_{f^{m,\epsilon},N_m+1}(t)\left\{\mathcal{D}_{f^{P,\epsilon},N_m+2,N_m+3-\frac{l^*_m+1-N_m}\gamma,-\gamma}(t)+\sum_{i=1}^{m-1}\mathcal{D}_{f^{i,\epsilon},N_m+2,N_m+2-\frac{l^*_m+1-N_m}\gamma,-\gamma}(t)\right\}\nonumber\\
 &&+\left\{\mathcal{E}_{f^{P,\epsilon},N_m+1,-\frac{l_m^*}\gamma,-\gamma}(t)
   +\sum_{1\leq j_1\leq m-1}\mathcal{E}_{f^{j_1,\epsilon},N_m+1,-\frac{l_m^*}\gamma,-\gamma}(t)\right\}
\sum_{0\leq k\leq |\beta|\atop k\leq j\leq n}\mathcal{D}^{(j,k)}_{f^{m,\epsilon},j,l_m^*,1}(t)\nonumber\\
  &&+\sum_{N_m^0+1\leq n\leq N_m+1,\atop 0\leq|\beta|\leq n }(1+t)^{-\sigma_{N_m+1,|\beta|}}\nonumber\\
  &&\times\sum_{0\leq k\leq |\beta|\atop k\leq j\leq n}\mathcal{E}^{(j,k)}_{f^{m,\epsilon},j,l_m^*,1}(t)\left\{\mathcal{D}_{f^{P,\epsilon},N_m+1,-\frac{l_m^*}\gamma,-\gamma}(t)
  +\sum_{1\leq j_1\leq m-1}
  \mathcal{D}_{f^{j_1,\epsilon},N_m+1,-\frac{l_m^*}\gamma,-\gamma}(t)\right\}\nonumber\\
   &&+\sum_{j_1+j_2\geq m,\atop 0<j_1,j_2<m}\left\{\mathcal{E}_{f^{j_2,\epsilon},N_m+2,-\frac{l_m^*+2}\gamma+\frac12,-\gamma}(t)
    \mathcal{D}_{f^{j_1,\epsilon},N_m+1,-\frac{l_m^*+2}\gamma+\frac12,-\gamma}(t)\right\}\nonumber\\
&&+\sum_{N_m^0+1\leq n\leq N_m+1,\atop 0\leq|\beta|\leq n }(1+t)^{-\sigma_{N_m+1,|\beta|}}\mathcal{E}_{f^{m,\epsilon},N_m,N_m,-\gamma}(t)\sum_{0\leq k\leq |\beta|\atop k\leq j\leq N_m+1}\mathcal{D}^{(j,k)}_{f^{m,\epsilon},j,l_m^*,1}(t)\nonumber\\
  &&+\sum_{N_m^0+1\leq n\leq N_m+1,\atop 0\leq|\beta|\leq n }(1+t)^{-\sigma_{N_m+1,|\beta|}}\sum_{k_1+k_2\leq |\beta|}\left\{\sum_{0\leq j_1\leq n}\mathcal{E}^{(j_1,k_1)}_{f^{m,\epsilon},j_1,l_m^*,1}(t)
  \times\sum_{0\leq j_2\leq n}\mathcal{D}^{(j_2,k_1)}_{f^{m,\epsilon},j_2,l_m^*,1}(t)\right\}\nonumber\\
 &&+\mathcal{E}_{f^{m,\epsilon},N_m+1}(t)\mathcal{E}^1_{f^{m,\epsilon},N^0_m,l^*_m+1-\frac\gamma2,1}(t)+\mathcal{E}_{f^{m,\epsilon},N_m,N_m,-\gamma}(t)\mathcal{D}_{f^{m,\epsilon},N_m+1}(t)
 +\mathcal{D}_{f^{m,\epsilon},N_m+1}(t).\nonumber
\end{eqnarray}
\end{lemma}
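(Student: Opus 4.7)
The plan is to prove this lemma by energy estimates organized by the order of derivatives $n=|\alpha|+|\beta|$ and the velocity-derivative count $|\beta|$, analogous to the strategy used for Lemma \ref{lemma-f-m} but now with the weight $w_{l_m^*-|\beta|,1}(t,v)$ and the time-decay multiplier $(1+t)^{-\sigma_{n,|\beta|}}$. For the case $|\beta|=0$ with $N_m^0+1\leq |\alpha|\leq N_m+1$, I will apply $\partial^\alpha$ to equation \eqref{mic-R-equation}, take the $L^2_{x,v}$ inner product with $w^2_{l_m^*,1}\partial^\alpha f^{m,\epsilon}$, and invoke \eqref{E-R-nonhard-2} to dispose of the troublesome term $\partial^\alpha\{E^{m,\epsilon}\cdot v\mu^{1/2}q_1\}$ through the magnetic-corrected quantity (this produces the $\frac{d}{dt}$ inner-product term on the right hand side of \eqref{lemma-f-R-high}). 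For the case $|\beta|\geq 1$, I will apply $\partial^\alpha_\beta$ to the microscopic equation \eqref{mic-I-P-equation}, pair with $w^2_{l_m^*-|\beta|,1}\partial^\alpha_\beta\{{\bf I-P}\}f^{m,\epsilon}$, and again handle the electromagnetic linear term by \eqref{E-R-nonhard-1}.

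The second step is to multiply the resulting differential inequality by $(1+t)^{-\sigma_{n,|\beta|}}$ and absorb the contribution $\sigma_{n,|\beta|}(1+t)^{-\sigma_{n,|\beta|}-1}\mathcal{E}^{(n,|\beta|)}_{f^{m,\epsilon},n,l_m^*,1}(t)$ arising from differentiating the time factor into the dissipation functional, using the fact that the exponential part of $w_{\ell-|\beta|,1}(t,v)$ provides the extra dissipation $(1+t)^{-1-\vartheta}\|w_{l_m^*-|\beta|,1}\partial^\alpha_\beta\{{\bf I-P}\}f^{m,\epsilon}\langle v\rangle\|^2$. For the nonlinear terms involving interactions of $f^{m,\epsilon}$ with $f^{P,\epsilon}$, $f^{j,\epsilon}$, the Lorentz terms, and $\Gamma$, I will directly invoke Lemmas \ref{lemma-high-E-i} and \ref{lemma-high-E-R} together with the standard weighted collision estimates; these produce exactly the terms listed on the right hand side of \eqref{lemma-f-R-high}, with the factor $(1+t)^{-\sigma_{n,|\beta|}+2+2\vartheta}$ coming from pulling out a $(1+t)^{1+\vartheta}$ from each of two copies of the electromagnetic sup-norm (which is how the decay rate feeds into the estimate).

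The principal obstacle, and the reason for introducing the specific ladder \eqref{sigma-relation} of time exponents $\sigma_{n,k}-\sigma_{n,k-1}=\frac{2(1+\gamma)}{\gamma-2}(1+\vartheta)$, is the linear transport contribution $\partial^\alpha_\beta(v\cdot\nabla_x\{{\bf I-P}\}f^{m,\epsilon})$: when one velocity derivative is converted into a spatial derivative via the multiplication by $v$, the resulting term carries weight $w_{l_m^*-|\beta-e_j|,1}$ with one fewer velocity derivative, so it does not fit into $\mathcal{D}^{(n,|\beta|)}_{f^{m,\epsilon},n,l_m^*,1}(t)$ at the same time-rate. This mismatch is resolved by \eqref{linear-fi-high-R}, which shows that after a log-convex interpolation between the $\langle v\rangle$- and $\langle v\rangle^{\gamma/2}$-weighted norms, the transport term at rate $\sigma_{n,|\beta|}$ can be bounded by $\mathcal{D}^{(n,|\beta|-1)}_{f^{m,\epsilon},n,l_m^*,1}(t)$ at the next lower rate $\sigma_{n,|\beta|-1}$ plus a small multiple of the target dissipation --- provided the gap $\sigma_{n,|\beta|}-\sigma_{n,|\beta|-1}$ is chosen to be exactly the ratio prescribed by \eqref{sigma-relation}.

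Finally, I will take the sum over $N_m^0+1\leq n\leq N_m+1$ and $0\leq |\beta|\leq n$, with properly chosen (sufficiently small) linear-combination constants to close the cascade created by the transport term: each term at level $(n,|\beta|)$ is absorbed by level $(n,|\beta|-1)$, which in turn is controlled by $(n,|\beta|-2)$, and so on down to $|\beta|=0$, where $\sigma_{n,0}$ is set to $0$ for $n\leq N_m$ and $\frac{1+\epsilon_0}{2}$ for $n=N_m+1$ so that the base level is controlled by $\mathcal{D}_{f^{m,\epsilon},N_m+1}(t)$. The smallness assumption \eqref{Assume-f-R-high} on $(1+\tau)^{1+\vartheta}\|[E^{P,\epsilon},E^{j_1,\epsilon},\epsilon^m E^{m,\epsilon}](\tau)\|_{L^\infty_x}$ is what allows the $\|E\|_{L^\infty_x}\|w\partial^\alpha f^{m,\epsilon}\langle v\rangle^{1/2}\|^2$ contributions from Lemmas \ref{lemma-high-E-i}--\ref{lemma-high-E-R} to be absorbed into the extra dissipation $(1+t)^{-1-\vartheta}\|w\partial^\alpha f^{m,\epsilon}\langle v\rangle\|^2$ rather than appearing on the right hand side.
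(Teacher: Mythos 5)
Your overall plan mirrors the paper's: decompose by the pair $(n,|\beta|)$, multiply by $(1+t)^{-\sigma_{n,|\beta|}}$, use the transport-ladder estimate \eqref{linear-fi-high-R} to cascade from $(n,|\beta|)$ down to $(n,|\beta|-1)$, invoke Lemmas \ref{lemma-high-E-i}--\ref{lemma-high-E-R} for the Lorentz/field nonlinearities, handle $\mathcal{W}_5$-type electromagnetic linear terms via \eqref{E-R-nonhard-1} to obtain the $\frac{d}{dt}$ inner-product terms, and absorb the $\|E\|_{L^\infty_x}\|w\partial^\alpha_\beta f^{m,\epsilon}\langle v\rangle^{1/2}\|^2$ contributions into the exponential-weight dissipation using \eqref{Assume-f-R-high}. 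All of this matches what the paper does.

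There is, however, a concrete gap in your treatment of the case $|\beta|=0$, $|\alpha|=N_m+1$. You propose to handle the term $\left(\partial^\alpha\{E^{m,\epsilon}\cdot v\mu^{1/2}q_1\},w^2_{l_m^*,1}\partial^\alpha f^{m,\epsilon}\right)$ by invoking \eqref{E-R-nonhard-2}, but that estimate is stated for $|\alpha|\leq N-1$ and, crucially, its right-hand side contains $\left\|\nabla^{|\alpha|+1}f^{m,\epsilon}(t)\right\|^2_\nu$. At $|\alpha|=N_m+1$ this is $\left\|\nabla^{N_m+2}f^{m,\epsilon}\right\|^2_\nu$, which is outside the energy framework (the a priori norm $\Xi_{\textrm{Total}}$ only controls up to $N_m+1$ spatial derivatives). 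The paper avoids this by making the otherwise-unmotivated choice $\sigma_{N_m+1,0}=\frac{1+\epsilon_0}{2}>0$ while $\sigma_{n,0}=0$ for $n\leq N_m$: for $|\alpha|=N_m+1$, $\beta=0$, it does \emph{not} use \eqref{E-R-nonhard-2}. Instead, it multiplies the zeroth-$\beta$ estimate by $(1+t)^{-\frac{1+\epsilon_0}{2}}$ and then bounds $\mathcal{W}_1$ directly by Cauchy--Schwarz, producing $\eta(1+t)^{-1-\epsilon_0}\|\nabla^{N_m+1}E^{m,\epsilon}\|^2+\|\nabla^{N_m+1}f^{m,\epsilon}\|^2_\nu$; the field piece is then controlled by a separate auxiliary estimate (cf. \eqref{f-R-end-6} in the proof of Lemma \ref{lemma9}). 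Your proposal leaves both the reason for $\sigma_{N_m+1,0}\neq 0$ and the fate of $\|\nabla^{N_m+1}E^{m,\epsilon}\|^2$ unexplained, and the substitution you offer is not available at top order. For $N_m^0+1\leq n\leq N_m$ and for $|\beta|\geq 1$ at $n=N_m+1$, your use of \eqref{E-R-nonhard-2} and \eqref{E-R-nonhard-1} respectively is fine.
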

\begin{proof}

\noindent{\bf Step 1:}
Firstly, performing $w^2_{l^*_m,1}\partial^\alpha f^{m,\epsilon}\times\partial^\alpha\eqref{f-R-vector}$ with $|\alpha|= N_m+1$ and integrating the resulting identity over $\mathbb{R}^3_v\times\mathbb{R}^3_x$, one has

\begin{eqnarray}\label{0-weight-alpha-high-R}
&&  \frac{d}{dt}\|w_{l^*_m,1}\partial^\alpha f^{m,\epsilon}\|^2+\|w_{l^*_m,1}\partial^\alpha f^{m,\epsilon}\|^2_{\nu}+\frac{q\vartheta}{(1+t)^{1+\vartheta}}\|w_{l^*_m,1}\partial^\alpha f^{m,\epsilon}\langle v\rangle\|^2\nonumber\\
&\lesssim&\|\partial^\alpha f^{m,\epsilon}\|_\nu^2+\underbrace{\left|\left(\partial^\alpha\left\{ E^{m,\epsilon} \cdot v \mu^{1/2}q_1\right\},w^2_{l^*_m,1}(\alpha,0)\partial^\alpha f^{m,\epsilon}\right)\right|
}_{\mathcal{W}_1}\nonumber\\
&&+\underbrace{\left|\left(\partial^\alpha \left(I_{mix}(t)+I_{pure}(t)\right),w^2_{l^*_m,1}(\alpha,0)\partial^\alpha f^{m,\epsilon}\right)\right|}_{\mathcal{W}_{2}}\nonumber\\
&&+\underbrace{\left|\left(\partial^\alpha\left(\epsilon q_0\left\{v\times \left\{B^{P}
+\sum_{j_1=1}^{m-1}\epsilon^{j_1}B^{j_1}\right\}\right\}\cdot\nabla_vf^{m,\epsilon}\right),w^2_{l^*_m,1}(\alpha,0)\partial^\alpha f^{m,\epsilon}\right)\right|}_{\mathcal{W}_{3}}.
\end{eqnarray}
For $\mathcal{W}_1$, multiplying a time factor $(1+t)^{-\frac{1+\epsilon_0}2}$ implies
\begin{eqnarray}
(1+t)^{-\frac{1+\epsilon_0}2}\mathcal{W}_1&\lesssim&(1+t)^{-\frac{1+\epsilon_0}2}\left|\left(\partial^\alpha\left\{ E^{m,\epsilon} \cdot v \mu^{1/2}q_1\right\},w^2_{l^*_m,1}(\alpha,0)\partial^\alpha f^{m,\epsilon}\right)\right|\nonumber\\
&\lesssim&\eta(1+t)^{-1-\epsilon_0}\| \nabla^{N_m+1}E^{m,\epsilon}\|^2+\|\nabla^{N_m+1} f^{m,\epsilon}\|^2_\nu.
\end{eqnarray}
For $\mathcal{W}_2$, setting \eqref{lemma-f-R-2}, \eqref{i-order-E-R-B},\eqref{i-order-E-R-12} with $\kappa=1$, which combining with \eqref{1-w-typical-high-R}, \eqref{1-w-typical-high-R-1}, \eqref{1-w-typical-high-non-R}, \eqref{1-w-typical-high-non-1-R}, and \eqref{i-weight-gamma-non-R} imply that
\begin{eqnarray}
  &&\mathcal{W}_{2}\nonumber\\
   &\lesssim&\sum_{1\leq j_1\leq m-1}\left\| [E^{P,\epsilon},E^{j_1,\epsilon},\epsilon^mE^{m,\epsilon}]\right\|_{L^\infty_x}\left\|w_{l^*_m,1}\partial^\alpha f^{m,\epsilon}\langle v\rangle^{\frac12}\right\|^2\nonumber\\
  &&+(1+t)^{2+2\vartheta}\nonumber\\
  &&\times\left\{\left\|\nabla_x[E^{P,\epsilon},E^{j_1,\epsilon},\epsilon^m
 E^{m,\epsilon},\epsilon^{m+1} B^{m,\epsilon}]\right\|^2_{L^\infty_x}
  +\left\|\nabla^3_x[E^{P,\epsilon},E^{j_1,\epsilon},\epsilon^m
 E^{m,\epsilon},\epsilon^{m+1} B^{m,\epsilon}]\right\|^2_{H^{N_m-3}_x}\right\}\nonumber\\
&&\times\sum_{0\leq k\leq 1,\atop k\leq j\leq |\alpha|}\mathcal{D}^{(j,k)}_{f^{m,\epsilon},j,l^*_m,1}(t)
\nonumber\\
  &&+\left\{\mathcal{E}_{f^{P,\epsilon},N_m+1,-\frac{l_m^*}\gamma,-\gamma}(t)
+\sum_{1\leq j_1\leq m-1}\mathcal{E}_{f^{j_1,\epsilon},N_m+1,-\frac{l_m^*}\gamma,-\gamma}(t)\right\}\mathcal{D}_{f^{m,\epsilon},N_m+1}(t)\nonumber\\
 &&+
  \mathcal{E}_{f^{m,\epsilon},N_m+1}(t)\left\{\mathcal{D}_{f^{P,\epsilon},N_m+2,N_m+3-\frac{l^*_m+1-N_m}\gamma,-\gamma}(t)+\sum_{i=1}^{m-1}\mathcal{D}_{f^{i,\epsilon},N_m+2,N_m+2-\frac{l^*_m+1-N_m}\gamma,-\gamma}(t)\right\}\nonumber\\
 &&+\left\{\mathcal{E}_{f^{P,\epsilon},N_m+1,-\frac{l_m^*}\gamma,-\gamma}(t)
   +\sum_{1\leq j_1\leq m-1}\mathcal{E}_{f^{j_1,\epsilon},N_m+1,-\frac{l_m^*}\gamma,-\gamma}(t)\right\}
 \sum_{0\leq j\leq |\alpha|}\mathcal{D}^{(j,0)}_{f^{m,\epsilon},j,l_m^*,1}(t)\nonumber\\
  &&+\sum_{0\leq j\leq |\alpha|}\mathcal{E}^{(j,0)}_{f^{m,\epsilon},j,\ell_m^*,1}(t)\left\{\mathcal{D}_{f^{P,\epsilon},N_m+1,-\frac{l_m^*}\gamma,-\gamma}(t)
  +\sum_{1\leq j_1\leq m-1}
  \mathcal{D}_{f^{j_1,\epsilon},N_m+1,-\frac{l_m^*}\gamma,-\gamma}(t)\right\}\nonumber\\
   &&+\sum_{j_1+j_2\geq m,\atop 0<j_1,j_2<m}\left\{\mathcal{E}_{f^{j_2,\epsilon},N_m+2,-\frac{l_m^*+2}\gamma+\frac12,-\gamma}(t)
    \mathcal{D}_{f^{j_1,\epsilon},N_m+1,-\frac{l_m^*+2}\gamma+\frac12,-\gamma}(t)\right\}\\
  &&+\mathcal{E}_{f^{m,\epsilon},N_m+1}(t)\mathcal{E}^1_{f^{m,\epsilon},N^0_m,l^*_m+1-\frac\gamma2,1}(t)+\eta\left\|\partial^\alpha f^{m,\epsilon}\right\|_\nu^2+\eta(1+t)^{-1-\vartheta}
\left\|w_{l^*_m,1}\partial^\alpha f^{m,\epsilon}\langle v\rangle^{\frac{1}2}\right\|^2\nonumber\\
&&+\mathcal{E}_{f^{m,\epsilon},N_m,N_m,-\gamma}(t)\left\{\mathcal{D}_{f^{m,\epsilon},N_m+1}(t)+\sum_{0\leq j\leq |\alpha|}\mathcal{D}^{(j,0)}_{f^{m,\epsilon},j,l^*_m,1}(t)\right\}\nonumber\\
   &&+\chi_{|\alpha|=1}\sum_{1\leq j\leq 2}\mathcal{E}^{(j,0)}_{f^{m,\epsilon},j,l_m^*,1}(t)\sum_{1\leq j\leq 2}\mathcal{D}^{(j,0)}_{f^{m,\epsilon},j,l^*_m,1}(t)\nonumber\\
  &&+\chi_{|\alpha|\geq 2}\sum_{0\leq j_1\leq |\alpha|}\mathcal{E}^{(j_1,0)}_{f^{m,\epsilon},j_1,l_m^*,1}(t)
  \times\sum_{0\leq j_2\leq |\alpha|}\mathcal{D}^{(j_2,0)}_{f^{m,\epsilon},j_2,l_m^*,1}(t).\nonumber
    \end{eqnarray}
  For $\mathcal{W}_3$, it is easy to see that
  \[\mathcal{W}_3=0.\]
For later simplicity in presentation, we use
\begin{equation}
  \sigma_{N_m+1,0}=\frac{1+\epsilon_0}2,
\end{equation}
thus we get
\begin{eqnarray}\label{0-weight-alpha-high-1-R}
&&  \frac{d}{dt}\left\{(1+t)^{-\sigma_{N_m+1,0}}
\|w_{l^*_m,1}\partial^\alpha f^{m,\epsilon}\|^2\right\}
+(1+t)^{-1-\sigma_{N_m+1,0}}
\|w_{l^*_m,1}\partial^\alpha f^{m,\epsilon}\|^2\nonumber\\
&&+(1+t)^{-\sigma_{N_m+1,0}}\|w_{l^*_m,1}\partial^\alpha f^{m,\epsilon}\|^2_{\nu}+(1+t)^{-\sigma_{N_m+1,0}}\frac{q\vartheta}{(1+t)^{1+\vartheta}}\|w_{l^*_m,1}\partial^\alpha f^{m,\epsilon}\langle v\rangle\|^2\nonumber\\
 &\lesssim&(1+t)^{-\sigma_{N_m+1,0}}(1+t)^{2+2\vartheta}\nonumber\\
 &&\times\left\{\left\|\nabla_x[E^{P,\epsilon},E^{j_1,\epsilon},\epsilon^m
 E^{m,\epsilon},\epsilon^{m+1} B^{m,\epsilon}]\right\|^2_{L^\infty_x}
  +\left\|\nabla^3_x[E^{P,\epsilon},E^{j_1,\epsilon},\epsilon^m
 E^{m,\epsilon},\epsilon^{m+1} B^{m,\epsilon}]\right\|^2_{H^{N_m-3}_x}\right\}\nonumber\\
&&\times\sum_{0\leq k\leq 1,\atop k\leq j\leq |\alpha|}\mathcal{D}^{(j,k)}_{f^{m,\epsilon},j,\ell^*_m,1}(t)
\nonumber\\
  &&+\left\{\mathcal{E}_{f^{P,\epsilon},N_m+1,-\frac{l_m^*}\gamma,-\gamma}(t)
+\sum_{1\leq j_1\leq m-1}\mathcal{E}_{f^{j_1,\epsilon},N_m+1,-\frac{l_m^*}\gamma,-\gamma}(t)\right\}\mathcal{D}_{f^{m,\epsilon},N_m+1}(t)\nonumber\\
 &&+
  \mathcal{E}_{f^{m,\epsilon},N_m+1}(t)\left\{\mathcal{D}_{f^{P,\epsilon},N_m+2,N_m+3-\frac{l^*_m+1-N_m}\gamma,-\gamma}(t)+\sum_{i=1}^{m-1}\mathcal{D}_{f^{i,\epsilon},N_m+2,N_m+2-\frac{l^*_m+1-N_m}\gamma,-\gamma}(t)\right\}\nonumber\\
 &&+(1+t)^{-\sigma_{N_m+1,0}}\nonumber\\
  &&\left\{\mathcal{E}_{f^{P,\epsilon},N_m+1,-\frac{l_m^*}\gamma,-\gamma}(t)
   +\sum_{1\leq j_1\leq m-1}\mathcal{E}_{f^{j_1,\epsilon},N_m+1,-\frac{l_m^*}\gamma,-\gamma}(t)\right\}
 \sum_{0\leq j\leq |\alpha|}\mathcal{D}^{(j,0)}_{f^{m,\epsilon},j,l_m^*,1}(t)\nonumber\\
  &&+(1+t)^{-\sigma_{N_m+1,0}}\nonumber\\
  &&\times\sum_{0\leq j\leq |\alpha|}\mathcal{E}^{(j,0)}_{f^{m,\epsilon},j,\ell_m^*,1}(t)\left\{\mathcal{D}_{f^{P,\epsilon},N_m+1,-\frac{l_m^*}\gamma,-\gamma}(t)
  +\sum_{1\leq j_1\leq m-1}
  \mathcal{D}_{f^{j_1,\epsilon},N_m+1,-\frac{l_m^*}\gamma,-\gamma}(t)\right\}\nonumber\\
   &&+\sum_{j_1+j_2\geq m,\atop 0<j_1,j_2<m}\left\{\mathcal{E}_{f^{j_2,\epsilon},N_m+2,-\frac{l_m^*+2}\gamma+\frac12,-\gamma}(t)
    \mathcal{D}_{f^{j_1,\epsilon},N_m+1,-\frac{l_m^*+2}\gamma+\frac12,-\gamma}(t)\right\}\\
&&+(1+t)^{-\sigma_{N_m+1,0}}\mathcal{E}_{f^{m,\epsilon},N_m,N_m,-\gamma}(t)\left\{\mathcal{D}_{f^{m,\epsilon},N_m+1}(t)+\sum_{0\leq j\leq |\alpha|}\mathcal{D}^{(j,0)}_{f^{m,\epsilon},j,l^*_m,1}(t)\right\}\nonumber\\
   &&+(1+t)^{-\sigma_{N_m+1,0}}\chi_{|\alpha|=1}\sum_{1\leq j\leq 2}\mathcal{E}^{(j,0)}_{f^{m,\epsilon},j,l_m^*,1}(t)\sum_{1\leq j\leq 2}\mathcal{D}^{(j,0)}_{f^{m,\epsilon},j,l^*_m,1}(t)\nonumber\\
  &&+(1+t)^{-\sigma_{N_m+1,0}}\chi_{|\alpha|\geq 2}\sum_{0\leq j_1\leq |\alpha|}\mathcal{E}^{(j_1,0)}_{f^{m,\epsilon},j_1,l_m^*,1}(t)
  \times\sum_{0\leq j_2\leq |\alpha|}\mathcal{D}^{(j_2,0)}_{f^{m,\epsilon},j_2,l_m^*,1}(t)\nonumber\\
 &&+\mathcal{E}_{f^{m,\epsilon},N_m}(t)\mathcal{E}^1_{f^{m,\epsilon},N^0_m,l^*_m+1-\frac\gamma2,1}(t)+\eta(1+t)^{-1-\epsilon_0}\| \nabla^{N_m+1}E^{m,\epsilon}\|^2+\|\nabla^{N_m+1} f^{m,\epsilon}\|^2_\nu,\nonumber
\end{eqnarray}
where we have used the assumption \eqref{Assume-f-R-high}.

Secondly, applying $\partial^\alpha_\beta$ with $|\alpha|+|\beta|= N_m+1,\ |\beta|\geq1$ to \eqref{f-R-vect-I-P},
multiplying the resulting identity by $w^2_{l^*_m-|\beta|,1}\partial^\alpha_\beta{\bf\{I-P\}} f^{m,\epsilon}$,  and integrating the final result over $\mathbb{R}^3_v\times\mathbb{R}^3_x$, one has
\begin{eqnarray}\label{i-weight-I-P-1}
&&  \frac{d}{dt}\|w_{l^*_m-|\beta|,1}\partial^\alpha_\beta{\bf \{I-P\}}f^{m,\epsilon}\|^2+\|w_{l^*_m-|\beta|,1}\partial^\alpha_\beta{\bf \{I-P\}}f^{m,\epsilon}\|^2_{\nu}\nonumber\\
&&+\frac{q\vartheta}{(1+t)^{1+\vartheta}}\|w_{l^*_m-|\beta|,1}\partial^\alpha_\beta{\bf \{I-P\}}f^{m,\epsilon}\langle v\rangle\|^2\nonumber\\
&\lesssim&\eta\|w_{l^*_m,1}\partial^\alpha{\bf\{I-P\}}f^{m,\epsilon}\|_\nu^2
+\|\partial^\alpha{\bf\{I-P\}}f^{m,\epsilon}\|_\nu^2
\nonumber\\
&&+\underbrace{\left|\left(\partial^\alpha_\beta\{-{\bf \{I-P\}}v\cdot\nabla_xf^{m,\epsilon}\},w^2_{l^*_m-|\beta|,1}\partial^\alpha_\beta{\bf \{I-P\}}f^{m,\epsilon}\right)\right|
}_{\mathcal{W}_{4}}\\
&&+\underbrace{\left|(\partial^\alpha_\beta\left\{\left\{E^{m,\epsilon}+\epsilon b^{f^{m,\epsilon}}\times \left\{B^{P}
+\sum_{j_1=1}^{m-1}\epsilon^{j_1}B^{j_1}\right\}\right\}  \cdot v \mu^{1/2}q_1\right\},w^2_{l^*_m-|\beta|,1}\partial^\alpha_\beta{\bf \{I-P\}}f^{m,\epsilon})\right|
}_{\mathcal{W}_{5}}\nonumber\\
&&+\underbrace{\left|\left(\partial^\alpha_\beta \left(I_{mix,mic}(t)+I_{pure,mic}(t)\right),w^2_{l^*_m-|\beta|,1}\partial^\alpha_\beta{\bf \{I-P\}}f^{m,\epsilon}\right)\right|}_{\mathcal{W}_{6}}\nonumber\\
&&+\underbrace{\left|\left(\partial^\alpha_\beta
\left\{ \left(\epsilon q_0\left\{v\times \left\{B^{P}
+\sum_{j_1=1}^{m-1}\epsilon^{j_1}B^{j_1}\right\}\right\}\cdot\nabla_v{\bf\{I-P\}}f^{m,\epsilon}\right)\right\},w^2_{l^*_m-|\beta|,1}\partial^\alpha_\beta{\bf \{I-P\}}f^{m,\epsilon}\right)\right|}_{\mathcal{W}_{7}}.\nonumber
\end{eqnarray}
By the macro-micro decomposition \eqref{macro-micro}, one has
\begin{eqnarray*}
  \mathcal{W}_{4}&\lesssim&\underbrace{\left|\left(\partial^\alpha_\beta\{v\cdot\nabla_x{\bf \{I-P\}}f^{m,\epsilon}\},w^2_{l^*_m-|\beta|,1}\partial^\alpha_\beta{\bf \{I-P\}}f^{m,\epsilon}\right)\right|}_{\mathcal{W}_{4,1}}\\
  &&+\underbrace{\left|\left(\partial^\alpha_\beta\{v\cdot\nabla_x{\bf P}f^{m,\epsilon}\},w^2_{l^*_m-|\beta|,1}\partial^\alpha_\beta{\bf \{I-P\}}f^{m,\epsilon}\right)\right|}_{\mathcal{W}_{4,2}}\\
  &&+\underbrace{\left|\left(\partial^\alpha_\beta{\bf P}\left\{v\cdot\nabla_xf^{m,\epsilon}\right\},w^2_{l^*_m-|\beta|,1}\partial^\alpha_\beta{\bf \{I-P\}}f^{m,\epsilon}\right)\right|}_{\mathcal{W}_{4,3}}.
\end{eqnarray*}
For $\mathcal{W}_{4,1}$, multiplying time factors $(1+t)^{-\sigma_{N_m+1,1}}$, applying \eqref{linear-fi-high-R}, one has
\begin{eqnarray*}
    (1+t)^{-\sigma_{N_m+1,1}}\mathcal{W}_{4,1}
    &\lesssim&(1+t)^{-\sigma_{N_m+1,0}}\mathcal{D}^{(N_m+1,0)}_{f^{m,\epsilon},N_m+1,l_m^*,1}(t)+
    \eta(1+t)^{-\sigma_{N_m+1,1}}\|w_{l_m^*-|\beta|,1}\partial^\alpha_\beta{\bf\{I-P\}}f^{m,\epsilon}\|_\nu^2.
  \end{eqnarray*}
It is straightforward to compute that
\[\mathcal{W}_{4,2}+\mathcal{W}_{4,3}\lesssim\eta\|\nabla^{|\alpha|+1}_x f^{m,\epsilon}\|_\nu^2+\|\partial^{\alpha}{\bf\{I-P\}}f^{m,\epsilon}\|_\nu^2.\]
Using \eqref{E-R-nonhard-1} yields
\begin{eqnarray*}
\mathcal{W}_{5} &\lesssim&\frac1{4C_\beta}\sum_{i=1}^3\frac{d}{dt}\left\|\left\langle \partial_{\beta}\left\{w^2_{l^*_m-|\beta|,1}\partial_\beta\left[v_i \mu^{1/2}\right]\right\}, \partial^\alpha{\{\bf I_+-P_+\}}f^{m,\epsilon}-\partial^\alpha{\{\bf I_--P_-\}}f^{m,\epsilon}\right\rangle\right\|^2\\ \nonumber
  &&+\left\|\nabla^{|\alpha|}f^{m,\epsilon}\right\|_\nu^2+\left\|\nabla^{|\alpha|+1}f^{m,\epsilon}\right\|^2_\nu
  \nonumber\\
  &&+\mathcal{E}_{f^{m,\epsilon},N_m}(t)\left\{\mathcal{D}_{f^{P,\epsilon},N_m+1}(t)+
  \sum_{1\leq j_1\leq m-1}\mathcal{D}_{f^{j_1,\epsilon},N_m+1}(t)+\mathcal{D}_{f^{m,\epsilon},N_m+1}(t)\right\}\\ \nonumber
  &&+\sum_{j_1+j_2\geq m,\atop 0<j_1,j_2<m}\mathcal{E}_{f^{j_1,\epsilon},N_m+1}(t)\mathcal{D}_{f^{j_2,\epsilon},N_m+1}(t)
  +\sum_{1\leq j_1\leq m-1}\mathcal{E}_{f^{j_1,\epsilon},N_m+1}(t)\mathcal{D}_{f^{m,\epsilon},N_m+1}(t).
\end{eqnarray*}
As for $\mathcal{W}_{6}$, one has
\begin{eqnarray}
&&\mathcal{W}_{6}\nonumber\\
   &\lesssim&\sum_{1\leq j_1\leq m-1}\left\| [E^{P,\epsilon},E^{j_1,\epsilon},\epsilon^mE^{m,\epsilon}]\right\|_{L^\infty_x}\left\| w_{l_m^*-|\beta|,1}\partial^\alpha_\beta{\bf\{I-P\}}f^{m,\epsilon}\langle v\rangle^{\frac12}\right\|^2\nonumber\\
  &&+(1+t)^{2+2\vartheta}\nonumber\\
  &&\left\{\left\|\nabla_x[E^{P,\epsilon},E^{j_1,\epsilon},\epsilon^m
 E^{m,\epsilon},\epsilon^{m+1} B^{m,\epsilon}]\right\|^2_{L^\infty_x}
  +\left\|\nabla^3_x[E^{P,\epsilon},E^{j_1,\epsilon},\epsilon^m
 E^{m,\epsilon},\epsilon^{m+1} B^{m,\epsilon}]\right\|^2_{H^{N_m-3}_x}\right\}\nonumber\\
&&\times\sum_{0\leq k\leq 2,\atop k\leq j\leq |\alpha|}\mathcal{D}^{(j,k)}_{f^{m,\epsilon},j,l^*_m,1}(t)
\nonumber\\
  &&+\left\{\mathcal{E}_{f^{P,\epsilon},N_m+1,-\frac{l_m^*}\gamma,-\gamma}(t)
+\sum_{1\leq j_1\leq m-1}\mathcal{E}_{f^{j_1,\epsilon},N_m+1,-\frac{l_m^*}\gamma,-\gamma}(t)\right\}\mathcal{D}_{f^{m,\epsilon},N_m+1}(t)\nonumber\\
 &&+
  \mathcal{E}_{f^{m,\epsilon},N_m+1}(t)\left\{\mathcal{D}_{f^{P,\epsilon},N_m+2,N_m+3-\frac{l^*_m+1-N_m}\gamma,-\gamma}(t)+\sum_{i=1}^{m-1}\mathcal{D}_{f^{i,\epsilon},N_m+2,N_m+2-\frac{l^*_m+1-N_m}\gamma,-\gamma}(t)\right\}\nonumber\\
 &&+\left\{\mathcal{E}_{f^{P,\epsilon},N_m+1,-\frac{l_m^*}\gamma,-\gamma}(t)
   +\sum_{1\leq j_1\leq m-1}\mathcal{E}_{f^{j_1,\epsilon},N_m+1,-\frac{l_m^*}\gamma,-\gamma}(t)\right\}
\sum_{0\leq k\leq 1\atop k\leq j\leq N_m+1}\mathcal{D}^{(j,k)}_{f^{m,\epsilon},j,l_m^*,1}(t)\nonumber\\
  &&+\sum_{0\leq k\leq 1\atop k\leq j\leq N_m+1}\mathcal{E}^{(j,k)}_{f^{m,\epsilon},j,l_m^*,1}(t)\left\{\mathcal{D}_{f^{P,\epsilon},N_m+1,-\frac{l_m^*}\gamma,-\gamma}(t)
  +\sum_{1\leq j_1\leq m-1}
  \mathcal{D}_{f^{j_1,\epsilon},N_m+1,-\frac{l_m^*}\gamma,-\gamma}(t)\right\}\nonumber\\
   &&+\sum_{j_1+j_2\geq m,\atop 0<j_1,j_2<m}\left\{\mathcal{E}_{f^{j_2,\epsilon},N_m+2,-\frac{l_m^*+2}\gamma+\frac12,-\gamma}(t)
    \mathcal{D}_{f^{j_1,\epsilon},N_m+1,-\frac{l_m^*+2}\gamma+\frac12,-\gamma}(t)\right\}\\
  &&+\mathcal{E}_{f^{m,\epsilon},N_m,N_m,-\gamma}(t)\mathcal{D}_{f^{m,\epsilon},N_m+1}(t)+\mathcal{E}_{f^{m,\epsilon},N_m,N_m,-\gamma}(t)\sum_{0\leq k\leq 1\atop k\leq j\leq N_m+1}\mathcal{D}^{(j,k)}_{f^{m,\epsilon},j,l_m^*,1}(t)\nonumber\\
  &&+\sum_{k_1+k_2\leq 1}\left\{\sum_{0\leq j_1\leq N_m+1}\mathcal{E}^{(j_1,k_1)}_{f^{m,\epsilon},j_1,l_m^*,1}(t)
  \times\sum_{0\leq j_2\leq N_m+1}\mathcal{D}^{(j_2,k_1)}_{f^{m,\epsilon},j_2,l_m^*,1}(t)\right\}\nonumber\\
  &&
  +\mathcal{E}_{f^{m,\epsilon},N_m+1}(t)\mathcal{E}^1_{f^{m,\epsilon},N^0_m,l^*_m+1-\frac\gamma2,1}(t)\nonumber\\
  &&+\left\|\partial^\alpha{\bf\{I-P\}}f^{m,\epsilon}\right\|_\nu^2+\eta(1+t)^{-1-\vartheta}
\left\| w_{l_m^*-|\beta|,1}\partial^\alpha_\beta{\bf\{I-P\}}f^{m,\epsilon}\langle v\rangle^{\frac12}\right\|^2.\nonumber
\end{eqnarray}
Finally, it is straightforward to compute that
\[\mathcal{W}_7=0.\]

Collecting the estimates on $\mathcal{W}_4\sim\mathcal{W}_7$ into \eqref{i-weight-I-P-1} yields that
\begin{eqnarray}\label{f-R-weight-I-P-1-High}
&&\frac{d}{dt}\left\{(1+t)^{-\sigma_{N_m+1,1}}\|w_{l^*_m-|\beta|,1}\partial^\alpha_\beta{\bf \{I-P\}}f^{m,\epsilon}\|^2\right\}+(1+t)^{-1-\sigma_{N_m+1,1}}\|w_{l^*_m-|\beta|,1}\partial^\alpha_\beta{\bf \{I-P\}}f^{m,\epsilon}\|^2\nonumber\\
&&+(1+t)^{-\sigma_{N_m+1,1}}\|w_{l^*_m-|\beta|,1}\partial^\alpha_\beta{\bf \{I-P\}}f^{m,\epsilon}\|^2_{\nu}\nonumber\\
&&+(1+t)^{-\sigma_{N_m+1,1}}\frac{q\vartheta}{(1+t)^{1+\vartheta}}\|w_{l^*_m-|\beta|,1}\partial^\alpha_\beta{\bf \{I-P\}}f^{m,\epsilon}\langle v\rangle\|^2\nonumber\\
&\lesssim&\frac1{4C_\beta}\sum_i\frac{d}{dt}\left\|\left\langle \partial_{\beta}\left\{w^2_{l^*_m-|\beta|,1}\partial_\beta\left[v_i \mu^{1/2}\right]\right\}, \partial^\alpha{\{\bf I_+-P_+\}}f^{m,\epsilon}-\partial^\alpha{\{\bf I_--P_-\}}f^{m,\epsilon}\right\rangle\right\|^2 \nonumber\\
&&+(1+t)^{-\sigma_{N_m+1,0}}\mathcal{D}^{(N_m+1,0)}_{f^{m,\epsilon},N_m+1,l_m^*,1}(t)+
\mathcal{D}_{f^{m,\epsilon},N_m+1}(t)\\
&&+(1+t)^{-\sigma_{N_m+1,1}+2+2\vartheta}\nonumber\\
&&\times\left\{\left\|\nabla_x[E^{P,\epsilon},E^{j_1,\epsilon},\epsilon^m
 E^{m,\epsilon},\epsilon^{m+1} B^{m,\epsilon}]\right\|^2_{L^\infty_x}
  +\left\|\nabla^3_x[E^{P,\epsilon},E^{j_1,\epsilon},\epsilon^m
 E^{m,\epsilon},\epsilon^{m+1} B^{m,\epsilon}]\right\|^2_{H^{N_m-3}_x}\right\}\nonumber\\
&&\times\sum_{0\leq k\leq 2,\atop k\leq j\leq |\alpha|}\mathcal{D}^{(j,k)}_{f^{m,\epsilon},j,l^*_m,1}(t)
\nonumber\\
  &&+\left\{\mathcal{E}_{f^{P,\epsilon},N_m+1,-\frac{l_m^*}\gamma,-\gamma}(t)
+\sum_{1\leq j_1\leq m-1}\mathcal{E}_{f^{j_1,\epsilon},N_m+1,-\frac{l_m^*}\gamma,-\gamma}(t)\right\}\mathcal{D}_{f^{m,\epsilon},N_m+1}(t)\nonumber\\
 &&+
  \mathcal{E}_{f^{m,\epsilon},N_m+1}(t)\left\{\mathcal{D}_{f^{P,\epsilon},N_m+2,N_m+3-\frac{l^*_m+1-N_m}\gamma,-\gamma}(t)+\sum_{i=1}^{m-1}\mathcal{D}_{f^{i,\epsilon},N_m+2,N_m+2-\frac{l^*_m+1-N_m}\gamma,-\gamma}(t)\right\}\nonumber\\
 &&+\left\{\mathcal{E}_{f^{P,\epsilon},N_m+1,-\frac{l_m^*}\gamma,-\gamma}(t)
   +\sum_{1\leq j_1\leq m-1}\mathcal{E}_{f^{j_1,\epsilon},N_m+1,-\frac{l_m^*}\gamma,-\gamma}(t)\right\}
\sum_{0\leq k\leq 1\atop k\leq j\leq N_m+1}\mathcal{D}^{(j,k)}_{f^{m,\epsilon},j,l_m^*,1}(t)\nonumber\\
  &&+(1+t)^{-\sigma_{N_m+1,1}}\nonumber\\
  &&\times\sum_{0\leq k\leq 1\atop k\leq j\leq N_m+1}\mathcal{E}^{(j,k)}_{f^{m,\epsilon},j,l_m^*,1}(t)\left\{\mathcal{D}_{f^{P,\epsilon},N_m+1,-\frac{l_m^*}\gamma,-\gamma}(t)
  +\sum_{1\leq j_1\leq m-1}
  \mathcal{D}_{f^{j_1,\epsilon},N_m+1,-\frac{l_m^*}\gamma,-\gamma}(t)\right\}\nonumber\\
   &&+\sum_{j_1+j_2\geq m,\atop 0<j_1,j_2<m}\left\{\mathcal{E}_{f^{j_2,\epsilon},N_m+2,-\frac{l_m^*+2}\gamma+\frac12,-\gamma}(t)
    \mathcal{D}_{f^{j_1,\epsilon},N_m+1,-\frac{l_m^*+2}\gamma+\frac12,-\gamma}(t)\right\}\nonumber\\
  &&+\mathcal{E}_{f^{m,\epsilon},N_m+1}(t)\mathcal{E}^1_{f^{m,\epsilon},N^0_m,l^*_m+1-\frac\gamma2,1}(t)+\mathcal{E}_{f^{m,\epsilon},N_m,N_m,-\gamma}(t)\mathcal{D}_{f^{m,\epsilon},N_m+1}(t)\nonumber\\
&&+(1+t)^{-\sigma_{N_m+1,1}}\mathcal{E}_{f^{m,\epsilon},N_m,N_m,-\gamma}(t)\sum_{0\leq k\leq 1\atop k\leq j\leq N_m+1}\mathcal{D}^{(j,k)}_{f^{m,\epsilon},j,l_m^*,1}(t)\nonumber\\
  &&+(1+t)^{-\sigma_{N_m+1,1}}\sum_{k_1+k_2\leq 1}\left\{\sum_{0\leq j_1\leq N_m+1}\mathcal{E}^{(j_1,k_1)}_{f^{m,\epsilon},j_1,l_m^*,1}(t)
  \times\sum_{0\leq j_2\leq N_m+1}\mathcal{D}^{(j_2,k_1)}_{f^{m,\epsilon},j_2,l_m^*,1}(t)\right\}.\nonumber
\end{eqnarray}
For $|\alpha|+|\beta|=N_m+1$ with $2\leq |\beta|\leq N_m+1$, one has
\begin{eqnarray}\label{f-R-weight-I-P-1-High-i}
&&\frac{d}{dt}\left\{(1+t)^{-\sigma_{N_m+1,|\beta|}}\|w_{l^*_m-|\beta|,1}\partial^\alpha_\beta{\bf \{I-P\}}f^{m,\epsilon}\|^2\right\}\nonumber\\
  &&+(1+t)^{-1-\sigma_{N_m+1,|\beta|}}\|w_{l^*_m-|\beta|,1}\partial^\alpha_\beta{\bf \{I-P\}}f^{m,\epsilon}\|^2\nonumber\\
&&+(1+t)^{-\sigma_{N_m+1,|\beta|}}\|w_{l^*_m-|\beta|,1}\partial^\alpha_\beta{\bf \{I-P\}}f^{m,\epsilon}\|^2_{\nu}\nonumber\\
&&+(1+t)^{-\sigma_{N_m+1,|\beta|}}\frac{q\vartheta}{(1+t)^{1+\vartheta}}\|w_{l^*_m,-\gamma}(\alpha,\beta)\partial^\alpha_\beta{\bf \{I-P\}}f^{m,\epsilon}\langle v\rangle\|^2\nonumber\\
&\lesssim&\frac1{4C_\beta}\sum_{i=1}^3\frac{d}{dt}\left\|\left\langle \partial_{\beta}\left\{w^2_{l^*_m-|\beta|,1}\partial_\beta\left[v_i \mu^{1/2}\right]\right\}, \partial^\alpha{\{\bf I_+-P_+\}}f^{m,\epsilon}-\partial^\alpha{\{\bf I_--P_-\}}f^{m,\epsilon}\right\rangle\right\|^2 \nonumber\\
&&+(1+t)^{-\sigma_{N_m+1,|\beta|-1}}\mathcal{D}^{(N_m+1,|\beta|-1)}_{f^{m,\epsilon},N_m+1,l_m^*,1}(t)+
\mathcal{D}_{f^{m,\epsilon},N_m+1}(t)\nonumber\\
&&+(1+t)^{-\sigma_{N_m+1,|\beta|}+2+2\vartheta}\nonumber\\
  &&\times\left\{\left\|\nabla_x[E^{P,\epsilon},E^{j_1,\epsilon},\epsilon^m
 E^{m,\epsilon},\epsilon^{m+1} B^{m,\epsilon}]\right\|^2_{L^\infty_x}
  +\left\|\nabla^3_x[E^{P,\epsilon},E^{j_1,\epsilon},\epsilon^m
 E^{m,\epsilon},\epsilon^{m+1} B^{m,\epsilon}]\right\|^2_{H^{N_m-3}_x}\right\}\nonumber\\
&&\times\sum_{0\leq k\leq \min\{|\beta|+1,|\alpha|+|\beta|\},\atop k\leq j\leq |\alpha|}\mathcal{D}^{(j,k)}_{f^{m,\epsilon},j,l^*_m,1}(t)
\nonumber\\
  &&+\left\{\mathcal{E}_{f^{P,\epsilon},N_m+1,-\frac{l_m^*}\gamma,-\gamma}(t)
+\sum_{1\leq j_1\leq m-1}\mathcal{E}_{f^{j_1,\epsilon},N_m+1,-\frac{l_m^*}\gamma,-\gamma}(t)\right\}\mathcal{D}_{f^{m,\epsilon},N_m+1}(t)\nonumber\\
 &&+
  \mathcal{E}_{f^{m,\epsilon},N_m+1}(t)\left\{\mathcal{D}_{f^{P,\epsilon},N_m+2,N_m+3-\frac{l^*_m+1-N_m}\gamma,-\gamma}(t)+\sum_{i=1}^{m-1}\mathcal{D}_{f^{i,\epsilon},N_m+2,N_m+2-\frac{l^*_m+1-N_m}\gamma,-\gamma}(t)\right\}\nonumber\\
 &&+\left\{\mathcal{E}_{f^{P,\epsilon},N_m+1,-\frac{l_m^*}\gamma,-\gamma}(t)
   +\sum_{1\leq j_1\leq m-1}\mathcal{E}_{f^{j_1,\epsilon},N_m+1,-\frac{l_m^*}\gamma,-\gamma}(t)\right\}
\sum_{0\leq k\leq |\beta|\atop k\leq j\leq N_m+1}\mathcal{D}^{(j,k)}_{f^{m,\epsilon},j,l_m^*,1}(t)\nonumber\\
  &&+(1+t)^{-\sigma_{N_m+1,|\beta|}}\nonumber\\
  &&\sum_{0\leq k\leq |\beta|\atop k\leq j\leq N_m+1}\mathcal{E}^{(j,k)}_{f^{m,\epsilon},j,l_m^*,1}(t)\left\{\mathcal{D}_{f^{P,\epsilon},N_m+1,-\frac{l_m^*}\gamma,-\gamma}(t)
  +\sum_{1\leq j_1\leq m-1}
  \mathcal{D}_{f^{j_1,\epsilon},N_m+1,-\frac{l_m^*}\gamma,-\gamma}(t)\right\}\nonumber\\
   &&+\sum_{j_1+j_2\geq m,\atop 0<j_1,j_2<m}\left\{\mathcal{E}_{f^{j_2,\epsilon},N_m+2,-\frac{l_m^*+2}\gamma+\frac12,-\gamma}(t)
    \mathcal{D}_{f^{j_1,\epsilon},N_m+1,-\frac{l_m^*+2}\gamma+\frac12,-\gamma}(t)\right\}\\
  &&+\mathcal{E}_{f^{m,\epsilon},N_m+1}(t)\mathcal{E}^1_{f^{m,\epsilon},N^0_m,l^*_m+1-\frac\gamma2,1}(t)+\mathcal{E}_{f^{m,\epsilon},N_m,N_m,-\gamma}(t)\mathcal{D}_{f^{m,\epsilon},N_m+1}(t)\nonumber\\
&&+(1+t)^{-\sigma_{N_m+1,|\beta|}}\mathcal{E}_{f^{m,\epsilon},N_m,N_m,-\gamma}(t)\sum_{0\leq k\leq |\beta|\atop k\leq j\leq N_m+1}\mathcal{D}^{(j,k)}_{f^{m,\epsilon},j,l_m^*,1}(t)\nonumber\\
  &&+(1+t)^{-\sigma_{N_m+1,|\beta|}}\sum_{k_1+k_2\leq |\beta|}\left\{\sum_{0\leq j_1\leq N_m+1}\mathcal{E}^{(j_1,k_1)}_{f^{m,\epsilon},j_1,l_m^*,1}(t)
  \times\sum_{0\leq j_2\leq N_m+1}\mathcal{D}^{(j_2,k_1)}_{f^{m,\epsilon},j_2,l_m^*,1}(t)\right\}.\nonumber
\end{eqnarray}

Therefore, a proper linear combination of \eqref{0-weight-alpha-high-1-R}, \eqref{f-R-weight-I-P-1-High} and \eqref{f-R-weight-I-P-1-High-i} over $|\alpha|+|\beta|=N_m+1, 0\leq|\beta|\leq N_m+1$, one has
\begin{eqnarray}\label{f-i-N_m+1-high}
  &&\frac{d}{dt}\left\{\sum_{0\leq|\beta|\leq N_m+1}(1+t)^{-\sigma_{N_m+1,|\beta|}}\mathcal{E}^{(N_m+1,|\beta|)}_{f^{m,\epsilon},N_m+1,l_m^*,1}(t)\right\}\nonumber\\
&&  +\left\{\sum_{0\leq|\beta|\leq N_m+1}(1+t)^{-\sigma_{N_m+1,|\beta|}}\mathcal{E}^{(N_m+1,|\beta|)}_{f^{m,\epsilon},N_m+1,l_m^*,1}(t)
+(1+t)^{-\sigma_{N_m+1,|\beta|}}\mathcal{D}^{(N_m+1,|\beta|)}_{f^{m,\epsilon},N_m+1,l_m^*,1}(t)\right\}\nonumber\\
&\lesssim&\sum_{0\leq|\beta|\leq N_m+1,\atop |\alpha|+|\beta|=N_m+1}\frac1{4C_\beta}\sum_{i=1}^3\frac{d}{dt}\left\|\left\langle \partial_{\beta}\left\{w^2_{l^*_m-|\beta|,1}\partial_\beta\left[v_i \mu^{1/2}\right]\right\}, \partial^\alpha{\{\bf I_+-P_+\}}f^{m,\epsilon}-\partial^\alpha{\{\bf I_--P_-\}}f^{m,\epsilon}\right\rangle\right\|^2 \nonumber\\
&&+\sum_{0\leq k\leq \min\{|\beta|+1,|\alpha|+|\beta|\},\atop 0\leq|\beta|\leq N_m+1,k\leq j\leq |\alpha|}(1+t)^{-\sigma_{N_m+1,|\beta|}+2+2\vartheta}\nonumber\\
&&\times\left\{\left\|\nabla_x(E^{P,\epsilon},E^{j_1,\epsilon},\epsilon^m
 E^{m,\epsilon},\epsilon^{m+1} B^{m,\epsilon})\right\|^2_{L^\infty_x}
  +\left\|\nabla^3_x(E^{P,\epsilon},E^{j_1,\epsilon},\epsilon^m
 E^{m,\epsilon},\epsilon^{m+1} B^{m,\epsilon})\right\|^2_{H^{N_m-3}_x}\right\}\nonumber\\
  &&\times\mathcal{D}^{(j,k)}_{f^{m,\epsilon},j,l^*_m,1}(t)
\nonumber\\
&&+\left\{\mathcal{E}_{f^{P,\epsilon},N_m+1,-\frac{l_m^*}\gamma,-\gamma}(t)
+\sum_{1\leq j_1\leq m-1}\mathcal{E}_{f^{j_1,\epsilon},N_m+1,-\frac{l_m^*}\gamma,-\gamma}(t)\right\}\mathcal{D}_{f^{m,\epsilon},N_m+1}(t)\nonumber\\
 &&+
  \mathcal{E}_{f^{m,\epsilon},N_m+1}(t)\left\{\mathcal{D}_{f^{P,\epsilon},N_m+2,N_m+3-\frac{l^*_m+1-N_m}\gamma,-\gamma}(t)+\sum_{i=1}^{m-1}\mathcal{D}_{f^{i,\epsilon},N_m+2,N_m+2-\frac{l^*_m+1-N_m}\gamma,-\gamma}(t)\right\}\nonumber\\
 &&+\left\{\mathcal{E}_{f^{P,\epsilon},N_m+1,-\frac{l_m^*}\gamma,-\gamma}(t)
   +\sum_{1\leq j_1\leq m-1}\mathcal{E}_{f^{j_1,\epsilon},N_m+1,-\frac{l_m^*}\gamma,-\gamma}(t)\right\}
\sum_{0\leq k\leq |\beta|\atop k\leq j\leq N_m+1}\mathcal{D}^{(j,k)}_{f^{m,\epsilon},j,l_m^*,1}(t)\nonumber\\
  &&+\sum_{0\leq|\beta|\leq N_m+1,0\leq k\leq |\beta|\atop k\leq j\leq N_m+1}(1+t)^{-\sigma_{N_m+1,|\beta|}}\mathcal{E}^{(j,k)}_{f^{m,\epsilon},j,l_m^*,1}(t)\nonumber\\
  &&\times\left\{\mathcal{D}_{f^{P,\epsilon},N_m+1,-\frac{l_m^*}\gamma,-\gamma}(t)
  +\sum_{1\leq j_1\leq m-1}
  \mathcal{D}_{f^{j_1,\epsilon},N_m+1,-\frac{l_m^*}\gamma,-\gamma}(t)\right\}\nonumber\\
   &&+\sum_{j_1+j_2\geq m,\atop 0<j_1,j_2<m}\left\{\mathcal{E}_{f^{j_2,\epsilon},N_m+2,-\frac{l_m^*+2}\gamma+\frac12,-\gamma}(t)
    \mathcal{D}_{f^{j_1,\epsilon},N_m+1,-\frac{l_m^*+2}\gamma+\frac12,-\gamma}(t)\right\}\\
&&+\sum_{0\leq|\beta|\leq N_m+1}(1+t)^{-\sigma_{N_m+1,|\beta|}}\mathcal{E}_{f^{m,\epsilon},N_m,N_m,-\gamma}(t)\sum_{0\leq k\leq |\beta|\atop k\leq j\leq N_m+1}\mathcal{D}^{(j,k)}_{f^{m,\epsilon},j,l_m^*,1}(t)\nonumber\\
  &&+\sum_{0\leq|\beta|\leq N_m+1}(1+t)^{-\sigma_{N_m+1,|\beta|}}\nonumber\\
  &&\times\sum_{k_1+k_2\leq |\beta|}\left\{\sum_{0\leq j_1\leq N_m+1}\mathcal{E}^{(j_1,k_1)}_{f^{m,\epsilon},j_1,l_m^*,1}(t)
  \times\sum_{0\leq j_2\leq N_m+1}\mathcal{D}^{(j_2,k_1)}_{f^{m,\epsilon},j_2,l_m^*,1}(t)\right\}\nonumber\\
&&+\mathcal{E}_{f^{m,\epsilon},N_m+1}(t)\mathcal{E}^1_{f^{m,\epsilon},N^0_m,l^*_m+1-\frac\gamma2,1}(t)+\mathcal{E}_{f^{m,\epsilon},N_m,N_m,-\gamma}(t)\mathcal{D}_{f^{m,\epsilon},N_m+1}(t)\nonumber\\
  &&+\mathcal{D}_{f^{m,\epsilon},N_m+1}(t).\nonumber
\end{eqnarray}

\noindent{\bf Step 2:} For $N_m^0+1\leq|\alpha|+|\beta|=n\leq N_m$, similar to that of  {\bf Step 1}, one can deduce that
\begin{eqnarray}\label{f-R-N_m-less-high}
  &&\frac{d}{dt}\left\{\sum_{0\leq|\beta|\leq n}(1+t)^{-\sigma_{n,|\beta|}}\mathcal{E}^{(n,|\beta|)}_{f^{m,\epsilon},n,l_m^*,1}(t)\right\}\nonumber\\
&&  +\left\{\sum_{0\leq|\beta|\leq n}(1+t)^{-\sigma_{n,|\beta|}}\mathcal{E}^{(n,|\beta|)}_{f^{m,\epsilon},n,l_m^*,1}(t)
+(1+t)^{-\sigma_{n,|\beta|}}\mathcal{D}^{(n,|\beta|)}_{f^{m,\epsilon},n,l_m^*,1}(t)\right\}\nonumber\\
&\lesssim&\sum_{0\leq|\beta|\leq n,\atop |\alpha|+|\beta|=n}\frac1{4C_\beta}\sum_{i=1}^3\frac{d}{dt}\left\|\left\langle \partial_{\beta}\left\{w^2_{l^*_m-|\beta|,1}\partial_\beta\left[v_i \mu^{1/2}\right]\right\}, \partial^\alpha{\{\bf I_+-P_+\}}f^{m,\epsilon}-\partial^\alpha{\{\bf I_--P_-\}}f^{m,\epsilon}\right\rangle\right\|^2 \nonumber\\
&&+\sum_{0\leq k\leq \min\{|\beta|+1,n\},\atop 0\leq|\beta|\leq n,k\leq j\leq n}(1+t)^{-\sigma_{N_m+1,|\beta|}+2+2\vartheta}\nonumber\\
&&\times\left\{\left\|\nabla_x[E^{P,\epsilon},E^{j_1,\epsilon},\epsilon^m
 E^{m,\epsilon},\epsilon^{m+1} B^{m,\epsilon}]\right\|^2_{L^\infty_x}
  +\left\|\nabla^3_x[E^{P,\epsilon},E^{j_1,\epsilon},\epsilon^m
 E^{m,\epsilon},\epsilon^{m+1} B^{m,\epsilon}]\right\|^2_{H^{N_m-3}_x}\right\}\nonumber\\
  &&\mathcal{D}^{(j,k)}_{f^{m,\epsilon},j,l^*_m,1}(t)
\nonumber\\
&&+\left\{\mathcal{E}_{f^{P,\epsilon},N_m+1,-\frac{l_m^*}\gamma,-\gamma}(t)
+\sum_{1\leq j_1\leq m-1}\mathcal{E}_{f^{j_1,\epsilon},N_m+1,-\frac{l_m^*}\gamma,-\gamma}(t)\right\}\mathcal{D}_{f^{m,\epsilon},N_m+1}(t)\nonumber\\
 &&+
  \mathcal{E}_{f^{m,\epsilon},N_m+1}(t)\left\{\mathcal{D}_{f^{P,\epsilon},N_m+2,N_m+3-\frac{l^*_m+1-N_m}\gamma,-\gamma}(t)+\sum_{i=1}^{m-1}\mathcal{D}_{f^{i,\epsilon},N_m+2,N_m+2-\frac{l^*_m+1-N_m}\gamma,-\gamma}(t)\right\}\nonumber\\
 &&+\left\{\mathcal{E}_{f^{P,\epsilon},N_m+1,-\frac{l_m^*}\gamma,-\gamma}(t)
   +\sum_{1\leq j_1\leq m-1}\mathcal{E}_{f^{j_1,\epsilon},N_m+1,-\frac{l_m^*}\gamma,-\gamma}(t)\right\}
\sum_{0\leq k\leq |\beta|\atop k\leq j\leq n}\mathcal{D}^{(j,k)}_{f^{m,\epsilon},j,l_m^*,1}(t)\nonumber\\
  &&+\sum_{0\leq|\beta|\leq n,0\leq k\leq |\beta|\atop k\leq j\leq n}(1+t)^{-\sigma_{N_m+1,|\beta|}}\mathcal{E}^{(j,k)}_{f^{m,\epsilon},j,l_m^*,1}(t)\nonumber\\
  &&\times\left\{\mathcal{D}_{f^{P,\epsilon},N_m+1,-\frac{l_m^*}\gamma,-\gamma}(t)
  +\sum_{1\leq j_1\leq m-1}
  \mathcal{D}_{f^{j_1,\epsilon},N_m+1,-\frac{l_m^*}\gamma,-\gamma}(t)\right\}\nonumber\\
   &&+\sum_{j_1+j_2\geq m,\atop 0<j_1,j_2<m}\left\{\mathcal{E}_{f^{j_2,\epsilon},N_m+2,-\frac{l_m^*+2}\gamma+\frac12,-\gamma}(t)
    \mathcal{D}_{f^{j_1,\epsilon},N_m+1,-\frac{l_m^*+2}\gamma+\frac12,-\gamma}(t)\right\}\\
&&+\sum_{0\leq|\beta|\leq n}(1+t)^{-\sigma_{N_m+1,|\beta|}}\mathcal{E}_{f^{m,\epsilon},N_m,N_m,-\gamma}(t)\sum_{0\leq k\leq |\beta|\atop k\leq j\leq N_m+1}\mathcal{D}^{(j,k)}_{f^{m,\epsilon},j,l_m^*,1}(t)\nonumber\\
  &&+\sum_{0\leq|\beta|\leq n}(1+t)^{-\sigma_{N_m+1,|\beta|}}\sum_{k_1+k_2\leq |\beta|}\left\{\sum_{0\leq j_1\leq n}\mathcal{E}^{(j_1,k_1)}_{f^{m,\epsilon},j_1,l_m^*,1}(t)
  \times\sum_{0\leq j_2\leq n}\mathcal{D}^{(j_2,k_1)}_{f^{m,\epsilon},j_2,l_m^*,1}(t)\right\}\nonumber\\
&&+\mathcal{E}_{f^{m,\epsilon},N_m+1}(t)\mathcal{E}^1_{f^{m,\epsilon},N^0_m,l^*_m+1-\frac\gamma2,1}(t)+\mathcal{E}_{f^{m,\epsilon},N_m,N_m,-\gamma}(t)\mathcal{D}_{f^{m,\epsilon},N_m+1}(t)
\nonumber\\
  &&+\mathcal{D}_{f^{m,\epsilon},n}(t)+\|\nabla^{n+1}f^{m,\epsilon}\|^2_\nu.\nonumber
\end{eqnarray}

\noindent{\bf Step 3:} Based on the above two steps, one has \eqref{lemma-f-R-high} by a proper linear combination of \eqref{f-i-N_m+1-high} and \eqref{f-R-N_m-less-high},
which completes the proof of this lemma.
\end{proof}
Moreover, we can deduce by applying a similar strategy that
\begin{lemma}
Under the assumptions in the above lemma, then
 \begin{eqnarray}\label{lemma-f-R-low-1}
 &&\frac{d}{dt}\left\{\sum_{0\leq n\leq N^0_m,\atop 0\leq|\beta|\leq n }(1+t)^{-\sigma_{n,|\beta|}}\mathcal{E}^{(n,|\beta|)}_{f^{m,\epsilon},n,l_m^\sharp,1}(t)\right\}\nonumber\\
&&  +\sum_{0\leq n\leq N^0_m,\atop 0\leq|\beta|\leq n }\left\{(1+t)^{-\sigma_{n,|\beta|}}\mathcal{E}^{(n,|\beta|)}_{f^{m,\epsilon},n,l_m^\sharp,1}(t)
+(1+t)^{-\sigma_{n,|\beta|}}\mathcal{D}^{(n,|\beta|)}_{f^{m,\epsilon},n,l_m^\sharp,1}(t)\right\}\\
&\lesssim&\sum_{0\leq n\leq N^0_m,\atop |\alpha|+|\beta|=n}\frac1{4C_\beta}\sum_{i=1}^3\frac{d}{dt}\left\|\left\langle \partial_{\beta}\left\{w^2_{l^*_m-|\beta|,1}\partial_\beta\left[v_i \mu^{1/2}\right]\right\}, \partial^\alpha{\{\bf I_+-P_+\}}f^{m,\epsilon}(t)-\partial^\alpha{\{\bf I_--P_-\}}f^{m,\epsilon}(t)\right\rangle\right\|^2 \nonumber\\
&&+\sum_{0\leq n\leq N^0_m,\atop 0\leq|\beta|\leq n }\sum_{0\leq k\leq \min\{|\beta|+1,n\},\atop k\leq j\leq n}(1+t)^{-\sigma_{n,|\beta|}+2+2\vartheta}\nonumber\\
&&\times\left\{\left\|\nabla_x\left[E^{P,\epsilon},E^{j_1,\epsilon},\epsilon^m
 E^{m,\epsilon},\epsilon^{m+1} B^{m,\epsilon}\right](t)\right\|^2_{L^\infty_x}
  +\left\|\nabla^3_x\left[E^{P,\epsilon},E^{j_1,\epsilon},\epsilon^m
 E^{m,\epsilon},\epsilon^{m+1} B^{m,\epsilon}\right](t)\right\|^2_{H^{N_m-3}_x}\right\}\nonumber\\
  &&\times\mathcal{D}^{(j,k)}_{f^{m,\epsilon},j,l_m^\sharp,1}(t)\nonumber\\
&&
+\left\{\mathcal{E}_{f^{P,\epsilon},N_m+1,-\frac{l_m^\sharp}\gamma,-\gamma}(t)
+\sum_{1\leq j_1\leq m-1}\mathcal{E}_{f^{j_1,\epsilon},N_m+1,-\frac{l_m^\sharp}\gamma,-\gamma}(t)\right\}\mathcal{D}_{f^{m,\epsilon},N_m+1}(t)\nonumber\\
 &&+
  \mathcal{E}_{f^{m,\epsilon},N_m+1}(t)\left\{\mathcal{D}_{f^{P,\epsilon},N_m+2,N_m+3-\frac{l_m^\sharp+1-N_m}\gamma,-\gamma}(t)+\sum_{i=1}^{m-1}\mathcal{D}_{f^{i,\epsilon},N_m+2,N_m+2-\frac{l_m^\sharp+1-N_m}\gamma,-\gamma}(t)\right\}\nonumber\\
 &&+\left\{\mathcal{E}_{f^{P,\epsilon},N_m+1,-\frac{l_m^\sharp}\gamma,-\gamma}(t)
   +\sum_{1\leq j_1\leq m-1}\mathcal{E}_{f^{j_1,\epsilon},N_m+1,-\frac{l_m^\sharp}\gamma,-\gamma}(t)\right\}
\sum_{0\leq k\leq |\beta|\atop k\leq j\leq n}\mathcal{D}^{(j,k)}_{f^{m,\epsilon},j,l_m^\sharp,1}(t)\nonumber\\
  &&+\sum_{0\leq n\leq N^0_m,\atop 0\leq|\beta|\leq n }(1+t)^{-\sigma_{n,|\beta|}}\sum_{0\leq k\leq |\beta|\atop k\leq j\leq n}\mathcal{E}^{(j,k)}_{f^{m,\epsilon},j,l_m^\sharp,1}(t)\nonumber\\
  &&\times\left\{\mathcal{D}_{f^{P,\epsilon},N_m+1,-\frac{l_m^\sharp}\gamma,-\gamma}(t)
  +\sum_{1\leq j_1\leq m-1}
  \mathcal{D}_{f^{j_1,\epsilon},N_m+1,-\frac{l_m^\sharp}\gamma,-\gamma}(t)\right\}\nonumber\\
   &&+\sum_{j_1+j_2\geq m,\atop 0<j_1,j_2<m}\left\{\mathcal{E}_{f^{j_2,\epsilon},N_m+2,-\frac{l_m^\sharp+2}\gamma+\frac12,-\gamma}(t)
    \mathcal{D}_{f^{j_1,\epsilon},N_m+1,-\frac{l_m^\sharp+2}\gamma+\frac12,-\gamma}(t)\right\}\nonumber\\
&&+\sum_{0\leq n\leq N^0_m,\atop 0\leq|\beta|\leq n }(1+t)^{-\sigma_{n,|\beta|}}\mathcal{E}_{f^{m,\epsilon},N_m,N_m,-\gamma}(t)\sum_{0\leq k\leq |\beta|\atop k\leq j\leq N_m+1}\mathcal{D}^{(j,k)}_{f^{m,\epsilon},j,l_m^\sharp,1}(t)\nonumber\\
  &&+\sum_{0\leq n\leq N^0_m,\atop 0\leq|\beta|\leq n }(1+t)^{-\sigma_{n,|\beta|}}\nonumber\\
  &&\times\sum_{k_1+k_2\leq |\beta|}\left\{\sum_{0\leq j_1\leq \max\{n,2\}}\mathcal{E}^{(j_1,k_1)}_{f^{m,\epsilon},j_1,l_m^\sharp,1}(t)
  \times\sum_{0\leq j_2\leq \max\{n,2\}}\mathcal{D}^{(j_2,k_1)}_{f^{m,\epsilon},j_2,l_m^\sharp,1}(t)\right\}\nonumber\\
  &&+\mathcal{E}_{f^{m,\epsilon},N^0_m,N^0_m,-\gamma}(t)\mathcal{D}_{f^{m,\epsilon},N^0_m}(t)+\mathcal{D}_{f^{m,\epsilon},N^0_m+1}(t).\nonumber
\end{eqnarray}
\end{lemma}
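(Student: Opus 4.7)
The plan is to mimic the proof of Lemma \ref{lemma-f-i-high} step by step, but now working in the lower-order regime $0\leq n\leq N_m^0$ with the weight exponent $l_m^\sharp$ in place of $l_m^*$. Because $\sigma_{n,0}=0$ for $n\leq N_m$ by the assumption on $\sigma_{n,k}$, the weighted pure-spatial estimates at order $n\leq N_m^0$ come in without any prefactor $(1+t)^{-\sigma_{n,0}}$, while the mixed-derivative estimates with $|\beta|\geq 1$ pick up the powers $(1+t)^{-\sigma_{n,|\beta|}}$ dictated by the telescoping relation \eqref{sigma-relation}. The main reason to treat this range separately from Lemma \ref{lemma-f-i-high} is that here one is free to invoke the sharp temporal decay estimate \eqref{lemma-decay-f-R} of Lemma \ref{Lemma1-R} on all the background pieces $E^{P,\epsilon},E^{j_1,\epsilon},\epsilon^mE^{m,\epsilon}$ and on the $\nabla^{N_m^0+1}$ norms on $f^{m,\epsilon}$ that appear when one integrates by parts the transport operator.

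I would proceed in three steps. First, for each multi-index with $|\alpha|=n\leq N_m^0$ and $|\beta|=0$, apply $\partial^\alpha$ to \eqref{f-R-vector} and take the inner product with $w_{l_m^\sharp,1}^2\partial^\alpha f^{m,\epsilon}$; use \eqref{E-R-nonhard-2} to trade the linear term $\partial^\alpha(E^{m,\epsilon}\cdot v\mu^{1/2}q_1)$ for the time-derivative of the boundary functional $\|\langle w^2_{l_m^\sharp,1}v_i\mu^{1/2},\partial^\alpha f^{m,\epsilon}_+-\partial^\alpha f^{m,\epsilon}_-\rangle\|^2$; then use Lemmas \ref{lemma-high-E-i} and \ref{lemma-high-E-R}, together with the cubic estimate for $\Gamma$, to control all remaining interaction terms. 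Second, for $|\beta|\geq 1$ with $|\alpha|+|\beta|=n\leq N_m^0$, apply $\partial^\alpha_\beta$ to \eqref{mic-I-P-equation}, test against $w^2_{l_m^\sharp-|\beta|,1}\partial^\alpha_\beta\{{\bf I-P}\}f^{m,\epsilon}$, and apply \eqref{linear-fi-high-R} so that the $v\cdot\nabla_x\{{\bf I-P}\}f^{m,\epsilon}$ term costs a dissipation factor at level $(|\beta|-1)$ multiplied by $(1+t)^{-\sigma_{n,|\beta|-1}}$ — this is precisely what the relation \eqref{sigma-relation} is calibrated to absorb. Third, multiply each estimate by $(1+t)^{-\sigma_{n,|\beta|}}$, chain the mixed-derivative identity into the form $\frac{d}{dt}\{(1+t)^{-\sigma_{n,|\beta|}}\mathcal{E}^{(n,|\beta|)}_{f^{m,\epsilon},n,l_m^\sharp,1}\}+(1+t)^{-1-\sigma_{n,|\beta|}}\mathcal{E}^{(n,|\beta|)}_{f^{m,\epsilon},n,l_m^\sharp,1}+\dots$, and take a suitable linear combination summed over $0\leq n\leq N_m^0$ and $0\leq|\beta|\leq n$ with small positive coefficients that close up the induction on $|\beta|$ from low to high.

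The main obstacle is the same one already resolved in Section 4.3, namely the linear term driven by $E^{m,\epsilon}$ whose dissipation is degenerate of order $\epsilon^2$: the pairing $(\partial^\alpha_\beta(E^{m,\epsilon}\cdot v\mu^{1/2}q_1),w^2_{l_m^\sharp-|\beta|,1}\partial^\alpha_\beta\{{\bf I-P}\}f^{m,\epsilon})$ cannot be bounded by $\|\partial^\alpha E^{m,\epsilon}\|$ times a dissipation. This is handled, as in Lemma \ref{lemma-nonhard-1}, by using the identity $\{{\bf I-P}\}(E^{m,\epsilon}\cdot v\mu^{1/2}q_1-q_0\epsilon(v\times \mathfrak{B})\cdot\nabla_v{\bf P}f^{m,\epsilon})=(E^{m,\epsilon}+\epsilon b^{f^{m,\epsilon}}\times\{B^P+\sum\epsilon^jB^j\})\cdot v\mu^{1/2}q_1$ together with the auxiliary transport equation for $\langle w_{l_m^\sharp-|\beta|,1}^2\partial_{\beta}(v_i\mu^{1/2}),\{{\bf I_+-P_+}\}f^{m,\epsilon}-\{{\bf I_--P_-}\}f^{m,\epsilon}\rangle$; the result is the $\frac{d}{dt}$-boundary term appearing on the right-hand side of \eqref{lemma-f-R-low-1}, which is harmless because it can be absorbed on the left after integrating in time.

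A secondary technical point will be bookkeeping the various weights: the inputs from the interaction lemmas produce right-hand side dissipations at weight exponent $-\frac{l_m^\sharp}{\gamma}$, $-\frac{l_m^\sharp+1-N_m}{\gamma}$ and $-\frac{l_m^\sharp+2}{\gamma}+\frac12$, which force the hypotheses on $l_m^\sharp$ in the statement of Theorem \ref{Th1.3}; since the corresponding energies $\mathcal{E}_{f^{P,\epsilon},\cdot,\cdot,-\gamma}$ and $\mathcal{E}_{f^{j_1,\epsilon},\cdot,\cdot,-\gamma}$ are controlled via Lemmas \ref{lemma-f-m}–\ref{Lemma4.1-f-R} by the small constant $c_1$ of \eqref{inital-condtions-Total}, while the temporal decay of $\|\nabla_x[E^{P,\epsilon},E^{j_1,\epsilon},\epsilon^m E^{m,\epsilon},\epsilon^{m+1}B^{m,\epsilon}]\|_{L^\infty_x}$ extracted from Lemma \ref{lemma2-R} gives a factor $(1+t)^{-2-\vartheta-\varsigma}$ with $\varsigma>0$ that defeats the $(1+t)^{2+2\vartheta}$ blow-up coming from the substitution $\kappa=1$, these right-hand sides cleanly collapse into the form asserted in \eqref{lemma-f-R-low-1}.
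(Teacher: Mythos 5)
Your proposal is correct and matches the paper's intended argument. The paper does not spell out a proof of this lemma at all -- it is introduced with the single sentence ``Moreover, we can deduce by applying a similar strategy that,'' referring back to the proof of Lemma \ref{lemma-f-i-high}. Your three-step scheme (pure-spatial weighted estimate via \eqref{E-R-nonhard-2} and Lemmas \ref{lemma-high-E-i}--\ref{lemma-high-E-R}; mixed-derivative estimate on the microscopic equation \eqref{mic-I-P-equation} via \eqref{E-R-nonhard-1} and \eqref{linear-fi-high-R}; multiplication by $(1+t)^{-\sigma_{n,|\beta|}}$ and a telescoping linear combination over $(n,|\beta|)$) is precisely that strategy specialized to the range $n\le N_m^0$ with weight exponent $l_m^\sharp$, and your explanation of why the $E^{m,\epsilon}$-degeneracy is resolved by the auxiliary boundary functional is the key point the paper's cross-reference is implicitly invoking.
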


\subsection{The proof of Theorem \ref{Th1.3}}
Now we are ready to close the {\it a priori} estimates

\begin{eqnarray}\label{def-priori-total-estimates}
  &&\Xi_{Total}(t)\nonumber\\
 &\equiv&\sup_{
 0\leq\tau\leq t}\left\{  \sum_{0\leq n\leq N^0_m,\atop 0\leq k\leq n}(1+t)^{-\sigma_{n,k}}\mathcal{E}^{(n,k)}_{f^{m,\epsilon},n,l_m^\sharp,1}(\tau)+\sum_{N^0_m+1\leq n\leq N_m+1,\atop 0\leq k\leq n}(1+t)^{-\sigma_{n,k}}\mathcal{E}^{(n,k)}_{f^{m,\epsilon},n,l_m^*,1}(\tau)\right.\nonumber\\[3mm]
 &&\left.\quad\quad\quad\quad\quad+\mathcal{E}_{f^{m,\epsilon},N_m+1}(\tau)+\mathcal{E}_{f^{m,\epsilon},N_m,l_m,-\gamma}(\tau)+\overline{\mathcal{E}}_{f^{m,\epsilon},N^0_m,l^0_m,-\gamma}(\tau)\right.\\[3mm]
 &&\left.\quad\quad\quad\quad\quad+\sum_{0<i<m}\sum_{0\leq n\leq N_i,\atop 0\leq k\leq n}(1+\tau)^{-{\sigma}_{n,k}}\mathcal{E}^{(n,k)}_{f^{i,\epsilon},n,l_i^*,1}(\tau)
 +\sum_{0<i<m}\overline{\mathcal{E}}_{f^{i,\epsilon},N_i,l_i,-\gamma}(\tau)\right.\nonumber\\[3mm]
 &&\left.\quad\quad\quad\quad\quad
 +\sum_{0\leq n\leq N_P,\atop 0\leq k\leq n}(1+\tau)^{-{\sigma}_{n,k}}{\mathcal{E}}^{(n,k)}_{f^{P,\epsilon},N_P,l^*_P,1}(\tau)+\overline{\mathcal{E}}_{f^{P,\epsilon},N_P,l_P,-\gamma}(\tau)\right\}\lesssim M, \nonumber
\end{eqnarray}
where all parameters satisfy the assumptions listed in Lemma \ref{lemma9} and $M$ is a sufficiently small positive constant independent of $\epsilon$.

In fact, we can get that
\begin{lemma}\label{lemma9}
Under the a priori estimates \eqref{def-priori-total-estimates} and the assumptions listed in Theorem \ref{Th1.3}, then it holds that
\begin{eqnarray}\label{end}
\Xi_{\textrm{Total}}(t)\lesssim& \mathbb{Y}_{\textrm{Total}, 0}^2
\end{eqnarray}
for all $0\leq t\leq T$. Here $\mathbb{Y}_{\textrm{Total}, 0}$ is defined in \eqref{inital-condtions-Total}.
\end{lemma}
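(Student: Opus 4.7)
The plan is to combine all the Lyapunov-type inequalities derived in the preceding subsections into a single master differential inequality for $\Xi_{\mathrm{Total}}(t)$, and then close the estimate by a standard continuation argument under the smallness of $\mathbb{Y}_{\mathrm{Total},0}$. Specifically, I would form a suitable linear combination (with properly chosen small constants) of the inequalities \eqref{lemma-f-R-N-R}, \eqref{lemma-spatial-f-R-N+1-R}, \eqref{lemma-f-m-N-R-0-low-1}, \eqref{lemma-f-R-high}, and \eqref{lemma-f-R-low-1} together with their counterparts for $\left[f^{P,\epsilon}, E^{P,\epsilon}\right]$ and $\left[f^{i,\epsilon}, E^{i,\epsilon}\right]$ $(1\le i\le m-1)$. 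The key role of the "anti-derivative" correction terms involving $\bigl\langle \partial_\beta\{w^2_{\ast,\ast}\partial_\beta(v_i\mu^{1/2})\},\,\partial^\alpha\{{\bf I_+{-}P_+}\}f^{m,\epsilon}-\partial^\alpha\{{\bf I_-{-}P_-}\}f^{m,\epsilon}\bigr\rangle$ on the right-hand sides is that, being controllable by $\mathcal{E}_{f^{m,\epsilon},\ast,\ast,-\gamma}$, they can be absorbed into the time derivative of the modified energy functional; this is precisely the device that handles the degeneracy of the electric-field dissipation in front of the $E^{m,\epsilon}\cdot v\mu^{1/2}q_1$ coupling.

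The next step is to deal with the bad cross terms of the form $(1+t)^{-\sigma_{n,|\beta|}+2+2\vartheta}\|\nabla_x[E^{P,\epsilon},E^{j_1,\epsilon},\epsilon^m E^{m,\epsilon},\epsilon^{m+1}B^{m,\epsilon}]\|_{L^\infty_x}^2\,\mathcal{D}^{(j,k)}_{f^{m,\epsilon},j,\ell^*_m,1}$ and their un-weighted analogues in \eqref{lemma-spatial-f-R-N+1-R}. For these I would invoke the temporal decay estimates \eqref{lemma-decay-f-R}–\eqref{lemma3-1-R} (Lemmas \ref{Lemma1-R} and \ref{lemma2-R}) together with Sobolev embedding $\|\cdot\|_{L^\infty_x}\lesssim\|\nabla_x\cdot\|_{H^1_x}^{1/2}\|\nabla^2_x\cdot\|_{H^1_x}^{1/2}$ on the electric fields, which yield $\|\nabla_x E^{P,\epsilon}\|_{L^\infty_x}^2+\sum_{1\le j_1\le m-1}\|\nabla_x E^{j_1,\epsilon}\|_{L^\infty_x}^2\lesssim (1+t)^{-(2+\varrho)}\Xi_{\mathrm{Total}}(t)$ (with $\epsilon$-independent constants, since $E^{P,\epsilon}$ and $E^{j_1,\epsilon}$ solve Poisson-type equations) and $\epsilon^{2m}\|\nabla_x[E^{m,\epsilon},\epsilon B^{m,\epsilon}]\|_{L^\infty_x}^2\lesssim \epsilon^{2m-2(2+\varrho)}(1+t)^{-(2+\varrho)}\Xi_{\mathrm{Total}}(t)$, where the $\epsilon^{2m-2(2+\varrho)}$ factor stays bounded thanks to $m>\frac52+\varrho$. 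Substituting these bounds, the exponent of $(1+t)$ in the bad terms becomes $-\sigma_{n,|\beta|}+2+2\vartheta-(2+\varrho)=-\sigma_{n,|\beta|}+2\vartheta-\varrho$, which is no larger than $-\sigma_{j,k}$ for the permissible indices by the monotonicity relation $\sigma_{n,k}-\sigma_{n,k-1}=\frac{2(1+\gamma)}{\gamma-2}(1+\vartheta)$ together with the constraint $\vartheta\le\tfrac{\varrho}{2}-\tfrac14$. Consequently these bad terms are absorbed by $(1+t)^{-\sigma_{j,k}}\mathcal{D}^{(j,k)}_{f^{m,\epsilon},j,\ell^*_m,1}$ on the left.

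For the pure nonlinear cross terms of the form $\mathcal{E}_{\cdot,\cdot}\mathcal{D}_{\cdot,\cdot}$ appearing on the right-hand sides, the smallness of $\Xi_{\mathrm{Total}}(t)\le M$ allows them to be absorbed directly by the total dissipation, after choosing $\eta>0$ small enough in all interpolation steps. The couplings $\sum_{j_1+j_2\ge m, 0<j_1,j_2<m}\mathcal{E}_{f^{j_1,\epsilon}}\mathcal{D}_{f^{j_2,\epsilon}}$ are treated similarly because each of the intermediate perturbations $f^{i,\epsilon}$ satisfies its own analogous Lyapunov inequality with the same temporal-decay bootstrap. Integrating the resulting master differential inequality from $0$ to $t$ yields
\[
\Xi_{\mathrm{Total}}(t)\lesssim \Xi_{\mathrm{Total}}(0)+M\cdot\Xi_{\mathrm{Total}}(t),
\]
and since $\Xi_{\mathrm{Total}}(0)\lesssim \mathbb{Y}_{\mathrm{Total},0}^2$ by the definitions of the functionals and the assumption \eqref{inital-condtions-Total}, a standard absorption argument gives \eqref{end}.

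The main obstacle I anticipate is the careful bookkeeping needed to ensure that the parameters $\vartheta,\varrho,m,N_m^0,N_m,\ell_m,\ell_m^*,\ell_m^\sharp,\ell_P,\ell_i,\widehat{\ell}_m,\widehat{\ell}_P,\widehat{\ell}_i,\sigma_{n,k}$ satisfy all the compatibility relations listed in the remark after Theorem \ref{Th1.3} simultaneously, so that (i) every weight increment arising from an integration by parts in $v$ (growing by at most $-\gamma$) is absorbed into the next weight level, (ii) the interpolations between the $w_{\ast,-\gamma}$ and $w_{\ast,1}$ norms used in controlling \eqref{gamma-weight-1}–\eqref{no-weight-2} close at both the highest-order $(N_m+1)$ and the weighted-derivative levels, and (iii) the factor $\epsilon^{m-\varrho-3/2}$ from the remainder nonlinearities $\epsilon^m E^{m,\epsilon}\cdot v\, f^{m,\epsilon}$ remains small so that the associated terms can be absorbed by the extra dissipation $D^W$ defined in \eqref{dissipative-weight}. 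Once these compatibility conditions have been verified exactly as indicated in the remark, closing the estimate is essentially a Gr\"onwall-type bootstrap.
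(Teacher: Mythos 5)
Your proposal captures the correct global architecture of the paper's argument: form a linear combination of the Lyapunov inequalities for the remainder and the lower-order pieces (the latter furnished by Propositions~\ref{Th1.4} and~\ref{lemma-fi-end}), use the anti-derivative correction terms from Lemma~\ref{lemma-nonhard-1} to absorb the electric-field coupling, invoke the temporal decay from Lemmas~\ref{Lemma1-R}--\ref{lemma2-R} to tame the bad cross terms, absorb quadratic terms by smallness of $\Xi_{\mathrm{Total}}$, and close with a Gr\"onwall-type bootstrap. This is essentially the paper's approach.

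However, there is a genuine arithmetic gap in the key step where you control the $L^\infty_x$ norm of the field gradients. You quote the rate $\|\nabla_x E\|_{L^\infty_x}^2 \lesssim (1+t)^{-(2+\varrho)}\Xi_{\mathrm{Total}}$ (and $\epsilon^{2m}\|\nabla_x [E^{m,\epsilon},\epsilon B^{m,\epsilon}]\|_{L^\infty_x}^2\lesssim \epsilon^{2m-2(2+\varrho)}(1+t)^{-(2+\varrho)}\Xi_{\mathrm{Total}}$). The paper's actual rate, obtained from the Gagliardo–Nirenberg inequality $\|\nabla g\|_{L^\infty}^2\lesssim\|\nabla^2 g\|\,\|\nabla^3 g\|$ applied to the decay bounds $\mathcal{E}^k\lesssim\epsilon^{-2(k+\varrho)}(1+t)^{-(k+\varrho)}\Xi$, is
\[
\bigl\|\nabla_x\bigl[\epsilon^m E^{m,\epsilon},\epsilon^{m+1}B^{m,\epsilon}\bigr]\bigr\|_{L^\infty_x}^2
\lesssim \epsilon^{2m}\,\epsilon^{-5-2\varrho}\,(1+t)^{-\frac52-\varrho}\,\Xi_{\mathrm{Total}}(t),
\]
i.e.\ with $(1+t)^{-\frac52-\varrho}$, a factor $(1+t)^{-\frac12}$ faster than you claim. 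This half-power is exactly what makes the time-weight bookkeeping close. When you re-index the bad term $(1+t)^{-\sigma_{n,|\beta|}+2+2\vartheta}\|\nabla E\|^2_{L^\infty_x}\,\mathcal{D}^{(j,k)}$ against the dissipation $(1+t)^{-\sigma_{j,k}}\mathcal{D}^{(j,k)}$, you must accommodate a possible one-step increment $\sigma_{n,|\beta|+1}-\sigma_{n,|\beta|}=\tfrac{2(1+\gamma)}{\gamma-2}(1+\vartheta)$ in the weight index, which can be as large as $\approx\tfrac45(1+\vartheta)$ as $\gamma\to-3$. With your exponent $-\sigma_{n,|\beta|}+2\vartheta-\varrho$ the required inequality $\sigma_{j,k}-\sigma_{n,|\beta|}\le\varrho-2\vartheta$ fails for $\gamma$ near $-3$ under the hypothesis $\vartheta\le\tfrac\varrho2-\tfrac14$; it is the extra $\tfrac12$ in the exponent (so that one needs $\sigma_{j,k}-\sigma_{n,|\beta|}\le\tfrac12+\varrho-2\vartheta$) that, combined with the other stated constraint $\vartheta\le\tfrac{\gamma-2}{4\gamma-2}\bigl(\tfrac52+\varrho\bigr)-1$, closes the argument. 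The same half-power is responsible for the $\epsilon^{2m-5-2\varrho}$ factor, and hence for the hypothesis $m>\tfrac52+\varrho$; your accounting would only give $\epsilon^{2m-4-2\varrho}$, requiring the strictly weaker $m\ge 2+\varrho$, which is again internally inconsistent with the theorem's hypotheses. Once you replace $(1+t)^{-(2+\varrho)}$ by $(1+t)^{-\frac52-\varrho}$ (and correspondingly $\epsilon^{-2(2+\varrho)}$ by $\epsilon^{-(5+2\varrho)}$) and use the correct $\vartheta$-constraint in place of $\vartheta\le\tfrac\varrho2-\tfrac14$ at this step, your sketch does agree with the paper's proof.
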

\begin{proof}
In fact, firstly, we have from Proposition \ref{Th1.4} and Proposition \ref{lemma-fi-end} under the {\it a priori} estimates \eqref{def-priori-total-estimates} that
\begin{eqnarray}\label{f-R-end-0}
  &&\frac{d}{dt}\mathcal{\overline{E}}_{f^{P,\epsilon},N_P,l_P,-\gamma}(t)+\mathcal{\overline{D}}_{f^{P,\epsilon},N_P,l_P,-\gamma}(t)\lesssim 0,\\
  &&\frac{d}{dt}\mathcal{\overline{E}}_{f^{j_1,\epsilon},N_{j_1},l_{j_1},-\gamma}(t)+\mathcal{\overline{D}}_{f^{j_1,\epsilon},N_{j_1},l_{j_1},-\gamma}(t)\lesssim 0,\  1\leq j\leq m-1,\nonumber
\end{eqnarray}
from which, we can deduce that:
\begin{itemize}
\item[(i).] If we take
$$
\sigma_{n,0}=0, n\leq N_m, \ \ \sigma_{N_m+1,0}=\frac{1+\epsilon_0}2,
$$
and notice that
$$
\sigma_{n,k}-\sigma_{n,k-1}=\frac{2(1+\gamma)}{\gamma-2}(1+\vartheta),
$$
we can deduce that
$$
 \displaystyle\max_{ N^0_m+1\leq n\leq N_m,\atop  0\leq k \leq n}\{\sigma_{n,k}\}=\sigma_{N_m,N_m},\quad
\max_{ 0\leq n\leq N^0_m,\atop  0\leq k\leq n}\{\sigma_{n,k}\}=\sigma_{N^0_m,N^0_m};
$$
\item [(ii).] If we take  $\widetilde{\ell}_m\geq\frac\gamma2+\frac{(2-4\gamma)\sigma_{N_m,N_m}}{1+\varrho}$ and $l_1^*\geq \widetilde{\ell}_1-\frac\gamma2-\gamma l_1$,
then ${\theta}=\frac{2-4\gamma}{2\widetilde{\ell}_m-\gamma}\leq\frac{1+\varrho}{\sigma_{N_m,N_m}}$ and we have from Lemma \ref{Lemma1-R} that
\begin{eqnarray*}
  &&\sum_{1\leq j_1\leq m-1}\left\|\nabla_x\left[E^{P,\epsilon}(t),E^{j_1,\epsilon}(t)\right]\right\|_{H^{N_m}_x}^{\frac2{{\theta}}}\sum_{0\leq k\leq N_m,\atop k\leq j\leq N_m}\mathcal{D}^{(j,k)}_{f^{m,\epsilon},j,-\gamma l_m+\widetilde{l}_m,1}(t)\\
  &&+\left\| \nabla_x\left[\epsilon^m E^{m,\epsilon}(t),\epsilon^{m+1} B^{m,\epsilon}(t)\right]\right\|_{H^{N_m^0-1}_x}^{\frac2{{\theta}}}\sum_{0\leq k\leq N_m,\atop k\leq j\leq N_m}\mathcal{D}^{(j,k)}_{f^{m,\epsilon},j,\widetilde{\ell}_m-\gamma l_m+1+\frac\gamma2,1}(t)\nonumber\\
&\lesssim&\left\{\Xi_{\textrm{Total}}(t)\right\}^{\frac{1}{{\theta}}}
(1+t)^{-\frac{1+\varrho}{{\theta}}}\sum_{0\leq j\leq N_m,\atop
0\leq k\leq j}\mathcal{D}_{f^{m,\epsilon},j,l_m^*,1}^{(j,k)}(t)\nonumber\\
&\lesssim&\left\{\Xi_{\textrm{Total}}(t)\right\}^{\frac{1}{{\theta}}}
(1+t)^{-\sigma_{N_m,N_m}}\sum_{0\leq j\leq N_m,\atop
0\leq k\leq j}\mathcal{D}_{f^{m,\epsilon},j,l_m^*,1}^{(j,k)}(t).\nonumber
\end{eqnarray*}
Thus we have from Lemma \ref{lemma-f-m} and \eqref{f-R-end-0} that
\begin{eqnarray}\label{f-R-end-1}
&&\frac{d}{dt}\left\{\mathcal{E}_{f^{m,\epsilon},N_m,l_m,-\gamma}(t)+\mathcal{\overline{E}}_{f^{P,\epsilon},N_P,l_P,-\gamma}(t)+\sum_{1\leq j_1\leq m-1}\mathcal{\overline{E}}_{f^{j_1,\epsilon},N_{j_1},l_{j_1},-\gamma}(t)\right\}
\nonumber\\
&&+\mathcal{D}_{f^{m,\epsilon},N_m,l_m,-\gamma}(t)+\mathcal{\overline{D}}_{f^{P,\epsilon},N_P,l_P,-\gamma}(t)+\sum_{1\leq j_1\leq m-1}\mathcal{\overline{D}}_{f^{j_1,\epsilon},N_{j_1},l_{j_1},-\gamma}(t)\nonumber\\
 &\lesssim&
  \left\|\nabla^{N_m+1}f^{m,\epsilon}(t)\right\|^2_\nu   +\left\{\Xi_{\textrm{Total}}(t)\right\}^{\frac{1}{\theta}}
(1+t)^{-\sigma_{N_m,N_m}}\sum_{0\leq j\leq N_m,\atop
0\leq k\leq j}\mathcal{D}_{f^{m,\epsilon},j,l_m^*,1}^{(j,k)}(t)\\
  &&+\mathcal{E}_{f^{m,\epsilon},N_m+1}(t) \mathcal{E}^1_{f^{m,\epsilon},N^0_m,l_m+\frac32-\frac1\gamma,1}(t) .\nonumber
  \end{eqnarray}

\item [(iii).] Similarly, we have by taking $l_m^\sharp\geq \widetilde{\ell}_m-\gamma l^0_m+1+\frac\gamma2$ that
\begin{eqnarray}\label{f-R-end-2}
&&\frac{d}{dt}\left\{\mathcal{\overline{E}}_{f^{m,\epsilon},N^0_m,l^0_m,-\gamma}(t)+\mathcal{\overline{E}}_{f^{P,\epsilon},N_P,l_P,-\gamma}(t)+\sum_{1\leq j_1\leq m-1}\mathcal{\overline{E}}_{f^{j_1,\epsilon},N_{j_1},l_{j_1},-\gamma}(t)\right\}
\nonumber\\
&&+\mathcal{\overline{D}}_{f^{m,\epsilon},N^0_m,l^0_m,-\gamma}(t)+\mathcal{\overline{D}}_{f^{P,\epsilon},N_P,l_P,-\gamma}(t)+\sum_{1\leq j_1\leq m-1}\mathcal{\overline{D}}_{f^{j_1,\epsilon},N_{j_1},l_{j_1},-\gamma}(t)
\nonumber\\
&\lesssim&\eta\left\|\nabla^{N_m^0+1}f^{m,\epsilon}(t)\right\|^2_\nu
+\left\{\Xi_{\textrm{Total}}(t)\right\}^{\frac{1}{\theta}}
(1+t)^{-\sigma_{N^0_m,N^0_m}}
\sum_{0\leq j\leq N^0_m,\atop
0\leq k\leq j}\mathcal{D}_{f^{m,\epsilon},j,l_m^\sharp,1}^{(j,k)}(t).
\end{eqnarray}

\item [(iv).] Since Lemma \ref{Lemma1-R} tells us that
    \begin{eqnarray}
&&\left\|\nabla_x\left[\epsilon^m E^{m,\epsilon},\epsilon^{m+1} B^{m,\epsilon}\right](t)\right\|^2_{L^\infty_x} \left\|\nabla_v\nabla^{N_m}_x{\bf\{I-P\}}f^{m,\epsilon}(t)\langle v\rangle^{1-\frac\gamma2}\right\|^2\nonumber\\
&\lesssim&\epsilon^{2m}\epsilon^{-5-2\varrho}(1+t)^{-\frac52-\varrho}\Xi_{\textrm{Total}}(t)(1+t)^{\sigma_{N_m+1,1}}(1+t)^{-\sigma_{N_m+1,1}}\mathcal{D}^{(N_m+1,1)}_{f^{m,\epsilon},N_m+1,l_m^*,1}\nonumber\\
&\lesssim&\epsilon^{2m-5-2\varrho}(1+t)^{-\frac52-\varrho+\sigma_{N_m+1,1}}\Xi_{\textrm{Total}}(t)(1+t)^{-\sigma_{N_m+1,1}}\mathcal{D}^{(N_m+1,1)}_{f^{m,\epsilon},N_m+1,l_m^*,1}\nonumber\\
&\lesssim&(1+t)^{-\frac52-\varrho+\frac{1+\epsilon_0}{2}+\frac{2(1+\gamma)}{\gamma-2}(1+\vartheta)}\Xi_{\textrm{Total}}(t)(1+t)^{-\sigma_{N_m+1,1}}\mathcal{D}^{(N_m+1,1)}_{f^{m,\epsilon},N_m+1,l_m^*,1}\nonumber\\
&\lesssim&\Xi_{\textrm{Total}}(t)(1+t)^{-\sigma_{N_m+1,1}} \mathcal{D}^{(N_m+1,1)}_{f^{m,\epsilon},N_m+1,l_m^*,1},
\end{eqnarray}
where we used the fact that
\[\epsilon^{2m-5-2\varrho}+ (1+t)^{-\frac52-\varrho+\frac{1+\epsilon_0}{2}+\frac{2(1+\gamma)}{\gamma-2}(1+\vartheta)}\lesssim 1\]
provided that
\[m\geq \frac52+\varrho,\ 0<\vartheta\leq \frac{\gamma-2}{2+2\gamma}\left(2+\varrho-\frac{\epsilon_0}2\right)-1,\]
then if we take $l_m^*\geq N_m+2-\frac\gamma2$, we can get from Lemma \ref{lemma-f-R-high-spatial} that

\begin{eqnarray}\label{f-R-end-3}
  &&\frac{d}{dt}\left\{\mathcal{E}_{f^{m,\epsilon},N_m+1}(t)+\mathcal{\overline{E}}_{f^{P,\epsilon},N_P,l_P,-\gamma}(t)+\sum_{1\leq j_1\leq m-1}\mathcal{\overline{E}}_{f^{j_1,\epsilon},N_{j_1},l_{j_1},-\gamma}(t)\right\}
\nonumber\\
&&+\mathcal{D}_{f^{m,\epsilon},N_m+1}(t)+\mathcal{\overline{E}}_{f^{P,\epsilon},N_P,l_P,-\gamma}(t)+\sum_{1\leq j_1\leq m-1}\mathcal{\overline{D}}_{f^{j_1,\epsilon},N_{j_1},l_{j_1},-\gamma}(t)\nonumber\\
 &\lesssim&
 \Xi_{\textrm{Total}}(t)(1+t)^{-\sigma_{N_m+1,1}}
  \sum_{0\leq k\leq 1}\mathcal{D}^{(N_m+1,k)}_{f^{m,\epsilon},N_m+1,l_m^*,1}.
    \end{eqnarray}

    \item[(v).] By Lemma \ref{Lemma1-R}, one has
    \begin{eqnarray}
 &&\sum_{N_m^0+1\leq n\leq N_m+1,\atop 0\leq|\beta|\leq n }(1+t)^{-\sigma_{n,|\beta|}+2+2\vartheta}\nonumber\\
&&\times\sum_{1\leq j_1\leq m-1}
  \left\|\nabla_x\left[\epsilon^m
 E^{m,\epsilon},\epsilon^{m+1} B^{m,\epsilon}\right](t)\right\|^2_{L^\infty_x}\sum_{0\leq k\leq \min\{|\beta|+1,n\},\atop k\leq j\leq n}\mathcal{D}^{(j,k)}_{f^{m,\epsilon},j,l^*_m,1}(t)\nonumber\\
 &\lesssim&\epsilon^{2m}\sum_{N_m^0+1\leq n\leq N_m+1,\atop 0\leq|\beta|\leq n }(1+t)^{-\sigma_{n,|\beta|}+2+2\vartheta}\epsilon^{-5-2\varrho}(1+t)^{-\frac52-\varrho}\nonumber\\
  &&\times\Xi_{\textrm{Total}}(t)
 \sum_{0\leq k\leq \min\{|\beta|+1,n\},\atop k\leq j\leq n}\mathcal{D}^{(j,k)}_{f^{m,\epsilon},j,l^*_m,1}(t)\nonumber\\
  &\lesssim&\epsilon^{2m-5-2\varrho}\sum_{N_m^0+1\leq n\leq N_m+1,\atop 0\leq|\beta|\leq n }(1+t)^{-\sigma_{n,|\beta|+1}}(1+t)^{\frac{2(1+\gamma)}{\gamma-2}(1+\vartheta)+2+2\vartheta-\frac52-\varrho}\nonumber\\
  &&\times\Xi_{\textrm{Total}}(t)
 \sum_{0\leq k\leq \min\{|\beta|+1,n\},\atop k\leq j\leq n}\mathcal{D}^{(j,k)}_{f^{m,\epsilon},j,l^*_m,1}(t)\nonumber\\
 &\lesssim&\epsilon^{2m-5-2\varrho}\sum_{N_m^0+1\leq n\leq N_m+1,\atop 0\leq|\beta|\leq n }(1+t)^{-\sigma_{n,|\beta|+1}}\nonumber\\
  &&\times\Xi_{\textrm{Total}}(t)\sum_{0\leq k\leq \min\{|\beta|+1,n\},\atop k\leq j\leq n}\mathcal{D}^{(j,k)}_{f^{m,\epsilon},j,l^*_m,1}(t)\nonumber\\
 &\lesssim&\Xi_{\textrm{Total}}(t)
   \sum_{0\leq n\leq N_m+1,\atop 0\leq k\leq n}(1+t)^{-\sigma_{n,k}}\mathcal{D}^{(n,k)}_{f^{m,\epsilon},n,l^*_m,1}(t)\nonumber,
\end{eqnarray}
where we have used the fact that
\[\epsilon^{2m-5-2\varrho}+ (1+t)^{\frac{2(1+\gamma)}{\gamma-2}(1+\vartheta)+2+2\vartheta-\frac52-\varrho}\lesssim 1\]
provided that
\[m\geq \frac52+\varrho,\ 0<\vartheta\leq \frac{\gamma-2}{4\gamma-2}\left(\frac52+\varrho\right)-1.\]
Thus one deduces that
    \begin{eqnarray}\label{f-R-end-4}
  &&\frac{d}{dt}\left\{\sum_{N_m^0+1\leq n\leq N_m+1,\atop 0\leq k\leq n }(1+t)^{-\sigma_{n,k}}\mathcal{E}^{(n,k)}_{f^{m,\epsilon},n,l_m^*,1}(t)+\mathcal{\overline{E}}_{f^{P,\epsilon},N_P,l_P,-\gamma}(t)+\sum_{1\leq j_1\leq m-1}\mathcal{\overline{E}}_{f^{j_1,\epsilon},N_{j_1},l_{j_1},-\gamma}(t)\right\}
\nonumber\\
&&  +\sum_{N_m^0+1\leq n\leq N_m+1,\atop 0\leq k\leq n }
(1+t)^{-\sigma_{n,k}}\mathcal{D}^{(n,k)}_{f^{m,\epsilon},n,l_m^*,1}(t)+\mathcal{\overline{E}}_{f^{P,\epsilon},N_P,l_P,-\gamma}(t)+\sum_{1\leq j_1\leq m-1}\mathcal{\overline{D}}_{f^{j_1,\epsilon},N_{j_1},l_{j_1},-\gamma}(t)\nonumber\\
 &\lesssim&
\mathcal{D}_{f^{m,\epsilon},N_m+1}(t)+\eta(1+t)^{-1-\epsilon_0}\left\|\epsilon \nabla^{N_m+1}E^{m,\epsilon}(t)\right\|^2 +\mathcal{E}_{f^{m,\epsilon},N_m+1}(t)\mathcal{E}^1_{f^{m,\epsilon},N^0_m,l^*_m+1-\frac\gamma2,1}(t)
\nonumber\\
&&+\Xi_{\textrm{Total}}(t)
   \sum_{0\leq n\leq N^0_m,\atop 0\leq k\leq n}(1+t)^{-\sigma_{n,k}}\mathcal{D}^{(n,k)}_{f^{m,\epsilon},n,l^*_m,1}(t),
\end{eqnarray}
where we have used the fact that
\begin{eqnarray}
 &&\sum_{N_m^0+1\leq n\leq N_m+1,\atop 0\leq|\beta|\leq n }(1+t)^{-\sigma_{n,|\beta|}}\sum_{k_1+k_2\leq |\beta|}\left\{\sum_{0\leq j_1\leq n}\mathcal{E}^{(j_1,k_1)}_{f^{m,\epsilon},j_1,l_m^*,1}(t)
  \times\sum_{0\leq j_2\leq n}\mathcal{D}^{(j_2,k_1)}_{f^{m,\epsilon},j_2,l_m^*,1}(t)\right\}\nonumber\\
  &\lesssim&\sum_{N_m^0+1\leq n\leq N_m+1,\atop 0\leq|\beta|\leq n }\sum_{k_1+k_2\leq |\beta|}(1+t)^{-\sigma_{n,k_1}}(1+t)^{-\sigma_{n,k_2}}\nonumber\\
  &&\times\left\{\sum_{0\leq j_1\leq n}\mathcal{E}^{(j_1,k_1)}_{f^{m,\epsilon},j_1,l_m^*,1}(t)
  \times\sum_{0\leq j_2\leq n}\mathcal{D}^{(j_2,k_1)}_{f^{m,\epsilon},j_2,l_m^*,1}(t)\right\}\nonumber\\
    &\lesssim&\sum_{N_m^0+1\leq n\leq N_m+1,\atop 0\leq|\beta|\leq n ,k_1+k_2\leq |\beta|}\left\{\sum_{0\leq j_1\leq n}(1+t)^{-\sigma_{j_1,k_1}}\mathcal{E}^{(j_1,k_1)}_{f^{m,\epsilon},j_1,l_m^*,1}(t)
  \times\sum_{0\leq j_2\leq n}(1+t)^{-\sigma_{j_2,k_2}}\mathcal{D}^{(j_2,k_1)}_{f^{m,\epsilon},j_2,l_m^*,1}(t)\right\}\nonumber\\
  &\lesssim&\Xi_{\textrm{Total}}(t)
   \sum_{0\leq n\leq N_m+1,\atop 0\leq k\leq n}(1+t)^{-\sigma_{n,k}}\mathcal{D}^{(n,k)}_{f^{m,\epsilon},n,l^*_m,1}(t)\nonumber.
\end{eqnarray}
\item [(vi)] In spirit of \eqref{f-R-end-4}, one also deduce
    \begin{eqnarray}\label{f-R-end-5}
  &&\frac{d}{dt}\left\{\sum_{0\leq n\leq N^0_m,\atop 0\leq k\leq n }(1+t)^{-\sigma_{n,k}}\mathcal{E}^{(n,k)}_{f^{m,\epsilon},n,l_m^\sharp,1}(t)+\mathcal{\overline{E}}_{f^{P,\epsilon},N_P,l_P,-\gamma}(t)+\sum_{1\leq j_1\leq m-1}\mathcal{\overline{E}}_{f^{j_1,\epsilon},N_{j_1},l_{j_1},-\gamma}(t)\right\}
\nonumber\\
&&  +\sum_{0\leq n\leq N^0_m,\atop 0\leq k\leq n }\left\{(1+t)^{-\sigma_{n,k}}\mathcal{E}^{(n,|\beta|)}_{f^{m,\epsilon},n,l_m^*,1}(t)
+(1+t)^{-\sigma_{n,k}}\mathcal{D}^{(n,k)}_{f^{m,\epsilon},n,l_m^\sharp,1}(t)\right\}\\
&\lesssim&
\mathcal{D}_{f^{m,\epsilon},N^0_m+1}(t) .\nonumber
\end{eqnarray}
\item[(vii).] Multiplying $(1+t)^{-\epsilon_0}$ to \eqref{f-R-end-3}, one gets
   \begin{eqnarray}\label{f-R-end-6}
    &&\frac{d}{dt}\left\{(1+t)^{-\epsilon_0}\left\{
    \mathcal{E}_{f^{m,\epsilon},N_m+1}(t)
    +\mathcal{\overline{E}}_{f^{P,\epsilon},N_P,l_P,-\gamma}(t)+\sum_{1\leq j_1\leq m-1}\mathcal{\overline{E}}_{f^{j_1,\epsilon},N_{j_1},l_{j_1},-\gamma}(t)\right\}\right\}
\nonumber\\
&&+(1+t)^{-1-\epsilon_0}\mathcal{E}_{f^{m,\epsilon},N_m+1}(t)
    +(1+t)^{-\epsilon_0}\mathcal{D}_{f^{m,\epsilon},N_m+1}(t)\\
    &&+\mathcal{\overline{E}}_{f^{P,\epsilon},N_P,l_P,-\gamma}(t)+\sum_{1\leq j_1\leq m-1}\mathcal{\overline{D}}_{f^{j_1,\epsilon},N_{j_1},l_{j_1},-\gamma}(t)\nonumber\\
   &\lesssim&\Xi_{\textrm{Total}}(t)(1+t)^{-\sigma_{N_m+1,1}}
  \sum_{0\leq k\leq 1}\mathcal{D}^{(N_m+1,k)}_{f^{m,\epsilon},N_m+1,N_m+2-\frac\gamma2,1}\nonumber.
    \end{eqnarray}

\end{itemize}

With the above preparations in hand, by taking a proper linear combination of \eqref{f-R-end-1}, \eqref{f-R-end-2}, \eqref{f-R-end-3}, \eqref{f-R-end-4}, \eqref{f-R-end-5}, and \eqref{f-R-end-6} and by
applying the smallness of $\Xi_{\textrm{Total}}(t)$ and $\eta$, we can deduce by taking the time integration with respect to $t$ from $0$ to $t$ that the resulting differential inequality that
\eqref{end} holds. Thus the proof of Lemma \ref{lemma9} is complete.
\end{proof}

Now we turn to prove Theorem \ref{Th1.3}. To this end, recall the definition of the $\Xi_{\textrm{Total}}(t)-$norm. Lemma \ref{lemma9} tells that for the local solution $[f^{m,\epsilon}(t,x,v), E^{m,\epsilon}(t,x), B^{m,\epsilon}(t,x)]$ to the Cauchy problem (\ref{F-R-s-VMB-Remain-1})-(\ref{IC-s-Compatibility}) defined on the time interval $[0,T]$ for some $0<T\leq +\infty$, if
\begin{equation*}
\Xi_{\textrm{Total}}(t)\leq M
\end{equation*}
holds for all $t\in[0,T]$ and some $\epsilon-$independent positive constant $M$, then there exists a sufficiently small $\epsilon-$independent positive constant $\delta_0>0$ such that if
\begin{equation*}
M\leq \delta^2_0,
\end{equation*}
there exists a $\epsilon-$ independent positive constant $\overline{C}>0$ such that
\begin{equation*}
\Xi_{\textrm{Total}}(t)\leq \overline{C}^2 \mathbb{Y}_{\textrm{Total}, 0}^2
\end{equation*}
holds for all $0\leq t\leq T$.

Thus if the initial perturbation $ \mathbb{Y}_{\textrm{Total}, 0}$ is assumed to be sufficiently small such that
\begin{equation*}
 \mathbb{Y}_{\textrm{Total}, 0}\leq \frac{\delta_0}{\overline{C}},
\end{equation*}
then the global existence follows by combining the local solvability result with the continuation argument in the usual way.
To obtain \eqref{Th2-2},
recall that the expansion form \eqref{F-S-Expansion}, we can deduce that from \eqref{end}
\begin{equation}\label{end-1}
  \sum_{|\alpha|+|\beta|\leq N_m}\left\|\partial^\alpha_\beta \left\{\frac{F^\epsilon-F^P}{\mu^{\frac12}}\right\}\right\|^2+\|E^\epsilon-E^{P,\epsilon}\|^2_{H^{N_m+1}_x}+\|B^\epsilon-B^P\|^2_{H^{N_m+1}_x}\lesssim \epsilon^{2m} \mathbb{Y}_{\textrm{Total}, 0}^2,
\end{equation}
that is what we want. Thus the proof of Theorem \ref{Th1.3} is complete.\qed
\section{Appendix}
\subsection{Appendix A}

In this section, we collect several basic estimates on the linearized Boltzmann collision operator $L$ and the nonlinear term $\Gamma$ for hard sphere interaction. Since these estimates are well-established in \cite{Guo-CPAM-2002, Guo-Invent Math-2003, Strain-CMP-2006} for hard sphere model and in \cite{Duan_Yang_Zhao-M3AS-2013, Strain-Guo-ARMA-2008} for cutoff potentials, we only state the result and omit the proofs for brevity.

$L$ is locally coercive in the sense that

\begin{lemma}\label{Lemma L}(cf. \cite[Lemma 3.1]{Duan-Lei-Yang-Zhao-CMP-2017})
Let $-3<\gamma\leq 1$, one has
\begin{equation}\label{L_0}
  \langle L f, f\rangle\geq |{\bf \{I-P\}}f|^2_\nu.
\end{equation}
Moreover, let $|\beta|>0$, for $\eta>0$ small enough and any $\ell\in\mathbb{R}, \kappa\geq0, 0<q\ll 1, \vartheta\in\mathbb{R}$, there exists $C_{\eta}>0$ such that
\begin{equation}\label{L_v}
  \left\langle w_{\ell,\kappa}^2\partial_{\beta}{ L}f,\partial_{\beta}f\right\rangle\geq \left|w_{\ell,\kappa}\partial_{\beta}f\right|^2_\nu
  -\eta\sum_{|\beta'|<|\beta|}\left|w_{\ell,\kappa}\partial_{\beta'}\{{\bf I-P}\}f\right|_\nu^2-C_{\eta}\left|\chi_{\{| v|\leq2C_{\eta}\}}f\right|^2
\end{equation}
holds.
\end{lemma}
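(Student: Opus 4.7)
The plan is to prove the two coercivity estimates by working with the splitting $L = \nu - K$ from \eqref{collision-frequency}–\eqref{Operator-K}, and then combining classical spectral arguments with weighted commutator estimates in the spirit of \cite{Guo-Invent Math-2003, Strain-Guo-ARMA-2008, Duan-Lei-Yang-Zhao-CMP-2017}. First, for the unweighted inequality \eqref{L_0}, I would use that $L$ is self-adjoint, nonnegative on $L^2_v\times L^2_v$, and has null space exactly $\mathcal N$. Writing $f = {\bf P}f + \{{\bf I-P}\}f$ and using $L({\bf P}f) = 0$ together with $\langle L\{{\bf I-P}\}f,{\bf P}f\rangle = 0$, the estimate reduces to showing $\langle L g,g\rangle \geq \sigma_0|g|^2_\nu$ for $g \in \mathcal N^\perp$, which is precisely the statement already recalled as \eqref{coercive-estimates}. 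After renormalizing the constant $\sigma_0$ into the statement (by absorbing it into $L$, as is done in \cite{Duan-Lei-Yang-Zhao-CMP-2017}), we obtain \eqref{L_0}.

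For the weighted velocity-derivative estimate \eqref{L_v}, I would start by decomposing $L = \nu(v) - K$ and applying $\partial_\beta$ via Leibniz. The diagonal $\nu$-term produces
\[
\langle w_{\ell,\kappa}^2 \partial_\beta(\nu f), \partial_\beta f\rangle = |w_{\ell,\kappa}\partial_\beta f|^2_\nu + \text{l.o.t.},
\]
where the lower-order terms come from $\partial_{\beta'}\nu$ with $|\beta'|\geq 1$; each such factor is bounded pointwise by $\langle v\rangle^\gamma$, so they can be absorbed by $\eta\sum_{|\beta'|<|\beta|}|w_{\ell,\kappa}\partial_{\beta'}f|^2_\nu$ via Cauchy–Schwarz, and after using ${\bf P}f$ lies in a finite-dimensional smooth space and so contributes at most the harmless $C_\eta|\chi_{\{|v|\leq 2C_\eta\}}f|^2$.

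The main work, and the technical obstacle, is the $K$-piece: one needs the classical pointwise/compactness bound on $\partial_\beta K$ of the form
\[
\bigl| w_{\ell,\kappa}\partial_\beta (K f)(v)\bigr| \lesssim \int_{\mathbb R^3} k_\beta(v,v')\,|w_{\ell,\kappa}f(v')|\,dv',
\]
with a kernel $k_\beta$ that (after splitting $|v|\leq 2C_\eta$ versus $|v|\geq 2C_\eta$) is a small multiple of $\nu(v)$ outside the ball and $L^\infty$–bounded inside. This is where the velocity-weight $\langle v\rangle^{\kappa(\ell-|\beta|)}$ and the exponential factor $e^{q\langle v\rangle^2/(1+t)^\vartheta}$ must be shown to commute well with the kernel: one exploits the identity $|v'-u'|^2+|v-u|^2 = |v'|^2+|u'|^2+|v|^2+|u|^2$ (from conservation of energy) to move the weight through $K$ at the price of a small loss, which can be absorbed into $\eta$ together with the lower-order $\{{\bf I-P}\}$-terms using the orthogonality properties of $L$. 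The hard part will be tracking these weighted commutator losses uniformly in $\ell$, $\kappa$, $q$, $\vartheta$ so that the final estimate is independent of $t$ and of $\epsilon$; however, since a statement of precisely this form has already been established in \cite[Lemma 3.1]{Duan-Lei-Yang-Zhao-CMP-2017} and is quoted by the authors, I would conclude the proof by citation after sketching the commutator structure.
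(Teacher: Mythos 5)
The paper gives no proof of this lemma at all: the appendix explicitly states that "these estimates are well-established in \cite{Guo-CPAM-2002, Guo-Invent Math-2003, Strain-CMP-2006} for hard sphere model and in \cite{Duan_Yang_Zhao-M3AS-2013, Strain-Guo-ARMA-2008} for cutoff potentials, we only state the result and omit the proofs for brevity," and the lemma itself simply cites \cite[Lemma 3.1]{Duan-Lei-Yang-Zhao-CMP-2017}. Your sketch — reducing \eqref{L_0} to the coercivity \eqref{coercive-estimates}, decomposing $L=\nu-K$, commuting the weight $w_{\ell,\kappa}$ through $K$ via the conservation-of-energy identity, and performing the compact/non-compact split so that the lower-order terms carry a small $\eta$ outside a ball and a $C_\eta$-multiple of the indicator term inside — is a faithful outline of the standard argument from the cited reference, so your concluding appeal to that citation matches the paper's own (entirely proof-free) treatment.
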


The second lemma is concerned with the corresponding weighted estimates on the nonlinear term $\Gamma$.
For this purpose, similar to that of \cite{Strain-Guo-ARMA-2008}, we can get that
\begin{eqnarray}\label{gamma-0}
\partial^\alpha_\beta\Gamma_\pm(g_1,g_2)&\equiv&\sum C_\beta^{\beta_0\beta_1\beta_2}C_\alpha^{\alpha_1\alpha_2} {\Gamma}^0_\pm\left(\partial^{\alpha_1}_{\beta_1}g_1,\partial^{\alpha_2}_{\beta_2}g_2\right)\\ \nonumber
&\equiv& \sum C_\beta^{\beta_0\beta_1\beta_2}C_\alpha^{\alpha_1\alpha_2}\int_{\mathbb{R}^3\times\mathbb{S}^2}|v-u|^\gamma{\bf b}(\cos\theta)\partial_{\beta_0}[\mu(u)^\frac 12]\left\{\partial^{\alpha_1}_{\beta_1}g_{1\pm}(v')\partial^{\alpha_2}_{\beta_2}g_{2\pm}(u')\right.\\ \nonumber
&&\left.
+\partial^{\alpha_1}_{\beta_1}g_{1\pm}(v')\partial^{\alpha_2}_{\beta_2}g_{2\mp}(u')
-\partial^{\alpha_1}_{\beta_1}g_{1\pm}(v)\partial^{\alpha_2}_{\beta_2}g_{2\pm}(u)
-\partial^{\alpha_1}_{\beta_1}g_{1\pm}(v)\partial^{\alpha_2}_{\beta_2}g_{2\mp}(u)\right\}d\omega du,
\end{eqnarray}
where $g_i(t,x,v)=[g_{i+}(t,x,v), g_{i-}(t,x,v)]$ $(i=1,2)$ and the summations are taken for all $\beta_0+\beta_1+\beta_2=\beta, \alpha_1+\alpha_2=\alpha$.

\begin{lemma}\label{lemma-nonlinear}(cf. \cite[Lemma 3.2]{Duan-Lei-Yang-Zhao-CMP-2017})
Assume $\kappa\geq 0, \ell\geq 0$.
Let $-3<\gamma\leq 1$, $N\geq4$, $g_i=g_i(t,x,v)=[g_{i+}(t,x,v),g_{i-}(t,x,v)]\ (i=1,2,3)$, $\beta_0+\beta_1+\beta_2=\beta$ and $\alpha_1+\alpha_2=\alpha$, we have
the following results:
\begin{itemize}
\item[(i).] When $|\alpha_1|+|\beta_1|\leq N$, we have
\begin{equation}\label{nonlinear-1}
\begin{array}{rl}
\left\langle w_{\ell,\kappa}^2{\Gamma}^0_\pm\left(\partial^{\alpha_1}_{\beta_1}g_1,\partial^{\alpha_2}_{\beta_2}g_2\right), \partial^{\alpha}_{\beta}g_3\right\rangle
\lesssim\sum\limits_{m\leq2}\left\{
\left|\nabla^m_{v}\left\{\mu^\delta\partial^{\alpha_1}_{\beta_1}g_1\right\}\right|+\left|w_{\ell,\kappa}\partial^{\alpha_1}_{\beta_1}g_1\right|\right\}
\left|w_{\ell,\kappa}\partial^{\alpha_2}_{\beta_2}g_2\right|_{L^2_{\nu}}
\left|w_{\ell,\kappa}\partial^{\alpha}_{\beta}g_3\right|_{L^2_{\nu}}
\end{array}
\end{equation}
or
\begin{equation}\label{nonlinear-2}
\begin{array}{rl}
\left\langle w_{\ell,\kappa}^2{\Gamma}^0_\pm\left(\partial^{\alpha_1}_{\beta_1}g_1,\partial^{\alpha_2}_{\beta_2}g_2\right), \partial^{\alpha}_{\beta}g_3\right\rangle
\lesssim\sum\limits_{m\leq2}\left\{
\left|\nabla^m_{v}\left\{\mu^\delta\partial^{\alpha_2}_{\beta_2}g_2\right\}\right|+\left|w_{\ell,\kappa}\partial^{\alpha_2}_{\beta_2}g_2\right|\right\}
\left|w_{\ell,\kappa}\partial^{\alpha_1}_{\beta_1}g_1\right|_{L^2_{\nu}}
\left|w_{\ell,\kappa}\partial^{\alpha}_{\beta}g_3\right|_{L^2_{\nu}}.
\end{array}
\end{equation}
\item[(ii).]
Set $\varsigma(v)=\langle v\rangle^{-\gamma}\equiv \nu(v)^{-1},~l\geq0$, it holds that
\begin{equation}\label{n-3}
\begin{split}
\left|\varsigma^{l}{\Gamma}(g_1,g_2)\right|^2_{L^2_v}
&\lesssim\displaystyle\sum_{|\beta|\leq2}\left|\varsigma^{l-|\beta|}\partial_{\beta}g_1\right|^2_{L^2_{\nu}}
\left|\varsigma^{l}g_2\right|^2_{L^2_{\nu}},\\
\left|\varsigma^{l}{\Gamma}(g_1,g_2)\right|^2_{L^2_v}
&\lesssim\sum_{|\beta|\leq2}\left|\varsigma^{l}g_1\right|^2_{L^2_{\nu}}
\left|\varsigma^{l-|\beta|}\partial_{\beta}g_2\right|^2_{L^2_{\nu}}.
\end{split}
\end{equation}
\end{itemize}
\end{lemma}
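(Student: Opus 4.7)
\medskip

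\noindent\textbf{Proposal for the proof of Lemma \ref{lemma-nonlinear}.}
The plan is to follow the standard route for weighted estimates on the cutoff Boltzmann operator as developed in \cite{Guo-Invent Math-2003, Guo-CPAM-2002, Strain-Guo-ARMA-2008, Duan_Yang_Zhao-M3AS-2013}, adapted to the two-species collision operator $\Gamma^0_\pm$ defined in \eqref{gamma-0} and to the time-dependent weight $w_{\ell,\kappa}(t,v)=\langle v\rangle^{\kappa \ell}e^{q\langle v\rangle^2/(1+t)^\vartheta}$. The fundamental mechanism is: (a) the rapid Gaussian decay supplied by the factor $\partial_{\beta_0}\mu^{1/2}(u)$, which can always be dominated by $\mu^{\delta}(u)$ for any $0<\delta<1/2$; (b) the pre-post-collisional energy identity $|v'|^2+|u'|^2=|v|^2+|u|^2$, which allows weights at $v$ to be redistributed between $v'$ and $u'$ after a change of variables; (c) unit Jacobian of the map $(v,u,\omega)\leftrightarrow(v',u',\omega)$.

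First, I would split $\Gamma^0_\pm$ into its gain part $\Gamma^0_{\rm gain}$ (the $v',u'$ terms) and its loss part $\Gamma^0_{\rm loss}$ (the $v,u$ terms). For the loss part, the integrand factors as $\partial^{\alpha_1}_{\beta_1}g_{1,\pm}(v)\int|v-u|^\gamma {\bf b}(\cos\theta)\partial_{\beta_0}\mu^{1/2}(u)\,\partial^{\alpha_2}_{\beta_2}g_{2,\pm/\mp}(u)\,d\omega du$. Applying Cauchy-Schwarz in $u$ and using $|\partial_{\beta_0}\mu^{1/2}(u)|\lesssim \mu^{\delta_0}(u)$ with a sufficiently small $\delta_0$, the $u$-integral is bounded pointwise by $\nu(v)|w_{\ell,\kappa}\partial^{\alpha_2}_{\beta_2}g_2|_{L^2}$ (the weight at $u$ is absorbed by $\mu^{\delta_0/2}(u)$, producing the $\nu(v)\sim\langle v\rangle^\gamma$ factor from $\int|v-u|^\gamma\mu^{\delta_0/2}(u)du$); then pairing with $w^2_{\ell,\kappa}\partial^\alpha_\beta g_3$ and using Cauchy-Schwarz in $(x,v)$ with the distribution of $w_{\ell,\kappa}(v)^2$ between $g_1$ and $g_3$ yields the loss contribution to \eqref{nonlinear-1}.

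For the gain part, I would change variables $(v,u)\to(v',u')$, using that the Jacobian equals one. The key point is the pointwise inequality
$$w_{\ell,\kappa}(t,v)\;\leq\; C\,w_{\ell,\kappa}(t,v')\,w_{\ell,\kappa}(t,u'),$$
which follows from $\langle v\rangle\leq\langle v'\rangle\langle u'\rangle$ (from $|v|^2\leq |v'|^2+|u'|^2+2$) and the energy identity above, provided $q$ is taken small relative to $1/4$ so that a surplus $\mu^{1/4}(u)$ can absorb $e^{q\langle u'\rangle^2/(1+t)^\vartheta}$. Distributing the weights as $w_{\ell,\kappa}(v)w_{\ell,\kappa}(v)\lesssim w_{\ell,\kappa}(v')\,\bigl[w_{\ell,\kappa}(v)w_{\ell,\kappa}(u')\mu^{\delta}(u)\bigr]$ and bounding the bracket uniformly in $u$ by $\mu^{\delta'}(u)$ for some smaller $\delta'>0$ allows the $\mu^{\delta'}(u)$ to neutralize $|v-u|^\gamma$ (integrating in $u$) and to allocate $\partial^{\alpha_1}_{\beta_1}g_1$ into either the $L^\infty_v(\mu^\delta)$-type term $\sum_{m\leq 2}|\nabla^m_v(\mu^\delta \partial^{\alpha_1}_{\beta_1}g_1)|$ (via a Sobolev embedding $H^2_v\hookrightarrow L^\infty_v$, which is where the condition $m\leq 2$ enters and requires $|\alpha_1|+|\beta_1|\leq N$ with $N\geq 4$) or into the weighted $L^2$ term $|w_{\ell,\kappa}\partial^{\alpha_1}_{\beta_1}g_1|$. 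The symmetric choice gives \eqref{nonlinear-2}. Part (ii), estimate \eqref{n-3}, is a pure velocity statement: one repeats the same gain/loss split but now uses the weight $\varsigma(v)^l=\nu(v)^{-l}$, whose redistribution $\varsigma(v)^l\lesssim\varsigma(v')^l\varsigma(u')^{-l}\langle v-u\rangle^{-\gamma l}$ is handled exactly as in \cite[Lemma 2.3]{Strain-Guo-ARMA-2008}, with the loss of regularity being absorbed by the two $\partial_\beta$-derivatives appearing in \eqref{n-3}.

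The main obstacle will be the soft potential regime $-3<\gamma<0$, where the kernel $|v-u|^\gamma$ is singular on the diagonal and $\nu(v)\sim\langle v\rangle^\gamma$ decays at infinity. The singularity near $v=u$ is controlled by writing $|v-u|^\gamma\lesssim|v-u|^\gamma\chi_{|v-u|\leq 1}+\chi_{|v-u|\geq 1}$ and exploiting $\mu^{\delta}(u)$ to dominate the angular integral uniformly in $v$ (this is where $\gamma>-3$ is essential so that $|v-u|^\gamma$ is locally integrable). The decay of $\nu$ at infinity is precisely matched by the $|\cdot|_{L^2_\nu}$ norms on the right-hand side, so no further adjustment is needed for \eqref{nonlinear-1}-\eqref{nonlinear-2}; for \eqref{n-3}, the $\nu$-weighted norms on the right are exactly tuned to compensate the $\varsigma^l=\nu^{-l}$ on the left via the pointwise redistribution mentioned above.
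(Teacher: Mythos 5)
The paper does not prove this lemma: it explicitly states in Appendix A that these estimates are well established in the cited literature and omits the proof, referring to \cite[Lemma 3.2]{Duan-Lei-Yang-Zhao-CMP-2017} and to \cite{Guo-Invent Math-2003, Strain-Guo-ARMA-2008, Duan_Yang_Zhao-M3AS-2013}. Your sketch reproduces exactly the mechanism used in those references — gain/loss splitting, Gaussian decay from $\partial_{\beta_0}\mu^{1/2}(u)$, unit Jacobian of the pre--post collisional map, submultiplicativity of the weight via energy conservation, and the $H^2_v\hookrightarrow L^\infty_v$ embedding that produces the $\sum_{m\le 2}$ terms and explains the two symmetric forms \eqref{nonlinear-1}--\eqref{nonlinear-2} — so the overall route is the right one and is consistent with what the paper relies on.

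One step, as literally written, is false and should be repaired. You distribute the weight as $w_{\ell,\kappa}(v)^2\lesssim w_{\ell,\kappa}(v')\bigl[w_{\ell,\kappa}(v)w_{\ell,\kappa}(u')\mu^{\delta}(u)\bigr]$ and claim the bracket is bounded uniformly by $\mu^{\delta'}(u)$. It is not: taking $u$ fixed and $|v|\to\infty$, the factor $w_{\ell,\kappa}(v)\sim e^{q\langle v\rangle^2/(1+t)^\vartheta}\langle v\rangle^{\kappa\ell}$ blows up while $\mu^{\delta}(u)$ is constant. Similarly, $\mu^{1/4}(u)$ cannot absorb $e^{q\langle u'\rangle^2/(1+t)^\vartheta}$ on its own, since $|u'|^2\le|u|^2+|v|^2$ leaves an uncontrolled $e^{q|v|^2}$ factor. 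The correct bookkeeping needs no such absorption: from $\langle v\rangle^2\le\langle v'\rangle^2+\langle u'\rangle^2$ and $\langle v\rangle\lesssim\langle v'\rangle\langle u'\rangle$ one has directly $w_{\ell,\kappa}(t,v)\le C\,w_{\ell,\kappa}(t,v')\,w_{\ell,\kappa}(t,u')$; one copy of $w_{\ell,\kappa}(v)$ stays with $\partial^\alpha_\beta g_3(v)$, while the other is split so that $w_{\ell,\kappa}(v')$ travels with $g_1(v')$ and $w_{\ell,\kappa}(u')$ with $g_2(u')$ into the respective $L^2$ (or $L^\infty_v$) norms after Cauchy--Schwarz and the change of variables; the surplus $\mu^{\delta}(u)$ is reserved solely for integrating the kinetic factor $|v-u|^\gamma$. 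Relatedly, for $-3<\gamma\le-\frac32$ the Cauchy--Schwarz step on the loss term with $|v-u|^{2\gamma}$ is not integrable near the diagonal, so the near-diagonal piece must be routed through the $L^\infty_v$ (Sobolev) alternative rather than through Cauchy--Schwarz in $u$; your final paragraph gestures at this but the loss-term paragraph as written would not close in that range. With these corrections the argument is the standard one and goes through.
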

Based on Lemma \ref{lemma-nonlinear}, we can get some weighted estimates on the interactions between $f^{P,\epsilon}(t,x,v), f^{1,\epsilon}(t,x,v),$ $\cdots, f^{m,\epsilon}(t,x,v)$ with respect to the weight $w_{\ell-|\beta|,-\gamma}(t,v)$, whose proof will be omitted for brevity.
\begin{lemma}\label{Lemma-5.3} Take $n\geq 8$ and $w_{l_m-|\beta|,-\gamma}
=\langle v\rangle^{-\gamma(l_m-|\beta|)}e^{\frac{q\langle v\rangle^2}{(1+t)^\vartheta}}$, one has the following estimates:
\begin{itemize}
\item [i)] For $|\alpha|\leq n$ with $j_1+j_2\geq m, 0<j_1,j_2<m$, one has
\begin{eqnarray}\label{0-order-gamma-R}
   &&\left(\partial^\alpha\left\{{\Gamma}\left(f^{P,\epsilon},f^{m,\epsilon}\right)
  +{\Gamma}\left(f^{m,\epsilon},f^{P,\epsilon}\right)+\sum_{1\leq j_1\leq m-1}\epsilon^{j_1}
  \left\{{\Gamma}\left(f^{m,\epsilon},f^{j_1,\epsilon}\right)+{\Gamma}\left(f^{j_1,\epsilon},f^{m,\epsilon}\right)\right\}\right.\right.\nonumber\\
  &&\left.\left.
\quad\quad\quad\quad\quad\quad\quad\quad\quad\quad\quad\quad\quad\quad\quad\quad\quad+\sum_{j_1+j_2\geq m,\atop 0<j_1,j_2<m} \epsilon^{j_1+j_2-m}\Gamma\left(f^{j_1,\epsilon},f^{j_2,\epsilon}\right)\right\}, \partial^\alpha f^{m,\epsilon}\right)\\
  &\lesssim&\mathcal{E}_{f^{P,\epsilon},n-1,n-1,-\gamma}(t)\mathcal{D}_{f^{m,\epsilon},n}(t)+\mathcal{E}_{f^{m,\epsilon},n-1,n-1,-\gamma}(t)\mathcal{D}_{f^{P,\epsilon},n}(t) \nonumber\\
  &&+\sum_{1\leq j_1\leq m-1}
  \left\{\mathcal{E}_{f^{j_1,\epsilon},n-1,n-1,-\gamma}(t)\mathcal{D}_{f^{m,\epsilon},n}(t)
  +\mathcal{E}_{f^{m,\epsilon},n-1,n-1,-\gamma}(t)\mathcal{D}_{f^{j_1,\epsilon},n}(t) \right\}\nonumber\\
  &&+\sum_{j_1+j_2\geq m,\atop 0<j_1,j_2<m}\mathcal{E}_{f^{j_1,\epsilon},n-1,n-1,-\gamma}(t)\mathcal{D}_{f^{j_2,\epsilon},n}(t)
  +\eta\left\|\partial^\alpha{\bf \{I-P\}}f^{m,\epsilon}(t)\langle v\rangle^{\frac\gamma2}\right\|^2,\nonumber
     \end{eqnarray}
     and
     \begin{eqnarray}\label{1-typical-low-non-gamma-R}
   &&\left|\left(\partial^\alpha\left\{\epsilon^{m}\Gamma(f^{m,\epsilon},f^{m,\epsilon})\right\},\partial^\alpha f^{m,\epsilon}\right)\right|\nonumber\\
  &\lesssim&
  \mathcal{E}_{f^{m,\epsilon},n-1,n-1,-\gamma}(t)\mathcal{D}_{f^{m,\epsilon},n}(t)
  +\eta\mathcal{D}_{f^{m,\epsilon},n}(t);
\end{eqnarray}

 \item [ii)] For $1\leq|\alpha|\leq n$, one has
\begin{eqnarray}\label{i-weight-gamma-low-R}
  &&\left(\partial^\alpha\left\{{\Gamma}\left(f^{P,\epsilon},f^{m,\epsilon}\right)
  +{\Gamma}\left(f^{m,\epsilon},f^{P,\epsilon}\right)+\sum_{1\leq j_1\leq m-1}\epsilon^{j_1}
  \left\{{\Gamma}\left(f^{m,\epsilon},f^{j_1,\epsilon}\right)+{\Gamma}\left(f^{j_1,\epsilon},f^{m,\epsilon}\right)\right\}\right.\right.\nonumber\\
  &&\left.\left.
  \quad\quad\quad\quad\quad\quad\quad\quad\quad\quad\quad+\sum_{j_1+j_2\geq m,\atop 0<j_1,j_2<m} \epsilon^{j_1+j_2-m}\Gamma\left(f^{j_1,\epsilon},f^{j_2,\epsilon}\right)\right\}, w^2_{l_m,-\gamma}\partial^\alpha f^{m,\epsilon}\right)\\
  &\lesssim&\mathcal{E}_{f^{P,\epsilon},n,l_m,-\gamma}(t)\mathcal{D}_{f^{m,\epsilon},n,l_m,-\gamma}(t)+\mathcal{E}_{f^{m,\epsilon},n,l_m,-\gamma}(t)
  \mathcal{D}_{f^{P,\epsilon},n,l_m,-\gamma}(t)\nonumber\\
  &&+\sum_{1\leq j_1\leq m-1}\left\{\mathcal{E}_{f^{j_1,\epsilon},n,l_m,-\gamma}(t)\mathcal{D}_{f^{m,\epsilon},n,l_m,-\gamma}(t)
  +\mathcal{E}_{f^{m,\epsilon},n,l_m,-\gamma}(t)
  \mathcal{D}_{f^{j_1,\epsilon},n,l_m,-\gamma}(t)\right\}\nonumber\\
  &&+\sum_{j_1+j_2\geq m,\atop 0<j_1,j_2<m}\mathcal{E}_{f^{j_1,\epsilon},n,l_m,-\gamma}(t)\mathcal{D}_{f^{j_2,\epsilon},n,l_m,-\gamma}(t)+\eta\left\|w_{l_m,-\gamma}\partial^\alpha f^{m,\epsilon}(t)\langle v\rangle^{\frac\gamma2}\right\|^2.\nonumber
     \end{eqnarray}
     and
     \begin{eqnarray}\label{1-w-typical-low-non-gamma-R}
   &&\left|\left(\partial^\alpha\left\{\epsilon^{m}\Gamma(f^{m,\epsilon},f^{m,\epsilon})\right\},w^2_{l_m,-\gamma}\partial^\alpha f^{m,\epsilon}\right)\right|\nonumber\\
  &\lesssim&
  \mathcal{E}_{f^{m,\epsilon},n,l_m,-\gamma}(t)\mathcal{D}_{f^{m,\epsilon},n,l_m,-\gamma}(t)
  +\eta\mathcal{D}_{f^{m,\epsilon},n,l_m,-\gamma}(t);
\end{eqnarray}
 \item [iii)] For $|\alpha|+|\beta|\leq n$, one has
\begin{eqnarray}\label{micro-weight-gamma-low-R}
   &&\left(\partial^\alpha_\beta\left\{{\Gamma}\left(f^{P,\epsilon},f^{m,\epsilon}\right)
  +{\Gamma}\left(f^{m,\epsilon},f^{P,\epsilon}\right)+\sum_{1\leq j_1\leq m-1}\epsilon^{j_1}
  \left\{{\Gamma}\left(f^{m,\epsilon},f^{j_1,\epsilon}\right)+{\Gamma}\left(f^{j_1,\epsilon},f^{m,\epsilon}\right)\right\}\right.\right.\nonumber\\
  &&\left.\left.\quad\quad\quad\quad\quad\quad\quad\quad\quad\quad+\sum_{j_1+j_2\geq m,\atop 0<j_1,j_2<m} \epsilon^{j_1+j_2-m}\Gamma\left(f^{j_1,\epsilon},f^{j_2,\epsilon}\right)\right\}, w^2_{l_m-|\beta|,-\gamma}\partial^\alpha{\bf \{I-P\}} f^{m,\epsilon}\right)\nonumber\\
  &\lesssim&\mathcal{E}_{f^{P,\epsilon},n,l_m,-\gamma}(t)\mathcal{D}_{f^{m,\epsilon},n,l_m,-\gamma}(t)+\mathcal{E}_{f^{m,\epsilon},n,l_m,-\gamma}(t)
  \mathcal{D}_{f^{P,\epsilon},n,l_m,-\gamma}(t)\\
  &&+\sum_{1\leq j_1\leq m-1}\left\{\mathcal{E}_{f^{j_1,\epsilon},n,l_m,-\gamma}(t)\mathcal{D}_{f^{m,\epsilon},n,l_m,-\gamma}(t)
  +\mathcal{E}_{f^{m,\epsilon},n,l_m,-\gamma}(t)
  \mathcal{D}_{f^{j_1,\epsilon},n,l_m,-\gamma}(t)\right\}\nonumber\\
  &&+\sum_{j_1+j_2\geq m,\atop 0<j_1,j_2<m}\mathcal{E}_{f^{j_1,\epsilon},n,l_m,-\gamma}(t)\mathcal{D}_{f^{j_2,\epsilon},n,l_i,-\gamma}(t)
  +\eta\left\|w_{l_m-|\beta|,-\gamma}\partial^\alpha_\beta{\bf \{I-P\}} f^{m,\epsilon}(t)\langle v\rangle^{\frac\gamma2}\right\|^2\nonumber
     \end{eqnarray}
     and
     \begin{eqnarray}\label{2-w-typical-low-non-gamma-R}
 &&\left|\left(\partial^\alpha_\beta\left\{\epsilon^{m}\Gamma(f^{m,\epsilon},f^{m,\epsilon})\right\},w^2_{l_m-|\beta|,-\gamma}\partial^\alpha_\beta{\bf\{I-P\}} f^{m,\epsilon}\right)\right|\nonumber\\
  &\lesssim&
  \mathcal{E}_{f^{m,\epsilon},n,l_m,-\gamma}(t)\mathcal{D}_{f^{m,\epsilon},n,l_m,-\gamma}(t)
  +\eta\mathcal{D}_{f^{m,\epsilon},n,l_m,-\gamma}(t).
\end{eqnarray}
\end{itemize}
\end{lemma}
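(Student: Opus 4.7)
The plan is to reduce each of the three families of estimates to the pointwise-in-$x$ nonlinear estimates of Lemma 5.2 by a standard Leibniz expansion and spatial Sobolev splitting. Since Lemma 5.3 is a ``collision bilinear packaged against a weighted test function'' estimate, the overall scheme is the same for (i), (ii) and (iii); only the target test function and the distribution of derivatives among the energy and the dissipation functionals change.

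First, I would expand every bilinear collision term using the Leibniz-type identity \eqref{gamma-0} recalled right before Lemma 5.2: for any multi-indices $\alpha,\beta$,
\[
\partial^\alpha_\beta \Gamma(g_1,g_2) \;=\; \sum_{\alpha_1+\alpha_2=\alpha,\,\beta_0+\beta_1+\beta_2=\beta} C^{\alpha_1\alpha_2}_\alpha C^{\beta_0\beta_1\beta_2}_\beta\; \Gamma^0\bigl(\partial^{\alpha_1}_{\beta_1}g_1,\partial^{\alpha_2}_{\beta_2}g_2\bigr).
\]
Applied to each of $\Gamma(f^{P,\epsilon},f^{m,\epsilon})$, $\Gamma(f^{m,\epsilon},f^{P,\epsilon})$, $\Gamma(f^{j_1,\epsilon},f^{m,\epsilon})$, $\Gamma(f^{m,\epsilon},f^{j_1,\epsilon})$, $\Gamma(f^{j_1,\epsilon},f^{j_2,\epsilon})$ and $\epsilon^m\Gamma(f^{m,\epsilon},f^{m,\epsilon})$, this reduces matters to controlling a finite sum of inner products of the form $\langle w^2_{l_m-|\beta|,-\gamma}\,\Gamma^0(\partial^{\alpha_1}_{\beta_1}g_1,\partial^{\alpha_2}_{\beta_2}g_2),\partial^\alpha_\beta g_3\rangle$, with $g_3=f^{m,\epsilon}$ or $\{\mathbf I-\mathbf P\}f^{m,\epsilon}$ and weight exponent $\kappa=-\gamma$.

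The second step is to apply Lemma 5.2 in the $v$-variable, pointwise in $x$. In the regime $|\alpha_1|+|\beta_1|\le n/2$ I would invoke \eqref{nonlinear-1} so that the first factor is measured in the ``energy-type'' velocity norm $|w_{\ell,\kappa}\partial^{\alpha_1}_{\beta_1}g_1|$ (plus the harmless $\nabla_v^{\le 2}\{\mu^\delta\cdot\}$ piece), while the second factor and the test function pick up $|w_{\ell,\kappa}\cdot|_{L^2_\nu}$; in the opposite regime I would use \eqref{nonlinear-2} instead. The third step is the $x$-integration: Cauchy--Schwarz in $x$ combined with the standard Sobolev splittings $L^\infty\cdot L^2\cdot L^2$ (when $|\alpha_1|\le n-2$), $L^3\cdot L^6\cdot L^2$ or $L^6\cdot L^3\cdot L^2$ (borderline cases $|\alpha_1|\in\{n-1,n\}$), noting that $H^2_x\hookrightarrow L^\infty_x$ and $H^1_x\hookrightarrow L^6_x$. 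After this splitting, the ``small'' factor is absorbed into an $\mathcal{E}_{\cdot,n,l_m,-\gamma}$-type norm (with at least one spatial derivative eaten by the embedding, which explains the index $n-1$ appearing in \eqref{0-order-gamma-R}) and the ``large'' factor into a $\mathcal{D}_{\cdot,n,l_m,-\gamma}$-type norm; the test function against $\partial^\alpha_\beta g_3$ is controlled by $\eta$ times a piece of $\mathcal{D}_{f^{m,\epsilon},n,l_m,-\gamma}(t)$ via Young's inequality, which explains the final $\eta\|\cdots\langle v\rangle^{\gamma/2}\|^2$ terms. The $\epsilon^{j_1+j_2-m}$ and $\epsilon^{j_1}$ factors are $\le 1$ and are harmlessly kept or dropped.

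The (only) subtle point, which I expect to be the main technical obstacle, is the \emph{distribution of the exponential weight} $w_{l_m-|\beta|,-\gamma}(v)=\langle v\rangle^{-\gamma(l_m-|\beta|)}e^{q\langle v\rangle^2/(1+t)^\vartheta}$ between the two factors inside $\Gamma^0$. Since the collision integrand lives at $(v',u')$ and $(v,u)$ with $|v'|^2+|u'|^2=|v|^2+|u|^2$, the exponential factor splits as $e^{q\langle v\rangle^2/(1+t)^\vartheta}\le e^{q\langle v'\rangle^2/(1+t)^\vartheta}e^{q\langle u'\rangle^2/(1+t)^\vartheta}\mu(u)^{-\delta}$ for small $\delta>0$, and the polynomial part is handled via $\langle v\rangle^{-\gamma}\lesssim \langle v-u\rangle^{-\gamma}\langle u\rangle^{-\gamma}$ together with the loss $\langle u\rangle^{-\gamma|\beta_0|}$ absorbed by $\partial_{\beta_0}\mu^{1/2}$. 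This lets me replace $w^2_{l_m-|\beta|,-\gamma}$ by the product $w_{l_m-|\beta_1|,-\gamma}\,w_{l_m-|\beta_2|,-\gamma}$ up to a rapidly decaying factor in $u$, after which Lemma 5.2 applies directly. Once this distribution is set up, (i), (ii) and (iii) follow by the same argument with only the choice of test function (unweighted $\partial^\alpha f^{m,\epsilon}$, weighted $w^2_{l_m,-\gamma}\partial^\alpha f^{m,\epsilon}$, or weighted $w^2_{l_m-|\beta|,-\gamma}\partial^\alpha_\beta\{\mathbf I-\mathbf P\}f^{m,\epsilon}$) differing, producing the three estimates \eqref{0-order-gamma-R}--\eqref{1-typical-low-non-gamma-R}, \eqref{i-weight-gamma-low-R}--\eqref{1-w-typical-low-non-gamma-R}, and \eqref{micro-weight-gamma-low-R}--\eqref{2-w-typical-low-non-gamma-R} respectively. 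The pure $\epsilon^m\Gamma(f^{m,\epsilon},f^{m,\epsilon})$ bounds \eqref{1-typical-low-non-gamma-R}, \eqref{1-w-typical-low-non-gamma-R}, \eqref{2-w-typical-low-non-gamma-R} are the diagonal special case $j_1=j_2=m$ and require no new ideas once the splitting of the weight has been set up.
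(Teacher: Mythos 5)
Your plan is correct and is essentially the argument the paper intends: the paper states Lemma \ref{Lemma-5.3} as a direct consequence of Lemma \ref{lemma-nonlinear} and omits the proof, and your combination of the Leibniz expansion \eqref{gamma-0}, the velocity-space bounds \eqref{nonlinear-1}--\eqref{nonlinear-2} applied according to which factor carries the fewer derivatives, the $L^\infty\!-\!L^2\!-\!L^2$ and $L^3\!-\!L^6\!-\!L^2$ Sobolev splittings in $x$, and Young's inequality for the $\eta$-terms is exactly that route. The only remark worth making is that the ``subtle point'' you flag — distributing the exponential factor $e^{q\langle v\rangle^2/(1+t)^\vartheta}$ across the collision integrand — is already built into the cited weighted estimates \eqref{nonlinear-1}--\eqref{nonlinear-2} (which are stated for the full weight $w_{\ell,\kappa}$), so it need not be re-derived; your sketch of it is nevertheless a correct account of why those estimates hold.
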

For the corresponding estimates with respect to the weight $w_{\ell_m^*-|\beta|,1}(t,v)$, we have
\begin{lemma}\label{Lemma-5.4} Take $n\geq 8$ and $w_{\ell_m^*-|\beta|,1}(t,v)
=\langle v\rangle^{\ell_m^*-|\beta|}e^{\frac{q\langle v\rangle^2}{(1+t)^\vartheta}}$, one has the following estimates:
\begin{itemize}
 \item [i)] For $1\leq|\alpha|\leq n$, one has
\begin{eqnarray}\label{i-weight-gamma-non-R}
  &&\left(\partial^\alpha\left\{{\Gamma}\left(f^{P,\epsilon},f^{m,\epsilon}\right)
  +{\Gamma}\left(f^{m,\epsilon},f^{P,\epsilon}\right)+\sum_{1\leq j_1\leq m-1}\epsilon^{j_1}
  \left\{{\Gamma}\left(f^{m,\epsilon},f^{j_1,\epsilon}\right)+{\Gamma}\left(f^{j_1,\epsilon},f^{m,\epsilon}\right)\right\}\right.\right.\nonumber\\
  &&\left.\left.
  +\sum_{j_1+j_2\geq m,\atop 0<j_1,j_2<m} \epsilon^{j_1+j_2-m}\Gamma\left(f^{j_1,\epsilon},f^{j_2,\epsilon}\right)+\epsilon^m\Gamma(f^{m,\epsilon},f^{m,\epsilon})\right\}, w^2_{\ell_m^*,1}\partial^\alpha f^{m,\epsilon}\right)\\
   &\lesssim&\left\{\mathcal{E}_{f^{P,\epsilon},n,-\frac{\ell_m^*}\gamma,-\gamma}(t)
   +\mathcal{E}_{f^{m,\epsilon},n-1,n-1,-\gamma}(t)+\sum_{1\leq j_1\leq m-1}\mathcal{E}_{f^{j_1,\epsilon},n,-\frac{\ell_m^*}\gamma,-\gamma}(t)\right\}\nonumber\\
  &&\times
  \left\{\sum_{0\leq j\leq |\alpha|}\mathcal{D}^{(j,0)}_{f^{m,\epsilon},j,\ell_m^*,1}(t)+\mathcal{D}_{f^{m,\epsilon},n}(t)\right\}\nonumber\\
  &&+\chi_{|\alpha|=1}\sum_{1\leq j\leq 2}\mathcal{E}^{(j,0)}_{f^{m,\epsilon},j,\ell_m^*,1}(t)\sum_{1\leq j\leq 2}\mathcal{D}^{(j,0)}_{f^{m,\epsilon},j,\ell^*_m,1}(t)\nonumber\\
  &&+\chi_{|\alpha|\geq 2}\sum_{0\leq j_1\leq |\alpha|}\mathcal{E}^{(j_1,0)}_{f^{m,\epsilon},j_1,\ell_m^*,1}(t)
  \times\sum_{0\leq j_2\leq |\alpha|}\mathcal{D}^{(j_2,0)}_{f^{m,\epsilon},j_2,\ell_m^*,1}(t)\nonumber\\
  &&+\left\{\sum_{0\leq j\leq |\alpha|}\mathcal{E}^{(j,0)}_{f^{m,\epsilon},j,\ell_m^*,1}(t)+\mathcal{E}_{f^{m,\epsilon},n}(t)\right\}\nonumber\\
  &&\left\{\mathcal{D}_{f^{P,\epsilon},n,-\frac{\ell_m^*}\gamma,-\gamma}(t)
  +\sum_{1\leq j_1\leq m-1}
  \mathcal{D}_{f^{j_1,\epsilon},n,-\frac{\ell_m^*}\gamma,-\gamma}(t)\right\}\nonumber\\
  &&+\sum_{j_1+j_2\geq m,\atop 0<j_1,j_2<m}\mathcal{E}_{f^{j_1,\epsilon},n,-\frac{\ell_m^*}\gamma,-\gamma}(t)
  \mathcal{D}_{f^{j_2,\epsilon},n,-\frac{\ell_m^*}\gamma,-\gamma}(t)
  +\eta\left\|w_{\ell_m^*,1}\partial^\alpha f^{m,\epsilon}\langle v\rangle^{\frac\gamma2}\right\|^2;\nonumber
     \end{eqnarray}
 \item [iii)] For $1\leq |\alpha|+|\beta|\leq n$ with $|\beta|\geq 1$ or $|\alpha|=|\beta|=0$, one has
\begin{eqnarray}\label{micro-weight-gamma-R}
   &&\left(\partial^\alpha_\beta\left\{{\Gamma}\left(f^{P,\epsilon},f^{m,\epsilon}\right)
  +{\Gamma}\left(f^{m,\epsilon},f^{P,\epsilon}\right)+\sum_{1\leq j_1\leq m-1}\epsilon^{j_1}
  \left\{{\Gamma}\left(f^{m,\epsilon},f^{j_1,\epsilon}\right)+{\Gamma}\left(f^{j_1,\epsilon},f^{m,\epsilon}\right)\right\}\right.\right.\nonumber\\
  &&\left.\left.+\sum_{j_1+j_2\geq m,\atop 0<j_1,j_2<m} \epsilon^{j_1+j_2-m}\Gamma\left(f^{j_1,\epsilon},f^{j_2,\epsilon}\right)+\epsilon^m\Gamma(f^{m,\epsilon},f^{m,\epsilon})\right\}, w^2_{\ell_m^*-|\beta|,1}\partial^\alpha_\beta{\bf \{I-P\}} f^{m,\epsilon}\right)\nonumber\\
  &\lesssim&\left\{\mathcal{E}_{f^{P,\epsilon},n,-\frac{\ell_m^*}\gamma,-\gamma}(t)
   +\mathcal{E}_{f^{m,\epsilon},n-1,n-1,-\gamma}(t)+\sum_{1\leq j_1\leq m-1}\mathcal{E}_{f^{j_1,\epsilon},n,-\frac{\ell_m^*}\gamma,-\gamma}(t)\right\}
  \nonumber\\
  &&\times\left\{\sum_{0\leq k\leq |\beta|,\atop 0\leq j\leq n}\mathcal{D}^{(j,k)}_{f^{m,\epsilon},j,\ell_m^*,1}(t)+\mathcal{D}_{f^{m,\epsilon},n}(t)\right\}\nonumber\\
  &&+\left\{\sum_{0\leq j\leq n}\mathcal{E}^{(j,0)}_{f^{m,\epsilon},j,\ell_m^*,1}(t)+\mathcal{E}_{f^{m,\epsilon},n}(t)\right\}\left\{\mathcal{D}_{f^{P,\epsilon},n,-\frac{\ell_m^*}\gamma,-\gamma}(t)
  +\sum_{1\leq j_1\leq m-1}
  \mathcal{D}_{f^{j_1,\epsilon},n,-\frac{\ell_m^*}\gamma,-\gamma}(t)\right\}\nonumber\\
  &&+\chi_{|\alpha|+|\beta|\leq1}\sum_{0\leq k_1+k_2\leq |\beta|}\left\{\sum_{k_1\leq j\leq 2}\mathcal{E}^{(j,k_1)}_{f^{m,\epsilon},j,\ell_m^*,1}(t)\sum_{k_2\leq j\leq 2}\mathcal{D}^{(j,k_2)}_{f^{m,\epsilon},j,\ell^*_m,1}(t)\right\}\nonumber\\
  &&+\chi_{|\alpha|+|\beta|\geq 2}\sum_{0\leq k_1+k_2\leq |\beta|}\left\{\sum_{k_1\leq j\leq |\alpha|+|\beta|}\mathcal{E}^{(j,k_1)}_{f^{m,\epsilon},j,\ell_m^*,1}(t)\sum_{k_2\leq j\leq |\alpha|+|\beta|}\mathcal{D}^{(j,k_2)}_{f^{m,\epsilon},j,\ell^*_m,1}(t)\right\}\nonumber\\
  &&+\sum_{j_1+j_2\geq m,\atop 0<j_1,j_2<m}\mathcal{E}_{f^{j_1,\epsilon},n,-\frac{\ell_m^*}\gamma,-\gamma}(t)\mathcal{D}_{f^{j_2,\epsilon},n,-\frac{\ell_m^*}\gamma,-\gamma}(t)
+\eta\left\|w_{\ell_m^*-|\beta|,1}\partial^\alpha_\beta{\bf \{I-P\}} f^{m,\epsilon}\langle v\rangle^{\frac\gamma2}\right\|^2.\nonumber
     \end{eqnarray}
\end{itemize}
\end{lemma}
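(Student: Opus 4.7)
\medskip

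The plan is to reduce both weighted nonlinear estimates to the pointwise inequalities \eqref{nonlinear-1}--\eqref{n-3} of Lemma \ref{lemma-nonlinear} after expanding the derivatives of each $\Gamma$ term via the Leibniz formula \eqref{gamma-0}. First I would expand, for each bilinear term $\Gamma(g_1,g_2)$ appearing in the expression (with $g_1,g_2\in\{f^{P,\epsilon},f^{j,\epsilon},f^{m,\epsilon}\}$ and suitable $\epsilon$-powers),
\[
\partial^{\alpha}_{\beta}\Gamma(g_1,g_2)=\sum C_{\beta}^{\beta_0\beta_1\beta_2}C_{\alpha}^{\alpha_1\alpha_2}\Gamma^0\bigl(\partial^{\alpha_1}_{\beta_1}g_1,\partial^{\alpha_2}_{\beta_2}g_2\bigr),
\]
and distinguish cases according to the splitting of $(\alpha_1,\alpha_2)$ and $(\beta_1,\beta_2)$ and according to which factor carries the higher differential order. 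Against the weight $w_{\ell_m^*-|\beta|,1}$, using \eqref{nonlinear-1} or \eqref{nonlinear-2} in the $v$-variable pointwise in $x$, each trilinear term is bounded by
\[
\bigl|w_{\ell_m^*-|\beta|,1}\partial^{\alpha_1}_{\beta_1}g_1\bigr|_{L^2_v}\cdot\bigl|w_{\ell_m^*-|\beta|,1}\partial^{\alpha_2}_{\beta_2}g_2\bigr|_{L^2_\nu}\cdot\bigl|w_{\ell_m^*-|\beta|,1}\partial^\alpha_\beta h\bigr|_{L^2_\nu},
\]
where $h=f^{m,\epsilon}$ or $\{{\bf I-P}\}f^{m,\epsilon}$; the $\mu^{\delta}$-weighted terms arising from $\partial_{\beta_0}\mu^{1/2}$ are absorbed into the polynomial weight.

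Next I would integrate in $x$ using a standard $L^\infty_x/L^3_x/L^6_x$ Sobolev splitting: the factor with the lowest $x$-derivatives is placed in $L^\infty_x$ via $H^2_x\hookrightarrow L^\infty_x$ while the others are placed in $L^2_x$ or $L^6_x$. Since $n\geq 8$ (so that $n-2\geq N^0_m\geq 5$), this places all but at most two factors into an energy functional $\mathcal{E}$ and leaves the remaining two factors to be absorbed into a dissipation $\mathcal{D}$ times the target factor. At this stage the estimate already looks like the right-hand sides in \eqref{i-weight-gamma-non-R} and \eqref{micro-weight-gamma-R}, modulo choosing in which norm to place each factor.

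The final and most delicate step is the \emph{weight conversion}: the estimate needs to be expressed in terms of the $-\gamma$-weighted energies $\mathcal{E}_{\cdot,\cdot,-\ell_m^*/\gamma,-\gamma}$ on the non-$f^{m,\epsilon}$ factors and in terms of the $1$-weighted dissipations $\mathcal{D}^{(j,k)}_{f^{m,\epsilon},j,\ell_m^*,1}$ on the $f^{m,\epsilon}$ factor. Since $\gamma<0$, the pointwise identity $\langle v\rangle^{\ell_m^*}=\langle v\rangle^{-\gamma(-\ell_m^*/\gamma)}$ allows $w_{\ell_m^*,1}$-bounds on $f^{P,\epsilon}$ or $f^{j_1,\epsilon}$ to be recast as $w_{-\ell_m^*/\gamma,-\gamma}$-bounds, which is exactly the weight appearing on the right of the claim. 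For the $f^{m,\epsilon}$ factor one distributes the $\langle v\rangle$-weight from the nonlinear estimate onto the $\nu$-factor so as to land inside $\mathcal{D}^{(j,k)}_{f^{m,\epsilon},j,\ell_m^*,1}$; this explains the need for the index splitting $0\le k\le|\beta|$ in the sum. The terms $\mathcal{E}_{f^{m,\epsilon},n-1,n-1,-\gamma}(t)\mathcal{D}_{f^{m,\epsilon},n}(t)$ absorb those contributions where both factors carry intermediate-order derivatives and must be controlled without weight growth, using that $n-1\ge N^0_m$.

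The main obstacle is the bookkeeping of the self-interaction piece $\Gamma(f^{m,\epsilon},f^{m,\epsilon})$ together with the cross terms $\epsilon^{j_1+j_2-m}\Gamma(f^{j_1,\epsilon},f^{j_2,\epsilon})$: when both arguments are $f^{m,\epsilon}$ there is no small $\epsilon^m$ factor available from the outside (it is already absorbed into the prefactor), so one cannot afford to place any derivative of $f^{m,\epsilon}$ in the unweighted $\mathcal{E}_{f^{m,\epsilon},n}$; instead one must carefully isolate the top-order case $|\alpha_i|+|\beta_i|=|\alpha|+|\beta|$ and produce precisely the \emph{square-form} $\bigl(\sum\mathcal{E}^{(j_1,k_1)}\bigr)\bigl(\sum\mathcal{D}^{(j_2,k_2)}\bigr)$ stated on the right, handling separately the low-order cases $|\alpha|+|\beta|\le 1$ (which need $j\le 2$ rather than $j\le|\alpha|+|\beta|$) so that \eqref{nonlinear-1}--\eqref{nonlinear-2} still supply two velocity derivatives on the $\mu^\delta$-factor without losing the $\nu$-weight.
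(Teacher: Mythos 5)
Your proposal follows exactly the route the paper intends: the paper omits the proof of Lemma \ref{Lemma-5.4} (as it does for the companion Lemma \ref{Lemma-5.3}), indicating only that it follows from the trilinear estimates of Lemma \ref{lemma-nonlinear} via the Leibniz expansion \eqref{gamma-0}, the usual $L^\infty_x$--$L^3_x$--$L^6_x$ Sobolev splitting, and the observation that $w_{\ell_m^*,1}$ coincides with $w_{-\ell_m^*/\gamma,-\gamma}$, which is precisely what you do. Your treatment of the top-order self-interaction terms and of the low-order cases $|\alpha|+|\beta|\leq 1$ matches the structure of the stated right-hand side, so the argument is correct and essentially identical to the paper's.
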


\subsection{Appendix B}

To deduce the desired energy estimates, we have to obtain the corresponding energy estimates with respect to $f^{P,\epsilon}$ and $f^{i,\epsilon}$ for $1\leq i\leq m$.
The first result is concerned with the estimates on $f^{P,\epsilon}$.

To this end, define
\begin{eqnarray}\label{def-Xt-f-P}
 &&\Xi_{\textrm{Total},P}(t)\equiv\sup_{
 0\leq\tau\leq t}\left\{\sum_{0\leq n\leq N_P,\atop 0\leq k\leq n}(1+\tau)^{-\sigma_{n,k}}{\mathcal{E}}^{(n,k)}_{f^{P,\epsilon},N_P,l^*_P,1}(\tau)+\overline{\mathcal{E}}_{f^{P,\epsilon},N_P,l_P,-\gamma}(\tau)\right\}
 \lesssim  M_{P},
\end{eqnarray}
where $M_P$ is a sufficiently small positive constant independent of $\epsilon$.

Under the {\it a priori} estimates \eqref{def-Xt-f-P}, we have
\begin{proposition}\label{Th1.4}
Take
\begin{itemize}
  \item $-3<\gamma<-1$, $\varrho\in [\frac12,\frac32)$, $N_P\geq 5$;
  \item  $\vartheta$ satisfies \[
    0<\vartheta\leq \min\left\{\frac{\gamma-2}{4\gamma-2}\left(\frac52+\varrho\right)-1,\frac\varrho2-\frac14\right\};
    \]
  \item ${\sigma}_{n,0}=0$ for $n\leq N_P$ and
$$
{\sigma}_{n,k}-{\sigma}_{n,k-1}=\frac{2(1+\gamma)}{\gamma-2}(1+\vartheta),  1\leq k\leq n,1\leq n\leq N_P;
$$
 \item there exists $\widehat{l}_P>\frac{N_P+\varrho}2$, $\ell_P\geq N_P,\ l_P\geq \ell_P+\widehat{l}_P$, $\widetilde{\ell}_P\geq\frac\gamma2-\frac{2\gamma{\sigma}_{N_P,N_P}}{1+\varrho}$ and $l_P^*\geq \widetilde{\ell}_{P}-\gamma l_{P}$;
 \item $F^{P,0}(x,v)=\mu+\sqrt{\mu}f_{P,0}(x,v)\geq0$.
\end{itemize}

If we assume further that
\begin{equation}\label{def-VPB-S-Y_P0}
\begin{split}
\mathbb{Y}_{P,0}=&\sum_{|\alpha|+|\beta|\leq N_P}\left\|\langle v\rangle^{l_P^*-|\beta|}e^{q\langle v\rangle^2}\partial^\alpha_\beta f_0^{P,\epsilon}\right\|+\left\|\left[f_0^{P,\epsilon},E_0^{P,\epsilon}\right]\right\|_{H^{-\varrho}_x}
\end{split}
\end{equation}
is bounded from above by some sufficiently small positive constant independent of $\epsilon$, the Cauchy problem (\ref{f-P-sign}), (\ref{VPB-sign-IC}) admits a unique global solution $\left[f^{P,\epsilon}(t,x,v),E^{P,\epsilon}(t,x)\right]$ satisfying $F^P(t,x,v)=\mu+\sqrt{\mu}f^{P,\epsilon}(t,x,v)\geq0$. Furthermore, one also has
\begin{equation}\label{VPB-S-f-p}
  \frac{d}{dt}\mathcal{E}_{f^{P,\epsilon},N_P,l_P,-\gamma}(t)+\mathcal{D}_{f^{P,\epsilon},N_P,l_P,-\gamma}(t)\lesssim 0,
\end{equation}

\begin{equation}\label{VPB-S-k-estimate}
  \frac{d}{dt}\mathcal{E}^{k}_{f^{P,\epsilon},N_P}(t)+\mathcal{D}^{k}_{f^{P,\epsilon},N_P}(t)\lesssim 0,\ k=0,1,\cdots,N_P-1,
\end{equation}

\begin{equation}\label{VPB-S-k-estimate-w}
  \frac{d}{dt}\mathcal{E}^{k}_{f^{P,\epsilon},N_P,\ell_P,-\gamma}(t)+\mathcal{D}^{k}_{f^{P,\epsilon},N_P,\ell_P,-\gamma}(t)\lesssim 0,\ k=0,1,\cdots,N_P-2,
\end{equation}
and
\begin{equation}\label{VPB-S-th-decay}
    (1+t)^{\varrho+k}\mathcal{E}^k_{f^{P,\epsilon},N_P}(t)\lesssim \mathbb{Y}_{P,0}^2,\ k=0,1,\cdots,N_P-1,
  \end{equation}
  \begin{equation}\label{VPB-S-th-decay-w}
    (1+t)^{\varrho+k}\mathcal{E}^k_{f^{P,\epsilon},N_P,\ell_P,-\gamma}(t)\lesssim \mathbb{Y}_{P,0}^2,\ k=0,1,\cdots,N_P-2,
  \end{equation}
    \begin{equation}\label{VPB-S-th-decay-w-1}
   (1+t)^{-\sigma_{n,k}}{\mathcal{E}}^{(n,k)}_{f^{P,\epsilon},N_P,l^*_P,1}(t)\lesssim \mathbb{Y}_{P,0}^2,\ k=0,1,\cdots,n.
  \end{equation}
where $0\leq n\leq N_P$.
\end{proposition}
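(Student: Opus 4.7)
The plan is to prove Proposition \ref{Th1.4} by a standard continuation argument built on a closed a priori bound for $\Xi_{\textrm{Total},P}(t)$ under the smallness of $\mathbb{Y}_{P,0}$. Local well-posedness in the weighted space generated by $\mathbb{Y}_{P,0}$ is standard, obtained by iteration with the weight $w_{l_P^*,1}(0,v)$ absorbing the velocity growth of the Poisson drift $E^{P,\epsilon}\cdot\nabla_v f^{P,\epsilon}$. Nonnegativity of $F^{P,\epsilon}(t,x,v)=\mu+\sqrt{\mu}f^{P,\epsilon}$ propagates from $F^{P,\epsilon}_0\geq 0$ via the classical mild formulation along the backward characteristics of the transport operator, in which the gain part of $Q$ is manifestly nonnegative. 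The architecture will mirror that of the proof of Theorem \ref{Th1.3}, but \emph{drastically simplified}: there are no interactions with other profiles, no $\epsilon^m$-remainder terms, and, crucially, \emph{no degeneracy} of the electromagnetic dissipation, since the Poisson equation yields $\|E^{P,\epsilon}\|_{H^{N_P}_x}\lesssim \|a^{f^{P,\epsilon}}_+-a^{f^{P,\epsilon}}_-\|_{H^{N_P-1}_x}$ directly, without any $\epsilon^2$ factor.

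First I would derive the macroscopic dissipation via an interactive functional modelled on \eqref{mac-dis-f-R}, which under the a priori smallness controls $\|\nabla_x{\bf P}f^{P,\epsilon}\|^2_{H^{N_P-1}_x}+\|a_+^{f^{P,\epsilon}}-a_-^{f^{P,\epsilon}}\|^2$ by the microscopic dissipation plus nonlinear remainders. I would then perform the $w_{l_P-|\beta|,-\gamma}$-weighted energy estimate following the template of \eqref{lemma-f-R-N-R}, with the collision and drift terms bounded by the single-profile specializations of the lemmas of Appendix A; since every cross-profile and $\epsilon^m$-remainder contribution drops out, every nonlinear term on the right-hand side is either quadratic in $\overline{\mathcal{E}}_{f^{P,\epsilon},N_P,l_P,-\gamma}$ or absorbable into $\mathcal{D}_{f^{P,\epsilon},N_P,l_P,-\gamma}(t)$ at small data, which gives \eqref{VPB-S-f-p}. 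For the decay estimates \eqref{VPB-S-k-estimate}--\eqref{VPB-S-th-decay-w}, I would then derive the negative Sobolev bound analogous to \eqref{EB-s-f-R}--\eqref{Lemma4.1-1-f-R}, obtaining uniform control of $\|\Lambda^{-\varrho}[f^{P,\epsilon},E^{P,\epsilon}]\|^2$, and feed the interpolation
\[
\left\|\nabla^k[{\bf P}f^{P,\epsilon},E^{P,\epsilon}]\right\|\leq \left\|\nabla^{k+1}[{\bf P}f^{P,\epsilon},E^{P,\epsilon}]\right\|^{\frac{k+\varrho}{k+\varrho+1}}\left\|\Lambda^{-\varrho}[{\bf P}f^{P,\epsilon},E^{P,\epsilon}]\right\|^{\frac{1}{k+\varrho+1}}
\]
(together with the microscopic Hölder-type interpolation used to derive \eqref{lemma-decay-f-R}) into a Guo-type ODE argument, which delivers the algebraic decay $(1+t)^{-(k+\varrho)}$. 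The parameters $\ell_P\geq N_P$ and $\widehat{l}_P>\frac{N_P+\varrho}{2}$ are precisely what is needed so that the auxiliary weighted energy tolerates the velocity-moment loss in the microscopic interpolation.

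The hardest step will be establishing the hierarchy \eqref{VPB-S-th-decay-w-1} with the weight $w_{l_P^*-|\beta|,1}(t,v)$. The obstruction is that the drift $\tfrac{1}{2}q_0 E^{P,\epsilon}\cdot v f^{P,\epsilon}$ grows linearly in $|v|$, whereas for $-3<\gamma<-1$ the soft-potential coercivity $\|\cdot\|_\nu^2$ only controls $|v|^\gamma$-weighted norms, so this drift cannot be absorbed by the $L$-dissipation alone. The resolution, following the template of \eqref{lemma-f-R-high}, is to exploit the extra dissipation
\[
\frac{q\vartheta}{(1+t)^{1+\vartheta}}\left\|w_{l_P^*-|\beta|,1}\partial^\alpha_\beta\{{\bf I-P}\}f^{P,\epsilon}\langle v\rangle\right\|^2
\]
produced when $\partial_t$ hits the time-dependent factor $e^{q\langle v\rangle^2/(1+t)^\vartheta}$. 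Combined with $\|E^{P,\epsilon}(t)\|_{L^\infty_x}\lesssim (1+t)^{-\frac{3}{4}-\frac{\varrho}{2}}$ from \eqref{VPB-S-th-decay} via Sobolev embedding, the hypothesis $\vartheta\leq \frac{\varrho}{2}-\frac{1}{4}$ ensures $\frac{3}{4}+\frac{\varrho}{2}\geq 1+\vartheta$, so this extra dissipation absorbs the drift. Coupling the $n$-th order with the $(n,k)$-th order estimates through the jump $\sigma_{n,k}-\sigma_{n,k-1}=\frac{2(1+\gamma)}{\gamma-2}(1+\vartheta)$ compensates the half-power loss from the transport term $v\cdot\nabla_x\{{\bf I-P}\}f^{P,\epsilon}$ via the interpolation in \eqref{linear-fi-compute-R}. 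A proper linear combination of all these differential inequalities, in the spirit of the closing argument of Lemma \ref{lemma9} but without the cross-profile terms, integrates to $\Xi_{\textrm{Total},P}(t)\lesssim \mathbb{Y}_{P,0}^2$ and completes the continuation.
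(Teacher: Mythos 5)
Your plan is sound and follows precisely the strategy the paper invokes: the paper's own ``proof'' of Proposition \ref{Th1.4} is a one-sentence deferral to the methods of \cite{Duan-Lei-Yang-Zhao-CMP-2017, Lei-Zhao-JFA-2014}, noting that the system \eqref{f-P-sign} is a Vlasov-Poisson-Boltzmann system with a given constant magnetic field, so the same two-weight ($w_{\ell,-\gamma}$, $w_{\ell,1}$) energy method, negative-Sobolev interpolation, and Guo-type decay argument apply without the degeneracy issues introduced by the Maxwell equations. Your outline correctly identifies the two non-trivial verification points --- that $\|E^{P,\epsilon}\|_{H^{N_P}_x}$ is controlled by $\|a_+^{f^{P,\epsilon}}-a_-^{f^{P,\epsilon}}\|_{H^{N_P-1}_x}$ without any $\epsilon$-degeneracy, and that the time-dependent exponential weight dissipation $(1+t)^{-1-\vartheta}\|w_{\ell,1}\cdot\langle v\rangle\|^2$ together with the hypothesis $\vartheta\leq\frac{\varrho}{2}-\frac14$ and the decay $\|E^{P,\epsilon}\|_{L^\infty_x}\lesssim(1+t)^{-\frac34-\frac{\varrho}{2}}$ absorbs the linearly growing drift --- and these match what the cited works (and the analogous Sections 4.3--4.6 of the present paper for $f^{m,\epsilon}$) would give upon specialization.
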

\begin{proof}
In fact, $\left[F^{P,\epsilon}(t,x,v), E^{P,\epsilon}(t,x)\right]$ satisfy the  Vlasov-Poisson-Boltzmann system with a given magnetic filed, this proposition can be obtained by a similar argument as \cite{Duan-Lei-Yang-Zhao-CMP-2017, Lei-Zhao-JFA-2014}. We omit its proof for simplicity.
\end{proof}
Based on the above proposition, for $i=1,2,\cdots, m-1$, we can obtain the corresponding results on the global solvability of the Cauchy problem \eqref{f-i-vector}-\eqref{f-i-e-b-compatibility conditions} together with some estimates on its global solution $\left[f^{i,\epsilon}(t,x,v), E^{i,\epsilon}(t,x)\right]$ by the method of mathematical induction.

In fact, if we define
\begin{eqnarray}\label{def-Xt-f-i}
 &&\Xi_{\textrm{Total},i}(t)\nonumber\\
 &\equiv&\sup_{
 0\leq\tau\leq t}\left\{\sum_{0\leq n\leq N_i,\atop 0\leq k\leq n}(1+\tau)^{-\sigma_{n,k}}\mathcal{E}^{(n,k)}_{f^{i,\epsilon},n,l_i^*,1}(\tau)
 +\overline{\mathcal{E}}_{f^{i,\epsilon},N_i,l_i,-\gamma}(\tau)\right.\nonumber\\
&& \left.\quad\quad+\chi_{i\geq 2}\sum_{0<j_1<i}\sum_{0\leq n\leq N_{j_1},\atop 0\leq k\leq n}(1+\tau)^{-\sigma_{n,k}}{\mathcal{E}}^{(n,k)}_{f^{j_1,\epsilon},N_{j_1},l^*_{j_1},1}(\tau)+\chi_{i\geq 2}\sum_{0<j_1<i}\overline{\mathcal{E}}_{f^{j_1,\epsilon},N_{j_1},l_{j_1},-\gamma}(\tau)\right.\nonumber\\
 &&\left.\quad\quad
 +\sum_{0\leq n\leq N_P,\atop 0\leq k\leq n}(1+\tau)^{-\sigma_{n,k}}{\mathcal{E}}^{(n,k)}_{f^{P,\epsilon},N_P,l^*_P,1}(\tau)+\overline{\mathcal{E}}_{f^{P,\epsilon},N_P,l_P,-\gamma}(\tau)\right\}
 \lesssim  M_{i},
\end{eqnarray}
where $M_i$ is a sufficiently small positive constant independent of $\epsilon$, we can get by repeating the arguments used in deducing Lemma \ref{Lemma1-R}, Lemma \ref{lemma2-R}, Lemma \ref{lemma9}, and by the method of mathematical induction that
\begin{proposition}\label{lemma-fi-end}
Assume
\begin{itemize}
\item $\gamma\in(-3,-1)$, $\varrho\in\left[1,\frac32\right)$;
\item  $\vartheta$ satisfies \[
    0<\vartheta\leq \min\left\{\frac{\gamma-2}{4\gamma-2}\left(\frac52+\varrho\right)-1,\frac\varrho2-\frac14\right\};
    \]
\item $N_P,N_i$ and ${\sigma}_{n,k}$ can be taken as
\begin{itemize}
\item $N_i\geq 5$, $N_{j}\geq N_{j+1}+1$ for $1\leq j\leq i-1$, $N_P\geq N_1+1$;
\item ${\sigma}_{n,0}=0$ for $n\leq N_i$ and
$$
{\sigma}_{n,k}-{\sigma}_{n,k-1}=\frac{2(1+\gamma)}{\gamma-2}(1+\vartheta),  1\leq k\leq n,1\leq n\leq N_i;
$$\end{itemize}
\item
\begin{itemize}
 \item there exists $\widehat{l}_i>\frac{N_P+\varrho}2$, $ \widehat{l}_{j-1}>\widehat{l}_{j}$ for $1<j\leq i$, $\widehat{l}_P>\widehat{l}_1$
        where $\widehat{l}_P$ satisfying the assumptions in Proposition \ref{Th1.4};
\item $\widetilde{\ell}_i\geq\frac\gamma2-\frac{2\gamma{\sigma}_{N_i,N_i}}{1+\varrho}$, $\ell_i\geq N_i$, $l_i\geq \ell_i+\widehat{l}_i$ and $l_i^*\geq \widetilde{\ell}_{i}-\gamma l_{i}$;
  \item $\widetilde{\ell}_{j}\geq\frac\gamma2-\frac{2\gamma{\sigma}_{N_{j},N_{j}}}{1+\varrho}$, $\ell_j\geq-\frac{l_{j+1}^*}\gamma+\frac12-\frac 2{\gamma}$, $l_{j}\geq\ell_j+\widehat{l}_j$  and $l_j^*\geq \widetilde{\ell}_{j}-\gamma l_{j}$ for $1\leq j\leq i-1$;
 \item $\widetilde{\ell}_{P}\geq\frac\gamma2-\frac{2\gamma{\sigma}_{N_{P},N_{P}}}{1+\varrho}$,
     $\ell_P\geq -\frac{l_{1}^*}\gamma+\frac12-\frac 2{\gamma}$, $l_{P}\geq\ell_P+\widehat{l}_P$  and $l_P^*\geq \widetilde{\ell}_{P}-\gamma l_{P}$.
        \end{itemize}
\end{itemize}
If we assume further that
\begin{eqnarray}\label{condition-i}
 \mathbb{Y}_{i,0}&\equiv& \sum_{|\alpha|+|\beta|\leq N_i}\left\|\langle v\rangle^{l_i^*-|\beta|}e^{q\langle v\rangle^2}\partial^\alpha_\beta f_0^{i,\epsilon}\right\|+\left\|E_0^{i,\epsilon}\right\|_{H^{N_i}_x} +\left\|\Lambda^{-\varrho}\left[f_0^{i,\epsilon},E_0^{i,\epsilon}\right]\right\|\\
 &&+\sum_{0<j_1<i}\left\{\sum_{|\alpha|+|\beta|\leq N_{j_1}}\left\|\langle v\rangle^{l_{j_1}^*-|\beta|}e^{q\langle v\rangle^2}\partial^\alpha_\beta f_0^{j_1,\epsilon}\right\|+\left\|E_0{j_1,\epsilon}\right\|_{H^{N_{j_1}}_x} +\left\|\Lambda^{-\varrho}\left[f_0^{j_1,\epsilon},E_0^{j_1,\epsilon}\right]\right\|\right\}\nonumber\\
 &&+\sum_{|\alpha|+|\beta|\leq N_P}\left\|\langle v\rangle^{l_P^*-|\beta|}e^{q\langle v\rangle^2}\partial^\alpha_\beta f_0^{P,\epsilon}\right\| +\left\|E_0^{P,\epsilon}\right\|_{H^{N_P}_x} +\left\|\Lambda^{-\varrho}\left[f_0^{P,\epsilon},E_0^{P,\epsilon}\right]\right\|\nonumber
\end{eqnarray}
is bounded from above by some sufficiently small positive constant independent of $\epsilon$, then the Cauchy problem \eqref{f-i-vector}-\eqref{f-i-e-b-compatibility conditions} admits a unique global solution $\left[f^{i,\epsilon}(t,x,v), E^{i,\epsilon}(t,x)\right]$ which satisfies
   \begin{eqnarray}
\Xi_{\textrm{Total},i}(t)
 \lesssim  \mathbb{Y}_{i,0}^2.
\end{eqnarray}

Moreover, one also has
\begin{equation}\label{f-i-end-00}
\frac{d}{dt}\overline{\mathcal{E}}_{f^{i,\epsilon},N_{i},l_{i},-\gamma}(t)+\overline{\mathcal{D}}_{f^{i,\epsilon},N_{i},l_{i},-\gamma}(t)\lesssim 0,\quad\quad 0<i<m
\end{equation}
 and for $k=0,1,2,\cdots, N_i-1$, one has
\begin{eqnarray}\label{Lemma1-1}
&&\frac{d}{dt}\left\{\mathcal{E}^{k}_{f^{i,\epsilon},N_i}(t)+\mathcal{E}^{k}_{f^{P,\epsilon},N_P}(t)+\chi_{i\geq2}\sum_{1\leq j\leq i-1}\mathcal{E}^{k}_{f^{j,\epsilon},N_{j}}(t)\right\}\nonumber\\
&&+\mathcal{D}^{k}_{f^{i,\epsilon},N_i}(t)+\mathcal{D}^{k}_{f^{P,\epsilon},N_P}(t)+\chi_{i\geq2}\sum_{1\leq j\leq i-1}\mathcal{D}^{k}_{f^{j,\epsilon},N_j}(t)\leq 0,\quad 0\leq t\leq T.
\end{eqnarray}

As a consequence of \eqref{Lemma1-1}, we can get that
\begin{eqnarray}\label{lemma-decay-f-i}
(1+t)^{k+\varrho}\left\{\mathcal{E}^{k}_{f^{i,\epsilon},N_i}(t)+\mathcal{E}^{k}_{f^{P,\epsilon},N_P}(t)+\chi_{i\geq2}\sum_{1\leq j\leq i-1}\mathcal{E}^{k}_{f^{j,\epsilon},N_{j}}(t)\right\}
\lesssim \mathbb{Y}^2_{i,0},
\end{eqnarray}
 and for $k=0,1,2,\cdots, N_i-2$, then one has
\begin{eqnarray}\label{Lemma1-1-w}
&&\frac{d}{dt}\left\{\mathcal{E}^{k}_{f^{i,\epsilon},N_i,\ell_i,-\gamma}(t)+\mathcal{E}^{k}_{f^{P,\epsilon},N_P,\ell_i,-\gamma}(t)
+\chi_{i\geq2}\sum_{1\leq j\leq i-1}\mathcal{E}^{k}_{f^{j,\epsilon},N_{j},\ell_i,-\gamma}(t)\right\}\nonumber\\
&&+\mathcal{D}^{k}_{f^{i,\epsilon},N_i,\ell_i,-\gamma}(t)+\mathcal{D}^{k}_{f^{P,\epsilon},N_P,\ell_i,-\gamma}(t)
+\chi_{i\geq2}\sum_{1\leq j\leq i-1}\mathcal{D}^{k}_{f^{j,\epsilon},N_j,\ell_i,-\gamma}(t)\leq 0,\quad 0\leq t\leq T,
\end{eqnarray}
as a consequence of \eqref{Lemma1-1-w}, we can get that
\begin{equation}\label{lemma3-1-R}
 \begin{split}
(1+t)^{k+\varrho}\left\{\mathcal{E}^k_{f^{i,\epsilon},N_i,{\ell_{i}},-\gamma}(t)+\sum_{0<j<i}\mathcal{E}^k_{f^{j,\epsilon},N_j,{\ell_{i}},-\gamma}(t)
+\mathcal{E}^k_{f^{P,\epsilon},N_P,\ell_{i},-\gamma}(t)\right\}
\lesssim& \mathbb{Y}^2_{i,0}.
\end{split}
\end{equation}
Furthermore, for $i=1,2,\cdots, m-1$, one can get that
\begin{eqnarray}\label{time-increasing-i}
 &&\sum_{0\leq n\leq N_i,\atop 0\leq k\leq n}(1+t)^{-\sigma_{n,k}}\mathcal{E}^{(n,k)}_{f^{i,\epsilon},n,l_i^*,1}(t)+\chi_{i\geq 2}\sum_{0<j_1<i}\sum_{0\leq n\leq N_{j_1},\atop 0\leq k\leq n}(1+t)^{-\sigma_{n,k}}{\mathcal{E}}^{(n,k)}_{f^{j_1,\epsilon},N_{j_1},l^*_{j_1},1}(t)\nonumber\\
&&\quad\quad
 +\sum_{0\leq n\leq N_P,\atop 0\leq k\leq n}(1+t)^{-\sigma_{n,k}}{\mathcal{E}}^{(n,k)}_{f^{P,\epsilon},N_P,l^*_P,1}(t)
 \lesssim  \mathbb{Y}^2_{i,0}.
\end{eqnarray}
\end{proposition}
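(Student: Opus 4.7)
\textbf{Proposal for the proof of Proposition \ref{lemma-fi-end}.} The plan is to argue by strong induction on $i\in\{1,2,\dots,m-1\}$, with Proposition \ref{Th1.4} serving as the base case (data/estimates for $f^{P,\epsilon}$). Under the inductive hypothesis that the full conclusion already holds at every level $j_1<i$, all quantities involving $f^{P,\epsilon}$ and $f^{j_1,\epsilon}$ ($1\leq j_1<i$) become known source terms with quantitative weighted energy bounds, temporal decay at the rates $(1+t)^{-(k+\varrho)}$, and controlled time-growth in the $w_{\ell_{j}^{*}-|\beta|,1}(t,v)$-weighted norms. The system \eqref{f-i-vector}--\eqref{f-i-e-b-compatibility conditions} is then \emph{linear} in the unknown $\left[f^{i,\epsilon},E^{i,\epsilon}\right]$ (no self-quadratic $\Gamma(f^{i,\epsilon},f^{i,\epsilon})$), and the magnetic field appearing in the Lorentz term is the \emph{constant} vector $B^{P}+\sum_{j=1}^{m-1}\epsilon^{j}B^{j}$, not a dynamical remainder. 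These two features make the analysis strictly simpler than that of the remainder $f^{m,\epsilon}$ treated in Theorem \ref{Th1.3}, and allow a direct transcription of the machinery already developed there.

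The core energy estimates will be produced in the same order as for $f^{m,\epsilon}$: first a macroscopic dissipation identity analogous to Lemma \ref{mac-dis-f-R} (but without the $B^{i,\epsilon}$ contribution, since $B^{i,\epsilon}$ is a constant absorbed into the background); then, for $|\alpha|+|\beta|\le N_i$, weighted energy inequalities at weights $w_{\ell_{i}-|\beta|,-\gamma}(t,v)$ and $w_{\ell_{i}^{*}-|\beta|,1}(t,v)$, using Lemma \ref{Lemma-5.3}--Lemma \ref{Lemma-5.4} for the collision terms and the identity
\[
\{{\bf I}-{\bf P}\}\bigl\{E^{i,\epsilon}\cdot v\mu^{1/2}q_{1}-q_{0}\epsilon(v\times(B^{P}+\textstyle\sum\epsilon^{j}B^{j}))\cdot\nabla_{v}{\bf P}f^{i,\epsilon}\bigr\}=\bigl\{E^{i,\epsilon}+\epsilon b^{f^{i,\epsilon}}\times(B^{P}+\textstyle\sum\epsilon^{j}B^{j})\bigr\}\cdot v\mu^{1/2}q_{1}
\]
(the exact analogue of Lemma \ref{Lemma-3.5} and Lemma \ref{lemma-nonhard-1}) to treat the degenerate electric coupling in an $\epsilon$-independent way. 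The source contributions $E^{j_{1},\epsilon}\cdot\nabla_{v}f^{j_{2},\epsilon}$, $E^{j_{1},\epsilon}\cdot v\,f^{j_{2},\epsilon}$ and $\Gamma(f^{j_{1},\epsilon},f^{j_{2},\epsilon})$ with $j_{1}+j_{2}=i$ are estimated by the already-available bounds $\Xi_{\mathrm{Total},j_{1}}(t),\Xi_{\mathrm{Total},j_{2}}(t)\lesssim \mathbb{Y}_{i,0}^{2}$, giving absolutely convergent integrable-in-time contributions after pairing a weighted energy of the $j_{1}$-term with the weighted dissipation of the $j_{2}$-term (always placing the time decay on one factor and the dissipation on the other).

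Once these Lyapunov-type inequalities are in hand, the negative Sobolev-space estimate (analog of Lemma \ref{Lemma4.1-f-R}) together with the interpolation argument of Lemma \ref{Lemma1-R} and Lemma \ref{lemma2-R} yields \eqref{Lemma1-1}, \eqref{lemma-decay-f-i}, \eqref{Lemma1-1-w} and \eqref{lemma3-1-R}; the parameters $\widehat{l}_{i},\ell_{i},l_{i}^{*},\widetilde{\ell}_{i}$ and the family $\sigma_{n,k}$ are tuned exactly as in Theorem \ref{Th1.3} so that the interpolation loss $(1+t)^{\sigma_{n,k}}$ coming from the Lorentz nonlinearity is absorbed by the decay factor $(1+t)^{-(k+\varrho)/\theta}$ produced by $\|\nabla_{x}E^{i,\epsilon}(t)\|_{H^{N_{i}-1}_{x}}$. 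The time-increase estimate \eqref{time-increasing-i} for the $w_{\ell_{i}^{*}-|\beta|,1}$-norms is obtained by the strategy of Lemma \ref{lemma-f-i-high}, again simpler here because the magnetic term is constant so $(1+t)^{2+2\vartheta}\|\nabla_{x}B^{i,\epsilon}\|_{L^\infty_x}^{2}\equiv 0$ and only the electric-field factor $\|\nabla_{x}E^{i,\epsilon}(t)\|_{L^\infty_x}^{2}$ contributes. Finally, the a priori assumption $\Xi_{\mathrm{Total},i}(t)\lesssim M_{i}$ is closed by the standard continuation argument, upgrading the bound to $\Xi_{\mathrm{Total},i}(t)\lesssim \mathbb{Y}_{i,0}^{2}$.

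The main technical obstacle, as in Theorem \ref{Th1.3}, is the \emph{degeneracy of the electromagnetic dissipation at large light speed}: the natural coercive term for $E^{i,\epsilon}$ carries no small factor $\epsilon$ (unlike the remainder case) but it still has to be produced from $\partial_{t}G^{i,\epsilon}$ using the equation $\partial_{t}E^{i,\epsilon}=-\int v\mu^{1/2}(f^{i,\epsilon}_{+}-f^{i,\epsilon}_{-})dv$ rather than from a true Maxwell wave structure, and the troublesome pairing $(\partial^{\alpha}_{\beta}(E^{i,\epsilon}\cdot v\mu^{1/2}q_{1}),w^{2}\partial^{\alpha}_{\beta}\{{\bf I}-{\bf P}\}f^{i,\epsilon})$ must be treated by the cancellation identity above exactly as for the remainder. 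A secondary difficulty, specific to the induction step, is the bookkeeping required to ensure that the weight parameters $\widehat{l}_{j},\ell_{j},l_{j}^{*}$ at level $i$ dominate all parameters used at levels $j_{1}<i$ (this is precisely why the statement imposes the monotone hierarchy $\widehat{l}_{j-1}\geq\widehat{l}_{j}$, $\ell_{j}\geq -l_{j+1}^{*}/\gamma+1/2-2/\gamma$, etc.); one must verify that these inequalities indeed allow each bilinear source term at level $i$ to be absorbed by either the energy or the dissipation of the corresponding lower level. Beyond this, every step is a direct specialisation of the remainder-case analysis, with the self-nonlinear and dynamical magnetic contributions removed.
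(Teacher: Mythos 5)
Your proposal follows essentially the same route as the paper: the paper itself proves Proposition \ref{lemma-fi-end} by induction on $i$, taking Proposition \ref{Th1.4} as the base case and simply repeating the machinery developed for the remainder $f^{m,\epsilon}$ (the macroscopic dissipation estimate, the two sets of weighted energy estimates with weights $w_{\ell-|\beta|,-\gamma}$ and $w_{\ell-|\beta|,1}$, the cancellation identity of Lemma \ref{lemma-nonhard-1}, the negative Sobolev estimates, and the interpolation arguments of Lemmas \ref{Lemma1-R}--\ref{lemma2-R}), exploiting exactly the simplifications you identify, namely that the level-$i$ system is linear in $\left[f^{i,\epsilon},E^{i,\epsilon}\right]$ with a constant magnetic field. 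Your account of the induction bookkeeping for the weight hierarchy and of the treatment of the degenerate electric coupling matches the paper's intent, so no gap remains.
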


\begin{remark} For the Cauchy problem (\ref{f-P-sign}), (\ref{VPB-sign-IC}) and the Cauchy problem \eqref{f-i-vector}-\eqref{f-i-e-b-compatibility conditions} for the whole range of cutoff intermolecular interactions, since both equations under our considerations are Vlasov-Poisson-Boltzmann type equations with given constant magnetic field, the arguments used to deduce Lemma \ref{Lemma1-R}, Lemma \ref{lemma2-R}, Lemma \ref{lemma9} can be adopted to yield the desired results listed in Propositions \ref{Th1.4} and \ref{lemma-fi-end} by employing two sets of weighted energy estimates with the weights $w_{\ell-|\beta|,-\gamma}(t,v)$ and $w_{\ell-|\beta|,1}(t,v)$ respectively. Although the estimates obtained in Propositions \ref{Th1.4} and \ref{lemma-fi-end} are sufficient to prove Theorem \ref{Th1.3}, these results can indeed be improved.

In fact, for the moderately soft potentials (i.e. for the case of $-2\leq\gamma<0$), by employing the argument developed in \cite{Duan_Yang_Zhao-M3AS-2013}, we can use a single set of weighted energy method with weights $w_{\ell-|\beta|,-\gamma}(t,v)$ to deduce similar global solvability results for both the Cauchy problem (\ref{f-P-sign}), (\ref{VPB-sign-IC}) and the Cauchy problem \eqref{f-i-vector}-\eqref{f-i-e-b-compatibility conditions} and to deduce nice temporal decay estimates on $\left[f^{P,\epsilon}(t,x,v), E^{P,\epsilon}(t,x)\right]$ and $\left[f^{i,\epsilon}(t,x,v), E^{i,\epsilon}(t,x)\right]$ for $i=1,2,\cdots, m-1$ which are sufficient to prove Theorem \ref{Th1.3}, provided that the initial data are assumed
to satisfy the neutral conditions. 

For the very soft potentials (i.e. for the case of $-3<\gamma<-2$), by employing the argument developed in \cite{Xiao-Xiong-Zhao-JFA-2016}, we can still use a single set of weighted energy method with weights $w_{\ell-|\beta|,-\gamma}(t,v)$ to get similar global solvability results for both the Cauchy problem (\ref{f-P-sign}), (\ref{VPB-sign-IC}) and the Cauchy problem \eqref{f-i-vector}-\eqref{f-i-e-b-compatibility conditions} and to deduce the desired temporal decay estimates. But, as in \cite{Xiao-Xiong-Zhao-JFA-2016}, we had to deduce the almost optimal decay estimates on $\left[f^{P,\epsilon}(t,x,v), E^{P,\epsilon}(t,x)\right]$ and $\left[f^{i,\epsilon}(t,x,v), E^{i,\epsilon}(t,x)\right]$ for $i=1,2,\cdots, m-1$ themselves, certain orders of derivatives with respect the space and velocity variables, and even the macroscopic component and microscopic component of them. Since such an improvement will lead to quite complex analysis, we thus state the somewhat weaker results in Propositions \ref{Th1.4} and \ref{lemma-fi-end} for simplicity of presentation.
\end{remark}




\bigskip
\noindent {\bf Acknowledgements:}
Ning Jiang was supported by two grants from National Natural Science Foundation of China under contracts 11731008 and 11971360, respectively. Yuanjie Lei was supported by three grants from National Natural Science Foundation of China under contracts 11601169, 11871335, and 11971187, respectively. Huijiang Zhao was supported by two grants from the National Natural Science Foundation of China under contracts 11731008 and 11671309, respectively.
\medskip

\end{document}